\newtheorem{theorem}{Theorem}[section]
\newtheorem{proposition}[theorem]{Proposition}
\newtheorem{lemma}[theorem]{Lemma}
\newtheorem{corollary}[theorem]{Corollary}
\theoremstyle{definition}
\newtheorem{definition}[theorem]{Definition}
\newtheorem{remark}[theorem]{Remark}
\theoremstyle{plain}
\DeclareMathAlphabet{\mathcalligra}{T1}{calligra}{m}{n}
\DeclareFontShape{T1}{calligra}{m}{n}{<->s*[2.2]callig15}{}
\newcommand{\Tboot}{T_{(Boot)}}
\newcommand{\threePsi}{\vec{\Psi}}
\newcommand{\contr}{\diamond}
\newcommand{\Trandatasize}{\mathring{\updelta}}
\newcommand{\Psiep}{\mathring{\upalpha}}
\newcommand{\TranminusdatasizeWithFactor}{\mathring{\updelta}_{\ast}}
\newcommand{\gt}{\underline{g}}
\newcommand{\gsphere}{g \mkern-8.5mu / }
\newcommand{\ginversesphere}{\gsphere^{-1}}
\newcommand{\mytr}{{\mbox{\upshape{tr}}_{\mkern-2mu \gsphere}}}
\newcommand{\D}{\mathscr{D}}
\newcommand{\angD}{ {\nabla \mkern-14mu / \,} }
\newcommand{\angDsquared}{ {\angD_{\mkern-3mu}}^{\, 2} }
\newcommand{\angdiv}{\mbox{\upshape{div} $\mkern-17mu /$\,}}
\newcommand{\angLap}{ {\Delta \mkern-12mu / \, } }
\newcommand{\Speed}{c}
\newcommand{\Transport}{B}
\newcommand{\Ent}{s}
\newcommand{\GradEnt}{S}
\newcommand{\Densrenormalized}{\uprho}
\newcommand{\Vortrenormalized}{\varOmega}
\newcommand{\CurlofVortrenormalized}{\mathcal{C}}
\newcommand{\DivofEntrenormalized}{\mathcal{D}}
\newcommand{\Flatdiv}{\mbox{\upshape div}\mkern 1mu}
\newcommand{\Flatcurl}{\mbox{\upshape curl}\mkern 1mu}
\newcommand{\angdiff}{ {{d \mkern-9.5mu /} }}
\newcommand{\angdiffuparg}[1]{ {d \mkern-9mu /}^{#1} }
\newcommand{\Lie}{\mathcal{L}}
\newcommand{\SigmatLie}{\underline{\mathcal{L}}}
\newcommand{\angLie}{ { \mathcal{L} \mkern-10mu / } }
\newcommand{\Lgeo}{L_{(Geo)}}
\newcommand{\Lunit}{L}
\newcommand{\Radunit}{X}
\newcommand{\Rad}{\breve{X}}
\newcommand{\angk}{ { {k \mkern-10mu /} \, } }
\newcommand{\angG}{ {{\vec{G} \mkern-12mu /} \, }}
\newcommand{\angGarg}[1]{ {{\vec{G} \mkern-12mu /}_{\mkern 1mu #1} \, }}
\newcommand{\angGmixedarg}[2]{ {{\vec{G} \mkern-12mu /}_{#1}^{\ #2} \, }}
\newcommand{\NovecangGarg}[2]{ {{G \mkern-12mu /}_{\mkern 1mu #1}^{\mkern 6mu #2}}}
\newcommand{\angxi}{ { {\xi \mkern-9mu /}  \, }}
\newcommand{\angxiarg}[1]{ {{\xi \mkern-9mu /}_{#1}  \, }}
\newcommand{\Lineproject}{{\Pi \mkern-12mu / } \, }
\newcommand{\Sigmatproject}{\underline{\Pi}}
\newcommand{\vol}{\varpi}
\newcommand{\tvol}{\underline{\varpi}}
\newcommand{\conevol}{\overline{\varpi}}
\newcommand{\spherevol}{\uplambda_{{g \mkern-8.5mu /}}}
\newcommand{\upchifullmodarg}[1]{{^{(#1)} \mkern-4mu \mathscr{X}}}
\newcommand{\upchifullmodinhom}{\mathfrak{X}}
\newcommand{\upchipartialmodarg}[1]{{^{(#1)} \mkern-4mu \widetilde{\mathscr{X}}}}
\newcommand{\upchipartialmodinhomarg}[1]{{^{(#1)} \mkern-2mu \widetilde{\mathfrak{X}}}}
\newcommand{\smoothfunction}{\mathrm{f}}
\newcommand{\mydiam}{{\mkern-1mu \scaleobj{.75}{\blacklozenge}}}
\newcommand{\toprate}{M_*}
\newcommand{\Ccrit}{C'}
\newcommand{\f}{\frac}
\newcommand{\de}{\delta}
\newcommand{\ls}{\lesssim}
\newcommand{\rd}{\partial}
\newcommand{\Vr}{\Vortrenormalized}
\newcommand{\rr}{\Densrenormalized}
\newcommand{\bX}{\Rad}
\newcommand{\epd}{\mathring{\upepsilon}}
\newcommand{\rdb}{\underline{\rd}}
\newcommand{\srd}{\slashed{\rd}}
\newcommand{\chih}{\hat{\upchi}}
\newcommand{\nab}{\slashed{\nabla}}
\newcommand{\alp}{\alpha}
\newcommand{\bt}{\beta}
\newcommand{\mfp}{\mathfrak p}
 \newcommand{\pfstep}[1]{\vspace{.5em} {\it \noindent #1.}}
\newcommand{\inhom}{\mathfrak{F}}
\newcommand{\Mtu}{\mathcal{M}_{t,u}}
\newcommand{\Ntop}{N_{top}}
\newcommand{\curl}{\mbox{curl }}
\numberwithin{equation}{section}
\begin{document}
\title{The stability of simple plane-symmetric shock formation for 3D compressible Euler flow with vorticity and entropy
}
\author[JL,JS]{Jonathan Luk$^{* \dagger}$ and Jared Speck$^{** \dagger\dagger}$}

\thanks{$^\dagger$ JL is supported by a Terman fellowship and the NSF Grants \# DMS-1709458 and DMS-2005435.}

\thanks{$^{\dagger\dagger}$JS gratefully acknowledges support from NSF grant \# DMS-2054184 and NSF CAREER grant \# DMS-1914537.
}

\thanks{$^{*}$Stanford University, Palo Alto, CA, USA.
\texttt{jluk@stanford.edu}}

\thanks{$^{**}$Vanderbilt University, Nashville, TN, USA.
\texttt{jared.speck@vanderbilt.edu}}

\begin{abstract}
Consider a $1$D simple small-amplitude solution 
$(\varrho_{(bkg)}, v^1_{(bkg)})$ to the isentropic compressible Euler equations
which has smooth initial data, coincides with a constant state outside a compact set, and forms a shock in finite time. Viewing $(\varrho_{(bkg)}, v^1_{(bkg)})$ as a plane-symmetric solution to the full compressible Euler equations in $3$D, we prove that the shock-formation mechanism for the solution $(\varrho_{(bkg)}, v^1_{(bkg)})$ is stable against all sufficiently small and compactly supported perturbations. 
In particular, these perturbations are allowed to break the symmetry and have non-trivial vorticity and variable entropy.

Our approach reveals the full structure of the set of blowup-points at the first singular time:
within the constant-time hypersurface of first blowup,
the solution's first-order Cartesian coordinate partial derivatives blow up precisely on the zero level
set of a function that measures the inverse foliation density of a family of characteristic hypersurfaces.
Moreover, relative to a set of geometric coordinates constructed out of an acoustic eikonal function, 
the fluid solution and the inverse foliation density function remain smooth up to the shock; the blowup of the solution's
Cartesian coordinate partial derivatives is caused by a degeneracy between the geometric and Cartesian coordinates,
signified by the vanishing of the inverse foliation density (i.e., the intersection of the characteristics).

\bigskip

\noindent \textbf{Keywords}:
compressible Euler equations;
shock formation;
stable singularity formation;
wave breaking;
vectorfield method;
characteristics;
eikonal function;
null condition;
null hypersurface;
null structure
\bigskip

\noindent \textbf{Mathematics Subject Classification (2010)} 
Primary: 35L67 - Secondary: 35L05, 35Q31, 76N10
\end{abstract}

\maketitle

\centerline{\today}

\tableofcontents

\section{Introduction}
\label{S:INTRO}
It is classically known --- going back to the work of Riemann --- 
that the compressible Euler equations admit solutions for which singularities 
develop from smooth initial data. Indeed, such examples can already be found in the
plane symmetric isentropic case. In this case,
the compressible Euler equations reduce to a $2\times 2$
hyperbolic system in $1+1$-dimensions, 
which can be analyzed using Riemann invariants. 
In particular, it is easy to show that simple plane-symmetric solutions 
--- solutions with one vanishing Riemann invariant --- 
obey a Burgers-type equation, and that a shock can form in finite time.
By a shock, we mean that the solution remains bounded but its first-order partial derivative
with respect to the standard spatial coordinate blows up, and that the blowup is tied to the
intersection of the characteristics.

In this article, we prove that a class of simple plane-symmetric isentropic small-amplitude shock-forming solutions
to the compressible Euler equations
are \underline{stable} under small perturbations which break the symmetry and admit variable vorticity and entropy.
In particular, the perturbed solutions develop a shock singularity in finite time.
This provides the details of the argument sketched in \cites{jSgHjLwW2016,jLjS2020a} and completes the program that we have initiated (partly joint also with
Gustav Holzegel and Willie Wai-Yeung Wong) in \cites{jSgHjLwW2016,jS2019c,LS,jLjS2020a}.

We will consider the spatial domain\footnote{It is only for technical convenience that we chose the spatial topology 
$\mathbb{R} \times \mathbb T^2$. Similar results also hold, for instance, on $\mathbb{R}^3$.}
$\Sigma \doteq \mathbb R\times \mathbb T^2 = \mathbb R \times (\mathbb R/\mathbb Z)^2$ and a time interval $I$. Our unknowns
are the density $\varrho:I\times \Sigma \to \mathbb R_{>0}$, the velocity $v:I\times \Sigma \to \mathbb R^3$, 
and the entropy $s:I\times \Sigma\to \mathbb{R}$. 
Relative to the standard Cartesian coordinates 
$(t,x^1,x^2,x^3)$ on 
$I \times \mathbb{R} \times \mathbb{T}^2$, the compressible Euler equations can be expressed as follows:
\begin{align}
\label{eq:Euler.1}(\rd_t + v^a \rd_a) \varrho = &\: - \varrho \,\mathrm{div} v,\\
\label{eq:Euler.2}(\rd_t + v^a \rd_a) v^j = &\: - \f 1\varrho \de^{ja} \rd_a p,
&& (j=1,2,3),\\
\label{eq:Euler.3}(\rd_t + v^a \rd_a) s =&\:  0,
\end{align}
where (from now on) $\de^{ij}$ denotes the Kronecker delta,
$\mathrm{div} v \doteq \partial_a v^a$ is the Euclidean divergence of $v$,
repeated lowercase Latin indices are summed over $i,j= 1,2,3$, 
and the pressure $p$ relates to $\varrho$ and $s$ by a prescribed 
smooth
\emph{equation of state} $p = p(\varrho,s)$.
In other words, the right-hand side of \eqref{eq:Euler.2} can be expressed as 
$- \f 1\varrho \de^{ja} \rd_a p = - \f 1\varrho p_{;\varrho}\de^{ja} \rd_a \varrho 
- 
\f 1\varrho p_{;\Ent} \de^{ja} \rd_a s$,
where $p_{;\varrho}$ denotes\footnote{Later in the paper, we will take the partial derivative of 
various quantities with respect
to the logarithmic density $\Densrenormalized$. If $f$ is a function of the fluid unknowns, then 
$f_{;\Densrenormalized}$ will denote the partial derivative of $f$ with respect to $\Densrenormalized$
when the other fluid variables are held fixed. Similarly, 
$f_{;\Ent}$ denotes the partial derivative of $f$ with respect to $\Ent$
when the other fluid variables are held fixed.} 
the partial derivative of the equation of state
with respect to the density at fixed $\Ent$, and analogously for $p_{;\Ent}$.

For the remainder of the paper:
\begin{enumerate}
\item We fix a constant $\bar{\varrho}>0$ and a constant solution $(\varrho, v^i,s) = (\bar{\varrho}, 0,0)$ to \eqref{eq:Euler.1}--\eqref{eq:Euler.3}
\item We fix an equation of state $p = p(\varrho,s)$ such that\footnote{This normalization can always be achieved by a change of variables as long as $\f{\rd p}{\rd\varrho}(\bar{\varrho}, 0) >0$; see \cite[Footnote~19]{LS}.} $\f{\rd p}{\rd\varrho}(\bar{\varrho}, 0) = 1$.
\end{enumerate}

For notational convenience, we define the logarithmic density 
$\rr \doteq \log \left( \f{\varrho}{\bar{\varrho}} \right)$ 
and the speed of sound $\Speed(\rr,s) \doteq \sqrt{\f{\rd p}{\rd\varrho}}(\varrho,s)$. We will from now on think of $\Speed$ as a function of $(\rr,s)$.

We will study
perturbations of a shock-forming
background solution $(\varrho_{(bkg)}, v^i_{(bkg)},s_{(bkg)})$ 
arising from smooth initial data such that the following hold:
\begin{enumerate}
\item The background solution is plane-symmetric and isentropic, i.e., $v^2_{(bkg)}=v^3_{(bkg)}=s_{(bkg)}=0$, and $(\varrho_{(bkg)}, v^1_{(bkg)})$ are functions only of $t$ and $x^1$.
\item The background solution is simple, i.e., the Riemann invariant $\mathcal{R}^{(bkg)}_{(-)}$ satisfies:
$$\mathcal{R}^{(bkg)}_{(-)} \doteq v_{(bkg)}^1 - \int_{0}^{\rr_{(bkg)}} c(\rr',0)\, d\rr'= 0.$$
\item The background solution is initially compactly supported in an $x^1$-interval of length 
$\leq 2\mathring{\upsigma}$, 
i.e., outside this interval, $(\varrho_{(bkg)}, v^i_{(bkg)},s_{(bkg)})\restriction_{t=0} = (\bar{\varrho},0,0)$.
\item At time $0$ (and hence throughout the evolution), the Riemann invariant 
$\mathcal{R}_{(+)}^{(bkg)} \doteq v^1_{(bkg)} + \int_{0}^{\rr_{(bkg)}} c(\rr',0)\, d\rr'$ has small $\leq \mathring{\upalpha}$ amplitude.
\item At time $0$, the
Cartesian spatial derivatives of 
$\mathcal{R}_{(+)}^{(bkg)}$ up to the third order\footnote{In the $1$D case,
one only needs information about the data's first derivative to close a proof of blowup
for a simple plane wave. However, when studying perturbations in $3$D, we need estimates
on these derivatives up the third order in order to close the proof.
For example, the proof of the bound \eqref{E:RADRADUPMULINFTY} relies on having 
control of up to these third-order derivatives 
(as is provided by 
\eqref{E:LINFINITITYTHREETRANSVERSALOFLARGERIEMANNINVARIANT}--\eqref{E:LINFINITITYTHREETRANSVERSALOFSMALLWAVEVARIABLES}), and we use the bound \eqref{E:RADRADUPMULINFTY}
in the proof of Lemma~\ref{lem:higher.transversal.for.Holder}
as well as in the proof of the energy estimates in the appendix.} are
bounded above pointwise by 
$\leq \mathring{\updelta}^{(bkg)}$ (where $\mathring{\updelta}^{(bkg)}$ is not necessarily small).
\item The quantity\footnote{One can check that 
this rules out the Chaplygin gas, whose speed of sound (after normalization) is given by $\Speed(\rr,s) = \exp(-\rr)$. 
One can also check that for any other equation of state, 
it is possible to choose
$\bar{\varrho}$ appropriately so that 
$\mathring{\updelta}_*^{(bkg)} > 0$.} $\mathring{\updelta}_*^{(bkg)}$ 
(where $\mathring{\updelta}_*^{(bkg)}$ is not necessarily small) that controls the 
blowup-time satisfies:\footnote{Here, $[\cdot]_+$ denotes the positive part.} 
$$\mathring{\updelta}_*^{(bkg)} \doteq \f 12 \sup_{\{t=0\}} \left[ \f 1\Speed\left\lbrace \f 1\Speed \f{\rd\Speed}{ \rd\rr}(\rr_{(bkg)},0) + 1\right\rbrace (\rd_{1} \mathcal{R}^{(bkg)}_{(+)})\right]_{+}  >0,$$
and the solution forms a shock at time\footnote{In the plane-symmetric, isentropic, simple case, 
$\mathcal R^{(bkg)}_{(+)}$ 
satisfies the transport equation 
$\rd_t \mathcal R^{(bkg)}_{(+)} + (v^1_{(bkg)} + c(\rr_{(bkg)})) \rd_1 \mathcal R^{(bkg)}_{(+)} = 0$,
and the blowup-time of $\rd_1 \mathcal R^{(bkg)}_{(+)}$ can easily be computed explicitly 
by commuting this transport equation with $\partial_1$ to obtain a Riccati-type ODE in 
$\partial_1 R^{(bkg)}_{(+)}$ along the integral curves of $\rd_t + (v^1_{(bkg)} + c(\rr_{(bkg)})) \rd_1$.} 
$T_{(Sing)}^{(bkg)} = (\mathring{\updelta}_*^{(bkg)})^{-1}$.
\end{enumerate}
The analysis for plane-symmetric solutions can be carried out easily using Riemann invariants. It is then straightforward to check that there exists a large class of plane-symmetric solutions satisfying (1)--(6) above.

We now provide a rough version of our main theorem; see Section~\ref{sec:statement} for a more precise statement.

\begin{theorem}[\textbf{Main theorem -- Rough version}]
\label{thm:intro.main}
Consider a plane-symmetric, shock-forming background solution $(\varrho_{(bkg)}, v^i_{(bkg)},s_{(bkg)})$ 
satisfying (1)--(6) above, where the parameter $\mathring{\upalpha}$ from point (4) is small.
Consider a small perturbation of the initial data of this background solution
satisfying the following assumptions 
(see Section~\ref{SS:FLUIDVARIABLEDATAASSUMPTIONS} for the precise assumptions):
\begin{itemize}
\item The perturbation is compactly supported in a region of $x^1$-length $\leq 2\mathring{\upsigma}$;
\item The perturbation belongs to a high-order Sobolev space, where 
the required Sobolev regularity is independent of the background solution 
and equation of state; 
\item The perturbation is small, where the smallness 
is captured by the small parameter $0 < \epd \ll 1$,
and the required smallness depends on the order of 
the Sobolev space, the equation of state, and the
parameters of the background solution.
\end{itemize}
Then the corresponding unique perturbed solution satisfies the following:
\begin{enumerate}
\item The solution is initially smooth, but 
it becomes singular at a time 
$T_{(Sing)}$, which is a small perturbation of the background blowup-time 
$(\mathring{\updelta}_*^{(bkg)})^{-1}$.
\item Defining\footnote{In higher dimensions or in the presence of dynamic entropy, 
$\mathcal{R}_{(+)}$ is \underline{not} a Riemann invariant because 
its dynamics is not determined purely by a transport equation. Nonetheless, for comparison
purposes, we continue to use the symbol ``$\mathcal{R}_+$'' to denote this quantity.}  
$\mathcal{R}_{(+)} \doteq v^1 + \int_{0}^{\rr} c(\rr',s)\, d\rr'$, we have
the following singular behavior:
\begin{equation}\label{eq:themoststupidblowup}
\limsup_{t \to T_{(Sing)}^-}
\sup_{\{t\}\times \Sigma} | \rd_1 \mathcal R_{(+)}| = + \infty.
\end{equation}
\item Relative to a geometric coordinate system $(t,u,x^2,x^3)$,
	where $u$ is an eikonal function, the solution remains smooth, all the way up to time
	$T_{(Sing)}$. In particular, the partial derivatives of the solution with respect
	to the geometric coordinates \underline{do not blow up}.
\item The blowup at time $T_{(Sing)}$ 
is characterized by the vanishing of the
inverse foliation density $\upmu$ (see Definition~\ref{D:FIRSTUPMU}) 
of a family of acoustically null hypersurfaces defined to be the level sets of $u$.
\item In particular, the set of blowup-points at time $T_{(Sing)}$ is characterized 
by:
\begin{align*}
& \left\{(u,x^2,x^3) \in \mathbb{R} \times \mathbb{T}^2: 
	\limsup_{(\tilde{t},\tilde{u},\tilde{x}^2,\tilde{x}^3) 
	\to 
	(T_{(Sing)}^-,u,x^2,x^3)}  
	|\rd_1 \mathcal R_{(+)}|(\tilde{t},\tilde{u},\tilde{x}^2,\tilde{x}^3) = \infty \right\}
		\\
& =
\left\{ (u,x^2,x^3) \in \mathbb{R} \times \mathbb{T}^2: \upmu(T_{(Sing)},u,x^2,x^3) = 0 \right\},
\end{align*}
where $|\rd_1 \mathcal R_{(+)}|(\tilde{t},\tilde{u},\tilde{x}^2,\tilde{x}^3)$
denotes the absolute value of the Cartesian partial derivative 
$\rd_1 \mathcal R_{(+)}$
evaluated at the point with geometric coordinates
$(\tilde{t},\tilde{u},\tilde{x}^2,\tilde{x}^3)$.
\item At the same time, as $T_{(Sing)}$ is approached from below, 
the fluid variables $\varrho$, $v^i$, $s$ all remain bounded, as 
do the specific vorticity $\Vr^i \doteq \f{(\mathrm{curl} v)^i}{(\varrho/\bar{\varrho})}$ and the entropy gradient $S \doteq \nabla s$.
\end{enumerate}
\end{theorem}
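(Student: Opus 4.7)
The plan is to follow the geometric vectorfield/eikonal-function framework developed by Christodoulou (and adapted to Euler flow in \cites{jSgHjLwW2016,jS2019c,LS,jLjS2020a}), treating the wave part of the system via acoustic geometry while exploiting a good null structure for the transport part carrying vorticity and entropy. First I would introduce an acoustic eikonal function $u$ solving the eikonal equation $(g^{-1})^{\alpha\beta}\partial_\alpha u \partial_\beta u = 0$ for the acoustic metric $g$, with initial data $u|_{t=0}$ a smooth function of $x^1$ chosen so that the level sets $\{u=\mathrm{const}\}$ are the naturally adapted characteristics propagating in the $+x^1$-direction. From $u$ I would construct the inverse foliation density $\upmu$, the null frame $(\Lunit,\Rad,Y_{(2)},Y_{(3)})$ with $\Lunit$ null and tangent to the characteristics $\mathcal{C}_u$, $\Rad$ transversal and $\upmu$-rescaled, and the geometric coordinates $(t,u,x^2,x^3)$. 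The fluid system would be reformulated as geometric wave equations for the wave variables (roughly $\mathcal{R}_{(\pm)}$ and $s$) coupled to transport equations for the specific vorticity $\Vr^i$ and the entropy gradient $S^i = \partial^i s$, written in a form where the sources seen by the wave equations from $\Vr$ and $S$ contain only the ``good'' null derivatives and hence lose no derivatives in Sobolev spaces — this is the key structural input from \cites{jS2019c,LS} that lets transport variables be controlled to the same order as the wave variables.

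Next I would run a bootstrap argument on a truncated region of the form $\mathcal{M}_{T_{Boot},U_0}$ foliated by $\{t=\mathrm{const}\}$ and $\{u=\mathrm{const}\}$. The bootstrap assumptions are: (a) smallness in $L^\infty$ of the ``good'' first-order geometric derivatives of all fluid variables and of all derivatives of $\Vr,S$, quantified by $\varepsilon \approx \mathring{\upepsilon}$; (b) pointwise closeness of the large transversal derivative $\Rad \mathcal{R}_{(+)}$ and the scalar $\upmu$ to their plane-symmetric background counterparts; (c) appropriate bounds on connection coefficients of the eikonal frame such as $\mytr\upchi$, $\upchi \hat{}$, and $\mu$-weighted norms of up-to-top-order derivatives. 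The $L^\infty$-estimates at low derivative levels are recovered by integrating transport equations along the integral curves of $\Lunit$ and $\Rad/\upmu$ and exploiting that the background $\mathcal{R}^{(bkg)}_{(+)}$ obeys a simple Burgers-type ODE whose derivatives explicitly decouple; the perturbations appear as forcing terms controlled by $\varepsilon$. The high-order energy estimates — performed via the multiplier method with the vectorfields $\uLgood$ and $T\doteq (1+2\upmu)\Lunit+2\Rad$ acting on commuted wave equations, and with pure $L^2$ integration along transport for $\Vr,S$ — must be closed via Gronwall in a hierarchy where top-order norms are allowed to grow like a negative power of $\upmu_\star(t)\doteq\min\upmu$, while all lower-order norms remain bounded. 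Standard Sobolev embedding then upgrades those lower-order $L^2$-bounds to the $L^\infty$-bounds assumed in the bootstrap.

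The hardest step — and the main technical obstacle — is the top-order estimate for the eikonal function (equivalently, for $\mytr\upchi$ and $\upmu$), because naive commutation costs one derivative. I would handle this via Christodoulou-type modified quantities $\upchifullmod$ and $\widetilde{\upchifullmod}$ (recorded in the paper's macros) in which the dangerous principal terms are absorbed into a renormalized object whose transport equation along $\Lunit$ has a source with only allowable derivatives of the fluid variables. The interplay with vorticity and entropy requires that the renormalization absorb also the new sources coming from $\Vr,S$, for which the null structure mentioned above is essential. Running the modified transport estimate against the $\upmu$-weighted energy identities yields a closed hierarchy with the expected $\upmu_\star^{-A}$-degeneracy at top order, and $\upmu_\star^{-A+1}$ at the next level, etc., down to a bounded lowest-level norm — this is exactly the Christodoulou-type descent scheme, adapted to the full Euler system.

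Once the bootstrap is closed on any time interval on which $\upmu_\star>0$, assertion (3) (smoothness in geometric coordinates) follows directly, since all quantities controlled are smooth functions of $(t,u,x^2,x^3)$ up to the maximal time. To obtain (1), (2), (4), (5), I would analyze the ODE $\Lunit\upmu = \tfrac{1}{2}G_{LL}\Rad\Psi + \mathrm{error}$ along $\Lunit$-integral curves; the near-background pointwise bounds together with the sign condition $\mathring{\updelta}_\star^{(bkg)}>0$ force $\upmu\to 0$ somewhere by time $T_{(Sing)}$ close to $(\mathring{\updelta}_\star^{(bkg)})^{-1}$. Simultaneously, one has the relation, essentially by the chain rule between Cartesian and geometric coordinates, that $\partial_1 \mathcal{R}_{(+)} \approx \upmu^{-1}(\Rad \mathcal{R}_{(+)})$ up to lower-order contributions controlled by the good derivatives; since the bootstrap shows $\Rad\mathcal{R}_{(+)}$ stays close to its (nonzero) background value at points where the background blows up, the limit $\upmu\to 0$ at a geometric point $(u,x^2,x^3)$ is both necessary and sufficient for $|\partial_1\mathcal{R}_{(+)}|\to\infty$ there, yielding (4) and (5) exactly, and the non-emptiness of $\{\upmu=0\}$ at $t=T_{(Sing)}$ gives (2). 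Finally, (6) is a consequence of the $L^\infty$-bootstrap for the wave variables, $\Vr$, and $S$, all of which close without $\upmu$-weights.
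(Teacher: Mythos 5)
Your proposal follows the paper's geometric vectorfield/eikonal-function framework, and the overall architecture (bootstrap region, $L^\infty$ low-order/transport estimates, $\upmu$-weighted energy hierarchy, Christodoulou modified quantities for $\mytr\upchi$, Sobolev embedding along characteristics to close) is indeed what the paper does. However, there is a genuine gap in how you treat the transport part.

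You write that the wave sources from $\Vr$ and $S$ ``lose no derivatives'' because of null structure, but this is false on a plain derivative count, and recognizing this is precisely where the paper's main new ideas enter. The sources on the right-hand side of the geometric wave equations are not $\Vr,S$ themselves but the \emph{modified fluid variables} $\mathcal{C}^i$ (a renormalized $\Flatcurl\Vortrenormalized$) and $\mathcal{D}$ (a renormalized $\Flatdiv\GradEnt$), which are genuinely one derivative above $\Vr,S$. If you control the wave array to $\Ntop+1$ derivatives, you must control $(\mathcal{C},\mathcal{D})$ to order $\Ntop$; but pure transport for $(\Vr,S)$ only gives $\Ntop$ derivatives, and feeding this into the transport equations for $(\mathcal{C},\mathcal{D})$ (which themselves have sources $\partial(v,\rho)\cdot\partial(\Vr,S)$) costs one more, landing you at $\Ntop-1$. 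The derivative count does not close by transport estimates alone, and the null structure does not rescue it. The paper's resolution is a div-curl-transport system together with \emph{elliptic estimates on $\Sigma_t$} (Proposition~\ref{prop:elliptic.Euclidean}, Proposition~\ref{prop:C.D.elliptic.final}): since $\Flatdiv\Vortrenormalized$ and $\Flatcurl\GradEnt$ are algebraically lower order, while $\mathcal{C}\approx\Flatcurl\Vortrenormalized$ and $\mathcal{D}\approx\Flatdiv\GradEnt$ satisfy their own transport equations, one can recover the full spatial gradient $\rdb\mathcal{P}^{\Ntop}(\Vr,S)$ in $L^2(\Sigma_t)$ elliptically. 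These top-order elliptic estimates, suitably $\upmu$- and $e^{-\mathfrak{c}u}$-weighted to avoid worsening the $\upmu_\star^{-1}$ blowup-rate, are the key technical novelty of the paper (Section~\ref{sec:transport.hard}); your proposal omits them entirely, so as stated your scheme cannot close at top order.

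A smaller imprecision: in your argument for conclusion (5) you say $\Rad\mathcal{R}_{(+)}$ ``stays close to its (nonzero) background value at points where the background blows up.'' The paper's argument is different: it uses the evolution equation $\Lunit\upmu = \tfrac12\vec G_{LL}\contr\Rad\threePsi+\cdots$ together with $\upmu\circ\beta(T_{(Sing)})=0$ to deduce \emph{a lower bound} on $|\Rad\mathcal{R}_{(+)}|$ along the $\Lunit$-integral curve through each vanishing point of $\upmu$ (not a comparison to the background, whose singular set need not align geometrically with the perturbed one). Your version of this step would need to be rephrased, though the conclusion is the same.

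Also, your remark that the $\upchifullmod$-renormalization must ``absorb also the new sources coming from $\Vr,S$'' is misleading. The fully modified quantity in the paper is constructed exactly as in the irrotational case; the new wave-equation sources $\mathfrak{G}$ enter only as benign inhomogeneous terms in its transport equation (Proposition~\ref{P:FULLYMODIFIEDTRANSPORTWITHEXTRATERMS}), not as terms that need to be renormalized away.
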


The proof of Theorem~\ref{thm:intro.main} relies on two main ingredients: 
\textbf{i)} Christodoulou's geometric theory of shock formation
for irrotational and isentropic solutions, in which case the dynamics reduces
to the study of quasilinear wave equations and \textbf{ii)} a (re)formulation of the
compressible Euler equations as a quasilinear system of wave-transport equations, 
which was derived in \cite{jS2019c}, following the earlier works 
\cites{LS,jLjS2020a} in the barotropic\footnote{A barotropic fluid is such that the 
equation of state for the pressure is a function of the density alone, as opposed to being a function
of the pressure and entropy.\label{footnote:barotropic}} case.
 This formulation exhibits remarkable null 
structures and regularity properties, which in total allow us to perturbatively control the
vorticity and entropy gradient all the way up to the singular time ---
even though generically, their first-order Cartesian partial derivatives blow up at the singularity. 
See Section~\ref{sec:ideas} for further discussion of the proof.

Some remarks are in order.

\begin{remark}
\label{R:NOSUBTRACTBACKGROUND}
Note that even though the rough Theorem~\ref{thm:intro.main} is formulated in terms of plane-symmetric background solutions, we do not actually ``subtract off a background'' in the proof. See Theorem~\ref{thm:shock} for the precise formulation.
\end{remark}

\begin{remark}[\textbf{Results building up towards Theorem~\ref{thm:intro.main}}]\label{rmk:theearlier}
\hfill

\begin{itemize}
\item Concerning stability of simple plane-symmetric shock-forming solutions to the compressible Euler equations, the first result
was our joint work with G.~Holzegel and W.~Wong \cite{jSgHjLwW2016}, which proved the analog\footnote{We remark that while \cite{jSgHjLwW2016} is only explicitly stated a theorem in $2$ spatial dimensions,
the analogous result in $3$ (or indeed higher) dimensions can be proved using similar arguments; 
see \cite[Remarks~1.4,1.11]{jSgHjLwW2016}.} 
of Theorem~\ref{thm:intro.main}
in the case\footnote{The main theorem in \cite{jSgHjLwW2016} is stated for general quasilinear wave equations. Particular applications to the relativistic compressible Euler equations in the irrotational and isentropic regime can be found in  \cite[Appendix~B]{jSgHjLwW2016}. It applies equally well to the non-relativistic case.} where the perturbation is irrotational and isentropic (i.e., $\Vr \equiv 0$,  $S \equiv 0$).

\item
In \cite{LS}, we proved the first
stable shock formation result 
without symmetry assumptions for the compressible Euler equations
for open sets of initial data that can have
non-trivial specific vorticity $\Vr$.
Specifically, in \cite{LS},
we treated the $2$D 
barotropic compressible Euler equations (see Footnote~\ref{footnote:barotropic}). 
One of the key points in \cite{LS} was our 
reformulation of equations into a system of quasilinear wave-transport equations which has favorable nonlinear 
null structures. This allowed us to use the full power of the geometric
vectorfield method on the wave part of the system while
treating the vorticity perturbatively. 
\item
In \cite{jLjS2020a}, we considered $3$D barotropic compressible Euler flow
and derived a similar reformulation of the equations
that allowed for non-zero vorticity.
In contrast to the $2$D case, the transport equation satisfied by the specific
vorticity $\Vr$ featured vorticity-stretching
source terms (of the schematic form $\Vr \cdot \partial v$). In order to handle the vorticity-stretching source terms 
in the framework of \cite{LS}, we also showed in \cite{jLjS2020a} that $\Vr$ satisfies a
div-curl-transport system with source terms that
are favorable from the point of view of regularity and from the point of view of null
structure. 
We refer to Section~\ref{sec:intro.elliptic} for further discussion of this point.
\item To incorporate thermodynamic effects into compressible fluid flow,
one must look beyond the family of barotropic equations of state, e.g.,
consider equations of state in which the pressure depends on the density and entropy.\footnote{Incorporating entropy into the analysis is expected to 
be especially important for studying weak solutions after the shock
(see Section~\ref{sec:shock.development} for further discussion),
since formal calculations \cite{dC2019} 
suggest that the entropy (even if initially zero)
should jump across the shock hypersurface, which in turn should induce
a jump in vorticity.}
Fortunately, in \cite{jS2019c}, it was shown that
a similar good reformulation of the compressible Euler equations holds
under an arbitrary equation of state (in which the pressure is a function of the density and the entropy)
in the presence of vorticity
and variable entropy. In the present paper, we use this reformulation
to prove our main results; we recall it below as Theorem~\ref{T:GEOMETRICWAVETRANSPORTSYSTEM}. 
The analysis in \cite{jS2019c} 
is substantially more complicated compared to the barotropic case,
and the basic setup requires the observation of some new structures tied
to elliptic estimates for $\Vr$ and $S$, 
such as good regularity and null structures tied to 
the modified fluid variables from Definition~\ref{def:variables.HO}.
\end{itemize}

This paper completes the program described above
by giving the analytic details already sketched in \cites{jLjS2020a,jS2019c}. 
Chief among the analytic novelties in the present paper are 
the elliptic estimates for $\Vr$ and $S$ at the top-order; 
see \cite[Sections~1.3, 4.2.7]{jLjS2020a}, 
\cite[Section~4.3]{jS2019c} and Section~\ref{sec:intro.elliptic}.
We also point out that there are other related works, which we discuss in Section~\ref{sec:related}.
\end{remark}

\begin{remark}[\textbf{Blowup and boundedness of quantities involving higher derivatives}]
For generic perturbations, derivatives of fluid variables other than 
{$\mathcal{R}_+$ (whose blowup was highlighted in \eqref{eq:themoststupidblowup}) can} 
also blow up. In particular, while the $\rd_2$ and $\rd_3$ derivatives of the fluid variables are identically $0$ for the plane-symmetric background solutions,
for the perturbed solution, $\rd_2 v^i$, say, is generically unbounded at the singularity. This is because the perturbation changes
the geometry of the solution, and the regular directions no longer align with the Cartesian directions.

On the other hand, there are indeed higher derivatives of the fluid variables that remain bounded up to the singular time. These include the specific vorticity and the entropy gradient that we already mentioned explicitly in Theorem~\ref{thm:intro.main}. Moreover, any null-hypersurface-tangential geometric derivatives (see further discussions in Section~\ref{sec:ideas}) of the fluid variables are also bounded up to the singular time. This is not just a curiosity,
but rather is a fundamental aspect of the proof.

Remarkably, there are additionally quantities, denoted by $\mathcal{C}$ and $\mathcal{D}$ 
(these variables were identified in \cite{jS2019c}, see \eqref{E:RENORMALIZEDCURLOFSPECIFICVORTICITY}--\eqref{E:RENORMALIZEDDIVOFENTROPY}), which are special combinations of 
up-to-second-order
Cartesian coordinate derivatives of the fluid variables,
which remain uniformly bounded up to the singularity 
(as do their derivatives in directions tangent to a family of null hypersurfaces);
$\mathcal{C}$ and $\mathcal{D}$ are precisely the modified fluid variables mentioned in Remark~\ref{rmk:theearlier}.
The existence of such regular higher order quantities is not only an interesting fact, but is also quite helpful in controlling the solution up to the first singularity; see Section~\ref{sec:ideas}.

Finally, as a comparison with our $2$D work \cite{LS}, note that in the $2$D case,
we proved that the specific vorticity remains Lipschitz (in Cartesian coordinates) up to the first singular time. This is no longer the case in $3$D. Indeed, in the language of this paper, the improved regularity for the specific vorticity in \cite{LS} stems from the fact that in 
$2$D, the Cartesian coordinate derivatives of the specific vorticity $\Vr$ coincide with $\mathcal{C}$.
\end{remark}

\begin{remark}[\textbf{Additional information on sub-classes of solutions}]\label{rmk:subregime}
Within the solution regime we study, we are able to derive additional information 
about the solution 
by making further assumptions on the data.
For instance, 
there are open subsets of data such that 
the vorticity/entropy gradient are non-vanishing at the first singularity, and
also open subsets of data such that 
the fluid variables remain H\"older\footnote{The H\"older estimates hold only for an open subset of data 
satisfying certain non-degeneracy assumptions. They were not announced in \cites{jLjS2020a,jS2019c}. 
We were instead inspired by \cite{tBsSvV2019b, tBsSvV2020} to include such estimates.} 
$C^{1/3}$ up to the singularity. 
See Section~\ref{sec:statement} for details.
\end{remark}


\begin{remark}[\textbf{The maximal smooth development}]\label{rmk:maximal}
The approach we take here allows us to analyze the solution up to the first singular time,
and our main results yield a complete description of the set of blowup-points at that time
(see, for example, conclusions (4)--(5) of Theorem~\ref{thm:intro.main}).
However, since the compressible Euler equations
are a hyperbolic system, it is desirable to go beyond our results by deriving
a full description of the maximal smooth development of the initial data, in analogy with \cite{dC2007}. Understanding the maximal smooth development is particularly important for the shock development problem; 
see Section~\ref{sec:shock.development} below.

Our methods, at least on their own,
are not enough to construct the maximal smooth development\footnote{Notice that in our earlier result \cite{LS} for the isentropic Euler equations in two spatial dimensions, we also only solved the equations up to the first singular time. However, there is an important difference. In the $2$D case, there does not seem to be a philosophical obstruction in extending \cite{LS} to provide a complete description of 
the maximal smooth development. In contrast, in the $3$D case it seems that ideas in \cite{lAjS2020} would be needed in a fundamental way.}. This is in part because our approach here relies on
spatially global elliptic estimates on constant-$t$ hypersurfaces; the point is that a full description of the smooth maximal development would require spatially localized estimates. 
On the other hand, the recent preprint \cite{lAjS2020} discovered an integral identity that allows the elliptic estimates to be localized, and thus gives hope that Theorem~\ref{thm:intro.main} 
can be extended to derive the structure of the full maximal smooth development.
\end{remark}

\begin{remark}[\textbf{No universal blowup-profile}]
\label{R:LACKOFUNIVERSALBLOWUPPROFILE}
One of the main advantages of our geometric framework is that it works for many kinds of singular solutions,
not just those exhibiting a specific blowup-profile. 
In particular, the solutions featured in Theorem~\ref{thm:intro.main} do not exhibit a universal blowup-profile.
Although we do not rigorously study the full class of blowup-profiles
exhibited by the solutions from Theorem~\ref{thm:intro.main}, 
the full class is likely quite complicated to describe.
This can already be seen in model case of Burgers' equation, 
where there are a continuum of possible blowup-profiles and corresponding blowup-rates \cite{jEmF2008}
(recall that we work in the near plane-symmetric regime and our work includes, as special cases, 
plane-symmetric solutions, which are analogs of Burgers' equation solutions).
A related issue is that at the time of first singularity formation, 
the set of blowup-points can be complicated and/or of infinite cardinality
(as one can already see in the special case of plane-symmetric solutions, viewed as solutions in $3$D with symmetry).
\end{remark}

\begin{remark}[\textbf{The relativistic case}]
While our present work treats only the non-relativistic case, it is likely that the relativistic case can also be treated in the same way. 
This is because the relativistic compressible Euler equations also admit a similar reformulation as we consider here,
and likewise the variables in the reformulation also exhibit a very similar null structure \cite{mDjS2019}.
\end{remark}

In the remainder of the introduction, we will first discuss the proof in \textbf{Section~\ref{sec:ideas}} and then discuss some related works in \textbf{Section~\ref{sec:related}}. We will end the introduction with an outline of the remainder of the paper.

\subsection{Ideas of the proof}\label{sec:ideas}

\subsubsection{The Christodoulou theory}\label{sec:ideas.Christodoulou} The starting point of our proof is the work of Christodoulou \cite{dC2007} on shock formation
for quasilinear wave equations.\footnote{Strictly speaking, \cite{dC2007} is only concerned with the irrotational isentropic relativistic Euler equations. However, its methods apply to much more general quasilinear wave equations; see further discussions in \cites{jS2016b,gHsKjSwW2016}.}

Consider the following model quasilinear covariant wave equation for the scalar function $\Psi$: 
$\square_{g(\Psi)}\Psi = 0$, where the Cartesian component functions $g_{\alpha \beta}$ 
are given (nonlinear in general) functions of $\Psi$, i.e., $g_{\alpha \beta} = g_{\alpha \beta}(\Psi)$.
Our study of compressible Euler flow in this paper essentially amounts
to studying a system of similar equations with source terms and showing that the source terms do not 
radically distort the dynamics. This is possible only because the source terms have remarkable null structure,
described below.



A key insight for studying the formation of shocks, 
going back to \cite{dC2007}, is that it is advantageous to 
study the shock formation via a system of geometric
coordinates. The point is that when appropriately constructed, such coordinates regularize the problem 
which allows one to treat the problem of shock formation as if it were a standard local existence problem.
More precisely,
one constructs geometric coordinates, adapted to the flow, such that the solution
remains regular relative to them.\footnote{It should be emphasized that it is only at the low derivative levels that the solution is regular. The high-order geometric energies can still blow up, even though the low-order energies remain bounded. The 
possible
growth of the high-order energies is one of the central
technical difficulties in the problem, and we will discuss it below in more detail. \label{FN:HIGHDERIVATIVESBLOWUP}}  
However, the geometric coordinates \underline{degenerate} relative to the Cartesian ones,
and the blowup of the solution's first-order Cartesian coordinate partial derivatives 
can be derived as a consequence of this degeneracy.

To carry out this strategy, one must
use the Lorentzian geometry associated to the acoustical metric $g$ (see Definition~\ref{D:ACOUSTICALMETRIC}). 
The following geometric objects are
of central importance in implementing this program:
\begin{itemize}
\item A foliation by constant-$u$ characteristic hypersurfaces $\mathcal F_u$ 
	(where~$g^{-1}(du, du) = 0$; see equation \eqref{E:INTROEIKONAL}).
	The function $u$ is known as an ``acoustic eikonal function.''
\item The inverse foliation density $\upmu$ ($\doteq -\f 1{g^{-1}(dt, du)}$), where $\upmu^{-1}$ measures the density of $\mathcal F_u$ with respect to the constant-$t$ hypersurfaces.
\item A frame of vectorfields $\{L, X, Y, Z\}$, where $\{L, Y, Z\}$ are tangent to $\mathcal F_u$ (with $L$ being its null generator) and $X$ is transversal to $\mathcal F_u$; see Figure~\ref{F:FRAME},
where we have suppressed the $Z$ direction.
\item $\{L, X, Y, Z\}$ is a frame that is ``comparable'' to the
Cartesian frame $\{ \rd_t, \rd_1, \rd_2, \rd_3\}$, by which we mean the coefficients relating the frames to each other
are size $\mathcal{O}(1)$.
\item However, in the analysis, uniform boundedness estimates are generally available
	for the derivatives of quantities with respect to only the \emph{rescaled} frame elements 
	$\{L, \bX \doteq \upmu X, Y, Z\}$.
\end{itemize}

\begin{center}
\begin{overpic}[scale=.25]{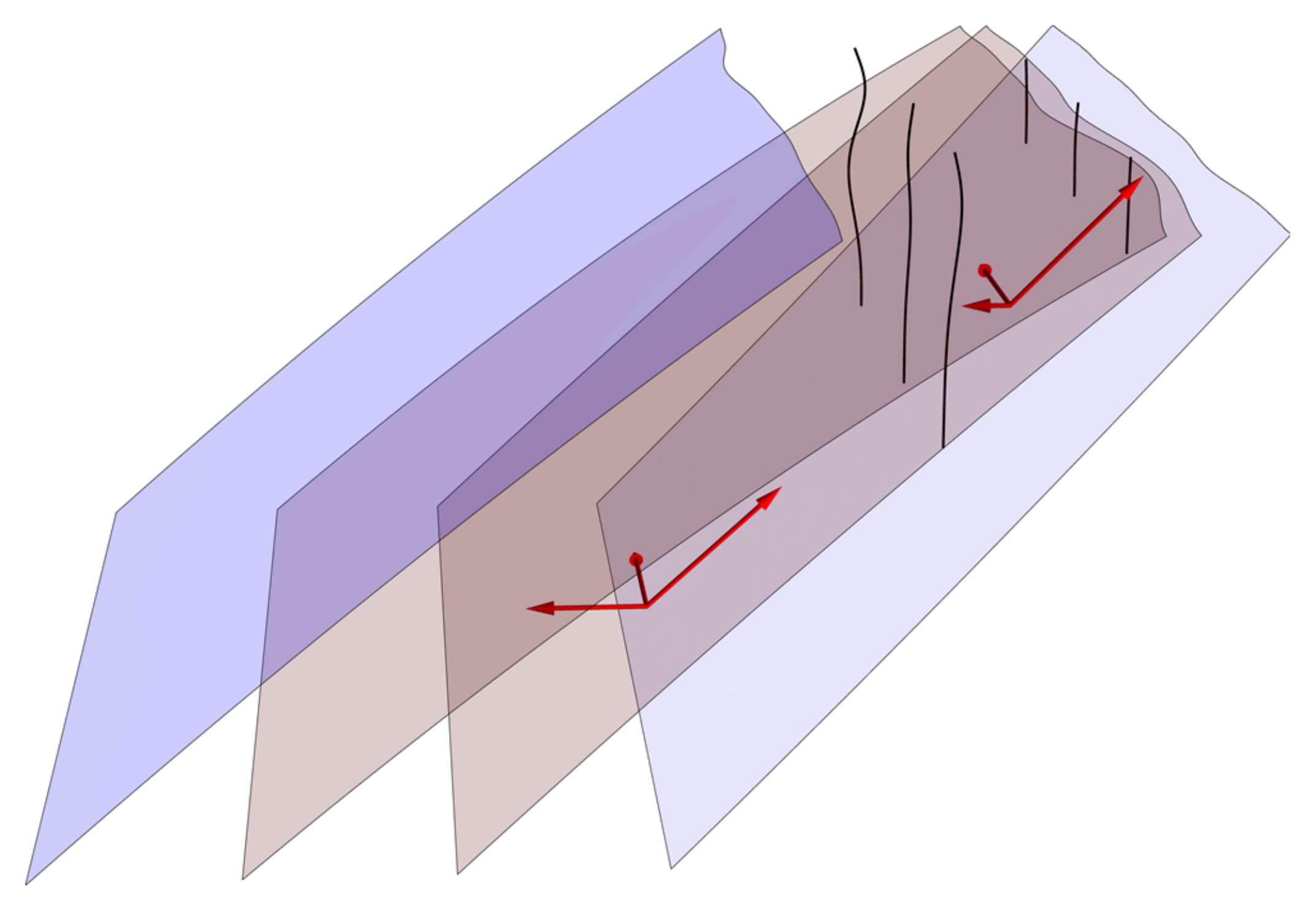} 
\put (82,47.7) {\large$\displaystyle \Lunit$}
\put (72,40.2) {\large$\displaystyle \Rad$}
\put (73.5,50) {\large$\displaystyle Y$}
\put (54,24) {\large$\displaystyle \Lunit$}
\put (38.5,17) {\large$\displaystyle \Rad$}
\put (47,28) {\large$\displaystyle Y$}
\put (49.5,10) {\large$\displaystyle \mathcal{F}_0$}
\put (35,10) {\large$\displaystyle \mathcal{F}_u$}
\put (5,10) {\large$\displaystyle \mathcal{F}_1$}
\put (22,26) {\large$\displaystyle \upmu \approx 1$}
\put (66,67) {\large$\displaystyle \upmu \ \mbox{\upshape small}$}
%
%
\end{overpic}
\captionof{figure}{The dynamic vectorfield frame at two distinct points on $\mathcal{F}_u$
with the $Z$-direction suppressed, 
and the integral curves of the transport operator $\Transport$ for the
specific vorticity and entropy}
\label{F:FRAME}
\end{center}

The analysis simultaneously yields control of the derivatives of $\Psi$ with respect to the rescaled frame and gives also quantitative estimates on the geometry. In this geometric picture, the blowup is completely captured by $\upmu\to 0$. The connection between the vanishing of $\upmu$ and the blowup of some Cartesian coordinate
partial derivative of $\Psi$ can be understood as follows: 
one proves an estimate of the form
$|\Rad \Psi| \approx 1$ (which is consistent with the uniform boundedness estimates mentioned above). 
In view of the relation $\Rad = \upmu \Radunit$, this estimate 
implies that $|\Radunit \Psi|$ blows up like $1/\upmu$ as $\upmu \to 0$.

We now give a more detailed description of the behavior of the solution, with a focus on
how it behaves at different derivative levels.
\begin{itemize}
\item  As our discussion above suggested, at
the lower derivative levels, derivatives of quantities 
with respect to the \underline{rescaled} frame are \underline{regular}, e.g.,~$L\Psi$, $\bX \Psi$, $Y\Psi$, $Z\Psi$, $\dots$, $L^3\bX Y\Psi$, etc.~are uniformly bounded.
\item As we highlighted above, the formation of the shock corresponds to
$\upmu\to 0$ in finite time, and moreover, the \underline{non-rescaled}
first-order derivative $X\Psi$ \underline{blows up} in finite time, exactly at points where 
 $\upmu$ vanishes.
\item The main difficulty in the proof is that the only known approach 
to the solution's regularity theory \underline{with respect to the rescaled frame derivatives}
that is able to avoid a loss of derivatives allows for the following possible scenario: 
the energy estimates are such that the high-order geometric energies might blow up when 
the shock forms. This leads to severe difficulties in the proof, especially considering that
one needs to show that the low-order derivatives of the solution remain bounded in order to derive the singular high-order energy estimates.\footnote{The possible high-order energy blowup has its origins
in the presence of some difficult factors of $1/\upmu$ in the top-order 
energy identities, where one must work hard to avoid a loss of derivatives. To close the energy estimates, 
one commutes the wave equation many times with the $\mathcal{F}_u$-tangent subset $\lbrace L,Y,Z \rbrace$
of the rescaled frame.
The most difficult terms in the commuted wave equation
are top-order terms in which all the derivatives fall onto the components of $\lbrace L,Y,Z \rbrace$.
It turns out that due to the way the rescaled frame is constructed, 
the corresponding difficult error terms depend on the top-order derivatives of the eikonal function $u$.
In Proposition~\ref{prop:identity.main.wave.commutator.terms}, we identify these difficult commutator terms.
To avoid the loss of derivatives, one must work with modified quantities and use elliptic estimates.
It is in this process that one creates difficult factors of $1/\upmu$.}


In \cite{dC2007}, Christodoulou showed that the maximum possible blowup-rate of the high-order energies
is of the form $\upmu_{\star}^{-2 P}(t)$, where $P$ is a universal positive constant
and $\upmu_{\star}(t) \doteq 
\min \left\lbrace 1, \min_{\Sigma_t} \upmu \right\rbrace$. To reconcile this possible high-order energy blowup
with the regular behavior at the lower derivative levels, one is forced to derive 
a hierarchy of energy estimates of the form, where
$\toprate$ is a universal\footnote{Our proof of the universality of $\toprate$ 
in the presence of vorticity and entropy
requires some new observations, described below equation \eqref{eq:intro.the.difficult.term}.
}  
positive integer:
\begin{equation}\label{eq:energy.blow.up}
\begin{split}
\mathbb{E}_{\Ntop}(t) \ls \epd^2 \upmu_{\star}^{-2 \toprate+1.8}(t), 
\quad \mathbb{E}_{\Ntop-1}(t) \ls \epd^2 \upmu_{\star}^{-2 \toprate+3.8}(t),\\
\mathbb{E}_{\Ntop-2}(t) \ls \epd^2 \upmu_{\star}^{-2 \toprate+5.8}(t),\quad \dots,
\end{split}
\end{equation}
where $\mathbb{E}_N$ denotes the energy after $N$ commutations
and all energies are by assumption initially of small size $\epd^2$. 
In other words, 
the energy estimates become less singular by
two powers of $\upmu_{\star}$ for each descent below the top derivative level.
Importantly, despite the possible blowup at higher orders, all the sufficiently 
low-order energies are bounded, which, by Sobolev embedding, 
is what allows one to show the uniform pointwise boundedness of the solution's lower-order derivatives:\footnote{
The lowest order energy $\mathbb{E}_{N}(t)$ is excluded from this estimate because it is not of small size $\epd^2$, 
owing to the largeness of $\bX \mathcal R_{(+)}$.}
\begin{equation}\label{eq:energy.not.blow.up}
\sum_{N = 1}^{\Ntop - \toprate} \mathbb{E}_{N}(t) \ls \epd^2.
\end{equation}
\end{itemize}

\subsubsection{The nearly simple plane-symmetric regime} 
Christodoulou's work \cite{dC2007} concerned compactly supported\footnote{More precisely, his work addressed compactly supported irrotational perturbations of constant, non-vacuum fluid solutions.} 
initial data in $\mathbb{R}^3$,
a regime in which dispersive effects dominate for a long time before the singularity formation processes 
eventually take over.
In a joint work with Holzegel and Wong \cite{jSgHjLwW2016}, 
we adapted the Christodoulou theory to the almost simple plane symmetric regime. 
The important point is that the commutators $\{L,Y,Z\}$, in addition to being regular derivatives near the singularity, also simultaneously capture the fact that the solution is
``almost simple plane symmetric.'' Moreover, the following analytical considerations
were fundamental to the philosophy of the proof in \cite{jSgHjLwW2016}:
\begin{itemize}
\item All energy estimates can be closed by commuting only with tangential derivatives $\{L,Y,Z\}$ (and with\underline{out} $\bX$). 
	This is a slightly different strategy than we used in our paper \cite{LS} in the $2$D case, 
	in which we closed the energy estimates by commuting the equations with strings of tangential derivatives 
	$\{L,Y,Z\}$ as well as strings that contain up to one factor of $\bX$. 
	In \cite{LS}, we also could have closed the energy estimates by commuting only with
	tangential derivatives $\{L,Y,Z\}$, but we would have had to work with the modified fluid variable
	$\CurlofVortrenormalized$ (which, though fundamental in $3$D, was not needed in \cite{LS} 
	due to the absence of the vorticity stretching term)
	or to treat the Cartesian gradients $\partial_{\alpha} \Vortrenormalized^i$ as independent unknowns.
\item After being commuted with (at least one of) $L,Y,Z$, the wave equation solutions are \underline{small}. In particular, we can capture the smallness from ``nearly simple plane-symmetric'' data without explicitly subtracting the simple plane-symmetric background solution; 
see also Remark~\ref{R:NOSUBTRACTBACKGROUND}.
\end{itemize}

\subsubsection{The reformulation of the equations}\label{sec:ideas.reformulation} In order to
extend Christodoulou's theory so that it can be applied to
the compressible Euler equations, a crucial first step
is to reformulate the compressible Euler equations as a system of quasilinear wave equations and transport equations.
Here, the  transport-part of the system refers to the vorticity and the entropy, 
and the intention is to handle them perturbatively.

As we mentioned earlier, the reformulation has been carried out in \cite{LS,jLjS2020a,jS2019c}. Here we highlight the main features and philosophy of the reformulation, and explain how we derived it.
\begin{enumerate}
\item To the extent possible, 
formulate compressible Euler flow as a perturbation of 
a system of quasilinear wave equations.
\begin{enumerate}
\item We compute $\square_g v^i$, $\square_g \rr$, and $\square_g s$,
where $\square_g$ is the covariant 
wave operator associated to the acoustical metric (see \eqref{E:ACOUSTICALMETRIC}). Then using
the compressible Euler equations \eqref{eq:Euler.1}--\eqref{eq:Euler.3}, 
we eliminate and re-express many terms.
\item We find that $v^i$, $\rr$, and $s$ 
do not exactly satisfy wave equations; instead, the 
right-hand sides contain second derivatives of the fluid variables, 
which we will show to be \emph{perturbative}, despite their appearance of being principal order in terms of the number of derivatives.
\end{enumerate}
\item The ``perturbative'' terms mentioned above are equal to 
\emph{good transport variables} that we identify, specifically 
$(\Vr,S,\mathcal{C},\mathcal{D})$.
These variables behave better 
than what one might na\"ively expect, from the points of view of their regularity and their singularity strength.
\begin{enumerate}
\item While both $\Vr^i \doteq \f{(\mathrm{curl} v)^i}{(\varrho/\bar{\varrho})}$ and $S \doteq \nabla s$ are derivatives of the fluid variables, they play a distinguished role since they verify independent transport equations, and obey better bounds than generic first derivatives of the fluid variables.
\item We have introduced the \emph{modified fluid variables} 
$\mathcal{C}^i$ and $\mathcal{D}$ 
(see Definition~\ref{def:variables.HO}),
which, up to lower-order correction terms, are equal to 
$(\mathrm{curl}\, \Vr)^i$ and $\Delta s = \mathrm{div}\,S$ respectively.
These quantities satisfy better estimates than generic first derivatives of $\Vr$ and $S$,
which is crucial for our proof.
\end{enumerate}
\end{enumerate}

\subsubsection{The remarkable null structure of the reformulation}
In the reformulation of compressible Euler flow, we consider
the unknowns to be all of 
$(v^i,\rr,s,\Vr^i,S^i,\mathcal{C}^i,\mathcal{D})$. Note that these include not only the fluid variables,
but also higher-order variables which can be derived from the fluid variables.

The equations satisfied by these variables 
take the following schematic form (see Theorem~\ref{T:GEOMETRICWAVETRANSPORTSYSTEM}
for the precise equations):\footnote{Here, our notation above the brackets 
is such that $\rd (v, \rr) \cdot\rd (v, \rr)$ may contain all of 
$\rd v^i \rd v^j$, $\rd v^i \rd\rr$ and $\rd \rr \rd \rr$. A similar convention applies for other terms.}
\begin{align}
\label{eq:intro.wave} \square_g (v,\rr,s) = &\: \underbrace{\rd (v, \rr) \cdot\rd (v, \rr) }_{\doteq I} 
+ (\Vr, S) \cdot\rd (v, \rr) + (\mathcal{C}, \mathcal{D}), \\
\label{eq:intro.V.S}  B (\Vr, S) = &\: (\Vr, S)\cdot \rd (v,\rr), \\
\label{eq:intro.C.D} B(\mathcal{C}, \mathcal{D}) = &\: \underbrace{\rd (v,\rr) \cdot\rd (\Vr, S)}_{\doteq II} + \underbrace{(\Vr, S) \cdot  \rd (v,\rr) \cdot \rd (v, \rr)}_{\doteq III} + S \cdot S\cdot \rd (v,\rr).
\end{align}
Here, $\square_g$ is the covariant wave operator associated to the acoustical metric (see \eqref{E:ACOUSTICALMETRIC}) and $B \doteq \rd_t + v^a\rd_a$ is the transport operator associated with the material derivative (cf.~\eqref{eq:Euler.1}--\eqref{eq:Euler.3}).

Although it is not apparent from the way we have written it, the system of equations 
\eqref{eq:intro.wave}--\eqref{eq:intro.C.D} has a remarkable null structure! Importantly, the terms $I$, $II$ and $III$ are $g$-null forms: when decomposed in the $\{L, X, Y, Z\}$ frame, we do not have $X (v^i,\rr) \cdot X (v^i,\rr)$ in $I$ and $III$, nor do we have $X(v,\rr) \cdot X(\Vr,S)$ in $II$.

Because $X(v^i,\rr)$ is the only derivative that blows up 
(while $\bX(v^i, \rr)$ is bounded), it follows that given a 
$g$-null form $\mathcal Q$ in the fluid variables
(see Definition~\ref{D:NULLFORMS} concerning $g$-null forms), 
such as $\mathcal Q(\rd v^i, \rd v^j)$, the quantity $\upmu \mathcal Q(\rd v^i, \rd v^j)$ 
remains bounded up to the singularity, while a generic quadratic nonlinearity
$\mathcal Q_{Bad}$ would be such that
$\upmu \mathcal Q_{Bad}(\rd v^i, \rd v^j)$
blows up when $\upmu$ vanishes. 

As is already observed in \cite{jS2016b}, a null form $I$ on the RHS of the wave equation allows all the wave estimates in Section~\ref{sec:ideas.Christodoulou} to be proved. 
As we will discuss below, the null forms $II$ and $III$ in \eqref{eq:intro.C.D} will also be important for estimating the full system.

\subsubsection{Estimates for the transported variables}\label{sec:intro.ideas.transport}
To control solutions to the system \eqref{eq:intro.wave}--\eqref{eq:intro.C.D}, 
we in particular need to estimate the transport variables $(\Vr,S,\mathcal{C},\mathcal{D})$ and understand how they interact with the wave variables 
$(v,\rr,s)$ on LHS~\eqref{eq:intro.wave}.
Here, we will discuss the estimates at the low derivative levels.
We will discuss the difficult technical issues
of a potential loss of derivatives and the blowup of the higher-order
energies
in Sections~\ref{sec:intro.elliptic} and \ref{sec:intro.transport.top.order} respectively.



We begin with two basic --- but crucial --- properties regarding the transport operator 
for the compressible Euler system, which were already observed in \cite{LS}:
\begin{itemize}
\item \underline{The transport vectorfield $B$ is \emph{transversal} to the null hypersurfaces $\mathcal F_u$}; see Figure~\ref{F:FRAME}, where some integral curves of $B$ are depicted.
As a result, one \emph{gains} a power of $\upmu$ by integrating along $B$, 
i.e., for solutions $\phi$ to
$B\phi = \inhom$, we have 
$\|\phi \|_{L^\infty} \ls \| \upmu \inhom \|_{L^\infty}$.
\item \underline{$\upmu B$ is a \emph{regular} vectorfield in the $(t,u,x^2,x^3)$ differential structure.} 
Thus, if $B \phi = \inhom$ and $\upmu \inhom$ has bounded $\{L,Y,Z \}$ derivatives, then $\phi$ also has bounded $\{L,Y,Z \}$ derivatives.
\end{itemize}

We now apply these observations to \eqref{eq:intro.V.S} and \eqref{eq:intro.C.D}:
\begin{itemize}
\item Even though $\rd(v,\rr)$ blows up as the shock forms, 
$\upmu \rd(v,\rr)$ remains regular. This is because $\upmu \rd$ can be written as
a linear combination of the rescaled frame vectorfields $\{\upmu X,L,Y,Z \}$ 
(see Section~\ref{sec:ideas.Christodoulou})
with coefficients that are $\mathcal{O}(1)$ or $\mathcal{O}(\upmu)$.
Hence, the above observations imply that
$(\Vr,S)$ and their $\{L,Y,Z \}$ derivatives are bounded.
\item The null structure and the bounds for the wave variables and $(\Vr,S)$ together imply that 
the RHS of \eqref{eq:intro.C.D} is $\mathcal{O}(\upmu^{-1})$. Thus, 
$\mathcal{C}$, $\mathcal{D}$ and their $\{L,Y,Z \}$ derivatives are also bounded.
\end{itemize}

\subsubsection{Elliptic estimates for the vorticity and the entropy gradient}\label{sec:intro.elliptic}
Despite the favorable structure of \eqref{eq:intro.wave}--\eqref{eq:intro.C.D}, there is apparently a potential loss of derivatives. To see this, consider the following simple derivative count. Suppose we bound 
$(v,\rr,s)$ with $\Ntop +1$ derivatives. \eqref{eq:intro.wave} dictates\footnote{We use here the fact that inverting the wave operator gains one derivative.} that we should control $(\mathcal{C}, \mathcal{D})$ with $\Ntop$ derivatives. If we rely only on \eqref{eq:intro.V.S}, 
then we can only bound $\Ntop$ derivatives of $(\Vr, S)$. 
However, this is insufficient:
plugging this into \eqref{eq:intro.C.D} and using only transport estimates, 
we are only able to control $\Ntop-1$ derivatives of $(\mathcal{C}, \mathcal{D})$, 
which is not enough.

The key to handling this difficulty is the observation that in fact, 
$\mathcal{C}$ and $\mathcal{D}$ 
can be used in conjunction with elliptic estimates to control one derivative of $\Omega$ and $S$. 
This is because up to lower-order terms, 
$\mathcal{C} \approx \mathrm{curl}\, \Vr$ and $\mathcal{D} \approx\mathrm{div}\, S$, while at the same time, 
by definition of $\Vr$ and $S$ --- precisely that $\Vr$ is almost a curl of a vectorfield and $S = \nabla s$ is an exact 
gradient --- $\mathrm{div} \Vr$ and $\mathrm{curl}\, S$ 
are of lower-order in terms of the number of derivatives. It follows that we can control \emph{all}
first-order spatial derivatives 
 of $\Vr$ and $S$, including $\mathcal{C}$ and $\mathcal{D}$,
using elliptic estimates.

\subsubsection{$L^2$ estimates for the transport variables and the high order blowup-rate}
\label{sec:intro.transport.top.order}
We end this section with a few comments on the 
$L^2$ energy estimates for the transport variables $(\Vr,S)$ (and $(\mathcal{C}, \mathcal{D})$){, with 
a focus on how to handle the degeneracies tied to the vanishing of $\upmu$.}

First, due to the eventual 
{vanishing of $\upmu$ and the corresponding}
blowup of the wave variables, we need to {incorporate 
$\upmu$ weights into our analysis of the} transport variables 
$(\Vr,S)$ (and $(\mathcal{C}, \mathcal{D})$){. In particular,
we need to incorporate $\upmu$ weights into the transport equations and energies}
so that the wave terms appearing as inhomogeneous terms in the energy estimates 
{for the transport variables} are regular. 
Importantly, {despite the need to rely on $\upmu$ weights in some parts of the analysis,
the ``transport energy'' that we construct} controls a non-{degenerate} energy flux 
{(i.e., an energy flux without $\upmu$ weights)}
on constant-$u$ hypersurfaces $\mathcal F_u$. That this {energy flux} is bounded can be thought of as another manifestation of the transversality of the transport operator and $\mathcal F_u$. 
{More precisely, with} $\Sigma_t$ denoting constant-$t$ hypersurfaces, we have, roughly,
{$L^2$ estimates} of the following form, where $\mathcal{P}^N$ is an order $N$
differential operator corresponding to repeated differentiation with respect to the $\mathcal F_u$-tangent 
vectorfields $\{L,Y,Z \}$:
\begin{equation}\label{eq:intro.top.order.transport}
\begin{split}
&\: \sup_{t'\in [0,t)}\|\sqrt{\upmu} \mathcal{P}^{N} (\Vr, S)\|_{L^2(\Sigma_{t'})}^2 
+ 
\sup_{u'\in [0,u)} \| \mathcal{P}^N (\Vr, S)\|_{L^2(\mathcal F_{u'})}^2 \\
\ls &\: \mbox{data terms } + \mbox{regular wave terms } + \int_{u'=0}^{u'=u} 
\| \mathcal{P}^N (\Vr, S)\|_{L^2(\mathcal F_{u'})}^2 \, du'.
\end{split}
\end{equation}
Here, the non-degenerate energy flux 
(i.e., the energy along $\mathcal{F}_{u'}$ on LHS~\eqref{eq:intro.top.order.transport}, which does 
not have a $\upmu$ weight)
allows one to absorb the last term 
on RHS~\eqref{eq:intro.top.order.transport}
using Gr\"onwall's inequality\footnote{Our analysis takes place in regions of bounded $u$ width,
so that factors of $e^{Cu}$ 
which arise in our Gr\"onwall estimates can be bounded by a constant.} 
in $u$ (as opposed to Gr\"onwall's inequality in $t$ which has a loss in $\upmu$). For the lower-order energies, the ``regular wave terms'' are indeed bounded (see \eqref{eq:energy.not.blow.up}), 
which in total allows us to prove that the transport energies on LHS~\eqref{eq:intro.top.order.transport}
are also bounded at the lower derivative levels.

Second, since the higher-order energies of the wave variables $(v,\rr,s)$ 
can blow up as $\upmu_{\star}(t) \rightarrow 0$
(even in the absence of inhomogeneous terms; see \eqref{eq:energy.blow.up}), 
\eqref{eq:intro.top.order.transport} allows for the possibility that
the higher-order energies of the transport variables $(\Vr, S)$ (and $(\mathcal{C},\mathcal{D})$) 
might also blow up.
Hence, one needs to verify that there is consistency between the blowup-rates 
(with respect to powers of $\upmu_{\star}^{-1}$) associated to the different kinds of solution variables. That is,
using \eqref{eq:intro.top.order.transport} and the wave energy blowup-rates from
\eqref{eq:energy.blow.up}, one needs to compute the expected blowup-rate of the transport
variables and then plug these back into the energy estimates for the wave variables to 
confirm that the transport terms have an expected singularity strength that is consistent with
wave energy blowup-rates. See, for example, the proof of Proposition~\ref{prop:wave.final}.

Third, due to issues mentioned in Section~\ref{sec:intro.elliptic}, the transport estimates at the top-order are necessarily coupled with elliptic estimates. 
By their nature, the elliptic estimates treat derivatives in all spatial directions on the same footing. This clashes with the philosophy of bounding the solution with respect to the rescaled frame 
(which would mean that derivatives in the $Y$ and $Z$ frame directions should be more regular
than those in the $X$ direction), and it leads to estimates that are 
singular in $\upmu_{\star}^{-1}$. 
To illustrate the difficulties and our approach to overcoming them, we first note that,
suppressing many error terms, we can derive a top-order inequality
of the following form, 
with $\rdb$ denoting Cartesian spatial derivatives and $A$ denoting a constant depending on the equation of state:
\begin{equation}\label{eq:intro.the.difficult.term}
\begin{split}
&\|\sqrt{\upmu}\rdb \mathcal{P}^{\Ntop} (\Vr, S)\|_{L^2(\Sigma_t)} 
	\\
 \leq 
&
C \epd^{\f 32} \upmu^{-2 \toprate+2.8}(t)
+  
A \int_{t'=0}^{t'=t} \upmu_{\star}^{-1}(t') \|\sqrt{\upmu} \rdb \mathcal{P}^{\Ntop} (\Vr, S)\|_{L^2(\Sigma_{t'})} \, dt' 
+ 
\cdots.
\end{split}
\end{equation}

To apply Gr\"onwall's inequality to \eqref{eq:intro.the.difficult.term},
one must quantitatively control the behavior of the crucial ``Gr\"onwall factor''
$\int_{t'=0}^{t'=t} \frac{A}{\upmu_{\star}(t')}\, dt'$.
A fundamental aspect of our analysis is that $\upmu_{\star}(t)$ tends to $0$ 
linearly\footnote{The linear vanishing rate is crucial for the proof of
Proposition~\ref{prop:mus.int} and for the Gr\"onwall-type estimates for the energies
that we carry out in Proposition~\ref{prop:wave.final} and in the appendix.
See \eqref{E:MUSTARFINALEST} for a precise description of how $\upmu_{\star}$ goes to $0$.}  
in $t$ towards the blowup-time.
It follows that one can at best prove an estimate of the form
$\int_{t'=0}^{t'=t} \upmu_{\star}^{-1}(t')\,dt' \ls \log (\upmu_{\star}^{-1})(t)$ 
(recall that $\upmu_{\star}(t) = 
\min \left\lbrace 1, \min_{\Sigma_t} \upmu \right\rbrace$, and
see Proposition~\ref{prop:mus.int} for related estimates). 
Using only this estimate and applying Gr\"onwall's inequality to \eqref{eq:intro.the.difficult.term},
we find (ignoring the error terms ``$\cdots$'') that
$\|\rdb \mathcal{P}^{\Ntop}(\Vr,S)\|_{L^2(\Sigma_t)} \ls 
\epd^{\f 32}
\upmu_{\star}^{-\max\{\mathcal{O}(A),\, 2 \toprate-2.8\}}(t)$. Notice that unless $A$ is small, 
the dominant blowup-rate in the problem would be the one corresponding to these elliptic 
estimates for $(\Vr, S)$, 
which could in principle be much larger than the blowup-rates corresponding
to the irrotational and isentropic case.\footnote{In principle, the largeness of $A$ would not be an obstruction to closing
the estimates. It would just mean that the number of derivatives needed to close the problem would increase
in the presence of vorticity and entropy. We refer readers to the technical estimates
in Section~\ref{sec:sketch.prop.wave} for clarification on the role that the sizes of various constants
play in determining the blowup-rates in the problem as well as the number of derivatives needed
to close the proof.}

However, we can prove a better result: 
we can show that the blowup-rates are not dominated by the
top-order elliptic estimates for the transport variables, 
but rather by the blowup-rates for the wave variables.\footnote{In other words, 
our approach yields the same maximum possible high-order energy
blowup-rates for the wave variables in the general case as it does for irrotational and isentropic solutions.}
The key to showing this is to replace the estimate \eqref{eq:intro.the.difficult.term}
with a related $L^2$ estimate that features \emph{weights in the eikonal function} $u$;
see Proposition~\ref{prop:elliptic}.
Thanks to the $u$ weights, the 
corresponding constant $A$ in this analogue of \eqref{eq:intro.the.difficult.term} 
can be chosen to be arbitrarily \emph{small}, and thus
the main contribution to the blowup-rate comes from the wave variables
error terms, which are present in the ``$\cdots$'' on RHS~\eqref{eq:intro.the.difficult.term}. 
That this can be done is related to the fact that we have good flux estimates for top derivatives of
$\mathcal{C}$ and $\mathcal{D}$ on $\mathcal F_u$.
We refer to Propositions~\ref{prop:C.D.transport.main},
\ref{prop:C.D.elliptic.final}, and \ref{prop:elliptic.putting.everything.together} for the details.

\subsection{Related works}\label{sec:related}

\subsubsection{Shock formation in one spatial dimension} One-dimensional shock formation has a long tradition starting from \cite{bR1860}. See the works of Lax \cite{pL1964},
John \cite{fJ1974}, Liu \cite{tpL1979}, and Christodoulou--Raoul Perez \cite{dCdRP2016},
as well as the surveys \cites{gqCdW2002,cD2010} for details.

\subsubsection{Multi-dimensional shock formation for quasilinear wave equations}\label{sec:related.waves}
Multi-dimensional shock formation for quasilinear wave equations was first proven in Alinhac's groundbreaking
papers \cites{sA1999a, sA1999b, sA2002}. Alinhac's methods allowed him to prove the formation of non-degenerate shock singularities which, roughly speaking, are shock singularities that are isolated within the constant-time hypersurface of first blowup.
The problem was revisited in Christodoulou's monumental 
book \cite{dC2007}, which concerned the quasilinear
wave equations of irrotational and isentropic relativistic fluid mechanics. In this book, Christodoulou
introduced methods that apply to a more general class of shock singularities than the non-degenerate ones treated by Alinhac and, for a large open subset of these solutions, 
are able to yield a complete description of the maximal smooth development, up to the boundary. This was the starting point of his follow-up breakthrough monograph \cite{dC2019} on the 
restricted shock development problem.

For quasilinear wave equations, there are many extensions, variations,
and simplifications of \cite{dC2007}, some of which adapted Christodoulou's geometric framework to other solution regimes. See, for instance, 
\cites{dC2007b,jS2016b,gHsKjSwW2016,jSgHjLwW2016,dCsM2014,sMpY2017,dCaL2016,sM2018}.

\subsubsection{Multi-dimensional shock formation for the compressible Euler equations} 
Multi-dimensional singularity formation for the compressible Euler equations 
without symmetry assumptions
was first discovered by Sideris \cite{tS1985} via an indirect argument. A constructive proof of stable shock formation in a symmetry-reduced regime for which multi-dimensional phenomena (such as dispersion and vorticity) are present was given by Alinhac in \cites{sA1993}. 
See also \cite{tBsSvV2020,tBsSvV2022}.

All the works in Section~\ref{sec:related.waves} on quasilinear wave equations can be used to obtain an analogous result for the compressible Euler equations in the irrotational and isentropic regime,
where the dynamics reduces to a single, scalar quasilinear wave equation for a potential function.
The regime of small, compact, irrotational perturbations of non-vacuum constant fluid
states was treated in Christodoulou's aforementioned breakthrough work \cite{dC2007} in the relativistic case,
and later in \cite{dCsM2014} in the non-relativistic case.

Shock formation beyond the irrotational and isentropic regime was first proven in \cites{LS,jLjS2020a,jS2019c}. These are already discussed above; see Remark~\ref{rmk:theearlier}.

In very interesting recent works \cites{tBsSvV2020,tBsSvV2022}, 
Buckmaster--Shkoller--Vicol provided a philosophically new proof of stable singularity formation 
without symmetry assumptions in $3$D under adiabatic equations of state
in a solution regime 
with vorticity and/or dynamic entropy
for initial data such that precisely one singular point forms
at the first singular time; these are analogs of the non-degenerate singularities that
Alinhac studied \cites{sA1999a,sA1999b,sA2002}
in the case of quasilinear wave equations.
Moreover, in their regime (compare with Remark~\ref{R:LACKOFUNIVERSALBLOWUPPROFILE}), 
they proved that the singularity is a perturbation of a self-similar Burgers shock. See also 
 the $2$D precursor work \cite{tBsSvV2019b} in symmetry, and the recent work \cite{tBsI2022},
which, in $2$D in azimuthal symmetry, constructed a set of shock-forming solutions whose cusp-like spatial
behavior at the singularity is unstable (non-generic).

\subsubsection{Shock development problem}\label{sec:shock.development}
In the $1$D case, the theory of global solutions of small bounded variation (BV) norms \cite{jG1965,aBgCbP2000}
allows one to study solutions that form shocks 
as well as the subsequent interactions of the shocks in the corresponding weak solutions. 
In higher dimensions, the compressible Euler equations are ill-posed in BV spaces \cite{jR1986}.
Nonetheless, in $2$D or $3$D, 
one still hopes to develop a theory that allows one to uniquely extend the solution as a piecewise smooth
weak solution beyond the 
first shock singularity and to prove that the resulting solution has a propagating shock hypersurface. 
This is known as \emph{the shock development problem.}

Even though the shock development problem for the compressible Euler equations in its full generality is open in higher dimensions, 
it has been solved under spherical symmetry in 3D, or in azimuthal symmetry in 2D. See \cite{hcY2004, dCaL2016} and, most recently, \cite{tBtdDsSvV2021}.

In the irrotational and isentropic regime, 
the restricted shock development problem was 
solved in the recent monumental work 
\cite{dC2019} of Christodoulou \emph{without any symmetry assumptions}. 
Here, the word ``restricted'' means that 
the approach of \cite{dC2019} does not exactly construct a weak solution to the compressible Euler equations, but instead yields a weak solution to a closely related hyperbolic PDE system such that the solution was ``forced'' to remain irrotational and isentropic. Nonetheless, this
gives hope that under an arbitrary equation of state for the compressible Euler equations in $3$D,
one could construct a unique weak solution with a propagating shock hypersurface,
starting from the first singular time exhibited in Theorem~\ref{thm:intro.main}.
To solve this problem would in particular require extending the ideas in \cite{dC2019} beyond the irrotational and isentropic regime. 
This is an outstanding open problem.

\subsubsection{Other singularities for the compressible Euler equations}
It has been known since \cites{gG1942,liS1982} that the compressible Euler equations admit self-similar solutions. Recently, this has been revisited by Merle--Raph\"ael--Rodnianski--Szeftel in \cite{fMpRiRjS2019a} to show that singularities more severe than shocks can arise in 
$3$D starting from smooth initial data. See also \cites{fMpRiRjS2019b,fMpRiRjS2022c} for some spectacular applications.

\subsubsection{Singularity formation in related models}
For shock formation results concerning some other multi-speed hyperbolic problems, see \cites{jS2020, jS2018b} by the second author. 

Interestingly, there are also \emph{non-hyperbolic} models with stable self-similar blowup-profiles modeled on a self-similar Burgers shock. Examples include the Burgers equation with transverse viscosity \cite{cCteGnM2018}, the Burgers--Hilbert equations \cite{rY2021}, the fractal Burgers equation \cite{krCrcMVgP2021}, as well as general dispersive or dissipative perturbations of the Burgers equation \cite{sjOfP2021}. See also \cites{cCteGsInM2018, cCteGnM2021}.

\subsubsection{Other works} The framework we introduced in \cites{LS,jLjS2020a,jS2019c} is useful in other low-regularity settings. See for example results on improved regularity for vorticity/entropy in \cite{mDjS2019}, and results on local existence with rough data in 
\cites{mDcLgMjS2022,qW2022,sY2022}.

\subsection{Structure of the paper}
\label{SS:STRUCTUREOFPAPER}
The remainder of the paper is structured as follows. 

Sections~\ref{S:GEOMETRICSETUP}--\ref{S:DATASIZEANDBOOTSTRAP} are introductory sections. We introduce the basic setup in \textbf{Section~\ref{S:GEOMETRICSETUP}}, 
and we define
the norms and energies in \textbf{Section~\ref{S:FORMSANDENERGY}}. 
The setup is similar to the setups in \cites{jSgHjLwW2016,LS}.
Then 
in \textbf{Section~\ref{S:DATASIZEANDBOOTSTRAP}}, we 
state our precise assumptions on the initial data and
give a precise statement 
of our main results, which we split into several theorems and corollaries.

In \textbf{Section~\ref{sec:speck.reformulation}}, we recall the results of \cite{jS2019c} on the reformulation of the equations{,} which is important for the remainder of the paper.

The bulk of paper is devoted to proving the main a priori estimates{,
which we state in \textbf{Section~\ref{sec:bootstrap}} as Theorem~\ref{thm:bootstrap}. The proof
of Theorem~\ref{thm:bootstrap}, which we provide in Section~\ref{sec:everything},
relies on a set of bootstrap assumption that we also state in Section~\ref{sec:bootstrap}.
Next,} 
after an easy (but crucial) finite speed of propagation argument in \textbf{Section~\ref{sec:trivial}}, 
in \textbf{Section~\ref{sec:geometry}},
we cite {various straightforward pointwise and $L^{\infty}$
estimates for geometric quantities found in \cite{jSgHjLwW2016}, 
and we complement these results with a few related ones that allow 
us to handle the transport variables.}

We then turn to the main estimates in this paper. 
In \textbf{Section~\ref{sec:transport.easy.main}}, we carry out the transport estimates{, specifically $L^{\infty}$ estimates and energy estimates,} for $\Vr$, $S$ and their derivatives. 
In \textbf{Section~\ref{sec:transport.harder}}, we prove {analogous} transport estimates for $\mathcal{C}$, $\mathcal{D}$, 
and their derivatives, except {we delay the proof of the top-order estimates until the next section.} 
In \textbf{Section~\ref{sec:transport.hard}}, 
we derive the top-order estimates for $\mathcal{C}$ and $\mathcal{D}$, 
which, as we described in Section~\ref{sec:intro.transport.top.order}, 
requires elliptic estimates in addition to transport estimates.
In total, these estimates for the transport variables can be viewed as the main
new contribution of the paper.

Next, in \textbf{Section~\ref{sec:wave.main.est}}, we derive energy estimates
for the fluid wave variables. For convenience, 
we have organized the wave equation estimates 
so that they rely on an auxiliary proposition, namely Proposition~\ref{prop:wave}, that 
provides estimates for solutions to the fluid wave equations in terms of 
various norms of their inhomogeneous terms, which for purposes of the 
proposition, we simply denote by ``$\mathfrak{G}$.''
To prove the final a priori energy estimates for the wave equations,
which are located in Proposition~\ref{prop:wave.final},
we must use the bounds for $\mathfrak{G}$ that
we obtained in the previous sections, including the bounds
for the transport variables.
Since the auxiliary Proposition~\ref{prop:wave} does not rely on
the precise structure of $\mathfrak{G}$,
it can be proved using essentially same arguments that have been used in previous works on shock formation
for wave equations.
For this reason, and to aid the flow of the paper, we delay the proof of
Proposition~\ref{prop:wave} until the appendix.

Next, in \textbf{Section~\ref{sec:Linfty}}, 
we {use the energy estimates to derive} 
$L^\infty$ estimates {for the wave variables}.
In particular, these estimates
yield improvements of the $L^{\infty}$ bootstrap assumptions that
we made in Section~\ref{sec:bootstrap}.

In \textbf{Section~\ref{sec:everything}}, we combine the results of the previous
sections to provide the proof of the main a priori estimates as well as the main theorems
and their corollaries.

Finally, in \textbf{Appendix~\ref{app:elliptic}}, 
we provide the details behind the proof of the auxiliary Proposition~\ref{prop:wave}.
The proof relies on small modifications to the proofs of \cites{jSgHjLwW2016,LS}
that account for the third spatial dimension (note that $3$D wave equations were also handled in \cites{dC2007,jS2016b})
as well as the presence of the inhomogeneous terms $\mathfrak{G}$ in the wave equations.

\section{Geometric setup}
\label{S:GEOMETRICSETUP}
In this section, we construct most of the geometric objects
that we use to study shock formation and exhibit their basic
properties. 

\subsection{Notational conventions and remarks on constants}
\label{SS:NOTATION}
The precise definitions of some of the concepts referred to
here are provided later in the article.

\begin{itemize}
	\item Lowercase Greek spacetime indices 
	$\alpha$, $\beta$, etc.\ correspond to the Cartesian spacetime coordinates 
	(see Section~\ref{sec:ambient})
	and vary over $0,1,2,3$.
	Lowercase Latin spatial indices
	$a$,$b$, etc.\ correspond to the Cartesian spatial coordinates and vary over $1,2,3$.
	Uppercase Latin spatial indices $A$,$B$, etc.~correspond to the coordinates on $\ell_{t,u}$ and vary over $2,3$.
	All lowercase Greek indices are lowered and raised with the acoustical metric
	$g$ and its inverse $g^{-1}$, and \emph{not with the Minkowski metric}.
	We use Einstein's summation convention in that repeated indices are summed.
\item $\cdot$ denotes the natural contraction between two tensors. 
		For example, if $\xi$ is a spacetime one-form and $V$ is a 
		spacetime vectorfield,
		then $\xi \cdot V \doteq \xi_{\alpha} V^{\alpha}$.
\item If $\xi$ is an $\ell_{t,u}$-tangent one-form
	(as defined in Section~\ref{SS:PROJECTIONTENSORFIELDANDPROJECTEDLIEDERIVATIVES}),
	then $\xi^{\#}$ denotes its $\gsphere$-dual vectorfield,
	where $\gsphere$ is the Riemannian metric induced on $\ell_{t,u}$ by $g$.
	Similarly, if $\xi$ is a symmetric type $\binom{0}{2}$ $\ell_{t,u}$-tangent tensor, 
	then $\xi^{\#}$ denotes the type $\binom{1}{1}$ $\ell_{t,u}$-tangent tensor formed by raising one index with $\ginversesphere$
	and $\xi^{\# \#}$ denotes the type $\binom{2}{0}$ $\ell_{t,u}$-tangent tensor formed by raising both indices with $\ginversesphere$.
\item If $V$ is an $\ell_{t,u}$-tangent vectorfield,
	then $V_{\flat}$ denotes its $\gsphere$-dual one-form.
\item If $V$ and $W$ are vectorfields, then $V_W \doteq V^{\alpha} W_{\alpha} = g_{\alpha \beta} V^{\alpha} W^{\beta}$.
\item If $\xi$ is a one-form and $V$ is a vectorfield, then $\xi_V \doteq \xi_{\alpha} V^{\alpha}$.
	We use similar notation when contracting higher-order tensorfields against vectorfields.
	For example, if $\xi$ is a type $\binom{0}{2}$ tensorfield and
	$V$ and $W$ are vectorfields, then $\xi_{VW} \doteq \xi_{\alpha \beta} V^{\alpha} W^{\beta}$.
\item Unless otherwise indicated, 
	all quantities in our estimates that are not explicitly under
	an integral are viewed as functions of 
	the geometric coordinates $(t,u,x^2,x^3)$.
	Unless otherwise indicated, integrands have the functional dependence established below in
	Definition~\ref{D:NONDEGENERATEVOLUMEFORMS}.
\item
	$[Q_1,Q_2] = Q_1 Q_2 - Q_2 Q_1$ denotes the commutator of the operators
	$Q_1$ and $Q_2$.
\item $A \lesssim B$ means that there exists $C > 0$ such that $A \leq C B$.
		$A \approx B$ means that $A \lesssim B$ and $B \lesssim A$.
		$A = \mathcal{O}(B)$ means that $|A| \lesssim |B|$.
\item The constants $C$ are free to vary from line to line.
	\textbf{These constants, and implicit constants as well, 
	are allowed 
	to depend on the equation of state, the background $\bar{\varrho}$,
	the maximum number of times $\Ntop$ that we commute the equations,
	and the parameters 
	$\mathring{\upsigma}$, 
	$\Trandatasize$
	and 
	$\TranminusdatasizeWithFactor^{-1}$}
	from
	Section~\ref{SS:FLUIDVARIABLEDATAASSUMPTIONS}.
\item Constants $C_{\mydiam}$ are also allowed to vary from line to line, but
		unlike $C$, the $C_{\mydiam}$ are 
		\textbf{only allowed to depend on the equation of state and the background $\bar{\varrho}$}.
\item In the appendix, 
there appear \textbf{absolute constants} $M_{\mathrm{abs}}$,
which can be chosen to be independent of the equation of state and all other parameters in the problem.
\item For our proof to close, the high-order energy blowup-rate parameter $\toprate$ needs to be chosen to be 
large in a manner that depends only on $M_{\mathrm{abs}}$; hence, $\toprate$ can also be chosen to be
an absolute constant. 
\item The integer $\Ntop$ denotes the maximum number of times we need to commute the equations
	to close the estimates. For our proof to close, $\Ntop$ 
	needs to be chosen to be 
	large in a manner that depends only on $\toprate$. $\Ntop$ could be chosen to be an absolute
	constant, but we choose to think of it as a parameter that we are free to adjust so that
	we can study solutions with arbitrary sufficiently large regularity.
\item For our proof to close, the data-size parameters $\mathring{\upalpha}$ and $\epd$ must be chosen
	to be sufficiently small, where the required smallness is clarified in Theorem~\ref{thm:bootstrap}.
	We always assume that $\epd^{\frac{1}{2}} \leq \mathring{\upalpha}$.
\item $A \ls_{\mydiam} B$ means that $A \leq C_{\mydiam} B$, with $C_{\mydiam}$ as above.
	Similarly, $A = \mathcal{O}_{\mydiam}(B)$ means that $|A| \leq C_{\mydiam} |B|$.
\item For example, $\TranminusdatasizeWithFactor^{-2} = \mathcal{O}(1)$,
		$2 + \Psiep + \Psiep^2 = \mathcal{O}_{\mydiam}(1)$,
		$\Psiep \epd = \mathcal{O}(\epd)$,
		$C_{\mydiam} \Psiep^2 =  \mathcal{O}_{\mydiam}(\Psiep)$,
		$N! \epd = \mathcal{O}(\epd)$,
		and $C \Psiep = \mathcal{O}(1)$. Some of these examples are non-optimal;
		e.g., we actually have $\Psiep \epd = \mathcal{O}_{\mydiam}(\epd)$.
\item $\lfloor \cdot \rfloor$
	and $\lceil \cdot \rceil$
	respectively denote the standard floor and ceiling functions. 
\end{itemize}

\subsection{Caveats on citations}
\label{SS:REMARKSONCITATIONSOFEARLIERWORK}
Before we introduce our geometric setup, we should say that our setup is essentially the same as that in 
\cites{jSgHjLwW2016,LS}, 
except for some small differences. We will therefore cite whenever possible the computations in 
\cites{jSgHjLwW2016,LS}, except 
we will need to take into account the following differences:
\begin{itemize}
	\item The work \cite{jSgHjLwW2016} allows for very general metrics, 
	while in the present paper,
	we are only concerned with the acoustical metric for the compressible Euler equations.
	In citing \cite{jSgHjLwW2016}, we sometimes adjust formulas
	to take into account the explicit
	form of the Cartesian metric components
	$g_{\alpha \beta}$  stated in Definition~\ref{D:ACOUSTICALMETRIC}.
	\item The papers \cites{jSgHjLwW2016,LS} concern two spatial dimensions (with ambient manifold $\Sigma = \mathbb R\times \mathbb T$), while in the present paper, we are concerned with three spatial dimensions (with $\Sigma = \mathbb R\times \mathbb T^2$).
	\item 
	In \cite{jSgHjLwW2016},
	the metric components $g_{\alpha \beta}$ were functions of a scalar function $\Psi$,
	as opposed to the array $\threePsi$ (defined in \eqref{E:THREEVECPSI}).
	For this reason, we must make minor adjustments 
	to many of the formulas
	from \cite{jSgHjLwW2016} 
	to account for the fact that in the present article, $\threePsi$ is an array.
\end{itemize}
In all cases, our minor adjustments can easily be verified by examining the proof in \cite{jSgHjLwW2016}.

\subsection{Basic setup and ambient manifold}\label{sec:ambient}
We recall again the setup from the introduction. 
We will work on the spacetime manifold $I\times \Sigma$ (with $I \subseteq \mathbb R$ 
a time 
interval and $\Sigma \doteq \mathbb R\times \mathbb T^2$ the spatial domain). We fix a standard Cartesian coordinate system 
$\lbrace x^{\alpha} \rbrace_{\alpha = 0,1,2,3}$
on $I \times \Sigma$, where $t \doteq x^0 \in I$
is the time coordinate and $x \doteq (x^1,x^2,x^3) \in \mathbb{R} \times \mathbb{T}^2$
are the spatial coordinates\footnote{While the coordinates $x^2,x^3$ on $\mathbb{T}^2$ are only locally defined, the corresponding partial derivative vectorfields $\partial_2,\partial_3$ can be extended
so as to form a global smooth frame on $\mathbb{T}^2$. Similar remarks apply to the one-forms $dx^2,dx^3$
These simple observations are relevant for this paper because when we derive estimates, the coordinate functions $x^2,x^3$
themselves are never directly relevant; what matters are estimates for the components of various tensorfields
with respect to the frame $\lbrace \partial_t, \partial_1, \partial_2, \partial_3 \rbrace$
and the basis dual co-frame $\lbrace dt, dx^1, dx^2, dx^3 \rbrace$, which are everywhere smooth.} \label{FN:COORDINATESARENOTGLOBAL}. 
We use the notation $\lbrace \rd_\alpha \rbrace_{\alpha = 0,1,2,3}$ 
(or $\rd_t \doteq \rd_0$) to denote the Cartesian coordinate partial derivative
vectorfields.

In this coordinate system, the \emph{plane-symmetric} solutions are exactly those whose fluid variables are independent of $(x^2,x^3)$.

\subsection{Fluid variables and new variables useful for the reformulation}
As we already discussed in Section~\ref{sec:ideas.reformulation}, 
at the heart of our approach is a reformulation of the compressible Euler equations
in terms of new variables. We introduce these new variables in this subsection; see
Definitions~\ref{def:variables.fluid} and \ref{def:variables.HO}.

The basic fluid variables are $(\varrho, v^i,s)$ (see the introduction).
We fix an equation of state $p = p(\varrho,s)$ and a constant $\bar{\varrho}>0$ 
 such 
that
$p_{;\varrho}(\bar{\varrho},0) = 1$.

\begin{definition}\label{def:rr.c}
Define the \textbf{logarithmic density} $\rr$ and the \textbf{speed of sound} $\Speed(\rr,s)$ by:
$$\rr = \log \left( \f{\varrho}{\bar{\varrho}}\right),\quad \Speed(\rr,s) = \sqrt{\f{\rd p}{\rd\varrho}}(\varrho,s).$$
\end{definition}

\begin{remark}
As is suggested by our notation, we will consider $\Speed(\rr,s)$ as a function of $(\rr,s)$. 
The normalization of $p_{;\varrho}$ that we stated above is equivalent to:
\begin{equation}\label{eq:c.normalization}
c(0,0) = 1.
\end{equation}
\end{remark}

\begin{definition}[\textbf{The fluid variables arrays}]\label{def:variables.fluid}
\

\begin{enumerate}
\item Define the \textbf{almost Riemann invariants}\footnote{$\mathcal R_{(\pm)}$ coincide with the well-known \emph{Riemann invariants} in the plane-symmetric isentropic case.
Even though they are no longer ``invariant'' in our case, they are useful in capturing smallness.} $\mathcal{R}_{(\pm)}$ as follows (recall Definition~\ref{def:rr.c}):
\begin{align} \label{E:RIEMANNINVARIANTS}
	\mathcal{R}_{(\pm)}
	& \doteq v^1 \pm F(\Densrenormalized,\Ent),
	\quad F(\Densrenormalized,\Ent) \doteq \int_0^{\rr} c(\rr',s) \, d\rr'.
\end{align}
\item Define the \textbf{array of wave variables}:\footnote{Throughout, we consider $\threePsi$ as an array of
	scalar functions; we will not attribute any 
	tensorial structure to the labeling index $\imath$ of $\Psi_{\imath}$
	besides simple contractions, denoted by $\contr$,
	corresponding to the chain rule; see Definition~\ref{D:ARRAYNOTATION}.}
		\begin{align} \label{E:THREEVECPSI}
			\threePsi
			\doteq 
			(\Psi_1,\Psi_2,\Psi_3,\Psi_4,\Psi_5)
			\doteq
			(\mathcal{R}_{(+)},
			 \mathcal{R}_{(-)},
				v^2,v^3,s).
			\end{align}	
\end{enumerate}
\end{definition}

\begin{remark}
	We sometimes use the simpler notation ``$\Psi$'' in place of ``$\threePsi$'' when there
	is no danger of confusion. At other times, we use the notation ``$\Psi$'' to denote a generic
	element of $\threePsi$. The precise meaning of the symbol ``$\Psi$'' will be clear from context.
\end{remark}

\begin{remark}[\textbf{Clarification on our approach to estimating $\Densrenormalized$ and $v^1$}]
	\label{R:HOWWEESTIMATEDENSITYANDV1}
	Recall that we have introduced $\mathcal{R}_{(\pm)}$ to allow us to
	capture the fact that our solutions are perturbations of simple plane 
	waves (for which only $\mathcal{R}_{(+)}$ is non-vanishing). 
	In the $1$D isentropic case, $\lbrace \mathcal{R}_{(+)}, \mathcal{R}_{(-)} \rbrace$ can be taken
	to be the unknowns in place of $\lbrace \Densrenormalized, v^1 \rbrace$.
	A similar remark holds in the present $3$D case as well, 
	provided we take into account the entropy.
	Specifically, from \eqref{eq:c.normalization} and Definition~\ref{def:variables.fluid},
	it follows that $v^1 = \frac{1}{2}(\mathcal{R}_{(+)} + \mathcal{R}_{(-)})$,
	and that when $\Densrenormalized$, $v^1$, and $s$ are sufficiently small
	(as is captured by the smallness parameters $\mathring{\upalpha}$ and $\epd$
	described at the beginning of Section~\ref{SS:FLUIDVARIABLEDATAASSUMPTIONS}),
	we have (via the implicit function theorem)
	$\Densrenormalized = 
	(\mathcal{R}_{(+)} - \mathcal{R}_{(-)})
	\cdot
	\widetilde{F}(\mathcal{R}_{(+)} - \mathcal{R}_{(-)},s)$,
	where $\widetilde{F}$ is a smooth function.
	This allows us to control $\Densrenormalized$ and $v^1$ in terms of 
	$\mathcal{R}_{(+)}$, $\mathcal{R}_{(-)}$, and $s$. Throughout the article,
	we use this observation without explicitly pointing it out.
	In particular, even though many of the equations we cite explicitly involve 
	$\Densrenormalized$ and $v^1$, it should be understood that we always
	estimate these quantities in terms of the wave variables
	$\mathcal{R}_{(+)}$, $\mathcal{R}_{(-)}$, and $s$,
	which are featured in the array \eqref{E:THREEVECPSI}.
\end{remark}

\begin{definition}[\textbf{Euclidean divergence and curl}]
Denote by\footnote{This is in contrast to $\angdiv$; see Definition~\ref{D:CONNECTIONS}.} $\mathrm{div}$ and $\mathrm{curl}$ the Euclidean spatial divergence and curl operator. That is,
given a $\Sigma_t$-tangent 
vectorfield $V = V^a \rd_a$, define:
\begin{align} \label{E:FLATDIVANDCURL}
		\Flatdiv V
		& \doteq \partial_a V^a,
			\qquad
		(\Flatcurl V)^i
		\doteq \epsilon_{iab} \partial_a V^b,
	\end{align}
	where $\epsilon_{iab}$ is the fully antisymmetric symbol normalized by $\epsilon_{123} = 1$.
\end{definition}

\begin{definition}[\textbf{The higher-order variables}]\label{def:variables.HO}
\

\begin{enumerate}
\item Define the \textbf{specific vorticity} 
	to be the $\Sigma_t$-tangent vectorfield with the following Cartesian spatial components:
$$\Vr^i \doteq \f{(\mathrm{curl}\,v)^i}{\varrho/\bar{\varrho}} = \f{(\mathrm{curl}\,v)^i}{\exp(\rr)}.$$
\item Define the \textbf{entropy gradient} 
	to be the $\Sigma_t$-tangent vectorfield with the following Cartesian spatial components:
$$\GradEnt^i \doteq \rd_i s.$$
\item Define the \textbf{modified fluid variables} by:
\begin{subequations}
	\begin{align} \label{E:RENORMALIZEDCURLOFSPECIFICVORTICITY}
		\CurlofVortrenormalized^i
		& \doteq
			\exp(-\Densrenormalized) (\Flatcurl \Vortrenormalized)^i
			+
			\exp(-3\Densrenormalized) \Speed^{-2} \frac{p_{;\Ent}}{\bar{\varrho}} \GradEnt^a \partial_a v^i
			-
			\exp(-3\Densrenormalized) \Speed^{-2} \frac{p_{;\Ent}}{\bar{\varrho}} (\partial_a v^a) \GradEnt^i,
				\\
		\DivofEntrenormalized
		& \doteq
			\exp(-2 \Densrenormalized) \Flatdiv \GradEnt 
			-
			\exp(-2 \Densrenormalized) \GradEnt^a \partial_a \Densrenormalized.
			\label{E:RENORMALIZEDDIVOFENTROPY}
	\end{align}
	\end{subequations}
We think of $\CurlofVortrenormalized$ as a
$\Sigma_t$-tangent vectorfield with Cartesian spatial components given by 
\eqref{E:RENORMALIZEDCURLOFSPECIFICVORTICITY}.
\end{enumerate}

\end{definition}

\subsection{The acoustical metric and related objects in Cartesian coordinates}
\label{SS:GEOMETRICTENSORFIELDSASSOCITEDTOTHEFLOW}
Hidden within compressible Euler flow lies
a geometric structure captured by the acoustical metric, which governs the dynamics of the sound waves.
We introduce in this subsection the acoustical metric $g$ in Cartesian coordinates.

\begin{definition}[\textbf{Material derivative vectorfield}]
\label{D:MATERIALVECTORVIELDRELATIVETORECTANGULAR}
 We define the \emph{material derivative vectorfield}
 as follows relative to the Cartesian coordinates:
\begin{align} \label{E:MATERIALVECTORVIELDRELATIVETORECTANGULAR}
	\Transport 
	\doteq \partial_t + v^a \partial_a.
\end{align}
\end{definition}

\begin{definition}[\textbf{The acoustical metric}] 
\label{D:ACOUSTICALMETRIC}
Define the \emph{acoustical metric} $g$ 
(in Cartesian coordinates) by:
	\begin{align}
		g 
		\doteq
		-  dt \otimes dt
			+ 
			\Speed^{-2} \sum_{a=1}^3(dx^a - v^a dt) \otimes (dx^a - v^a dt).
				\label{E:ACOUSTICALMETRIC} 
	\end{align}
\end{definition}

The following lemma follows from straightforward computations.
\begin{lemma}[\textbf{The inverse acoustical metric}] 
The inverse of the acoustical metric $g$ from \eqref{E:ACOUSTICALMETRIC}
can be expressed as follows:
\begin{align}
g^{-1} 
		= 
			- \Transport \otimes \Transport
			+ \Speed^2 \sum_{a=1}^3 \partial_a \otimes \partial_a.
			\label{E:INVERSEACOUSTICALMETRIC}
	\end{align}

\end{lemma}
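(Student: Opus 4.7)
The plan is to verify the formula \eqref{E:INVERSEACOUSTICALMETRIC} by a direct computation in Cartesian coordinates, checking that the candidate $g^{-1}$ satisfies $g_{\alpha\mu}(g^{-1})^{\mu\beta} = \delta_\alpha^{\ \beta}$. This is the most transparent approach because both $g$ and the proposed inverse are given explicitly in Cartesian form, and the identity only involves polynomial manipulations of $v^a$ and $c$.

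First I would expand \eqref{E:ACOUSTICALMETRIC} to read off the Cartesian components of $g$:
\begin{align*}
g_{00} = -1 + \Speed^{-2}\sum_a (v^a)^2, \qquad g_{0i} = -\Speed^{-2} v^i, \qquad g_{ij} = \Speed^{-2}\delta_{ij}.
\end{align*}
Next, using $\Transport^0 = 1$ and $\Transport^i = v^i$, I would read off the Cartesian components of the candidate inverse from $-\Transport\otimes\Transport + \Speed^2\sum_a \partial_a\otimes\partial_a$:
\begin{align*}
(g^{-1})^{00} = -1, \qquad (g^{-1})^{0i} = -v^i, \qquad (g^{-1})^{ij} = -v^i v^j + \Speed^2 \delta^{ij}.
\end{align*}

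The remaining step is to check the four cases of $g_{\alpha\mu}(g^{-1})^{\mu\beta} = \delta_\alpha^{\ \beta}$, i.e.\ $(\alpha,\beta) \in \{(0,0),(0,j),(i,0),(i,j)\}$. Each case is a short cancellation: the $(0,0)$ entry uses that $\Speed^{-2}\sum_a (v^a)^2$ in $g_{00}$ cancels against the cross term $\sum_a g_{0a}(g^{-1})^{a0}$; the $(0,j)$ and $(i,0)$ entries vanish after combining the $v^j$ contributions from the $(g^{-1})^{0j}$ and $(g^{-1})^{aj}$ pieces with those coming from $g_{0a}$; and the $(i,j)$ entry reduces to $\Speed^{-2}v^i v^j - \Speed^{-2}v^i v^j + \delta_{ij} = \delta_{ij}$.

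There is no real obstacle here beyond bookkeeping: once the Cartesian components of $g$ and of the claimed $g^{-1}$ are written down, the identity is immediate. If a more conceptual presentation is desired, one could alternatively observe that $\Transport$ is the $g$-dual of $-dt$ (i.e.\ $g(\Transport,\cdot) = -dt$) and that $\{dx^a - v^a dt\}_{a=1,2,3}$ is $g$-orthogonal to $\Transport$, so that $\{-dt,\,dx^1-v^1 dt,\,dx^2-v^2 dt,\,dx^3-v^3 dt\}$ and its $g$-dual frame $\{\Transport,\,\Speed^2\partial_1,\,\Speed^2\partial_2,\,\Speed^2\partial_3\}$ give an orthogonal decomposition from which \eqref{E:INVERSEACOUSTICALMETRIC} follows at once; but I would keep the direct component-wise verification as the primary proof since it is shortest.
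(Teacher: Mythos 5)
Your proof is correct, and since the paper simply states that this lemma ``follows from straightforward computations,'' your direct component-wise verification of $g_{\alpha\mu}(g^{-1})^{\mu\beta} = \delta_\alpha^{\ \beta}$ is exactly the intended argument. The alternative frame-duality observation you mention is also sound and gives a nice conceptual picture, but either route suffices.
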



\begin{remark}[\textbf{Closeness to the Minkowski metric}]
In our analysis, $v$ and $c-1$ will be small,
where the smallness is
captured by the parameters $\mathring{\upalpha}$ and $\epd$
described at the beginning of Section~\ref{SS:FLUIDVARIABLEDATAASSUMPTIONS}.
Recalling \eqref{E:ACOUSTICALMETRIC}, we see that $g$ will be 
$L^\infty$-close to the Minkowski metric.
It is therefore convenient to introduce the following decomposition:
\begin{align} \label{E:LITTLEGDECOMPOSED}
	g_{\alpha \beta}(\threePsi)
	& = m_{\alpha \beta} 
		+ g_{\alpha \beta}^{(Small)}(\threePsi),
	&& m_{\alpha \beta} \doteq \mbox{\upshape diag}(-1,1,1,1),
\end{align}
where $m$
is the Minkowski metric and
$g_{\alpha \beta}^{(Small)}(\threePsi)$ is a smooth function of $\threePsi$
such that:
\begin{align} \label{E:METRICPERTURBATIONFUNCTION}
	g_{\alpha \beta}^{(Small)}(\threePsi = 0)
	& = 0.
\end{align}
\end{remark}


\begin{definition}[\textbf{$\threePsi$-derivatives of $g_{\alpha \beta}$}]
	\label{D:BIGGANDBIGH}
	For $\alpha,\beta = 0,\dots,3$ and $\imath = 1,\dots,5$, we define:
	\begin{align} \label{E:BIGGDEF} 
		G_{\alpha \beta}^{\imath}(\threePsi)
		& \doteq \frac{\partial}{\partial \Psi_{\imath}} 
					g_{\alpha \beta}(\threePsi),
		&
		\vec{G}_{\alpha \beta}
		=
		\vec{G}_{\alpha \beta}(\threePsi)
		&\doteq 
		\left(
			G_{\alpha \beta}^1(\threePsi),
			G_{\alpha \beta}^2(\threePsi),
			G_{\alpha \beta}^3(\threePsi),
			G_{\alpha \beta}^4(\threePsi),
			G_{\alpha \beta}^5(\threePsi)
		\right).
	\end{align}
\end{definition}
For each fixed $\imath \in \lbrace 1,\dots,5 \rbrace$, we think of
$\lbrace G_{\alpha \beta}^{\imath} \rbrace_{\alpha,\beta = 0,\dots,3}$,
as the Cartesian components of a spacetime tensorfield. 
Similarly, we think of
$\lbrace \vec{G}_{\alpha \beta} \rbrace_{\alpha,\beta = 0,\dots,3}$
as the Cartesian components of an array-valued spacetime tensorfield.

\begin{definition}[\textbf{Operators involving $\threePsi$}]
	\label{D:ARRAYNOTATION}
	Let $U_1,U_2,V$ be vectorfields. We define:
	\begin{align} 
		V \threePsi
		& \doteq 
		(V \Psi_1, V \Psi_2, V \Psi_3, V \Psi_4, V \Psi_5),
	&
	\vec{G}_{U_1 U_2}\contr V \threePsi
	& \doteq \sum_{\imath=1}^5 G_{\alpha \beta}^{\imath}U_1^{\alpha} U_2^{\beta} V \Psi_{\imath}.
	\end{align}

	We use similar notation with other differential operators in place of vectorfield differentiation.
	For example, 
	$\vec{G}_{U_1 U_2} \contr\angLap \threePsi 
	\doteq \sum_{\imath=1}^5 G_{\alpha \beta}^{\imath} U_1^{\alpha} U_2^{\beta} \angLap \Psi_{\imath}
	$
	(where $\angLap$ is defined in Definition~\ref{D:CONNECTIONS}).

\end{definition}

\subsection{The acoustic eikonal function and related constructions}
\label{SS:EIKONALFUNCTIONANDRELATED}
To control the solution up to the shock, 
we will crucially rely on an eikonal function
for the acoustical metric.

\begin{definition}[\textbf{Acoustic eikonal function}]
\label{D:INTROEIKONAL}
The acoustic eikonal function (eikonal function for short) $u$ 
solves the following eikonal equation initial value problem:
\begin{align} \label{E:INTROEIKONAL}
	(g^{-1})^{\alpha \beta}
	\partial_{\alpha} u \partial_{\beta} u
	& = 0, 
	\qquad \partial_t u > 0,
	\qquad
	u\restriction_{t=0}
	= \mathring{\upsigma} - x^1,
\end{align}
where $\mathring{\upsigma}>0$ is the constant controlling the initial support (recall Theorem~\ref{thm:intro.main}).
\end{definition}

\begin{definition}[\textbf{Inverse foliation density}]
\label{D:FIRSTUPMU}
Define the inverse foliation density $\upmu$ by:
\begin{align} \label{E:FIRSTUPMU}
	\upmu 
	& \doteq \frac{-1}{(g^{-1})^{\alpha \beta}(\threePsi) \partial_{\alpha} t \partial_{\beta} u} > 0.
\end{align}
\end{definition}

$1/\upmu$ measures the density of the level sets of $u$
relative to the constant-time hypersurfaces $\Sigma_t$. 
For the data that we will consider,
we have $\upmu\restriction_{\Sigma_0} \approx 1$.
When $\upmu$ vanishes, the level sets of $u$ intersect
and, as it turns out,
$\max_{\alpha = 0,1,2,3}|\partial_{\alpha} u|$
and
$\max_{\alpha = 0,1,2,3}|\partial_{\alpha} \mathcal{R}_{(+)}|$, 
blow up.

The following quantities, tied to $\upmu$, play an important role in our description of the singular behavior of our
high-order energies.
\begin{definition}\label{def:mustar}
Define $\upmu_{\star}(t,u)$ and $\upmu_{\star}(t)$ by:\footnote{By definition, $\upmu_{\star}(t,u)\geq \upmu_{\star}(t)$, $\forall u\in \mathbb R$. Note that by the localization lemma (Lemma~\ref{lem:localization}) we prove below, we have $\upmu_{\star}(t) = \upmu_{\star}(t,U_0)$.  In most of the proof, it suffices to consider the function $\upmu_{\star}(t)$ without considering $\upmu_{\star}(t,u)$. The more refined definition for $\upmu_{\star}(t,u)$ will only be referred to in the appendix, so that the formulas take the same forms as their counterparts in \cites{jSgHjLwW2016,LS}.}
$$\upmu_{\star}(t,u) \doteq \min \left\lbrace 1, \min_{u' \leq u} \upmu(t,u') \right\rbrace,
\quad \upmu_{\star}(t) \doteq \min \left\lbrace 1, \min_{\Sigma_t} \upmu \right\rbrace.$$
\end{definition}

\subsection{Subsets of spacetimes}

\begin{definition} [\textbf{Subsets of spacetime}]
\label{D:HYPERSURFACESANDCONICALREGIONS}
For $0 \leq t'$ and $0 \leq u'$,
define: 
\begin{subequations}
\begin{align}
	\Sigma_{t'} & \doteq \lbrace (t,x) \in \mathbb{R} \times (\mathbb{R} \times \mathbb{T}^2 )  
		\ | \ t = t' \rbrace, 
		\label{E:SIGMAT} \\
	\Sigma_{t'}^{u'} & \doteq  \lbrace (t,x) \in \mathbb{R} \times (\mathbb{R} \times \mathbb{T}^2 ) 
		 \ | \ t = t', \ 0 \leq u(t,x) \leq u' \rbrace, 
		\label{E:SIGMATU} 
		\\
	\mathcal{F}_{u'}
	& \doteq 
		\lbrace (t,x) \in \mathbb{R} \times (\mathbb{R} \times \mathbb{T}^2)
			\ | \ u(t,x) = u' 
		\rbrace, 
		\label{E:PU} \\
	\mathcal{F}_{u'}^{t'} 
	& \doteq 
		\lbrace (t,x) \in \mathbb{R} \times (\mathbb{R} \times \mathbb{T}^2 )
			\ | \ 0 \leq t \leq t', \ u(t,x) = u' 
		\rbrace, 
		\label{E:PUT} \\
	\ell_{t',u'} 
		&\doteq  \mathcal{F}_{u'}^{t'} \cap \Sigma_{t'}^{u'}
		= \lbrace (t,x) \in \mathbb{R} \times (\mathbb{R} \times \mathbb{T}^2 )
			\ | \ t = t', \ u(t,x) = u' \rbrace, 
			\label{E:LTU} \\
	\mathcal{M}_{t',u'} & \doteq  \cup_{u \in [0,u']} \mathcal{F}_u^{t'} \cap \lbrace (t,x) 
		\in \mathbb{R} \times (\mathbb{R} \times \mathbb{T}^2 ) \ | \ 0 \leq t < t' \rbrace.
		\label{E:MTUDEF}
\end{align}
\end{subequations}
\end{definition}
\begin{center}
\begin{overpic}[scale=.15]{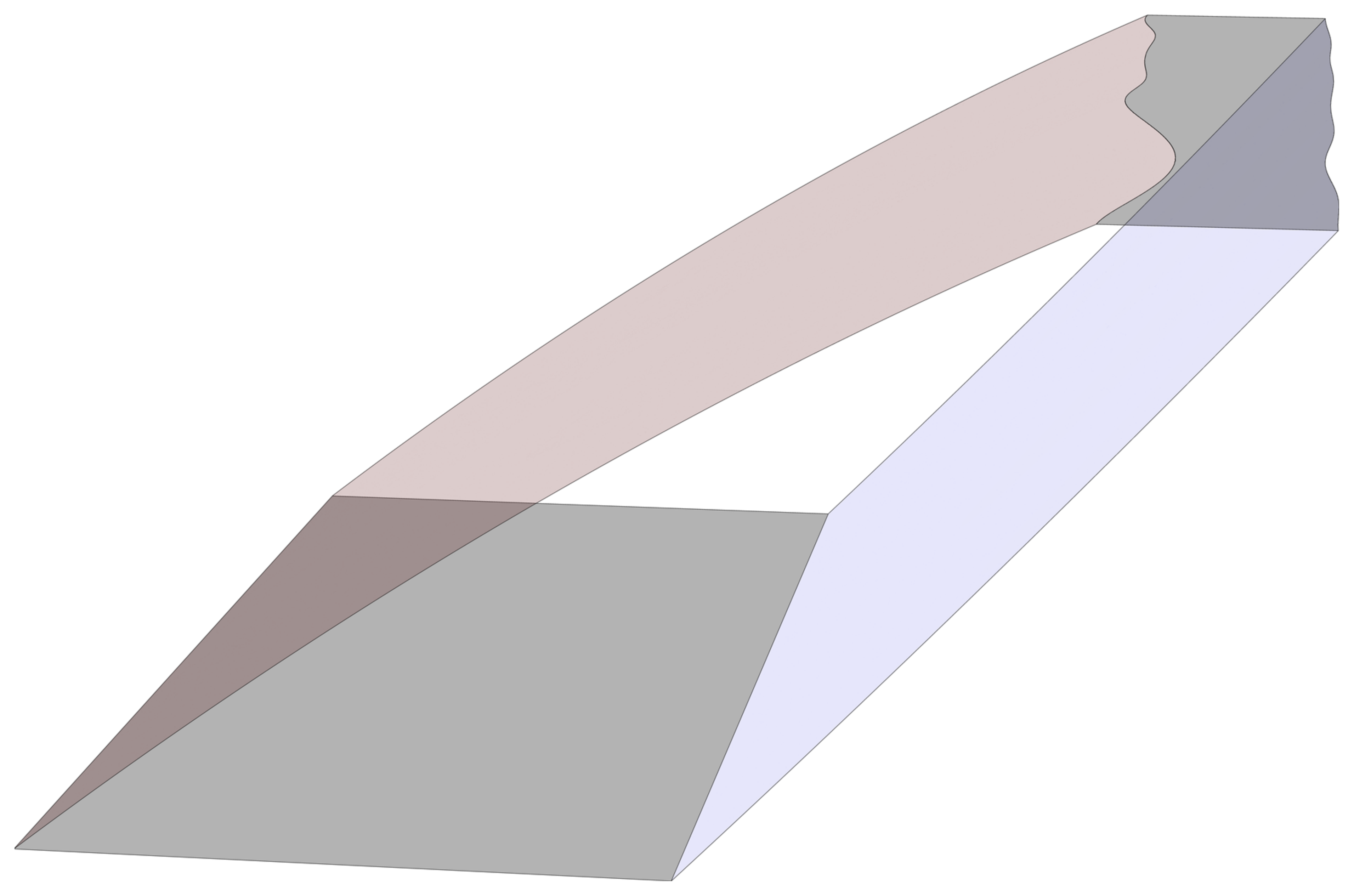} 
\put (54,34) {$\displaystyle \mathcal{M}_{t,u}$}
\put (38,33) {$\displaystyle \mathcal{F}_u^t$}
\put (74,37) {$\displaystyle \mathcal{F}_0^t$}
\put (32,17) {$\displaystyle \Sigma_0^u$}
\put (48.5,13) {$\displaystyle \ell_{0,0}$}
\put (12,13) {$\displaystyle \ell_{0,u}$}
\put (89.5,61) {$\displaystyle \Sigma_t^u$}
\put (92.8,56) {$\displaystyle \ell_{t,0}$}
\put (86,56) {$\displaystyle \ell_{t,u}$}
\put (-11,16) {$\displaystyle (x^2,x^3) \in \mathbb{T}^2$}
\put (22,-4) {$\displaystyle x^1 \in \mathbb{R}$}
\thicklines
\put (-0.9,3){\vector(0.9,1){22}}
\put (.7,1.8){\vector(100,-4.5){48}}
\end{overpic}
\captionof{figure}{The spacetime region and various subsets\protect\footnote{The (unlabeled and uncolored)
flat front and back surfaces should be identified.}.}
\label{F:SOLIDREGION}
\end{center}

We refer to the $\Sigma_t$ and $\Sigma_t^u$ as ``constant time slices,'' 
the 
$\mathcal{F}_u$
and
$\mathcal{F}_u^t$ as ``null hyperplanes,'' 
``null hypersurfaces,''
``characteristics,'' or ``acoustic characteristics,''
and the $\ell_{t,u}$ as ``tori.'' 
Note that $\mathcal{M}_{t,u}$ is ``open-at-the-top'' by construction.

\subsection{Important vectorfields, 
the rescaled frame, 
and the non-rescaled frame}
\label{SS:FRAMEANDRELATEDVECTORFIELDS}

\begin{definition}[\textbf{Important vectorfields}]
\

\begin{enumerate}
\item Define the \textbf{geodesic null vectorfield} by:
\begin{align} \label{E:LGEOEQUATION}
	\Lgeo^{\nu} 
	& \doteq - (g^{-1})^{\nu \alpha} \partial_{\alpha} u.
\end{align}
\item Define the \textbf{rescale null vectorfield} (recall the
	definition of $\upmu$ in \eqref{E:FIRSTUPMU}) by:
\begin{align} \label{E:LUNITDEF}
		\Lunit
		& \doteq \upmu \Lgeo.
	\end{align}
\item Define $X$ to be the unique vectorfield that is $\Sigma_t$-tangent, 
$g$-orthogonal
	to the $\ell_{t,u}$, and normalized by:
	\begin{align} \label{E:GLUNITRADUNITISMINUSONE}
		g(\Lunit,\Radunit) = -1.
	\end{align}
Define the ``rescaled'' vectorfield $\bX$ by:
\begin{align} \label{E:RADDEF}
		\Rad \doteq \upmu \Radunit.
	\end{align}
\item Define $Y$ and $Z$ respectively to be the $g$-orthogonal projection\footnote{
To see that $Y$ and $Z$ are tangent to $\ell_{t,u}$, 
one can use \eqref{E:GLUNITRADUNITISMINUSONE}, \eqref{E:TRANSPORTVECTORFIELDINTERMSOFLUNITANDRADUNIT},
the fact that $\Transport$ is $g$-orthogonal to $\Sigma_t$,
and the fact that $\rd_i$ is tangent to $\Sigma_t$. Alternatively, see \eqref{E:LINEPROJECTION}.} 
of the Cartesian partial derivative vectorfields
$\rd_2$ and $\rd_3$ to the tangent space of $\ell_{t,u}$, i.e.,
\begin{equation}\label{eq:Y.Z.def}
Y \doteq \rd_2 - g(\rd_2, X) X, \quad Z \doteq \rd_3 - g(\rd_3,X) X. 
\end{equation}
\item We will use vectorfields in $\mathscr{P} \doteq \{L,Y,Z\}$ for commutation, and
we therefore refer to them as \textbf{commutation vectorfields}. An element of $\mathscr{P}$ will often be denoted schematically by $\mathcal{P}$ (see also Definition~\ref{def:array.convention}).
\end{enumerate}
\end{definition}

We collect some basic properties of these vectorfields; see \cite[(2.12), (2.13) and Lemma~2.1]{jSgHjLwW2016} for proofs.

\begin{lemma}[\textbf{Basic properties of the vectorfields}] 
\label{L:VFBASIC}
\

\begin{enumerate}
\item $\Lgeo$ is geodesic and null, i.e., 
$$g(\Lgeo,\Lgeo) = 0,\quad \D_{\Lgeo} \Lgeo = 0,$$
where $\D$ is the Levi-Civita connection associated to $g$.
\item The following identities hold:
\begin{align} \label{E:LUNITANDRADOFUANDT}
	\Lunit u & = 0, 
	\qquad \Lunit t = \Lunit^0 = 1,
	\qquad
	\Rad u = 1,
	\qquad
	\Rad t = \Rad^0 = 0,
		\\
	g(\Radunit,\Radunit)
	& = 1,
	\qquad
	g(\Rad,\Rad)
	= \upmu^2,
		\qquad
	g(\Lunit,\Radunit)
	= -1,
	\qquad
	g(\Lunit,\Rad) = -\upmu.
		\label{E:RADIALVECTORFIELDSLENGTHSANDLRADIALVECTORFIELDSNORMALIZATIONS}
\end{align}
\item The vectorfield 
$\Transport$ (see \eqref{E:MATERIALVECTORVIELDRELATIVETORECTANGULAR})
is future-directed, $g$-orthogonal
to $\Sigma_t$, and is normalized by $
	g(\Transport,\Transport) 
	 = - 1
$. 
Moreover,
\begin{align} \label{E:TRANSPORTVECTORFIELDINTERMSOFLUNITANDRADUNIT}
	\Transport
	& = \rd_t + v^a \rd_a = \Lunit + \Radunit,
		\\
	\Transport_{\alpha}
	& = - \delta_{\alpha}^0,
	\label{E:MATERIALDERIVATIVEINDICESDOWN}
\end{align}
where $\delta_{\alpha}^{\beta}$ is the Kronecker delta.
\end{enumerate}
\end{lemma}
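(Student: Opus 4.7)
The plan is to derive all three parts by direct computation from the definitions, using the eikonal equation \eqref{E:INTROEIKONAL}, the definition \eqref{E:FIRSTUPMU} of $\upmu$, and the explicit forms \eqref{E:ACOUSTICALMETRIC}--\eqref{E:INVERSEACOUSTICALMETRIC} of $g$ and $g^{-1}$. As a preliminary step needed later, I first record $\Transport_\alpha = -\delta_\alpha^0$, which is an elementary index computation: using the Cartesian components $g_{00} = -1 + \Speed^{-2} \sum (v^a)^2$, $g_{0i} = -\Speed^{-2} v^i$, $g_{ij} = \Speed^{-2} \delta_{ij}$, one checks $g_{0\beta}\Transport^\beta = -1$ and $g_{i\beta}\Transport^\beta = 0$.

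For part (1), the nullness $g(\Lgeo,\Lgeo) = 0$ is immediate from \eqref{E:LGEOEQUATION}: $g_{\mu\nu}\Lgeo^\mu \Lgeo^\nu = (g^{-1})^{\alpha\beta}\partial_\alpha u\, \partial_\beta u$, which vanishes by the eikonal equation. The geodesic property $\D_\Lgeo \Lgeo = 0$ follows from the classical observation that $\Lgeo$ is a gradient at the covector level, i.e.\ $\Lgeo_\alpha = -\partial_\alpha u$, so $\D_\alpha \Lgeo_\beta = \D_\beta \Lgeo_\alpha$ by symmetry of the Hessian of $u$; consequently $\Lgeo^\alpha \D_\alpha \Lgeo^\beta = \Lgeo^\alpha \D^\beta \Lgeo_\alpha = \tfrac{1}{2}\D^\beta g(\Lgeo,\Lgeo) = 0$.

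For part (2), each identity is a brief calculation. The equations $\Lunit u = \upmu \Lgeo u = -\upmu(g^{-1})(du,du) = 0$ and $\Lunit t = \Lunit^0 = -\upmu(g^{-1})(dt,du) = 1$ follow from the eikonal equation and the definition of $\upmu$. The identity $\Rad t = \Rad^0 = 0$ holds since $\Radunit$ is $\Sigma_t$-tangent. For $\Rad u$, I use the pairing $g(\Lgeo,V) = -Vu$ (valid for every vectorfield $V$, directly from $\Lgeo^\alpha = -(g^{-1})^{\alpha\beta}\partial_\beta u$); applied to $V = \Radunit$ and combined with the defining normalization $g(\Lunit,\Radunit) = -1$ from \eqref{E:GLUNITRADUNITISMINUSONE}, this yields $\Radunit u = \upmu^{-1}$ and hence $\Rad u = 1$. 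The identity $g(\Radunit,\Radunit) = 1$ requires a geometric decomposition: splitting $\Lgeo = -g(\Lgeo,\Transport)\Transport + \Lgeo^\sharp$ with $\Lgeo^\sharp$ tangent to $\Sigma_t$, nullness together with $g(\Transport,\Transport) = -1$ forces $g(\Lgeo^\sharp,\Lgeo^\sharp) = [g(\Lgeo,\Transport)]^2 = \upmu^{-2}$ (using $g(\Lgeo,\Transport) = \Transport_\alpha \Lgeo^\alpha = -\Lgeo^0 = -\upmu^{-1}$). Since $\Lgeo^\sharp$ is $\Sigma_t$-tangent and $g$-orthogonal to $\ell_{t,u}$ (because $\Lgeo$ is $g$-normal to $\mathcal{F}_u \supset \ell_{t,u}$), it is proportional to $\Radunit$, and the normalization $g(\Lunit,\Radunit) = -1$ pins down $g(\Radunit,\Radunit) = 1$. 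The remaining formulas $g(\Rad,\Rad) = \upmu^2$ and $g(\Lunit,\Rad) = -\upmu$ follow by bilinearity from $\Rad = \upmu \Radunit$.

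For part (3), substituting $\Transport = \partial_t + v^a \partial_a$ into \eqref{E:ACOUSTICALMETRIC} yields $(dx^a - v^a\, dt)(\Transport) = 0$ and $dt(\Transport) = 1$, whence $g(\Transport,\Transport) = -1$; for any $V = V^a \partial_a$ tangent to $\Sigma_t$ the same substitution gives $g(\Transport,V) = 0$, establishing $\Sigma_t$-orthogonality. Future-directedness follows from $\Transport^0 = 1 > 0$. To establish $\Transport = \Lunit + \Radunit$, my plan is to verify that $\Lunit + \Radunit$ is also a future-directed $g$-unit timelike vectorfield $g$-orthogonal to $\Sigma_t$, and then invoke uniqueness: bilinearity and part (2) give $g(\Lunit+\Radunit,\Lunit+\Radunit) = 0 - 2 + 1 = -1$; orthogonality to $\Sigma_t$ reduces, via the splitting $T\Sigma_t = \mathbb{R}\Radunit \oplus T\ell_{t,u}$, to $g(\Lunit+\Radunit,\Radunit) = -1 + 1 = 0$ together with $g(\Lunit+\Radunit,W) = 0$ for $W$ tangent to $\ell_{t,u}$, the latter immediate since both $\Lunit$ (via $g(\Lgeo,W) = -Wu = 0$) and $\Radunit$ are $g$-orthogonal to $\ell_{t,u}$; future-directedness comes from $(\Lunit+\Radunit)^0 = \Lunit^0 = 1$. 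The chief subtlety in the proof is the normalization $g(\Radunit,\Radunit) = 1$, which is \emph{not} a free choice but is forced by the equal-magnitude decomposition of the null vectorfield $\Lgeo$ into its $\Transport$- and $\Sigma_t$-components, coupled with consistent tracking of the time-orientation via $g(\Lunit,\Radunit) = -1$; once this is in place, all remaining identities reduce to bilinearity and direct algebraic manipulation.
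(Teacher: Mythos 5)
Your proposal is correct; the paper does not reproduce a proof but cites \cite[(2.12), (2.13), Lemma~2.1]{jSgHjLwW2016}, and your argument reconstructs those computations faithfully. Each step checks out: the lowered-index identity $\Lgeo_\alpha = -\partial_\alpha u$ gives nullness and geodesy by the standard Hessian-symmetry argument; the pairing $g(\Lgeo, V) = -Vu$ combined with $g(\Lunit,\Radunit) = -1$ gives $\Radunit u = \upmu^{-1}$; the decomposition $\Lgeo = -g(\Lgeo,\Transport)\Transport + \Lgeo^\sharp$ together with $\Lgeo^\sharp = c\Radunit$, $c^2 g(\Radunit,\Radunit) = \upmu^{-2}$, and $c\,g(\Radunit,\Radunit) = -\upmu^{-1}$ yields $g(\Radunit,\Radunit) = 1$; and the uniqueness characterization of $\Transport$ gives $\Transport = \Lunit + \Radunit$.

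One remark on organization: the proof of $g(\Radunit,\Radunit) = 1$ in your part (2) uses $g(\Transport,\Transport) = -1$, $\Transport_\alpha = -\delta_\alpha^0$, and $\Sigma_t$-orthogonality of $\Transport$, which you prove in part (3). This is not circular — those facts are established by direct substitution into \eqref{E:ACOUSTICALMETRIC} and \eqref{E:INVERSEACOUSTICALMETRIC} without reference to parts (1) or (2) — but the dependency would be cleaner if you stated and proved the $\Transport$-facts as a preliminary step before part (2), rather than folding only $\Transport_\alpha = -\delta_\alpha^0$ into a preamble and leaving $g(\Transport,\Transport) = -1$ and the orthogonality for part (3). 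As written, a reader encountering the decomposition $\Lgeo = -g(\Lgeo,\Transport)\Transport + \Lgeo^\sharp$ before reading part (3) would need to flip forward. The mathematics is sound; only the ordering of claims could be tightened.
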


\subsection{Transformations}
Having introduced various vectorfields in Section~\ref{SS:FRAMEANDRELATEDVECTORFIELDS}, 
we now derive some related transformation formulas that we will use later on.

\begin{definition}[\textbf{Coordinate vectorfields in geometric $(t,u,x^2,x^3)$ coordinates}]
\label{D:GEOMETRICCOORDINATEVECTORFIELDS}
Define $(\srd_t, \srd_u, \srd_2, \srd_3)$ to be the coordinate partial derivative 
vectorfields in the geometric $(t,u,x^2,x^3)$ coordinate system.
\end{definition}

\begin{definition}[\textbf{Cartesian components of geometric vectorfields}] \label{D:CARTESIANCOMPONENTSOFVECTORFIELDS}
\

\begin{enumerate}
\item Define $L^i$ and $X^i$ to be the Cartesian $i$-th components of $L$ and $X$ respectively. 
(Note that $L^i +X^i -v^i = 0$; see \eqref{E:TRANSPORTVECTORFIELDINTERMSOFLUNITANDRADUNIT}.)
\item Define\footnote{The notation is suggestive of the fact that these quantities are of size $\mathcal{O}(\mathring{\upalpha})$ (and hence small).} $L_{(Small)}$ and $X_{(Small)}$ by:
\begin{subequations}
\begin{align}
L_{(Small)}^1 & \doteq L^1 - 1,\quad  L_{(Small)}^2 \doteq L^2,\quad L_{(Small)}^3 \doteq L^3,
	\label{E:LSMALLDEF} \\
X_{(Small)}^1 & \doteq X^1 + 1,\quad  X_{(Small)}^2 \doteq X^2,\quad X_{(Small)}^3 \doteq X^3.
\label{E:XSMALLDEF}
\end{align}
\end{subequations}
\end{enumerate}
\end{definition}

\begin{lemma}[\textbf{Relations between $\{\rd_\alp\}_{\alpha=0,1,2,3}$ 
and $\{L,X,Y,Z\}$}]\label{lem:Cart.to.geo}
The following identities hold:
\begin{subequations}
\begin{align}
\label{eq:Cart.to.geo.0} \rd_t \doteq & \ \rd_0 = \: L + X - v^a \rd_a,\\
\label{eq:Cart.to.geo.1} \rd_1 = &\: \Speed^{-2} X^1 X - \f {X^2}{X^1} Y - \f {X^3}{X^1} Z, \\
\label{eq:Cart.to.geo.2} \rd_2 = &\: Y+ (\Speed^{-2} X^2) X,\quad \rd_3 = Z + (\Speed^{-2} X^3) X.
\end{align}
\end{subequations}
\end{lemma}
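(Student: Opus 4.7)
The plan is to verify the three identities in turn, using only the definition \eqref{eq:Y.Z.def} of $Y,Z$, the frame relation $\Transport = L+X$ from Lemma~\ref{L:VFBASIC}, and the explicit form of the spatial part of the acoustical metric.

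First, for \eqref{eq:Cart.to.geo.0}, I would simply rearrange \eqref{E:TRANSPORTVECTORFIELDINTERMSOFLUNITANDRADUNIT}: since $\partial_t + v^a\partial_a = \Transport = L + X$, we have $\partial_t = L + X - v^a\partial_a$.

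Next, for \eqref{eq:Cart.to.geo.2}, I would compute $g(\partial_i,X)$ for $i=2,3$ using the decomposition \eqref{E:ACOUSTICALMETRIC} of $g$. Since $X$ is $\Sigma_t$-tangent we have $X^0=0$, so $g(\partial_i,X)=g_{ij}X^j$, and reading off the Cartesian components of $g$ from \eqref{E:ACOUSTICALMETRIC} gives $g_{ij}=c^{-2}\delta_{ij}$ for $i,j\in\{1,2,3\}$. Hence $g(\partial_i,X)=c^{-2}X^i$, and substituting into \eqref{eq:Y.Z.def} yields
\begin{equation*}
\partial_2 = Y + c^{-2}X^2\,X,\qquad \partial_3 = Z + c^{-2}X^3\,X,
\end{equation*}
which is \eqref{eq:Cart.to.geo.2}.

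Finally, for \eqref{eq:Cart.to.geo.1}, I would exploit the $\Sigma_t$-tangency of $X$ to write $X = X^1\partial_1+X^2\partial_2+X^3\partial_3$, solve for $\partial_1$, and substitute \eqref{eq:Cart.to.geo.2}:
\begin{equation*}
\partial_1 = \frac{1}{X^1}\Big(X - X^2\partial_2 - X^3\partial_3\Big) = \frac{1}{X^1}\Big(\big(1 - c^{-2}((X^2)^2+(X^3)^2)\big)X - X^2 Y - X^3 Z\Big).
\end{equation*}
The key simplification comes from the normalization $g(X,X)=1$ of \eqref{E:RADIALVECTORFIELDSLENGTHSANDLRADIALVECTORFIELDSNORMALIZATIONS}, which combined with $g_{ij}=c^{-2}\delta_{ij}$ forces $(X^1)^2+(X^2)^2+(X^3)^2=c^2$. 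Therefore $1 - c^{-2}((X^2)^2+(X^3)^2)=c^{-2}(X^1)^2$, and substituting yields exactly $\partial_1 = c^{-2}X^1\,X - (X^2/X^1)Y - (X^3/X^1)Z$, i.e.\ \eqref{eq:Cart.to.geo.1}. There is no substantive obstacle here; the only non-routine point is recognizing that the $X$-coefficient must be simplified via the unit-length constraint on $X$, which in turn implicitly requires that $X^1$ does not vanish—a property guaranteed in our regime since $X$ is a small perturbation of $-\partial_1$ (consistent with \eqref{E:XSMALLDEF}).
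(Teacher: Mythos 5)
Your proof is correct and follows the same route as the paper: rearrange $\Transport=L+X$ for $\partial_t$, compute $g(\partial_A,X)=c^{-2}X^A$ and substitute into \eqref{eq:Y.Z.def} for $\partial_2,\partial_3$, then expand $X=X^a\partial_a$, solve for $\partial_1$, substitute back, and simplify via $\sum_a(X^a)^2=c^2$. Your closing remark about the non-vanishing of $X^1$ is a reasonable observation; the paper leaves it implicit.
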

\begin{proof}
\eqref{eq:Cart.to.geo.0} is simply a restatement of \eqref{E:TRANSPORTVECTORFIELDINTERMSOFLUNITANDRADUNIT}. \eqref{eq:Cart.to.geo.2} follows from \eqref{eq:Y.Z.def} and $g(\rd_A, X) = \Speed^{-2} X^A$ for $A=2,3$
(see \eqref{E:ACOUSTICALMETRIC}). Finally, to obtain \eqref{eq:Cart.to.geo.1}, we write 
$X = X^a \rd_a$ and use \eqref{eq:Cart.to.geo.2} to obtain $\rd_1 = \f{1}{X^1} [1 - c^{-2} ((X^2)^2 + (X^3)^2)] X - \f {X^2}{X^1} Y - \f {X^3}{X^1} Z$. This then implies \eqref{eq:Cart.to.geo.1} since 
 $\sum_{a=1}^3 (X^a)^2 = c^2$ by $g(X,X)=1$ 
(see \eqref{E:RADIALVECTORFIELDSLENGTHSANDLRADIALVECTORFIELDSNORMALIZATIONS}) and 
\eqref{E:ACOUSTICALMETRIC}. \qedhere
\end{proof}

\begin{lemma}[\textbf{Relation between $\{\srd_\alp\}$ and $\{L,X,Y,Z\}$}]\label{lem:slashed}
The following identities hold, where repeated capital Latin indices are summed over $A=2,3$:
\begin{subequations}
\begin{equation}\label{eq:slashes.1}
L = \srd_t + L^A \srd_A,\quad \bX = \srd_u + \upmu X^A \srd_A, 
\end{equation}
\begin{equation}\label{eq:slashes.2}
 Y = (1-\Speed^{-2} (X^2)^2) \srd_2 - \Speed^{-2} X^2 X^3 \srd_3, \quad  
Z = (1-\Speed^{-2} (X^3)^2) \srd_3 - \Speed^{-2} X^2 X^3 \srd_2.
\end{equation}

\end{subequations}
\end{lemma}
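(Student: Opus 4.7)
\medskip

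\noindent\textbf{Proof proposal.}
The plan is to exploit the basic chain-rule identity that any vectorfield $V$ on spacetime admits the expansion
\[
V = (V t)\,\srd_t + (V u)\,\srd_u + (V x^2)\,\srd_2 + (V x^3)\,\srd_3
\]
in the geometric coordinate basis, and then to evaluate the coefficients $V t$, $V u$, $V x^A$ for each of the four vectorfields $L$, $\bX$, $Y$, $Z$ using facts already established earlier in the paper. This reduces the lemma to four short, essentially algebraic calculations.

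For $L$: by \eqref{E:LUNITANDRADOFUANDT} we have $L t = 1$ and $L u = 0$, while $L x^A = L^\alpha \partial_\alpha x^A = L^A$ for $A=2,3$ by Definition~\ref{D:CARTESIANCOMPONENTSOFVECTORFIELDS}. This yields $L = \srd_t + L^A \srd_A$. For $\bX$: by \eqref{E:LUNITANDRADOFUANDT} we have $\bX t = 0$ and $\bX u = 1$, while $\bX x^A = \upmu X^A$ by \eqref{E:RADDEF} and the same logic. This gives $\bX = \srd_u + \upmu X^A \srd_A$, establishing \eqref{eq:slashes.1}.

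For $Y$ (and analogously $Z$): since $Y$ is tangent to $\ell_{t,u}$, it annihilates $t$ and $u$, so only the $\srd_2,\srd_3$ terms survive. The remaining step is to compute the Cartesian components $Y^A = Y x^A$ for $A=2,3$ from the definition \eqref{eq:Y.Z.def}, namely $Y = \partial_2 - g(\partial_2, X) X$. The key input is the spatial part of the acoustical metric: from \eqref{E:ACOUSTICALMETRIC} one reads off $g_{ab} = \Speed^{-2}\delta_{ab}$ for spatial indices, and hence $g(\partial_2, X) = g_{2b} X^b = \Speed^{-2} X^2$. This gives $Y^2 = 1 - \Speed^{-2}(X^2)^2$ and $Y^3 = -\Speed^{-2} X^2 X^3$, which is exactly the stated expression for $Y$ in \eqref{eq:slashes.2}; the computation for $Z$ is identical after swapping the roles of the indices $2$ and $3$.

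There is no genuine obstacle: the only thing one must be careful about is keeping straight which coordinate system the partial derivatives refer to (the geometric $\srd_\alpha$ versus the Cartesian $\partial_\alpha$), and using the correct metric components to evaluate $g(\partial_A, X)$. Everything else follows mechanically from previously established identities, so the lemma can be proved in a few lines without any additional machinery.
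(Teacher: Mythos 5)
Your proof is correct and follows essentially the same route as the paper: expand each vectorfield in the geometric coordinate basis via its action on the coordinate functions $t,u,x^2,x^3$, then evaluate the coefficients using \eqref{E:LUNITANDRADOFUANDT}, \eqref{E:RADDEF}, the $\ell_{t,u}$-tangency of $Y,Z$, and the identity $g(\partial_A,X)=\Speed^{-2}X^A$. The only cosmetic difference is that you re-derive $g(\partial_A,X)=\Speed^{-2}X^A$ directly from \eqref{E:ACOUSTICALMETRIC} and \eqref{eq:Y.Z.def}, whereas the paper simply cites the already-established relation \eqref{eq:Cart.to.geo.2}, which encodes the same computation.
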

\begin{proof}
\eqref{eq:slashes.1} is an immediate consequence of \eqref{E:LUNITANDRADOFUANDT}
(and \eqref{E:RADDEF}).

To derive the first equation in \eqref{eq:slashes.2}, simply note that $Yx^2 = 1-\Speed^{-2}(X^2)^2$ and $Yx^3 = -\Speed X^2 X^3$ by \eqref{eq:Cart.to.geo.2}, and that $Yt = Y u = 0$ since
$Y$ is $\ell_{t,u}$-tangent. 
The second equation in \eqref{eq:slashes.2} 
follows from similar reasoning. \qedhere 
\end{proof}

\begin{lemma}[\textbf{Relation\footnote{We could also obtain $\srd_t = \rd_t + (L^1 +\f{X^2L^2 + X^3L^3}{X_1})\rd_1$. Since this will not be explicitly needed, we will not prove it.} between 
$\{\rd_a\}_{a=1,2,3}$, $\{\srd_u, \srd_2, \srd_3\}$, and
$\{\bX,Y,Z\}$}]
\label{L:GEOMETRICCOORDINATEVECTORFIELDSINTERMSOFCARTESIANVECTORFIELDS}
The following identities hold:
\begin{subequations}
\begin{align}
	\srd_u 
		& 
		= \f{\upmu \Speed^2}{X^1} \rd_1
		= \Rad - \upmu \Speed^2 \f {X^2}{(X^1)^2} Y - \upmu \Speed^2 \f {X^3}{(X^1)^2} Z,
			\label{E:GEOMETRICUCOORDINATEPARTIALDERIVATIVESINTERMSOFOTHERVECTORFIELDS} \\
	\srd_2 & = \rd_2 -\f{X^2}{X^1} \rd_1	
		= \left\lbrace 1 + \left(\frac{X^2}{X^1}\right)^2 \right\rbrace Y
			+
			\frac{X^2 X^3}{(X^1)^2}
			Z,
		\label{E:GEOMETRIC2COORDINATEPARTIALDERIVATIVESINTERMSOFOTHERVECTORFIELDS} \\
	\srd_3 & = \rd_3 -\f{X^3}{X^1}\rd_1
	= \frac{X^2 X^3}{(X^1)^2} Y
		+
		\left\lbrace 1 + \left(\frac{X^3}{X^1}\right)^2 \right\rbrace
		Z.
	\label{E:GEOMETRIC3COORDINATEPARTIALDERIVATIVESINTERMSOFOTHERVECTORFIELDS} 
\end{align}
\end{subequations}
\end{lemma}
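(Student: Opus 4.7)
The plan is to establish each identity in two steps: first express $\srd_u, \srd_2, \srd_3$ as combinations of the Cartesian spatial derivatives $\{\rd_1, \rd_2, \rd_3\}$, then use the change-of-frame identities from Lemma~\ref{lem:Cart.to.geo} to rewrite in terms of $\{\bX, Y, Z\}$.

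For the first step, I would exploit that $\srd_u t = \srd_2 t = \srd_3 t = 0$, so each of these vectorfields is $\Sigma_t$-tangent and can be written as $a\rd_1 + b\rd_2 + c\rd_3$. The coefficients are determined by evaluating on the coordinate functions $u, x^2, x^3$: e.g., for $\srd_u$ I have $\srd_u x^2 = 0$, $\srd_u x^3 = 0$, and $\srd_u u = 1$, which forces $b = c = 0$ and $a = 1/(\rd_1 u)$. The key computation is therefore to identify $\rd_1 u$. Using \eqref{eq:Cart.to.geo.1} to expand $\rd_1 = \Speed^{-2} X^1 X - \frac{X^2}{X^1}Y - \frac{X^3}{X^1}Z$, and using $Y u = Z u = 0$ (since $Y, Z$ are $\ell_{t,u}$-tangent by construction) together with $\Rad u = 1$ from \eqref{E:LUNITANDRADOFUANDT}, which yields $Xu = 1/\upmu$, I obtain $\rd_1 u = X^1/(\upmu \Speed^2)$. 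Hence $\srd_u = \frac{\upmu \Speed^2}{X^1}\rd_1$. An analogous computation using $\rd_2 u = \Speed^{-2} X^2 Xu = X^2/(\upmu \Speed^2)$ and $\rd_3 u = X^3/(\upmu \Speed^2)$ (from \eqref{eq:Cart.to.geo.2}) gives the first equalities in \eqref{E:GEOMETRIC2COORDINATEPARTIALDERIVATIVESINTERMSOFOTHERVECTORFIELDS}--\eqref{E:GEOMETRIC3COORDINATEPARTIALDERIVATIVESINTERMSOFOTHERVECTORFIELDS}.

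For the second step, I simply substitute the identities \eqref{eq:Cart.to.geo.1}--\eqref{eq:Cart.to.geo.2} into the expressions just obtained. For $\srd_u$: substituting \eqref{eq:Cart.to.geo.1} into $\srd_u = \frac{\upmu \Speed^2}{X^1}\rd_1$ yields $\upmu X - \upmu \Speed^2 \frac{X^2}{(X^1)^2} Y - \upmu \Speed^2 \frac{X^3}{(X^1)^2} Z$, and recognizing $\upmu X = \bX$ finishes. For $\srd_2 = \rd_2 - \frac{X^2}{X^1}\rd_1$, substituting both \eqref{eq:Cart.to.geo.1} and \eqref{eq:Cart.to.geo.2} causes the $X$-components to cancel exactly (this cancellation is the entire point of these frames being compatible), leaving the claimed combination of $Y$ and $Z$. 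The case of $\srd_3$ is completely parallel.

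I do not expect a genuine obstacle: the whole lemma is a direct matrix-inversion exercise once one has established $\rd_1 u = X^1/(\upmu \Speed^2)$. The only step requiring mild care is confirming this identity, which in turn rests on the trio $\Rad u = 1$, $Y u = Z u = 0$, and the expansion of $\rd_1$ from Lemma~\ref{lem:Cart.to.geo}; everything else is algebra. No further properties of $\upmu$ beyond its defining identity and the normalization $\bX = \upmu X$ are needed.
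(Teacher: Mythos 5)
Your proof is correct, and it reaches the same destination by a route that is recognizably a ``transpose'' of the paper's. Both arguments determine the change-of-frame coefficients by evaluating vectorfields on coordinate functions and then substituting Lemma~\ref{lem:Cart.to.geo} for the second set of equalities. The paper evaluates $\srd_u, \srd_2, \srd_3$ on the Cartesian coordinate $x^1$: this requires first inverting the $(Y,Z)\!\leftrightarrow\!(\srd_2,\srd_3)$ block of Lemma~\ref{lem:slashed} and twice invoking the norm identity $\sum_{a=1}^3 (X^a)^2 = \Speed^2$. You instead evaluate $\rd_1, \rd_2, \rd_3$ on the eikonal function $u$: the coefficients drop out directly from $\Rad u = 1$ (hence $Xu = \upmu^{-1}$), $Yu = Zu = 0$, and Lemma~\ref{lem:Cart.to.geo}, so no matrix inversion is needed and the identity $\sum_a(X^a)^2=\Speed^2$ never reappears explicitly (it is already baked into \eqref{eq:Cart.to.geo.1}). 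Your route is marginally more economical for the same reason that dualizing linear-algebra arguments often is; the cancellation of the $X$-components in the second step, which you flag as ``the entire point,'' is exactly the check that the two decompositions are compatible. No gaps.
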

\begin{proof}
It suffices to derive the following identities:
\begin{equation}\label{eq:srd.x1}
\srd_u x^1=  \f{\upmu \Speed^2}{X^1},\quad \slashed \rd_2 x^1 = -\f{X^2}{X^1},\quad \slashed \rd_3 x^1 = -\f{X^3}{X^1};
\end{equation}
it is straightforward to see that the first identities in each of 
\eqref{E:GEOMETRICUCOORDINATEPARTIALDERIVATIVESINTERMSOFOTHERVECTORFIELDS}--\eqref{E:GEOMETRIC3COORDINATEPARTIALDERIVATIVESINTERMSOFOTHERVECTORFIELDS} follow from \eqref{eq:srd.x1}; 
the second identities in
\eqref{E:GEOMETRICUCOORDINATEPARTIALDERIVATIVESINTERMSOFOTHERVECTORFIELDS}--\eqref{E:GEOMETRIC3COORDINATEPARTIALDERIVATIVESINTERMSOFOTHERVECTORFIELDS}
then follow from the first ones and Lemma~\ref{lem:Cart.to.geo}. To prove \eqref{eq:srd.x1},
we invert \eqref{eq:slashes.2} to obtain
(with the help of the identity $\sum_{a=1}^3 (X^a)^2 = c^2$,
which follows from \eqref{E:RADIALVECTORFIELDSLENGTHSANDLRADIALVECTORFIELDSNORMALIZATIONS} and \eqref{E:ACOUSTICALMETRIC}):
$$\slashed \rd_2 = \left\lbrace \f{\Speed^2}{(X^1)^2} - (\f{X^3}{X^1})^2 \right\rbrace Y 
	+ 
	\f{X^2 X^3}{(X^1)^2} Z,\quad \slashed \rd_3 = \f{X^2 X^3}{(X^1)^2} Y 
	+ 
	\left\lbrace(\f{\Speed^2}{(X^1)^2} - (\f{X^2}{X^1})^2 \right\rbrace)Z.$$ 

On the other hand, by \eqref{eq:Cart.to.geo.2}, $Yx^1 = -\Speed^{-2} X^2 X^1$, $Zx^1 = -\Speed^{-2} X^3 X^1$. 
Hence,
$$\slashed \rd_2 x^1 = -\f{X^2}{X^1},\quad \slashed \rd_3 x^1 = -\f{X^3}{X^1}.$$
Plugging back into the second identity in \eqref{eq:slashes.1}, we obtain:
$$\srd_u x^1= \upmu X^1 - \sum_{A=2}^3 \upmu X^A \srd_A x^1= \upmu X^1 + \sum_{A=2}^3 \upmu \f{(X^A)^2}{X^1} = \f{\upmu \Speed^2}{X^1},$$
where we again used $\sum_{i=1}^3 (X^i)^2 = c^2$. \qedhere
\end{proof}

\subsection{Projection tensorfields, \texorpdfstring{$\vec{G}_{(Frame)}$,}{frame components,} 
and projected Lie derivatives}
\label{SS:PROJECTIONTENSORFIELDANDPROJECTEDLIEDERIVATIVES}

\begin{definition}[\textbf{Projection tensorfields}]
We define the $\Sigma_t$ projection tensorfield\footnote{In \eqref{E:SIGMATPROJECTION},
we have corrected a sign error that occurred in
\cite{jSgHjLwW2016}*{Definition 2.8}.} 
$\Sigmatproject$
and the $\ell_{t,u}$ projection tensorfield
$\Lineproject$ relative to Cartesian coordinates as follows:
\begin{subequations}
\begin{align} 
	\Sigmatproject_{\nu}^{\ \mu} 
	&\doteq	\delta_{\nu}^{\ \mu}
			+ 
			\Transport_{\nu} \Transport^{\mu} 
		= \delta_{\nu}^{\ \mu}
			- \delta_{\nu}^{\ 0} \Lunit^{\mu}
			- \delta_{\nu}^{\ 0} \Radunit^{\mu},
			\label{E:SIGMATPROJECTION} \\
	\Lineproject_{\nu}^{\ \mu} 
	&\doteq	\delta_{\nu}^{\ \mu}
			+ \Radunit_{\nu} \Lunit^{\mu} 
			+ \Lunit_{\nu} (\Lunit^{\mu} + \Radunit^{\mu}) 
		= \delta_{\nu}^{\ \mu}
			- \delta_{\nu}^{\ 0} \Lunit^{\mu} 
			+  \Lunit_{\nu} \Radunit^{\mu}.
			\label{E:LINEPROJECTION}
	\end{align}
\end{subequations}
In \eqref{E:SIGMATPROJECTION}--\eqref{E:LINEPROJECTION},
$\delta_{\nu}^{\ \mu}$ is the standard Kronecker delta.
The last equalities in \eqref{E:SIGMATPROJECTION} and \eqref{E:LINEPROJECTION} follow from 
\eqref{E:TRANSPORTVECTORFIELDINTERMSOFLUNITANDRADUNIT}--\eqref{E:MATERIALDERIVATIVEINDICESDOWN}.
\end{definition}

\begin{definition}[\textbf{Projections of tensorfields}]
Given any type $\binom{m}{n}$ spacetime tensorfield $\xi$,
we define its $\Sigma_t$ projection $\Sigmatproject \xi$
and its $\ell_{t,u}$ projection $\Lineproject \xi$
as follows:
\begin{subequations}
\begin{align} 
(\Sigmatproject \xi)_{\nu_1 \cdots \nu_n}^{\mu_1 \cdots \mu_m}
& \doteq
	\Sigmatproject_{\widetilde{\mu}_1}^{\ \mu_1} \cdots \Sigmatproject_{\widetilde{\mu}_m}^{\ \mu_m}
	\Sigmatproject_{\nu_1}^{\ \widetilde{\nu}_1} \cdots \Sigmatproject_{\nu_n}^{\ \widetilde{\nu}_n} 
	\xi_{\widetilde{\nu}_1 \cdots \widetilde{\nu}_n}^{\widetilde{\mu}_1 \cdots \widetilde{\mu}_m},
		\\
(\Lineproject \xi)_{\nu_1 \cdots \nu_n}^{\mu_1 \cdots \mu_m}
& \doteq 
	\Lineproject_{\widetilde{\mu}_1}^{\ \mu_1} \cdots \Lineproject_{\widetilde{\mu}_m}^{\ \mu_m}
	\Lineproject_{\nu_1}^{\ \widetilde{\nu}_1} \cdots \Lineproject_{\nu_n}^{\ \widetilde{\nu}_n} 
	\xi_{\widetilde{\nu}_1 \cdots \widetilde{\nu}_n}^{\widetilde{\mu}_1 \cdots \widetilde{\mu}_m}.
	\label{E:STUPROJECTIONOFATENSOR}
\end{align}
\end{subequations}
\end{definition}
We say that a spacetime tensorfield $\xi$ is $\Sigma_t$-tangent 
(respectively $\ell_{t,u}$-tangent)
if $\Sigmatproject \xi = \xi$
(respectively if $\Lineproject \xi = \xi$).
Alternatively, we say that $\xi$ is a
$\Sigma_t$ tensor (respectively $\ell_{t,u}$ tensor).


\begin{definition}[\textbf{$\ell_{t,u}$ projection notation}]
	\label{D:STUSLASHPROJECTIONNOTATION}
	If $\xi$ is a spacetime tensor, then 
	$\angxi \doteq \Lineproject \xi$.

	If $\xi$ is a symmetric type $\binom{0}{2}$ spacetime tensor and $V$ is a spacetime
	vectorfield, then $\angxiarg{V} \doteq \Lineproject (\xi_V)$,
	where $\xi_V$ is the spacetime one-form with 
	Cartesian components $\xi_{\alpha \nu} V^{\alpha}$, $(\nu = 0,1,2,3)$.
\end{definition}

\begin{remark}[Clarification of the symbols $(\srd_t, \srd_u, \srd_2, \srd_3)$]
	\label{R:MEANINGOFSRDTANDSRDU}
	We caution that the coordinate partial derivative vectorfields $(\srd_t, \srd_u, \srd_2, \srd_3)$
	from Definition~\ref{D:GEOMETRICCOORDINATEVECTORFIELDS}
	are not $\ell_{t,u}$ projections of other vectorfields, i.e.,
	for $(\srd_t, \srd_u, \srd_2, \srd_3)$, we are not using the ``slash conventions''
	of Definition~\ref{D:STUSLASHPROJECTIONNOTATION}.
\end{remark}

Throughout, $\Lie_V \xi$ denotes the Lie derivative of the tensorfield
$\xi$ with respect to the vectorfield $V$.
We often use the Lie bracket notation
$[V,W] \doteq \Lie_V W$ when $V$ and $W$ are vectorfields.

\begin{definition}[$\Sigma_t$- \textbf{and} $\ell_{t,u}$-\textbf{projected Lie derivatives}]
\label{D:PROJECTEDLIE}
If $\xi$ is a tensorfield 
and $V$ is a vectorfield,
we define the $\Sigma_t$-projected Lie derivative
$\SigmatLie_V \xi$
and the $\ell_{t,u}$-projected Lie derivative
$\angLie_V \xi$ as follows:
\begin{align} 
	\SigmatLie_V \xi
	& \doteq \Sigmatproject \Lie_V \xi,
	&&
	\angLie_V \xi
	\doteq \Lineproject \Lie_V \xi.
	\label{E:PROJECTIONS}
\end{align}
\end{definition}

\begin{definition}[\textbf{Components of $\vec{G}$ relative to the non-rescaled frame}]
	\label{D:GFRAMEANDHFRAMEARRAYS}
	We define:
	\begin{align}
		\vec{G}_{(Frame)} 
		& \doteq 
			\left\lbrace 
				\vec{G}_{\Lunit \Lunit}, \vec{G}_{\Lunit \Radunit}, 
				\vec{G}_{\Radunit \Radunit}, \angGarg{\Lunit}, 
				\angGarg{\Radunit}, 
				\angG 
			\right\rbrace,
				\label{E:GFRAME}
	\end{align}
	where $\vec{G}_{\alpha \beta}$
	is defined in \eqref{E:BIGGDEF}.
\end{definition}

Our convention is that derivatives of $\vec{G}_{(Frame)}$
form a new array consisting of the differentiated components.
For example,
$\angLie_{\Lunit} \vec{G}_{(Frame)} 
		\doteq 
			\left\lbrace 
				\Lunit(\vec{G}_{\Lunit \Lunit}), 
				\Lunit (\vec{G}_{\Lunit \Radunit}), 
				\cdots,
				\angLie_{\Lunit} \angG
			\right\rbrace
$,
where 
$
\Lunit(\vec{G}_{\Lunit \Lunit})
\doteq \left\lbrace
					\Lunit(G_{\Lunit \Lunit}^1),
					\Lunit(G_{\Lunit \Lunit}^2),
					\cdots,
					\Lunit(G_{\Lunit \Lunit}^5)
			 \right\rbrace
$,
$
\angLie_{\Lunit} (\angGarg{\Radunit})
	\doteq \left\lbrace
					\angLie_{\Lunit} (\NovecangGarg{\Radunit}{1}),
					\angLie_{\Lunit} (\NovecangGarg{\Radunit}{2}),
					\cdots,
					\angLie_{\Lunit} (\NovecangGarg{\Radunit}{5})
			 \right\rbrace,
$
etc.

\subsection{First and second fundamental forms and covariant differential operators}
\label{SS:FUNDFORMSANDCOVDERIVOPS}

\begin{definition}[\textbf{First fundamental forms}] \label{D:FIRSTFUND}
	Let $\Sigmatproject$ and $\Lineproject$ be as in Definition~\ref{D:STUSLASHPROJECTIONNOTATION}.
	We define the first fundamental form $\gt$ of $\Sigma_t$ and the 
	first fundamental form $\gsphere$ of $\ell_{t,u}$ as follows:
	\begin{align}
		\gt
		\doteq \Sigmatproject g,
		\qquad
		\gsphere
		\doteq \Lineproject g.
		\label{E:GTANDGSPHERESPHEREDEF}
	\end{align}

	We define the inverse first fundamental forms by raising the indices with $g^{-1}$:
	\begin{align} \label{E:GGTINVERSEANDGSPHEREINVERSEDEF}
		(\gt^{-1})^{\mu \nu}
		& \doteq (g^{-1})^{\mu \alpha} (g^{-1})^{\nu \beta} \gt_{\alpha \beta},
		&
		(\gsphere^{-1})^{\mu \nu}
		& \doteq (g^{-1})^{\mu \alpha} (g^{-1})^{\nu \beta} \gsphere_{\alpha \beta}.
	\end{align}
\end{definition}
$\gt$ is the Riemannian metric on $\Sigma_t$ induced by $g$ while
$\gsphere$ is the Riemannian metric on $\ell_{t,u}$ induced by $g$.
Simple calculations imply that
$(\gt^{-1})^{\mu \alpha} \gt_{\alpha \nu} = \Sigmatproject_{\nu}^{\ \mu}$
and $(\gsphere^{-1})^{\mu \alpha} \gsphere_{\alpha \nu} = \Lineproject_{\nu}^{\ \mu}$.

\begin{lemma}[\textbf{Identities for induced metrics}]
\label{lem:induced.metric}
In the $(t,u,x^2,x^3)$ coordinate system, we have:
$$\underline{g} = \f{\upmu^2 \Speed^{2}}{(X^1)^2}  du \otimes du  
- \upmu \sum_{A=2}^3 \f{X^A}{(X^1)^2} (d x^A \otimes du + du \otimes d x^A) 
+ 
\slashed{g},\quad \slashed{g} = \sum_{A,B=2}^3 \Speed^{-2}(\de_{AB} + \frac{X^A X^B}{(X^1)^2}) dx^A \otimes dx^B. $$
Moreover, 
$$\slashed g^{-1} = \sum_{A,B=2}^3 (\Speed^2 \de^{AB} - X^A X^B) \srd_A \otimes \srd_B.$$
\end{lemma}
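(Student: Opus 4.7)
The proof is a direct coordinate computation, and my plan proceeds in three steps.

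First, I observe that $\Sigma_t$ is the level set $\{t = \mathrm{const}\}$, so the pullback of \eqref{E:ACOUSTICALMETRIC} to $\Sigma_t$ simply has $dt = 0$, giving $\gt = \Speed^{-2}\sum_{a=1}^3 dx^a \otimes dx^a$ restricted to $\Sigma_t$. My task reduces to rewriting $dx^1\restriction_{\Sigma_t}$ in terms of the geometric one-forms $du, dx^2, dx^3$. Since any one-form on $\Sigma_t$ is determined by its action on the coordinate vectorfields $\srd_u, \srd_2, \srd_3$, I read off $dx^1\restriction_{\Sigma_t} = (\srd_u x^1)\,du + (\srd_2 x^1)\,dx^2 + (\srd_3 x^1)\,dx^3$ and apply the identities \eqref{eq:srd.x1} from Lemma~\ref{L:GEOMETRICCOORDINATEVECTORFIELDSINTERMSOFCARTESIANVECTORFIELDS} to obtain
\[
dx^1\restriction_{\Sigma_t} = \f{\upmu \Speed^2}{X^1}\,du - \f{X^2}{X^1}\,dx^2 - \f{X^3}{X^1}\,dx^3.
\]

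Second, I substitute this into $\Speed^{-2}\,dx^1 \otimes dx^1$ and combine with the $\Speed^{-2}\, dx^A \otimes dx^A$ terms for $A=2,3$. The $du \otimes du$ coefficient becomes $\upmu^2 \Speed^2/(X^1)^2$, the coefficients of $du \otimes dx^A + dx^A \otimes du$ are $-\upmu X^A/(X^1)^2$, and the $dx^A \otimes dx^B$ coefficients collect into $\Speed^{-2}(\de_{AB} + X^A X^B/(X^1)^2)$. This yields the displayed formula for $\gt$; identifying the pure $dx^A \otimes dx^B$ terms gives $\gsphere$, which is consistent with its definition as the restriction of $g$ to $\ell_{t,u}$ (cut out inside $\Sigma_t$ by $u = \mathrm{const}$, so that $du = 0$ in addition to $dt = 0$).

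Third, to verify the formula for $\gsphere^{-1}$, I multiply the candidate matrix against $\gsphere$ directly:
\[
\sum_{C=2}^3 \Speed^{-2}\!\left(\de_{AC} + \f{X^A X^C}{(X^1)^2}\right)\!\left(\Speed^2 \de^{CB} - X^C X^B\right) = \de_A^B - \Speed^{-2} X^A X^B + \f{X^A X^B}{(X^1)^2} - \Speed^{-2} \f{X^A X^B}{(X^1)^2}\sum_{C=2}^3 (X^C)^2.
\]
Using the constraint $\sum_{a=1}^3 (X^a)^2 = \Speed^2$, which follows from $g(X,X)=1$ together with \eqref{E:ACOUSTICALMETRIC}, I rewrite $\sum_{C=2}^3 (X^C)^2 = \Speed^2 - (X^1)^2$, and the four terms telescope exactly to $\de_A^B$. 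This cancellation is the only nontrivial identity in the lemma and is the main bookkeeping hazard; everything else is pure substitution.
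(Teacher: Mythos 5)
Your proof is correct and follows essentially the same route as the paper's, which cites Lemma~\ref{L:GEOMETRICCOORDINATEVECTORFIELDSINTERMSOFCARTESIANVECTORFIELDS} and the Cartesian form $\gt_{ij} = \Speed^{-2}\de_{ij}$ for the first two identities and inverts the $2\times 2$ matrix using $\sum_{a}(X^a)^2 = \Speed^2$ for the last; your dual formulation via $dx^1\restriction_{\Sigma_t}$ and the explicit product check for $\gsphere^{-1}$ are just the same computations written out.
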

\begin{proof}
	The identities for $\underline{g}$ and $\slashed g$
	follow easily from Lemma~\ref{L:GEOMETRICCOORDINATEVECTORFIELDSINTERMSOFCARTESIANVECTORFIELDS}
	and the fact that $\underline{g}_{ij} = \Speed^{-2} \updelta_{ij}$ in Cartesian coordinates
	(see \eqref{E:ACOUSTICALMETRIC}).
	The identity for $\slashed g^{-1}$ follows from inverting the matrix $(\slashed g_{AB})_{A,B=2,3}$
	and using the identity
	$\sum_{i=1}^3 (X^i)^2 = c^2$,
	which follows from the first identity in \eqref{E:RADIALVECTORFIELDSLENGTHSANDLRADIALVECTORFIELDSNORMALIZATIONS} 
	and \eqref{E:ACOUSTICALMETRIC}.
\end{proof}



\begin{definition}[\textbf{Differential operators associated to the metrics}] 
\label{D:CONNECTIONS}
	\ \\
	\begin{itemize}
		\item $\D$ denotes the Levi-Civita connection of the acoustical metric $g$.
		\item $\angD$ denotes the Levi-Civita connection of $\gsphere$.
		\item If $f$ is a scalar function on $\ell_{t,u}$, 
	then $\angdiff f \doteq \angD f = \Lineproject \D f$,
	where $\D f$ is the gradient one-form associated to $f$.
		\item If $\xi$ is an $\ell_{t,u}$-tangent one-form,
			then $\angdiv \xi$ is the scalar function
			$\angdiv \xi \doteq \ginversesphere \cdot \angD \xi$.
		\item Similarly, if $V$ is an $\ell_{t,u}$-tangent vectorfield,
			then $\angdiv V \doteq \ginversesphere \cdot \angD V_{\flat}$,
			where $V_{\flat}$ is the one-form $\gsphere$-dual to $V$.
		\item If $\xi$ is a symmetric type $\binom{0}{2}$ 
		 $\ell_{t,u}$-tangent tensorfield, then
		 $\angdiv \xi$ is the $\ell_{t,u}$-tangent 
		 one-form $\angdiv \xi \doteq \ginversesphere \cdot \angD \xi$,
		 where the two contraction indices in $\angD \xi$
		 correspond to the operator $\angD$ and the first index of $\xi$.
		\item  $\angLap \doteq \ginversesphere \cdot \angDsquared$ denotes the covariant Laplacian 
			corresponding to $\gsphere$.
		\end{itemize}
\end{definition}

\subsection{Ricci coefficients}

\begin{definition}[\textbf{Ricci coefficients}]
\label{D:SECONDFUND}
\

\begin{enumerate}
\item Define the \textbf{second fundamental form} $k$ of $\Sigma_t$
and the \textbf{null second fundamental form} $\upchi$ of $\ell_{t,u}$ 
as follows:
\begin{align} \label{E:CHIANDSECONDFUNDDEF}
	k 
	&\doteq \frac{1}{2} \SigmatLie_{\Transport} \gt,
	&
	\upchi
	& \doteq \frac{1}{2} \angLie_{\Lunit} \gsphere.
\end{align}
\item Define $\upzeta$ to be the $\ell_{t,u}$-tangent one-form whose components are given by:
\begin{align} \label{E:ZETADEF}
		\upzeta(\srd_A)
		& \doteq g(\D_{\srd_A} \Lunit, \Radunit) 
		= \upmu^{-1} g(\D_{\srd_A} \Lunit, \Rad),\quad A=2,3.
	\end{align}
	\item Given any symmetric type $\binom{0}{2}$ 
		 $\ell_{t,u}$-tangent tensorfield $\xi$, define its \textbf{trace} by:
		 $$\mytr \xi\doteq (\slashed g^{-1})^{AB} \xi_{AB}.$$
\end{enumerate}

\end{definition}

\begin{lemma}[\textbf{Useful identities for the Ricci coefficients}] 
	\label{L:IDENTITIESINVOLVING}
	The following identities hold:\footnote{Here, $
\angGarg{\Lunit} \overset{\contr}{\otimes} \angdiff \threePsi
\doteq
\sum_{\imath=1}^5 
\NovecangGarg{\Lunit}{\imath} \otimes \angdiff \Psi_{\imath}
$,
and similarly for the other terms involving $\overset{\contr}{\otimes}$.}
	\begin{subequations}
	\begin{align} 
	\upchi
	& =  g_{ab} (\angdiff L^a)\otimes (\angdiff x^b) 
		+ 
		\frac{1}{2} \angG \contr \Lunit \threePsi
		+ 
		\frac{1}{2} \angdiff \threePsi \overset{\contr}{\otimes}  \angGarg{\Lunit} 
		-
		\frac{1}{2} \angGarg{\Lunit} \overset{\contr}{\otimes} \angdiff \threePsi, \label{E:CHIINTERMSOFOTHERVARIABLES}
		\\
	\mytr \upchi 
	& = g_{ab} \ginversesphere \cdot \left\lbrace (\angdiff \Lunit^a) \otimes (\angdiff x)^b \right\rbrace
		+ \frac{1}{2} \ginversesphere \cdot \angG\contr\Lunit\threePsi,
			 \label{E:TRCHIINTERMSOFOTHERVARIABLES}\\
	\angk & = 
			\frac{1}{2} \upmu^{-1} \angG\contr \Rad \threePsi 
				+ 
				\frac{1}{2} \angG \contr \Lunit \threePsi
			- 
			\frac{1}{2} \angGarg{\Lunit} \overset{\contr}{\otimes} \angdiff \threePsi
			- 
			\frac{1}{2} \angdiff \threePsi \overset{\contr}{\otimes} \angGarg{\Lunit} 
			- 
			\frac{1}{2} \angGarg{\Radunit} \overset{\contr}{\otimes} \angdiff \threePsi
			- 
			\frac{1}{2} \angdiff \threePsi \overset{\contr}{\otimes} \angGarg{\Radunit}, \label{eq:angk.def} \\
	\upzeta & = - \frac{1}{2} \upmu^{-1}\angGarg{\Lunit} \contr \Rad \threePsi + \frac{1}{2} \angGarg{\Radunit}\contr\Lunit \threePsi
			- \frac{1}{2} \vec{G}_{\Lunit \Radunit}\contr\angdiff \threePsi
			- \frac{1}{2} \vec{G}_{\Radunit \Radunit}\contr\angdiff \threePsi. \label{eq:upzeta.def}
\end{align}
\end{subequations}
\end{lemma}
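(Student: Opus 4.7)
All four identities are verified by direct computation in Cartesian coordinates, adapting the scalar-variable formulas of \cite{jSgHjLwW2016} to our array-valued $\threePsi$ setting via Definition~\ref{D:ARRAYNOTATION}. The uniform input is the chain rule
$V g_{\alpha\beta} = \vec G_{\alpha\beta}\contr V\threePsi$
from Definition~\ref{D:BIGGANDBIGH}, together with the closed-form one-form identities $\Lunit_\flat = -\upmu\,du$ (immediate from $\Lunit^\alpha = -\upmu(g^{-1})^{\alpha\beta}\partial_\beta u$) and $\Transport_\flat = -dt$ (equation \eqref{E:MATERIALDERIVATIVEINDICESDOWN}). These two relations, treated as one-forms with known exterior derivatives $d\Lunit_\flat = -d\upmu\wedge du$ and $d\Transport_\flat = 0$, will produce the antisymmetric $\angdiff\threePsi\overset{\contr}{\otimes}\angGarg{(\cdot)}$ correction pieces in the final formulas.

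For \eqref{E:CHIINTERMSOFOTHERVARIABLES}, I would start from $\upchi = \tfrac{1}{2}\angLie_\Lunit\gsphere$ and apply the coordinate formula $(\Lie_\Lunit g)_{\alpha\beta} = \Lunit g_{\alpha\beta} + g_{\gamma\beta}\partial_\alpha \Lunit^\gamma + g_{\alpha\gamma}\partial_\beta \Lunit^\gamma$. Using the chain rule above and $\Lunit^0 = 1$ (so $\angdiff \Lunit^0 = 0$), the angular projection yields the symmetric expression
$$\upchi = \tfrac{1}{2}\angG\contr\Lunit\threePsi + \tfrac{1}{2}g_{ab}\bigl[(\angdiff \Lunit^a)\otimes(\angdiff x^b) + (\angdiff x^a)\otimes(\angdiff \Lunit^b)\bigr].$$
To recast this in the asymmetric form of the lemma, I would use that $d\Lunit_\flat = -d\upmu\wedge du$ annihilates pairs of $\ell_{t,u}$-tangent vectorfields (since $du$ does); expanding $d\Lunit_\flat$ in Cartesian coordinates through $\Lunit_\beta = g_{\beta\gamma}\Lunit^\gamma$ and the $\vec G$ chain rule yields the antisymmetric identity
$$g_{ab}\bigl[(\angdiff \Lunit^a)\otimes(\angdiff x^b) - (\angdiff x^a)\otimes(\angdiff \Lunit^b)\bigr] = \angGarg{\Lunit}\overset{\contr}{\otimes}\angdiff\threePsi - \angdiff\threePsi\overset{\contr}{\otimes}\angGarg{\Lunit},$$
and adding half of this to the symmetric expression produces \eqref{E:CHIINTERMSOFOTHERVARIABLES}. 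The trace identity \eqref{E:TRCHIINTERMSOFOTHERVARIABLES} then follows at once: tracing \eqref{E:CHIINTERMSOFOTHERVARIABLES} against the symmetric $\ginversesphere$ kills the antisymmetric $\angdiff\threePsi\overset{\contr}{\otimes}\angGarg{\Lunit} - \angGarg{\Lunit}\overset{\contr}{\otimes}\angdiff\threePsi$ correction.

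For \eqref{eq:angk.def}, the analogous computation is in fact cleaner, because $\Transport_\flat = -dt$ is constant as a one-form, so $\partial_\alpha \Transport_\beta = 0$ \emph{identically}. Hence
$g_{\gamma\beta}\partial_\alpha \Transport^\gamma = -(\partial_\alpha g_{\beta\gamma})\Transport^\gamma = -\vec G_{\beta \Transport}\contr \partial_\alpha \threePsi$,
which eliminates the explicit angular-derivative-of-$\Transport$ pieces outright. Substituting into $(\Lie_\Transport g)_{\alpha\beta} = \Transport g_{\alpha\beta} + g_{\gamma\beta}\partial_\alpha\Transport^\gamma + g_{\alpha\gamma}\partial_\beta\Transport^\gamma$, projecting angularly, and using $\Transport = \Lunit + \upmu^{-1}\Rad$ (so that $\angGarg{\Transport} = \angGarg{\Lunit} + \angGarg{\Radunit}$ and $\Transport\threePsi = \Lunit\threePsi + \upmu^{-1}\Rad\threePsi$) yields \eqref{eq:angk.def} directly, with no symmetric/antisymmetric decomposition needed. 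Finally, for \eqref{eq:upzeta.def}, I would compute $\upzeta(\srd_A) = \upmu^{-1}g(\D_{\srd_A}\Lunit, \Rad)$ by expanding the covariant derivative via the Christoffel symbols $\Gamma_{\alpha\gamma,\beta} = \tfrac{1}{2}(\partial_\alpha g_{\gamma\beta} + \partial_\gamma g_{\alpha\beta} - \partial_\beta g_{\alpha\gamma})$, converting each $\partial g$-term into a $\vec G$-contraction, and then invoking $d\Lunit_\flat(\srd_A, \Rad) = -\srd_A\upmu$ (from $\Rad u = 1$, $\srd_A u = 0$) to absorb the leftover $\srd_A\Lunit^a$ contributions into the $\upmu^{-1}\angGarg{\Lunit}\contr\Rad\threePsi$ term.

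The main obstacle is pure bookkeeping: tracking tensor-slot orderings in $\overset{\contr}{\otimes}$, the distinction between projected and unprojected quantities, and sign conventions across the four formulas. There is no real insight beyond the closed-form data $\Lunit_\flat = -\upmu\,du$ and $\Transport_\flat = -dt$, which is precisely what converts the naive symmetric Lie-derivative expressions into the tidy, manifestly $\vec G$-only asymmetric forms stated in the lemma.
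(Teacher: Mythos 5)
The paper's own ``proof'' is a one-line citation to \cite[Lemmas~2.13, 2.15]{jSgHjLwW2016}, so you are reconstructing the underlying calculation rather than paraphrasing a written argument. Your structural plan --- compute the relevant Lie derivative or covariant derivative in Cartesian coordinates, convert every $\partial g$ to a $\vec{G}\contr\partial\threePsi$ via the chain rule, then exploit the two closed-form one-form identities $\Lunit_\flat = -\upmu\,du$ and $\Transport_\flat = -dt$ --- is correct and is indeed the route taken in the cited predecessor. Your decomposition of $\upchi$ into the manifestly symmetric Lie-derivative expression plus the antisymmetric $d\Lunit_\flat$ correction is a clean way to see where the $\angdiff\threePsi\overset{\contr}{\otimes}\angGarg{\Lunit} - \angGarg{\Lunit}\overset{\contr}{\otimes}\angdiff\threePsi$ pieces come from, and the observation that tracing against $\ginversesphere$ kills that antisymmetric correction to give \eqref{E:TRCHIINTERMSOFOTHERVARIABLES} is right. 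Your $\angk$ computation is also correct: $\partial_\alpha \Transport_\beta \equiv 0$ converts $g_{\gamma\beta}\partial_\alpha\Transport^\gamma$ into $-\vec{G}_{\beta\Transport}\contr\partial_\alpha\threePsi$, and then $\Transport = \Lunit + \Radunit$, $\Radunit\threePsi = \upmu^{-1}\Rad\threePsi$ yield \eqref{eq:angk.def}.

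One caveat on the $\upzeta$ step. You say that after the Christoffel-symbol expansion of $g(\D_{\srd_A}\Lunit, \Radunit)$, the leftover $\srd_A \Lunit^a$ term is absorbed by invoking $d\Lunit_\flat(\srd_A,\Rad) = -\srd_A\upmu$. That single identity is not enough: the $X_a\,\srd_A \Lunit^a$ contribution also spawns $\srd_A X^a$ terms, and one needs the full set of algebraic normalization relations ($g(\Lunit,\Lunit) = 0$, $g(\Lunit,\Radunit) = -1$, $g(\Radunit,\Radunit) = 1$, $g(\Transport,\Transport) = -1$), differentiated angularly, to recycle all of the $\srd_A\Lunit^a$ and $\srd_A\Radunit^a$ factors into $\vec{G}$-contractions. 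A tidier route, consistent with the spirit of your closed-form one-form philosophy, is to write
$\upzeta(\srd_A) = g(\D_{\srd_A}\Lunit, \Radunit) = -g(\Lunit, \D_{\srd_A}\Radunit) = -(\D_{\srd_A}\Radunit_\flat)(\Lunit)$
with $\Radunit_\flat = -dt - \Lunit_\flat = -dt + \upmu\,du$; the $du$-contributions then vanish because $\Lunit u = 0$ and $\D^2_{\srd_A\Lunit}u = 0$ (the latter following from $\Lunit_\flat = -\upmu\,du$, $g(\Lunit,\Lunit)=0$), leaving $\upzeta(\srd_A) = \D^2_{\srd_A\Lunit} t = \Transport^\mu\Gamma_{\srd_A\Lunit,\mu}$, which expands into pure $\vec{G}$-contractions without any residual $\srd_A\Lunit^a$ terms at all. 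To match the exact form stated in \eqref{eq:upzeta.def} one then uses $\vec{G}_{\Transport\Transport}\contr\angdiff\threePsi = 0$ (a consequence of $\Transport_\flat$ being constant and $g(\Transport,\Transport) = -1$). Either way, the conclusion is the same and the discrepancy is bookkeeping, as you anticipated.
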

\begin{proof}
		This is the same as \cite[Lemmas~2.13, 2.15]{jSgHjLwW2016}
		except for small modifications incorporating the third dimension. \qedhere
\end{proof}


\subsection{Pointwise norms}
\label{SS:POINTWISENORMS}
We always measure the magnitude of $\ell_{t,u}$ tensors\footnote{Note that in contrast, for $\Sigma_t$ tensors, we measure 
 their magnitude using the Euclidean metric or an equivalent norm; see, for example, 
Definition~\ref{def:spatial.derivatives}.} using $\gsphere$.

\begin{definition}[\textbf{Pointwise norms}]
	\label{D:POINTWISENORM}
	For any type $\binom{m}{n}$ $\ell_{t,u}$ tensor $\xi_{\nu_1 \cdots \nu_n}^{\mu_1 \cdots \mu_m}$,
	we define:
	\begin{align} \label{E:POINTWISENORM}
		|\xi|
		\doteq 
		\sqrt{
		\gsphere_{\mu_1 \widetilde{\mu}_1} \cdots \gsphere_{\mu_m \widetilde{\mu}_m}
		(\ginversesphere)^{\nu_1 \widetilde{\nu}_1} \cdots (\ginversesphere)^{\nu_n \widetilde{\nu}_n}
		\xi_{\nu_1 \cdots \nu_n}^{\mu_1 \cdots \mu_m}
		\xi_{\widetilde{\nu}_1 \cdots \widetilde{\nu}_n}^{\widetilde{\mu}_1 \cdots \widetilde{\mu}_m}
		}.
	\end{align}
\end{definition}

\subsection{Transport equations for the eikonal function quantities}
\label{SS:TRANSPORT}
The next lemma provides the transport equations that, 
in conjunction with \eqref{E:TRCHIINTERMSOFOTHERVARIABLES}, 
we use to estimate
the eikonal function quantities 
$\upmu$,
$\Lunit_{(Small)}^i$,
and $\mytr \upchi$
below top-order.

\begin{lemma}\cite{jSgHjLwW2016}*{Lemma 2.12; \textbf{The transport equations verified by} $\upmu$ \textbf{and} $\Lunit_{(Small)}^i$}
 \label{L:UPMUANDLUNITIFIRSTTRANSPORT}
The following transport equations hold:
\begin{align} \label{E:UPMUFIRSTTRANSPORT}
	\Lunit \upmu 
	& =
		\frac{1}{2} \vec{G}_{\Lunit \Lunit}\contr \Rad \threePsi
		- \frac{1}{2} \upmu \vec{G}_{\Lunit \Lunit}\contr\Lunit \threePsi
		- \upmu \vec{G}_{\Lunit \Radunit}\contr\Lunit \threePsi,
			\\
	\Lunit \Lunit^i
	&  = \frac{1}{2} \vec{G}_{\Lunit \Lunit}\contr(\Lunit \threePsi) \Radunit^i
			- 
			\angGmixedarg{\Lunit}{\#}\contr(\Lunit \threePsi) \cdot \angdiff x^i
			+ 
			\frac{1}{2} \vec{G}_{\Lunit \Lunit}\contr(\angdiffuparg{\#} \threePsi) \cdot \angdiff x^i.
			\label{E:LLUNITI}
\end{align}
\end{lemma}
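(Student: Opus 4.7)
The plan is to derive both identities simultaneously by exploiting the geodesic property of $\Lgeo$. Since $\Lunit = \upmu \Lgeo$ and $\D_{\Lgeo} \Lgeo = 0$, the product rule yields the covariant identity $\D_\Lunit \Lunit = \upmu^{-1}(\Lunit \upmu) \Lunit$, which in Cartesian components reads
\begin{equation*}
\Lunit \Lunit^\mu + \Gamma^\mu_{\alpha \beta}(\threePsi)\Lunit^\alpha \Lunit^\beta = \upmu^{-1}(\Lunit \upmu)\, \Lunit^\mu.
\end{equation*}
Since $\Lunit^0 = \Lunit t = 1$ by \eqref{E:LUNITANDRADOFUANDT}, we have $\Lunit \Lunit^0 = 0$; setting $\mu = 0$ therefore isolates
\begin{equation*}
\Lunit \upmu = \upmu\, \Gamma^0_{\alpha \beta}\Lunit^\alpha \Lunit^\beta,
\end{equation*}
so the whole lemma reduces to computing $\Gamma^\mu_{\alpha \beta}\Lunit^\alpha \Lunit^\beta$ and decomposing the result in the frame $\{\Lunit, \Radunit, \ell_{t,u}\}$.

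Since $g_{\alpha \beta}$ depends on spacetime only through $\threePsi$, we have $\partial_\gamma g_{\alpha \beta} = G^\imath_{\alpha \beta}\partial_\gamma \Psi_\imath$; inserting this into the Christoffel formula gives
\begin{equation*}
\Gamma^\mu_{\alpha \beta}\Lunit^\alpha \Lunit^\beta = (g^{-1})^{\mu \gamma}\vec{G}_{\gamma \Lunit}\contr \Lunit\threePsi - \tfrac{1}{2}(g^{-1})^{\mu \gamma}\vec{G}_{\Lunit\Lunit}\contr \partial_\gamma \threePsi.
\end{equation*}
To extract the frame components, I would use the decomposition
\begin{equation*}
g^{-1} = -\Lunit \otimes \Lunit - \Lunit \otimes \Radunit - \Radunit \otimes \Lunit + \ginversesphere,
\end{equation*}
obtained by inverting the $2 \times 2$ matrix representation of $g$ on $\mathrm{span}(\Lunit, \Radunit)$ (whose entries are fixed by \eqref{E:RADIALVECTORFIELDSLENGTHSANDLRADIALVECTORFIELDSNORMALIZATIONS}) and appending the $\ginversesphere$-contribution from the $g$-orthogonal complement $\ell_{t,u}$. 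Substituting this into the expression for $\Gamma^\mu_{\alpha \beta}\Lunit^\alpha \Lunit^\beta$ and grouping terms by their $\Lunit^\mu$-, $\Radunit^\mu$-, and $\ell_{t,u}$-tangent parts yields, after algebraic simplification,
\begin{equation*}
\Gamma^\mu_{\alpha \beta}\Lunit^\alpha \Lunit^\beta = \Lunit^\mu \bigl[-\tfrac{1}{2}\vec{G}_{\Lunit\Lunit}\contr \Lunit\threePsi - \vec{G}_{\Lunit\Radunit}\contr \Lunit\threePsi + \tfrac{1}{2}\vec{G}_{\Lunit\Lunit}\contr \Radunit\threePsi\bigr] - \tfrac{1}{2}\Radunit^\mu\vec{G}_{\Lunit\Lunit}\contr \Lunit\threePsi + \Pi_{\mathrm{tan}}^\mu,
\end{equation*}
where the tangential remainder is $\Pi_{\mathrm{tan}} \doteq \angGmixedarg{\Lunit}{\#}\contr\Lunit\threePsi - \tfrac{1}{2}\vec{G}_{\Lunit\Lunit}\contr \angdiffuparg{\#}\threePsi$.

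From here \eqref{E:UPMUFIRSTTRANSPORT} follows by setting $\mu = 0$, using $\Lunit^0 = 1$, $\Radunit^0 = 0$, and the vanishing of the $0$-th component of any $\ell_{t,u}$-tangent vector, then multiplying by $\upmu$ and rewriting $\upmu \Radunit \threePsi = \Rad \threePsi$. For \eqref{E:LLUNITI}, I would set $\mu = i$ and substitute into $\Lunit\Lunit^i = -\Gamma^i_{\alpha\beta}\Lunit^\alpha \Lunit^\beta + \upmu^{-1}(\Lunit \upmu)\Lunit^i$; the $\Lunit^i$-proportional contributions cancel exactly (by construction, since $\upmu^{-1}\Lunit\upmu = \Gamma^0_{\alpha \beta}\Lunit^\alpha \Lunit^\beta$), leaving only the $\Radunit^i$-piece and $\Pi_{\mathrm{tan}}^i$, whose $i$-th Cartesian component I would rewrite as its contraction with $\angdiff x^i$ by means of the identity $V(x^i) = V^i$ for any $\ell_{t,u}$-tangent $V$. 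The main subtlety is the frame decomposition of $g^{-1}$: because $\Radunit$ is not null, the naive null-tetrad formula does not apply, and the correct decomposition carries an extra $-\Lunit \otimes \Lunit$ term which is essential for producing the $-\tfrac{1}{2}\upmu\vec{G}_{\Lunit \Lunit}\contr\Lunit \threePsi$ coefficient in \eqref{E:UPMUFIRSTTRANSPORT} and the matching term in \eqref{E:LLUNITI}; everything else is algebraic bookkeeping.
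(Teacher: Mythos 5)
Your proposal is correct, and the route you take — exploiting $\D_{\Lgeo}\Lgeo=0$ via $\D_\Lunit\Lunit=\upmu^{-1}(\Lunit\upmu)\Lunit$, computing $\Gamma^\mu_{\alpha\beta}\Lunit^\alpha\Lunit^\beta$ via $\partial_\gamma g_{\alpha\beta}=G^\imath_{\alpha\beta}\partial_\gamma\Psi_\imath$, and decomposing with $g^{-1}=-\Lunit\otimes\Lunit-(\Lunit\otimes\Radunit+\Radunit\otimes\Lunit)+\ginversesphere$ — is exactly the standard argument underlying \cite{jSgHjLwW2016}*{Lemma~2.12}, which this paper quotes without proof. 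Your identification of the $-\Lunit\otimes\Lunit$ term (forced by $g(\Radunit,\Radunit)=1\neq 0$) as the source of the $-\tfrac{1}{2}\upmu\vec{G}_{\Lunit\Lunit}\contr\Lunit\threePsi$ coefficient is the right thing to flag, and the cancellation of the $\Lunit^i$-proportional pieces when isolating \eqref{E:LLUNITI} is correctly explained.
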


\subsection{Calculations connected to the failure of the null condition}
\label{SS:CALCSFORFAILUREOFNULLCONDITION}
Many important estimates are tied to
the coefficients $\vec{G}_{\Lunit \Lunit}$.
In the next two lemmas,
we derive expressions for
$\vec{G}_{\Lunit \Lunit}$
and
$
\frac{1}{2} \vec{G}_{\Lunit \Lunit} \contr \Rad \threePsi
$.
This presence of the latter term on RHS~\eqref{E:UPMUFIRSTTRANSPORT}
is tied to the failure of
Klainerman's null condition \cite{sK1984} and thus one expects
that the product must be non-zero for shocks to form;
this is explained in more detail in
the survey article \cite{gHsKjSwW2016} in 
a slightly different context.

\begin{lemma}[\textbf{Formula for} $\frac{1}{2} \vec{G}_{\Lunit \Lunit}\contr\Rad \threePsi$]
	\label{L:FORMULAFORGLLRADPSI}
	Let $F$ be the smooth function of $(\Densrenormalized,\Ent)$ from \eqref{E:RIEMANNINVARIANTS},
	and let $F_{;\Ent}$ denote its partial derivative with respect to $\Ent$ at fixed $\Densrenormalized$.
	For solutions to 
	\eqref{eq:Euler.1}--\eqref{eq:Euler.3},
	we have:
\begin{align} 
\begin{split} \label{E:KEYLARGETERMEXPANDED}
	\frac{1}{2} \vec{G}_{\Lunit \Lunit}\contr\Rad \threePsi
	& =
		-
			\frac{1}{2}
			\Speed^{-1}
			(\Speed^{-1} \Speed_{;\rr} + 1)
			\left\lbrace
				\Rad \mathcal{R}_{(+)} - \Rad \mathcal{R}_{(-)}
			\right\rbrace
				\\
	& \ \
		-
		\frac{1}{2}
		\upmu \Speed^{-2} \Radunit^1 
		\left\lbrace
			\Lunit \mathcal{R}_{(+)} + \Lunit \mathcal{R}_{(-)}
		\right\rbrace
		-
		\upmu 
		\Speed^{-2} 
		(\Radunit^2 \Lunit v^2 + \Radunit^3 \Lunit v^3) 	
			 \\
	& \ \
		-
		\upmu
		\Speed^{-1} \Speed_{;s} \Radunit^a \GradEnt^a
		+
		\upmu
		\Speed^{-1}
		(\Speed^{-1} \Speed_{;\rr} + 1)
		F_{;\Ent} \Radunit^a \GradEnt^a.
\end{split}
\end{align}
\end{lemma}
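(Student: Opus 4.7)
The plan is to compute $L^{\alpha}L^{\beta}\Rad g_{\alpha\beta}$ directly using the chain rule, then use the compressible Euler equations \eqref{eq:Euler.1}--\eqref{eq:Euler.3} to convert $\Rad v^a$ into $\Lunit v^a$, $\Rad\rr$ and $\Radunit^a S^a$ terms, and finally use the Riemann-invariant relations to replace $\Rad\rr$. First, by Definition~\ref{D:BIGGANDBIGH} and the chain rule,
\begin{align*}
\vec{G}_{\Lunit \Lunit}\contr \Rad \threePsi
= L^{\alpha}L^{\beta}\sum_{\imath}\frac{\partial g_{\alpha\beta}}{\partial \Psi_{\imath}}\Rad\Psi_{\imath}
= L^{\alpha}L^{\beta}\Rad g_{\alpha\beta},
\end{align*}
where the Cartesian components $L^{\alpha}$ are treated as spacetime scalars on which $\Rad$ does not act.

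Next, using the explicit components $g_{00}=-1+\Speed^{-2}|v|^{2}$, $g_{0i}=-\Speed^{-2}v^{i}$, $g_{ij}=\Speed^{-2}\delta_{ij}$ from \eqref{E:ACOUSTICALMETRIC}, together with $L^{0}=1$, $v^{a}-L^{a}=\Radunit^{a}$ (from \eqref{E:TRANSPORTVECTORFIELDINTERMSOFLUNITANDRADUNIT}), and the null identity $\sum_{a}(\Radunit^{a})^{2}=\Speed^{2}$ (a consequence of \eqref{E:RADIALVECTORFIELDSLENGTHSANDLRADIALVECTORFIELDSNORMALIZATIONS} and \eqref{E:ACOUSTICALMETRIC}), a short direct computation yields
\begin{align*}
L^{\alpha}L^{\beta}\Rad g_{\alpha\beta}
= -2\Speed^{-1}\Rad \Speed + 2\Speed^{-2}\Radunit^{a}\Rad v^{a},
\qquad
\Rad \Speed=\Speed_{;\rr}\Rad\rr+\Speed_{;\Ent}\Rad \Ent.
\end{align*}

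I would then use Euler to eliminate $\Rad v^{a}$. Since $\Rad=\upmu\Radunit$ and $\Radunit=\Transport-\Lunit$ by \eqref{E:TRANSPORTVECTORFIELDINTERMSOFLUNITANDRADUNIT}, one has $\Rad v^{a}=\upmu(\Transport v^{a}-\Lunit v^{a})$. Substituting \eqref{eq:Euler.2} in the form $\Transport v^{a}=-\Speed^{2}\partial_{a}\rr-(p_{;\Ent}/\varrho)\GradEnt^{a}$ and using $\Radunit^{a}\partial_{a}\rr=\Radunit\rr=\upmu^{-1}\Rad\rr$ gives
\begin{align*}
\Radunit^{a}\Rad v^{a}
= -\Speed^{2}\Rad\rr-\upmu\frac{p_{;\Ent}}{\varrho}\Radunit^{a}\GradEnt^{a}-\upmu \Radunit^{a}\Lunit v^{a}.
\end{align*}
Finally, from $\mathcal{R}_{(\pm)}=v^{1}\pm F(\rr,\Ent)$ with $F_{;\rr}=\Speed$ one gets $\Rad\rr=\tfrac{1}{2\Speed}\{\Rad\mathcal{R}_{(+)}-\Rad\mathcal{R}_{(-)}\}-\tfrac{F_{;\Ent}}{\Speed}\Rad \Ent$, while $\Rad \Ent=\upmu\,\Radunit^{a}\GradEnt^{a}$ and $\Lunit\mathcal{R}_{(+)}+\Lunit\mathcal{R}_{(-)}=2\Lunit v^{1}$. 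Substituting these identities into the expression above and grouping the $\Radunit^{a}\Lunit v^{a}$, $\{\Rad\mathcal{R}_{(+)}-\Rad\mathcal{R}_{(-)}\}$, and $\Radunit^{a}\GradEnt^{a}$ contributions separately yields \eqref{E:KEYLARGETERMEXPANDED}.

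The main bookkeeping obstacle is the $\Radunit^{a}\GradEnt^{a}$ coefficient: contributions arise simultaneously from the intrinsic $\Speed_{;\Ent}\Rad\Ent$ piece of $-2\Speed^{-1}\Rad \Speed$, from the $(p_{;\Ent}/\varrho)\Radunit^{a}\GradEnt^{a}$ piece produced by applying Euler to $\Radunit^{a}\Rad v^{a}$, and from the $-F_{;\Ent}\Rad \Ent/\Speed$ correction generated when $\Rad\rr$ is re-expressed through $\Rad\mathcal{R}_{(\pm)}$; these must be combined using the thermodynamic identity relating $p_{;\Ent}$, $\Speed_{;\Ent}$, and $F_{;\Ent}$ (derived from $\Speed^{2}=p_{;\varrho}$ and the definition of $F$) to produce the clean factor $(\Speed^{-1}\Speed_{;\rr}+1)F_{;\Ent}-\Speed_{;\Ent}$ appearing on the right-hand side of \eqref{E:KEYLARGETERMEXPANDED}. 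Everything else is a direct collection of terms.
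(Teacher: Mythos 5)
Your overall strategy matches the paper's (which defers to \cite{LS}, Lemmas~2.45--2.46, and performs exactly the calculation you outline): write
$\vec{G}_{\Lunit\Lunit}\contr\Rad\threePsi = L^{\alpha}L^{\beta}\Rad g_{\alpha\beta}$
with $L^{\alpha},L^{\beta}$ treated as constants, use the explicit Cartesian components of $g$ and the identity $\sum_{a}(\Radunit^{a})^{2}=\Speed^{2}$ to obtain
$-2\Speed^{-1}\Rad\Speed+2\Speed^{-2}\Radunit^{a}\Rad v^{a}$,
then substitute the momentum equation through $\Rad v^a=\upmu(\Transport v^a-\Lunit v^a)$, and finally re-express $\Rad\rr$ via $\Rad\mathcal{R}_{(\pm)}$. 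All of that is correct, and the coefficients of $\Rad\mathcal{R}_{(\pm)}$, $\Lunit\mathcal{R}_{(\pm)}$, and $\Lunit v^{2},\Lunit v^{3}$ come out exactly as in \eqref{E:KEYLARGETERMEXPANDED}.

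The gap is your final step. You assert, without proof, that a ``thermodynamic identity relating $p_{;\Ent}$, $\Speed_{;\Ent}$, and $F_{;\Ent}$'' makes the $\Radunit^{a}\GradEnt^{a}$ contributions combine into the coefficient $\Speed^{-1}(\Speed^{-1}\Speed_{;\rr}+1)F_{;\Ent}-\Speed^{-1}\Speed_{;\Ent}$. No such identity exists. Carrying out your own algebra all the way through yields \eqref{E:KEYLARGETERMEXPANDED} \emph{plus} the additional term
\begin{equation*}
-\,\upmu\,\Speed^{-2}\,\frac{p_{;\Ent}}{\varrho}\,\Radunit^{a}\GradEnt^{a},
\end{equation*}
which comes directly from the $-(p_{;\Ent}/\varrho)\GradEnt^{a}$ piece of the Euler momentum equation, and it does not cancel against anything. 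Indeed the three functions $p_{;\Ent}$, $\Speed_{;\Ent}$, $F_{;\Ent}$ of $(\rr,\Ent)$ are genuinely independent for a generic equation of state: from $\Speed^{2}=p_{;\varrho}$ one only gets $2\Speed\,\Speed_{;\Ent}=p_{;\varrho;\Ent}$, and $F_{;\Ent}=\int_{0}^{\rr}\Speed_{;\Ent}\,d\rr'$; neither relation expresses $p_{;\Ent}$ itself in terms of $\Speed_{;\Ent}$ or $F_{;\Ent}$. You must either exhibit the claimed identity explicitly (you will find it fails), or accept that the computation produces the extra $p_{;\Ent}$-weighted term. Note that this does not compromise the paper downstream, since the extra term is $\mathcal{O}(\epd^{3/2})$ and is absorbed into the $\mathcal{O}(\epd)$ remainder in \eqref{eq:Lmu.main}; but as written your proof of the exact formula \eqref{E:KEYLARGETERMEXPANDED} does not close.
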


\begin{proof}
		This is the same as \cite[Lemmas~2.45, 2.46]{LS}, except for minor
		modifications incorporating the third dimension and the entropy
		(via the $\Speed_{;s}$-dependent and $F_{;\Ent}$-dependent products).
		\qedhere
\end{proof}



\section{Volume forms and energies}
\label{S:FORMSANDENERGY}
In this section, we first define geometric integration forms
and corresponding integrals. 
We then define the energies and null fluxes
which we will use in the remainder of the paper to derive a priori $L^2$-type estimates.

\subsection{Geometric forms and related integrals}
\label{SS:FORMS}
We define our geometric integrals in terms of area and volume
forms that remain non-degenerate relative to the geometric coordinates
throughout the evolution (i.e., all the way up to the shock).

\begin{definition}[\textbf{Geometric forms and related integrals}]
	\label{D:NONDEGENERATEVOLUMEFORMS}
	Define the area form
	$d \spherevol$ on $\ell_{t,u}$,
	the area form $d \tvol$ on $\Sigma_t^u$,
	the area form $d \conevol$ on $\mathcal{F}_u^t$,
	and the volume form $d \vol$ on $\mathcal{M}_{t,u}$
	as follows (relative to the $(t,u,x^2,x^3)$ coordinates):
	\begin{align*} 
			d \spherevol
			& = d \spherevol{(t,u,x^2,x^3)}
			\doteq \f{dx^2\, dx^3}{\Speed |X^1|},
				&&
			d \tvol
			=
			d \tvol(t,u',x^2,x^3)
			\doteq d \spherevol(t,u',x^2,x^3) du',
				\\
			d \conevol 
			& = d \conevol(t',u,x^2,x^3)
			\doteq d \spherevol(t',u,x^2,x^3) dt',
				&&
			d \vol 
			= d \vol(t',u',x^2,x^3)
			\doteq d \spherevol(t',u',x^2,x^3) du' dt'.
	\end{align*}
\end{definition}

	It is understood that unless we explicitly indicate otherwise, all integrals
	are defined with respect to the forms of Definition~\ref{D:NONDEGENERATEVOLUMEFORMS}.
	Moreover, in our notation, we often suppress the variables with respect to which we integrate,
	i.e., we write
	$
	\int_{\ell_{t,u}}
		f
		\, d \spherevol
		\doteq
		\int_{(x^2,x^3) \in \mathbb{T}^2}
			f(t,u,x^2,x^3)
		\, d \spherevol(t,u,x^2,x^3)
	 $, etc.
	
The following lemma clarifies the geometric and analytic significance
of the forms from Definition~\ref{D:NONDEGENERATEVOLUMEFORMS}.
\begin{lemma}[\textbf{Identities concerning the forms}]
\

\begin{enumerate}
\item $d \spherevol$ is the volume measure induced by $\slashed g$ on $\ell_{t,u}$.
\item $\upmu \, d\tvol$ is the volume measure induced by $\underline{g}$ on $\Sigma_t^u$.
\item Let $dx$ be the standard Euclidean volume measure on $\Sigma_t^u$,
	i.e., $dx = dx^1 \, dx^2\, dx^3$ relative to the Cartesian spatial coordinates. Then:
\begin{equation}\label{eq:volume.form.Sigma}
dx = \upmu\Speed^3 \, d\tvol.
\end{equation}
\end{enumerate}
\end{lemma}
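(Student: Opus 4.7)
My plan is to prove the three identities by direct computation using the explicit formulas for the induced metrics already recorded in Lemma~\ref{lem:induced.metric} and the Jacobian information from Lemma~\ref{L:GEOMETRICCOORDINATEVECTORFIELDSINTERMSOFCARTESIANVECTORFIELDS}. Throughout, I will use the identity $\sum_{i=1}^3 (X^i)^2 = \Speed^2$, which is a consequence of the normalization $g(X,X)=1$ together with the form of $g$ in \eqref{E:ACOUSTICALMETRIC}.

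For (1), since $\lbrace x^2, x^3 \rbrace$ are local coordinates on $\ell_{t,u}$, the volume form of $\gsphere$ is $\sqrt{\det \gsphere}\, dx^2\, dx^3$. From Lemma~\ref{lem:induced.metric}, the matrix of $\gsphere$ is $\gsphere_{AB} = \Speed^{-2}\left(\delta_{AB} + \frac{X^A X^B}{(X^1)^2}\right)$. Using the rank-one perturbation formula $\det(I + uu^{T}) = 1 + |u|^2$ with $u = (X^2, X^3)/X^1$, I get $\det(\delta_{AB} + X^A X^B/(X^1)^2) = 1 + \frac{(X^2)^2 + (X^3)^2}{(X^1)^2} = \frac{\Speed^2}{(X^1)^2}$. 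Hence $\det \gsphere = \Speed^{-4}\cdot \Speed^2/(X^1)^2 = 1/(\Speed^2 (X^1)^2)$, which gives $\sqrt{\det \gsphere} = 1/(\Speed |X^1|)$, matching $d\spherevol$ by Definition~\ref{D:NONDEGENERATEVOLUMEFORMS}.

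For (2), I take $(u, x^2, x^3)$ as local coordinates on $\Sigma_t^u$ and compute $\det \gt$ via the block-matrix (Schur complement) formula
\[
\det \gt = \det \gsphere \cdot \left(\gt_{uu} - \sum_{A,B=2}^{3} \gt_{uA}(\gsphere^{-1})^{AB} \gt_{Bu}\right).
\]
The Schur complement has a clean geometric meaning: it equals $g(\partial_u^{\perp}, \partial_u^{\perp})$, where $\partial_u^{\perp}$ denotes the $g$-orthogonal projection of $\srd_u$ onto the $g$-orthogonal complement of $T\ell_{t,u}$ within $T\Sigma_t$. By \eqref{E:GEOMETRICUCOORDINATEPARTIALDERIVATIVESINTERMSOFOTHERVECTORFIELDS}, $\srd_u = \Rad + (\text{$\ell_{t,u}$-tangent})$, so $\partial_u^{\perp} = \Rad$. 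Invoking \eqref{E:RADIALVECTORFIELDSLENGTHSANDLRADIALVECTORFIELDSNORMALIZATIONS}, $g(\Rad, \Rad) = \upmu^2$. Combining with (1), $\det \gt = \upmu^2/(\Speed^2 (X^1)^2)$, so $\sqrt{\det \gt} = \upmu/(\Speed|X^1|)$, which exhibits the volume form of $\gt$ as $\upmu\, d\tvol$.

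For (3), I change variables from $(x^1,x^2,x^3)$ to $(u,x^2,x^3)$ at fixed $t$ on $\Sigma_t^u$. The Jacobian matrix is upper triangular in a suitable ordering; its determinant is $|\partial x^1/\partial u|_{t,x^2,x^3}|$. From the first identity in \eqref{eq:srd.x1}, $\srd_u x^1 = \upmu \Speed^2/X^1$, so $|\partial x^1/\partial u| = \upmu \Speed^2/|X^1|$. Therefore $dx^1\, dx^2\, dx^3 = (\upmu \Speed^2/|X^1|)\, du\, dx^2\, dx^3 = \upmu \Speed^3 \cdot d\tvol$, where the last step uses $d\tvol = du\, dx^2\, dx^3/(\Speed|X^1|)$.

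No step is truly a substantive obstacle; the only mildly delicate part is recognizing the Schur complement in (2) as $g(\Rad,\Rad)$, which avoids expanding a $3\times 3$ determinant and instead exploits the geometric decomposition $\srd_u = \Rad + (\text{$\ell_{t,u}$-tangent})$ together with the fact that $\Rad$ is $g$-orthogonal to $\ell_{t,u}$. Alternatively, (2) can be obtained by a purely algebraic expansion of the explicit matrix for $\gt$ in Lemma~\ref{lem:induced.metric}, but the geometric argument is cleaner and makes clear why $\upmu^2$ appears.
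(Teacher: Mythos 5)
Your proof is correct, and all three parts land on the same determinant identities the paper uses ($\det \gsphere = \Speed^{-2}(X^1)^{-2}$ and $\det \gt = \upmu^2 \Speed^{-2}(X^1)^{-2}$), but your route differs in two places worth flagging. For (2), the paper simply states that a direct computation of $\det \gt$ in $(u,x^2,x^3)$ coordinates gives the answer; your Schur-complement reduction is cleaner and makes the appearance of $\upmu^2$ conceptually transparent, because the cross terms $\gt_{uA}$ are exactly accounted for by the $\ell_{t,u}$-tangent part of $\srd_u$, leaving $g(\Rad,\Rad)=\upmu^2$ — this does require checking, as you implicitly do, that the $(\gsphere^{-1})^{AB}$ from Lemma~\ref{lem:induced.metric} is indeed the matrix inverse of $(\gsphere_{AB})$, which is a routine verification. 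For (3), the paper deduces the identity by comparing the Riemannian volume measure of $\gt$ expressed in Cartesian spatial coordinates (where $\gt_{ij}=\Speed^{-2}\delta_{ij}$, so the volume form is $\Speed^{-3}dx$) with the same measure expressed as $\upmu\, d\tvol$ via part (2); you instead do a direct coordinate change $(x^1,x^2,x^3)\mapsto(u,x^2,x^3)$ at fixed $t$ and read off the Jacobian $|\srd_u x^1| = \upmu\Speed^2/|X^1|$ from \eqref{eq:srd.x1}. Your version of (3) has the small structural advantage of being independent of (2), while the paper's version shows more explicitly that all three identities are statements about the same underlying Riemannian measures.
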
	
\begin{proof}
A computation based on
Lemma~\ref{lem:induced.metric} 
and the identity $\sum_{a=1}^3 (X^a)^2 = c^2$
(which follows from \eqref{E:RADIALVECTORFIELDSLENGTHSANDLRADIALVECTORFIELDSNORMALIZATIONS} 
and \eqref{E:ACOUSTICALMETRIC}) yields
that $\det \slashed g = \f{1}{\Speed^2 (X^1)^2}$. 
Since
$d\spherevol = \sqrt{\det\slashed g} \, dx^2\, dx^3$, we thus obtain (1). 

Next, we again use
Lemma~\ref{lem:induced.metric} 
and the identity $\sum_{a=1}^3 (X^a)^2 = c^2$
to compute that relative to the $(u,x^2,x^3)$ coordinates, we have
$\det \underline{g} = \f{\upmu^2}{\Speed^2 (X^1)^2}$. 
Taking the square root, we see that
the volume measure induced by $\underline{g}$ on $\Sigma_t^u$ is given in the $(u,x^2,x^3)$ coordinates 
by $\f{\upmu}{\Speed |X^1|} \, du\, dx^2\, dx^3$, which gives (2). 

Finally, we obtain (3) from (2) via \eqref{E:ACOUSTICALMETRIC}, which
implies that relative to the Cartesian spatial coordinates, the canonical volume form 
induced by $\underline{g}$ on $\Sigma_t$ is
$\Speed^{-3} dx^1\, dx^2\, dx^3$.
\qedhere
\end{proof}
	
\subsection{The definitions of the energies and null fluxes}
\label{SS:DEFSOFENANDFLUX}

\subsubsection{Forms and conventions}

\begin{definition}[\textbf{Volume forms for $L^p$ norms}]
For $p \in \lbrace 1,2 \rbrace$,
we define $L^p$ norms with respect to the volume forms introduced in Definition~\ref{D:NONDEGENERATEVOLUMEFORMS}.
That is, for scalar functions or $\ell_{t,u}$-tangent tensorfields $\upxi$, we define:
$$\|  \upxi \|_{L^p(\ell_{t,u})}\doteq 
\left(\int_{\ell_{t,u}}  |\upxi|^p \, d \spherevol \right)^{\f 1p}, \quad \| \upxi \|_{L^p(\mathcal F_{u}^t)}\doteq \left(\int_{\mathcal F_{u}^t}  |\upxi|^p \, d \conevol \right)^{\f 1p},$$
$$\|  \upxi \|_{L^p(\Sigma_{t}^u)}
\doteq \left(\int_{\Sigma_{t}^u} | \upxi|^p \, d \tvol \right)^{\f 1p}, \quad \|  \upxi \|_{L^p(\Sigma_{t})}\doteq \left(\int_{\Sigma_{t}}  |\upxi|^p \, d \tvol \right)^{\f 1p},$$
$$\|  \upxi \|_{L^p(\Mtu)} \doteq
\left(\int_{\Mtu} | \upxi|^p \, d \vol \right)^{\f 1p}.$$  
\end{definition}

\begin{definition}[\textbf{Conventions with variable arrays and differentiated quantities}]\label{def:array.convention}
	\ 

\begin{enumerate}
\item Given the fluid variable array $\threePsi$ in Definition~\ref{def:variables.fluid}, define:
$$|\threePsi| = |\Psi| \doteq \max_{\iota=1,\cdots,5} |\Psi_{\imath}|.$$ We also set: $$|\Vr| \doteq \max_{a=1,2,3} |\Vr^a|,$$ 
and similarly for the other $\Sigma_t$-tangent
tensorfields such as $S$ and $\mathcal{C}$ that correspond to the transport variables.
For $p = 2$ or $p=\infty$, define also:
$$\|\Psi \|_{L^p(\ell_{t,u})} \doteq \max_{\iota=1,\cdots,5} \|\Psi_{\imath}\|_{L^p(\ell_{t,u})},$$
and similarly for
$L^p(\Sigma_t^u)$, $L^p(\Sigma_t)$, $L^p(\mathcal F_u^t)$, and $L^p(\Mtu)$.
Similarly, we set: 
$$\| \Vr \|_{L^p(\ell_{t,u})} \doteq 
\max_{a=1,2,3} \| \Vr^a \|_{L^p(\ell_{t,u})},$$ 
and we analogously define $L^p$ norms of other $\Sigma_t$-tangent tensorfields 
that correspond to the transport variables,
such as $S$ and $\mathcal{C}$.
\item When estimating multiple solution variables
simultaneously, we use the following convention (for $p=2$ or $p=\infty$):
$$\|(\Vr,S) \|_{L^p(\ell_{t,u})} \doteq \max \{\| \Vr \|_{L^p(\ell_{t,u})},\, \| S \|_{L^p(\ell_{t,u})}\},$$
and similarly for $L^p(\Sigma_t^u)$, $L^p(\Sigma_t)$, $L^p(\mathcal F_u^t)$, and $L^p(\Mtu)$.
\item Let $\mathscr{P} \doteq \{L,Y,Z\}$ be the set of commutation vectorfields and $\mathscr{P}^{(N)} \doteq \{\mathcal{P}_1 \mathcal{P}_2\cdots \mathcal{P}_N | \mathcal{P}_i \in \mathscr{P} \mbox{ for } 1 \leq i \leq N \}$. For any smooth scalar function $\phi$, define:
$$|\mathcal{P}^N \phi| \doteq 
\max_{\mathcal{P}_1,\cdots,\mathcal{P}_N \in \mathscr{P}^{(N)}} |\mathcal{P}_1 \cdots \mathcal{P}_N \phi|.$$
For $p=2$ or $p=\infty$, the $L^p$ norms are defined similarly, with:
$$\|\mathcal{P}^N \phi\|_{L^p(\ell_{t,u})} \doteq \left\| |\mathcal{P}^N \phi| \right\|_{L^p(\ell_{t,u})}, \mbox{etc.}$$
Moreover, we let $\mathcal{P}^N \Vr$ denote the $\Sigma_t$-tangent vectorfield
with Cartesian spatial components $\mathcal{P}^N \Vr^i$,
and we define:
$$|\mathcal{P}^N \Vr| 
\doteq \max_{\mathcal{P}_1 \cdots \mathcal{P}_N \in \mathscr{P}^{(N)}} 
\max_{a=1,2,3} |\mathcal{P}_1 \cdots \mathcal{P}_N \Vr^a|,$$
$$\|\mathcal{P}^N \Vr \|_{L^p(\ell_{t,u})} 
\doteq 
\left\| |\mathcal{P}^N \Vr| \right\|_{L^p(\ell_{t,u})}, \mbox{etc.,}$$
and similarly for other $\Sigma_t$-tangent tensorfields 
that correspond to the transport variables,
such as $S$ and $\mathcal{C}$.
\item Similarly, we let
$\slashed{\mathscr{P}} \doteq \{Y,Z\}$ be the set of $\ell_{t,u}$-tangent commutation vectorfields,
$\slashed{\mathscr{P}}^{(N)} \doteq \{\slashed{\mathcal{P}}_1 \slashed{\mathcal{P}}_2\cdots \slashed{\mathcal{P}}_N 
|\slashed{\mathcal{P}}_i \in \slashed{\mathscr{P}} \mbox{ for } 1 \leq i \leq N \}$, 
$|\slashed{\mathcal{P}}^N \phi| 
\doteq \max_{\slashed{\mathcal{P}}_1,\cdots,\slashed{\mathcal{P}}_N \in \slashed{\mathscr{P}}^{(N)}} 
|\slashed{\mathcal{P}}_1 \cdots \slashed{\mathcal{P}}_N \phi|$,
etc. 
The importance of distinguishing the subset of $\ell_{t,u}$-tangent commutation vectorfields 
from the full set $\mathscr{P}$ will be made clear in the appendix.\footnote{As in the 
$2$D-case, the most difficult error terms
in the wave equation energy estimates are commutator terms 
involving the pure $\ell_{t,u}$-tangent derivatives of the null mean curvature
of the $\ell_{t,u}$.}
\item We use the following conventions for sums:
$$|\mathcal{P}^{[N_1,N_2]} \phi| \doteq \sum_{N'=N_1}^{N_2} |\mathcal{P}^{N'}\phi|,\quad |\mathcal{P}^{\leq N} \phi|\doteq |\mathcal{P}^{[0,N]}\phi|,$$
$$|\slashed{\mathcal{P}}^{[N_1,N_2]} \phi| \doteq \sum_{N'=N_1}^{N_2} |\slashed{\mathcal{P}}^{N'}\phi|,\quad 
|\slashed{\mathcal{P}}^{\leq N} \phi |\doteq |\slashed{\mathcal{P}}^{[0,N]}\phi|.$$	
\item We will combine the above conventions. For instance,
$$|\mathcal{P}^N(\Vr, S)| \doteq  \max_{\mathcal{P}_1 \cdots \mathcal{P}_N \in \mathscr{P}^{(N)}} \max\{ |\mathcal{P}_1 \cdots \mathcal{P}_N \Vr|,\, |\mathcal{P}_1 \cdots \mathcal{P}_N S|\},$$
$$|\mathcal{P}^N \Psi| \doteq \max_{\iota=1,\cdots,5} \max_{\mathcal{P}_1 \cdots \mathcal{P}_N \in \mathscr{P}^{(N)}} |\mathcal{P}_1 \cdots \mathcal{P}_N \Psi_{\imath}|.$$
\end{enumerate}
\end{definition}

\subsubsection{Definitions of the energies}
\label{SSS:DEFSOFENERGIES}
We are now ready to introduce the main energies we use to control the solution.

\medskip 

\noindent{\underline{\textbf{Wave energies}}.

\begin{subequations}
\begin{align}
\mathbb{E}_N(t,u) & \doteq  \sup_{t' \in [0,t]} \left(\|\bX \mathcal{P}^{N} \Psi\|_{L^2(\Sigma_{t'}^u)}^2+ \|\sqrt{\upmu} \mathcal{P}^{N+1} \Psi\|_{L^2(\Sigma_{t'}^u)}^2 \right), \label{eq:wave.energy.def.1}\\
\mathbb{F}_N(t,u) & \doteq \sup_{u'\in [0,u]} \left( \| L\mathcal{P}^N \Psi \|_{L^2(\mathcal F_{u'}^t)}^2 + \| \sqrt{\upmu} \slashed{d} \mathcal{P}^N \Psi \|_{L^2(\mathcal F_{u'}^t)}^2 \right), \\
\mathbb K_N(t,u) & \doteq \|\slashed{d} \mathcal{P}^N \Psi \|_{L^2(\mathcal M_{t,u}\cap\{\upmu \leq \f 14\})}^2, 
\label{eq:wave.energy.def.3}\\
\mathbb{Q}_N(t,u) & \doteq \mathbb{E}_N(t,u) + \mathbb{F}_N(t,u),\\
\mathbb{W}_N(t,u) & \doteq \mathbb{E}_N(t,u) + \mathbb{F}_N(t,u) + \mathbb K_N(t,u).
	\label{eq:wave.energy.def.4}
\end{align}
\end{subequations}

\noindent{\underline{\textbf{Specific vorticity energies}}.

\begin{subequations}
\begin{align}
\mathbb V_N(t,u) \doteq \sup_{t'\in [0,t]} \|\sqrt{\upmu} \mathcal{P}^{N} \Vr\|_{L^2(\Sigma_{t'}^u)}^2 + \sup_{u'\in [0,u]}\| \mathcal{P}^N \Vr \|_{L^2(\mathcal F_{u'}^t)}^2, \\
\mathbb C_N(t,u) \doteq \sup_{t'\in [0,t]} \|\sqrt{\upmu} \mathcal{P}^{N} \mathcal{C}\|_{L^2(\Sigma_{t'}^u)}^2 + \sup_{u'\in [0,u]}\| \mathcal{P}^N \mathcal{C}\|_{L^2(\mathcal F_{u'}^t)}^2.
\label{eq:C.energy.def}
\end{align}
\end{subequations}

\noindent{\underline{\textbf{Entropy gradient energies}}.

\begin{subequations}
\begin{align}
\mathbb S_N(t,u) & \doteq \sup_{t'\in [0,t]} \|\sqrt{\upmu} \mathcal{P}^{N} S\|_{L^2(\Sigma_{t'}^u)}^2 + \sup_{u'\in [0,u]}\| \mathcal{P}^N S \|_{L^2(\mathcal F_{u'}^t)}^2, \\
\mathbb D_N(t,u) & \doteq \sup_{t'\in [0,t]} \|\sqrt{\upmu} \mathcal{P}^{N} \mathcal{D}\|_{L^2(\Sigma_{t'}^u)}^2 + \sup_{u'\in [0,u]}\| \mathcal{P}^N \mathcal{D} \|_{L^2(\mathcal F_{u'}^t)}^2.\label{eq:D.energy.def}
\end{align}
\end{subequations}

\begin{definition}[\textbf{Important conventions for energies}]\label{def:energy.convention}
\

\begin{enumerate}
\item We define the following convention for sums (cf.~Definition~\ref{def:array.convention}(3)):
$$\mathbb{E}_{\leq N}(t,u)\doteq \sum_{N'=0}^N \mathbb{E}_{N'}(t,u),\quad \mathbb{E}_{[1,N]}(t,u)\doteq \sum_{N'=1}^N \mathbb{E}_{N'}(t,u),$$
and similarly for other energies.
\item Abusing notation slightly, if we write an energy as a function of only
$t$ (instead of a function of $(t,u)$), then it is understood that we take supremum in $u$, e.g.,
$$\mathbb{E}_N(t) \doteq \sup_{u\in \mathbb R} \mathbb{E}_N(t,u).$$
\end{enumerate}
\end{definition}

\section{Assumptions on the data and statement of the main theorems}
\label{S:DATASIZEANDBOOTSTRAP}

\subsection{Assumptions on the data of the fluid variables}
\label{SS:FLUIDVARIABLEDATAASSUMPTIONS}
We now introduce the assumptions on the data for our main theorem. 
We have five parameters (cf.~Theorem~\ref{thm:intro.main}), denoted by
$\mathring{\upsigma}$, $\mathring{\updelta}_*$, $\mathring{\updelta}$, $\mathring{\upalpha}$ and $\epd$:
\begin{itemize}
\item $\mathring{\upsigma}$ measures the size of the initial support in $x^1$.
\item $\mathring{\updelta}_*$ gives a lower bound on the quantity that controls the blowup, and in particular determines the time interval for which we need to control our solution before a singularity forms. 
\item $\mathring{\updelta}$, $\mathring{\upalpha}$ and $\epd$ are parameters that control
the sizes of various norms of the solution. 
$\mathring{\updelta}$ measures the $L^{\infty}$ size of the transversal derivatives of $\mathcal{R}_{(+)}$,
and it \emph{can be large}. 
$\mathring{\upalpha}$ limits the size of the amplitude of the fluid variables, is \emph{small} depending on the equation of state and the background density 
$\bar{\varrho} > 0$, and is used to control basic features of the Lorentzian geometry. $\epd$ is a \emph{small} parameter depending on the equation of state and all the other parameters. In particular,
$\epd$ controls the size of solution in ``directions that break the simple plane symmetry,''
and it provides the most crucial smallness that we exploit in the analysis.
\item We assume that $\epd^{\frac{1}{2}} \leq \mathring{\upalpha}$.
\end{itemize}

\medskip
Here are the assumptions on the initial data.\footnote{Of course, we are only allowed to prescribe 
$(\varrho,v^i,s)$ without explicitly specifying their derivatives transversal to $\Sigma_0$. Nevertheless, 
using \eqref{eq:Euler.1}--\eqref{eq:Euler.3}, we can compute the traces of all derivatives on $\Sigma_0$. The derivative assumptions that we specify here are to be understood in this sense. 
Notice that all the assumptions are satisfied by the data of exactly simple plane-symmetric solutions with
$\epd = 0$.
Thus, they are also satisfied by small perturbations of them.}

In what follows, $\Ntop$ and $\toprate$
denote large positive integers that are constrained in particular by
$\Ntop \geq 2 \toprate + 10$. 
In our proof of Proposition~\ref{prop:wave},
we will show that our estimates close with $\toprate$ chosen
to be a universal positive integer.
The restriction $\Ntop \geq 2 \toprate + 10$
is further explained in Remark~\ref{R:NUMBEROFDERIVATIVES}.
See also the discussion in Section~\ref{SS:NOTATION}.

\medskip


\noindent \underline{\textbf{Compact support assumptions}}.
\begin{align}\label{assumption:support}
\mbox{If $|x^1|\geq \mathring{\upsigma}$, then $(\rr,v,s) = (0,0,0)$.}
\end{align}

\medskip

\noindent \underline{\textbf{Lower bound for the quantity that controls the blowup-time}}.\footnote{Here, $z_+\doteq \max\{z,0\}$.}
\begin{align} 
\mathring{\updelta}_* \doteq 
\sup_{\Sigma_0}
\f 12 \left[ \Speed^{-1}
			(\Speed^{-1} \Speed_{;\rr} + 1) (\bX \mathcal R_{(+)})\right]_+ >0. \label{assumption:lower.bound}
\end{align}

\begin{remark}[\textbf{Non-degeneracy assumption on the factor 
$\Speed^{-1} (\Speed^{-1} \Speed_{;\rr} + 1)$}]
	\label{R:NONVANISHINGFACTOR}
	Recall that the factor $\Speed^{-1} (\Speed^{-1} \Speed_{;\rr} + 1)$
	in \eqref{assumption:lower.bound} can be viewed as a function of $(\Densrenormalized,s)$.
	For the solutions under study, we are assuming that 
	$\Speed^{-1} (\Speed^{-1} \Speed_{;\rr} + 1)$
	is non-vanishing when evaluated at the trivial background solution $(\Densrenormalized,s) \equiv (0,0)$
	(recall that this background corresponds to a state with constant density $\varrho \equiv \bar{\varrho} > 0$).
	One can check that for any smooth equation of state except that of a Chaplygin gas,
	there are always open sets of $\bar{\varrho} > 0$ such that the non-vanishing condition holds;
	see the end of \cite[Section~2.16]{LS} for further discussion.
	We also point out that for the Chaplygin gas, it is not expected that shocks will form.
\end{remark}

\medskip

\noindent \underline{\textbf{Assumptions on the amplitude and transversal derivatives of the wave variables}}.
\begin{subequations}
\begin{align} 
	\left\|
		\mathcal{R}_{(+)}
	\right\|_{L^{\infty}(\Sigma_0)}
	& \leq \Psiep,
		\label{assumption:R+} \\
	\left\|
		\Rad^{[1,3]} \mathcal{R}_{(+)}
	\right\|_{L^{\infty}(\Sigma_0)}
	& \leq \mathring{\updelta},
	\label{assumption:R+.trans} \\
	\left\|
		\Rad^{\leq 3} (\mathcal{R}_{(-)},v^2,v^3,s)
	\right\|_{L^{\infty}(\Sigma_0)}
	& \leq \epd, \label{assumption:small.trans}
		\\
	\left\|
		\Lunit \Rad \Rad \Rad \Psi
	\right\|_{L^{\infty}(\Sigma_0)} & \leq \epd.
\end{align}
\end{subequations}

\medskip

\noindent \underline{\textbf{Smallness assumptions for good derivatives of the wave variables}}.
\begin{align} 
\begin{split} \label{assumption:small}
	&
	\left\|
		\mathcal{P}^{[1,\Ntop- \toprate-2]} \Psi
	\right\|_{L^{\infty}(\Sigma_0)},
		\,
	\left\|
		\mathcal{P}^{[1,\Ntop-\toprate-4]} \bX \Psi
	\right\|_{L^{\infty}(\Sigma_0)}, 
		\,
	\left\|
		\mathcal{P}^{[1,2]} \bX \bX \Psi
	\right\|_{L^{\infty}(\Sigma_0)},	
		\\
	&
	\sup_{u \in [0,U_0]}
	\left\|
		\mathcal{P}^{[1,\Ntop-\toprate]} \Psi
	\right\|_{L^2(\ell_{0,u})},
		\\
	&
	\left\|
		\mathcal{P}^{[1,\Ntop+1]} \Psi
	\right\|_{L^2(\Sigma_0)},
		\,
	\left\|
		\mathcal{P}^{[1,\Ntop]} \bX \Psi
	\right\|_{L^2(\Sigma_0)}
		\\
	& \leq 
	\epd.
\end{split}
\end{align}

\medskip

\noindent \underline{\textbf{Smallness assumptions for the specific vorticity and entropy gradient}}.
\begin{align}
\begin{split} \label{assumption:very.small}
&
	\left\|
		\mathcal{P}^{\leq \Ntop - \toprate-2} (\Vortrenormalized,S)
	\right\|_{L^\infty(\Sigma_0)},
		\,
	\sup_{u \in [0,U_0]}	
	\left\|
		\mathcal{P}^{\leq \Ntop-\toprate} (\Vortrenormalized,S)
	\right\|_{L^2(\ell_{0,u})},
		 \\
	&
	\left\|
		\mathcal{P}^{\leq \Ntop} (\Vortrenormalized,S)
	\right\|_{L^2(\Sigma_0)}
		 \\
	& \leq \epd^{\f 32}. 
\end{split}
\end{align}

\medskip

\noindent \underline{\textbf{Smallness assumptions for the modified fluid variables}}.
\begin{align}
\begin{split} \label{assumption:very.small.modified} 
&\left\|
		\mathcal{P}^{\leq \Ntop - \toprate-3} (\mathcal{C},\mathcal{D})
	\right\|_{L^\infty(\Sigma_0)},
		\, 
	\sup_{u \in [0,U_0]}	
	\left\|
		\mathcal{P}^{\leq \Ntop-\toprate-1} (\mathcal{C},\mathcal{D})
	\right\|_{L^2(\ell_{0,u})},
			 \\
	&
	\left\|
		\mathcal{P}^{\leq \Ntop} (\mathcal{C},\mathcal{D})
	\right\|_{L^2(\Sigma_0)}
		 \\
	& \leq \epd^{\f 32}. 
\end{split}
\end{align}




\subsection{Statement of the main theorem}\label{sec:statement}
We are now ready to give a precise statement of Theorem~\ref{thm:intro.main} (see already Theorems~\ref{thm:main} and \ref{thm:shock} below), as well as the corollaries in interesting sub-regimes of solutions discussed in Remark~\ref{rmk:subregime} (see already Corollaries~\ref{cor:stupid.nonvanishing} and \ref{cor:stupid.Holder}).

We first discuss Theorem~\ref{thm:intro.main}. It will be convenient to think of Theorem~\ref{thm:intro.main} as two theorems. The first, which is the much harder theorem, is a \emph{regularity} statement, stating --- with precise estimates --- that in the region under study, the only possible singularity is that of a shock, i.e., one that is associated with the vanishing of $\upmu$. This is the content of Theorem~\ref{thm:main}. 
Once Theorem~\ref{thm:main} has been proved, 
the proof that a shock indeed occurs is much easier.
This is the content of Theorem~\ref{thm:shock}.

\begin{theorem}[\textbf{Regularity unless shock occurs}]\label{thm:main}
Let $\mathring{\upsigma},\,\mathring{\updelta},\,\mathring{\updelta}_* >0$.
There exists a large integer $\toprate$ that is 
\textbf{absolute} in the sense that it is \underline{independent} 
of the equation of state, $\bar{\varrho}$,   
$\mathring{\upsigma}$, $\mathring{\updelta}$, and $\mathring{\updelta}_*^{-1}$,
such that the following hold.
Assume that:
\begin{itemize}
\item The integer $\Ntop$ satisfies $\Ntop \geq 2 \toprate + 10$
(see Remark~\ref{R:NUMBEROFDERIVATIVES} regarding the size of $\Ntop$);
\item $\mathring{\upalpha} > 0$ is sufficiently small 
in a manner that depends only on the equation of state and $\bar{\varrho}$;
\item $\epd > 0$ satisfies\footnote{The assumption $\epd^{\frac{1}{2}} \leq \mathring{\upalpha}$ 
	allows us to simplify the presentation of various estimates, for example, by allowing us to write 
	$\mathcal{O}(\mathring{\upalpha})$ instead of
	$\mathcal{O}(\epd^{\frac{1}{2}}) + \mathcal{O}(\mathring{\upalpha})$.
	\label{FN:EPSILONSMALLERTHANALPHA}}  
$\epd^{\frac{1}{2}} \leq \mathring{\upalpha}$
and is sufficiently small
in
a manner that depends only on 
the equation of state, 
$\Ntop$,
$\bar{\varrho}$, 
 $\mathring{\upsigma}$, 
$\mathring{\updelta}$, 
and $\mathring{\updelta}_*^{-1}$;
\item The initial data satisfy the support-size and norm-size 
assumptions\footnote{Note that our plane-symmetric background solutions
satisfy these assumptions with $\epd = 0$.}  
\eqref{assumption:support}--\eqref{assumption:very.small.modified}.
\end{itemize}
Then the corresponding solution $(\varrho,v^1,v^2,v^3,s)$ to the compressible Euler equations
\eqref{eq:Euler.1}--\eqref{eq:Euler.3} exhibits the following properties.

Suppose $T \in (0,2\mathring{\updelta}_*^{-1}]$, and \underline{\textbf{assume}} 
that there is a smooth solution 
such that the following two conditions hold:
\begin{itemize}
	\item The change of variables map $(t,u,x^2,x^3) \rightarrow (t,x^1,x^2,x^3)$
	from geometric to Cartesian coordinates
	is a diffeomorphism from $[0,T) \times \mathbb{R} \times \mathbb{T}^2$ onto
	$[0,T) \times \Sigma$;
		and
	\item $\upmu>0$ in $[0,T)\times \Sigma$.
\end{itemize}
Then the following estimates hold for every $t\in [0,T)$, where
the implicit constants in 
$\ls_{\mydiam}$ depend only on the equation of state and $\bar{\varrho}$,
while the implicit constants in $\ls$ depend only on 
the equation of state,
$\Ntop$,
$\bar{\varrho}$
$\mathring{\upsigma}$, 
$\mathring{\updelta}$,
and
$\mathring{\updelta}_*^{-1}$
(in particular, all implicit constants are independent of $t$ and $T$).
\begin{enumerate}
\item The following energy estimates hold
(where the energies are defined in \eqref{eq:wave.energy.def.1}--\eqref{eq:D.energy.def} and
$\upmu_{\star}(t)$ is as in Definition~\ref{def:mustar}):
\begin{subequations}
\begin{align}
\label{conclusion:W.energy}\mathbb{W}_{N}(t) & \ls \epd^2 \max\{1, \upmu_{\star}^{-2 \toprate+2\Ntop-2N+1.8}(t)\} \quad \mbox{for $1\leq N\leq \Ntop$}, \\
\label{conclusion:V.energy}\mathbb V_N(t),\,\mathbb S_N(t) & \ls \epd^3\max\{1, \upmu_{\star}^{-2 \toprate+2\Ntop-2N+2.8}(t)\}\quad \mbox{for $0\leq N\leq \Ntop$}, \\
\label{conclusion:C.energy}\mathbb C_N(t),\, \mathbb D_N(t) & \ls \epd^3\max\{1, \upmu_{\star}^{-2 \toprate+2\Ntop-2N+0.8}(t)\}\quad \mbox{for $0\leq N\leq \Ntop$}.
\end{align}
\end{subequations}
\item The following $L^\infty$ estimates hold:
\begin{subequations}
\begin{align}
\label{conclusion:W.P.Li}
\|\mathcal{P}^{[1,\Ntop- \toprate-2]} \Psi\|_{L^\infty(\Sigma_t)} ,\, 
\|\mathcal{P}^{[1,\Ntop- \toprate-4]} \bX \Psi\|_{L^\infty(\Sigma_t)} &\ls \epd, \\
\label{conclusion:Psi.itself.Li}
 \|\mathcal R_{(+)}\|_{L^\infty(\Sigma_t)} \ls_{\mydiam} \mathring{\upalpha},
	\quad 
\|(\mathcal{R}_{(-)},v^2,v^3,s) \|_{L^\infty(\Sigma_t)}
	&\ls \epd,\\
\label{conclusion:XPsi.Li}
\| \bX \mathcal R_{(+)} \|_{L^\infty(\Sigma_t)} \leq 2\mathring{\updelta}, 
	\quad 
\|\bX (\mathcal{R}_{(-)},v^2,v^3,s) \|_{L^\infty(\Sigma_t)} 
& \ls \epd, 
	\\
\label{conclusion:V.S.C.D}
\|\mathcal{P}^{[1,\Ntop- \toprate-2]} (\Vr,S)\|_{L^\infty(\Sigma_t)},
	\, 
\|\mathcal{P}^{[1,\Ntop- \toprate-3]} (\mathcal{C},\mathcal{D})\|_{L^\infty(\Sigma_t)},
	\,
\|\mathcal{P}^{[1,\Ntop- \toprate-4]} \bX (\Vr,S)\|_{L^\infty(\Sigma_t)}
&
\ls \epd^{\f 32}.
\end{align}
\end{subequations}
\end{enumerate}

In addition, the solution can be smoothly extended to 
$[0,T] \times \mathbb{R} \times \mathbb{T}^2$
as a function of the geometric coordinates
$(t,u,x^2,x^3)$.

Finally, if $\inf_{t\in [0,T)} \upmu_{\star}(t) >0$, then the solution can be smoothly extended 
to a Cartesian slab $[0,T + \epsilon] \times \Sigma$ for some $\epsilon > 0$
such that the map $(t,u,x^2,x^3) \rightarrow (t,x^1,x^2,x^3)$
is a diffeomorphism from $[0,T + \epsilon] \times \mathbb{R} \times \mathbb{T}^2$
onto $[0,T + \epsilon] \times \Sigma$. In particular, on the extended region,
the solution is a smooth function of the geometric coordinates and the Cartesian coordinates.
\end{theorem}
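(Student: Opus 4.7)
The plan is to carry out a continuity/bootstrap argument on a subinterval $[0,\Tboot)\subset[0,T)$, where we posit quantitative $L^\infty$ and $L^2$ bounds on the wave variables $\threePsi$, the transport variables $(\Vortrenormalized,\GradEnt)$, and the modified fluid variables $(\CurlofVortrenormalized,\DivofEntrenormalized)$, then use the geometric wave--transport reformulation (Theorem~\ref{T:GEOMETRICWAVETRANSPORTSYSTEM}) together with the smallness parameters $\Psiep,\epd$ to strictly improve every assumption, so that the two hypotheses of the theorem -- $\upmu>0$ and injectivity of $(t,u,x^2,x^3)\mapsto(t,x^1,x^2,x^3)$ -- suffice on their own to propagate the bounds. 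Throughout I would work in the geometric coordinates $(t,u,x^2,x^3)$, in which all fluid and eikonal quantities should remain smooth and of bounded geometric size all the way up to the would-be shock.

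First I would close the geometric estimates. Integrating the transport equations \eqref{E:UPMUFIRSTTRANSPORT}--\eqref{E:LLUNITI} together with \eqref{E:TRCHIINTERMSOFOTHERVARIABLES} along the integral curves of $\Lunit$, and feeding in the bootstrap $L^\infty$ bounds on $\Rad\threePsi$ and $\Lunit\threePsi$, yields pointwise and $L^\infty$ control of $\upmu$, $L_{(Small)}^i$, $X_{(Small)}^i$, and $\mytr\upchi$; these are essentially imported from \cites{jSgHjLwW2016,LS}. In parallel I would translate these bounds through Lemma~\ref{lem:Cart.to.geo} and Lemma~\ref{L:GEOMETRICCOORDINATEVECTORFIELDSINTERMSOFCARTESIANVECTORFIELDS} to obtain the crucial fact that $\upmu\,\rdb$ is a regular operator in the $\{L,\Rad,Y,Z\}$ frame, so that a single power of $\upmu$ tames one Cartesian derivative of any quantity. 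This is the mechanism that feeds the transport sector.

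Next comes the coupled $L^2$ theory. For the wave variables I would run the Christodoulou multiplier/commutator method with the multiplier $\Mult=\Lunit+2\Rad$, commuting only with $\Tanset=\{L,Y,Z\}$ up to $\Ntop$ times, which reduces the problem to the auxiliary Proposition~\ref{prop:wave} with inhomogeneity $\mathfrak{G}$ whose structure is governed by \eqref{eq:intro.wave}; the descending hierarchy \eqref{conclusion:W.energy} with the universal rate $\toprate$ is then forced by the modified-quantity construction required to avoid losing a derivative in $\mytr\upchi$ at the top order. For the transport variables I would combine three ingredients: $(i)$ $\sqrt{\upmu}$-weighted transport energy identities that deliver a non-degenerate flux on $\mathcal{F}_u$, closed by Gr\"onwall-in-$u$ using the transversality of $\Transport$; $(ii)$ the $g$-null structure of the right-hand sides of \eqref{eq:intro.V.S}--\eqref{eq:intro.C.D}, which is what keeps $\upmu\cdot(\text{wave inhomogeneity})$ regular; and $(iii)$ at the top order for $(\Vortrenormalized,\GradEnt)$, Hodge-type elliptic estimates on $\Sigma_t$ exploiting $\Flatcurl\Vortrenormalized\approx\CurlofVortrenormalized$, $\Flatdiv\GradEnt\approx\DivofEntrenormalized$, and the fact that $\Flatdiv\Vortrenormalized$ and $\Flatcurl\GradEnt$ are algebraically lower order in the number of derivatives.

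The hard part will be the top-order elliptic--transport loop of Section~\ref{sec:transport.hard}. Naively \eqref{eq:intro.the.difficult.term} produces a Gr\"onwall factor $A\int_0^t\upmu_\star^{-1}(t')\,dt'\sim A\log\upmu_\star^{-1}(t)$, which, exponentiated, injects a new blowup-rate $\upmu_\star^{-\mathcal{O}(A)}$ that could dominate the irrotational rate and wreck the hierarchy --- and, worse, force $\toprate$ to depend on the data rather than being absolute. To defeat this I would reprove the elliptic estimate with weights in the eikonal function $u$, using the good flux control of $\mathcal{P}^{\Ntop}(\CurlofVortrenormalized,\DivofEntrenormalized)$ on $\mathcal{F}_u$, so that the constant $A$ can be made arbitrarily small; this ensures that the dominant blowup-rate in the entire problem is the one coming from the wave sector, which is exactly what is needed for \eqref{conclusion:W.energy}--\eqref{conclusion:C.energy} to be consistent. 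Once the bootstrap is closed, Sobolev embedding on $\ell_{t,u}$ plus the fundamental theorem of calculus in $u$ upgrade the low-order energies to the $L^\infty$ bounds \eqref{conclusion:W.P.Li}--\eqref{conclusion:V.S.C.D}. Finally, the continuation statement is standard: if $\inf_{[0,T)}\upmu_\star>0$, then every solution variable and every geometric quantity is uniformly bounded in $C^\infty((t,u,x^2,x^3))$, the Jacobian of the change of coordinates stays uniformly non-degenerate by \eqref{E:GEOMETRICUCOORDINATEPARTIALDERIVATIVESINTERMSOFOTHERVECTORFIELDS}, and a standard local existence argument in the geometric coordinates extends the solution to $[0,T+\epsilon]\times\Sigma$.
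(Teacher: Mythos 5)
Your proposal follows essentially the same plan as the paper: a bootstrap/continuity argument (Theorem~\ref{thm:bootstrap}) in which one first closes the pointwise/$L^\infty$ estimates for $\upmu$, $L^i$, and the frame from the eikonal transport equations, then runs the coupled wave--transport $L^2$ hierarchy (wave variables via the Christodoulou $\{L,Y,Z\}$-commuted multiplier identities packaged as Proposition~\ref{prop:wave}, transport variables via $\sqrt{\upmu}$-weighted energy fluxes and the $u$-weighted div--curl elliptic estimates that keep the Gr\"onwall coefficient $A$ small so that the wave blowup-rate dominates), and finally upgrades low-order energies to the $L^\infty$ conclusions by Sobolev embedding on $\ell_{t,u}$ plus integration along $\mathcal{F}_u$, concluding the extension/continuation statement from the uniform geometric-coordinate bounds. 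The one point you telescope into the Proposition~\ref{prop:wave} black box is that the universality of $\toprate$ rests not only on the $u$-weighted elliptic trick but also on splitting off the \emph{partial energies} that omit $\mathcal{R}_{(+)}$, which is a second, independent mechanism the paper uses inside that proposition's proof to make the remaining borderline constants absolute.
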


\begin{theorem}[\textbf{Complete description of the shock formation at the first singular time}]\label{thm:shock}
Under the assumptions of Theorem~\ref{thm:main} 
-- perhaps taking 
$\mathring{\upalpha}$ and $\epd$ smaller in a manner that depends on the same quantities stated in the theorem -- 
there exists $T_{(Sing)} \in [0,2\mathring{\updelta}_*^{-1}]$ satisfying 
the estimate:\footnote{See Section~\ref{SS:NOTATION} regarding
our use of the notations ``$\mathcal{O}_{\mydiam}(\cdot)$,'' ``$\mathcal{O}(\cdot)$,''
etc.}
\begin{equation}\label{eq:Tsing.est}
T_{(Sing)}
=
\left 
\lbrace 1 
+ 
\mathcal{O}_{\mydiam}(\mathring{\upalpha}) 
+ 
\mathcal{O}(\epd) 
\right\rbrace
\mathring{\updelta}_*^{-1}
\end{equation}
such that the following holds:
\begin{enumerate}
\item The solution variables are smooth functions of the Cartesian coordinates 
$(t,x^1,x^2,x^3)$ in $[0,T_{(Sing)})\times \Sigma$.
\item The solution variables extend as 
	smooth functions of the geometric coordinates $(t,u,x^2,x^3)$ to $[0,T_{(Sing)}] \times \mathbb{R} \times \mathbb{T}^2$.
\item The inverse foliation density tends to zero at $T_{(Sing)}$, i.e., $\liminf_{t \uparrow T_{(Sing)}^-} \upmu_{\star}(t) = 0$.
\item $\rd_1\mathcal R_{(+)}$ blows up as $t \uparrow T_{(Sing)}^-$, i.e., 
	$\limsup_{t \uparrow T_{(Sing)}^-} \sup_{\Sigma_t} |\rd_1 \mathcal R_{(+)}| = \infty$.
\item Moreover, let:\footnote{For definiteness, in the definition of the subset $\mathscr{S}_{Regular}$, 
we have made statements only about the boundedness of the solution's $C^1$ norm. However, 
our proof shows that on $\mathscr{S}_{Regular}$, the solution inherits the full regularity enjoyed by the initial data.}
 \begin{align*}
\mathscr{S}_{Blowup} & \doteq \{(u,x^2,x^3) \in \mathbb{R} \times \mathbb{T}^2: 
	\limsup_{(\tilde{t},\tilde{u},\tilde{x}^2,\tilde{x}^3) 
	\to 
	(T_{(Sing)}^-,u,x^2,x^3)}  
	|\rd_1 \mathcal R_{(+)}|(\tilde{t},\tilde{u},\tilde{x}^2,\tilde{x}^3) = \infty \} , \\
\mathscr{S}_{Vanish} & \doteq  \{ (u,x^2,x^3) \in \mathbb{R} \times \mathbb{T}^2: \upmu(T_{(Sing)},u,x^2,x^3) = 0\}, \\
\mathscr{S}_{Regular} &\doteq \Big\{ (u,x^2,x^3) \in \mathbb{R} \times \mathbb{T}^2: 
	\mbox{all solution variables extend to be $C^1$ functions}
		\\
	& \ \ \ \ \ \ \ \ \ \ \ 
		\mbox{of the geometric \underline{and} Cartesian coordinates 
		in a neighborhood of}
			\\
		& \ \ \ \ \ \ \ \ \ \ \ \mbox{the point with geometric coordinates $(T_{(Sing)},u,x^2,x^3)$,}
				\\
		& \ \ \ \ \ \ \ \ \ \ \ \mbox{intersected with the half-space $\lbrace t \leq T_{(Sing)} \rbrace$ \Big\}}.
 \end{align*}
 Then $\mathscr{S}_{Blowup}$ and $\mathscr{S}_{Vanish}$ are non-empty, and:
 $$\mathscr{S}_{Blowup} = \mathscr{S}_{Vanish} = \mathbb{R} \times \mathbb{T}^2 \setminus \mathscr{S}_{Regular}.$$
\end{enumerate}
\end{theorem}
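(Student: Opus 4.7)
The plan is to define $T_{(Sing)}$ by a continuation argument and then extract all of (1)--(5) from the a priori estimates of Theorem~\ref{thm:main} combined with the transport equation \eqref{E:UPMUFIRSTTRANSPORT} for $\upmu$. I let $T_{(Sing)}$ denote the supremum of $T \in (0,2\mathring{\updelta}_*^{-1}]$ for which the two hypotheses of Theorem~\ref{thm:main} (the diffeomorphism property and strict positivity of $\upmu$) hold on $[0,T)\times \mathbb R\times \mathbb T^2$. By standard local well-posedness, $T_{(Sing)} > 0$; by the final clause of Theorem~\ref{thm:main}, if $\inf_{[0,T_{(Sing)})} \upmu_\star > 0$, the solution extends past $T_{(Sing)}$, contradicting maximality unless $T_{(Sing)}$ attains the upper endpoint $2\mathring{\updelta}_*^{-1}$. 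The quantitative estimate \eqref{eq:Tsing.est} derived below rules out this latter scenario, thereby forcing $\liminf_{t \uparrow T_{(Sing)}^-} \upmu_\star(t) = 0$ and establishing item~(3).

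To derive \eqref{eq:Tsing.est}, I integrate \eqref{E:UPMUFIRSTTRANSPORT} along integral curves of $\Lunit$. Combining it with identity \eqref{E:KEYLARGETERMEXPANDED} of Lemma~\ref{L:FORMULAFORGLLRADPSI} and the $L^\infty$ bounds \eqref{conclusion:W.P.Li}--\eqref{conclusion:V.S.C.D}, I obtain
\begin{equation*}
\Lunit \upmu = -\tfrac{1}{2} \Speed^{-1}(\Speed^{-1}\Speed_{;\rr} + 1)\bX\mathcal R_{(+)} + \mathcal{O}_{\mydiam}(\mathring{\upalpha})|\bX\mathcal R_{(+)}| + \mathcal{O}(\epd).
\end{equation*}
A commutator computation (using the wave equations for the $\Psi_{\imath}$ via \eqref{conclusion:W.P.Li} and a schematic expansion of $\Lunit(\bX \mathcal R_{(+)})$) further shows that the right-hand side is nearly constant along $\Lunit$-integral curves, up to errors of size $\mathcal{O}_{\mydiam}(\mathring{\upalpha}) + \mathcal{O}(\epd)$. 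Integrating from $t=0$ and using $\upmu\restriction_{\Sigma_0} = 1 + \mathcal{O}_{\mydiam}(\mathring{\upalpha})$ (a consequence of \eqref{E:FIRSTUPMU} and the data assumptions), I conclude that at points where the supremum in \eqref{assumption:lower.bound} is essentially attained, $\upmu$ first vanishes at time $(1 + \mathcal{O}_{\mydiam}(\mathring{\upalpha}) + \mathcal{O}(\epd))\mathring{\updelta}_*^{-1}$, while at all other points $\upmu$ remains positive at least that long. This yields \eqref{eq:Tsing.est} together with the sharper linear lower bound $\upmu_\star(t) \geq (T_{(Sing)}-t)\cdot \bigl(\mathring{\updelta}_* + \mathcal{O}_{\mydiam}(\mathring{\upalpha}) + \mathcal{O}(\epd)\bigr)$ for $t \in [0,T_{(Sing)})$.

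Items (1) and (2) are immediate: (1) from the very definition of $T_{(Sing)}$, and (2) from the ``In addition'' clause of Theorem~\ref{thm:main} applied with $T = T_{(Sing)}$. For (4) and (5), the decisive tool is the decomposition from Lemma~\ref{lem:Cart.to.geo},
\begin{equation*}
\rd_1 = \Speed^{-2} X^1 \upmu^{-1}\bX - \tfrac{X^2}{X^1} Y - \tfrac{X^3}{X^1} Z,
\end{equation*}
together with $X^1 = -1 + \mathcal{O}_{\mydiam}(\mathring{\upalpha})$, $\Speed = 1 + \mathcal{O}_{\mydiam}(\mathring{\upalpha})$, and the boundedness (in geometric coordinates up to $T_{(Sing)}$) of $\bX \mathcal R_{(+)}$, $Y\mathcal R_{(+)}$, $Z\mathcal R_{(+)}$. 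These give
\begin{equation*}
\rd_1 \mathcal R_{(+)} = -\Speed^{-2}\upmu^{-1} \bX \mathcal R_{(+)}\bigl(1 + \mathcal{O}_{\mydiam}(\mathring{\upalpha})\bigr) + \mathcal{O}(1),
\end{equation*}
so the blowup of $\rd_1 \mathcal R_{(+)}$ at a geometric point is equivalent to the simultaneous vanishing of $\upmu$ \emph{and} the non-vanishing of $\bX \mathcal R_{(+)}$ there. The inclusion $(\mathbb{R}\times\mathbb{T}^2)\setminus \mathscr{S}_{Vanish} \subseteq \mathscr{S}_{Regular}$ is handled by a local extension: at any geometric point with $\upmu > 0$, the inverse function theorem produces a local Cartesian diffeomorphism, and item (2) transfers to local Cartesian smoothness.

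The main obstacle is to show $\mathscr{S}_{Vanish} \subseteq \mathscr{S}_{Blowup}$, i.e.\ that at every point where $\upmu$ vanishes, the factor $\bX\mathcal R_{(+)}$ is bounded \emph{away from zero}. This is shown by combining the near-constancy of $\Lunit\upmu$ along $\Lunit$-integral curves (from paragraph two) with the linear vanishing rate of $\upmu$ at such a point: the linear rate forces $|\Lunit\upmu|$ to be bounded below by a positive multiple of $\mathring{\updelta}_*$ in a neighborhood of the vanishing point, and the non-degeneracy of the coefficient $\Speed^{-1}(\Speed^{-1}\Speed_{;\rr}+1)$ (Remark~\ref{R:NONVANISHINGFACTOR}) then implies $|\bX \mathcal R_{(+)}| \gtrsim \mathring{\updelta}_*$ there. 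Non-emptiness of $\mathscr{S}_{Vanish}$ (and hence $\mathscr{S}_{Blowup}$) follows from item (3): the finiteness $T_{(Sing)} < 2\mathring{\updelta}_*^{-1}$ obtained from \eqref{eq:Tsing.est} means $\upmu_\star$ vanishes at time $T_{(Sing)}$, and by compactness of the minimizing sequence at the extended time slice (using item~(2)), this minimum is attained at some point in $\mathbb{R}\times \mathbb{T}^2$.
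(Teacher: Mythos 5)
Your proposal is correct and follows essentially the same line of reasoning as the paper's proof: define $T_{(Sing)}$ by a continuation/supremum argument, integrate the transport equation \eqref{E:UPMUFIRSTTRANSPORT} via the expansion \eqref{E:KEYLARGETERMEXPANDED} and the near-constancy of $L\upmu$ along integral curves to establish the linear vanishing of $\upmu_\star$ and hence \eqref{eq:Tsing.est}, and then use the decomposition $\rd_1 \approx -\upmu^{-1}\bX$ modulo bounded tangential directions together with the FTC-derived lower bound $|\Rad\mathcal{R}_{(+)}|\gtrsim \mathring{\updelta}_*$ at vanishing points to prove the equality of $\mathscr{S}_{Blowup}$, $\mathscr{S}_{Vanish}$, and the complement of $\mathscr{S}_{Regular}$. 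One small inaccuracy: the additional $\mathcal{O}_{\mydiam}(\mathring{\upalpha})|\bX\mathcal R_{(+)}|$ term you inserted into the expression for $\Lunit\upmu$ is spurious (the coefficient $\Speed^{-1}(\Speed^{-1}\Speed_{;\rr}+1)$ in \eqref{E:KEYLARGETERMEXPANDED} is already evaluated at the actual solution, matching \eqref{assumption:lower.bound}), and keeping it would degrade the $\mathcal{O}_{\mydiam}(\mathring{\upalpha})$ error in \eqref{eq:Tsing.est} to a generic $\mathcal{O}(\mathring{\upalpha})$; removing it recovers the paper's clean formula \eqref{eq:Lmu.main}.
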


The proofs of both Theorem~\ref{thm:main} and Theorem~\ref{thm:shock} 
are located in Section~\ref{sec:proof.of.main.theorem}.
\medskip

The next two corollaries concern some refined conclusions one can make with \emph{additional} assumptions on the initial data.

\begin{corollary}[\textbf{Non-vanishing of the vorticity and entropy at the blowup-points}]
\label{cor:stupid.nonvanishing}
Assume the hypotheses and conclusions of Theorem~\ref{thm:main}, 
but perhaps taking $\mathring{\upalpha}$ and $\epd$ smaller in a manner that depends on the same quantities stated in the theorem. Assume in addition that:\footnote{Recall the initial condition \eqref{E:INTROEIKONAL} for $u$,
which shows that $u\restriction_{\Sigma_0} = \mathring{\upsigma} - x^1$.}
\begin{equation}\label{eq:problem.concentrated.somewhere}
\f 12 [\Speed^{-1}(\Speed^{-1} \Speed_{;\rr} +1)(\bX\mathcal R_{(+)})]_+(t=0, u, x^2, x^3) \leq \f 12 \mathring{\updelta}_*^{-1} \quad\mbox{when 
$| u - \mathring{\upsigma} + \mathring{\updelta}_*^{-1}| \geq 3\mathring{\upalpha} \mathring{\updelta}_*^{-1}$},
\end{equation}
and:
\begin{equation}\label{eq:V.S.nonvanishing.assumption}
\f 12\epd^2 \leq |\Vr(t=0,u,x^2,x^3)|\leq \epd^2,
\quad 
\f 12 \epd^3\leq |S(t=0,u,x^2,x^3)|\leq \epd^3 \quad 
\mbox{when $| u - \mathring{\upsigma}|\leq \mathring{\upalpha}^{\f 12}$}.
\end{equation}

Then $\Vr$ and $S$ are non-vanishing near the singular set, i.e., for any 
$(u,x^2,x^3) \in \mathscr{S}_{Blowup}$ (as in Theorem~\ref{thm:shock}), 
we have $\Vr(T_{(Sing)},u,x^2,x^3) \neq 0$ and $S(T_{(Sing)},u,x^2,x^3) \neq 0$.
\end{corollary}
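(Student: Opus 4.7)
\medskip

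\noindent\emph{Proof plan for Corollary~\ref{cor:stupid.nonvanishing}.}

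The strategy is to exploit the transport structure of $\Vortrenormalized$ and $S$ together with the regularity properties of $\upmu B$ in geometric coordinates. First, I recall that $\Vortrenormalized$ and $S$ satisfy transport equations of the form
\[
B\Vortrenormalized^{i} = \Vortrenormalized^{a}\partial_{a}v^{i} + \mathscr{V}^{i}(\Vortrenormalized,S,\partial v,\partial\Densrenormalized,\partial\Ent),
\qquad
BS^{i} = -S^{a}\partial_{i}v^{a},
\]
with right-hand sides that are \emph{linear and homogeneous} in $\Vortrenormalized$ and $S$, with coefficients built from $v$, $\Densrenormalized$, $\Ent$, and their first-order Cartesian partial derivatives. (These are obtained by commuting $\partial_{i}$ with $B$ in \eqref{eq:Euler.3} and using Definition~\ref{def:variables.HO} together with standard vorticity identities, or read off directly from Theorem~\ref{T:GEOMETRICWAVETRANSPORTSYSTEM}.) In particular, if $\Vortrenormalized$ or $S$ vanishes at some point, this is preserved along the flow of $B$ (both forward and backward in time) as long as the coefficients are integrable.

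Next, I rewrite the system in the geometric frame. Using Lemma~\ref{lem:Cart.to.geo}, each Cartesian derivative $\partial_{a}v^{i}$ is a linear combination of $Lv^{i}, Yv^{i}, Zv^{i}$, and $Xv^{i} = \upmu^{-1}\bX v^{i}$, with bounded coefficients. Multiplying the transport equations by $\upmu$, the singular factor $\upmu^{-1}$ is absorbed, and the $L^{\infty}$ estimates \eqref{conclusion:W.P.Li}--\eqref{conclusion:V.S.C.D} imply that the resulting equations take the form
\[
(\upmu B)\Vortrenormalized^{i} = \mathsf{M}^{i}_{\ a}\,\Vortrenormalized^{a} + \mathsf{N}^{i}_{\ a}\,S^{a},
\qquad
(\upmu B)S^{i} = \mathsf{P}^{i}_{\ a}\,S^{a},
\]
with $\mathsf{M},\mathsf{N},\mathsf{P}$ uniformly bounded on $\mathcal{M}_{T_{(Sing)},U_{0}}$. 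By Lemma~\ref{lem:slashed}, $\upmu B = \upmu \srd_{t} + \srd_{u} + \upmu(L^{A}+X^{A})\srd_{A}$ extends smoothly to the closure $\overline{\mathcal{M}_{T_{(Sing)},U_{0}}}$ in the geometric coordinates, so its integral curves are everywhere well-defined, even at points where $\upmu$ vanishes.

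Given $p_{*} = (T_{(Sing)},u_{*},x^{2}_{*},x^{3}_{*})\in\mathscr{S}_{Vanish}$, I let $\gamma(\tau)$ denote the integral curve of $\upmu B$ with $\gamma(0)=p_{*}$, traced backward. Along $\gamma$ one has $du/d\tau = -1$, $dt/d\tau = -\upmu\le 0$, and $\gamma$ remains in the compact geometric spacetime slab; since $\upmu$ is bounded below by a positive constant away from $\mathscr{S}_{Vanish}$, $\gamma$ reaches $\Sigma_{0}$ at some finite parameter value $\tau=-\tau_{*}$, say at a point $p_{0}=(0,u_{0},x^{2}_{0},x^{3}_{0})$. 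A standard Gr\"onwall argument applied to the linear ODE system above yields
\[
|\Vortrenormalized(p_{*})|+|S(p_{*})| \;\ge\; e^{-C\tau_{*}}\bigl(|\Vortrenormalized(p_{0})| + |S(p_{0})|\bigr)
\]
for a constant $C$ depending only on the uniform bounds on $\mathsf{M},\mathsf{N},\mathsf{P}$.

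The remaining---and main---obstacle is to show that $p_{0}$ lies in the initial region where $\Vortrenormalized$ and $S$ are quantitatively non-vanishing. To do this, I combine the localization hypothesis \eqref{eq:problem.concentrated.somewhere} with sharp $L^\infty$ bounds on $\upmu$ away from the shock locus (as in Section~\ref{sec:geometry}) and the finite speed of propagation arguments of Section~\ref{sec:trivial} to pin down $(u_{0},x^{2}_{0},x^{3}_{0})$ quantitatively: more precisely, by tracking $\int \upmu\,d\tau$ along $\gamma$ against $dt$, one shows that the pair $(u_{0},(x^{2}_{0},x^{3}_{0}))$ is confined to an $\mathcal{O}(\mathring{\upalpha})$-neighborhood of the fluid trajectory through the shock point, which by hypothesis falls inside $\{|u-\mathring{\upsigma}|\le\mathring{\upalpha}^{1/2}\}\times\mathbb{T}^{2}$. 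Applying \eqref{eq:V.S.nonvanishing.assumption} at $p_{0}$ then yields $|\Vortrenormalized(p_{0})|\ge\tfrac{1}{2}\epd^{2}$ and $|S(p_{0})|\ge\tfrac{1}{2}\epd^{3}$, which combined with the Gr\"onwall estimate gives the desired non-vanishing $|\Vortrenormalized(p_{*})|,|S(p_{*})|>0$. The verification that the $\upmu B$-flow indeed connects the two prescribed regions is the most delicate point; it is where the precise calibration of the small parameter $\mathring{\upalpha}$ relative to $\mathring{\updelta}_{*}^{-1}$ and $\mathring{\upsigma}$ is used, and it is for this geometric matching that the smallness conditions on $\mathring{\upalpha}$ and $\epd$ may need to be tightened beyond those required for Theorem~\ref{thm:main}.
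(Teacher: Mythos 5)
Your overall strategy is the same as the paper's: flow backward along the material trajectory through the shock point, exploit the linear transport structure of $\Vr$ and $S$ together with a Gr\"onwall argument, and match against the initial data. Two aspects need attention.

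First, the Gr\"onwall statement you wrote, $|\Vr(p_*)|+|S(p_*)|\ge e^{-C\tau_*}(|\Vr(p_0)|+|S(p_0)|)$, does not by itself yield the conclusion: since $|\Vr(p_0)|\approx \epd^2 \gg \epd^3\approx|S(p_0)|$, the lower bound on the sum is dominated by $\Vr$ and gives no information about $S(p_*)$. You must exploit the lower-triangular structure you already wrote down: the $S$ equation $(\upmu B)S=\mathsf{P}S$ is closed, so a scalar Gr\"onwall gives $|S(p_*)|\approx|S(p_0)|\gtrsim\epd^3>0$; then the $\Vr$ equation has $S$ as a small source and Duhamel gives $|\Vr(p_*)|\gtrsim\epd^2 - C\epd^3>0$ for $\epd$ small. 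This is exactly what the paper does (its schematic $\frac{d}{du}$ equations along $\gamma_{(Sing)}$).

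Second, the ``geometric matching,'' which you explicitly flag as the crux, is where the argument actually has content and your sketch (tracking $\int\upmu\,d\tau$ against $dt$) does not make clear how the hypothesis \eqref{eq:problem.concentrated.somewhere} is used at all. The paper proceeds as follows: (a)~it uses the transport equation \eqref{E:UPMUFIRSTTRANSPORT} for $\upmu$ \emph{along the $L$-flow} (not the $B$-flow) together with \eqref{eq:problem.concentrated.somewhere} and \eqref{eq:Tsing.est} to show that $\upmu$ stays bounded below on $[0,T_{(Sing)}]$ along any $L$-integral curve emanating from a point on $\Sigma_0$ with $|u-\mathring{\upsigma}+\mathring{\updelta}_*^{-1}|\ge 3\mathring{\upalpha}\mathring{\updelta}_*^{-1}$; this constrains the $u$-value $u_*$ of the shock point. (b)~Since $L^1=1+\mathcal{O}_{\mydiam}(\mathring{\upalpha})$, integrating the $L$-flow gives the Cartesian $x^1$-coordinate of the shock point up to $\mathcal{O}_{\mydiam}(\mathring{\upalpha})\mathring{\updelta}_*^{-1}$. (c)~Only then does the paper flow backward along $B=\rd_t+\mathcal{O}_{\mydiam}(\mathring{\upalpha})\rd$, parametrized by $t$ (not by $u$), to conclude that the backward trajectory lands in the region $|u-\mathring{\upsigma}|\le\mathring{\upalpha}^{1/2}$ where \eqref{eq:V.S.nonvanishing.assumption} applies. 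Parametrizing the backward flow by $t$ is cleaner than parametrizing by $u$: since $dt/d\tau=\upmu$ vanishes at the starting point, your claim that the $\upmu B$-flow ``reaches $\Sigma_0$ at some finite parameter value'' requires a separate argument, which the paper sidesteps by instead using the manifestly globally defined $t$-parametrized $B$-flow and the bound $|v|=\mathcal{O}_{\mydiam}(\mathring{\upalpha})$. You should incorporate these three steps; as written, the argument does not actually use \eqref{eq:problem.concentrated.somewhere}, and without it there is no way to localize $u_*$ and hence no way to localize $p_0$.
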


The proof of Corollary~\ref{cor:stupid.nonvanishing} is located in Section~\ref{sec:stupid.nonvanishing}.

\begin{corollary}[\textbf{The spatial H\"{o}lder regularity of the solution relative to the Cartesian coordinates}]
\label{cor:stupid.Holder}
Let $\mathring{\upbeta}>0$ be a constant,
and assume that the following hold:
\begin{enumerate}
\item For all $u$ such that $|u - \mathring{\upsigma}| \geq \f{\mathring{\upsigma}}4$, we have:
$$\f 12 [\Speed^{-1}(\Speed^{-1} \Speed_{;\rr} +1)(\bX\mathcal R_{(+)})]_+(t=0,u,x^2, x^3) 
\leq \f 14 \mathring{\updelta}_*.$$
\item For all\footnote{This is a non-degeneracy condition 
in the sense that it guarantees that
 for every 
$(x^2,x^3) \in \mathbb{T}^2$, the quantity $(\Speed^{-1} \Speed_{;\rr} +1)(\bX\mathcal R_{(+)})\restriction_{\Sigma_0}$, when viewed as a one-variable function 
of $u$, has a non-degenerate maximum. (Note also that $(\Speed^{-1} \Speed_{;\rr} +1)(\bX\mathcal R_{(+)})\restriction_{\Sigma_0}$ is related to the quantity 
in \eqref{assumption:lower.bound}, whose reciprocal 
controls the blowup-time.)} $u \in [\f{\mathring{\upsigma}}2, \frac{3\mathring{\upsigma}}{2}]$,
\begin{equation}\label{eq:nondegeneracy}
\f 12 \bX \bX  \{ (\Speed^{-1} \Speed_{;\rr} +1)(\bX\mathcal R_{(+)})\}(t=0,u, x^2, x^3)   \leq - 3 \mathring{\updelta}_* \mathring{\upbeta} < 0.
\end{equation}
\end{enumerate}

Also assume the hypotheses and conclusions of Theorems~\ref{thm:main} and \ref{thm:shock}, 
but perhaps taking $\mathring{\upalpha}$ and $\epd$ smaller in a manner that depends on $\mathring{\upbeta}$ and
the same quantities stated in Theorem~\ref{thm:main}. Then 
the spatial $C^{1/3}$ norms 
(i.e., the standard $C^{1/3}$ H\"{o}lder norms with respect to the Cartesian spatial coordinates)
of all of the fluid variables and higher-order variables
$\rr$, $v^i$, $\Vr^i$, $S^i$, $\mathcal{C}^i$ and $\mathcal{D}$ 
are uniformly bounded up to the first singular time.
\end{corollary}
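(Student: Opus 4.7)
\medskip

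\noindent \textbf{Proof proposal for Corollary~\ref{cor:stupid.Holder}.}

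The strategy is to explicitly analyze the degeneration of the change of variables from the geometric coordinates $(t,u,x^2,x^3)$ to the Cartesian coordinates $(t,x^1,x^2,x^3)$ at the first singular time, and then to combine this with the smoothness of the solution in geometric coordinates (guaranteed by Theorem~\ref{thm:main}). The non-degeneracy assumption \eqref{eq:nondegeneracy} is exactly what forces the degeneration to be of a cubic Burgers-type form $x^1 - x^1_\ast \sim (u-u_\ast)^3$, which yields the H\"older exponent $1/3$.

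First, I would use the transport equation \eqref{E:UPMUFIRSTTRANSPORT} together with the expansion in Lemma~\ref{L:FORMULAFORGLLRADPSI} to derive, up to errors of size $\mathcal{O}_{\mydiam}(\mathring{\upalpha}) + \mathcal{O}(\epd)$, the relation
\begin{align*}
L\upmu(t,u,x^2,x^3) \;=\; -\tfrac{1}{2}\Speed^{-1}(\Speed^{-1}\Speed_{;\rr}+1)(\bX\mathcal{R}_{(+)})(t,u,x^2,x^3) + (\text{small})\,.
\end{align*}
Using the $L^\infty$ bounds in conclusions \eqref{conclusion:W.P.Li}--\eqref{conclusion:V.S.C.D} of Theorem~\ref{thm:main}, one shows that the right-hand side is nearly constant along the integral curves of $L$ (i.e., along curves of constant $(u,x^2,x^3)$). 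Since $L$ coincides with $\slashed{\partial}_t$ modulo $\ell_{t,u}$-tangent corrections (by \eqref{eq:slashes.1}), integrating in $t$ yields
\begin{align*}
\upmu(t,u,x^2,x^3) \;=\; 1 + t\, F(u,x^2,x^3) + (\text{small})\,, \qquad F \;\doteq\; -\tfrac{1}{2}\Speed^{-1}(\Speed^{-1}\Speed_{;\rr}+1)(\bX\mathcal{R}_{(+)})\big|_{t=0}\,.
\end{align*}

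Next, by assumption (1) together with \eqref{eq:nondegeneracy}, for each fixed $(x^2,x^3)$ the function $u \mapsto -F(u,x^2,x^3)$ attains a non-degenerate maximum at some point $u_\ast = u_\ast(x^2,x^3) \in [\tfrac{\mathring{\upsigma}}{2},\tfrac{3\mathring{\upsigma}}{2}]$ with $-F(u_\ast,x^2,x^3) = \mathring{\updelta}_\ast$ and $\partial_u^2 F(u_\ast,x^2,x^3) \geq 3\mathring{\updelta}_\ast\mathring{\upbeta}$ (after absorbing positive smooth factors like $\Speed^{-1}$). Since $T_{(Sing)} = \mathring{\updelta}_\ast^{-1}(1+\mathcal{O}_{\mydiam}(\mathring{\upalpha})+\mathcal{O}(\epd))$ by \eqref{eq:Tsing.est}, a Taylor expansion in $u$ around $u_\ast$ yields, uniformly in $(x^2,x^3)$,
\begin{align*}
\upmu(T_{(Sing)},u,x^2,x^3) \;\geq\; c\,\mathring{\upbeta}\,(u-u_\ast(x^2,x^3))^2
\end{align*}
for some constant $c>0$, provided $\mathring{\upalpha}$ and $\epd$ are chosen small enough in terms of $\mathring{\upbeta}$. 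This propagates the Morse structure from $\Sigma_0$ to $\Sigma_{T_{(Sing)}}$ and is the \emph{main technical step}; the crucial subtlety is tracking the various error terms in the transport equation so that the Morse non-degeneracy survives.

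To conclude, I use the identity $\slashed{\partial}_u x^1 = \upmu\Speed^2/X^1$ from Lemma~\ref{L:GEOMETRICCOORDINATEVECTORFIELDSINTERMSOFCARTESIANVECTORFIELDS}, along with the bounds $|X^1 + 1|, |\Speed - 1| \ls \mathring{\upalpha}$, to integrate in $u$ from $u_\ast(x^2,x^3)$:
\begin{align*}
\bigl|x^1(T_{(Sing)},u,x^2,x^3) - x^1(T_{(Sing)},u_\ast(x^2,x^3),x^2,x^3)\bigr| \;\approx\; \bigl|\tfrac{c\mathring{\upbeta}}{3}(u-u_\ast)^3\bigr|\,,
\end{align*}
so that $|u-u_\ast| \ls |x^1 - x^1_\ast|^{1/3}$. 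By the final extendibility statement of Theorem~\ref{thm:main} (and the analogous extendibility of the transport variables which follows from Sections~\ref{sec:transport.easy.main}--\ref{sec:transport.hard}), the fluid variables $\rr, v^i, s, \Vortrenormalized^i, \GradEnt^i, \CurlofVortrenormalized^i, \DivofEntrenormalized$ are all smooth functions of $(u,x^2,x^3)$ at time $T_{(Sing)}$, hence Lipschitz in $u$. Composing with $|u-u_\ast| \ls |x^1-x^1_\ast|^{1/3}$ yields $C^{1/3}$ regularity in the $x^1$ direction. For the $x^2,x^3$ directions, I would use the relations \eqref{eq:Cart.to.geo.2} combined with the Lipschitz regularity in $(u,x^2,x^3)$ and with the implicit function theorem (applied to $x^1$ as a function of $(u,x^2,x^3)$, inverting to express $u$ in terms of $(x^1,x^2,x^3)$): the only direction in which the Jacobian degenerates is the $\slashed{\partial}_u$-direction, so the resulting H\"older exponent is again $1/3$, with the worst case controlled by the same quadratic vanishing of $\upmu$.
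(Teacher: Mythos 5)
Your approach captures the right geometric intuition (quadratic vanishing of $\upmu$ $\Rightarrow$ cubic degeneration of $u \mapsto x^1$ $\Rightarrow$ H\"older exponent $1/3$), and the integration step $\slashed{\partial}_u x^1 = \upmu\Speed^2/X^1$ via Lemma~\ref{L:GEOMETRICCOORDINATEVECTORFIELDSINTERMSOFCARTESIANVECTORFIELDS} is indeed the correct pivot, as in the paper's Lemma~\ref{lem:Lipschitz.cov}. However, there is a genuine gap: your argument only produces an estimate of the form $|u - u_\ast| \lesssim |x^1(u) - x^1(u_\ast)|^{1/3}$, i.e., a H\"older bound \emph{with respect to the critical point} $u_\ast(x^2,x^3)$. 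A $C^{1/3}$ bound requires comparing \emph{arbitrary pairs} $u_1, u_2$, and in particular the case where both $u_1$ and $u_2$ lie on the same side of $u_\ast$ and are close to it is not addressed. Filling this gap is not automatic: one needs a quantitative calculus inequality showing that a monotone function with a uniform sign on its third derivative obeys $|f(y_1)-f(y_2)| \gtrsim |y_1-y_2|^3$ for \emph{any} pair, not just at a critical point. This is precisely the content of the paper's Lemma~\ref{lem:trivial.calculus}, whose proof involves a non-trivial case split depending on the signs of $f''$ at the endpoints. Without such a lemma, your argument does not close.

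The paper also takes a cleaner route into the lemma: instead of integrating the transport equation for $\upmu$, propagating an approximate Morse structure to time $T_{(Sing)}$, and identifying a moving critical point $u_\ast(x^2,x^3)$ (each of which introduces $\mathcal{O}_{\mydiam}(\mathring\upalpha) + \mathcal{O}(\epd)$ errors that you would need to track quantitatively to preserve the strict inequality $\partial_u^2 F(u_\ast) > 0$), the paper directly proves the pointwise sign-definite bound $\slashed{\partial}_u^3 x^1 \leq -\mathring\upbeta$ on the relevant time-$u$ region (Lemma~\ref{lem:higher.transversal.for.Holder}). This uses the transversal derivative $L^\infty$ estimates of Proposition~\ref{P:LINFTYHIGHERTRANSVERSAL} and avoids ever locating a critical point. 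Your third paragraph, which handles the transverse $x^2,x^3$ directions, is also too vague: the paper's Lemma~\ref{lem:Lipschitz.cov} is careful here, choosing the pair of points realizing the minimal Euclidean distance between two level sets of $u$ and exploiting the fact that $\mathrm{grad}_e\, u$ is $\mathcal{O}_{\mydiam}(\mathring\upalpha)$-close to $-\partial_1$ (via \eqref{eq:compute.u.gradient}), so that the H\"older estimate along $\partial_1$ controls the full spatial H\"older norm; an appeal to the implicit function theorem alone does not give the sharp $1/3$ exponent without this geometric observation.
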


The proof of Corollary~\ref{cor:stupid.Holder} is located in Section~\ref{sec:stupid.Holder}.


\section{Reformulation of the equations and the remarkable null structure}\label{sec:speck.reformulation}
We recall in this section the main result in \cite{jS2019c}, which is of crucial importance for our analysis.

\begin{theorem}[\textbf{The geometric wave-transport-divergence-curl formulation of the compressible Euler equations}]
	\label{T:GEOMETRICWAVETRANSPORTSYSTEM}
	Consider a smooth solution to the compressible Euler equations
	\eqref{eq:Euler.1}--\eqref{eq:Euler.3}
	under an equation of state $p = p(\varrho,s)$ and constant $\bar{\varrho} > 0$ 
	such that the normalization condition \eqref{eq:c.normalization} holds.
	Then the scalar-valued functions
	$v^i$,	
	$\mathcal R_{(\pm)}$,
	$\Vortrenormalized^i$,
$\Ent$,
$\GradEnt^i$,
$\Flatdiv \Vortrenormalized$,
$\CurlofVortrenormalized^i$,
$\DivofEntrenormalized$,
and
$(\Flatcurl \GradEnt)^i$,
	$(i=1,2,3)$, (see Definitions~\ref{def:variables.fluid} and \ref{def:variables.HO})
	obey the following system of equations
	(where the Cartesian component functions $v^i$ are 
	\textbf{treated as scalar-valued functions
	under covariant differentiation} on LHS~\eqref{E:VELOCITYWAVEEQUATION}):

\medskip

\noindent \underline{\textbf{\upshape Covariant wave equations}}
	\begin{subequations}
	\begin{align}
		\square_g v^i
		& = 
			- 
			\Speed^2 \exp(2 \Densrenormalized) \CurlofVortrenormalized^i
			+ 
			\mathfrak{Q}_{(v)}^i
			+ 
			\mathfrak{L}_{(v)}^i,
			\label{E:VELOCITYWAVEEQUATION}	\\
			\square_g \mathcal R_{(\pm)} & = 
			- \Speed^2 \exp(2 \Densrenormalized) \CurlofVortrenormalized^1
			\pm
			\left\lbrace
				F_{;\Ent }\Speed^2 \exp(2\rr) 
				-
				c \exp(\Densrenormalized) \frac{p_{;\Ent}}{\bar{\varrho}} 
			\right\rbrace
			\DivofEntrenormalized
			+ 
			\mathfrak{Q}_{(\pm)}
			+ 
			\mathfrak{L}_{(\pm)}, 
			\label{E:PMWAVEEQUATION}\\
	\square_g s 
		& 
		=  
		\Speed^2 \exp(2\rr) \DivofEntrenormalized 
		+
		\mathfrak{L}_{(s)}. \label{E:ENTROPYWAVEEQUATION}
\end{align}
\end{subequations}

\noindent \underline{\textbf{\upshape Transport equations}}
\begin{subequations}
\begin{align}	\Transport \Vortrenormalized^i
	& = \mathfrak{L}_{(\Vortrenormalized)}^i,
		\label{E:RENORMALIZEDVORTICTITYTRANSPORTEQUATION}
		\\
	\Transport \Ent
	& = 0,	
		\label{E:ENTROPYTRANSPORTMAINSYSTEM}
			\\
	\Transport \GradEnt^i
	& = \mathfrak{L}_{(\GradEnt)}^i.
		\label{E:GRADENTROPYTRANSPORT}
	\end{align}
	\end{subequations}

\noindent \underline{\textbf{\upshape Transport-divergence-curl system for the specific vorticity}}
\begin{subequations}
\begin{align} \label{E:FLATDIVOFRENORMALIZEDVORTICITY}
	\Flatdiv \Vortrenormalized
	& = 
		\mathfrak{L}_{(\Flatdiv \Vortrenormalized)},
		\\
\Transport 
\CurlofVortrenormalized^i \label{E:EVOLUTIONEQUATIONFLATCURLRENORMALIZEDVORTICITY} 
& =  \mathfrak M_{(\CurlofVortrenormalized)}^i 
	+
	\mathfrak{Q}_{(\CurlofVortrenormalized)}^i
	+	
	\mathfrak{L}_{(\CurlofVortrenormalized)}^i.
\end{align}	
\end{subequations}

\noindent \underline{\textbf{\upshape Transport-divergence-curl system for the entropy gradient}}
\begin{subequations}
\begin{align} 	
\Transport \DivofEntrenormalized
	& =  		\mathfrak{M}_{(\DivofEntrenormalized)} 
			+
			\mathfrak{Q}_{(\DivofEntrenormalized)}, 	\label{E:TRANSPORTFLATDIVGRADENT}
			\\
	(\Flatcurl \GradEnt)^i & = 0.
	\label{E:CURLGRADENT}
\end{align}
\end{subequations}

\medskip

	Above, the main terms in the transport equations for the modified fluid variables take the form:
	\begin{subequations}
	\begin{align}
	\begin{split} \label{eq:def.M.C}
	\mathfrak M_{(\mathcal{C})}^i & \doteq - 
		2 \delta_{jk} \epsilon_{iab} \exp(-\Densrenormalized) (\partial_a v^j) \partial_b \Vortrenormalized^k
		+
		\epsilon_{ajk}
		\exp(-\Densrenormalized)
		(\partial_a v^i) 
		\partial_j \Vortrenormalized^k
			 \\
& \ \
		+ 
		\exp(-3 \Densrenormalized) \Speed^{-2} \frac{p_{;\Ent}}{\bar{\varrho}} 
		\left\lbrace
			(\Transport \GradEnt^a) \partial_a v^i
			-
			(\Transport v^i) \partial_a \GradEnt^a
		\right\rbrace
		 \\
	& \ \
		+
		\exp(-3 \Densrenormalized) \Speed^{-2} \frac{p_{;\Ent}}{\bar{\varrho}}  
		\left\lbrace
			(\Transport v^a) \partial_a \GradEnt^i
			- 
			(\partial_a v^a) \Transport \GradEnt^i
		\right\rbrace,
	\end{split}
	\\
	\mathfrak M_{(\DivofEntrenormalized)}
	& \doteq 2 \exp(-2 \Densrenormalized) 
			\left\lbrace
				(\partial_a v^a) \partial_b \GradEnt^b
				-
				(\partial_a v^b) \partial_b \GradEnt^a
			\right\rbrace
			+
			\exp(-\Densrenormalized) \delta_{ab} (\Flatcurl \Vortrenormalized)^a \GradEnt^b. \label{eq:def.M.D}
	\end{align} 
	\end{subequations}
	The terms
	$\mathfrak{Q}_{(v)}^i$,
	$\mathfrak{Q}_{(\pm)}$, 
	$\mathfrak{Q}_{(\CurlofVortrenormalized)}^i$,
	and
	$\mathfrak{Q}_{(\DivofEntrenormalized)}$
	are the \textbf{null forms relative to} $g$ defined by:
	\begin{subequations}
		\begin{align}
		\mathfrak{Q}_{(v)}^i	
		& \doteq 	-
					\left\lbrace
						1
						+
						\Speed^{-1} \Speed_{; \rr}
					\right\rbrace
					(g^{-1})^{\alpha \beta} (\partial_{\alpha} \Densrenormalized) \partial_{\beta} v^i,
			\label{E:VELOCITYNULLFORM} \\
		\mathfrak{Q}_{(\pm)}
		& \doteq 
		\mathfrak{Q}_{(v)}^i 
		\mp 
		2 \Speed_{; \rr} 
		(g^{-1})^{\alpha \beta} \partial_{\alpha} \Densrenormalized \partial_{\beta} \Densrenormalized
		\pm
		\Speed \left\lbrace
			(\partial_a v^a) (\partial_b v^b)
			-
			(\partial_a v^b) \partial_b v^a
		\right\rbrace,
			\label{E:DENSITYNULLFORM}
				\\
	\begin{split} \label{E:RENORMALIZEDVORTICITYCURLNULLFORM}
	\mathfrak{Q}_{(\CurlofVortrenormalized)}^i
	& \doteq
		\exp(-3 \Densrenormalized) \Speed^{-2} \frac{p_{;\Ent}}{\bar{\varrho}}  \GradEnt^i
		\left\lbrace
			(\partial_a v^b) \partial_b v^a
			-
			(\partial_a v^a) \partial_b v^b
		\right\rbrace
			 \\
& \ \
		+ 
		\exp(-3 \Densrenormalized) \Speed^{-2} \frac{p_{;\Ent}}{\bar{\varrho}}
		\left\lbrace
			(\partial_a v^a) \GradEnt^b \partial_b v^i 
			- 
			(\GradEnt^a \partial_a v^b) \partial_b v^i 
		\right\rbrace
			\\
& \ \
		+ 
		2 \exp(-3 \Densrenormalized) \Speed^{-2} \frac{p_{;\Ent}}{\bar{\varrho}}
		\left\lbrace
			(\GradEnt^a \partial_a \Densrenormalized)  \Transport v^i
			 - 
		  (\Transport \Densrenormalized) \GradEnt^a \partial_a v^i
		\right\rbrace
			\\
	&  \ \
			+ 
			2 \exp(-3 \Densrenormalized) \Speed^{-3} \Speed_{;\Densrenormalized} \frac{p_{;\Ent}}{\bar{\varrho}}
			\left\lbrace
				(\GradEnt^a \partial_a \Densrenormalized)  \Transport v^i
				- 
				(\Transport \Densrenormalized) \GradEnt^a \partial_a v^i
			\right\rbrace
		 \\
	& \ \
		+ 
		\exp(-3 \Densrenormalized) \Speed^{-2} \frac{p_{;\Ent;\Densrenormalized}}{\bar{\varrho}}
		\left\lbrace
			(\Transport \Densrenormalized) \GradEnt^a \partial_a v^i
			- 
			(\GradEnt^a \partial_a \Densrenormalized) \Transport v^i
		\right\rbrace
		 \\
	& \ \
		+
		\exp(-3 \Densrenormalized) \Speed^{-2} \frac{p_{;\Ent;\Densrenormalized}}{\bar{\varrho}} \GradEnt^i
		\left\lbrace
			(\Transport v^a) \partial_a \Densrenormalized
			-
			 (\partial_a v^a) \Transport \Densrenormalized
		\right\rbrace
		 \\
	& \ \
		+
		2 \exp(-3 \Densrenormalized) \Speed^{-2} \frac{p_{;\Ent}}{\bar{\varrho}} \GradEnt^i
		\left\lbrace
			(\partial_a v^a) \Transport \Densrenormalized
			- 
			(\Transport v^a) \partial_a \Densrenormalized
		\right\rbrace
			\\
	& \ \
		 	+
			2 \exp(-3 \Densrenormalized) \Speed^{-3} \Speed_{;\Densrenormalized} \frac{p_{;\Ent}}{\bar{\varrho}} \GradEnt^i
			\left\lbrace
				(\partial_a v^a) \Transport \Densrenormalized
		 		- 
		 		(\Transport v^a) \partial_a \Densrenormalized
		 	\right\rbrace,
\end{split}
		 		\\
\label{E:DIVENTROPYGRADIENTNULLFORM}
	\mathfrak{Q}_{(\DivofEntrenormalized)}
	& \doteq
		2 \exp(-2 \Densrenormalized) 
		\left\lbrace
			(\GradEnt^a \partial_a v^b) \partial_b \Densrenormalized
			-
			(\partial_a v^a) \GradEnt^b \partial_b \Densrenormalized
		\right\rbrace.
\end{align}
\end{subequations}
	In addition, the terms
	$\mathfrak{L}_{(v)}^i$,
	$\mathfrak{L}_{(\pm)}$,
	$\mathfrak{L}_{(s)}$,
	$\mathfrak{L}_{(\Vortrenormalized)}^i$,
	$\mathfrak{L}_{(\GradEnt)}^i$,
		$\mathfrak{L}_{(\Flatdiv \Vortrenormalized)}$,
		and
	$\mathfrak{L}_{(\CurlofVortrenormalized)}^i$,
	which are at most linear in the derivatives of the unknowns, are defined as follows:
	\begin{subequations}
	\begin{align} 
	\begin{split} \label{E:VELOCITYILINEARORBETTER} 
		\mathfrak{L}_{(v)}^i
		& \doteq 
		2 \exp(\Densrenormalized) \epsilon_{iab} (\Transport v^a) \Vortrenormalized^b
		-
		\frac{p_{;\Ent}}{\bar{\varrho}} \epsilon_{iab} \Vortrenormalized^a \GradEnt^b
			\\	
	& \ \
		- 
		\frac{1}{2} \exp(-\Densrenormalized) \frac{p_{;\Densrenormalized;\Ent}}{\bar{\varrho}} \GradEnt^a \partial_a v^i
			\\
		& \ \
		- 
		2 \exp(-\Densrenormalized) \Speed^{-1} \Speed_{;{\Densrenormalized}} \frac{p_{;\Ent}}{\bar{\varrho}} 
		(\Transport \Densrenormalized) \GradEnt^i
		+
		\exp(-\Densrenormalized) \frac{p_{;\Ent;\Densrenormalized}}{\bar{\varrho}} (\Transport \Densrenormalized) \GradEnt^i,
		\end{split}	
			\\
		\begin{split}\label{E:PMLINEARORBETTER}   
		\mathfrak {L}_{(\pm)} & 
		\doteq \mathfrak{L}_{(v)}^i 
		\pm
		F_{;\Ent}\mathfrak{L}_{(s)}
		\mp
		\f 52 \Speed \exp(-\Densrenormalized) \frac{p_{;\Ent;\Densrenormalized}}{\bar{\varrho}} 
		\GradEnt^a \partial_a \Densrenormalized 
		\pm 
		2 c^2 c_{;s} \GradEnt^a \rd_a \rr \\
		& \ \
		\mp
		\Speed \exp(-\Densrenormalized) \frac{p_{;\Ent;\Ent}}{\bar{\varrho}} \delta_{ab} \GradEnt^a \GradEnt^b 
		\pm
		F_{;\Ent;\Ent} \Speed^2 \delta_{ab} S^a S^b, 
		\end{split}
			\\	
		\mathfrak{L}_{(s)}
		& \doteq
		\Speed^2 \GradEnt^a \rd_a \rr - c c_{;\rr} \GradEnt^a \rd_a \rr - c c_{;s} \de_{ab}\GradEnt^a \GradEnt^b, \label{E:ENTROPYLINEARORBETTER} \\
		\mathfrak{L}_{(\Vortrenormalized)}^i
		& \doteq 
		\Vortrenormalized^a \partial_a v^i
		-
		\exp(-2 \Densrenormalized) \Speed^{-2} \frac{p_{;\Ent}}{\bar{\varrho}} \epsilon_{iab} (\Transport v^a) \GradEnt^b,
		\label{E:SPECIFICVORTICITYLINEARORBETTER}
			\\
		\mathfrak{L}_{(\GradEnt)}^i
		& \doteq
			- \GradEnt^a \partial_a v^i
			+ 
			\epsilon_{iab} \exp(\Densrenormalized) \Vortrenormalized^a \GradEnt^b,
			\label{E:ENTROPYGRADIENTLINEARORBETTER}
				\\
	\mathfrak{L}_{(\Flatdiv \Vortrenormalized)}
		& \doteq - \Vortrenormalized^a \partial_a \Densrenormalized,
		\label{E:RENORMALIZEDVORTICITYDIVLINEARORBETTER} \\
	\begin{split} \label{E:RENORMALIZEDVORTICITYCURLLINEARORBETTER} 
	\mathfrak{L}_{(\CurlofVortrenormalized)}^i	 		
	& \doteq
		 		2 \exp(-3 \Densrenormalized) \Speed^{-3} \Speed_{;\Ent} \frac{p_{;\Ent}}{\bar{\varrho}} 
				(\Transport v^i) \delta_{ab} \GradEnt^a \GradEnt^b
				-
				2 \exp(-3 \Densrenormalized) \Speed^{-3} \Speed_{;\Ent} \frac{p_{;\Ent}}{\bar{\varrho}} 
				\delta_{ab} \GradEnt^a (\Transport v^b) \GradEnt^i
			\\
		& \ \
			+ 
			\exp(-3 \Densrenormalized) \Speed^{-2} \frac{p_{;\Ent;\Ent}}{\bar{\varrho}} (\Transport v^i) \delta_{ab} \GradEnt^a \GradEnt^b
			- 
			\exp(-3 \Densrenormalized) \Speed^{-2} \frac{p_{;\Ent;\Ent}}{\bar{\varrho}} \delta_{ab} (\Transport v^a) \GradEnt^b \GradEnt^i.
\end{split} 
	\end{align}
	\end{subequations}
\end{theorem}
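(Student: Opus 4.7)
The proof is essentially a long but directed computation; the point is not that any particular step is deep, but that the final form of each equation exhibits a specific null/regularity structure, and one must choose the right change of variables (namely $\mathcal R_{(\pm)}$, $\Vortrenormalized$, $S$, $\CurlofVortrenormalized$, $\DivofEntrenormalized$) for this to become apparent. I would organize the work into three blocks: wave equations, transport equations, and the div-curl-transport system for the modified variables.

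The plan for the wave equations is to exploit the decomposition $g^{-1} = -\Transport \otimes \Transport + \Speed^2 \delta^{ab}\partial_a\otimes\partial_b$ from \eqref{E:INVERSEACOUSTICALMETRIC}, which gives $\square_g \phi = -\Transport^2\phi + \Speed^2 \Delta \phi + (\text{Christoffel terms})$ for any scalar $\phi$. To derive \eqref{E:VELOCITYWAVEEQUATION} I would apply $\Transport$ to the momentum equation \eqref{eq:Euler.2} to compute $\Transport^2 v^i$, then use the continuity equation \eqref{eq:Euler.1} to eliminate $\Flatdiv v$ and the identity $\Delta v^i = \partial_i(\Flatdiv v) - (\Flatcurl\Flatcurl v)^i$ to rewrite $\Speed^2 \Delta v^i$; since $\Flatcurl v = (\varrho/\bar{\varrho}) \Vortrenormalized$, the $\Flatcurl\Flatcurl v$ term produces precisely the leading $-\Speed^2 \exp(2\Densrenormalized) \CurlofVortrenormalized^i$ together with correction terms that force the definition \eqref{E:RENORMALIZEDCURLOFSPECIFICVORTICITY}. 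Every remaining quadratic term must then be inspected: products of the form $(\Transport\phi)(\Transport\psi) - \Speed^2 \delta^{ab}\partial_a\phi\partial_b\psi$ assemble into $(g^{-1})^{\alpha\beta}\partial_\alpha\phi\partial_\beta\psi$, and that is what defines $\mathfrak Q_{(v)}$. The equations for $\mathcal R_{(\pm)}$ follow by taking the appropriate linear combination of $\square_g v^1$ and $\square_g F(\Densrenormalized,\Ent)$, using \eqref{E:ENTROPYWAVEEQUATION} and the chain rule to absorb the $F_{;\Ent}$ contributions.

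For the transport block, \eqref{E:ENTROPYTRANSPORTMAINSYSTEM} is immediate, and \eqref{E:GRADENTROPYTRANSPORT} follows by applying $\partial_i$ to \eqref{eq:Euler.3} and commuting $\partial_i$ past $v^a \partial_a$. The specific-vorticity equation \eqref{E:RENORMALIZEDVORTICTITYTRANSPORTEQUATION} is obtained by taking $\Flatcurl$ of \eqref{eq:Euler.2}, dividing by $\varrho/\bar\varrho$, and using \eqref{eq:Euler.1} to evaluate $\Transport(\varrho/\bar\varrho)^{-1}$; the baroclinic term $(\nabla p \wedge \nabla \varrho)/\varrho^2$ is non-zero only through the entropy dependence of $p$, which is where the $p_{;\Ent}$ contribution in $\mathfrak L_{(\Vortrenormalized)}$ originates. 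The identities \eqref{E:FLATDIVOFRENORMALIZEDVORTICITY} and \eqref{E:CURLGRADENT} are algebraic consequences of $\Flatdiv\Flatcurl v = 0$ and $\Flatcurl\nabla s = 0$, respectively.

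The hardest part, and the one that truly requires the modified variables, is to derive \eqref{E:EVOLUTIONEQUATIONFLATCURLRENORMALIZEDVORTICITY} and \eqref{E:TRANSPORTFLATDIVGRADENT}. Naively, applying $\Flatcurl$ to \eqref{E:RENORMALIZEDVORTICTITYTRANSPORTEQUATION} produces a term $\epsilon_{iab}\partial_a v^c \partial_c \Vortrenormalized^b$ that costs a derivative and, worse, is not a null form in $(\partial v, \partial \Vortrenormalized)$. The point of the definition \eqref{E:RENORMALIZEDCURLOFSPECIFICVORTICITY} is that the additional $\exp(-3\Densrenormalized)\Speed^{-2}\frac{p_{;\Ent}}{\bar\varrho}$-weighted products of $S^a\partial_a v^i$ and $(\partial_a v^a) S^i$ are precisely tailored so that, upon computing $\Transport\CurlofVortrenormalized^i$, the obstructing products are either (i) reorganized into the $g$-null form $\mathfrak Q_{(\CurlofVortrenormalized)}$, (ii) converted via \eqref{eq:Euler.2}, \eqref{E:ENTROPYTRANSPORTMAINSYSTEM} into the material-derivative expressions in $\mathfrak M_{(\CurlofVortrenormalized)}$, or (iii) absorbed into $\mathfrak L_{(\CurlofVortrenormalized)}$, which is linear in derivatives. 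I would carry this out by writing $\Flatcurl(\Transport \Vortrenormalized)$ in terms of $\Transport(\Flatcurl \Vortrenormalized)$ plus commutator terms $[\Flatcurl,\Transport]\Vortrenormalized$, and then repeatedly applying \eqref{eq:Euler.1}--\eqref{eq:Euler.3} to trade spatial derivatives for material derivatives whenever a non-null combination threatens to appear. The derivation of \eqref{E:TRANSPORTFLATDIVGRADENT} proceeds in parallel, taking $\Flatdiv$ of \eqref{E:GRADENTROPYTRANSPORT} and using that $\Flatcurl S = 0$ to rewrite $\partial_a v^b \partial_b S^a$. The main obstacle throughout is bookkeeping: verifying that \emph{every} quadratic term that survives in $\mathfrak Q_{(v)}, \mathfrak Q_{(\pm)}, \mathfrak Q_{(\CurlofVortrenormalized)}, \mathfrak Q_{(\DivofEntrenormalized)}$ is of the schematic form $(g^{-1})^{\alpha\beta}\partial_\alpha(\cdot)\partial_\beta(\cdot)$ or $\partial_a(\cdot)\partial_b(\cdot) - \partial_b(\cdot)\partial_a(\cdot)$, since this is the input that makes $X$-$X$ products (the only ones that see the shock) vanish from the worst terms.
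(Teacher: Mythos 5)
Your proposal takes a genuinely different route from the paper. The paper does \emph{not} derive the reformulation from scratch: its proof of Theorem~\ref{T:GEOMETRICWAVETRANSPORTSYSTEM} simply cites \cite[Theorem~1]{jS2019c}, where exactly this wave-transport-divergence-curl system is established for the unknowns $\Densrenormalized, v^i, s$, and then proves only the one new identity
\begin{equation*}
\square_g \mathcal R_{(\pm)} = \square_g v^1 \pm
\left\lbrace
	\Speed \,\square_g\rr
	+
	\Speed_{;\rr} (g^{-1})^{\alpha\beta}\rd_\alpha \rr\rd_\beta\rr
	+
	2 c^2 c_{;s} \GradEnt^a \rd_a \rr
	+
	F_{;\Ent;\Ent} \Speed^2 \delta_{ab} S^a S^b
	+
	F_{;\Ent} \square_g \Ent
\right\rbrace,
\end{equation*}
which follows from the chain rule, \eqref{E:INVERSEACOUSTICALMETRIC}, and $\Transport\Ent=0$, thereby trading the $(\Densrenormalized,v^1)$ wave equations of \cite{jS2019c} for the $\mathcal R_{(\pm)}$ wave equations stated here. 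Your sketch is instead a plan for re-deriving the entire system directly from \eqref{eq:Euler.1}--\eqref{eq:Euler.3}, which is what \cite{jS2019c} itself does. Methodologically the outline is sound and you have identified the right mechanism --- apply $\Transport$ to the momentum equation to form $\Transport^2 v^i$, use the curl-curl identity to split $\Delta v^i$ so that the definition of $\CurlofVortrenormalized$ emerges, and repeatedly trade spatial for material derivatives via \eqref{eq:Euler.1}--\eqref{eq:Euler.3} so that what survives is either a $g$-null form or at most linear in first derivatives. Two small caveats: \textbf{(i)} with \eqref{E:INVERSEACOUSTICALMETRIC} it is $-(\Transport\phi)(\Transport\psi) + \Speed^2\delta^{ab}(\partial_a\phi)(\partial_b\psi)$, not the combination you wrote, that equals $(g^{-1})^{\alpha\beta}\partial_\alpha\phi\partial_\beta\psi$ (a harmless sign); and \textbf{(ii)} for \eqref{E:TRANSPORTFLATDIVGRADENT}, taking $\Flatdiv$ of \eqref{E:GRADENTROPYTRANSPORT} and using $\Flatcurl\GradEnt=0$ alone does \emph{not} produce an antisymmetric combination --- the cancellation that yields the null form $\mathfrak M_{(\DivofEntrenormalized)}$ requires also differentiating the correction term $-\exp(-2\Densrenormalized)\GradEnt^a\partial_a\Densrenormalized$ in the definition \eqref{E:RENORMALIZEDDIVOFENTROPY} of $\DivofEntrenormalized$, which your outline does not mention. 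More broadly, the proposal is an honest plan rather than a proof: the long verification that the surviving quadratics are \emph{exactly} \eqref{eq:def.M.C}--\eqref{E:RENORMALIZEDVORTICITYCURLLINEARORBETTER} is promised but not carried out, which is precisely the work that \cite{jS2019c} performs and this paper avoids by citation.
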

\begin{proof}
The equations are copied from \cite[Theorem~1]{jS2019c}, except 
we have replaced the wave equations for $\rr, v^1$
from \cite[Theorem~1]{jS2019c} with equivalent wave equations
for $\mathcal R_{(\pm)}$ with the help of the identity
$$
\square_g \mathcal R_{(\pm)} = \square_g v^1 \pm 
\left\lbrace
	\Speed \,\square_g\rr 
	+ 
	\Speed_{;\rr} (g^{-1})^{\alp\bt}\rd_\alp \rr\rd_\bt\rr 
	+ 
	2 c^2 c_{;s} \GradEnt^a \rd_a \rr
	+
	F_{;\Ent;\Ent} \Speed^2 \delta_{ab} S^a S^b
	+
	F_{;\Ent} \square_g \Ent
	\right\rbrace,$$
which follows from 
\eqref{E:RIEMANNINVARIANTS},
the chain rule,
the expression \eqref{E:INVERSEACOUSTICALMETRIC} for $g^{-1}$, 
and the transport equation $\Transport \Ent = 0$, i.e., \eqref{eq:Euler.3}. \qedhere
\end{proof}

\section{The bootstrap assumptions and statement of the main a priori estimates}
\label{sec:bootstrap}
We prove our theorem with a bootstrap argument.
In this section, we state the precise bootstrap assumptions as well as a theorem that features our main a priori estimates. The proof of the theorem occupies Sections~\ref{sec:trivial}--\ref{sec:end.of.bootstrap}.

\subsection{Bootstrap assumptions}
\label{SS:BOOTSTRAPASSUMPTIONS}
We now introduce our bootstrap assumptions. 
In the context of Theorem~\ref{thm:bootstrap} below,
we assume that the bootstrap assumptions in the next two subsubsections hold
hold for $t \in [0,\Tboot)$, where $\Tboot \in [0, 2 \mathring{\updelta}_*^{-1}]$
is a ``bootstrap time.''

\subsubsection{Soft bootstrap assumptions}
\label{SSS:SOFTBOOTSTRAP}
\begin{enumerate}
\item We assume that the change of variables map $(t,u,x^2,x^3) \rightarrow (t,x^1,x^2,x^3)$
from geometric to Cartesian coordinates 
is a $C^1$ diffeomorphism from $[0,\Tboot) \times \mathbb{R} \times \mathbb{T}^2$ onto
$[0,\Tboot) \times \Sigma$.
\item We assume that $\upmu > 0$ on $[0,\Tboot) \times \mathbb{R} \times \mathbb{T}^2$.
\end{enumerate}
The first of these ``soft bootstrap assumptions'' 
allows us, in particular, to switch back and forth between viewing tensorfields
as a function of the geometric coordinates (which is the dominant view we take throughout the analysis)
and the Cartesian coordinates. The second soft bootstrap assumption guarantees
that there are no shocks present in the bootstrap region
(though it allows for the possibility that a shock will form precisely at time $\Tboot$).

\subsubsection{Quantitative bootstrap assumptions}
\label{SSS:QUANTITATIVEBOOTSTRAP}
Let $\toprate\in \mathbb N$ be the absolute constant appearing in
the statements of Theorem~\ref{thm:main} above and
Proposition~\ref{prop:wave} below. 
{ Moreover, as we stated already in Section~\ref{SS:FLUIDVARIABLEDATAASSUMPTIONS}, 
$\Ntop$ denotes any fixed positive integer satisfying}
$\Ntop \geq  2 \toprate + 10$. 

\begin{remark}[\textbf{Rationale behind our choice $\Ntop \geq 2 \toprate + 10$}]
	\label{R:NUMBEROFDERIVATIVES}
	Later on, our assumption $\Ntop \geq  2 \toprate + 10$ and the bootstrap assumptions
	will allow us to control $\leq \Ntop$ derivatives of nonlinear products 
	by bounding all terms in $L^{\infty}$ except perhaps the one factor
	hit by the most derivatives. Roughly,\footnote{In reality, the different solution
	variables that we have to track, such as $\Psi$, $\Vr^i$, $\Lunit^i$, $\upmu$, etc., 
	exhibit slightly different
	amounts of $L^{\infty}$ regularity.} 
	the reason is that our derivative
	count will be such that any factor 
	that is hit by $\leq \Ntop - \toprate - 4$ or fewer
	derivatives is bounded in $L^{\infty}$. We will often avoid
	explicitly pointing out this aspect of our derivative count.
\end{remark}

\medskip

\noindent\underline{\textbf{$L^2$ bootstrap assumptions for the wave variables}}.

For $\Ntop- \toprate+1 \leq N\leq \Ntop$,\footnote{Equivalently, for $0\leq K \leq  \toprate-1$,
$\mathbb{W}_{\Ntop-K}(t) \leq \epd \upmu_{\star}^{-2 \toprate+2K+1.8}(t).$}
we assume the following bounds, where 
the energies $\mathbb{W}_N$ are defined in Section~\ref{SSS:DEFSOFENERGIES} and
$\upmu_{\star}(t)$ is defined in Definition~\ref{def:mustar}:
\begin{equation}\label{BA:W1}
\mathbb{W}_{N}(t) \leq \epd \upmu_{\star}^{-2 \toprate+2N_{top}-2N+1.8}(t).
\end{equation}
For $1\leq N \leq \Ntop- \toprate$,
\begin{equation}\label{BA:W2}
\mathbb{W}_N(t) \leq \epd.
\end{equation}



\medskip

\noindent\underline{\textbf{$L^\infty$ bootstrap assumptions for the wave variables}}.

\begin{equation}\label{BA:LARGERIEMANNINVARIANTLARGE}
\| \mathcal R_{(+)} \|_{L^\infty(\Sigma_t)} \leq \mathring{\upalpha}^{\f 12}, 
\quad \|\bX \mathcal R_{(+)} \|_{L^\infty(\Sigma_t)}\leq 3\mathring{\updelta},
\end{equation}

\begin{align} \label{BA:SMALLWAVEVARIABLESUPTOONETRANSVERSALDERIVATIVE}
	\|(\mathcal{R}_{(-)}, v^2, v^3, s)\|_{L^\infty(\Sigma_t)},
		\quad
	\|\Rad (\mathcal{R}_{(-)}, v^2, v^3, s)\|_{L^\infty(\Sigma_t)}
	&
	\leq \epd^{\f 12},
\end{align}

\begin{equation}\label{BA:W.Li.small}
\|\mathcal{P}^{[1,\Ntop- \toprate-2]} \Psi\|_{L^\infty(\Sigma_t)} \leq \epd^{\f 12},
	\quad 
\|\mathcal{P}^{{[1,\Ntop- \toprate-4]}} \bX \Psi\|_{L^\infty(\Sigma_t)} \leq \epd^{\f 12}.
\end{equation}

\medskip

\noindent\underline{\textbf{$L^\infty$ bootstrap assumptions for the specific vorticity}}.

\begin{equation}\label{BA:V}
\|\mathcal{P}^{{\leq \Ntop- \toprate-2}} \Vr \|_{L^\infty(\Sigma_t)} 
+ 
\|\mathcal{P}^{{\leq \Ntop- \toprate-4}} \bX \Vr \|_{L^\infty(\Sigma_t)} \leq \epd.
\end{equation}

\noindent\underline{\textbf{$L^\infty$ bootstrap assumptions for the entropy gradient}}.

\begin{equation}\label{BA:S}
 \|\mathcal{P}^{{\leq \Ntop- \toprate-2}} S \|_{L^\infty(\Sigma_t)} 
+ 
\|\mathcal{P}^{{\leq \Ntop- \toprate-4}} \bX S \|_{L^\infty(\Sigma_t)}\leq \epd.
\end{equation}

\noindent\underline{\textbf{$L^\infty$ bootstrap assumptions for the modified fluid variables}}.
\begin{equation}\label{BA:C.D}
\|\mathcal{P}^{{\leq \Ntop- \toprate-3}} (\mathcal{C}, \mathcal{D}) \|_{L^\infty(\Sigma_t)} 
\leq \epd.
\end{equation}

\begin{remark}[\textbf{The main large quantity in the problem}]
	\label{R:ONELARGEQUANTITY}
	From the discussion of the parameters at the beginning of Section~\ref{SS:FLUIDVARIABLEDATAASSUMPTIONS}
	and
	\eqref{BA:LARGERIEMANNINVARIANTLARGE}--\eqref{BA:C.D}
	we see that the main large quantity in the problem is
	$\bX \mathcal R_{(+)}$; all other terms exhibit smallness that is controlled
	by $\mathring{\upalpha}$ and $\epd$. This, of course, is tied to the
	kind of initial data we treat here.
\end{remark}

\subsection{Statement of the main a priori estimates}
We now state the theorem that yields our main a priori estimates.
Its proof will be the content of Sections~\ref{sec:trivial}--\ref{sec:end.of.bootstrap}.

\begin{theorem}[\textbf{The main a priori estimates}]\label{thm:bootstrap}
Let $\Tboot \in [0, 2\mathring{\updelta}_*^{-1}]$. Suppose that:
\begin{enumerate}
\item The bootstrap assumptions \eqref{BA:W1}--\eqref{BA:C.D} all hold for all $t\in [0,\Tboot)$
(where we recall that in the bootstrap assumptions, $\Ntop$ is any integer
 satisfying $\Ntop \geq 2 \toprate + 10$,
 where $\toprate\in \mathbb N$ is the absolute constant appearing in
the statements of Theorem~\ref{thm:main} and Proposition~\ref{prop:wave});
\item In \eqref{BA:LARGERIEMANNINVARIANTLARGE}, the parameter 
$\mathring{\upalpha}$ is sufficiently small in a manner that depends
only on the equation of state and $\bar{\varrho}$; 
\item The parameter $\epd > 0$ in \eqref{BA:W1}--\eqref{BA:C.D}
satisfies $\epd^{\frac{1}{2}} \leq \mathring{\upalpha}$ and
is sufficiently small in
a manner that depends only on 
the equation of state, 
$\Ntop$,
$\bar{\varrho}$, 
$\mathring{\upsigma}$, 
$\mathring{\updelta}$, 
and $\mathring{\updelta}_*^{-1}$;
	\\
and
\item The soft bootstrap assumptions stated in Section~\ref{SSS:SOFTBOOTSTRAP} hold 
	(including $\upmu>0$ in $[0,\Tboot) \times \mathbb{R} \times \mathbb{T}^2$).
\end{enumerate}

Then there exists a constant $C_{\mydiam}>0$ depending only on the equation of state and $\bar{\varrho}$, 
and a constant $C>0$ depending on 
 the equation of state,
$\Ntop$,
$\bar{\varrho}$,
$\mathring{\upsigma}$, $\mathring{\updelta}$, and $\mathring{\updelta}_*^{-1}$,
such that the following holds for all $t\in [0,\Tboot)$:
\begin{enumerate}
\item \eqref{BA:W1} and \eqref{BA:W2} hold with $\epd$ replaced by $C \epd^2$;
\item The two inequalities in \eqref{BA:LARGERIEMANNINVARIANTLARGE} hold with $\mathring{\upalpha}^{\f 12}$ replaced by $C_{\mydiam} \mathring{\upalpha}$ and 
$3 \mathring{\updelta}$ replaced by $2\mathring{\updelta}$ respectively;
\item The inequalities in 
\eqref{BA:SMALLWAVEVARIABLESUPTOONETRANSVERSALDERIVATIVE} and \eqref{BA:W.Li.small} 
hold with $\epd^{\f 12}$ replaced by $C\epd$; \\
	and
\item The inequalities \eqref{BA:V}--\eqref{BA:C.D} all hold with $\epd$ replaced by $C\epd^{\f 32}$.
\end{enumerate}
\end{theorem}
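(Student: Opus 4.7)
The plan is to execute a standard-in-spirit but technically intricate bootstrap argument: assuming all bounds \eqref{BA:W1}--\eqref{BA:C.D} hold on $[0,\Tboot)$, I would derive strictly better versions by peeling off independent layers of the estimate hierarchy (geometric, transport/low-order, transport/top-order with elliptic, wave, then $L^\infty$), at each stage using only \emph{lower} layers plus the previous paragraph's inputs. Because we are perturbing near-simple plane symmetric data, the only variable permitted to be large is $\bX \mathcal R_{(+)}$ (cf.~Remark~\ref{R:ONELARGEQUANTITY}); the slack in the bootstrap constants (e.g.~$\epd^{1/2}$ versus $\epd$, or $\mathring{\upalpha}^{1/2}$ versus $\mathring{\upalpha}$) is what will be recovered at the end.

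I would proceed in the following order. First, a finite speed of propagation argument (Section~\ref{sec:trivial}) confines everything to $u \in [0,U_0]$ for some $U_0 \ls \mathring{\upsigma}$ and localizes $\upmu_\star$, so that the blow-up is captured by the single scalar $\upmu_\star(t)$. Second, I cite the pointwise/$L^\infty$ estimates for the geometric quantities $\upmu, \Lunit^i, \mytr\upchi, \upzeta, \angk$ from \cite{jSgHjLwW2016} via the transport equations of Lemma~\ref{L:UPMUANDLUNITIFIRSTTRANSPORT} and Lemma~\ref{L:IDENTITIESINVOLVING}, using \eqref{BA:LARGERIEMANNINVARIANTLARGE}--\eqref{BA:W.Li.small} as input; the key quantitative output is that $\upmu_\star(t)$ decreases linearly in $t$ and that $\int_0^t \upmu_\star^{-1}(t')\,dt' \ls \log \upmu_\star^{-1}(t)$. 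Third, for the transport variables $(\Vr, S)$ I would carry out $L^\infty$ estimates by exploiting that $\upmu B$ is a regular vectorfield and that $B$ is $\mathcal F_u$-transversal (so one integrates along $B$ and gains a factor of $\upmu$ against $\upmu^{-1}$ factors on the right-hand side of \eqref{E:RENORMALIZEDVORTICTITYTRANSPORTEQUATION}, \eqref{E:GRADENTROPYTRANSPORT}), together with the analogous $L^2$ transport estimates in $u$-Gr\"onwall form \eqref{eq:intro.top.order.transport}. The same scheme handles $(\mathcal C, \mathcal D)$ below top order through \eqref{E:EVOLUTIONEQUATIONFLATCURLRENORMALIZEDVORTICITY}, \eqref{E:TRANSPORTFLATDIVGRADENT}, using the $g$-null structure of $\mathfrak M_{(\mathcal C)}, \mathfrak M_{(\mathcal D)}$ to ensure the right-hand sides are only $\mathcal O(\upmu^{-1})$ rather than $\mathcal O(\upmu^{-2})$.

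The main obstacle, and the one I expect to require the most care, is the coupled top-order estimate for $\mathcal{P}^{\Ntop}(\Vr, S)$ and $\mathcal{P}^{\Ntop}(\mathcal C, \mathcal D)$. Pure transport loses a derivative here, so I would combine the transport identity for $\mathcal C, \mathcal D$ with elliptic estimates using that $\mathcal C \approx \curl\Vr$ modulo lower order (and similarly for $\mathcal D$), while $\div \Vr$ and $\curl S$ are themselves lower order by \eqref{E:FLATDIVOFRENORMALIZEDVORTICITY}, \eqref{E:CURLGRADENT}. Naively, this produces an $L^2$ inequality of the shape \eqref{eq:intro.the.difficult.term} with a possibly large constant $A$ in front of $\int \upmu_\star^{-1}\, \|\cdot\|$. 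The resolution, as hinted in the introduction, is to weight the elliptic estimate by powers of the eikonal function $u$ (exploiting that $u$ is of bounded width $\ls \mathring{\upsigma}$), which reduces the constant in front of the Gr\"onwall factor to something arbitrarily small; the dominant blow-up rate then comes from the wave right-hand sides, not from the elliptic loop. The final blow-up rates $\upmu_\star^{-2\toprate+2\Ntop-2N+\cdots}$ for $(\Vr, S)$ (offset by $+2.8$) and for $(\mathcal C, \mathcal D)$ (offset by $+0.8$) fall out of bookkeeping this Gr\"onwall-in-$u$ against the wave bounds.

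Once the transport and modified-fluid bounds are in place, I would feed them as the source $\mathfrak G$ into the auxiliary wave estimate Proposition~\ref{prop:wave} to obtain \eqref{conclusion:W.energy}, noting that the $g$-null forms $\mathfrak Q_{(v)}^i, \mathfrak Q_{(\pm)}$ on the right-hand sides of \eqref{E:VELOCITYWAVEEQUATION}--\eqref{E:PMWAVEEQUATION} interact favorably with the rescaled frame ($\upmu$ times a null form stays bounded as $\upmu \to 0$), and that the $(\mathcal C, \mathcal D)$ source in the wave equations is exactly weak enough to be consistent with the target blow-up rates. Finally, I would recover the $L^\infty$ bounds \eqref{conclusion:W.P.Li}--\eqref{conclusion:V.S.C.D} via Sobolev embedding on $\ell_{t,u}$ after descending $\toprate$ levels from the top in \eqref{eq:energy.not.blow.up}, and use a sharper direct transport argument along $L$ and $\bX$ integral curves to improve the pointwise bounds on $\|\mathcal R_{(+)}\|_{L^\infty}$ and $\|\bX \mathcal R_{(+)}\|_{L^\infty}$ to $C_\mydiam \mathring{\upalpha}$ and $2\mathring{\updelta}$ respectively (this being the one place where the large background $\bX \mathcal R_{(+)}$ propagates undamped, and where one uses that $\Lunit\bX\mathcal R_{(+)}$ picks up only $\mathcal O(\mathring{\upalpha} + \epd)$ corrections over the bounded time interval $[0, 2\mathring{\updelta}_*^{-1}]$).
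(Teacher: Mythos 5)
Your proposal takes essentially the same path as the paper's proof of Theorem~\ref{thm:bootstrap}: finite speed of propagation and localization to $u \in [0,U_0]$, pointwise estimates for the acoustical geometry, $L^2$ transport estimates for $(\Vr,S)$ and $(\mathcal C,\mathcal D)$ culminating in the $u$-weighted top-order elliptic/transport coupling, wave energy estimates obtained by feeding the transport bounds as inhomogeneities $\mathfrak G$ into the auxiliary Proposition~\ref{prop:wave}, and $L^\infty$ recovery via Sobolev embedding and direct integration along $L$. The layering, the exploitation of the $g$-null structure, and the $u$-weight device to keep the elliptic Gr\"onwall constant small all match the paper.

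One imprecision is worth flagging because, taken literally, it would break the final step. You assert that $\Lunit\bX\mathcal R_{(+)}$ picks up $\mathcal{O}(\mathring{\upalpha}+\epd)$ corrections over $[0,2\mathring{\updelta}_*^{-1}]$. An $\mathcal{O}(\mathring{\upalpha})$ contribution, integrated over this time interval, yields $\mathcal{O}(\mathring{\upalpha}\mathring{\updelta}_*^{-1})$, which cannot be absorbed into the improvement $3\mathring{\updelta}\to 2\mathring{\updelta}$ (nor into $C_{\mydiam}\mathring{\upalpha}$ for $\|\mathcal R_{(+)}\|_{L^\infty}$): the smallness of $\mathring{\upalpha}$ in the theorem is required to be \emph{independent} of $\mathring{\updelta}$ and $\mathring{\updelta}_*^{-1}$, and $C_{\mydiam}$ is not allowed to depend on $\mathring{\updelta}_*^{-1}$. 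What the paper actually shows (Proposition~\ref{prop:Linfty.est.1}, Steps~2--4) is that $\Lunit\bX\mathcal R_{(+)}=\mathcal{O}(\epd)$ and $\Lunit\mathcal R_{(+)}=\mathcal{O}(\epd)$. This sharper smallness is not automatic: it hinges on the auxiliary bound $\|\mytr\upchi\|_{L^\infty}\ls\epd$ (which in turn uses that the transport equation \eqref{E:LLUNITI} for $L^i$ has no $\bX\Psi$ source, so $\mathcal{P}L^i$ remains $\mathcal{O}(\epd)$), together with the $\mathcal{O}(\epd)$ smallness of $L\Psi$ from Sobolev embedding, so that when one inverts $\upmu\square_g$ via \eqref{eq:wave.in.terms.of.geometric} the only $\mathring{\updelta}$-sized factors ($L\upmu$ and $\Rad\mathcal R_{(+)}$) always multiply $\mathcal{O}(\epd)$-sized partners. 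Since $\epd$ \emph{is} permitted to be small depending on $\mathring{\updelta}$ and $\mathring{\updelta}_*^{-1}$, the time integration then closes. This $\mathcal{O}(\epd)$ smallness of the $\mathcal{F}_u$-tangential derivatives is exactly the ``nearly simple plane symmetric'' structure the whole argument is designed to propagate, and it cannot be relaxed to $\mathcal{O}(\mathring{\upalpha})$ in this step.
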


Sections~\ref{sec:trivial}--\ref{sec:Linfty} will be devoted to the proof of Theorem~\ref{thm:bootstrap}. See Section~\ref{sec:end.of.bootstrap} for the conclusion of the proof.

\textbf{From now on, we will use the conventions for constants stated in 
Section~\ref{SS:NOTATION} and Theorem~\ref{thm:bootstrap}.}

\section{A localization lemma via finite speed of propagation}\label{sec:trivial}
We work under the assumptions of Theorem~\ref{thm:bootstrap}.

\begin{lemma}[\textbf{A localization lemma}]
\label{lem:localization}
Let $U_0 \doteq  2 \mathring{\upsigma} + 4\mathring{\updelta}_*^{-1}$. Then for all $t\in [0,\Tboot)$,
$$(\rr,v,s) = (0,0,0),\quad \mbox{whenever $u\notin (0,U_0)$}.$$
\end{lemma}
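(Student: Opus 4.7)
The plan is a finite-speed-of-propagation argument applied separately to each side of the support. For the right side ($u \leq 0$), I will use that $\mathcal{F}_0 = \{u = 0\}$ is a $g$-null hypersurface emanating from the right edge of the initial support $\{x^1 = \mathring{\upsigma}\} \cap \Sigma_0$. Since $u$ is an eikonal function with $\partial_t u > 0$ and $\Lgeo = - (g^{-1}) \cdot d u$ is future-directed null (Lemma~\ref{L:VFBASIC}), every future-directed $g$-causal vector $V$ satisfies $V u = - g(V, \Lgeo) \geq 0$, so $u$ is non-decreasing along future-directed $g$-causal curves. Finite speed of propagation for the Euler system, whose characteristics are $g$-null, then shows that the solution at any point $p$ with $u(p) \leq 0$ is determined by the trace of the data on $\{u \leq 0\} \cap \Sigma_0 = \{x^1 \geq \mathring{\upsigma}\}$, where the data is trivial by \eqref{assumption:support}; uniqueness forces the constant-state solution on $\{u \leq 0\}$.

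For the left side ($u \geq U_0$), I would combine a Cartesian-coordinate finite-speed-of-propagation argument with the explicit formula for $u$ in the constant-state region. Under the bootstrap assumptions \eqref{BA:LARGERIEMANNINVARIANTLARGE}--\eqref{BA:SMALLWAVEVARIABLESUPTOONETRANSVERSALDERIVATIVE}, the Euler characteristic speeds are bounded by $|v| + c = 1 + \mathcal{O}(\mathring{\upalpha}^{1/2})$, so the support of the non-trivial part of the solution at time $t$ lies in the slab $\{|x^1| \leq \mathring{\upsigma} + (1 + C\mathring{\upalpha}^{1/2}) t\}$. Outside this slab the solution is exactly the constant state, so the acoustical metric \eqref{E:ACOUSTICALMETRIC} coincides with the Minkowski metric and the eikonal equation \eqref{E:INTROEIKONAL} admits the explicit solution $u = \mathring{\upsigma} - x^1 + t$. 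Consequently, within the constant-state region, the set $\{u \geq U_0\}$ is precisely $\{x^1 \leq -\mathring{\upsigma} - 4\mathring{\updelta}_*^{-1} + t\}$. For $t \in [0, 2\mathring{\updelta}_*^{-1})$ and $\mathring{\upalpha}$ sufficiently small, this strictly precedes the left edge of the support at $x^1 = -\mathring{\upsigma} - (1 + C\mathring{\upalpha}^{1/2}) t$, so $\{u \geq U_0\}$ is entirely in the constant-state region and the solution is the constant state there.

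The main step to pin down is the classical finite-speed-of-propagation estimate for the symmetric-hyperbolic Euler system, which follows from a standard energy estimate in the exterior of a backward characteristic cone of speed $|v| + c$; the bootstrap assumptions supply the pointwise control of $|v|+c$ needed to turn this into the Cartesian support slab above. Everything else reduces to an algebraic comparison of affine functions of $(t, x^1)$ in the constant-state region, and the margin in the choice $U_0 = 2\mathring{\upsigma} + 4\mathring{\updelta}_*^{-1}$ closes under the smallness of $\mathring{\upalpha}$ assumed in Theorem~\ref{thm:bootstrap}. I do not expect any substantial additional obstacle.
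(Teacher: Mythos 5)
Your treatment of the $u \leq 0$ side is correct, and it is a genuinely different route from the paper's: you use the monotonicity of $u$ along future $g$-causal curves to place $\{u \leq 0\}$ in the $g$-domain of dependence of the trivial part of $\Sigma_0$, whereas the paper works entirely in the Cartesian variables $(t,x^1)$ and handles both sides at once.

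The $u \geq U_0$ side, however, has a genuine gap, and your closing claim that ``the margin in the choice $U_0 = 2\mathring{\upsigma} + 4\mathring{\updelta}_*^{-1}$ closes under the smallness of $\mathring{\upalpha}$'' is precisely the false step. Your pointwise bound $|v| + c \leq 1 + C\mathring{\upalpha}^{1/2}$ certifies triviality only outside $\{|x^1| \leq \mathring{\upsigma} + (1 + C\mathring{\upalpha}^{1/2})t\}$. At the left edge of that slab the explicit formula $u = \mathring{\upsigma} + t - x^1$ gives $u = 2\mathring{\upsigma} + (2 + C\mathring{\upalpha}^{1/2})t$, and since $\Tboot$ may equal $2\mathring{\updelta}_*^{-1}$, this approaches $U_0 + 2C\mathring{\upalpha}^{1/2}\mathring{\updelta}_*^{-1} > U_0$ as $t \to \Tboot$, \emph{no matter how small $\mathring{\upalpha}$ is fixed} (after $\mathring{\upalpha}$ is chosen, $t$ still ranges over all of $[0,\Tboot)$). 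So a neighborhood of the edge of your slab lies in $\{u \geq U_0\}$ yet is not covered by your constant-state certificate, and the conclusion does not follow. In fact there is no margin at all here: even with the exact speed $1$, the edge value $2\mathring{\upsigma} + 2t$ tends to $U_0$ as $t \to 2\mathring{\updelta}_*^{-1}$, so \emph{any} over-estimate of the propagation speed of order $\mathring{\upalpha}^{1/2}$ breaks the argument.

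The missing ingredient is the \emph{sharp} support statement $(\rr,v,s) = (0,0,0)$ on $\{|x^1| \geq \mathring{\upsigma} + t\}$, with speed exactly $1$. This does not come from a pointwise bound on $|v|+c$ in the interior of the slab; it comes from the fact that the hypersurfaces $\{x^1 = \pm(\mathring{\upsigma} + t)\}$ are $g$-null for the \emph{constant-state} acoustical metric (i.e.\ Minkowski), so that an energy-flux argument across them, together with a continuity argument in $t$, shows the exterior solution is exactly the constant state. This is what the paper's appeal to the first-order symmetric-hyperbolic formulation and the energy currents of \cite{dC2019} provides. Once you have that sharp statement, the left-side inclusion reduces to the algebraic inequality $2t < 4\mathring{\updelta}_*^{-1}$, which holds for $t < \Tboot \leq 2\mathring{\updelta}_*^{-1}$ with equality only in the limit. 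Alternatively, you could mirror your $u \leq 0$ argument on the left using the incoming acoustical null cone, but the pointwise speed bound by itself is not enough.
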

\begin{proof}
Recall that we have normalized (see \eqref{eq:c.normalization}) $\Speed(0,0) = 1$, and (by \eqref{assumption:support}) the data are compactly supported in the region where $|x^1|\leq \mathring{\upsigma}$. Hence, by a standard finite speed of propagation argument, we see that $(\rr, v, s) = (0,0,0)$ whenever $|x^1| \geq \mathring{\upsigma} + t$. 
More precisely, 
this can be proved by applying standard energy methods 
to the first-order formulation 
of the compressible Euler equations provided by \cite[Equation~(1.201)]{dC2019},
where the relevant energy identities can be obtained with the help of
the ``energy current'' vectorfields defined by
\cite[Equations~(1.204),(1.205)]{dC2019}.
Since $t < \Tboot \leq 2\mathring{\updelta}_*^{-1}$, 
$$\{(t,x)\in [0,\Tboot)\times \Sigma: t-x^1 \geq 
\mathring{\upsigma} + 4\mathring{\updelta}_*^{-1}\} \subseteq 
\overbrace{\{(t,x)\in [0,\Tboot)\times \Sigma: x^1\leq -\mathring{\upsigma} - t\}}^{\mbox{\upshape solution is trivial here}}.$$
In particular, this implies: 
\begin{equation}\label{eq:support.in.x}
(\rr,v,s) = (0,0,0) \quad\mbox{unless $-\mathring{\upsigma}< t-x^1 < \mathring{\upsigma} + 4\mathring{\updelta}_*^{-1}$}.
\end{equation}

Observe now that since $u\restriction_{\{t=0\}} = \mathring{\upsigma}-x^1$, in the set $\{(t,x)\in [0,\Tboot)\times \Sigma: |x^1|\geq \mathring{\upsigma} + t\}$ (where the solution is trivial), we have $u=t+\mathring{\upsigma}-x^1$. In particular, $\{ u =0\} = \{t-x^1 = -\mathring{\upsigma}\}$ and 
$\{u = U_0\} = \{ t-x^1 = \mathring{\upsigma} + 4\mathring{\updelta}_*^{-1}\}$. The conclusion thus follows from \eqref{eq:support.in.x}. \qedhere
\end{proof}

\textbf{For the rest of the paper, $U_0 > 0$ denotes the constant appearing in the statement of
Lemma~\ref{lem:localization}.}

\section{Estimates for the geometric quantities associated to the acoustical metric}\label{sec:geometry}
We continue to work under the assumptions of Theorem~\ref{thm:bootstrap}.

In this section, we collect some estimates of the geometric quantities $\upmu$, 
$L^i_{(Small)}$ (see Definition~\ref{D:CARTESIANCOMPONENTSOFVECTORFIELDS}), 
under the bootstrap assumptions on the fluid variables. These estimates are the same as those appearing in 
\cites{jSgHjLwW2016,LS}.
Our analysis will therefore be somewhat brief in some spots, and
we will refer the reader to \cites{jSgHjLwW2016,LS} for details.

We highlight the following point, which is crucial for the subsequent analysis:
the bounds for $\upmu$, $L^i_{(Small)}$ and the wave variables $\Psi$ control all the other geometric quantities, including the transformation coefficients
between different sets of vectorfields, as well as the commutators of vectorfields.

\subsection{Some preliminary geo-analytic identities}
In this section, we provide some geo-analytic identities that we will use throughout our analysis.

We start by recalling the definition of a null form with respect to the acoustical metric
(``$g$-null form'' for short).

\begin{definition}[\textbf{$g$-Null forms}]
\label{D:NULLFORMS}
Let $\phi^{(1)}$ and $\phi^{(2)}$ be scalar functions.
We use the notation 
$\mathcal{Q}^{(g)}(\rd\phi^{(1)},\rd\phi^{(2)})$ to denote any derivative-quadratic term
of the form:
\begin{align}\label{E:Q0.def}
\mathcal{Q}^{(g)}(\rd\phi^{(1)},\rd\phi^{(2)})
& = \smoothfunction(L^i,\Psi) 
(g^{-1})^{\alpha\beta}\rd_\alpha \phi^{(1)} \rd_\beta \phi^{(2)},
\end{align}
where $\smoothfunction(\cdot)$ is a smooth function.

We use the notation 
$\mathcal{Q}_{\alpha \beta}(\rd\phi^{(1)},\rd\phi^{(2)})$
to denote any derivative-quadratic term
of the form:
\begin{align}\label{E:Qij.def}
\mathcal{Q}_{\alpha \beta}(\rd\phi^{(1)},\rd\phi^{(2)})
	= \smoothfunction(L^i, \Psi) 
		\left\lbrace
			\rd_\alpha \phi^{(1)} \rd_\beta \phi^{(2)} - \rd_\beta \phi^{(1)} \rd_\alpha \phi^{(2)}
		\right\rbrace,
\end{align}
where $\smoothfunction(\cdot)$ is a smooth function.
\end{definition}

\begin{lemma}[\textbf{Crucial structural properties of null forms}]
	\label{L:CRUCIALPROPERTIESOFNULLFORMS}
	Let $\mathcal{Q}(\rd\phi^{(1)},\rd\phi^{(2)})$ be a $g$-null form of type \eqref{E:Q0.def} or \eqref{E:Qij.def}.
	Then there exist smooth functions, all schematically denoted by ``$\smoothfunction$''
	(and which are different from the ``$\smoothfunction$'' in Definition~\ref{D:NULLFORMS}),
	such that the following identity holds:
	\begin{align} \label{E:CRUCIALPROPERTIESOFNULLFORMS}
		\upmu \mathcal{Q}(\rd \phi^{(1)},\rd\phi^{(2)})
		& = 
				\smoothfunction(L^i,\Psi)
				\Rad \phi^{(1)} \cdot \mathcal{P} \phi^{(2)}
				+
				\smoothfunction(L^i,\Psi)
				\Rad \phi^{(2)} \cdot \mathcal{P} \phi^{(1)}
				+
				\upmu 
				\smoothfunction(L^i,\Psi)
				\mathcal{P} \phi^{(1)} \cdot \mathcal{P} \phi^{(2)}.
	\end{align}
	In particular, decomposing all differentiations in the null form with respect to the $\{L,X,Y,Z\}$ frame
	leads to the \underline{absence} of all $X\phi^{(1)} \cdot X\phi^{(2)}$terms on RHS~\eqref{E:CRUCIALPROPERTIESOFNULLFORMS}.
\end{lemma}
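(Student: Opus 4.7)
The plan is to decompose every Cartesian coordinate derivative $\partial_{\alpha}$ in the rescaled frame $\{L,X,Y,Z\}$ using Lemma~\ref{lem:Cart.to.geo}, and to exploit the basic structural fact that, for either of the null-form types \eqref{E:Q0.def} or \eqref{E:Qij.def}, the $X\phi^{(1)}\cdot X\phi^{(2)}$ contribution vanishes identically. Once that cancellation is in hand, the identity \eqref{E:CRUCIALPROPERTIESOFNULLFORMS} follows after multiplying by $\upmu$ and converting each $X$ factor into a $\Rad=\upmu X$ factor. The ``smooth function'' prefactors $\smoothfunction$ that will appear depend only on the frame coefficients $A^I_{\alpha}$ together with the original coefficient in the null form; those coefficients are smooth functions of $L^i,\Psi$ because $X^i=v^i-L^i$ (from \eqref{E:TRANSPORTVECTORFIELDINTERMSOFLUNITANDRADUNIT}) and because $\Speed$ and $v^i$ are smooth functions of $\Psi$.

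For type \eqref{E:Q0.def}, I first establish the frame decomposition
\[
g^{-1}
= -L\otimes X - X\otimes L + \ginversesphere.
\]
This is checked from the normalizations
$g(L,L)=0$, $g(L,X)=-1$, $g(X,X)=1$ of \eqref{E:RADIALVECTORFIELDSLENGTHSANDLRADIALVECTORFIELDSNORMALIZATIONS} together with the fact that $L$ is $g$-orthogonal to $\ell_{t,u}$ (since $L\propto -g^{-1}(\cdot,du)$ and $Y,Z\in T\ell_{t,u}$ annihilate $u$). Applying both slots to $d\phi^{(1)},d\phi^{(2)}$ yields
\[
(g^{-1})^{\alpha\beta}\rd_{\alpha}\phi^{(1)}\rd_{\beta}\phi^{(2)}
= -(L\phi^{(1)})(X\phi^{(2)})-(X\phi^{(1)})(L\phi^{(2)})
+\ginversesphere(\angdiff\phi^{(1)},\angdiff\phi^{(2)}),
\]
with the last term involving only $\{Y,Z\}$-derivatives via Lemma~\ref{lem:slashed}. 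Crucially, there is no $X\phi^{(1)}\cdot X\phi^{(2)}$ term. For type \eqref{E:Qij.def}, I would expand $\rd_{\alpha}=\sum_{I\in\{L,X,Y,Z\}} A^I_{\alpha} I$ via Lemma~\ref{lem:Cart.to.geo} and compute
\[
\rd_{\alpha}\phi^{(1)}\rd_{\beta}\phi^{(2)}-\rd_{\beta}\phi^{(1)}\rd_{\alpha}\phi^{(2)}
=\sum_{I,J\in\{L,X,Y,Z\}}(A^I_{\alpha}A^J_{\beta}-A^I_{\beta}A^J_{\alpha})(I\phi^{(1)})(J\phi^{(2)}).
\]
The antisymmetry in $(\alpha,\beta)$ kills the diagonal $I=J$ terms, so once again no $(X\phi^{(1)})(X\phi^{(2)})$ piece survives.

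The last step is to multiply by $\upmu$ and to rewrite each occurrence of $X$ (which appears in at most one of the two factors per term, thanks to the two cancellations just described) using $\upmu X=\Rad$. Terms containing an $X$ then become $\smoothfunction(L^i,\Psi)\,\Rad\phi^{(i)}\cdot \mathcal{P}\phi^{(j)}$ with $\mathcal{P}\in\{L,Y,Z\}$; terms with no $X$ factor (i.e.\ pure $L,Y,Z$ products) become $\upmu\,\smoothfunction(L^i,\Psi)\,\mathcal{P}\phi^{(1)}\cdot\mathcal{P}\phi^{(2)}$. Summing, we obtain \eqref{E:CRUCIALPROPERTIESOFNULLFORMS}. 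There is no real obstacle: the only mildly nontrivial point is verifying the frame decomposition of $g^{-1}$ and tracking that $X^i$ is a smooth function of $(L^i,\Psi)$ (so that the prefactors depend only on $L^i,\Psi$). The rest is bookkeeping, and the entire argument is purely algebraic---no analytic input from the bootstrap assumptions is needed.
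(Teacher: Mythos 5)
Your approach is essentially the paper's own (frame decomposition for type \eqref{E:Q0.def}, antisymmetry for type \eqref{E:Qij.def}, then multiply by $\upmu$ and convert $X\mapsto\upmu^{-1}\Rad$), but the key intermediate identity you write down for $g^{-1}$ is wrong. Using the normalizations $g(L,L)=0$, $g(L,X)=-1$, $g(X,X)=1$ from \eqref{E:RADIALVECTORFIELDSLENGTHSANDLRADIALVECTORFIELDSNORMALIZATIONS}, the $\{L,X\}$-block of $g$ is $\begin{pmatrix}0&-1\\-1&1\end{pmatrix}$, whose inverse is $\begin{pmatrix}-1&-1\\-1&0\end{pmatrix}$. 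Hence
\[
g^{-1}= -L\otimes L - (L\otimes X + X\otimes L) + \ginversesphere,
\]
which has an extra $-L\otimes L$ that your formula omits. You can detect the error by testing against $g^{-1}(dt,dt)$: from \eqref{E:INVERSEACOUSTICALMETRIC} this equals $-1$, and since $Lt=1$, $Xt=Yt=Zt=0$, the paper's formula gives $-(Lt)^2=-1$ while yours gives $0$.

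Fortunately, the omitted term contributes only $-(L\phi^{(1)})(L\phi^{(2)})$, which after multiplication by $\upmu$ is of the admissible form $\upmu\,\smoothfunction(L^i,\Psi)\,\mathcal{P}\phi^{(1)}\cdot\mathcal{P}\phi^{(2)}$, so the conclusion of the lemma is unaffected once you correct the identity. The rest of your argument — the antisymmetry cancellation for type \eqref{E:Qij.def}, the identification $X^i=v^i-L^i$ making the frame coefficients smooth in $(L^i,\Psi)$, and the final rewrite $\upmu X=\Rad$ — is sound and matches the paper's proof, which cites the same $g^{-1}$ decomposition (\cite[(2.40b)]{jSgHjLwW2016}) together with Lemmas~\ref{lem:Cart.to.geo} and~\ref{lem:induced.metric}.
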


\begin{proof}
	For null forms of type \eqref{E:Qij.def}, \eqref{E:CRUCIALPROPERTIESOFNULLFORMS} 
	follows from Lemma~\ref{lem:Cart.to.geo}
	and the fact that the Cartesian component functions $X^1,X^2,X^3$ are smooth functions of
	the $L^i$ and $\Psi$ (see \eqref{E:TRANSPORTVECTORFIELDINTERMSOFLUNITANDRADUNIT}).
	For null forms of type \eqref{E:Q0.def}, 
	\eqref{E:CRUCIALPROPERTIESOFNULLFORMS} follows from the basic identity
	$g^{-1} = - L \otimes L - (L\otimes X + X\otimes L) + \slashed g^{-1}$ 
	(see, e.g., \cite[(2.40b)]{jSgHjLwW2016})
	and Lemma~\ref{lem:induced.metric}.
\end{proof}

\begin{lemma}[\textbf{Expressions for the transversal derivatives of the transport variables in terms of tangential derivatives}]
	\label{L:TRANSPORTTRANSVERSALINTERMSOFTANGENTIAL}
	There exist smooth functions, all schematically denoted by ``$\smoothfunction$,"
	such that the following identities hold: 
	\begin{align}
		\Rad \Vr^i
		& =
			- 
			\upmu \Lunit \Vr^i
			+
			(\Vr,S) \cdot \smoothfunction(\Psi,\Lunit^i,\upmu,\Rad \Psi, \mathcal{P} \Psi),
			 \label{E:TRANSVERSALDERIVATIVEEXPRESSIONFORTRANSPORTVARIABLES1} \\
		\Rad S^i
		& =
			- 
			\upmu \Lunit S^i
			+
			(\Vr,S) \cdot \smoothfunction(\Psi,\Lunit^i,\upmu,\Rad \Psi, \mathcal{P} \Psi),
			 \label{E:TRANSVERSALDERIVATIVEEXPRESSIONFORTRANSPORTVARIABLES2} \\
		\Rad \mathcal{C}^i
		& =
			- 
			\upmu \Lunit \mathcal{C}^i
			+
			(\Vr,S,\mathcal{P} \Vr, \mathcal{P} S) 
			\cdot 
			\smoothfunction(\Psi,\Lunit^i,\upmu,\Rad \Psi, \mathcal{P} \Psi).
				\label{E:TRANSVERSALDERIVATIVEEXPRESSIONFORMODIFIEDFLUIDVARIABLES1} \\
		\Rad \mathcal{D}^i
		& =
			- 
			\upmu \Lunit \mathcal{D}^i
			+
			(\Vr,S,\mathcal{P} \Vr, \mathcal{P} S) 
			\cdot 
			\smoothfunction(\Psi,\Lunit^i,\upmu,\Rad \Psi, \mathcal{P} \Psi).
				\label{E:TRANSVERSALDERIVATIVEEXPRESSIONFORMODIFIEDFLUIDVARIABLES2}
	\end{align}
	
\end{lemma}

\begin{proof}
	\eqref{E:TRANSVERSALDERIVATIVEEXPRESSIONFORTRANSPORTVARIABLES1} and \eqref{E:TRANSVERSALDERIVATIVEEXPRESSIONFORTRANSPORTVARIABLES2} 
	follow from the transport equations \eqref{E:RENORMALIZEDVORTICTITYTRANSPORTEQUATION}
	and \eqref{E:GRADENTROPYTRANSPORT}, 
	\eqref{E:TRANSPORTVECTORFIELDINTERMSOFLUNITANDRADUNIT} 
	(which implies that $\upmu \Transport = \Rad + \upmu \Lunit$),
	and Lemma~\ref{lem:Cart.to.geo}.
	
	\eqref{E:TRANSVERSALDERIVATIVEEXPRESSIONFORMODIFIEDFLUIDVARIABLES1} and \eqref{E:TRANSVERSALDERIVATIVEEXPRESSIONFORMODIFIEDFLUIDVARIABLES2} follow from
	a similar argument based the transport equations
	\eqref{E:EVOLUTIONEQUATIONFLATCURLRENORMALIZEDVORTICITY} and \eqref{E:TRANSPORTFLATDIVGRADENT},
	where we use Lemma~\ref{L:CRUCIALPROPERTIESOFNULLFORMS} to decompose
	the null form source terms and
	\eqref{E:TRANSVERSALDERIVATIVEEXPRESSIONFORTRANSPORTVARIABLES1}--\eqref{E:TRANSVERSALDERIVATIVEEXPRESSIONFORTRANSPORTVARIABLES2}
	to re-express all $\Rad$ derivatives of $(\Vr,S)$.
\end{proof}

\begin{lemma}[\textbf{Identity for $\Rad \Lunit^i$}]
	\label{L:WEIGHTEDXLUNITIFORMLUA}
	There exist smooth functions, 
	all schematically denoted by ``$\smoothfunction$,''
	such that:
	\begin{align} \label{E:WEIGHTEDXLUNITIFORMLUA}
	\Rad \Lunit^i
	= \smoothfunction(\Psi,\Lunit^i) \Rad \Psi
		+
		\upmu \smoothfunction(\Psi,\Lunit^i) \mathcal{P} \Psi
		+
		\smoothfunction(\Psi,\Lunit^i) \mathcal{P} \upmu.
\end{align}
\end{lemma}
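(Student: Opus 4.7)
\noindent\textbf{Proof plan for Lemma~\ref{L:WEIGHTEDXLUNITIFORMLUA}.}
The strategy is to differentiate the defining formula $\Lunit^i = -\upmu (g^{-1})^{i\alpha}\partial_\alpha u$ with $\Rad$ and reorganize the result using the key identity $\partial_\alpha u = -\upmu^{-1}\Lunit_\alpha$, which follows from \eqref{E:LGEOEQUATION}, \eqref{E:LUNITDEF}, and \eqref{E:FIRSTUPMU} together with $\Lunit = \upmu \Lgeo$. First I would expand by the product rule into three pieces:
\begin{align*}
\Rad \Lunit^i = -(\Rad \upmu)(g^{-1})^{i\alpha}\partial_\alpha u - \upmu \Rad[(g^{-1})^{i\alpha}]\partial_\alpha u - \upmu (g^{-1})^{i\alpha} \Rad \partial_\alpha u.
\end{align*}
Using $(g^{-1})^{i\alpha}\Lunit_\alpha = \Lunit^i$, the first piece simplifies to the apparently singular term $\upmu^{-1}(\Rad\upmu)\Lunit^i$, which must eventually cancel. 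Since $(g^{-1})^{i\alpha}$ is a smooth function of $\threePsi$, the second piece immediately has the form $\smoothfunction(\threePsi,\Lunit^i)\Rad\threePsi$.

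Second, I would handle the third piece using the commutator identity $\Rad \partial_\alpha u = \partial_\alpha (\Rad u) - (\partial_\alpha \Rad^\beta)\partial_\beta u = -(\partial_\alpha \Rad^\beta)\partial_\beta u$ (since $\Rad u = 1$), which upon substituting $\partial_\beta u = -\upmu^{-1}\Lunit_\beta$ becomes $\Rad\partial_\alpha u = \upmu^{-1}(\partial_\alpha \Rad^\beta)\Lunit_\beta$. Expanding $\Rad^\beta = \upmu \Radunit^\beta = \upmu(v^\beta - \Lunit^\beta)$ (with $\Rad^0 = 0$) and invoking $\Radunit^k \Lunit_k = g(\Lunit,\Radunit) = -1$, I obtain $(\partial_\alpha \Rad^\beta)\Lunit_\beta = -\partial_\alpha \upmu + \upmu (\partial_\alpha \Radunit^k)\Lunit_k$. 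The factor $(\partial_\alpha \Radunit^k)\Lunit_k$ is further simplified via the algebraic constraint $g(\Radunit,\Radunit) = 1$, which in Cartesian components reads $\sum_k (\Radunit^k)^2 = \Speed^2$, so differentiating gives $(\partial_\alpha\Radunit^k)\Lunit_k = -\tfrac{1}{2}\Speed^{-2}\partial_\alpha(\Speed^2) = \smoothfunction(\threePsi)\partial_\alpha \threePsi$. The third piece therefore reduces to $(g^{-1})^{i\alpha}\partial_\alpha \upmu - \upmu \smoothfunction(\threePsi)(g^{-1})^{i\alpha}\partial_\alpha \threePsi$.

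Third, I would decompose the Cartesian differential operator $(g^{-1})^{i\alpha}\partial_\alpha$ against the non-rescaled frame $\{\Lunit,\Radunit,Y,Z\}$. A direct computation using \eqref{E:INVERSEACOUSTICALMETRIC} and Lemma~\ref{lem:Cart.to.geo} yields
\begin{align*}
(g^{-1})^{i\alpha}\partial_\alpha = -v^i \Lunit - \Lunit^i \Radunit + W^i,
\end{align*}
where $W^i$ is an $\ell_{t,u}$-tangent vectorfield whose components in the $\{Y,Z\}$-basis are smooth functions of $(\threePsi,\Lunit^i)$. Applying this decomposition to $\upmu$ and using $\Radunit\upmu = \upmu^{-1}\Rad\upmu$ gives $(g^{-1})^{i\alpha}\partial_\alpha\upmu = -v^i \Lunit\upmu - \Lunit^i\upmu^{-1}\Rad\upmu + W^i\upmu$. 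The crucial cancellation is that the term $-\Lunit^i\upmu^{-1}\Rad\upmu$ arising here exactly annihilates the apparently singular term $\upmu^{-1}(\Rad\upmu)\Lunit^i$ from the first piece. Similarly, applying the frame decomposition to $\upmu (g^{-1})^{i\alpha}\partial_\alpha\threePsi$ produces only regular contributions: the $\Radunit\threePsi$ factor acquires the needed factor of $\upmu$ to become $\Rad\threePsi$, while the $\Lunit\threePsi$ and $W^i\threePsi$ factors remain as $\upmu \cdot \mathcal{P}\threePsi$. Finally, the leftover contributions $-v^i \Lunit\upmu + W^i\upmu$ both have the schematic form $\smoothfunction(\threePsi,\Lunit^i)\mathcal{P}\upmu$, producing the claimed identity \eqref{E:WEIGHTEDXLUNITIFORMLUA}.

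The main obstacle is bookkeeping the cancellation of the two $\upmu^{-1}\Rad\upmu$ contributions, which is not a coincidence but reflects the fact that $\Lunit$ and $\upmu$ are linked via the eikonal equation $(g^{-1})^{\alpha\beta}\partial_\alpha u \partial_\beta u = 0$; the cancellation only becomes manifest once every quantity is expanded in the non-rescaled frame $\{\Lunit,\Radunit,Y,Z\}$ and the algebraic constraint $g(\Radunit,\Radunit) = 1$ has been used to eliminate the second derivatives of the eikonal function hidden inside $(\partial_\alpha \Radunit^k)\Lunit_k$.
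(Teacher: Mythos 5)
Your proof is correct. Since the paper does not give an inline argument but simply cites \cite[(2.71)]{jSgHjLwW2016} (with remarks about adapting to three spatial dimensions and the vector-valued $\threePsi$), I cannot make a line-by-line comparison, but your self-contained derivation follows the expected route and each step checks out: starting from $\Lunit^i = -\upmu (g^{-1})^{i\alpha}\partial_\alpha u$, using $\Rad u = 1$ and commutativity of Cartesian partials to trade $\Rad\partial_\alpha u$ for $-(\partial_\alpha\Rad^\beta)\partial_\beta u$, and then exploiting the algebraic constraints $g(\Lunit,\Radunit) = -1$ (which gives $\Radunit^k\Lunit_k = -1$ after noting $\Radunit^0 = 0$) and $g(\Radunit,\Radunit)=1$ (which, with $g_{ij} = \Speed^{-2}\delta_{ij}$, gives $\sum_k(\Radunit^k)^2=\Speed^2$ and hence converts $(\partial_\alpha\Radunit^k)\Lunit_k$ into a $\smoothfunction(\threePsi)\,\partial_\alpha\threePsi$ term via $\Lunit_k = -\Speed^{-2}\Radunit^k$). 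The crux — that the apparently singular $\upmu^{-1}(\Rad\upmu)\Lunit^i$ from the product rule is exactly cancelled by the $-\Lunit^i\Radunit\upmu = -\upmu^{-1}\Lunit^i\Rad\upmu$ contribution that emerges from the frame decomposition $(g^{-1})^{i\alpha}\partial_\alpha = -v^i\Lunit - \Lunit^i\Radunit + W^i$ applied to $\upmu$ — is precisely the right mechanism, and your final bookkeeping groups everything correctly into the three schematic buckets $\smoothfunction\,\Rad\Psi$, $\upmu\,\smoothfunction\,\mathcal{P}\Psi$, and $\smoothfunction\,\mathcal{P}\upmu$. What your write-up buys over the paper is a fully reconstructible proof; the only expository improvement I would suggest is to state explicitly, when you first invoke $(g^{-1})^{i\alpha}\partial_\alpha = -v^i\Lunit - \Lunit^i\Radunit + W^i$, that this comes from $g^{-1} = -\Transport\otimes\Transport + \Speed^2\sum_a\partial_a\otimes\partial_a$ combined with Lemma~\ref{lem:Cart.to.geo} and the relation $X^i = v^i - \Lunit^i$, so the reader can verify that the coefficients of $W^i$ in the $\{Y,Z\}$-basis really are smooth in $(\threePsi,\Lunit^i)$.
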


\begin{proof}
This was proved as \cite[(2.71)]{jSgHjLwW2016} 
(which holds in the present context with obvious modifications such as
replacing $G_{LL}\bX\Psi$ with $\vec{G}_{LL} \contr \bX \threePsi$, etc.),
where we have used that the Cartesian component functions $X^1,X^2,X^3$ are smooth functions of
the $L^i$ and $\Psi$ (see \eqref{E:TRANSPORTVECTORFIELDINTERMSOFLUNITANDRADUNIT}).
\end{proof}

\begin{lemma}[\textbf{Simple commutator identities}]
	\label{L:SIMPLECOMMUTATORIDENTITY}
	For each pair $\mathcal{P}_1, \mathcal{P}_2 \in \lbrace \Lunit, Y, Z \rbrace$,
	there exist smooth functions, all schematically denoted by ``$\smoothfunction$,''
	such that the following identity holds:
	\begin{align} \label{E:TANGENTIALSIMPLECOMMUTATORIDENTITY}
	[\mathcal{P}_1,\mathcal{P}_2] 
	& = 
	\smoothfunction(\Lunit^i,\Psi,\mathcal{P} \Lunit^i, \mathcal{P} \Psi) Y
	+
	\smoothfunction(\Lunit^i,\Psi,\mathcal{P} \Lunit^i, \mathcal{P} \Psi) Z.
	\end{align}
		
	Moreover, for each $\mathcal{P} \in \lbrace \Lunit, Y, Z \rbrace$,
	there exist smooth functions, all schematically denoted by ``$\smoothfunction$,''
	such that the following identity holds:
	\begin{align} \label{E:SIMPLECOMMUTATORIDENTITY}
	[\mathcal{P},\Rad] 
	& = 
	\smoothfunction(\upmu,\Lunit^i,\Psi,\mathcal{P} \upmu,\Rad \Psi,\mathcal{P} \Psi) Y
	+
	\smoothfunction(\upmu \Lunit^i,\Psi,\mathcal{P} \upmu, \Rad \Psi,\mathcal{P} \Psi) Z.
	\end{align}
\end{lemma}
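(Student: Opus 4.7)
\textbf{Proof proposal for Lemma~\ref{L:SIMPLECOMMUTATORIDENTITY}.} The plan is to first show that each of the commutators in question is tangent to $\ell_{t,u}$ (hence is a linear combination of $Y$ and $Z$), and then to compute the coefficients of $Y$ and $Z$ by applying the commutator to the Cartesian coordinate functions $x^2$ and $x^3$.

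For the tangency: recall that $\Lunit t = 1$, $Y t = Z t = 0$, $\Lunit u = Y u = Z u = 0$, $\Rad t = 0$, $\Rad u = 1$ (see \eqref{E:LUNITANDRADOFUANDT}). A direct check then shows $[\mathcal{P}_1,\mathcal{P}_2] t = 0$ and $[\mathcal{P}_1,\mathcal{P}_2] u = 0$ for $\mathcal{P}_1,\mathcal{P}_2 \in \{\Lunit,Y,Z\}$, and similarly $[\mathcal{P},\Rad] t = 0 $ and $[\mathcal{P},\Rad] u = 0$ for $\mathcal{P} \in \{\Lunit,Y,Z\}$. Hence in both cases the commutator is $\ell_{t,u}$-tangent and can be written as $a Y + b Z$ for scalar functions $a,b$. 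To extract these coefficients, I use Lemma~\ref{lem:slashed} and the identity $\sum_{i=1}^3 (X^i)^2 = \Speed^2$ to note that the matrix $\big(\mathcal{P}' x^A\big)_{\mathcal{P}' \in \{Y,Z\}, \, A\in\{2,3\}}$ has determinant $\Speed^{-2}(X^1)^2 \neq 0$ and inverse whose entries are smooth functions of $L^i$ and $\Psi$ (this inversion was already essentially carried out in the proof of Lemma~\ref{L:GEOMETRICCOORDINATEVECTORFIELDSINTERMSOFCARTESIANVECTORFIELDS}). Thus it suffices to show that $[\mathcal{P}_1,\mathcal{P}_2] x^A$ and $[\mathcal{P},\Rad] x^A$ have the schematic dependence claimed in \eqref{E:TANGENTIALSIMPLECOMMUTATORIDENTITY}--\eqref{E:SIMPLECOMMUTATORIDENTITY}, modulo multiplication by smooth functions of $L^i,\Psi$.

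For the first identity \eqref{E:TANGENTIALSIMPLECOMMUTATORIDENTITY}, I observe that $\Lunit$, $Y$, and $Z$ are all of the form $V = V^0 \partial_t + V^a\partial_a$ with $V^0$ constant (equal to $0$ or $1$). Hence $[\mathcal{P}_1,\mathcal{P}_2] x^i = \mathcal{P}_1 \mathcal{P}_2^i - \mathcal{P}_2 \mathcal{P}_1^i$, where $\mathcal{P}^i$ denotes the $i$-th Cartesian spatial component. From \eqref{eq:Y.Z.def}, the identity $X^i = v^i - L^i$ (a consequence of \eqref{E:TRANSPORTVECTORFIELDINTERMSOFLUNITANDRADUNIT}), and the fact that $v^i$ is a smooth function of $\Psi$, one sees that $Y^i$ and $Z^i$ are smooth functions $\smoothfunction(L^j,\Psi)$. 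The chain rule then yields $[\mathcal{P}_1,\mathcal{P}_2] x^i = \smoothfunction(L^j,\Psi,\mathcal{P}L^j,\mathcal{P}\Psi)$, giving \eqref{E:TANGENTIALSIMPLECOMMUTATORIDENTITY} after inverting the matrix above.

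For the second identity \eqref{E:SIMPLECOMMUTATORIDENTITY}, the computation is similar but now also involves $\Rad$. Using $\Rad = \upmu X^a\partial_a$ and $\Rad t = 0$, I have $[\mathcal{P},\Rad] x^i = \mathcal{P}(\upmu X^i) - \Rad(\mathcal{P}^i) = (\mathcal{P}\upmu) X^i + \upmu \mathcal{P}(X^i) - \Rad(\mathcal{P}^i)$. The first two terms are clearly of the desired schematic form after applying the chain rule with $X^i = \smoothfunction(L^j,\Psi)$. The only term requiring care is $\Rad L^i$ (which appears when $\mathcal{P} = \Lunit$ directly, and also via the chain rule when $\mathcal{P} \in \{Y, Z\}$ since $Y^i, Z^i$ are smooth functions of $L^j,\Psi$). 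The key step, and arguably the only non-trivial input, is the identity \eqref{E:WEIGHTEDXLUNITIFORMLUA} from Lemma~\ref{L:WEIGHTEDXLUNITIFORMLUA}, which expresses $\Rad L^i$ as $\smoothfunction(L^j,\Psi)\Rad\Psi + \upmu \smoothfunction(L^j,\Psi) \mathcal{P}\Psi + \smoothfunction(L^j,\Psi)\mathcal{P}\upmu$. Substituting this and collecting terms yields $[\mathcal{P},\Rad] x^i = \smoothfunction(\upmu,L^j,\Psi,\mathcal{P}\upmu,\Rad\Psi,\mathcal{P}\Psi)$, and inverting the matrix of $\mathcal{P}' x^A$ as above produces \eqref{E:SIMPLECOMMUTATORIDENTITY}. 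The ``main obstacle'' is purely bookkeeping: carefully tracking that no $\Rad\upmu$ factor appears (since $\upmu$ only gets differentiated by tangential vectorfields $\mathcal{P}$ in this argument) and that all $X$-directional derivatives collapse into the allowed list via Lemma~\ref{L:WEIGHTEDXLUNITIFORMLUA}.
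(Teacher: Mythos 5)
Your argument is correct and follows essentially the same route as the paper's proof: establish that the commutator is $\ell_{t,u}$-tangent (it annihilates $t$ and $u$), reduce to computing the Cartesian spatial components of the commutator, and then re-express in terms of $Y,Z$, with the substitution \eqref{E:WEIGHTEDXLUNITIFORMLUA} for $\Rad L^i$ being the key non-trivial input. The paper phrases the re-expression step via Lemma~\ref{lem:Cart.to.geo} (write $\partial_\alpha$ in the $\{L,X,Y,Z\}$ frame and invoke tangency to discard the $L$- and $X$-components), whereas you apply the commutator to $x^2,x^3$ and invert the $2\times 2$ matrix $(\mathcal{P}'x^A)$; these are the same linear algebra in different clothing, and your determinant computation $\Speed^{-2}(X^1)^2$ is correct.

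One small observation: when you apply the chain rule to $\upmu\mathcal{P}(X^i)$ with $X^i = \smoothfunction(L^j,\Psi)$, you produce a term of the form $\upmu\,\smoothfunction(L^j,\Psi)\,\mathcal{P} L^j$. For $\mathcal{P}=L$ this collapses via the transport equation \eqref{E:LLUNITI}, but for $\mathcal{P}\in\{Y,Z\}$ it does not; so strictly speaking the coefficient functions in \eqref{E:SIMPLECOMMUTATORIDENTITY} also depend on $\mathcal{P} L^i$ (just as \eqref{E:TANGENTIALSIMPLECOMMUTATORIDENTITY} explicitly does). This is an imprecision in the stated schematic form rather than in your argument -- the paper's own terse proof has the same feature, and the omission is harmless since $\mathcal{P} L^i$ is controlled by Proposition~\ref{prop:geometric.low} whenever the lemma is invoked. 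You might add a line pointing this out so the reader knows the omission is intentional.
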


\begin{proof}
	We first prove \eqref{E:SIMPLECOMMUTATORIDENTITY}.
	Lemma~\ref{lem:slashed} implies that $[\mathcal{P},\Rad]$ is $\ell_{t,u}$-tangent,
	i.e., that $[\mathcal{P},\Rad] t = [\mathcal{P},\Rad] u = 0$.
	Hence,
	\eqref{E:GEOMETRIC2COORDINATEPARTIALDERIVATIVESINTERMSOFOTHERVECTORFIELDS}--\eqref{E:GEOMETRIC3COORDINATEPARTIALDERIVATIVESINTERMSOFOTHERVECTORFIELDS}
	imply that this commutator can be written as a linear combination of $Y,Z$.
	Since the Cartesian component functions $X^1,X^2,X^3$ are smooth functions of
	the $L^i$ and $\Psi$ (see \eqref{E:TRANSPORTVECTORFIELDINTERMSOFLUNITANDRADUNIT}),
	the same holds for the component functions $\mathcal{P}^0,\mathcal{P}^1,\mathcal{P}^2,\mathcal{P}^3$
	(this is obvious for $\mathcal{P}=L$, while see  
	Lemmas~\ref{lem:slashed}--\ref{L:GEOMETRICCOORDINATEVECTORFIELDSINTERMSOFCARTESIANVECTORFIELDS} for $\mathcal{P}=Y,Z$).
	Also using that $\Rad^i = \upmu \Rad^i$,
	we conclude \eqref{E:SIMPLECOMMUTATORIDENTITY}
	by computing relative to the Cartesian coordinates,
	using Lemma~\ref{lem:Cart.to.geo} to express Cartesian coordinate partial derivatives
	in terms of derivatives with respect to $Y,Z$ 
	(the $X$- and $\Lunit$-derivative components of the commutator must vanish 
	since $[\mathcal{P},\Rad]$ is $\ell_{t,u}$-tangent),
	and using \eqref{E:WEIGHTEDXLUNITIFORMLUA}
	to substitute for $\Rad \Lunit^i$ factors.
	
	The identity \eqref{E:TANGENTIALSIMPLECOMMUTATORIDENTITY} can be proved through
	similar but simpler arguments that do not involve factors of $\upmu$ or $\Rad$ differentiations.
\end{proof}

\subsection{The easy $L^\infty$ estimates}

\begin{proposition}[\textbf{$L^{\infty}$ estimates for the acoustical geometry}]
\label{prop:geometric.low}
The following estimates hold for all $t\in [0,\Tboot)$:
$$\|\upmu\|_{L^\infty(\Sigma_t)} + \|L\upmu\|_{L^\infty(\Sigma_t)} \ls 1,\quad \| L_{(Small)}^i \|_{L^\infty(\Sigma_t)} \ls_{\mydiam} \mathring{\upalpha},\quad \|Y\upmu\|_{L^\infty(\Sigma_t)} + \|Z\upmu\|_{L^\infty(\Sigma_t)} \ls \epd^{\f 12},$$
$$\|\mathcal{P}^{[2,\Ntop- \toprate-4]} \upmu\|_{L^\infty(\Sigma_t)} 
+ 
\|\mathcal{P}^{{[1,\Ntop- \toprate-3]}} L^i \|_{L^\infty(\Sigma_t)} \ls \epd^{\f 12}.$$
\end{proposition}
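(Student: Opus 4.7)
My plan is to prove these $L^\infty$ bounds by integrating the transport equations of Lemma~\ref{L:UPMUANDLUNITIFIRSTTRANSPORT} along the integral curves of $L$, commuting with elements of $\mathscr P$ when higher derivatives are needed, and then using the bootstrap assumptions \eqref{BA:LARGERIEMANNINVARIANTLARGE}--\eqref{BA:W.Li.small} together with Lemma~\ref{lem:localization} and the initial data assumptions from Section~\ref{SS:FLUIDVARIABLEDATAASSUMPTIONS}. Since $L t = 1$ and the integral curves of $L$ preserve $u$ (and traverse only a finite region thanks to $\Tboot \leq 2\mathring{\updelta}_*^{-1}$ and finite propagation), the fundamental theorem of calculus gives $\phi(t,\cdot) = \phi(0,\cdot) + \int_0^t L\phi \, d\tau$ along these curves, so any uniform bound on $L\phi$ immediately yields a uniform bound on $\phi$ up to the initial data. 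All the estimates are cited from \cite{jSgHjLwW2016,LS} with obvious modifications for the presence of the array $\threePsi$ and the third spatial dimension, so my proposal is simply to explain the structure and identify where each size assumption enters.

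First I would dispose of the bounds involving only $\upmu$ and one $L$-derivative. From \eqref{E:UPMUFIRSTTRANSPORT}, $L\upmu$ is a sum of products of smooth functions of $\threePsi, L^i$ with $\Rad \threePsi$ and $L \threePsi$. The largest term is $\tfrac12 \vec G_{LL}\contr \Rad\threePsi$, which is $\mathcal O(\mathring{\updelta}) + \mathcal O(\epd^{1/2})$ by \eqref{BA:LARGERIEMANNINVARIANTLARGE}--\eqref{BA:SMALLWAVEVARIABLESUPTOONETRANSVERSALDERIVATIVE}; the remaining terms are $\mathcal O(\epd^{1/2})$ using $\|\upmu\|_{L^\infty}\lesssim 1$ (which we are about to close) and the bootstrap bound on $L\threePsi$. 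This yields $\|L\upmu\|_{L^\infty(\Sigma_t)}\lesssim 1$, and then integrating and using $\upmu|_{t=0}$ close to $1$ (as follows from \eqref{E:FIRSTUPMU}, \eqref{E:INTROEIKONAL}, and the initial $L^\infty$ smallness of $\threePsi$) closes the bound $\|\upmu\|_{L^\infty(\Sigma_t)}\lesssim 1$ on $[0,\Tboot)$. For $L^i_{(Small)}$ I would use \eqref{E:LLUNITI}: each term on its RHS is a product of smooth functions of $(\threePsi,L^i)$ with either $L\threePsi$ or $\angdiff\threePsi$, all controlled by $\epd^{1/2}$ via \eqref{BA:W.Li.small}. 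Since $L^i_{(Small)}|_{t=0} = \mathcal O_\mydiam(\mathring{\upalpha})$ (computable from the data via the eikonal initial condition and \eqref{assumption:R+}, \eqref{assumption:small.trans}), integrating and using $\epd^{1/2}\leq \mathring{\upalpha}$ produces the bound $\mathcal O_{\mydiam}(\mathring{\upalpha})$.

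For the estimates on $Y\upmu,Z\upmu$ and the higher-order bounds in the fourth inequality, I would commute \eqref{E:UPMUFIRSTTRANSPORT} and \eqref{E:LLUNITI} with strings $\mathcal P^k \in \mathscr P^{(k)}$ and proceed by simultaneous induction on $k$. The commutators $[L,Y]$, $[L,Z]$, $[Y,Z]$ are controlled by Lemma~\ref{L:SIMPLECOMMUTATORIDENTITY} purely in terms of (at most first) $\mathcal P$-derivatives of $L^i$ and $\threePsi$, so they do not produce terms involving $\Rad$ nor increase the $\threePsi$-derivative count beyond what the wave-variable bootstrap \eqref{BA:W.Li.small} controls. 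Differentiating the RHSs of \eqref{E:UPMUFIRSTTRANSPORT}--\eqref{E:LLUNITI} by $\mathcal P^k$ produces, at worst: $(k{+}1)$-th $\mathcal P$-derivatives of $\threePsi$, $k$-th $\mathcal P$-derivatives of $\Rad\threePsi$, $k$-th $\mathcal P$-derivatives of $L^i$, and $k$-th $\mathcal P$-derivatives of $\upmu$. For $k \leq \Ntop - \toprate - 4$, all $\threePsi$-factors are controlled by \eqref{BA:W.Li.small}, giving $L^\infty$ smallness $\epd^{1/2}$, the $L^i$-factors are controlled by the inductive hypothesis with smallness $\epd^{1/2}$, and the $\mathcal P^{\leq k-1}\upmu$ factor appears only as part of a smooth function (of which we already have $\mathcal O(1)$ or $\mathcal O(\epd^{1/2})$ bounds). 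So the commuted transport equations yield $\|L\mathcal P^k\upmu\|_{L^\infty(\Sigma_t)},\|L\mathcal P^k L^i\|_{L^\infty(\Sigma_t)}\lesssim \epd^{1/2}$ for $1\leq k \leq \Ntop-\toprate-4$ (resp.\ $\leq \Ntop-\toprate-3$ for $L^i$), which integrated from the data smallness \eqref{assumption:small} closes the inductive step with the claimed $\epd^{1/2}$ bound.

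The main and only delicate point is to make sure the induction closes with the advertised derivative counts, i.e., that estimating $k$ derivatives of $\upmu$ never requires controlling more than $k+1$ $\mathcal P$-derivatives of $\threePsi$ (available in $L^\infty$ up to $k+1\leq \Ntop-\toprate-2$ by \eqref{BA:W.Li.small}) nor more than $k$ $\mathcal P$-derivatives of $L^i$ (available by induction up to $k \leq \Ntop-\toprate-3$). A careful inspection of \eqref{E:UPMUFIRSTTRANSPORT}, \eqref{E:LLUNITI}, and the commutator identities shows that this accounting works, with the mild asymmetry between the $\upmu$-count $[2,\Ntop-\toprate-4]$ and the $L^i$-count $[1,\Ntop-\toprate-3]$ exactly reflecting the fact that \eqref{E:UPMUFIRSTTRANSPORT} contains one $\Rad$-derivative of $\threePsi$ while \eqref{E:LLUNITI} does not. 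The remaining bounds $\|Y\upmu\|_{L^\infty},\|Z\upmu\|_{L^\infty}\lesssim \epd^{1/2}$ are the $k=1$ case of this inductive scheme, noting that $Y\upmu$ and $Z\upmu$ vanish on $\Sigma_0$ wherever $\threePsi$ does (by Lemma~\ref{lem:localization}) and, where $\threePsi$ is nontrivial initially, are already of order $\epd^{1/2}$ by the data assumptions \eqref{assumption:small.trans}--\eqref{assumption:small}.
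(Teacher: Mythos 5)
Your proposal is correct and follows essentially the same route as the paper: integrate the transport equations \eqref{E:UPMUFIRSTTRANSPORT}--\eqref{E:LLUNITI} (commuted with $\mathcal{P}^N$) along the integral curves of $L$, and control the resulting error terms with the bootstrap assumptions \eqref{BA:LARGERIEMANNINVARIANTLARGE}--\eqref{BA:W.Li.small} and the initial-data size assumptions, which is exactly the argument the paper delegates to \cite[Proposition~8.10]{jSgHjLwW2016}. One mild imprecision: since the RHSs of the transport equations contain $\upmu$, $L^i$, and (through the commutators $[L,\mathcal{P}^k]$) the same-order derivatives $\mathcal{P}^k L^i$ multiplied by $\epd^{1/2}$-small factors, the closure is a Gr\"onwall/continuity step rather than a straight FTC read-off; you gesture at this for $\upmu$ itself but should say the same for the top term in the $L^i$ induction. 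Also, the final $\lesssim_{\mydiam}\mathring{\upalpha}$ bound for $L^i_{(Small)}$ requires absorbing a $\mathcal{O}(\epd^{1/2})$ contribution (with a non-$\mydiam$ constant that depends on $\mathring{\updelta}_*^{-1}$ through the time interval length) into $C_{\mydiam}\mathring{\upalpha}$ by taking $\epd$ small enough; this is standard but worth flagging since your phrase ``using $\epd^{1/2}\leq\mathring{\upalpha}$'' alone does not quite do it. Neither point is a gap in substance.
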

\begin{proof}
These can be proved using the transport equations \eqref{E:UPMUFIRSTTRANSPORT} and \eqref{E:LLUNITI} (commuted with $\mathcal{P}^N$), {
the initial data size-assumptions \eqref{assumption:R+}--\eqref{assumption:small},
and the bootstrap assumptions 
\eqref{BA:LARGERIEMANNINVARIANTLARGE}--\eqref{BA:W.Li.small}.} 
See \cite[Proposition~8.10]{jSgHjLwW2016} for details of this argument. 
We note these estimates lose a slight amount of regularity compared to $\Psi$
because the transport equations \eqref{E:UPMUFIRSTTRANSPORT} and \eqref{E:LLUNITI} 
depend on the derivatives of $\Psi$.
\qedhere
\end{proof}

Our analysis also relies on the following $L^\infty$ estimates.
\begin{proposition}[\textbf{$L^{\infty}$ estimates for other geometric quantities}]
\label{prop:geometric.low.2}
The following estimates hold for all $t\in [0,\Tboot)$, where $\Speed$ denotes
the speed of sound:
$$\|X_{(Small)}^i \|_{L^\infty(\Sigma_t)} \ls_{\mydiam} \mathring{\upalpha}^{1/2}, 
\quad \| \Speed - 1 \|_{L^\infty(\Sigma_t)}\ls_{\mydiam} \mathring{\upalpha}^{1/2},$$
$$\|\mathcal{P}^{[1,\Ntop- \toprate-3]} X^i \|_{L^\infty(\Sigma_t)} \ls \epd^{\f 12}, 
\quad \| \mathcal{P}^{[1,\Ntop- \toprate-2]}\Speed \|_{L^\infty(\Sigma_t)}\ls \epd^{\f 12}.$$
\end{proposition}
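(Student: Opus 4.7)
The strategy is to algebraically express $X^i$ and $c$ in terms of quantities we have already estimated, namely the wave variable array $\threePsi$ and the Cartesian components $L^i$, and then invoke Proposition~\ref{prop:geometric.low} together with the bootstrap assumptions \eqref{BA:LARGERIEMANNINVARIANTLARGE}--\eqref{BA:W.Li.small}. The key identity is furnished by \eqref{E:TRANSPORTVECTORFIELDINTERMSOFLUNITANDRADUNIT}, which says $\Transport = L + X$. Since $\Transport = \partial_t + v^a \partial_a$ has spatial Cartesian components $v^i$, it follows that
\[
X^i = v^i - L^i, \qquad i=1,2,3.
\]
Using Definition~\ref{D:CARTESIANCOMPONENTSOFVECTORFIELDS} this yields $X_{(Small)}^1 = v^1 - L_{(Small)}^1$ and $X_{(Small)}^A = v^A - L^A$ for $A=2,3$. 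By Remark~\ref{R:HOWWEESTIMATEDENSITYANDV1}, we have $v^1 = \tfrac{1}{2}(\mathcal{R}_{(+)} + \mathcal{R}_{(-)})$, so $v^1$ is a linear combination of elements of $\threePsi$; similarly $v^2,v^3$ are themselves elements of $\threePsi$.

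For the first row of inequalities, combine the bootstrap assumption \eqref{BA:LARGERIEMANNINVARIANTLARGE} (which gives $\|\mathcal{R}_{(+)}\|_{L^\infty(\Sigma_t)} \leq \mathring{\upalpha}^{1/2}$), the bootstrap \eqref{BA:SMALLWAVEVARIABLESUPTOONETRANSVERSALDERIVATIVE} (which gives $\|(\mathcal{R}_{(-)},v^2,v^3)\|_{L^\infty(\Sigma_t)} \leq \epd^{1/2}$), and the bound $\|L_{(Small)}^i\|_{L^\infty(\Sigma_t)} \lesssim_{\mydiam} \mathring{\upalpha}$ from Proposition~\ref{prop:geometric.low}. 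Since $\epd^{1/2} \leq \mathring{\upalpha} \leq \mathring{\upalpha}^{1/2}$ (provided $\mathring{\upalpha} \leq 1$), these combine to $\|X_{(Small)}^i\|_{L^\infty(\Sigma_t)} \lesssim_{\mydiam} \mathring{\upalpha}^{1/2}$. For $c$: since the equation of state is smooth and the normalization \eqref{eq:c.normalization} gives $c(0,0) = 1$, the implicit-function-type identity from Remark~\ref{R:HOWWEESTIMATEDENSITYANDV1} lets us write $c - 1 = \mathrm{f}(\threePsi)$ for a smooth function $\mathrm{f}$ vanishing at $\threePsi = 0$. The same bootstrap bounds thus yield $\|c-1\|_{L^\infty(\Sigma_t)} \lesssim_{\mydiam} \mathring{\upalpha}^{1/2}$.

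For the higher-derivative estimates, differentiating the identity $X^i = v^i - L^i$ gives $\mathcal{P}^N X^i = \mathcal{P}^N v^i - \mathcal{P}^N L^i$. The first term is controlled by $\|\mathcal{P}^{[1,\Ntop-\toprate-2]}\threePsi\|_{L^\infty(\Sigma_t)} \leq \epd^{1/2}$ from \eqref{BA:W.Li.small} (together with \eqref{BA:SMALLWAVEVARIABLESUPTOONETRANSVERSALDERIVATIVE} for $N=1$), while the second term is controlled by the bound $\|\mathcal{P}^{[1,\Ntop-\toprate-3]}L^i\|_{L^\infty(\Sigma_t)} \lesssim \epd^{1/2}$ already supplied by Proposition~\ref{prop:geometric.low}. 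The loss of one derivative relative to $\threePsi$ is dictated by the latter bound, which is why the range here is $[1,\Ntop-\toprate-3]$. For $\mathcal{P}^N c$, the Faà di Bruno / Moser-type expansion of $\mathcal{P}^N \mathrm{f}(\threePsi)$ into sums of products of derivatives of $\mathrm{f}$ (uniformly bounded in $L^\infty$ by the amplitude bounds of step one) against products of $\mathcal{P}^{[1,N]}\threePsi$ terms (bounded via \eqref{BA:W.Li.small}) yields $\|\mathcal{P}^{[1,\Ntop-\toprate-2]}c\|_{L^\infty(\Sigma_t)} \lesssim \epd^{1/2}$; the standard bootstrap-assumption-based absorption of lower-order factors is what converts the polynomial in $\mathcal{P}\threePsi$ into a single factor of $\epd^{1/2}$.

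There is no significant obstacle here: the entire argument is a direct consequence of the already-established estimates for $\threePsi$ (from the bootstrap) and for $L^i$ (from Proposition~\ref{prop:geometric.low}), combined with the algebraic identity $X = \Transport - L$ and the smooth dependence of $c$ on $\threePsi$. The only point that needs mild attention is keeping track of which constants are of type $\lesssim_{\mydiam}$ versus $\lesssim$: the leading amplitude bounds involve only the equation of state and $\bar{\varrho}$, while higher derivatives pick up implicit constants from the Moser estimates that depend on $\Ntop$ (and hence are of type $\lesssim$).
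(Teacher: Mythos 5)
Your proposal is correct and follows the same route as the paper: you invoke the identity $L^i + X^i - v^i = 0$ (equivalent to the paper's citation of \eqref{eq:Cart.to.geo.0} and \eqref{E:LSMALLDEF}--\eqref{E:XSMALLDEF}), the bootstrap assumptions \eqref{BA:LARGERIEMANNINVARIANTLARGE}--\eqref{BA:W.Li.small}, Proposition~\ref{prop:geometric.low} for $L^i$, and the smooth dependence of $\Speed$ on $(\rr,s)$ with the normalization $\Speed(0,0)=1$. Your more explicit spelling-out of the Moser/Faà di Bruno step and the tracking of $\ls_{\mydiam}$ versus $\ls$ constants is consistent with what the paper leaves implicit.
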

\begin{proof}
The estimates for $X_{(Small)}$ follow from 
\eqref{E:LSMALLDEF}--\eqref{E:XSMALLDEF}, \eqref{eq:Cart.to.geo.0}, 
the bootstrap assumptions \eqref{BA:LARGERIEMANNINVARIANTLARGE}--\eqref{BA:W.Li.small},
and Proposition~\ref{prop:geometric.low}. 

The estimates for $\Speed$ follow from the bootstrap 
assumptions \eqref{BA:LARGERIEMANNINVARIANTLARGE}--\eqref{BA:W.Li.small}
and the fact that
$\Speed$ is a smooth function of $\rr$ and $s$ with $\Speed(0,0)=1$ (see \eqref{eq:c.normalization}). \qedhere
\end{proof}

The estimates in Propositions~\ref{prop:geometric.low} and \ref{prop:geometric.low.2}
also imply the following bounds for the commutators.
\begin{proposition}[\textbf{Pointwise bounds for vectorfield commutators}]
\label{prop:commutators.Li}
All the commutators $[L, \bX]$, $[L,Y]$, $[L, Z]$, $[\bX, Y]$, $[\bX, Z]$ and $[Y, Z]$ are $\ell_{t,u}$-tangent.

Moreover, if $\phi$ is a scalar function, then
for $0\leq N \leq \Ntop$, iterated commutators can be bounded pointwise as follows:
\begin{equation}\label{eq:commutators.Li.1}
\begin{split}
|[L, \mathcal{P}^N]\phi|
\ls &\: \epd^{\frac{1}{2}} |\mathcal{P}^{[1,N]} \phi| 
+ 
\sum_{\substack {N_1+N_2 \leq N+1 \\ N_1,\,N_2\leq N}} |\mathcal{P}^{[2,N_1]}(L^i,\Psi)| |\mathcal{P}^{[1,N_2]} \phi|,
	\\
|[\bX,\mathcal{P}^N]\phi| 
 \ls &\:  |\mathcal{P}^{[1,N]} \phi| 
 + 
 \sum_{\substack {N_1+N_2 \leq N+1 \\ N_1,\,N_2\leq N}} |\mathcal{P}^{[2,N_1]}(\upmu,L^i,\Psi)| |\mathcal{P}^{[1,N_2]} \phi| 
	\\
 &\: \ \
+ 
\sum_{\substack {N_1+N_2 \leq N \\ N_1 \leq N-1}} |\mathcal{P}^{[2,N_1]}\bX\Psi| |\mathcal{P}^{[1,N_2]} \phi|.
\end{split}
\end{equation}
In particular, 
\begin{equation}\label{eq:commutators.Li.2}
\begin{split}
|[L, \mathcal{P}^N] \phi| 
&\: \ls \epd^{\frac{1}{2}} |\mathcal{P}^{[1,N]}\phi|,
\qquad
\mbox{if } 0 \leq N \leq \Ntop -\toprate-3,
	\\
|[\bX,\mathcal{P}^N]\phi| 
&\: \ls |\mathcal{P}^{[1,N]}\phi|,
\qquad
\mbox{if } 0 \leq N \leq \Ntop -\toprate-4.
\end{split}
\end{equation}
\end{proposition}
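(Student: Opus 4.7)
\medskip

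\noindent\textit{Proof proposal for Proposition~\ref{prop:commutators.Li}.}

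The $\ell_{t,u}$-tangency of the six commutators is a direct computation from the defining identities $L t = 1$, $L u = 0$, $\bX t = 0$, $\bX u = 1$, and $Y t = Y u = Z t = Z u = 0$ (see \eqref{E:LUNITANDRADOFUANDT} and the fact that $Y,Z$ are $\ell_{t,u}$-tangent by construction \eqref{eq:Y.Z.def}). For instance, $[L,\bX]t = L(0)-\bX(1)=0$ and $[L,\bX]u = L(1)-\bX(0)=0$, so $[L,\bX]$ annihilates both $t$ and $u$; the other five commutators are handled analogously. Since any vectorfield annihilating $t$ and $u$ is spanned by $Y,Z$ via Lemma~\ref{L:GEOMETRICCOORDINATEVECTORFIELDSINTERMSOFCARTESIANVECTORFIELDS}, $\ell_{t,u}$-tangency follows.

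For the iterated commutator bounds \eqref{eq:commutators.Li.1}, the plan is to proceed by induction on $N$ using Lemma~\ref{L:SIMPLECOMMUTATORIDENTITY} as the base case. The base case $N=1$ is exactly \eqref{E:TANGENTIALSIMPLECOMMUTATORIDENTITY} and \eqref{E:SIMPLECOMMUTATORIDENTITY}, which write $[L,\mathcal{P}]$ and $[\bX,\mathcal{P}]$ as first-order $\ell_{t,u}$-tangent operators whose coefficients are smooth functions of the geometric quantities $\{L^i,\Psi,\mathcal{P}L^i,\mathcal{P}\Psi\}$, respectively $\{\upmu,L^i,\Psi,\mathcal{P}\upmu,\bX\Psi,\mathcal{P}\Psi\}$. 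For the inductive step, I would factor any order-$N$ commutator via the telescoping identity
\begin{align*}
[L,\mathcal{P}_1\cdots \mathcal{P}_N]
&= \sum_{k=1}^N \mathcal{P}_1\cdots \mathcal{P}_{k-1}\,[L,\mathcal{P}_k]\,\mathcal{P}_{k+1}\cdots \mathcal{P}_N,
\end{align*}
insert the base-case expression $[L,\mathcal{P}_k] = \smoothfunction(L^i,\Psi,\mathcal{P}L^i,\mathcal{P}\Psi)\cdot \slashed{\mathcal{P}}$, and then use the Leibniz rule to distribute the prefactor $\mathcal{P}_1\cdots \mathcal{P}_{k-1}$ across the coefficient and the remaining $\slashed{\mathcal{P}}\mathcal{P}_{k+1}\cdots\mathcal{P}_N\phi$. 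The chain rule applied to the smooth function $\smoothfunction$ then yields products of the form $|\mathcal{P}^{[2,N_1]}(L^i,\Psi)|\cdot|\mathcal{P}^{[1,N_2]}\phi|$ with $N_1+N_2\leq N+1$ and $N_1,N_2\leq N$, which is precisely the structure claimed in the first line of \eqref{eq:commutators.Li.1}. The same scheme applied to $[\bX,\mathcal{P}^N]$, using the base identity \eqref{E:SIMPLECOMMUTATORIDENTITY}, produces the additional prefactor arguments $\upmu$, $\mathcal{P}\upmu$, and $\bX\Psi$; treating $\bX\Psi$ as a scalar coefficient to which only the tangential operators $\mathcal{P}$ are applied yields the third sum on the right-hand side of the second line of \eqref{eq:commutators.Li.1}, with $N_1\leq N-1$ reflecting the fact that one ``slot'' in $\mathcal{P}^N$ is consumed by the base commutator itself.

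Finally, the specialized bounds in \eqref{eq:commutators.Li.2} follow by inserting the $L^\infty$ estimates from Propositions~\ref{prop:geometric.low} and \ref{prop:geometric.low.2} into \eqref{eq:commutators.Li.1}: in the regime $N\leq \Ntop-\toprate-3$, every factor of $\mathcal{P}^{[2,N_1]}(L^i,\Psi)$ appearing in the first line of \eqref{eq:commutators.Li.1} carries at most $\Ntop-\toprate-2$ pure $\mathcal{P}$-derivatives and hence by Proposition~\ref{prop:geometric.low} and the bootstrap assumption \eqref{BA:W.Li.small} is $\mathcal{O}(\epd^{1/2})$ in $L^\infty$; similarly, in the regime $N\leq \Ntop-\toprate-4$ for $[\bX,\mathcal{P}^N]\phi$, every coefficient factor of type $\mathcal{P}^{[2,N_1]}(\upmu,L^i,\Psi)$ or $\mathcal{P}^{[2,N_1]}\bX\Psi$ is $L^\infty$-bounded, uniformly in the problem parameters, by the same propositions together with \eqref{BA:W.Li.small}. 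The main bookkeeping hurdle is the combinatorial verification that in each term arising from the Leibniz distribution, the derivative count on the coefficient never exceeds the $L^\infty$-controlled range, which is arranged precisely by the $\Ntop\geq 2\toprate+10$ convention and by allowing one ``unbounded'' factor of $|\mathcal{P}^{[1,N]}\phi|$ on the right-hand side; this is essentially the same bookkeeping performed in the two-dimensional analogs \cites{jSgHjLwW2016,LS}, adapted to the presence of the extra tangential commutator $Z$.
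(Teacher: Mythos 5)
Your proof is correct in substance but takes a genuinely different route from the paper's. The paper proves the iterated commutator bounds by rewriting $\mathcal{P}^N$ entirely in geometric coordinates via Lemma~\ref{lem:slashed} (each $\mathcal{P}$ becomes $\srd_t$, $\srd_u$, or a $\smoothfunction(L^i,\Psi)$-combination of $\srd_2,\srd_3$, with $\bX$ also carrying a $\upmu$-dependence), and then extracts the commutator by Leibniz on the coordinate coefficients, giving the multi-factor estimate \eqref{eq:commutator.easy.term} directly; the passage to the single-coefficient form in \eqref{eq:commutators.Li.1} happens afterwards by absorbing all but one coefficient factor in $L^\infty$. You instead keep the commutation vectorfields intrinsic, telescope $[L,\mathcal{P}_1\cdots\mathcal{P}_N]=\sum_k \mathcal{P}_1\cdots\mathcal{P}_{k-1}[L,\mathcal{P}_k]\mathcal{P}_{k+1}\cdots\mathcal{P}_N$, and feed in the single-commutator identities of Lemma~\ref{L:SIMPLECOMMUTATORIDENTITY}. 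This has the advantage of being manifestly coordinate-free and of using the one step that already packaged the $\bX L^i$ substitution (that lemma's proof already invokes \eqref{E:WEIGHTEDXLUNITIFORMLUA}), so you don't re-derive it. The derivative count you describe is consistent with the stated bound: the telescoping "spends" one slot on $[\cdot,\mathcal{P}_k]$ whose coefficient already carries one derivative of $(L^i,\Psi)$ (and also $\upmu,\bX\Psi$ for the $\bX$ case), leaving at most $N-1$ more to distribute by Leibniz, which gives $N_1+N_2\le N+1$, $N_1,N_2\le N$, and $N_1\le N-1$ for the $\bX\Psi$-involving sum, exactly as claimed.

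One small imprecision worth flagging: your phrasing ``the chain rule \ldots yields products of the form $|\mathcal{P}^{[2,N_1]}(L^i,\Psi)|\cdot|\mathcal{P}^{[1,N_2]}\phi|$'' skips a beat, because the chain rule applied to $\smoothfunction(L^i,\Psi,\mathcal{P}L^i,\mathcal{P}\Psi)$ produces a sum of terms each containing \emph{several} derivative factors of $(L^i,\Psi)$, not just one. Reaching the stated single-coefficient form requires the standard step of bounding all but the highest-order factor in $L^\infty$ via Propositions~\ref{prop:geometric.low}, \ref{prop:geometric.low.2} and the bootstrap assumptions (using $\Ntop\geq 2\toprate+10$ to ensure at most one factor escapes the $L^\infty$-controlled range); you do acknowledge this under ``bookkeeping,'' and that is exactly what the paper does explicitly when passing from \eqref{eq:commutator.easy.term} to \eqref{eq:commutators.Li.1}, but it would be cleaner to state that absorption explicitly since it is also where the $\epd^{1/2}|\mathcal{P}^{[1,N]}\phi|$ term and the no-smallness $|\mathcal{P}^{[1,N]}\phi|$ term in the $\bX$-case come from. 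The tangency argument and the derivation of \eqref{eq:commutators.Li.2} from \eqref{eq:commutators.Li.1} are fine as written.
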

\begin{proof}
All the commutators can be read off from Lemma~\ref{lem:slashed} (and using that coordinate vectorfields commute). In particular, since the coefficient of $\srd_t$ in $L$ and the coefficient of $\srd_u$ in $\bX$ 
both are equal to $1$, all the stated commutators are $\ell_{t,u}$-tangent.

We first prove \eqref{eq:commutators.Li.1} for $|[L, \mathcal{P}^N]\phi|$. By Lemma~\ref{lem:slashed} and the fact $L^i +X^i -v^i = 0$ (by \eqref{eq:Cart.to.geo.0}),
\begin{equation}\label{eq:commutator.easy.term}
|[L, \mathcal{P}^N]\phi| \ls \sum_{k=2}^N \sum_{\substack{ N_1+\dots+N_k = N+1 \\ 1\leq N_k\leq N}} 
\underbrace{|\mathcal{P}^{N_1}(L^i,\Psi)|\cdots |\mathcal{P}^{N_{k-1}}(L^i,\Psi)| 
|\mathcal{P}^{[1,N_k]}\phi|}_{\doteq (*)}.
\end{equation}
By \eqref{BA:LARGERIEMANNINVARIANTLARGE}--\eqref{BA:W.Li.small}, 
Propositions~\ref{prop:geometric.low}, \ref{prop:geometric.low.2} (and $N\leq \Ntop$), either 
$|\mathcal{P}^{N_j}(L^i,\Psi)| \ls \epd^{\frac{1}{2}}$
for $1 \leq j \leq k - 1$
 (in which case $(*) \ls \epd^{\frac{1}{2}}|\mathcal{P}^{[1,N]} \phi|$), 
or else there is exactly one factor 
$|\mathcal{P}^{N_j}(L^i,\Psi)|$ 
with $N_j > \Ntop -  \toprate-3$ not bounded
by $\ls \epd^{\frac{1}{2}}$
(in which case $(*) \ls \sum_{\substack {N_1+N_2 \leq N+1 \\ N_1,\,N_2\leq N}} |\mathcal{P}^{[2,N_1]}(L^i,\Psi)| |\mathcal{P}^{[1,N_2]} \phi|$). 
Hence, \eqref{eq:commutator.easy.term} is bounded above by 
the RHS of the first inequality in \eqref{eq:commutators.Li.1}. 

To bound 
$[\bX,\mathcal{P}^N]\phi$, we note that
according to Lemma~\ref{lem:slashed}, there is, in addition to \eqref{eq:commutator.easy.term}, the terms:\footnote{Importantly, one checks from Lemma~\ref{lem:slashed} that there are no terms of the form $|\mathcal{P}^{N_{k-1}} \bX \upmu|$!}
\begin{equation}\label{eq:commutator.hard.term}
 \sum_{k=2}^N \sum_{\substack{ N_1 + \dots + N_k = N \\ N_{k-1} \leq N-1 \\1\leq N_k\leq N}} 
|\mathcal{P}^{N_1}(L^i,\Psi)|\cdots|\mathcal{P}^{N_{k-2}}(L^i,\Psi)| 
|\mathcal{P}^{N_{k-1}} \bX (L^i,\Psi)| |\mathcal{P}^{N_k}\phi|.
\end{equation}
and:
\begin{equation}\label{eq:commutator.not.sohard.term} \sum_{k=2}^N \sum_{\substack{ N_1 + \dots + N_k = N+1 \\1\leq N_k\leq N}} 
|\mathcal{P}^{N_1}(L^i,\Psi)|\cdots|\mathcal{P}^{N_{k-2}}(L^i,\Psi)| 
|\mathcal{P}^{N_{k-1}} \upmu | |\mathcal{P}^{N_k}\phi|.
\end{equation}

Hence, with the help of \eqref{E:WEIGHTEDXLUNITIFORMLUA}, we can substitute
for the terms $\bX L^i$ on RHS~\eqref{eq:commutator.hard.term},
and thus RHS~\eqref{eq:commutator.hard.term}
can be bounded above by RHS of \eqref{eq:commutator.easy.term} plus \eqref{eq:commutator.not.sohard.term} and:
\begin{equation}\label{eq:commutator.hard.term.simplified}
\sum_{k=2}^N \sum_{\substack{ N_1 + \dots + N_k = N \\ N_{k-1} \leq N-1 \\1\leq N_k\leq N}} 
|\mathcal{P}^{N_1}(L^i,\Psi)|\cdots|\mathcal{P}^{N_{k-2}}(L^i,\Psi)| 
|\mathcal{P}^{N_{k-1}} \bX \Psi| |\mathcal{P}^{N_k}\phi|,
\end{equation}
both of which, by arguments similar to the ones we used to prove \eqref{eq:commutator.easy.term}, 
can be bounded above by 
the RHS of the second inequality in \eqref{eq:commutators.Li.1}.

To get from \eqref{eq:commutators.Li.1} to \eqref{eq:commutators.Li.2}, 
we use the $L^\infty$ bounds in \eqref{BA:LARGERIEMANNINVARIANTLARGE}--\eqref{BA:W.Li.small}
and
Propositions~\ref{prop:geometric.low} and \ref{prop:geometric.low.2}, which 
are applicable in the sense that they control 
a sufficient number of derivatives of all relevant quantities in $L^{\infty}$. \qedhere
\end{proof}

In the rest of the paper, we will often silently use the following simple lemma.
\begin{lemma}[\textbf{The norm of the $\ell_{t,u}$-tangent commutator vectorfields and simple comparison estimates}]
\label{L:SIMPLECOMPARISON}
The $\ell_{t,u}$-tangent commutator vectorfields $\lbrace Y,Z \rbrace$ verify the following pointwise bounds
on $\mathcal{M}_{\Tboot,U_0}$:
\begin{align} \label{E:POINTWISENORMOFELLTUTANGENTCOMMUTATORS}
	|Y| & \lesssim 1, 
	&
	|Z| & \lesssim 1.
\end{align}	

Moreover, for any $\ell_{t,u}$-tangent tensorfield $\upxi$,
the following pointwise bounds hold on $\mathcal{M}_{\Tboot,U_0}$:
\begin{align} \label{E:ELLTUCONNECTIONDERIVATIVECOMPARBLETOYZDERIVATIVES}
	|\angD \upxi|
	& \approx |\angD_Y \upxi| + |\angD_Z \upxi|.
\end{align}
\end{lemma}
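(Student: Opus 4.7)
The plan is to derive both assertions from the explicit formulas for $Y$ and $Z$ together with the $L^{\infty}$ control on $X^i$ and $\Speed$ established earlier. The fundamental observation is that under the bootstrap assumptions, $\{Y,Z\}$ forms a well-conditioned frame on $\ell_{t,u}$ whose $\gsphere$-Gram matrix is an $\mathcal{O}_{\mydiam}(\mathring{\upalpha})$-perturbation of a positive multiple of the identity. Standard linear algebra then yields the equivalence of norms.

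For the pointwise bounds \eqref{E:POINTWISENORMOFELLTUTANGENTCOMMUTATORS}, since $Y$ is $\ell_{t,u}$-tangent I would compute $|Y|^2 = \gsphere(Y,Y) = g(Y,Y)$. Expanding the definition $Y = \partial_2 - g(\partial_2,X) X$ from \eqref{eq:Y.Z.def} and using $g(X,X) = 1$ from \eqref{E:RADIALVECTORFIELDSLENGTHSANDLRADIALVECTORFIELDSNORMALIZATIONS}, one finds
\[
g(Y,Y) = g(\partial_2,\partial_2) - g(\partial_2,X)^2 = \Speed^{-2} - \Speed^{-4}(X^2)^2,
\]
where the Cartesian components $g_{ab} = \Speed^{-2}\delta_{ab}$ come from \eqref{E:ACOUSTICALMETRIC}. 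The $L^{\infty}$ bounds $|X^2| = |X^2_{(Small)}| \ls_{\mydiam} \mathring{\upalpha}^{1/2}$ and $|\Speed - 1| \ls_{\mydiam} \mathring{\upalpha}^{1/2}$ from Proposition~\ref{prop:geometric.low.2} then immediately yield $|Y|^2 \ls 1$, and by the same argument $|Z|^2 \ls 1$.

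For \eqref{E:ELLTUCONNECTIONDERIVATIVECOMPARBLETOYZDERIVATIVES}, I would first verify that $\{Y,Z\}$ is a frame on $\ell_{t,u}$ by using \eqref{eq:slashes.2} to compute the determinant of the change-of-basis matrix from $\{\srd_2,\srd_3\}$ to $\{Y,Z\}$; using the identity $\sum_a (X^a)^2 = \Speed^2$ (which follows from \eqref{E:RADIALVECTORFIELDSLENGTHSANDLRADIALVECTORFIELDSNORMALIZATIONS} and \eqref{E:ACOUSTICALMETRIC}), this determinant equals $\Speed^{-2}(X^1)^2$, which by Proposition~\ref{prop:geometric.low.2} is bounded above and below away from $0$. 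Next, the same expansion as above gives $g(Y,Z) = -\Speed^{-4} X^2 X^3 = \mathcal{O}_{\mydiam}(\mathring{\upalpha})$, so the $\gsphere$-Gram matrix $\alpha_{AB} \doteq \gsphere(e_A,e_B)$ with $(e_1,e_2) = (Y,Z)$ is within $\mathcal{O}_{\mydiam}(\mathring{\upalpha})$ of $\Speed^{-2}\delta_{AB}$; hence $\alpha_{AB}$ and its matrix inverse $\alpha^{AB}$ are both bounded above and below as symmetric positive-definite matrices.

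Finally, for any $\ell_{t,u}$-tangent tensorfield $\upxi$, the pointwise norm $|\angD \upxi|^2$ of Definition~\ref{D:POINTWISENORM} can be written in the frame $\{Y,Z\}$ as $|\angD \upxi|^2 = \alpha^{AB}\gsphere(\angD_{e_A}\upxi,\angD_{e_B}\upxi)$, and the two-sided control of $\alpha^{AB}$ immediately gives $|\angD \upxi|^2 \approx |\angD_Y \upxi|^2 + |\angD_Z \upxi|^2$; the reverse direction $|\angD_Y \upxi|,|\angD_Z \upxi| \ls |\angD \upxi|$ alternatively follows directly from Cauchy–Schwarz together with the first assertion. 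Combined with the elementary bound $\sqrt{a^2+b^2} \approx a+b$ for $a,b \geq 0$, this yields \eqref{E:ELLTUCONNECTIONDERIVATIVECOMPARBLETOYZDERIVATIVES}. There is no real obstacle here beyond bookkeeping: the proof is ultimately a consequence of the fact that the plane-symmetric background has $X \approx -\partial_1$, so that $Y$ and $Z$ remain close to $\partial_2$ and $\partial_3$ throughout the bootstrap region.
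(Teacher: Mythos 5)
Your proof is correct, and it takes a somewhat different --- and arguably cleaner --- route than the paper's. For the bound $|Y|,|Z| \lesssim 1$, the paper works schematically in the geometric coordinate frame, showing $|Y|^2 = \gsphere_{AB} Y^A Y^B$ is a smooth function of $(L^i,\Psi)$ (via Lemmas~\ref{lem:slashed} and \ref{lem:induced.metric}) and then invoking the bootstrap assumptions; you instead compute $g(Y,Y) = \Speed^{-2} - \Speed^{-4}(X^2)^2$ and $g(Y,Z) = -\Speed^{-4}X^2X^3$ explicitly from \eqref{eq:Y.Z.def}, \eqref{E:ACOUSTICALMETRIC}, and $g(X,X)=1$, which is more transparent and still lands on the same $L^\infty$ inputs (Proposition~\ref{prop:geometric.low.2}). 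For the comparison estimate \eqref{E:ELLTUCONNECTIONDERIVATIVECOMPARBLETOYZDERIVATIVES}, the paper proves the easy direction by $\gsphere$-Cauchy--Schwarz, then proves the reverse direction only for scalar functions via the identity $|\angD\upxi|^2 = (\gsphere^{-1})^{AB}(\srd_A\upxi)(\srd_B\upxi)$ together with Lemma~\ref{L:GEOMETRICCOORDINATEVECTORFIELDSINTERMSOFCARTESIANVECTORFIELDS} and Young's inequality, and asserts that the tensorfield case ``can be handled using the same arguments.'' You bypass that reduction entirely: by checking that the $\gsphere$-Gram matrix of $\{Y,Z\}$ is an $\mathcal{O}_{\mydiam}(\mathring{\upalpha})$-perturbation of $\Speed^{-2}\delta_{AB}$ (hence uniformly equivalent to the identity) and that $\{Y,Z\}$ is genuinely a frame (determinant $\Speed^{-2}(X^1)^2$, bounded away from zero), you can write $|\angD\upxi|^2 = \alpha^{AB}\gsphere(\angD_{e_A}\upxi,\angD_{e_B}\upxi)$ with $\alpha^{AB}$ the inverse Gram matrix, and both directions of the equivalence fall out at once for arbitrary $\ell_{t,u}$-tangent tensorfields. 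The two arguments have the same analytic content and rely on the same $L^\infty$ estimates; yours avoids the scalar-then-tensor two-step and the appeal to the (ultimately harmless but unspoken) analogy, at the cost of introducing the Gram matrix.
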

\begin{proof}
To prove \eqref{E:POINTWISENORMOFELLTUTANGENTCOMMUTATORS}, we 
use Lemmas~\ref{lem:slashed} and \ref{lem:induced.metric}
and the fact that the Cartesian component functions $X^1,X^2,X^3$ are smooth functions of
the $L^i$ and $\Psi$ (see \eqref{E:TRANSPORTVECTORFIELDINTERMSOFLUNITANDRADUNIT})
to deduce that $|Y|^2= \gsphere_{AB} Y^A Y^B = \smoothfunction(L^i,\Psi)$,
where $\smoothfunction$ is a smooth function. Similar remarks hold
for $|Z|^2$. The desired estimates in \eqref{E:POINTWISENORMOFELLTUTANGENTCOMMUTATORS} 
therefore follow from 
the bootstrap assumptions \eqref{BA:LARGERIEMANNINVARIANTLARGE}--\eqref{BA:SMALLWAVEVARIABLESUPTOONETRANSVERSALDERIVATIVE}
and Proposition~\ref{prop:geometric.low}.

To prove \eqref{E:ELLTUCONNECTIONDERIVATIVECOMPARBLETOYZDERIVATIVES},
we note that the $\gsphere$-Cauchy--Schwarz inequality and 
\eqref{E:POINTWISENORMOFELLTUTANGENTCOMMUTATORS} imply that
$|\angD_Y \upxi| + |\angD_Z \upxi| \lesssim |\angD \upxi|$.
We will show how to obtain the reverse inequality when $\upxi$ is a scalar function;
the case of an arbitrary $\ell_{t,u}$-tangent tensorfield can be handled using the same arguments,
which will complete the proof.
To proceed, we note that for scalar functions $\upxi$, we have
$
|\angD \upxi|^2 = (\gsphere^{-1})^{AB} (\srd_A \upxi)(\srd_B \upxi)
$.
We now use Lemmas~\ref{L:GEOMETRICCOORDINATEVECTORFIELDSINTERMSOFCARTESIANVECTORFIELDS}
and \ref{lem:induced.metric}
and the fact that $X^1,X^2,X^3$ are smooth functions of
$L^i$ and $\Psi$ (as noted above)
to deduce that there exist smooth functions, all schematically denoted by
``$\smoothfunction$,'' such that
$
(\gsphere^{-1})^{AB} (\srd_A \upxi)(\srd_B \upxi)
= \smoothfunction(L^i,\Psi) (Y \upxi)^2 
+
\smoothfunction(L^i,\Psi) (Y \upxi) (Z \upxi)
+ 
\smoothfunction(L^i,\Psi) (Z \upxi)^2
$.
Also using the bootstrap assumptions 
\eqref{BA:LARGERIEMANNINVARIANTLARGE}--\eqref{BA:SMALLWAVEVARIABLESUPTOONETRANSVERSALDERIVATIVE},
Young's inequality,
and Proposition~\ref{prop:geometric.low},
we conclude that 
$|\angD \upxi|^2 \lesssim |Y \upxi|^2 + |Z \upxi|^2 = |\angD_Y \upxi|^2 + |\angD_Z \upxi|^2$ as desired.
\end{proof}

\subsection{$L^{\infty}$ estimates involving higher transversal derivatives}

Some aspects of our main results rely on having $L^{\infty}$ estimates for the higher transversal
derivatives of various solution variables. We provide these estimates in the next
proposition. The proofs are similar to the proofs of related estimates in
\cite{jSgHjLwW2016}.

\begin{proposition}[\textbf{$L^{\infty}$ estimates involving higher transversal derivatives}]
	\label{P:LINFTYHIGHERTRANSVERSAL}
	The following estimates hold\footnote{
	Based on our assumptions on the data (see Section~\ref{SS:FLUIDVARIABLEDATAASSUMPTIONS}),
	we could obtain $L^{\infty}$ control over additional
	$\mathcal{F}_u$-tangential derivatives of the quantities stated in the proposition 
	-- but not! additional $\Rad$ differentiations. However, for convenience, in the proposition,
	we have only derived control of a sufficient number of derivatives so that the estimates close and
	so that we can use the results in our proof of Proposition~\ref{lem:higher.transversal.for.Holder}
	and in the appendix. 
	} 
	for all $t \in [0,\Tboot)$ and $u \in [0,U_0]$,
	where in \eqref{E:RADTANGENTIALUPMULINFTY}, $\slashed{\mathcal{P}} \in \lbrace Y,Z \rbrace$:
	
	\medskip
	
	\noindent \underline{\textbf{$L^{\infty}$ estimates involving two or three transversal derivatives of 
	the wave variables}.}
	
\begin{subequations}	
\begin{align}
		\| \Lunit \mathcal{P}^{\leq 2} \Rad \Rad \Psi \|_{L^\infty(\Mtu)}
		& \leq C \epd^{\frac{1}{2}},
			\label{E:LUNITTWORADPSI} \\
		\| \mathcal{P}^{[1,2]} \Rad \Rad \Psi \|_{L^\infty(\Mtu)}
		& \leq C\epd^{\frac{1}{2}},
			\\
		\| \Rad \Rad \mathcal{R}_{(+)} \|_{L^\infty(\Mtu)}
		& \leq \| \Rad \Rad \mathcal{R}_{(+)} \|_{L^\infty(\Sigma_0)}
			+ C \epd^{\frac{1}{2}},
				\\
		\| \Rad \Rad (\mathcal{R}_{(-)},v^1,v^2,s) \|_{L^\infty(\Mtu)}
		& \leq C \epd^{\frac{1}{2}},
	\end{align}
	\end{subequations}
	
	\begin{subequations}
	\begin{align}
		\| \Lunit \Rad \Rad \Rad \Psi \|_{L^\infty(\Mtu)}
		& \leq C\epd^{\frac{1}{2}},
			\\
		\| \Rad \Rad \Rad \mathcal{R}_{(+)} \|_{L^\infty(\Mtu)}
		& \leq \| \Rad \Rad \Rad \mathcal{R}_{(+)} \|_{L^\infty(\Sigma_0)}
			+ C \epd^{\frac{1}{2}},
				\label{E:LINFINITITYTHREETRANSVERSALOFLARGERIEMANNINVARIANT} \\
		\| \Rad \Rad \Rad (\mathcal{R}_{(-)},v^1,v^2,s) \|_{L^\infty(\Mtu)}
		& \leq C \epd^{\frac{1}{2}}.
			\label{E:LINFINITITYTHREETRANSVERSALOFSMALLWAVEVARIABLES}
	\end{align}
	\end{subequations}
	
	\medskip
	
	\noindent \underline{\textbf{$L^{\infty}$ estimates involving one or two transversal derivatives of $\upmu$}.}
	\begin{subequations}
	\begin{align}
		\left\|
			\Lunit \Rad \upmu
		\right\|_{L^{\infty}(\Mtu)}
		& \leq
			\frac{1}{2}
			\left\|
				\Rad
				\left(
					\vec{G}_{\Lunit \Lunit}\contr \Rad \threePsi
				\right)
			\right\|_{L^{\infty}(\Sigma_0)}
			+
			C \epd^{\frac{1}{2}},
				\label{E:LUNITRADUPMULINFTY}
					\\
		\left\|
			\Rad \upmu
		\right\|_{L^{\infty}(\Mtu)}
		& \leq
			\left\|
				\Rad \upmu
			\right\|_{L^{\infty}(\Sigma_0)}
			+
			\mathring{\updelta}_*^{-1}
			\left\|
				\Rad
				\left(
					\vec{G}_{\Lunit \Lunit} \contr \Rad \threePsi
				\right)
			\right\|_{L^{\infty}(\Sigma_0)}
			+
			C \epd^{\frac{1}{2}},
				\label{E:RADUPMULINFTY}
\end{align}
\end{subequations}

\begin{subequations}
\begin{align}
	\left\| 
		\Lunit \Rad \mathcal{P} \upmu
	\right\|_{L^{\infty}(\Mtu)},
		\,
	\left\| 
		\Lunit \Rad \mathcal{P}^2 \upmu
	\right\|_{L^{\infty}(\Mtu)}
	& \leq C \epd^{\frac{1}{2}},
				\label{E:LUNITRADTANGENTIALUPMULINFTY}
				\\
	\left\| 
		\Rad \slashed{\mathcal{P}} \upmu
	\right\|_{L^{\infty}(\Mtu)},
		\,
	\left\| 
		\Rad \mathcal{P}^2 \upmu
	\right\|_{L^{\infty}(\Mtu)}
	& \leq C \epd^{\frac{1}{2}},
				\label{E:RADTANGENTIALUPMULINFTY}
\end{align}
\end{subequations}

\begin{subequations}
\begin{align}
		\left\|
			\Lunit \Lunit \Rad \Rad \upmu
		\right\|_{L^{\infty}(\Mtu)}
		& \leq
			C \epd^{\frac{1}{2}},
		\label{E:LUNITLUNITRADRADUPMULINFTY}
			\\
		\left\|
			\Lunit \Rad \Rad \upmu
		\right\|_{L^{\infty}(\Mtu)}
		& \leq
			\frac{1}{2}
			\left\|
				\Rad
				\Rad
				\left(
					\vec{G}_{\Lunit \Lunit}\contr \Rad \threePsi
				\right)
			\right\|_{L^{\infty}(\Sigma_0)}
			+
			C \epd^{\frac{1}{2}},
		\label{E:LUNITRADRADUPMULINFTY}
			\\
		\left\|
			\Rad \Rad \upmu
		\right\|_{L^{\infty}(\Mtu)}
		& \leq
			\left\|
				\Rad \Rad \upmu
			\right\|_{L^{\infty}(\Sigma_0)}
			+
			\mathring{\updelta}_*^{-1}
			\left\|
				\Rad
				\Rad
				\left(
					\vec{G}_{\Lunit \Lunit}\contr \Rad \threePsi
				\right)
			\right\|_{L^{\infty}(\Sigma_0)}
			+
			C \epd^{\frac{1}{2}}.
		\label{E:RADRADUPMULINFTY}
	\end{align}
	\end{subequations}

\medskip

\noindent \underline{\textbf{$L^{\infty}$ estimates involving one or two transversal derivatives of $\Lunit^i$}.}	
	\begin{subequations}
	\begin{align}
		\| \mathcal{P}^{[1,\Ntop - M_* - 5]} \Rad \Lunit^i \|_{L^\infty(\Mtu)}
		& \leq C \epd^{\frac{1}{2}},
			\\
		\| \Rad \Lunit^i \|_{L^\infty(\Mtu)}
		& \leq C,
	\end{align}
	\end{subequations}
	
	\begin{subequations}
	\begin{align}
		\| \Lunit \mathcal{P} \Rad \Rad \Lunit^i \|_{L^\infty(\Mtu)}
		& \leq C \epd^{\frac{1}{2}},
			 \label{E:LTANGENTIALTWOTRANSVERSALDERIVATIESLUNITILINFTY} \\
		\| \mathcal{P} \Rad \Rad \Lunit^i \|_{L^\infty(\Mtu)}
		& \leq C \epd^{\frac{1}{2}},
			\\
		\| \Rad \Rad \Lunit^i \|_{L^\infty(\Mtu)}
		& \leq C.
			\label{E:TWOTRANSVERSALDERIVATIESLUNITILINFTY}
	\end{align}
	\end{subequations}
	
	\medskip
	
	\noindent \underline{\textbf{$L^{\infty}$ estimates involving 
	transversal derivatives of the transported variables}.}
	\begin{align} 
	\begin{split} \label{E:VORTICITYANDENTROPYLINFTYHIGHERTRANSVERSAL}
		&
		\| \mathcal{P}^{\leq 3} \Rad^{\leq 1} (\Vr,S) \|_{L^\infty(\Mtu)}
		+
		\| \mathcal{P}^{\leq 2} \Rad \Rad (\Vr,S) \|_{L^\infty(\Mtu)}
		+
		\| \Rad^{\leq 3} (\Vr,S) \|_{L^\infty(\Mtu)}
			\\
	& \ \
		+
		\| \mathcal{P}^{\leq 2} \Rad^{\leq 1} (\mathcal{C},\mathcal{D}) \|_{L^\infty(\Mtu)}
		+
		\| \Rad^{\leq 2} (\mathcal{C},\mathcal{D}) \|_{L^\infty(\Mtu)}
		\leq C \epd.
	\end{split}
	\end{align}	
	
	Finally,
	\begin{align} 
	\begin{split} \label{E:PERMUTEDESTIMATES}
	&
	\mbox{We can permute the vectorfield operators on the LHSs of 
	\eqref{E:LUNITTWORADPSI}--\eqref{E:TWOTRANSVERSALDERIVATIESLUNITILINFTY}}
		\\
	& \mbox{up to error terms of $L^{\infty}$ size
	$\mathcal{O}(\epd^{\frac{1}{2}})$,}
	\end{split}	
		\\
	&\mbox{and on the LHS of \eqref{E:VORTICITYANDENTROPYLINFTYHIGHERTRANSVERSAL}
	up to error terms of $L^{\infty}$ size $\mathcal{O}(\epd)$}.
	\label{E:TRANSPORTPERMUTEDESTIMATES}
\end{align}
	
\end{proposition}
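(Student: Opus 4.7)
The overall plan is to prove each bound by deriving a transport equation along the null generator $L$ for the quantity on the left-hand side, integrating in $t$ from $0$ (where the initial-data assumptions from Section~\ref{SS:FLUIDVARIABLEDATAASSUMPTIONS} supply control), and exploiting the short time interval $[0,\Tboot)\subset [0,2\mathring{\updelta}_*^{-1}]$ together with the bootstrap assumptions \eqref{BA:W1}--\eqref{BA:C.D} and Propositions~\ref{prop:geometric.low}--\ref{prop:commutators.Li} to bound all remaining source terms by $C\epd^{1/2}$. The factor $\mathring{\updelta}_*^{-1}$ that appears in \eqref{E:RADUPMULINFTY} and \eqref{E:RADRADUPMULINFTY} is precisely the length of the time interval, while the $L^\infty(\Sigma_0)$ norms of $\Rad^{\leq 2}(\vec G_{LL}\contr \Rad\threePsi)$ that appear in those same bounds arise because these quantities (unlike $\Rad^k\upmu$ itself) are nearly transported along the null generator, hence remain close to their data values.

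The estimates for $\upmu$ are the cleanest illustration. Starting from the transport equation \eqref{E:UPMUFIRSTTRANSPORT} and commuting with products of $\Rad$ and elements of $\lbrace L, Y, Z\rbrace$ using \eqref{E:SIMPLECOMMUTATORIDENTITY}--\eqref{eq:commutators.Li.2}, one obtains identities of the schematic form $L\Rad^k \mathcal{P}^j \upmu = \frac{1}{2}\Rad^k \mathcal{P}^j(\vec G_{LL}\contr \Rad\threePsi) + \text{Err}$, where $\text{Err}$ consists of products in which every factor is $L^\infty$-controlled at size $\mathcal{O}(\epd^{1/2})$ or better by the bootstrap assumptions and Propositions~\ref{prop:geometric.low}--\ref{prop:geometric.low.2}, possibly combined with Lemma~\ref{L:WEIGHTEDXLUNITIFORMLUA} when a $\Rad L^i$ factor is generated. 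A parallel commutation shows that $L[\Rad^k(\vec G_{LL}\contr\Rad\threePsi)] = \mathcal{O}(\epd^{1/2})$ for $k\leq 2$, so this quantity differs from its initial value by $C\epd^{1/2}$ along integral curves of $L$. Integrating the transport equation from $t=0$ and combining these two observations yields \eqref{E:LUNITRADUPMULINFTY}--\eqref{E:RADRADUPMULINFTY}, while the estimates \eqref{E:LUNITRADTANGENTIALUPMULINFTY}--\eqref{E:RADTANGENTIALUPMULINFTY} and \eqref{E:LUNITLUNITRADRADUPMULINFTY} follow the same procedure with at least one $\mathcal{P}$-derivative, which renders the source already $\mathcal{O}(\epd^{1/2})$ by the smallness assumption \eqref{assumption:small}. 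The estimates for $L^i$ use the transport equation \eqref{E:LLUNITI} in the identical fashion, with Lemma~\ref{L:WEIGHTEDXLUNITIFORMLUA} used to manage transversal derivatives of $L^i$ that appear under commutation. For the wave variables $\Psi$, one decomposes each of the wave equations \eqref{E:VELOCITYWAVEEQUATION}--\eqref{E:ENTROPYWAVEEQUATION} relative to the frame $\lbrace L, \Rad, Y, Z\rbrace$; solving for $L\Rad\Psi$ modulo tangential terms and source terms $\mathfrak{Q}, \mathfrak{L}, \mathcal{C}, \mathcal{D}$, commuting with one or two additional $\Rad$'s, and integrating from $\Sigma_0$ yields \eqref{E:LUNITTWORADPSI}--\eqref{E:LINFINITITYTHREETRANSVERSALOFSMALLWAVEVARIABLES}, where the null-form structure from Lemma~\ref{L:CRUCIALPROPERTIESOFNULLFORMS} ensures all $\mathfrak{Q}$-contributions are controlled and the transport-variable contributions are handled by \eqref{BA:V}--\eqref{BA:C.D}.

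The transport-variable bounds in \eqref{E:VORTICITYANDENTROPYLINFTYHIGHERTRANSVERSAL} are the final piece and are most naturally derived from Lemma~\ref{L:TRANSPORTTRANSVERSALINTERMSOFTANGENTIAL}: identities \eqref{E:TRANSVERSALDERIVATIVEEXPRESSIONFORTRANSPORTVARIABLES}--\eqref{E:TRANSVERSALDERIVATIVEEXPRESSIONFORMODIFIEDFLUIDVARIABLES} exchange each $\Rad$ on a transport variable for $-\upmu L$ acting on that variable plus lower-order products that are manifestly $\mathcal{O}(\epd)$ under the bootstrap assumptions; the $L$-derivatives on the right are then bounded by commuting \eqref{E:RENORMALIZEDVORTICTITYTRANSPORTEQUATION}--\eqref{E:GRADENTROPYTRANSPORT} and \eqref{E:EVOLUTIONEQUATIONFLATCURLRENORMALIZEDVORTICITY}, \eqref{E:TRANSPORTFLATDIVGRADENT} with up to two further tangential or $\Rad$ operators and integrating. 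Finally, the permutation statements \eqref{E:PERMUTEDESTIMATES}--\eqref{E:TRANSPORTPERMUTEDESTIMATES} follow directly from Proposition~\ref{prop:commutators.Li} and Lemma~\ref{L:SIMPLECOMMUTATORIDENTITY}, since any reordering of $L$, $\Rad$, and elements of $\slashed{\mathscr P}$ generates commutator terms that are products of already-controlled low-order quantities with at most one "top-order" factor, all falling under the pointwise bounds already in hand. The main technical obstacle is the bookkeeping for the two-transversal estimates \eqref{E:LUNITRADRADUPMULINFTY}--\eqref{E:RADRADUPMULINFTY}: one must verify that the commutator $[\Rad,L]$ acting on quantities like $\vec G_{LL}\contr\Rad\threePsi$ produces only terms of size $\epd^{1/2}$, which requires using \eqref{E:WEIGHTEDXLUNITIFORMLUA} to eliminate $\Rad L^i$ factors and Lemma~\ref{L:CRUCIALPROPERTIESOFNULLFORMS} to handle null-form structure in the wave equations supplying $L\Rad\Rad\Psi$; a symmetric issue arises in isolating the explicit leading coefficient $\frac{1}{2}$ and the precise $L^\infty(\Sigma_0)$ seed term, which must not absorb uncontrolled bulk contributions.
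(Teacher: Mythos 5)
Your overall strategy — treat each bound via an $L$-transport equation, integrate from $\Sigma_0$, and exploit the bootstrap assumptions and the commutator toolbox of Proposition~\ref{prop:commutators.Li} — is the right template, and is the same as the paper's. However, there is a genuine gap in your argument that the paper resolves with an additional structural ingredient you have not supplied: a \emph{new} continuity/bootstrap argument for the transport-variable estimates \eqref{E:VORTICITYANDENTROPYLINFTYHIGHERTRANSVERSAL} and \eqref{E:TRANSPORTPERMUTEDESTIMATES}. You claim that in the derivation of the wave-variable estimates \eqref{E:LUNITTWORADPSI}--\eqref{E:LINFINITITYTHREETRANSVERSALOFSMALLWAVEVARIABLES} the ``transport-variable contributions are handled by \eqref{BA:V}--\eqref{BA:C.D}.'' This is false: when you solve the wave equations for $L\Rad\Psi$ using \eqref{eq:wave.in.terms.of.geometric} and then commute with $\Rad\Rad$ to get at $L\Rad\Rad\Rad\Psi$, the source contributions include $\Rad\Rad\mathcal{C}$ and $\Rad\Rad\mathcal{D}$ (from the $-c^2 e^{2\rho}\mathcal{C}$ and $\mathcal{D}$ terms on RHS~\eqref{E:VELOCITYWAVEEQUATION}--\eqref{E:ENTROPYWAVEEQUATION}), and the bootstrap assumptions \eqref{BA:V}--\eqref{BA:C.D} control only pure $\mathcal{P}$-derivatives of $(\mathcal{C},\mathcal{D})$ and $\mathcal{P}^{\leq k}\Rad(\Vr,S)$, not $\Rad^{\geq 1}(\mathcal{C},\mathcal{D})$ or $\Rad^{\geq 2}(\Vr,S)$. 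You could try to swap those $\Rad$'s for tangential derivatives using \eqref{E:TRANSVERSALDERIVATIVEEXPRESSIONFORMODIFIEDFLUIDVARIABLES}, but doing so regenerates $\Rad\Rad\Psi$, $\Rad\upmu$, $\Rad L^i$ in the coefficient $\smoothfunction(\Psi,\Lunit^i,\upmu,\Rad\Psi,\mathcal{P}\Psi)$ — exactly the quantities being estimated. The circular dependence between the wave/geometric block ($\Rad^{\leq 3}\Psi$, $\Rad^{\leq 2}\upmu$, $\Rad^{\leq 2}L^i$) and the transport-variable block ($\Rad^{\leq 3}(\Vr,S)$, $\Rad^{\leq 2}(\mathcal{C},\mathcal{D})$) cannot be sequenced away by ordering the sub-estimates as you propose ($\upmu$, then $L^i$, then $\Psi$, then transport variables); the $\upmu$ estimate \eqref{E:RADRADUPMULINFTY} itself already requires $L$-transport control of $\Rad\Rad(\vec G_{LL}\contr\Rad\threePsi)$, which in turn needs $L\Rad^3\Psi$.

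The paper breaks this cycle by first \emph{assuming} that \eqref{E:VORTICITYANDENTROPYLINFTYHIGHERTRANSVERSAL}--\eqref{E:TRANSPORTPERMUTEDESTIMATES} hold with the bound $\epd^{1/2}$ in place of $C\epd$, deriving \eqref{E:LUNITTWORADPSI}--\eqref{E:TWOTRANSVERSALDERIVATIESLUNITILINFTY} and \eqref{E:PERMUTEDESTIMATES} under this temporary assumption (which makes the new inhomogeneous terms $\mathcal{O}(\epd^{1/2})$, compatible with the bounds being proved), then deriving the transport-variable estimates at strength $C\epd < \epd^{1/2}$ from the now-available wave/geometric estimates and the exchange identities \eqref{E:TRANSVERSALDERIVATIVEEXPRESSIONFORTRANSPORTVARIABLES}--\eqref{E:TRANSVERSALDERIVATIVEEXPRESSIONFORMODIFIEDFLUIDVARIABLES}, and closing with a continuity argument in $t$. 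Your proposal, as written, neither notices the cycle nor introduces this auxiliary bootstrap, so it does not yield a valid proof even though the individual computations you sketch are essentially correct.
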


\begin{proof}
	To prove the lemma, we make the ``new bootstrap assumption'' that 
	the estimates in \eqref{E:VORTICITYANDENTROPYLINFTYHIGHERTRANSVERSAL}
	hold for $t \in [0,\Tboot)$ with the ``$C \epd$'' term on the RHS
	replaced by ``$\epd^{\frac{1}{2}}$,''
	and also that \eqref{E:TRANSPORTPERMUTEDESTIMATES} holds with 
	$\mathcal{O}(\epd)$ replaced by $\epd^{\frac{1}{2}}$.
	Given this new bootstrap assumption,
	to obtain \eqref{E:LUNITTWORADPSI}--\eqref{E:TWOTRANSVERSALDERIVATIESLUNITILINFTY}
	and \eqref{E:PERMUTEDESTIMATES},
	we can simply repeat\footnote{We clarify that the bootstrap parameter ``$\varepsilon$'' from 
	\cite{jSgHjLwW2016} should be identified with the quantity $\epd^{\frac{1}{2}}$
	in our bootstrap assumptions \eqref{BA:SMALLWAVEVARIABLESUPTOONETRANSVERSALDERIVATIVE}--\eqref{BA:C.D}.} 
	the proof of \cite[Lemma~9.3]{jSgHjLwW2016},
	which relies on transport-type estimates 
	that lose derivatives (in particular, one uses the transport equations 
	\eqref{E:UPMUFIRSTTRANSPORT}--\eqref{E:LLUNITI}
	and also treats the wave equation
	as a derivative-losing transport equation $\Lunit \Rad \Psi = \cdots$ by
	using \eqref{eq:wave.in.terms.of.geometric}).
	The only difference between the estimates derived in \cite[Lemma~9.3]{jSgHjLwW2016}
	and the estimates we need to derive
	is that our wave equations
	\eqref{E:VELOCITYWAVEEQUATION}--\eqref{E:ENTROPYWAVEEQUATION},
	when weighted with a factor of $\upmu$
	(so that the decomposition \eqref{eq:wave.in.terms.of.geometric}
	of $\upmu \square_g$ can be employed),
	feature some new inhomogeneous terms compared to \cite[Lemma~9.3]{jSgHjLwW2016},
	specifically, some of the ones depending on $(\mathcal{C},\mathcal{D},\Vr,S)$
	and the first derivatives of $(\Vr,S)$.
	The key point is that our new bootstrap assumption
	implies that the new inhomogeneous terms are all bounded in $L^{\infty}$ by $\lesssim \epd^{\frac{1}{2}}$,
	which is compatible with the $\mathcal{O}(\epd^{\frac{1}{2}})$-size bounds that
	one is aiming to prove; i.e., our new $\mathcal{O}(\epd^{\frac{1}{2}})$-sized error terms
	are harmless in the context of the proof.
	From this logic, it follows that
	the estimates \eqref{E:LUNITTWORADPSI}--\eqref{E:TWOTRANSVERSALDERIVATIESLUNITILINFTY}
	and \eqref{E:PERMUTEDESTIMATES}
	hold for all $t \in [0,\Tboot)$.
	We clarify that the estimates 
	\eqref{E:LUNITLUNITRADRADUPMULINFTY}
	and
	\eqref{E:LTANGENTIALTWOTRANSVERSALDERIVATIESLUNITILINFTY} 
	were not explicitly stated in \cite[Lemma~9.3]{jSgHjLwW2016}.
	However \eqref{E:LUNITLUNITRADRADUPMULINFTY}
	follows from
	commuting the transport
	equation \eqref{E:UPMUFIRSTTRANSPORT}
	with $\Lunit \Rad \Rad$
	via Lemma~\ref{L:SIMPLECOMMUTATORIDENTITY}
	and bounding the resulting algebraic expression
	for $\Lunit \Lunit \Rad \Rad \upmu$
	using the fact that the Cartesian component functions $X^1,X^2,X^3$ are smooth functions of
	the $L^i$ and $\Psi$ (see \eqref{E:TRANSPORTVECTORFIELDINTERMSOFLUNITANDRADUNIT}),
	the bootstrap assumptions \eqref{BA:LARGERIEMANNINVARIANTLARGE}--\eqref{BA:S},
	Proposition~\ref{prop:geometric.low},
	and the estimates 
	in \eqref{E:LUNITTWORADPSI}--\eqref{E:TWOTRANSVERSALDERIVATIESLUNITILINFTY}
	and \eqref{E:PERMUTEDESTIMATES}
	besides 
	\eqref{E:LUNITLUNITRADRADUPMULINFTY}
	and
	\eqref{E:LTANGENTIALTWOTRANSVERSALDERIVATIESLUNITILINFTY} .
	Similarly, \eqref{E:LTANGENTIALTWOTRANSVERSALDERIVATIESLUNITILINFTY} 
	follows from
	commuting the transport
	equation \eqref{E:LLUNITI}
	with $\mathcal{P} \Rad \Rad$.
	
	To complete the proof, it only remains for
	us to prove \eqref{E:VORTICITYANDENTROPYLINFTYHIGHERTRANSVERSAL} and \eqref{E:TRANSPORTPERMUTEDESTIMATES}
	(with the help of the already established bounds 
	\eqref{E:LUNITTWORADPSI}--\eqref{E:TWOTRANSVERSALDERIVATIESLUNITILINFTY} and \eqref{E:PERMUTEDESTIMATES});
	for if $\epd$ is sufficiently small, this yields a strict improvement
	of the new bootstrap assumption mentioned at the beginning of the proof,
	and the conclusions of the proposition then
	follow from a standard continuity argument.
	We start by noting that the bounds in \eqref{E:VORTICITYANDENTROPYLINFTYHIGHERTRANSVERSAL}
	for the pure $\mathcal{F}_u$-tangential derivatives of $(\Vr,S)$ 
	are included in the bootstrap assumptions \eqref{BA:V}--\eqref{BA:S},
	as are the following bounds:
	\begin{align} \label{E:LINFINITYBOUNDSFORONETRANSVERSALDERIVATIVEENTROPYANDVORTICITY}
		\| \mathcal{P}^{\leq 3} \Rad (\Vr,S) \|_{L^\infty(\Mtu)}
		& \lesssim \epd.
	\end{align}
	Next, we use 
	Lemma~\ref{L:SIMPLECOMMUTATORIDENTITY},
	the bootstrap assumptions \eqref{BA:LARGERIEMANNINVARIANTLARGE}--\eqref{BA:S},
	Proposition~\ref{prop:geometric.low},
	the estimates 
	\eqref{E:LUNITTWORADPSI}--\eqref{E:TWOTRANSVERSALDERIVATIESLUNITILINFTY} and \eqref{E:PERMUTEDESTIMATES},
	and the bounds 
	\eqref{E:LINFINITYBOUNDSFORONETRANSVERSALDERIVATIVEENTROPYANDVORTICITY}
	to deduce that the estimate \eqref{E:LINFINITYBOUNDSFORONETRANSVERSALDERIVATIVEENTROPYANDVORTICITY} also 
	holds for all permutations of the vectorfield operators on the LHS.
	
	We next show that:
	\begin{align} \label{E:LINFINITYBOUNDSFORTWOTRANSVERSALDERIVATIVESENTROPYANDVORTICITY}
		\| \mathcal{P}^{\leq 2} \Rad \Rad (\Vr,S) \|_{L^\infty(\Mtu)}
		& \lesssim \epd.
	\end{align}
	This estimate follows from differentiating 
	the identities 
	\eqref{E:TRANSVERSALDERIVATIVEEXPRESSIONFORTRANSPORTVARIABLES1}--\eqref{E:TRANSVERSALDERIVATIVEEXPRESSIONFORTRANSPORTVARIABLES2}
	with $\mathcal{P}^{\leq 2} \Rad$ and using
	the bootstrap assumptions \eqref{BA:LARGERIEMANNINVARIANTLARGE}--\eqref{BA:S},
	Proposition~\ref{prop:geometric.low},
	the estimates 
	\eqref{E:LUNITTWORADPSI}--\eqref{E:TWOTRANSVERSALDERIVATIESLUNITILINFTY} and \eqref{E:PERMUTEDESTIMATES},
	the estimate \eqref{E:LINFINITYBOUNDSFORONETRANSVERSALDERIVATIVEENTROPYANDVORTICITY},
	and the analog of
	\eqref{E:LINFINITYBOUNDSFORONETRANSVERSALDERIVATIVEENTROPYANDVORTICITY}
	for all permutations of the vectorfield operators on the LHS.
	(Notice that we can indeed prove \eqref{E:LINFINITYBOUNDSFORTWOTRANSVERSALDERIVATIVESENTROPYANDVORTICITY}
	with a strict improvement of our new bootstrap assumptions because the terms 
	arising from differentiating 
	\eqref{E:TRANSVERSALDERIVATIVEEXPRESSIONFORTRANSPORTVARIABLES1}--\eqref{E:TRANSVERSALDERIVATIVEEXPRESSIONFORTRANSPORTVARIABLES2}
	by $\mathcal{P}^{\leq 2} \Rad$ contain at least one factor of $(\Vr,S)$ differentiated with  
	at most one $\breve{X}$ derivative, and such factors have already been shown to bounded in the norm
	$\| \cdot \|_{L^\infty(\Mtu)}$ by $\lesssim \epd$.)
	Again using Lemma~\ref{L:SIMPLECOMMUTATORIDENTITY} to commute vectorfield derivatives,
	we also deduce that the estimate \eqref{E:LINFINITYBOUNDSFORTWOTRANSVERSALDERIVATIVESENTROPYANDVORTICITY} 
	also holds for all permutations of the vectorfield operators on the LHS. 
	
	We next show that:
	\begin{align} \label{E:LINFINITYBOUNDSFORTHREETRANSVERSALDERIVATIVESENTROPYANDVORTICITY}
		\| \Rad \Rad \Rad (\Vr,S) \|_{L^\infty(\Mtu)}
		& \lesssim \epd.
	\end{align}
	This estimate follows from differentiating 
	the identities \eqref{E:TRANSVERSALDERIVATIVEEXPRESSIONFORTRANSPORTVARIABLES1}--\eqref{E:TRANSVERSALDERIVATIVEEXPRESSIONFORTRANSPORTVARIABLES2}
	with $\Rad \Rad$ and
	using the bootstrap assumptions \eqref{BA:LARGERIEMANNINVARIANTLARGE}--\eqref{BA:S},
	Proposition~\ref{prop:geometric.low},
	the estimates 
	\eqref{E:LUNITTWORADPSI}--\eqref{E:TWOTRANSVERSALDERIVATIESLUNITILINFTY} and \eqref{E:PERMUTEDESTIMATES},
	the estimates 
	\eqref{E:LINFINITYBOUNDSFORONETRANSVERSALDERIVATIVEENTROPYANDVORTICITY}--\eqref{E:LINFINITYBOUNDSFORTWOTRANSVERSALDERIVATIVESENTROPYANDVORTICITY},
	and the analogs of
	\eqref{E:LINFINITYBOUNDSFORONETRANSVERSALDERIVATIVEENTROPYANDVORTICITY}--\eqref{E:LINFINITYBOUNDSFORTWOTRANSVERSALDERIVATIVESENTROPYANDVORTICITY}
	for all permutations of the vectorfield operators on the LHSs.
	
	Similarly, we can first prove:
	\begin{align} \label{E:LINFINITYBOUNDSFORONETRANSVERSALDERIVATIVEMODIFIEDFLUID}
		\| \mathcal{P}^{\leq 2} \Rad^{\leq 1} (\mathcal{C},\mathcal{D}) \|_{L^\infty(\Mtu)}
		& \lesssim \epd
	\end{align}
	and then:
	\begin{align} \label{E:LINFINITYBOUNDSFORTWOTRANSVERSALDERIVATIVESMODIFIEDFLUID}
		\| \Rad^{\leq 2} (\mathcal{C},\mathcal{D}) \|_{L^\infty(\Mtu)}
		& \lesssim \epd
	\end{align}
	(and that \eqref{E:LINFINITYBOUNDSFORONETRANSVERSALDERIVATIVEMODIFIEDFLUID}
	holds for all permutations of the vectorfield operators on the LHS
	all permutations of the vectorfield operators on the LHS)
	by using the identities \eqref{E:TRANSVERSALDERIVATIVEEXPRESSIONFORMODIFIEDFLUIDVARIABLES1}--\eqref{E:TRANSVERSALDERIVATIVEEXPRESSIONFORMODIFIEDFLUIDVARIABLES2}
	and arguing as above, using
	in addition the bootstrap assumption \eqref{BA:C.D}
	and the already proven estimates for $(\Vr,S)$.
	
	We have therefore established \eqref{E:VORTICITYANDENTROPYLINFTYHIGHERTRANSVERSAL}
	and \eqref{E:TRANSPORTPERMUTEDESTIMATES},
	which completes the proof of the proposition. \qedhere

	\end{proof}

\subsection{Sharp estimates for $\upmu_{\star}$}
Recall the definition of $\upmu_{\star}(t)$ in Definition~\ref{def:mustar}. In this subsection, 
in Propositions~\ref{prop:mus.int} and \ref{prop:almost.monotonicity}, 
we provide some estimates for $\upmu_{\star}(t)$ that were proved 
in \cite{jSgHjLwW2016}. We will simply cite the relevant estimates, 
noting that their proof relies only on the $L^\infty$ bounds for (lower-order derivatives of) 
the wave variables and the geometric quantities that we have already established. 
Moreover, we remark
that these estimates capture that $\upmu_{\star}(t)$ tends to $0$ linearly,
a fact that is crucial for bounding the maximum possible 
singularity strength of our high-order geometric energies 
(i.e., for controlling the blowup-rate of the energies in, for example, \eqref{BA:W1}).

Thanks to our bootstrap assumptions and the estimates of Proposition~\ref{prop:geometric.low},
the following estimates for $\upmu_{\star}(t)$ can be proved exactly as in 
\cite[(10.36), (10.39)]{jSgHjLwW2016}:
\begin{proposition}[\textbf{Control of integrals of $\upmu_{\star}$}]
\label{prop:mus.int}
Let $\toprate\in \mathbb N$ be the absolute constant appearing in
the statements of Theorem~\ref{thm:main} and
Proposition~\ref{prop:wave} below. 
For $1< b \leq 100 \toprate$, 
the quantities $\upmu_{\star}(t,u)$ and $\upmu_{\star}(t)$ from Definition~\ref{def:mustar}
obey the following estimates for every $(t,u) \in [0,\Tboot) \times [0,U_0]$:
\begin{equation}\label{eq:mus.int.1}
\int_{t'=0}^{t'=t} \upmu_{\star}^{-b} (t',u) \,dt' \ls \left(1+ \f{1}{b-1} \right) \upmu_{\star}^{-b+1}(t,u),
	\quad
\int_{t'=0}^{t'=t} \upmu_{\star}^{-b} (t') \,dt' \ls \left(1+ \f{1}{b-1} \right) \upmu_{\star}^{-b+1}(t).
\end{equation}
Moreover, for all $t\in [0,\Tboot)$,
\begin{equation}\label{eq:mus.int.2}
\int_{t'=0}^{t'=t} \upmu_{\star}^{-0.9} (t',u) \,dt' \ls 1,
	\quad
\int_{t'=0}^{t'=t} \upmu_{\star}^{-0.9} (t') \,dt' \ls 1.
\end{equation}
\end{proposition}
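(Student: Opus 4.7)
The plan is to deduce both estimates from the fact that $\upmu_\star$ tends to zero at a near-linear rate in $t$, and then to perform an elementary integration. The key mechanism is the transport equation \eqref{E:UPMUFIRSTTRANSPORT}, which, combined with the expansion \eqref{E:KEYLARGETERMEXPANDED} and the bootstrap assumptions \eqref{BA:LARGERIEMANNINVARIANTLARGE}--\eqref{BA:W.Li.small}, identifies the ``driving coefficient'' as $\Lunit\upmu = \frac{1}{2}\vec{G}_{\Lunit\Lunit}\contr\Rad\threePsi + \mathcal{O}(\epd^{1/2}) = -\frac{1}{2}\Speed^{-1}(\Speed^{-1}\Speed_{;\rr}+1)(\Rad\mathcal{R}_{(+)} - \Rad\mathcal{R}_{(-)}) + \mathcal{O}(\epd^{1/2})$; by \eqref{assumption:lower.bound} and the smallness of $\mathcal{R}_{(-)}$, this expression is bounded below near the shock by $-\mathring{\updelta}_* + \mathcal{O}(\epd^{1/2})$ at precisely the points that matter.

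The first substantive step is to show that along each integral curve of $\Lunit$, the quantity $\Lunit\upmu$ is nearly constant in $t$. Commuting \eqref{E:UPMUFIRSTTRANSPORT} with $\Lunit$ produces a transport equation for $\Lunit\Lunit\upmu$ whose source is schematically $\Lunit\Rad\threePsi + \text{lower order}$; Proposition~\ref{P:LINFTYHIGHERTRANSVERSAL} (specifically the bound on $\Lunit\Rad\Rad\Psi$-type terms) and the $L^\infty$ bounds of Propositions~\ref{prop:geometric.low}--\ref{prop:geometric.low.2} show this is $\mathcal{O}(\epd^{1/2})$. Integrating along $\Lunit$ over $t' \in [0,t]$, using $t \leq 2\mathring{\updelta}_*^{-1}$, gives
\begin{equation*}
|\Lunit\upmu(t,u,x^2,x^3) - \Lunit\upmu(0,u,x^2,x^3)| \lesssim \epd^{1/2},
\end{equation*}
and a further integration yields the affine-in-$t$ expansion $\upmu(t,\cdot) = \upmu(0,\cdot) + t\,\Lunit\upmu(0,\cdot) + \mathcal{O}(\epd^{1/2})$ along integral curves of $\Lunit$.

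The second step extracts the sharp linear lower bound on $\upmu_\star$. Let $p_2$ be a point on $\Sigma_{t_2}$ where $\upmu_\star(t_2,u_2) = \upmu(t_2,p_2)$ is attained, and let $p_1 \in \Sigma_{t_1}$ denote the $\Lunit$-integral-curve pullback to time $t_1 < t_2$ (so $u(p_1)=u(p_2)=u_2$). From the affine expansion,
\begin{equation*}
\upmu_\star(t_1,u_2) \leq \upmu(t_1,p_1) = \upmu(t_2,p_2) - (t_2-t_1)\Lunit\upmu(0,p_1) + \mathcal{O}(\epd^{1/2}),
\end{equation*}
and since $\upmu_\star(t_2,u_2)$ is small only at points where $\Lunit\upmu(0,\cdot) \leq -\mathring{\updelta}_*/2 + \mathcal{O}(\epd^{1/2})$ (as forced by \eqref{assumption:lower.bound} together with the initial bound $\upmu \approx 1$ on $\Sigma_0$), one concludes
\begin{equation*}
\upmu_\star(t_1,u_2) \geq \upmu_\star(t_2,u_2) + \tfrac{1}{2}\mathring{\updelta}_*(t_2-t_1).
\end{equation*}
Rewriting with $t_1 = t'$ and $t_2 = t$, this is the linear-vanishing property $\upmu_\star(t',u) \gtrsim \upmu_\star(t,u) + \mathring{\updelta}_*(t-t')$ for $0 \leq t' \leq t$; the analogous statement for $\upmu_\star(t')$ follows by the same argument taking the infimum over $u$ as well.

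The third step is a direct integration. For $1 < b \leq 100\toprate$,
\begin{equation*}
\int_0^t \upmu_\star^{-b}(t',u)\,dt' \lesssim \int_0^t \bigl(\upmu_\star(t,u) + \mathring{\updelta}_*(t-t')\bigr)^{-b}\,dt' \lesssim \frac{1}{\mathring{\updelta}_*(b-1)}\upmu_\star^{-b+1}(t,u),
\end{equation*}
which gives \eqref{eq:mus.int.1} after absorbing the $\mathring{\updelta}_*^{-1}$ into the implicit constant. For the exponent $0.9 < 1$ in \eqref{eq:mus.int.2},
\begin{equation*}
\int_0^t \upmu_\star^{-0.9}(t',u)\,dt' \lesssim \int_0^t \bigl(\upmu_\star(t,u)+\mathring{\updelta}_*(t-t')\bigr)^{-0.9}\,dt' \lesssim \mathring{\updelta}_*^{-1}\bigl(\upmu_\star(t,u)+\mathring{\updelta}_* t\bigr)^{0.1} \lesssim 1,
\end{equation*}
again using $t \leq 2\mathring{\updelta}_*^{-1}$. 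The main obstacle is the second step: carefully comparing $\upmu_\star$ at different times requires tracking minimum-attaining points along $\Lunit$-integral curves and invoking the non-degeneracy condition \eqref{assumption:lower.bound} to guarantee that $\Lunit\upmu$ is sufficiently negative wherever $\upmu$ becomes small; this is the analytic heart of the proof, and it is exactly the content imported from \cite[(10.36), (10.39)]{jSgHjLwW2016}.
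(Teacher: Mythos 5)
The overall strategy you pursue — use the transport equation for $\upmu$ and the bootstrap assumptions to establish that $\upmu_\star$ vanishes at a near-linear rate, then do a straightforward integration — is exactly the right one, and it is what the proof of \cite[(10.36), (10.39)]{jSgHjLwW2016} (which the paper simply cites) is based on. Steps 1 and 3 are fine: $\Lunit\Lunit\upmu = \mathcal{O}(\epd^{1/2})$ does follow from the bootstrap bounds on $\Lunit\Rad\threePsi$ (one only needs \eqref{BA:W.Li.small}, not the heavier Proposition~\ref{P:LINFTYHIGHERTRANSVERSAL}), and given the linear vanishing, the integrations in Step 3 are elementary.

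However, Step 2 — the analytic heart, as you say — contains a genuine logical error, and the claimed inequality is false as stated. You prove
\begin{equation*}
	\upmu_\star(t_1,u_2) \leq \upmu(t_1,p_1) = \upmu(t_2,p_2) - (t_2-t_1)\Lunit\upmu(0,p_1) + \mathcal{O}(\epd^{1/2}),
\end{equation*}
which is an \emph{upper} bound on $\upmu_\star(t_1,u_2)$, and from it (and the negativity of $\Lunit\upmu(0,p_1)$) you ``conclude'' the \emph{lower} bound $\upmu_\star(t_1,u_2) \geq \upmu_\star(t_2,u_2) + \tfrac12\mathring{\updelta}_*(t_2-t_1)$. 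This is the inference ``$A \leq B$ and $B \geq C$, therefore $A \geq C$,'' which is invalid. Moreover, the unconditional lower bound you assert is simply false: take $\upmu_\star(t_2,u_2)$ close to $1$ and $t_2-t_1$ of order $\mathring{\updelta}_*^{-1}$; then the right-hand side exceeds $1$, yet $\upmu_\star(t_1,u_2) \leq 1$ by definition.

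To repair this, one must lower-bound $\upmu(t_1,q_1)$ for \emph{every} $q_1 \in \Sigma_{t_1}^{u_2}$ (not just the pullback of the minimizer at $t_2$), by pushing $q_1$ \emph{forward} along $\Lunit$ and dividing into two cases according to whether $\Lunit\upmu(0,q_0)$ is sufficiently negative: in the negative case one gains the linear increment when comparing to $\upmu(t_2,q_2) \geq \upmu_\star(t_2,u_2)$; in the other case $\upmu$ stays bounded below by a universal constant because the slope is too shallow to make $\upmu$ vanish on $[0,2\mathring{\updelta}_*^{-1}]$. The integral estimates then follow by splitting $[0,t]$ into the regime where $\upmu_\star$ is near $1$ (bounded integrand) and the regime where the linear decay dominates. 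This case analysis is precisely the content imported from \cite{jSgHjLwW2016}, and it cannot be replaced by tracking only the single integral curve through the minimizer at the later time.
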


Thanks to our bootstrap assumptions and the estimates of Proposition~\ref{prop:geometric.low}, the
following 
``almost-monotonicity''
of $\upmu_{\star}$ can be proved as in \cite[(10.23)]{jSgHjLwW2016}:
\begin{proposition}[\textbf{The approximate monotonicity of $\upmu_{\star}$}]
\label{prop:almost.monotonicity}
For $0\leq s_1\leq s_2< \Tboot$,
$$\upmu_{\star}^{-1}(s_1) \leq 2 \upmu_{\star}^{-1}(s_2). $$
\end{proposition}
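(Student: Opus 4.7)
The plan is to propagate $\upmu$ along integral curves of $\Lunit$ via the transport equation \eqref{E:UPMUFIRSTTRANSPORT}, derive a sharp pointwise formula, and then deduce the almost-monotonicity by comparing values along the minimizing curve. Since $\Lunit u = 0$ and $\Lunit t = 1$ by \eqref{E:LUNITANDRADOFUANDT}, each $\Lunit$-integral curve has constant $u$ and can be parametrized by $t$. Along such a curve, \eqref{E:UPMUFIRSTTRANSPORT} reduces to the linear ODE $\frac{d\upmu}{dt} = A + B\upmu$, where
\begin{align*}
A \doteq \tfrac{1}{2}\vec{G}_{\Lunit\Lunit}\contr\Rad\threePsi, \qquad B \doteq -\tfrac{1}{2}\vec{G}_{\Lunit\Lunit}\contr\Lunit\threePsi - \vec{G}_{\Lunit\Radunit}\contr\Lunit\threePsi.
\end{align*}
The bootstrap assumptions \eqref{BA:LARGERIEMANNINVARIANTLARGE}--\eqref{BA:W.Li.small} combined with Propositions~\ref{prop:geometric.low}--\ref{prop:geometric.low.2} yield $|A|\ls 1$ and $|B|\ls\epd^{\frac{1}{2}}$ pointwise on $\mathcal{M}_{\Tboot, U_0}$.

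The key analytical input is the sharp pointwise formula
\begin{align*}
\upmu(t, u, x^2, x^3) = \mathring\upmu(u, x^2, x^3) + t\cdot \mathring M(u, x^2, x^3) + \mathrm{Error}(t, u, x^2, x^3),
\end{align*}
valid along $\Lunit$-integral curves for $t \in [0,\Tboot)$, where $\mathring\upmu \doteq \upmu|_{t=0}$, $\mathring M \doteq A|_{t=0}$, and $|\mathrm{Error}| \ls \epd^{\frac{1}{2}}$. I would derive this formula by: (i) observing that $\upmu|_{\Sigma_0} = 1 + \mathcal{O}(\mathring{\upalpha})$, which follows from \eqref{E:INTROEIKONAL}, \eqref{E:FIRSTUPMU}, and the smallness of $v$ and $\Speed - 1$; (ii) integrating the ODE by Duhamel's formula, absorbing the $B\upmu$ contribution into $\mathrm{Error}$ since $|B|\ls \epd^{\frac{1}{2}}$ and $t \leq 2\mathring{\updelta}_*^{-1}$; and (iii) showing that $A$ differs from $\mathring M$ by $\mathcal{O}(\epd^{\frac{1}{2}})$ along the curve, by deriving pointwise bounds on $\Lunit A$ with the help of Lemma~\ref{L:FORMULAFORGLLRADPSI}, the bootstrap assumptions, and the higher-transversal-derivative $L^\infty$ estimates of Proposition~\ref{P:LINFTYHIGHERTRANSVERSAL} (crucially \eqref{E:LUNITTWORADPSI}, which controls $\Lunit\Rad\Rad\Psi$). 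The construction here is essentially that of \cite[Section~10]{jSgHjLwW2016}; the adaptation to 3D with vorticity/entropy introduces only $\mathcal{O}(\epd^{\frac{1}{2}})$-size perturbations by virtue of \eqref{BA:V}--\eqref{BA:C.D} and \eqref{E:VORTICITYANDENTROPYLINFTYHIGHERTRANSVERSAL}.

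Granted the sharp formula, almost-monotonicity follows by comparing values along the minimizing $\Lunit$-integral curve. If $\upmu_\star(s_1) \geq \tfrac{1}{2}$ the bound is trivial, since $\upmu_\star^{-1}(s_1) \leq 2 \leq 2\upmu_\star^{-1}(s_2)$. Otherwise, choose $p_1 \in \Sigma_{s_1}$ with $\upmu(p_1) = \upmu_\star(s_1) < \tfrac{1}{2}$, and let $p_2 \in \Sigma_{s_2}$ be the forward endpoint of the $\Lunit$-integral curve through $p_1$. Evaluating the sharp formula at $p_1$ gives $s_1 \mathring M = \upmu(p_1) - \mathring\upmu + \mathcal{O}(\epd^{\frac{1}{2}}) \leq -\tfrac{1}{2} + \mathcal{O}(\mathring{\upalpha} + \epd^{\frac{1}{2}}) < 0$ for $\mathring{\upalpha}$ and $\epd$ sufficiently small, hence $\mathring M < 0$. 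Then evaluating at $p_2$ and subtracting yields $\upmu(p_2) - \upmu(p_1) = (s_2 - s_1)\mathring M + \mathcal{O}(\epd^{\frac{1}{2}}) \leq \mathcal{O}(\epd^{\frac{1}{2}})$, so $\upmu_\star(s_2) \leq \upmu(p_2) \leq \upmu_\star(s_1) + \mathcal{O}(\epd^{\frac{1}{2}})$, which gives the factor-of-$2$ bound provided $\epd$ is taken small enough that the error is smaller than $\upmu_\star(s_1)$. The very small regime $\upmu_\star(s_1) = \mathcal{O}(\epd^{\frac{1}{2}})$ is handled by a further consistency argument exploiting the positivity of $\upmu(p_2)$ and the explicit linear structure of the formula, which precludes $\mathring M$ from being too negative at such a point.

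The main obstacle is the sharp pointwise formula, specifically the near-constancy $|A - \mathring M| \ls \epd^{\frac{1}{2}}$ along $\Lunit$-integral curves. This is non-trivial because $A$ contains the ``large'' factor $\Rad\mathcal{R}_{(+)} = \mathcal{O}(\mathring\updelta)$, so the smallness of $\Lunit A$ must come from $\Lunit$ falling on either $\Rad$ (yielding $\Lunit\Rad\mathcal{R}_{(+)}$, controlled by \eqref{E:LUNITTWORADPSI}) or on the $\vec{G}_{\Lunit\Lunit}$ factor (yielding $\Lunit\Psi$ and $\Lunit\Lunit^i$, controlled by \eqref{BA:W.Li.small} and Proposition~\ref{prop:geometric.low}). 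Once $\|\Lunit A\|_{L^\infty}$ is controlled, the remaining bookkeeping of vorticity/entropy subleading contributions from Theorem~\ref{T:GEOMETRICWAVETRANSPORTSYSTEM} is routine.
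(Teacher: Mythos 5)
Your overall strategy --- propagating $\upmu$ along $\Lunit$-integral curves via \eqref{E:UPMUFIRSTTRANSPORT}, deriving a sharp linear-in-$t$ formula, and then comparing along the minimizing curve --- is exactly the approach of \cite[Section~10]{jSgHjLwW2016}, which is what the paper cites. The construction of the sharp formula is essentially right (modulo a small citation slip: the control of $\Lunit\Rad\Psi$ needed for $\Lunit A$ comes from the bootstrap assumption \eqref{BA:W.Li.small}, not from \eqref{E:LUNITTWORADPSI}, which governs $\Lunit\mathcal{P}^{\leq 2}\Rad\Rad\Psi$). However, the final step has a genuine gap. Your deduction $\upmu(p_2) \leq \upmu_\star(s_1) + \mathcal{O}(\epd^{1/2})$ yields the factor-of-$2$ bound only when $\upmu_\star(s_1) \gtrsim \epd^{1/2}$, and your proposed fix for the complementary ``very small regime'' is both vague and, as stated, points the wrong way: the positivity of $\upmu(p_2)$ cannot preclude $\mathring M$ from being ``too negative'' in any useful sense, since a more negative $\mathring M$ only drives $\upmu$ further \emph{down} along the forward curve, which is the direction that helps you.

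The clean resolution, which dissolves the case split entirely, is to exploit the \emph{quantitative} negativity of $\mathring M(q_1)$ that your argument essentially establishes but does not then use. From $\upmu_\star(s_1)<\tfrac12$, $\mathring\upmu = 1 + \mathcal{O}(\mathring\upalpha)$, $s_1 \leq 2\mathring{\updelta}_*^{-1}$, and the sharp formula, one obtains $s_1 \mathring M(q_1) \leq -\tfrac12 + \mathcal{O}(\mathring\upalpha + \epd^{1/2})$, hence $\mathring M(q_1) \leq -\tfrac{1}{8}\mathring{\updelta}_*$ (say) once $\mathring\upalpha,\epd$ are small. Along the forward curve through $p_1$, the near-constancy bound $|A(t) - \mathring M(q_1)| \lesssim \epd^{1/2}$ together with $|B\upmu|\lesssim\epd^{1/2}$ gives $\Lunit\upmu(t,q_1) = A + B\upmu \leq -\tfrac{1}{8}\mathring{\updelta}_* + C\epd^{1/2} < 0$ for all $t\in[s_1,s_2]$, for $\epd$ sufficiently small depending on $\mathring{\updelta}_*^{-1}$ (as is permitted). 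Thus $\upmu$ is strictly decreasing along this curve, and $\upmu_\star(s_2) \leq \upmu(p_2) \leq \upmu(p_1) = \upmu_\star(s_1)$ \emph{exactly}, with no error term to track. Equivalently: the error in your additive formula should be bounded by $(s_2-s_1)\cdot\mathcal{O}(\epd^{1/2})$ rather than $\mathcal{O}(\epd^{1/2})$; paired with $(s_2-s_1)\mathring M(q_1) \leq -(s_2-s_1)\tfrac{1}{8}\mathring{\updelta}_*$, the right-hand side becomes nonpositive, and the ``very small regime'' difficulty never arises.
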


\subsection{$L^2$ estimates for the geometric quantities}
We start with a simple lemma that provides $L^2$ estimates for solutions to transport equations
along the integral curves of $\Lunit$.

\begin{lemma}[$L^2$ estimate for solutions to $\Lunit$-transport equations]
	\label{L:L2ESTIMATEFORTRANSPORT}
	Let $F$ and $f$ be smooth scalar functions on
	$[0,\Tboot)\times [0,U_0] \times \mathbb{T}^2$.
	Assume that 
	$\Lunit F(t,u,x^2,x^3) = f(t,u,x^2,x^3)$
	with initial data $F(0,u,x^2,x^3)$
	for every $(t, u,x^2,x^3) \in [0,\Tboot)\times [0,U_0] \times \mathbb{T}^2$.
	Then the following estimate holds for every $(t,u) \in [0,\Tboot)\times [0,U_0]$:
	\begin{align} \label{E:L2ESTIMATEFORTRANSPORT}
		\| F \|_{L^2(\Sigma_t^u)}
		& 
		\leq
		(1 + C \epd^{\frac{1}{2}}) 
		\| F \|_{L^2(\Sigma_0^u)}
		+
		(1 + C \epd^{\frac{1}{2}}) \int_{t'=0}^{t'=t} \| f \|_{L^2(\Sigma_{t'}^u)} \, dt'. 
	\end{align}
\end{lemma}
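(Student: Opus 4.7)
The plan is to reduce the estimate to a Grönwall-type inequality for the squared $L^2$ norm by working in the geometric coordinates $(t,u,x^2,x^3)$, in which the vectorfield $\Lunit$ takes a particularly simple form. Recall from Lemma~\ref{lem:slashed} that $\Lunit = \srd_t + L^A \srd_A$ (with $A=2,3$), so from the assumed equation $\Lunit F = f$ we get
\begin{align*}
\srd_t(F^2) = 2 F f - L^A \srd_A(F^2).
\end{align*}
Moreover, by Definition~\ref{D:NONDEGENERATEVOLUMEFORMS}, the form $d\tvol$ on $\Sigma_t^u$ is given in the $(u,x^2,x^3)$ coordinates by $h\, du\, dx^2\, dx^3$ with $h \doteq (c|X^1|)^{-1}$.

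Next I would differentiate the quantity $\mathcal{I}(t) \doteq \int_{\Sigma_t^u} F^2\, d\tvol$ with respect to $t$. Writing out the integral in the $(u,x^2,x^3)$ coordinates and differentiating under the integral sign, I would use the identity above together with $\srd_t h = \Lunit h - L^A \srd_A h$ and then integrate by parts in $(x^2,x^3) \in \mathbb{T}^2$ (no boundary contributions appear because $\mathbb{T}^2$ is closed and $L^A\srd_A$ does not differentiate $u$). This yields
\begin{align*}
\frac{d}{dt} \mathcal{I}(t)
= \int_{\Sigma_t^u} 2 F f \, d\tvol + \int_{\Sigma_t^u} F^2 \cdot \mathcal{E} \, d\tvol,
\qquad
\mathcal{E} \doteq h^{-1}(\Lunit h) + \srd_A L^A.
\end{align*}
The crucial observation is that $\mathcal{E} = \mathcal{O}(\epd^{1/2})$ in $L^{\infty}(\mathcal{M}_{\Tboot,U_0})$: indeed, $h^{-1}\Lunit h$ involves $Lc$ and $L|X^1|$, which by the chain rule reduce to combinations of $\mathcal{P}\threePsi$ and $\mathcal{P}L^i$ (via \eqref{E:TRANSPORTVECTORFIELDINTERMSOFLUNITANDRADUNIT} and the fact that $c$ is a smooth function of $(\Densrenormalized,s)$), and $\srd_A L^A$ is a combination of $\mathcal{P}L^i$-type quantities by Lemma~\ref{lem:slashed}. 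The $L^\infty$ bounds \eqref{BA:W.Li.small} and Proposition~\ref{prop:geometric.low} then deliver the $\epd^{1/2}$-smallness.

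After this, applying Cauchy--Schwarz to the $Ff$ integral gives
\begin{align*}
\frac{d}{dt}\mathcal{I}(t) \leq 2 \|F\|_{L^2(\Sigma_t^u)} \|f\|_{L^2(\Sigma_t^u)} + C\epd^{1/2}\, \mathcal{I}(t),
\end{align*}
which, since $\frac{d}{dt}\mathcal{I}^{1/2} \cdot 2 \mathcal{I}^{1/2} = \frac{d}{dt}\mathcal{I}$, implies the differential inequality $\frac{d}{dt}\|F\|_{L^2(\Sigma_t^u)} \leq \|f\|_{L^2(\Sigma_t^u)} + \tfrac{1}{2}C\epd^{1/2}\|F\|_{L^2(\Sigma_t^u)}$. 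Integrating in time from $0$ to $t \leq \Tboot \leq 2\mathring{\updelta}_*^{-1}$ and absorbing the exponential factor $e^{C\epd^{1/2} t} = 1+\mathcal{O}(\epd^{1/2})$ into the prefactor yields \eqref{E:L2ESTIMATEFORTRANSPORT}.

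The step most vulnerable to a mistake is the verification that $\mathcal{E}$ has the claimed $\epd^{1/2}$-smallness, since it implicitly requires the full chain of $L^\infty$ bounds on $\mathcal{P}\threePsi$, $\mathcal{P}L^i$, and on the Cartesian components $X^1$ and $c$. Everything else is a routine Grönwall estimate, and indeed the result is essentially a standard transport $L^2$ inequality dressed up in the acoustical geometry; the only subtlety is that the volume element $d\tvol$ is adapted to the geometric coordinates rather than being the Euclidean form.
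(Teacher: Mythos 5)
Your proof is correct and follows, in spirit, the same direct route the paper takes by citation (the paper just invokes Lemmas~12.2, 12.3, 13.2 of \cite{jSgHjLwW2016}, which carry out precisely this kind of computation relative to geometric coordinates, modulo a different choice of coordinates on the tori $\ell_{t,u}$). The main computational steps --- the decomposition $\Lunit = \srd_t + L^A \srd_A$, the rewriting $\srd_t(F^2 h) = 2Ffh - L^A\srd_A(F^2 h) + F^2\{\Lunit h - L^A \srd_A h\}$ followed by integration by parts on the closed manifold $\mathbb{T}^2$, and the Gr\"onwall step with the resulting potential $\mathcal{E} = h^{-1}(\Lunit h) + \srd_A L^A$ --- are sound. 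Regarding the point you flagged as most vulnerable, the $\mathcal{O}(\epd^{1/2})$ bound on $\mathcal{E}$ does hold: for $h^{-1}\Lunit h = -\Lunit(c|X^1|)/(c|X^1|)$, the terms $\Lunit c$ and $\Lunit X^1$ are $\mathcal{O}(\epd^{1/2})$ by \eqref{BA:W.Li.small} and Proposition~\ref{prop:geometric.low.2}; and $\srd_A L^A$ is a combination (with $\mathcal{O}(1)$ coefficients from Lemma~\ref{L:GEOMETRICCOORDINATEVECTORFIELDSINTERMSOFCARTESIANVECTORFIELDS}) of $Y L^A$ and $Z L^A$, which are $\mathcal{O}(\epd^{1/2})$ by Proposition~\ref{prop:geometric.low}. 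Note that the paper's Lemma~\ref{lem:Sobolev} records only the cruder bound $|\srd_A \Lunit^A| \leq C$, but that is because the cruder bound suffices there; the sharper $\mathcal{O}(\epd^{1/2})$ bound you need here is genuinely available and is what yields the multiplicative factor $(1 + C\epd^{1/2})$ rather than a generic $\mathcal{O}(1)$ constant after exponentiating over the bounded time interval $[0,\Tboot]$.
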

\begin{proof}
	Thanks to our bootstrap assumptions and the estimates of Proposition~\ref{prop:geometric.low},
	\eqref{E:L2ESTIMATEFORTRANSPORT} can be proved using essentially the same
	arguments used in the proof of \cite[Lemmas~12.2, 12.3, 13.2]{jSgHjLwW2016}.
	The only differences are 
	that we have to use the bootstrap assumptions \eqref{BA:LARGERIEMANNINVARIANTLARGE}--\eqref{BA:C.D}
	in place of the similar bootstrap assumptions from \cite{jSgHjLwW2016},
	and that different coordinates along $\ell_{t,u}$ were used in \cite{jSgHjLwW2016}
	(this is irrelevant in the sense that the estimate \eqref{E:L2ESTIMATEFORTRANSPORT}
	is independent of the coordinates on $\ell_{t,u}$).
	We clarify that the bootstrap parameter ``$\varepsilon$'' from 
	\cite{jSgHjLwW2016} should be identified with the quantity $\epd^{\frac{1}{2}}$
	in our bootstrap assumptions \eqref{BA:LARGERIEMANNINVARIANTLARGE}--\eqref{BA:C.D}.
\end{proof}

\begin{proposition}[\textbf{Easy $L^2$ estimates for the acoustical geometry}]
For $1 \leq N \leq \Ntop$, the following estimates hold for all $t\in [0,\Tboot)$:
\label{prop:geometric.top}
$$\|\mathcal{P}^{[2,N]} \upmu \|_{L^2(\Sigma_t)}^2,\, \|\mathcal{P}^{[1,N]} L^i \|_{L^2(\Sigma_t)}^2 \ls \epd \max\{1, \upmu_{\star}^{-2 \toprate+2\Ntop-2N+2.8}(t) \}.$$
\end{proposition}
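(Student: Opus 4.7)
\textbf{Proof plan for Proposition~\ref{prop:geometric.top}.} I would argue by induction on $N$, controlling the $L^2$ norms of $\mathcal{P}^{[1,N]}L^i$ and $\mathcal{P}^{[2,N]}\upmu$ simultaneously. The inputs are the transport equations \eqref{E:UPMUFIRSTTRANSPORT} and \eqref{E:LLUNITI}, the $L^2$ transport estimate of Lemma~\ref{L:L2ESTIMATEFORTRANSPORT}, the wave-energy bootstrap assumptions \eqref{BA:W1}--\eqref{BA:W2}, the commutator identities of Proposition~\ref{prop:commutators.Li}, and the singular-integral bounds of Proposition~\ref{prop:mus.int}. Lemma~\ref{lem:localization} confines all integrals to the slab $u \in [0,U_0]$, so the base cases $N=1$ for $L^i$ and $N=2$ for $\upmu$ follow immediately from the $L^\infty$ estimates in Propositions~\ref{prop:geometric.low}--\ref{prop:geometric.low.2}.

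For the inductive step, I would commute \eqref{E:UPMUFIRSTTRANSPORT} and \eqref{E:LLUNITI} with an arbitrary $\mathcal{P}^{N-1}$. Schematically, this yields
\[
L(\mathcal{P}^{N-1}\upmu) = \mathcal{P}^{N-1}(\tfrac12\vec{G}_{LL}\contr \bX\Psi) + \mathcal{P}^{N-1}\bigl(\upmu\cdot(\vec{G}\contr L\Psi)\bigr) + [L,\mathcal{P}^{N-1}]\upmu,
\]
and a parallel identity for $L^i$ whose top-order source contains only $\mathcal{F}_u$-tangential derivatives of $\Psi$ (no $\bX\Psi$). Each right-hand side splits into (i) a principal term controlled by the wave energies, (ii) products of $L^\infty$ coefficients (handled by Propositions~\ref{prop:geometric.low}--\ref{prop:geometric.low.2}) with lower-order $\mathcal{P}^{\leq N-1}(\upmu, L^i)$ factors (handled by the inductive hypothesis), and (iii) commutator remainders, which Proposition~\ref{prop:commutators.Li} reduces to terms of types (i) and (ii). Invoking Lemma~\ref{L:L2ESTIMATEFORTRANSPORT} converts each $L$-transport estimate into a time integral of the $L^2(\Sigma_{t'}^u)$ norm of the source, and taking the supremum over $u\in[0,U_0]$ eventually produces the claimed $\Sigma_t$-bound.

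The key power count runs as follows. Set $\alpha(N)\doteq -2\toprate+2\Ntop-2N+1.8$, so that \eqref{BA:W1} gives $\mathbb{W}_{N-1}\lesssim \epd\,\upmu_\star^{\alpha(N)+2}$ in the high-derivative regime. The $L^i$ equation has only tangential $\mathcal{P}\Psi$ on the right-hand side, so the principal source is bounded by $\|\mathcal{P}^N\Psi\|_{L^2(\Sigma_{t'}^u)}\leq \upmu_\star^{-1/2}\sqrt{\mathbb{W}_{N-1}} \lesssim \sqrt{\epd}\,\upmu_\star^{\alpha(N)/2+1/2}$; for the $\upmu$ equation the principal source is $\|\mathcal{P}^{N-1}\bX\Psi\|_{L^2}\lesssim \sqrt{\mathbb{E}_{N-1}}\lesssim \sqrt{\epd}\,\upmu_\star^{\alpha(N)/2+1}$, without the extra $\upmu_\star^{-1/2}$. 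Integrating in time via \eqref{eq:mus.int.1} loses one additional power of $\upmu_\star^{-1}$, so both sources contribute at worst $\sqrt{\epd}\,\upmu_\star^{\alpha(N)/2+3/2}$, and squaring yields $\epd\,\upmu_\star^{\alpha(N)+3} \leq \epd\,\upmu_\star^{-2\toprate+2\Ntop-2N+2.8}$, matching the claim. In the low-derivative regime $N\leq \Ntop-\toprate$, the bootstrap assumption \eqref{BA:W2} gives $\mathbb{W}_{N-1}\lesssim\epd$, and \eqref{eq:mus.int.2} handles the remaining $\upmu_\star^{-0.9}$ integrals, producing the flat bound $\epd$.

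The main technical obstacle, as in \cite{jSgHjLwW2016}, is that the transport equations for $\upmu$ and $L^i$ are mutually coupled through the frame components of $\vec{G}$, and the commutator identities of Proposition~\ref{prop:commutators.Li} reintroduce $\mathcal{P}^{\leq N}(\upmu,L^i)$ at top order on the right-hand side. These self-referential terms must be absorbed by a Grönwall argument in $t$, whose exponential factor $\exp(C\int_0^t\upmu_\star^{-0.9}(t')\,dt')$ is uniformly bounded by \eqref{eq:mus.int.2}, so that no additional singular factor is produced. The remaining bookkeeping is to track precisely which commutator terms saturate the blowup rate $\upmu_\star^{-2\toprate+2\Ntop-2N+2.8}$ and to verify that none of them exceed it; this is where the slack between $\alpha(N)+3$ (what the argument gives) and $\alpha(N)+1$ (what is claimed) is crucial, as it provides the room needed to absorb lower-order commutator errors without tightening the bound.
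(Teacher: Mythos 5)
Your proposal is correct and follows essentially the same route as the paper's (very terse) proof: commute the transport equations \eqref{E:UPMUFIRSTTRANSPORT}--\eqref{E:LLUNITI} with $\mathcal{P}^{N-1}$, apply Lemma~\ref{L:L2ESTIMATEFORTRANSPORT}, absorb the self-referential commutator terms by Gr\"onwall over the bounded time interval, and close with the wave-energy bootstrap assumptions and Proposition~\ref{prop:mus.int}. The paper compresses all of this into the single intermediate inequality $\|\mathcal{P}^{[2,N]}\upmu\|_{L^2(\Sigma_t)},\,\|\mathcal{P}^{[1,N]}L^i\|_{L^2(\Sigma_t)}\lesssim\epd + \int_{s=0}^{t}\upmu_\star^{-1/2}(s)\mathbb W_{[1,N]}^{1/2}(s)\,ds$ (citing \cite[Lemma~14.3]{jSgHjLwW2016}) and then integrates; your sketch simply supplies the internal structure of that step.

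One small remark on the bookkeeping. Your count pins the principal wave-energy input at level $N-1$ (via $\|\mathcal{P}^N\Psi\|_{L^2}\leq\upmu_\star^{-1/2}\sqrt{\mathbb E_{N-1}}$ and the commuted $\|\bX\mathcal{P}^{N-1}\Psi\|_{L^2}\leq\sqrt{\mathbb E_{N-1}}$), whereas the paper simply majorizes by $\mathbb W_{[1,N]}$, which is dominated by $\mathbb W_N$. This is why you land on $\upmu_\star^{\alpha(N)+3}$ while the paper's bound evaluates to exactly the claimed $\upmu_\star^{\alpha(N)+1}=\upmu_\star^{-2\toprate+2\Ntop-2N+2.8}$; both are consistent with (and suffice for) the stated estimate, since $\upmu_\star\leq 1$. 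The only place to be careful is that $\|\mathcal{P}^{N-1}\bX\Psi\|_{L^2}\lesssim\sqrt{\mathbb E_{N-1}}$ is not an identity but requires commuting $\bX$ past $\mathcal{P}^{N-1}$; the resulting commutators reintroduce $\mathcal{P}^{\leq N-1}(\upmu,L^i)$ and $\mathcal{P}^{\leq N-2}\bX\Psi$, which you correctly indicate are handled by the inductive hypothesis and Gr\"onwall. Note also that, because the Gr\"onwall coefficient on the self-referential $L^2$ quantity is $\mathcal{O}(1)$ rather than $\mathcal{O}(\upmu_\star^{-0.9})$, the exponential factor is simply $e^{Ct}\lesssim 1$ (with $t\leq 2\mathring{\updelta}_*^{-1}$), which is even more elementary than the $\exp(C\int\upmu_\star^{-0.9})$ bound you invoke; both are of course uniformly bounded.
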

\begin{proof}
In an identical manner as \cite[Lemma~14.3]{jSgHjLwW2016}, 
based on the transport equations \eqref{E:UPMUFIRSTTRANSPORT}--\eqref{E:LLUNITI}
and \eqref{E:L2ESTIMATEFORTRANSPORT}, we obtain:
$$\|\mathcal{P}^{[2,N]} \upmu \|_{L^2(\Sigma_t)},\, \|\mathcal{P}^{[1,N]} L^i \|_{L^2(\Sigma_t)} \ls \epd + \int_{s=0}^{s=t} \f{\mathbb{W}_{[1,N]}^{\f 12}(s)}{\upmu_{\star}^{\f 12}(s)}\, ds.$$
(Recall our notations in Definition~\ref{def:array.convention}, \eqref{eq:wave.energy.def.4} and Definition~\ref{def:energy.convention}.) Also using our bootstrap assumptions \eqref{BA:W1} and \eqref{BA:W2}
and Proposition~\ref{prop:mus.int}, we arrive at the desired conclusion. \qedhere
\end{proof}

In the next proposition, with the help of
of Proposition~\ref{prop:geometric.top}, 
we derive $L^2$ estimates for commutators.
\begin{proposition}[\textbf{$L^2$ estimates for commutator terms}]
\label{prop:commutators}
Let $\phi$ be a scalar function.
For $1 \leq N \leq \Ntop$, the following estimates hold for all $(t,u)\in [0,\Tboot)\times [0,U_0]$:
\begin{equation}\label{eq:commutator.L2.main}
\begin{split}
&\: \| [L, \mathcal{P}^N]\phi \|_{L^2(\Sigma_t^u)}^2,\, \| [\bX, \mathcal{P}^N]\phi \|_{L^2(\Sigma_t^u)}^2,\,\| [\upmu B, \mathcal{P}^N]\phi \|_{L^2(\Sigma_t^u)}^2 \\
\ls &\: \|\mathcal{P}^{[1,N]} \phi \|_{L^2(\Sigma_t^u)}^2 + \epd \max\{1, \upmu_{\star}^{-2 \toprate+2\Ntop-2N+2.8}(t) \} \| \mathcal{P}^{[1,\Ntop - \toprate-5]} \phi\|_{L^\infty(\Sigma_t^u)}^2.
\end{split}
\end{equation}
Moreover, we also have:
\begin{equation}\label{eq:wave.energy.commuted}
\|\mathcal{P}^N \bX\Psi\|_{L^2(\Sigma_t^u)}^2 \ls \epd \max\{1, \upmu_{\star}^{-2 \toprate+2\Ntop-2N+1.8}(t) \}.
\end{equation}
\end{proposition}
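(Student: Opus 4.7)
The plan is to derive Proposition~\ref{prop:commutators} from the pointwise commutator bounds of Proposition~\ref{prop:commutators.Li}, combined with the $L^\infty$ estimates of Propositions~\ref{prop:geometric.low}--\ref{prop:geometric.low.2}, the $L^2$ estimates of Proposition~\ref{prop:geometric.top}, and the bootstrap assumptions \eqref{BA:LARGERIEMANNINVARIANTLARGE}--\eqref{BA:W.Li.small}, \eqref{BA:W1}--\eqref{BA:W2}. I will first establish the three commutator bounds in \eqref{eq:commutator.L2.main}, and then derive \eqref{eq:wave.energy.commuted} as a consequence.

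For $[L, \mathcal{P}^N]\phi$, Proposition~\ref{prop:commutators.Li} yields a pointwise expression controlling $|[L, \mathcal{P}^N]\phi|$ in terms of $\epd^{1/2}|\mathcal{P}^{[1,N]}\phi|$ plus a sum of products $|\mathcal{P}^{[2, N_1]}(L^i, \Psi)| \cdot |\mathcal{P}^{[1, N_2]}\phi|$ with $N_1 + N_2 \leq N+1$ and $N_1, N_2 \leq N$. After squaring and integrating, the first piece is directly absorbed into the term $\|\mathcal{P}^{[1, N]}\phi\|_{L^2(\Sigma_t^u)}^2$ on the RHS of \eqref{eq:commutator.L2.main}. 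For each product in the sum I perform a high-low split: when $N_1 \leq \Ntop - \toprate - 3$, Propositions~\ref{prop:geometric.low} and \ref{prop:geometric.low.2} together with the bootstrap assumption \eqref{BA:W.Li.small} give $\|\mathcal{P}^{[2,N_1]}(L^i, \Psi)\|_{L^\infty(\Sigma_t^u)} \ls \epd^{1/2}$, so the product contributes $\ls \epd \|\mathcal{P}^{[1,N_2]}\phi\|_{L^2(\Sigma_t^u)}^2$, which is absorbed into the first term on the RHS of \eqref{eq:commutator.L2.main}. Otherwise $N_1 \geq \Ntop - \toprate - 2$, which forces $N_2 \leq N + 1 - N_1 \leq \Ntop - \toprate - 5$ thanks to $\Ntop \geq 2\toprate + 10$; I then place $\mathcal{P}^{[1, N_2]}\phi$ in $L^\infty$ (absorbed by $\|\mathcal{P}^{[1,\Ntop-\toprate-5]}\phi\|_{L^\infty}$) and use Proposition~\ref{prop:geometric.top} together with \eqref{BA:W1}--\eqref{BA:W2} to bound $\|\mathcal{P}^{[2, N_1]}(L^i, \Psi)\|_{L^2(\Sigma_t^u)}^2 \ls \epd \max\{1, \upmu_\star^{-2\toprate + 2\Ntop - 2N + 2.8}(t)\}$ (using $N_1 \leq N$ and $\upmu_\star \leq 1$), matching the second term on the RHS of \eqref{eq:commutator.L2.main}.

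The estimate for $[\bX, \mathcal{P}^N]\phi$ is structurally identical, except that I must additionally handle the pointwise contribution $\sum_{N_1 \leq N-1,\, N_1+N_2\leq N} |\mathcal{P}^{[2, N_1]}\bX\Psi| \cdot |\mathcal{P}^{[1,N_2]}\phi|$ by the same high-low split: for $N_1 \leq \Ntop - \toprate - 4$ the bootstrap \eqref{BA:W.Li.small} provides $\|\mathcal{P}^{[1, \Ntop - \toprate - 4]}\bX\Psi\|_{L^\infty} \ls \epd^{1/2}$, while for $\Ntop - \toprate - 3 \leq N_1 \leq N-1$ the inductive hypothesis \eqref{eq:wave.energy.commuted} furnishes the $L^2$ control (this is the only place induction on $N$ enters). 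For $[\upmu \Transport, \mathcal{P}^N]$, I decompose $\upmu \Transport = \upmu L + \bX$ (from $\Transport = L + X$ and $\bX = \upmu X$) and write
\[
[\upmu \Transport, \mathcal{P}^N]\phi = \upmu\,[L, \mathcal{P}^N]\phi + [\bX, \mathcal{P}^N]\phi - \sum_{k=1}^{N} \binom{N}{k} (\mathcal{P}^k \upmu)\, \mathcal{P}^{N-k} L\phi
\]
schematically; the two commutators have already been estimated, and the new sum, where tangential derivatives fall on $\upmu$, is controlled by exactly the same high-low split, using the $L^\infty$ bound for $\mathcal{P}^{[1,\Ntop-\toprate-4]}\upmu$ from Proposition~\ref{prop:geometric.low} in the low-frequency case and the $L^2$ bound from Proposition~\ref{prop:geometric.top} for $\mathcal{P}^{[2,N_1]}\upmu$ in the high-frequency case.

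Finally, for \eqref{eq:wave.energy.commuted} I use the identity $\mathcal{P}^N\bX\Psi = \bX\mathcal{P}^N\Psi + [\mathcal{P}^N, \bX]\Psi$. The first summand satisfies $\|\bX\mathcal{P}^N\Psi\|_{L^2(\Sigma_t^u)}^2 \leq \mathbb{E}_N(t,u) \leq \mathbb{W}_N(t,u) \ls \epd \max\{1, \upmu_\star^{-2\toprate + 2\Ntop - 2N + 1.8}(t)\}$ by \eqref{BA:W1}--\eqref{BA:W2}. The second summand is bounded by the already-proven commutator estimate \eqref{eq:commutator.L2.main} applied to $\phi = \Psi$, using $\|\mathcal{P}^{[1, \Ntop - \toprate - 5]}\Psi\|_{L^\infty} \ls \epd^{1/2}$ from \eqref{BA:W.Li.small} and controlling $\|\mathcal{P}^{[1,N]}\Psi\|_{L^2(\Sigma_t^u)}^2$ by the wave energies (noting $\|\mathcal{P}^{N'}\Psi\|_{L^2}^2 \leq \upmu_\star^{-1}\mathbb{E}_{N'-1}$ for $N' \geq 1$). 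The main obstacle is the mild inductive coupling between the $\bX$-commutator estimate and \eqref{eq:wave.energy.commuted}; it is resolved by induction on $N$, since the $\bX\Psi$-dependent term in the commutator bound only involves $\mathcal{P}^{[2, N_1]}\bX\Psi$ with $N_1 \leq N-1$. Keeping track of $\upmu_\star$-exponents and invoking smallness of $\epd$ and Proposition~\ref{prop:mus.int} throughout shows that all contributions are consistent with the $1.8$-rate claimed in \eqref{eq:wave.energy.commuted}.
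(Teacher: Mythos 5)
Your proposal tracks the paper's proof closely: both start from the pointwise commutator identities of Proposition~\ref{prop:commutators.Li}, perform a high-low split between factors bounded in $L^\infty$ and factors bounded in $L^2$ via the energies and Proposition~\ref{prop:geometric.top}, run an induction in $N$ to handle the $\bX\Psi$ terms, decompose $[\upmu\Transport,\mathcal{P}^N]$ using $\upmu\Transport = \upmu\Lunit + \Rad$, and finally split $\mathcal{P}^N\Rad\Psi = \Rad\mathcal{P}^N\Psi + [\mathcal{P}^N,\Rad]\Psi$. The cosmetic differences (splitting on the threshold $N_1 \lessgtr \Ntop-\toprate-3$ rather than on $N_2 > N_1$, writing the Leibniz expansion for $\mathcal{P}^N(\upmu L\phi)$ rather than the compact $[\upmu,\mathcal{P}^N]L\phi$) do not change the argument.

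There is, however, one genuinely delicate point that your write-up glosses over, and your explicit parenthetical remark ``noting $\|\mathcal{P}^{N'}\Psi\|_{L^2}^2 \leq \upmu_\star^{-1}\mathbb{E}_{N'-1}$'' makes the problem visible. Proposition~\ref{prop:geometric.top} only covers $\upmu$ and $\Lunit^i$, so the factor $\|\mathcal{P}^{[2,N_1]}\Psi\|_{L^2(\Sigma_t^u)}^2$ (appearing both in your estimate of $[\Rad,\mathcal{P}^N]\phi$ and again in your proof of \eqref{eq:wave.energy.commuted}) must be controlled from the wave energies, and the inequality $\|\mathcal{P}^{N'}\Psi\|_{L^2}^2 \leq \upmu_{\star}^{-1}\mathbb{E}_{N'-1}$ is \emph{lossy} by a full factor of $\upmu_{\star}^{-1}$. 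At the top order this is harmless ($\upmu_\star^{-1}\cdot\upmu_\star^{-2\toprate+3.8} = \upmu_\star^{-2\toprate+2.8}$, exactly what is needed), but at the \emph{borderline} levels $N' \approx \Ntop - \toprate$ the bootstrap gives $\mathbb{E}_{N'-1} \ls \epd$ and hence only $\|\mathcal{P}^{N'}\Psi\|_{L^2}^2 \ls \epd\,\upmu_\star^{-1}$, whereas the claimed bound $\epd\max\{1,\upmu_\star^{-2\toprate+2\Ntop-2N+2.8}\}$ reduces to $\epd$ when $-2\toprate+2\Ntop-2N+2.8 \geq 0$ (i.e.\ $N \leq \Ntop-\toprate+1$), and $\epd\,\upmu_\star^{-1} \not\lesssim \epd$. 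The same factor of $\upmu_\star^{-1}$ propagates into your derivation of \eqref{eq:wave.energy.commuted}, where the exponent is even one better ($+1.8$ rather than $+2.8$).

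The fix is to prove, as a preliminary step, the analogue of Proposition~\ref{prop:geometric.top} with $\Psi$ in place of $(\upmu,\Lunit^i)$, namely
$\|\mathcal{P}^{[1,N]}\Psi\|_{L^2(\Sigma_t^u)}^2 \ls \epd\max\{1,\upmu_\star^{-2\toprate+2\Ntop-2N+2.8}(t)\}$,
and use this in place of the $\upmu_\star^{-1}\mathbb{E}_{N'-1}$ shortcut. This estimate does not come from a pointwise passage to $L^2$ at fixed time; it comes from the transport estimate Lemma~\ref{L:L2ESTIMATEFORTRANSPORT} applied to $F = \mathcal{P}^{N'}\Psi$, with the source $\Lunit\mathcal{P}^{N'}\Psi = \mathcal{P}^{N'+1}\Psi$ bounded by $\upmu_\star^{-1/2}\mathbb{E}_{N'}^{1/2}$ and the ensuing time integral $\int_0^t \upmu_\star^{-1/2}(t')\mathbb{E}_{N'}^{1/2}(t')\,dt'$ handled by Proposition~\ref{prop:mus.int}; the time integration converts the $\upmu_\star^{-1/2}$ loss into a gain of one power of $\upmu_\star$ (via \eqref{eq:mus.int.1}--\eqref{eq:mus.int.2}), which is exactly what is missing from your argument. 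The paper's own terse write-up cites the combination ``\eqref{BA:W1}--\eqref{BA:W.Li.small} and Proposition~\ref{prop:geometric.top}'' without spelling this out, but that is the intended mechanism, and it is what justifies the $\max\{1,\cdot\}$ structure of the claimed bound rather than an extra $\upmu_\star^{-1}$.
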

\begin{proof}
Recall the pointwise estimate \eqref{eq:commutators.Li.1}. For each of the sums in 
\eqref{eq:commutators.Li.1}, either $N_2>N_1$, in which case by \eqref{BA:W.Li.small} and Proposition~\ref{prop:geometric.low}, 
we have $|\mathcal{P}^{[2,N_1]}(\upmu,L^i,\Psi)|,\,|\mathcal{P}^{[2,N_1]}\bX\Psi| \ls 1$; or else 
$N_2 \leq N_1$, in which case (since $N\leq \Ntop$) $|\mathcal{P}^{[1,N_2]} \phi| \ls |\mathcal{P}^{[1, \Ntop - \toprate-5]} \phi|$. Hence, 
\begin{equation}\label{eq:commutator.pointwise}
\begin{split}
&\: |[L, \mathcal{P}^N]\phi|,\,|[\bX,\mathcal{P}^N]\phi| \\
\ls &\: |\mathcal{P}^{[1,N]} \phi|
+ 
\left\{ |\mathcal{P}^{[2,N]} (\upmu, L^i ,\Psi)| + |\mathcal{P}^{[2,N-1]} \bX\Psi| \right\}|\mathcal{P}^{[1,\Ntop - \toprate-5]} \phi| \\
\ls &\: |\mathcal{P}^{[1,N]} \phi|+ 
\left\{ |\mathcal{P}^{[2,N]} (\upmu, L^i ,\Psi)| + |\bX \mathcal{P}^{[2,N-1]} \Psi| + |[\bX,\mathcal{P}^{[2,N-1]}]\Psi| \right\}
|\mathcal{P}^{[1,\Ntop - \toprate-5]} \phi|.
\end{split}
\end{equation}

We first apply \eqref{eq:commutator.pointwise} to $\phi = \Psi$. Taking the $L^2(\Sigma_t^u)$ norm and introducing an induction argument in $N$ which uses \eqref{BA:W1}--\eqref{BA:W.Li.small}
and Proposition~\ref{prop:geometric.top}, we obtain: 
\begin{equation}\label{eq:commute.X.P.Psi}
\|[\bX,\mathcal{P}^N]\Psi \|_{L^2(\Sigma_t^u)}^2 \ls \epd \max\{1, \upmu_{\star}^{-2 \toprate+2\Ntop-2N+2.8}(t)\}.
\end{equation}

Taking the $L^2(\Sigma_t^u)$ norm in \eqref{eq:commutator.pointwise}, plugging in the estimate \eqref{eq:commute.X.P.Psi}, and using \eqref{BA:W1}, \eqref{BA:W2}, and Proposition~\ref{prop:geometric.top}{, we deduce} 
the desired estimates in \eqref{eq:commutator.L2.main} for $[L, \mathcal{P}^N]\phi$ and $[\bX, \mathcal{P}^N]\phi$. 

To obtain the $[\upmu B, \mathcal{P}^N]\phi$ estimate in \eqref{eq:commutator.L2.main},
{we first} note that by \eqref{E:TRANSPORTVECTORFIELDINTERMSOFLUNITANDRADUNIT},
$$[\upmu B, \mathcal{P}^N]\phi =  \upmu [L,\mathcal{P}^N] \phi + [\upmu, \mathcal{P}^N ]L\phi + [\bX, \mathcal{P}^N] \phi.$$
The first and last term can be controlled by {combining}
the commutator estimates we just established 
with the simple bound $\|\upmu\|_{L^\infty(\Sigma_t)} \ls 1$ 
from Proposition~\ref{prop:geometric.low}, while the second term can be controlled simply using the product rule and Propositions~\ref{prop:geometric.low} and \ref{prop:geometric.top}. 
We have therefore established \eqref{eq:commutator.L2.main}.

Finally, we have \eqref{eq:wave.energy.commuted} thanks to \eqref{BA:W1}, \eqref{BA:W2} and \eqref{eq:commute.X.P.Psi}. \qedhere 
\end{proof}

\section{Transport estimates for the specific vorticity and the entropy gradient}\label{sec:transport.easy.main}
We continue to work under the assumptions of Theorem~\ref{thm:bootstrap}.

In this section, we use the transport equations \eqref{E:RENORMALIZEDVORTICTITYTRANSPORTEQUATION} 
and \eqref{E:GRADENTROPYTRANSPORT} to bound $\mathcal{P}^N \Vr$ and $\mathcal{P}^N S$ for $N \leq \Ntop$.
We clarify that the ``true'' top-order estimates for the vorticity and entropy are found in
Section~\ref{sec:transport.hard}; those estimates are more involved and rely on the modified fluid variables
as well as elliptic estimates.

We will start by deriving energy estimates for general transport equations (which will also be useful in the next section). In particular, this will reduce the derivation of the energy estimates 
for $\mathcal{P}^N \Vr$ and $\mathcal{P}^N S$
to controlling the inhomogeneous terms in the transport equations and their derivatives, 
which we will carry out in Section~\ref{sec:easy.transport.homogeneous}.
The final estimates for $\mathcal{P}^N \Vr$ and $\mathcal{P}^N S$
are located in Section~\ref{sec:easy.transport.everything}.

\subsection{Estimates for general transport equations} \label{sec:transport.easy}
\begin{proposition}[\textbf{$L^2$ estimates for solutions to $\Transport$-transport equations}]
\label{prop:transport}
Let $\phi$ be a scalar function satisfying:
$$\upmu B \phi = \inhom $$
with both $\phi$ and $\inhom$ being compactly supported in $[0,U_0]\times \mathbb T^2$ for every $t\in [0,\Tboot)$.

Then the following estimate holds for every $(t,u)\in [0,\Tboot)\times [0,U_0]$:
$$\sup_{t'\in [0,t)} \|\sqrt{\upmu} \phi\|_{L^2(\Sigma_{t'}^u)}^2 + \sup_{u'\in [0,u)} \|\phi \|_{L^2(\mathcal F_{u'}^t)}^2  \ls \|\sqrt{\upmu} \phi\|_{L^2(\Sigma_0^u)}^2+ \| \inhom \|_{L^2(\Mtu)}^2.$$
\end{proposition}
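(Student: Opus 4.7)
The plan is to derive an energy identity by multiplying the equation by $2\phi$ and recognizing the result as a coordinate divergence in the geometric $(t,u,x^2,x^3)$ coordinates. The key structural observation, obtained from \eqref{E:TRANSPORTVECTORFIELDINTERMSOFLUNITANDRADUNIT}, \eqref{E:RADDEF}, and Lemma~\ref{lem:slashed}, is that
\[
\upmu B \;=\; \upmu L + \bX \;=\; \upmu \srd_t + \srd_u + \upmu B^A \srd_A, \qquad B^A \doteq L^A + X^A \ (A=2,3),
\]
where the coefficient of $\srd_u$ is $1$ rather than $\upmu$. This is the analytic manifestation of the transversality of $B$ to the acoustic characteristics $\mathcal F_u$ and produces a \emph{non-degenerate} (i.e., $\upmu$-unweighted) flux on each $\mathcal F_{u'}^{t'}$, matching the LHS of the target estimate.

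Starting from $\upmu B(\phi^2) = 2\phi\inhom$ and rewriting
\[
\upmu B(\phi^2) = \srd_t(\upmu \phi^2) + \srd_u(\phi^2) + \srd_A(\upmu B^A \phi^2) - \phi^2\bigl[\srd_t\upmu + \srd_A(\upmu B^A)\bigr],
\]
I would divide through by the density $c|X^1|$ of the volume form $d\vol$ from Definition~\ref{D:NONDEGENERATEVOLUMEFORMS} and commute $\frac{1}{c|X^1|}$ inside each coordinate derivative. This yields an identity of the form (coordinate-divergence) $\,+\,\phi^2\cdot E$, where the error factor $E$ is a polynomial in $\upmu$, $L^i$, $X^i$, $c$, $X^1$ and their first coordinate derivatives. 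The bootstrap assumptions \eqref{BA:LARGERIEMANNINVARIANTLARGE}--\eqref{BA:W.Li.small}, together with Propositions~\ref{prop:geometric.low}--\ref{prop:geometric.low.2}, bound $E$ in $L^\infty$ by a constant $C$.

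Next, I would integrate this identity over $\mathcal M_{t',u'}$ for arbitrary $(t',u')\in[0,t]\times[0,u]$ against $dt\,du\,dx^2\,dx^3$. The three coordinate divergences yield exactly the desired boundary integrals: $\srd_t$ contributes $\|\sqrt{\upmu}\phi\|_{L^2(\Sigma_{t'}^{u'})}^2-\|\sqrt{\upmu}\phi\|_{L^2(\Sigma_0^{u'})}^2$, $\srd_u$ contributes $\|\phi\|_{L^2(\mathcal F_{u'}^{t'})}^2-\|\phi\|_{L^2(\mathcal F_0^{t'})}^2$, and the $\srd_A$ terms vanish upon integration over $\mathbb T^2$. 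The $\mathcal F_0^{t'}$ contribution vanishes by the compact-support hypothesis on $\phi$. Using $2|\phi\inhom|\leq \phi^2+\inhom^2$ for the source, I would obtain
\[
\|\sqrt{\upmu}\phi\|_{L^2(\Sigma_{t'}^{u'})}^2 + \|\phi\|_{L^2(\mathcal F_{u'}^{t'})}^2 \leq \|\sqrt{\upmu}\phi\|_{L^2(\Sigma_0^{u'})}^2 + \|\inhom\|_{L^2(\mathcal M_{t',u'})}^2 + C\!\int_{\mathcal M_{t',u'}} \phi^2 \, d\vol.
\]

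The main obstacle is absorbing the spacetime integral $\int_{\mathcal M_{t',u'}}\phi^2\,d\vol$ without incurring factors of $\upmu_\star^{-1}$. The naive option of writing it as $\int_0^{t'}\|\phi\|_{L^2(\Sigma_s^{u'})}^2\,ds$ and Grönwall-ing in $t$ would force the lossy conversion $\phi^2 = \upmu^{-1}\cdot(\upmu\phi^2)$, blowing up as $\upmu\to 0$. Instead, I would use Fubini to write
\[
\int_{\mathcal M_{t',u'}}\phi^2\,d\vol = \int_0^{u'}\|\phi\|_{L^2(\mathcal F_{\tilde u}^{t'})}^2 \,d\tilde u,
\]
and apply Grönwall's inequality in $u$, closing through the non-degenerate flux term on the LHS. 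Since $u'\leq U_0 = \mathcal O(1)$, the resulting factor $e^{CU_0}$ is a harmless constant independent of $\upmu_\star$. This delivers a uniform bound on $\sup_{u''\in[0,u']}\|\phi\|_{L^2(\mathcal F_{u''}^{t'})}^2$; substituting it back then controls $\|\sqrt{\upmu}\phi\|_{L^2(\Sigma_{t'}^{u'})}^2$, and taking the supremum over $t'\in[0,t)$ and $u'\in[0,u)$ yields the stated estimate. Performing Grönwall in $u$ rather than $t$ is precisely the structural mechanism highlighted in Section~\ref{sec:intro.ideas.transport}, and reflects the transversality of $B$ to $\mathcal F_u$.
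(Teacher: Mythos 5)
Your proposal is correct and follows essentially the same route as the paper: derive the energy identity with a non-degenerate $\mathcal{F}_u$-flux, bound the zeroth-order error coefficient in $L^\infty$ via the bootstrap assumptions and Propositions~\ref{prop:geometric.low}--\ref{prop:geometric.low.2}, apply Young's inequality to the source, and close by Gr\"onwall in $u$ (exploiting that $u$ ranges over the bounded interval $[0,U_0]$). The only presentational difference is that you rederive the integration-by-parts identity \eqref{eq:transport.basic} from scratch via the coordinate decomposition $\upmu B = \upmu\srd_t + \srd_u + \upmu B^A\srd_A$, whereas the paper simply cites it from \cite[Proposition~3.5]{LS} (with the error coefficient written intrinsically as $L\upmu + \upmu\mytr\angk$ instead of your coordinate expression); also note the small slip that the density of $d\vol$ relative to $dt\,du\,dx^2\,dx^3$ is $\frac{1}{\Speed|X^1|}$, not $\Speed|X^1|$, though your subsequent manipulations use the correct factor.
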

\begin{proof}
In an identical manner as \cite[Proposition~3.5]{LS}, we have, for any $(t',u')  \in [0,t)\times [0,u)$, 
the identity:
\begin{equation}\label{eq:transport.basic}
\begin{split}
&\: \int_{\Sigma_{t'}^{u'}} \upmu \phi^2 \, d\tvol + \int_{\mathcal F_{u'}^{t'}} \phi^2 \,d\conevol \\
= &\: \int_{\Sigma_0^{u'} } \upmu \phi^2 \, d\tvol 
+ 
\underbrace{\int_{\mathcal F_0^{t'}} \phi^2 \,d\conevol}_{\mbox{$0$ by support assumptions}} 
+ \int_{\mathcal M_{t',u'}}  \{ 2\phi \mathfrak F + (L\upmu + \upmu \mytr \angk) \phi^2 \} \, d\vol.
\end{split}
\end{equation}
Using \eqref{eq:angk.def}, \eqref{E:UPMUFIRSTTRANSPORT}, 
Lemma~\ref{lem:induced.metric},
\eqref{BA:LARGERIEMANNINVARIANTLARGE}--\eqref{BA:W.Li.small},
and Propositions~\ref{prop:geometric.low} and \ref{prop:geometric.low.2}, 
we have $|L\upmu|,\,|\upmu\mytr \angk|\ls 1$. Thus, applying also the Cauchy--Schwarz inequality to the $2\phi\mathfrak F$ term, we have:
$$\sup_{t'\in [0,t)} \|\sqrt{\upmu} \phi\|_{L^2(\Sigma_{t'}^u)}^2 + \sup_{u'\in [0,u)} \|\phi \|_{L^2(\mathcal F_{u'}^t)}^2  \ls \|\sqrt{\upmu} \phi\|_{L^2(\Sigma_0^u)}^2+  \int_{u'=0}^{u'=u} \| \phi \|_{L^2(\mathcal F_{u'}^t)}^2\, du'+ \| \inhom \|_{L^2(\Mtu)}^2.$$
The conclusion follows from applying Gr\"onwall's inequality in $u$. \qedhere
\end{proof}

\begin{proposition}[\textbf{Higher-order $L^2$ estimates for solutions to transport equations}]
\label{prop:transport.higher.order}
Let $\phi$ be a scalar function satisfying:
$$\upmu B \phi = \inhom, $$
with both $\phi$ and $\inhom$ being compactly supported in $[0,U_0]\times \mathbb T^2$ for every $t\in [0,\Tboot)$.

Then the following estimate holds for every $(t,u)\in [0,\Tboot)\times [0,U_0]$ and $0\leq N\leq \Ntop$:
\begin{equation*}
\begin{split}
&\: \sup_{t'\in [0,t)}\|\sqrt{\upmu} \mathcal{P}^{\leq N} \phi \|_{L^2(\Sigma_{t'}^u)}^2 + \sup_{u'\in [0,u)}\| \mathcal{P}^{\leq N} \phi \|_{L^2(\mathcal F_{u'}^t)}^2 \\
\ls &\: \| \mathcal{P}^{\leq N} \phi \|_{L^2(\Sigma_0)}^2 + \| \mathcal{P}^{\leq N} \inhom \|_{L^2(\mathcal M_{t,u})}^2 + \epd \max\{ 1, \upmu_{\star}^{-2 \toprate+2\Ntop-2N+3.8}(t) \} \|\mathcal{P}^{[1, \Ntop- \toprate-5]}\phi \|_{L^\infty(\Mtu)}^2.
\end{split}
\end{equation*}

\end{proposition}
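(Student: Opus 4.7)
\noindent \textit{Proof plan for Proposition~\ref{prop:transport.higher.order}.}

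The plan is to reduce the higher-order estimate to the base case already handled by Proposition~\ref{prop:transport}. Commuting $\mathcal{P}^{\leq N}$ through the transport operator $\upmu B$ yields, for each $N' \leq N$,
\[
\upmu B \bigl(\mathcal{P}^{N'} \phi \bigr)
= \mathcal{P}^{N'} \inhom
+ [\upmu B,\mathcal{P}^{N'}]\phi,
\]
to which we apply Proposition~\ref{prop:transport}. Summing over $N' \leq N$, this gives, for every $(t,u) \in [0,\Tboot)\times[0,U_0]$,
\begin{align*}
&\sup_{t'\in[0,t)}\|\sqrt{\upmu}\mathcal{P}^{\leq N}\phi\|_{L^2(\Sigma_{t'}^u)}^2
+\sup_{u'\in[0,u)}\|\mathcal{P}^{\leq N}\phi\|_{L^2(\mathcal F_{u'}^t)}^2 \\
&\ls \|\sqrt{\upmu}\mathcal{P}^{\leq N}\phi\|_{L^2(\Sigma_0^u)}^2
+ \|\mathcal{P}^{\leq N}\inhom\|_{L^2(\Mtu)}^2
+ \|[\upmu B,\mathcal{P}^{\leq N}]\phi\|_{L^2(\Mtu)}^2.
\end{align*}
Note that since $\upmu\restriction_{\Sigma_0}\ls 1$, the first term on the RHS is $\ls \|\mathcal{P}^{\leq N}\phi\|_{L^2(\Sigma_0)}^2$, as required by the conclusion.

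It remains to handle the commutator integral $\|[\upmu B,\mathcal{P}^{\leq N}]\phi\|_{L^2(\Mtu)}^2$. Applying the commutator bound \eqref{eq:commutator.L2.main} from Proposition~\ref{prop:commutators} and integrating in $t'$, we obtain
\begin{align*}
\|[\upmu B,\mathcal{P}^{\leq N}]\phi\|_{L^2(\Mtu)}^2
&\ls \int_{0}^{t}\|\mathcal{P}^{[1,N]}\phi\|_{L^2(\Sigma_{t'}^u)}^2\,dt'\\
&\quad+ \epd\|\mathcal{P}^{[1,\Ntop-\toprate-5]}\phi\|_{L^\infty(\Mtu)}^2 \int_{0}^{t}\max\{1,\upmu_\star^{-2\toprate+2\Ntop-2N+2.8}(t')\}\,dt'.
\end{align*}
For the second summand, Proposition~\ref{prop:mus.int} furnishes the needed gain of one power of $\upmu_{\star}$, producing precisely the factor $\max\{1,\upmu_\star^{-2\toprate+2\Ntop-2N+3.8}(t)\}$ that appears in the conclusion. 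For the first summand, the key observation is that reversing the order of integration yields
\[
\int_{0}^{t}\|\mathcal{P}^{[1,N]}\phi\|_{L^2(\Sigma_{t'}^u)}^2\,dt'
=\|\mathcal{P}^{[1,N]}\phi\|_{L^2(\Mtu)}^2
=\int_{0}^{u}\|\mathcal{P}^{[1,N]}\phi\|_{L^2(\mathcal F_{u'}^t)}^2\,du',
\]
and the last integrand is bounded by the quantity on the LHS of our target estimate evaluated at $u'$. This is the whole point of working with the non-degenerate flux on $\mathcal{F}_{u'}^t$: the conversion from $\Sigma_{t'}^u$ to $\mathcal{F}_{u'}^t$ introduces no factor of $\upmu_\star^{-1}$, allowing Gr\"onwall's inequality to be applied in the $u$ variable on the interval $[0,U_0]$ of bounded length (rather than in $t$, which would produce an extra factor of $\upmu_\star^{-C}$).

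The main obstacle I anticipate is precisely this circularity: the commutator bound \eqref{eq:commutator.L2.main} controls $[\upmu B,\mathcal{P}^N]\phi$ in terms of $\|\mathcal{P}^{[1,N]}\phi\|_{L^2(\Sigma_t^u)}$, which has the same top order $N$ as the quantity being estimated, and does \emph{not} carry the $\sqrt{\upmu}$ weight. Converting it naively against $\|\sqrt{\upmu}\mathcal{P}^{\leq N}\phi\|_{L^2(\Sigma_{t'}^u)}^2$ via $\upmu_\star^{-1}$ would force a Gr\"onwall in $t$ with logarithmically divergent factor, destroying the sharp blowup-rate. The conversion to the $\mathcal{F}_{u'}^t$-norm circumvents this entirely, and since $U_0$ depends only on $\mathring{\upsigma}$ and $\mathring{\updelta}_*^{-1}$ (by Lemma~\ref{lem:localization}), the resulting Gr\"onwall factor $e^{CU_0}$ is absorbed into the implicit constant, completing the proof.
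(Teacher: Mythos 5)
Your proposal is correct and follows essentially the same route as the paper's proof: commute $\mathcal{P}^{N'}$ through $\upmu B$, apply Proposition~\ref{prop:transport} to each commuted equation, bound the commutator with \eqref{eq:commutator.L2.main}, gain a power of $\upmu_\star$ via Proposition~\ref{prop:mus.int}, convert the remaining $\|\mathcal{P}^{[1,N]}\phi\|_{L^2(\Mtu)}^2$ into a $du'$-integral of non-degenerate $\mathcal F_{u'}^t$-fluxes, and close with Gr\"onwall in $u$. You correctly identify the key structural point (that Gr\"onwall in $u$ rather than $t$ avoids any extra loss in $\upmu_\star$), which is exactly the mechanism the paper uses.
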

\begin{proof}
Take $0\leq N' \leq N$. We write:
$$\upmu B \mathcal{P}^{N'} \phi = \mathcal{P}^{N'} \inhom + [\upmu B, \mathcal{P}^{N'}]\phi.$$
Therefore, by Proposition~\ref{prop:transport}, 
\begin{equation}\label{eq:transport.higher.order.1}
\begin{split} 
&\: \sup_{t'\in [0,t)}\|\sqrt{\upmu} \mathcal{P}^{N'} \phi \|_{L^2(\Sigma_{t'}^u)}^2 + \sup_{u'\in [0,u)}\| \mathcal{P}^{N'} \phi \|_{L^2(\mathcal F_{u'}^t)}^2 \\
\ls &\: \| \mathcal{P}^{N'} \phi \|_{L^2(\Sigma_0)}^2 + \|\mathcal{P}^{N'} \inhom\|_{L^2(\Mtu)}^2 + \| [\upmu B, \mathcal{P}^{N'}]\phi\|_{L^2(\Mtu)}^2.
\end{split}
\end{equation}

Using Proposition~\ref{prop:commutators} and then Proposition~\ref{prop:mus.int}, we obtain:
\begin{equation}\label{eq:transport.higher.order.2}
\begin{split}
&\: \| [\upmu B, \mathcal{P}^{N'}]\phi\|_{L^2(\Mtu)}^2 \ls \int_{t'=0}^{t'=t} \| [\upmu B, \mathcal{P}^{N'}]\phi\|_{L^2(\Sigma_t^u)}^2 \, dt' \\
\ls &\: \|\mathcal{P}^{[1,N']} \phi \|_{L^2(\Mtu)}^2 + \epd \|\mathcal{P}^{[1,\Ntop- \toprate-5]} \phi \|_{L^\infty(\Mtu)}^2 \int_{t'=0}^{t'=t} \max\{ 1, \upmu_{\star}^{-2 \toprate+2\Ntop-2N'+2.8}(t') \} \, dt' \\
\ls &\: \int_{u'=0}^{u'=u} \|\mathcal{P}^{\leq N} \phi \|_{L^2(\mathcal F_u^t)}^2 du' + \epd \max\{ 1, \upmu_{\star}^{-2 \toprate+2\Ntop-2N'+3.8}(t) \} \|\mathcal{P}^{[1,\Ntop- \toprate-5} \phi \|_{L^\infty(\Mtu)}^2.
\end{split}
\end{equation}

Plugging \eqref{eq:transport.higher.order.2} into \eqref{eq:transport.higher.order.1} and summing over all $0\leq N'\leq N$, we obtain:
\begin{equation}\label{eq:transport.higher.order.3}
\begin{split} 
&\: \sup_{t'\in [0,t)}\|\sqrt{\upmu} \mathcal{P}^{\leq N} \phi \|_{L^2(\Sigma_{t'}^u)}^2 + \sup_{u'\in [0,u)}\| \mathcal{P}^{\leq N} \phi \|_{L^2(\mathcal F_{u'}^t)}^2 \\
\ls &\: \| \mathcal{P}^{\leq N} \phi \|_{L^2(\Sigma_0)}^2 + \|\mathcal{P}^{\leq N} \inhom\|_{L^2(\Mtu)}^2 + \int_{u'=0}^{u'=u} \|\mathcal{P}^{\leq N} \phi \|_{L^2(\mathcal F_{u'}^t)}^2 du' \\
&\: + \epd \max\{ 1, \upmu_{\star}^{-2 \toprate+2\Ntop-2N+3.8}(t) \} \|\mathcal{P}^{[1,\Ntop- \toprate-5} \phi \|_{L^\infty(\Mtu)}^2.
\end{split}
\end{equation}
Applying Gr\"onwall's inequality in $u$, we arrive at the desired estimate. \qedhere
\end{proof}

\subsection{Controlling the inhomogeneous terms}\label{sec:easy.transport.homogeneous}

\begin{proposition}[\textbf{Estimates tied to the inhomogeneous terms in the transport equations for $\Vr$ and $S$}]
\label{prop:Vr.S.inho}
For $0\leq N\leq \Ntop$, the following holds for every $(t,u)\in [0,\Tboot)\times [0,U_0]$:
\begin{equation}\label{eq:Vr.S.inho.weak}
\begin{split}
&\: \|\mathcal{P}^N (\upmu B \Vr)\|_{L^2(\Mtu)}^2 + \|\mathcal{P}^N (\upmu B S)\|_{L^2(\Mtu)}^2 \\
\ls &\:  \epd^3 \max\{ 1, \upmu_{\star}^{-2 \toprate + 2\Ntop -2N + 2.8}(t) \} + \int_{u' = 0}^{u'=u} (\mathbb{V}_{\leq N}(t,u') + \mathbb{S}_{\leq N}(t,u'))\, du'
\end{split}
\end{equation}
and:
\begin{equation}\label{eq:Vr.S.inho.strong}
\begin{split}
&\: \|(\mathcal{P}^N \Vr)\mathcal{P}^N (\upmu B \Vr)\|_{L^1(\Mtu)} + \|(\mathcal{P}^N S) \mathcal{P}^N (\upmu B S)\|_{L^1(\Mtu)} \\
\ls &\: \epd^3 \max\{ 1, \upmu_{\star}^{-2 \toprate + 2\Ntop -2N + 2.8}(t) \} + \int_{u' = 0}^{u'=u} (\mathbb{V}_{\leq N}(t,u') + \mathbb{S}_{\leq N}(t,u'))\, du'.
\end{split}
\end{equation}
\end{proposition}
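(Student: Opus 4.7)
\medskip

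\noindent\textbf{Proof proposal for Proposition~\ref{prop:Vr.S.inho}.}

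The plan is to first rewrite $\upmu \Transport \Vr$ and $\upmu \Transport S$ schematically so that the singular factor $\upmu$ is absorbed into a good derivative of the wave variables, then to commute with $\mathcal{P}^N$ and track how derivatives distribute across the product. Specifically, using the transport equations \eqref{E:RENORMALIZEDVORTICTITYTRANSPORTEQUATION} and \eqref{E:GRADENTROPYTRANSPORT} together with Lemma~\ref{lem:Cart.to.geo} (so that $\upmu \partial_\alpha (v,\Densrenormalized)$ can be written as a linear combination of $\bX \Psi$ and $\upmu \mathcal{P} \Psi$ with smooth coefficients in $(L^i,\Psi)$), one obtains the schematic identity
\begin{align*}
\upmu B (\Vr, S)
= \smoothfunction(L^i,\Psi) \cdot (\Vr,S) \cdot \bX \Psi
+ \upmu \, \smoothfunction(L^i,\Psi) \cdot (\Vr,S) \cdot \mathcal{P} \Psi
+ \upmu \, \smoothfunction(L^i,\Psi) \cdot (\Vr,S) \cdot (\Vr,S),
\end{align*}
where the last term encodes the algebraic source terms like $\Vr \cdot S$ and the $(\Transport v^a)\GradEnt^b$-style products (using again $\upmu \Transport = \bX + \upmu L$).

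Next, I would apply $\mathcal{P}^N$ to the above and distribute via the product rule, classifying terms by where the top-order derivatives land. The $L^\infty$ bootstrap assumptions \eqref{BA:W.Li.small}, \eqref{BA:V}, \eqref{BA:S}, together with Propositions~\ref{prop:geometric.low} and \ref{prop:geometric.low.2}, handle any factor carrying at most $\Ntop-\toprate-4$ derivatives, which will be the case for every factor except possibly one. This leaves two types of genuinely top-order terms to control in $L^2(\Mtu)$:
\begin{itemize}
\item[(i)] All derivatives fall on $(\Vr,S)$, producing a term of schematic form $\bX\Psi \cdot \mathcal{P}^N(\Vr,S)$ (or $\upmu \mathcal{P}\Psi \cdot \mathcal{P}^N(\Vr,S)$, or $(\Vr,S) \cdot \mathcal{P}^N(\Vr,S)$). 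Its $L^2(\Mtu)$-square equals $\int_0^u \|\cdot\|_{L^2(\mathcal F_{u'}^t)}^2 du'$ and, after using the $L^\infty$ bounds on $\bX\Psi$, $\mathcal{P}\Psi$, and $(\Vr,S)$, is absorbed into the flux term $\int_0^u \bigl(\mathbb V_{\leq N}+\mathbb S_{\leq N}\bigr)(t,u')\,du'$ on the RHS.
\item[(ii)] All derivatives fall on $\Psi$, producing a term of schematic form $(\Vr,S)\cdot \mathcal{P}^N \bX\Psi$ (or $\upmu (\Vr,S)\cdot \mathcal{P}^{N+1}\Psi$). After using $\|(\Vr,S)\|_{L^\infty} \ls \epd$ from \eqref{BA:V}--\eqref{BA:S}, Proposition~\ref{prop:commutators} (in particular \eqref{eq:wave.energy.commuted}) gives
$\|\mathcal{P}^N \bX\Psi\|_{L^2(\Sigma_{t'}^u)}^2 \ls \epd \max\{1,\upmu_\star^{-2\toprate+2\Ntop-2N+1.8}(t')\}$, and integrating in $t'$ via Proposition~\ref{prop:mus.int} produces the claimed $\epd^3 \max\{1,\upmu_\star^{-2\toprate+2\Ntop-2N+2.8}(t)\}$.
\end{itemize}
Collecting (i) and (ii) yields \eqref{eq:Vr.S.inho.weak}.

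For \eqref{eq:Vr.S.inho.strong}, I would simply apply Cauchy--Schwarz and Young's inequality to split $\|(\mathcal{P}^N \Vr)\mathcal{P}^N(\upmu B\Vr)\|_{L^1(\Mtu)} \leq \tfrac{1}{2}\|\mathcal{P}^N\Vr\|_{L^2(\Mtu)}^2 + \tfrac{1}{2}\|\mathcal{P}^N(\upmu B\Vr)\|_{L^2(\Mtu)}^2$; the second summand is controlled by \eqref{eq:Vr.S.inho.weak} just established, while the first summand equals $\tfrac{1}{2}\int_0^u \|\mathcal{P}^N\Vr\|_{L^2(\mathcal F_{u'}^t)}^2\,du' \leq \tfrac{1}{2}\int_0^u \mathbb V_{\leq N}(t,u')\,du'$, which is already present on the RHS of \eqref{eq:Vr.S.inho.strong}. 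An identical treatment handles the $S$ estimate.

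The main technical obstacle is bookkeeping in step (i)--(ii): I need to verify that my $\Ntop \geq 2\toprate+10$ derivative budget always leaves at most one factor at top order, so that all remaining factors in each Leibniz term are genuinely controlled in $L^\infty$ by the bootstrap assumptions, and that the $\upmu_\star$ exponent produced in step (ii) after $t$-integration of \eqref{eq:wave.energy.commuted} matches exactly the $-2\toprate+2\Ntop-2N+2.8$ on the RHS. The remaining intermediate Leibniz terms (where derivatives split between $(\Vr,S)$ and $\Psi$) will, by the $L^\infty$ bounds on the low-derivative factor, reduce either to case (i) or case (ii) above up to acceptable losses, and the cubic term $\upmu(\Vr,S)\cdot(\Vr,S)\cdot\mathcal{P}^N(\text{something})$ is only smaller thanks to the extra factor of $\|(\Vr,S)\|_{L^\infty}\ls \epd$.
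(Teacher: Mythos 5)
Your overall strategy matches the paper's: rewrite $\upmu B (\Vr,S)$ schematically with the $\upmu$-weight absorbed into good ($\bX\Psi$, $\upmu\mathcal{P}\Psi$) derivatives of the wave variables, apply $\mathcal{P}^N$ by Leibniz, classify by where the top-order derivatives land, and then use the transport/wave energies and Proposition~\ref{prop:mus.int} for the $\upmu_\star$ bookkeeping. Steps (i) and (ii) correspond to the paper's terms $I$ and $II$ in its pointwise decomposition, and the Cauchy--Schwarz treatment of \eqref{eq:Vr.S.inho.strong} is identical.

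There is, however, a gap in your case analysis. Your dichotomy ``all derivatives on $(\Vr,S)$'' vs.\ ``all derivatives on $\Psi$'', with intermediate splits between $(\Vr,S)$ and $\Psi$, omits the case in which essentially all $N$ derivatives fall on the \emph{geometric coefficients} $\smoothfunction(L^i,\Psi)$ or on the explicit $\upmu$-weight. This produces a top-order term of the schematic form $\epd\,|\mathcal{P}^{[2,N]}(\upmu, L^i, \Psi)|$ (the paper's term $III$). The factors $L^i$ and $\upmu$ are controlled in $L^\infty$ only up to order roughly $\Ntop-\toprate-4$ by Propositions~\ref{prop:geometric.low}--\ref{prop:geometric.low.2}, so when $N$ is near $\Ntop$ this term is genuinely top-order and requires the $L^2$ estimates for $\mathcal{P}^{[2,N]}(\upmu,L^i)$ from Proposition~\ref{prop:geometric.top} (which you do not cite), together with the $\mathbb K$ and $\mathbb F$ flux bounds for $\mathcal{P}^{[2,N]}\Psi$. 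The resulting $\upmu_\star$-exponent is $+3.8$ rather than $+2.8$, which is harmless because $\upmu_\star \leq 1$, but the step must be present. Adding this third case to your Leibniz classification and invoking Proposition~\ref{prop:geometric.top} closes the gap and brings your argument into full agreement with the paper's proof.
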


\begin{proof}

\pfstep{Step~1: Basic pointwise estimates} We claim that the derivatives of the 
$\upmu$-weighted inhomogeneous 
terms $\upmu \mathfrak{L}^i_{(\Vr)}$ and $\upmu \mathfrak{L}^i_{(\GradEnt)}$,
which are defined respectively in
\eqref{E:SPECIFICVORTICITYLINEARORBETTER} and \eqref{E:ENTROPYGRADIENTLINEARORBETTER},
obey the following pointwise bounds:
\begin{equation}\label{eq:pointwise.of.the.most.basic.transport.inho}
\begin{split}
&\: |\mathcal{P}^N (\upmu\mathfrak L^i_{(\Vr)})| + |\mathcal{P}^N (\upmu\mathfrak L^i_{(S)})|\\
\ls &\:  \underbrace{|\mathcal{P}^{\leq N} (\Vr,S)|}_{\doteq I} + \underbrace{\epd(|\upmu\mathcal{P}^{[2,N+1]} \Psi| + |\mathcal{P}^{[1, N]} \bX \Psi|) }_{\doteq II} + \underbrace{\epd |\mathcal{P}^{[2,N]}(\upmu,\,L^i,\, \Psi)| }_{\doteq III}.
\end{split}
\end{equation}

Since this is the first instance of these kind of estimates (and we will derive similar estimates later), we give some details on how to obtain \eqref{eq:pointwise.of.the.most.basic.transport.inho}.
\begin{enumerate}
\item By Lemma~\ref{lem:Cart.to.geo}
and the fact that the Cartesian component functions $X^1,X^2,X^3$ are smooth functions of
the $L^i$ and $\Psi$ (see \eqref{E:TRANSPORTVECTORFIELDINTERMSOFLUNITANDRADUNIT}), 
the weighted Cartesian coordinate vectorfield $\upmu\rd_i$ and the transport vectorfield $\upmu B$ can be decomposed regularly (i.e., with coefficients being smooth functions of $\upmu$, $L^i$ and $\Psi$) in terms of $\bX$, $\upmu Y$, $\upmu Z$ and $\upmu L$.
\item Therefore, $\mathcal{P}^N (\upmu\mathfrak L^i_{(\Vr)})$ and $\mathcal{P}^N (\upmu\mathfrak L^i_{(S)})$ can be bounded 
as follows:
\begin{equation}\label{eq:example.product}
\begin{split}
&\: | \mathcal{P}^N (\upmu\mathfrak L^i_{(\Vr)})| + |\mathcal{P}^N (\upmu\mathfrak L^i_{(S)})|\\
\ls &\: \sum_{k=0}^N \sum_{N_1+ \cdots + N_k +n_1+n_2 = N} (1 + |\mathcal{P}^{N_1} (\upmu, L^i, \Psi)|)\cdots (1 + |\mathcal{P}^{N_k} (\upmu, L^i,\Psi)|) \\
&\: \qquad \qquad \qquad \qquad  \times  |\mathcal{P}^{n_1}(\Vr, S)| \times |\mathcal{P}^{n_2}(\upmu \mathcal{P} \Psi, \bX\Psi)| \doteq \sum_{k=0}^N \mathrm{Error}_{N_1, \dots, N_k, n_1, n_2}.
\end{split}
\end{equation}
We now bound RHS~\eqref{eq:example.product}.
\item If $N_1,\,\dots, \, N_k\leq \Ntop -  \toprate -5$ and $n_2 \leq \Ntop - \toprate-5$, we bound the terms $(1 + |\mathcal{P}^{N_j} (\upmu, L^i,\Psi)|)$ (for all $j=1,\dots,k$) and $|\mathcal{P}^{n_2}(\upmu \mathcal{P} \Psi, \bX\Psi)|$ in $L^\infty$
by $\lesssim 1$ using \eqref{BA:LARGERIEMANNINVARIANTLARGE}--\eqref{BA:W.Li.small}
and Proposition~\ref{prop:geometric.low}, which yields:
\begin{equation}\label{eq:example.product.1}
\mathrm{Error}_{N_1, \dots, N_k, n_1, n_2} \ls |\mathcal{P}^{\leq N} (\Vr,S)|.
\end{equation}
\item If $N_j > \Ntop -  \toprate - 5$ for some\footnote{Note that there can be at most one such $j$.} $j$, then all the terms $(1 + |\mathcal{P}^{N_{j'}} (\upmu, L^i, \Psi)|)$ when $j' \neq j$ and $|\mathcal{P}^{n_2}(\upmu \mathcal{P} \Psi, \bX\Psi)|$ can be bounded in $L^\infty$ by $\lesssim 1$ 
using \eqref{BA:LARGERIEMANNINVARIANTLARGE}--\eqref{BA:W.Li.small} and Proposition~\ref{prop:geometric.low}. Moreover, since it must also hold that $n_1 \leq \Ntop - \toprate -5$, we also have $|\mathcal{P}^{n_1}(\Vr, S)| \ls \epd$ by the bootstrap assumptions \eqref{BA:V} and \eqref{BA:S}. Hence, we have:
\begin{equation}\label{eq:example.product.2}
\begin{split}
\mathrm{Error}_{N_1, \dots, N_k, n_1, n_2}  \ls &\: (1+ |\mathcal{P}^{[2,N_j]}(\upmu,\,L^i,\, \Psi)|) |\mathcal{P}^{\leq n_1} (\Vr,S)| \\
\ls &\:  |\mathcal{P}^{\leq N} (\Vr,S)|+ \epd |\mathcal{P}^{[2,N]}(\upmu,\,L^i,\, \Psi)|.
\end{split}
\end{equation}
\item When $n_2 > \Ntop -  \toprate-5$, we can argue similarly as above to see that 
$(1 + |\mathcal{P}^{N_{j}}(\upmu, L^i, \Psi)|)\ls 1$ for all $j$, and 
$|\mathcal{P}^{n_1}(\Vr, S)| \ls \epd$. Notice further that since $n_2 > \Ntop -  \toprate-5$, by \eqref{BA:W.Li.small} and Proposition~\ref{prop:geometric.low} we have:
$$|\mathcal{P}^{n_2}(\upmu \mathcal{P}\Psi)| \ls |\upmu \mathcal{P}^{[2,n_2+1]} \Psi| +|\mathcal{P}^{[2, n_2]}\Psi| 
+ |\mathcal{P}^{[2,n_2]} \upmu|. $$
Hence, we have:
\begin{equation}\label{eq:example.product.3}
\mathrm{Error}_{N_1, \dots, N_k, n_1, n_2} \ls \epd(\upmu|\mathcal{P}^{N+1} \Psi| + |\mathcal{P}^{[2,N]} \Psi| + | \mathcal{P}^{[1, N]} \bX \Psi| + |\mathcal{P}^{[2,n_2]} \upmu|).
\end{equation}
\end{enumerate}

Finally, it is easy to check that \eqref{eq:example.product.1}--\eqref{eq:example.product.3} are all bounded above by the RHS of \eqref{eq:pointwise.of.the.most.basic.transport.inho}.

\pfstep{Step~2: Proof of \eqref{eq:Vr.S.inho.weak}} To derive \eqref{eq:Vr.S.inho.weak}, we control each term in \eqref{eq:pointwise.of.the.most.basic.transport.inho} in the $L^2(\Mtu)$ norm.

We begin with the term $I$ in \eqref{eq:pointwise.of.the.most.basic.transport.inho}, which we 
estimate using the definition 
of the $\mathbb{V}_{\leq N}$ and $\mathbb{S}_{\leq N}$ energies
(see Section~\ref{SSS:DEFSOFENERGIES}):
\begin{equation}\label{eq:Vr.S.inho.I}
\begin{split}
&\: \|\mathcal{P}^{\leq N} (\Vr,S)\|^2_{L^2(\Mtu)} \ls \int_{u'=0}^{u'=u} [\mathbb{V}_{\leq N} + \mathbb{S}_{\leq N}](t,u')\, du'.
\end{split}
\end{equation}

We control term $II$ in \eqref{eq:pointwise.of.the.most.basic.transport.inho} by the $\mathbb{E}_{[1,N]}$ norm, and use the bootstrap assumptions \eqref{BA:W1}, \eqref{BA:W2}, the bound \eqref{eq:wave.energy.commuted}, and Proposition~\ref{prop:mus.int} to obtain:
\begin{equation}\label{eq:Vr.S.inho.II}
\begin{split}
&\: \epd^2\|\upmu\mathcal{P}^{[2,N+1]} \Psi\|^2_{L^2(\Mtu)} 
+ 
\epd^2 \|\mathcal{P}^{[1,N]} \bX \Psi \|^2_{L^2(\Mtu)} \\
\ls &\:  \epd^2 \int_{t'=0}^{t'=t} \left[\mathbb{E}_{[1,N]}(t') + \epd^2 \max\{1, \upmu_{\star}^{-2 \toprate+2\Ntop-2N+1.8}(t') \} \right] \, dt' \\
\ls &\: \epd^3 \max\{1, \upmu_{\star}^{-2 \toprate+2\Ntop-2N+2.8}(t) \}.
\end{split}
\end{equation}

Finally, for the term $III$, we use the control for $\mathbb K_{[1,N-1]}$ and $\mathbb{F}_{[1, N-1]}$ 
provided by the bootstrap assumptions 
\eqref{BA:W1} and \eqref{BA:W2}, the bounds in Proposition~\ref{prop:geometric.top}, and Proposition~\ref{prop:mus.int} to obtain:
\begin{equation}\label{eq:Vr.S.inho.III}
\begin{split}
&\: \epd^2\|\mathcal{P}^{[2,N]}(\upmu, L^i,\Psi) \|^2_{L^2(\Mtu)} \\
\ls &\: \epd^2 \mathbb K_{[1,N-1]}(t,u) 
+ 
\epd^2 \int_{u'=0}^{u'=u} \mathbb{F}_{[1,N-1]}(t,u') \, du'  
+ 
\epd^3 \int_{t'=0}^{t'=t} \max\{1,\upmu_{\star}^{-2 \toprate+2\Ntop-2N+2.8}(t') \} \, dt' \\
\ls &\: \epd^3 \max\{1, \upmu_{\star}^{-2 \toprate+2\Ntop-2N+3.8}(t) \}.
\end{split}
\end{equation}

Combining \eqref{eq:pointwise.of.the.most.basic.transport.inho} with 
\eqref{eq:Vr.S.inho.I}--\eqref{eq:Vr.S.inho.III}, we arrive at the desired bound
\eqref{eq:Vr.S.inho.weak}.

\pfstep{Step~3: Proof of \eqref{eq:Vr.S.inho.strong}} 
The estimate \eqref{eq:Vr.S.inho.strong} 
follows as a simple consequence of 
the already obtained bound \eqref{eq:Vr.S.inho.weak}
and the Cauchy--Schwarz inequality. \qedhere

\end{proof}

\subsection{Putting everything together}\label{sec:easy.transport.everything}

\begin{proposition}[\textbf{Estimates for the specific vorticity and entropy gradient}]\label{prop:V.S}
For $0\leq  N\leq \Ntop$, the following holds for all $t \in [0,\Tboot)\times [0,U_0]$:
$$\mathbb V_N(t,u) + \mathbb S_N(t,u) \ls \epd^3 \max\{1,\,\upmu_{\star}^{-2 \toprate+2\Ntop-2N+2.8}(t)\}.$$
\end{proposition}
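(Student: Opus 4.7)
The plan is to apply Proposition~\ref{prop:transport.higher.order} to the transport equations \eqref{E:RENORMALIZEDVORTICTITYTRANSPORTEQUATION} and \eqref{E:GRADENTROPYTRANSPORT}, which, upon multiplication by $\upmu$, take the form
\begin{align*}
	\upmu \Transport \Vr^i = \upmu \mathfrak{L}_{(\Vr)}^i,
	\qquad
	\upmu \Transport S^i = \upmu \mathfrak{L}_{(S)}^i,
	\qquad (i = 1,2,3).
\end{align*}
Treating the Cartesian components $\Vr^i$ and $S^i$ as scalar functions (the required compact support of $\Vr^i$, $S^i$, and $\upmu \mathfrak{L}$ on $[0,U_0]\times\mathbb{T}^2$ follows from the localization Lemma~\ref{lem:localization}), I would apply Proposition~\ref{prop:transport.higher.order} separately to each component, then take the maximum over $i$ in accordance with Definition~\ref{def:array.convention}.

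Next, I would control each of the three terms appearing on the RHS of the bound furnished by Proposition~\ref{prop:transport.higher.order}. The data term $\|\mathcal{P}^{\leq N}(\Vortrenormalized,\GradEnt)\|_{L^2(\Sigma_0)}^2$ is directly bounded by $\lesssim \epd^3$ using the data assumption \eqref{assumption:very.small}. The inhomogeneous term $\|\mathcal{P}^{\leq N}(\upmu\mathfrak{L})\|_{L^2(\Mtu)}^2$ is handled by the weak estimate \eqref{eq:Vr.S.inho.weak} of Proposition~\ref{prop:Vr.S.inho}, which produces the singular factor $\epd^3\max\{1,\upmu_\star^{-2\toprate+2\Ntop-2N+2.8}(t)\}$ --- exactly the one we are aiming for --- along with the error term $\int_{u'=0}^{u'=u}(\mathbb{V}_{\leq N}+\mathbb{S}_{\leq N})(t,u')\,du'$, which will be absorbed by Gr\"onwall. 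The remaining $L^\infty$ error term of Proposition~\ref{prop:transport.higher.order} is controlled using the bootstrap assumptions \eqref{BA:V}--\eqref{BA:S}, which give $\|\mathcal{P}^{[1,\Ntop-\toprate-5]}(\Vortrenormalized,\GradEnt)\|_{L^\infty(\Mtu)} \lesssim \epd$; this yields a contribution of size $\epd^3\max\{1,\upmu_\star^{-2\toprate+2\Ntop-2N+3.8}(t)\}$, which is dominated by the $\upmu_\star^{-2\toprate+2\Ntop-2N+2.8}(t)$ term since $\upmu_\star \leq 1$ implies $\upmu_\star^{a+1} \leq \upmu_\star^{a}$.

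Summing these bounds over $N' \leq N$ (and also over the Cartesian component index $i$) yields the integral inequality
\begin{align*}
	\mathbb{V}_{\leq N}(t,u) + \mathbb{S}_{\leq N}(t,u)
	\lesssim
	\epd^3 \max\{1, \upmu_\star^{-2\toprate+2\Ntop-2N+2.8}(t)\}
	+
	\int_{u'=0}^{u'=u} (\mathbb{V}_{\leq N} + \mathbb{S}_{\leq N})(t,u')\,du'.
\end{align*}
Since $u$ ranges over the bounded interval $[0,U_0]$, a straightforward Gr\"onwall argument in $u$ closes the estimate and yields the desired bound (after noting that $\mathbb{V}_{N} + \mathbb{S}_{N} \leq \mathbb{V}_{\leq N} + \mathbb{S}_{\leq N}$). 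The argument does not really present a main obstacle per se, given all the prior work; rather, the essential point to verify is a consistency check --- that \emph{every} source of error (data, inhomogeneous term, commutator, and the $L^\infty$ term) respects the common singular rate $\upmu_\star^{-2\toprate+2\Ntop-2N+2.8}$. What makes this possible is the non-degenerate flux contribution $\|\mathcal{P}^N (\Vr, S)\|_{L^2(\mathcal{F}_u^t)}$ in the energies $\mathbb{V}_N$ and $\mathbb{S}_N$, which arises from the transversality of $\Transport$ to $\mathcal{F}_u$ and allows the Gr\"onwall loop to be closed in $u$ rather than in $t$ (Gr\"onwall in $t$ would introduce an additional and unacceptable factor of $\upmu_\star^{-1}$).
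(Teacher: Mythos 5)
Your proposal is correct and follows essentially the same route as the paper's proof: apply Proposition~\ref{prop:transport.higher.order} to the Cartesian components of $\Vr$ and $S$, bound the data term via \eqref{assumption:very.small}, the inhomogeneous term via \eqref{eq:Vr.S.inho.weak}, and the $L^\infty$ error term via the bootstrap assumptions \eqref{BA:V}--\eqref{BA:S}, then close with Gr\"onwall in $u$. Your write-up is more explicit than the paper's terse paragraph, but the argument and all key inputs are identical, including the observation that the $\upmu_\star^{-2\toprate+2\Ntop-2N+3.8}$ contribution is dominated by the $\upmu_\star^{-2\toprate+2\Ntop-2N+2.8}$ contribution because $\upmu_\star \leq 1$.
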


\begin{proof}
Using Proposition~\ref{prop:transport.higher.order} 
for $\phi = \Vr^i,S^i$,
the initial data size assumptions in \eqref{assumption:very.small}, 
the bootstrap assumptions \eqref{BA:V}--\eqref{BA:S},
and the inhomogeneous term estimates in Proposition~\ref{prop:Vr.S.inho}
for the terms on RHSs of the transport equations 
\eqref{E:RENORMALIZEDVORTICTITYTRANSPORTEQUATION} and \eqref{E:GRADENTROPYTRANSPORT}, 
we deduce:
\begin{equation*}
\begin{split}
&\:\mathbb V_{\leq N}(t,u) + \mathbb S_{\leq N}(t,u)\\
\ls &\: \epd^3 \max\{1, \upmu_{\star}^{-2 \toprate+2\Ntop-2N+2.8}(t)\} + \int_{u'=0}^{u'=u} (\mathbb V_{\leq N}(t,u') + \mathbb S_{\leq N}(t,u')) \, du'.
\end{split}
\end{equation*}
The desired estimate now follows from applying Gr\"onwall's inequality in $u$. \qedhere
\end{proof}

\section{Lower-order transport estimates for the modified fluid variables}\label{sec:transport.harder}
We continue to work under the assumptions of Theorem~\ref{thm:bootstrap}.

In this section, we derive the energy estimates for the modified fluid variables $\mathcal{C}$ and $\mathcal{D}$ \textbf{except for the top-order}. (We will derive the top-order estimates in the next section.) Thanks to Proposition~\ref{prop:transport.higher.order}, 
to obtain the desired estimates,
it remains only for us to bound the inhomogeneous terms in the transport equations \eqref{E:EVOLUTIONEQUATIONFLATCURLRENORMALIZEDVORTICITY} and \eqref{E:TRANSPORTFLATDIVGRADENT}. Before we estimate the inhomogeneous terms, we will first control the $\bX$ derivative of $\Vr$ and $S$ in Section~\ref{sec:C.D.prelim}, and give general bounds for null forms in Section~\ref{sec:null.form}. (The 
null forms will also be useful later on,
in Section~\ref{sec:wave.main.est}.) We will combine these results to control the
inhomogeneous terms in Section~\ref{sec:C.D.inhomogeneous}. 
We provide the final estimate in Section~\ref{sec:C.D.everything}.

\subsection{Preliminaries}\label{sec:C.D.prelim}
A priori, the norms $\mathbb V_N$ and $\mathbb S_N$ do not control the $\bX$ derivatives of $\Vr$ or $S$. Nonetheless, we can obtain such control in terms of the norms $\mathbb V_N$ and $\mathbb S_N$ 
by using the transport equations \eqref{E:RENORMALIZEDVORTICTITYTRANSPORTEQUATION} and \eqref{E:GRADENTROPYTRANSPORT}.
\begin{proposition}[\textbf{$L^2$ control of the transversal derivatives of the $\Vr$ and $S$}]
\label{prop:transverse.from.transport}
For $1\leq N \leq \Ntop$, the following holds for all $(t,u) \in [0,\Tboot)\times [0,U_0]$:
$$\|\mathcal{P}^{N-1} \bX (\Vr,S) \|_{L^2(\Mtu)}^2 \ls \epd^3 \max\{1, \upmu_{\star}^{-2 \toprate + 2\Ntop - 2N + 2.8}(t)\}. $$
\end{proposition}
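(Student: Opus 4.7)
The plan is to use the algebraic identity \eqref{E:TRANSVERSALDERIVATIVEEXPRESSIONFORTRANSPORTVARIABLES} from Lemma~\ref{L:TRANSPORTTRANSVERSALINTERMSOFTANGENTIAL} to trade the single transversal derivative $\bX = \Rad$ for a tangential $\Lunit$ derivative (weighted by $\upmu$) plus an undifferentiated factor of $(\Vr,S)$ times a smooth coefficient depending on lower-order quantities. Specifically, writing $\bX(\Vr,S) = -\upmu \Lunit(\Vr,S) + (\Vr,S)\cdot \smoothfunction(\Psi,\Lunit^i,\upmu,\Rad\Psi,\mathcal{P}\Psi)$, I would apply $\mathcal{P}^{N-1}$ to both sides and distribute derivatives by the Leibniz rule, keeping in mind the commutator identity from Lemma~\ref{L:SIMPLECOMMUTATORIDENTITY} and the commutator $L^2$-bound from Proposition~\ref{prop:commutators}.

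For the main term $\mathcal{P}^{N-1}[\upmu \Lunit (\Vr,S)]$, the worst case is when all derivatives hit $(\Vr,S)$, producing $\upmu \Lunit \mathcal{P}^{N-1}(\Vr,S)$ plus commutators. Since $\Lunit \mathcal{P}^{N-1}(\Vr,S) = \mathcal{P}^{\leq N}(\Vr,S)$ up to tangential-tangential commutators controlled by Proposition~\ref{prop:commutators}, I can bound the $L^2(\Sigma_t^u)$ norm of this contribution by $\upmu_{\star}^{-1/2}(t)\sqrt{\mathbb{V}_N(t,u)+\mathbb{S}_N(t,u)}$ after using the $\sqrt{\upmu}$-weighted energy definition \eqref{eq:wave.energy.def.1} together with $\|\upmu\|_{L^\infty}\lesssim 1$. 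Squaring, invoking Proposition~\ref{prop:V.S} for $\mathbb{V}_N,\mathbb{S}_N\lesssim \epd^3\max\{1,\upmu_{\star}^{-2\toprate+2\Ntop-2N+2.8}\}$, and integrating in time via Proposition~\ref{prop:mus.int} with exponent $b = 2\toprate - 2\Ntop + 2N - 1.8$ (a term that is $\mathcal{O}(1)$ once integrated and then loses one power of $\upmu_{\star}^{-1}$, exactly matching the right-hand side of the proposition) yields the desired bound.

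For the lower-order term $\mathcal{P}^{N-1}[(\Vr,S)\cdot \smoothfunction]$, I would Leibniz-expand and place all factors except the top-order one in $L^\infty$. When a derivative falls on the coefficient $\smoothfunction$, the arguments $\Psi,\Lunit^i,\upmu,\Rad\Psi,\mathcal{P}\Psi$ have low-order $L^\infty$ bounds from the bootstrap assumptions \eqref{BA:LARGERIEMANNINVARIANTLARGE}--\eqref{BA:W.Li.small} and Propositions~\ref{prop:geometric.low}--\ref{P:LINFTYHIGHERTRANSVERSAL}, while their high-order $L^2$ bounds come from Proposition~\ref{prop:geometric.top} and the bootstrap energy estimate \eqref{eq:wave.energy.commuted} for $\bX$-differentiated wave variables. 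The contributions involving $\mathcal{P}^{\leq N-1}(\Vr,S)$ in $L^2$ are dominated by $\sqrt{\mathbb{V}_{\leq N-1}+\mathbb{S}_{\leq N-1}}$, which by Proposition~\ref{prop:V.S} is at least as good as the target bound. The handful of terms where the top derivatives land on $\smoothfunction$ produce factors like $\mathcal{P}^{[2,N]}(\upmu,\Lunit^i,\Psi)$ or $\bX \mathcal{P}^{\leq N-1}\Psi$ multiplied by $\|(\Vr,S)\|_{L^\infty} \lesssim \epd$; these are handled exactly as terms $II$ and $III$ in Step~2 of the proof of Proposition~\ref{prop:Vr.S.inho}, and give an $\epd^3\upmu_\star^{-2\toprate+2\Ntop-2N+3.8}$ type bound that is absorbed into the right-hand side.

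The main obstacle is bookkeeping rather than analysis: matching the singularity exponents so the conclusion is precisely $\upmu_\star^{-2\toprate+2\Ntop-2N+2.8}$ rather than something worse. The decisive gain comes from the factor of $\upmu$ in front of the $\Lunit(\Vr,S)$ term of the key identity, which converts a $\upmu_\star^{-1}$-degenerate $L^2(\Sigma_t^u)$ bound for $\mathcal{P}^N(\Vr,S)$ into a $\upmu_\star^{-1/2}$ degeneracy after accounting for the $\sqrt{\upmu}$-weighted definition of $\mathbb{V}_N,\mathbb{S}_N$, and the final time integration via Proposition~\ref{prop:mus.int} costs exactly one additional power of $\upmu_\star^{-1}$, which is what produces the $+2.8$ in the exponent (compared to $+1.8$ for wave energies).
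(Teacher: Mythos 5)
Your decomposition is the right one, and it is in essence the same as the paper's: the paper writes $\mathcal{P}^{N-1}\Rad(\Vr,S) = \mathcal{P}^{N-1}(\upmu B(\Vr,S)) - \mathcal{P}^{N-1}(\upmu L(\Vr,S))$ directly from $\Rad = \upmu B - \upmu L$, and then recycles the already-obtained bound \eqref{eq:Vr.S.inho.weak} for $\mathcal{P}^{N-1}(\upmu B(\Vr,S))$ rather than re-expanding $\upmu B(\Vr,S) = \upmu\mathfrak{L}_{(\Vr,S)}$ as your route through Lemma~\ref{L:TRANSPORTTRANSVERSALINTERMSOFTANGENTIAL} does; these are the same computation, organized differently. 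The one place your bookkeeping is off is in the estimate of the main term $\upmu L\mathcal{P}^{N-1}(\Vr,S)$. You bound $\|\upmu \mathcal{P}^{N}(\Vr,S)\|_{L^2(\Sigma_t^u)}$ by $\upmu_\star^{-1/2}(t)\sqrt{\mathbb V_N + \mathbb S_N}$ by first using $\|\upmu\|_{L^\infty}\lesssim 1$ and then paying $\upmu_\star^{-1/2}$ to convert the $\sqrt{\upmu}$-weighted energy norm into an unweighted one; this wastes the available $\upmu$ and, after squaring, forces you to integrate $\upmu_\star^{-1}\mathbb V_N$ in time. For $N$ in the range where $\mathbb V_N\lesssim\epd^3$ with no $\upmu_\star$-singular factor, this gives $\epd^3\int_0^t \upmu_\star^{-1}(t')\,dt'\approx \epd^3\log\upmu_\star^{-1}(t)$, which is \emph{not} $\lesssim\epd^3$ and hence not covered by the stated right-hand side; Proposition~\ref{prop:mus.int} only controls $\int\upmu_\star^{-b}$ for $b>1$ (with a gain) or $b\leq 0.9$ (bounded). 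The fix is trivial: write $\upmu = \sqrt{\upmu}\cdot\sqrt{\upmu}$ and put only one $\sqrt{\upmu}$ in $L^\infty$, so $\|\upmu\mathcal{P}^N(\Vr,S)\|_{L^2(\Sigma_t^u)}\lesssim\sqrt{\mathbb V_N+\mathbb S_N}$ with \emph{no} $\upmu_\star$ loss, after which the time integration actually gives you a power of $\upmu_\star$ \emph{better} than what is claimed. Alternatively, the paper avoids the $\Sigma_t$-side $\upmu_\star$-accounting entirely by passing to the non-degenerate null-flux piece of $\mathbb V_N,\mathbb S_N$: it bounds $\|\mathcal{P}^{\leq N}(\Vr,S)\|_{L^2(\Mtu)}^2\lesssim\int_{u'=0}^{u'=u}(\mathbb V_{\leq N}+\mathbb S_{\leq N})(t,u')\,du'$ (the $\mathcal{F}_{u'}$ flux carries no $\upmu$-weight), which is uniformly controlled on the fixed $u$-interval $[0,U_0]$; that is perhaps the cleaner route, but your time-slice version works once the $\upmu$ factor is used honestly.
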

\begin{proof}
Recalling \eqref{E:TRANSPORTVECTORFIELDINTERMSOFLUNITANDRADUNIT}, we have:
\begin{equation}\label{eq:X.in.terms.of.transport}
\mathcal{P}^{N-1} \bX \Vr = \mathcal{P}^{N-1}(\upmu B \Vr) - \mathcal{P}^{N-1}( \upmu L \Vr), \quad \mathcal{P}^{N-1} \bX S = \mathcal{P}^{N-1}(\upmu B S) - \mathcal{P}^{N-1}(\upmu L S).
\end{equation}

The terms $\mathcal{P}^{N-1}(\upmu B \Vr)$ and $\mathcal{P}^{N-1}(\upmu B S)$ can be bounded
as follows
using \eqref{eq:Vr.S.inho.weak} and Proposition~\ref{prop:V.S}:
\begin{equation}\label{eq:X.in.terms.of.transport.est.1}
\| \mathcal{P}^{N-1}(\upmu B \Vr)\|_{L^2(\Mtu)}^2 + \| \mathcal{P}^{N-1}(\upmu B S)\|_{L^2(\Mtu)}^2 \ls \epd^3 \max\{ 1, \upmu_{\star}^{-2 \toprate + 2\Ntop -2N + 2.8}(t) \}.
\end{equation}
By \eqref{BA:V}, \eqref{BA:S}, 
Propositions~\ref{prop:geometric.low}, \ref{prop:almost.monotonicity},
\ref{prop:geometric.top}, and \ref{prop:V.S}, 
we have:
\begin{equation}\label{eq:X.in.terms.of.transport.est.2}
\begin{split}
&\: \|\mathcal{P}^{N-1}(\upmu L \Vr)\|_{L^2(\Mtu)}^2 + \|\mathcal{P}^{N-1}(\upmu L S)\|_{L^2(\Mtu)}^2 \\
\ls &\: \|\mathcal{P}^{\leq N}(\Vr,S)\|_{L^2(\Mtu)}^2 + \epd^2 \|\mathcal{P}^{[2, N-1]} \upmu \|_{L^2(\Mtu)}^2 \\
\ls &\: \int_0^u [\mathbb V_{\leq N} + \mathbb S_{\leq N}](t,u') \, du' + \epd^3 \int_{t'=0}^{t'=t} \max\{1, \upmu_{\star}^{-2 \toprate+2\Ntop-2N + 4.8}(t') \} \, dt' \\
\ls &\: \epd^3 \max\{ 1, \upmu_{\star}^{-2 \toprate + 2\Ntop -2N + 2.8}(t) \}.
\end{split}
\end{equation}

Therefore, combining \eqref{eq:X.in.terms.of.transport}--\eqref{eq:X.in.terms.of.transport.est.2}, we obtain the desired conclusion. \qedhere
\end{proof}

\subsection{General estimates for null forms}\label{sec:null.form}

\begin{lemma}[\textbf{Pointwise estimates for null forms}]
\label{lem:null.form}
Suppose 
\begin{enumerate}
\item $\mathcal Q(\rd\phi^{(1)},\rd\phi^{(2)})$ is a $g$-null form,
as in Definition~\ref{D:NULLFORMS}; \\
	and
\item $\phi^{(1)}$ and $\phi^{(2)}$ obey the following $L^\infty$ estimates for some $\mathfrak d^{(1,1)} \gtrsim \mathfrak d^{(1,2)}$, $\mathfrak d^{(2,1)} \gtrsim \mathfrak d^{(2,2)}$ for all $t\in [0,\Tboot)$:
\begin{equation}\label{eq:null.form.abstract.Li}
\begin{split}
\|\mathcal{P}^{\leq \Ntop - \toprate - 5} \bX \phi^{(1)} \|_{L^\infty(\Sigma_t)} \leq \mathfrak d^{(1,1)},\\
\|\mathcal{P}^{[1,\Ntop - \toprate-5]} \phi^{(1)}\|_{L^\infty(\Sigma_t)} \leq \mathfrak d^{(1,2)},\\
\|\mathcal{P}^{\leq \Ntop - \toprate - 5} \bX \phi^{(2)} \|_{L^\infty(\Sigma_t)} \leq \mathfrak d^{(2,1)},\\
\|\mathcal{P}^{[1,\Ntop - \toprate-5]} \phi^{(2)}\|_{L^\infty(\Sigma_t)} \leq \mathfrak d^{(2,2)}.
\end{split}
\end{equation}
\end{enumerate}

Then for any $0\leq N \leq \Ntop$, the following pointwise estimate holds on $[0,\Tboot)\times \Sigma$:
\begin{equation}\label{eq:null.form.basic}
\begin{split}
&\: |\mathcal{P}^N [\upmu\mathcal Q(\rd\phi^{(1)},\rd\phi^{(2)})] | \\
 \ls &\: \mathfrak d^{(2,1)} |\mathcal{P}^{[1,N+1]} \phi^{(1)} | + \mathfrak d^{(2,2)}  |\mathcal{P}^{[1,N]} \bX \phi^{(1)}| + \mathfrak d^{(1,1)} |\mathcal{P}^{[1,N+1]} \phi^{(2)}| + \mathfrak d^{(1,2)}  | \mathcal{P}^{[1,N]} \bX \phi^{(2)}| \\
 &\: + \max\{\mathfrak d^{(1,1)} \mathfrak d^{(2,2)}, \, \mathfrak d^{(1,2)} \mathfrak d^{(2,1)}\} |\mathcal{P}^{[2,N]}(\upmu, L^i,\Psi)|,
\end{split}
\end{equation}
and for any $1 \leq N \leq \Ntop$, we have:
\begin{equation}\label{eq:null.form.basic.1}
\begin{split}
&\: |\mathcal{P}^{[1,N]} [\upmu\mathcal Q(\rd\phi^{(1)},\rd\phi^{(2)})] |\\
 \ls &\: \mathfrak d^{(2,1)} |\mathcal{P}^{[2,N+1]} \phi^{(1)} | + \mathfrak d^{(2,2)} |\mathcal{P}^{[1,N]} \bX \phi^{(1)}| + \mathfrak d^{(1,1)} |\mathcal{P}^{[2,N+1]} \phi^{(2)}| + \mathfrak d^{(1,2)} | \mathcal{P}^{[1,N]} \bX \phi^{(2)}| \\
 &\: + \underbrace{\epd^{\f 12}  (\mathfrak d^{(2,1)}|\mathcal{P} \phi^{(1)} | + \mathfrak d^{(1,1)}|\mathcal{P} \phi^{(2)} |)}_{\doteq \mathscr A} + \underbrace{\mathfrak d^{(2,2)}|\mathcal{P} \phi^{(1)} | + \mathfrak d^{(1,2)}|\mathcal{P} \phi^{(2)} |}_{\doteq\mathscr B}\\
 &\: + \max\{\mathfrak d^{(1,1)} \mathfrak d^{(2,2)}, \, \mathfrak d^{(1,2)} \mathfrak d^{(2,1)}\} |\mathcal{P}^{[2,N]}(\upmu, L^i,\Psi)|.
\end{split}
\end{equation}
\end{lemma}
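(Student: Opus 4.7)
The plan is to reduce the estimate to a purely algebraic bookkeeping exercise by first invoking the decomposition supplied by Lemma~\ref{L:CRUCIALPROPERTIESOFNULLFORMS}. That lemma rewrites $\upmu\mathcal{Q}(\partial\phi^{(1)},\partial\phi^{(2)})$ schematically as a sum of three products
\[
\smoothfunction(L^i,\Psi)\,\bX\phi^{(1)}\cdot \mathcal{P}\phi^{(2)}
\;+\;
\smoothfunction(L^i,\Psi)\,\bX\phi^{(2)}\cdot \mathcal{P}\phi^{(1)}
\;+\;
\upmu\,\smoothfunction(L^i,\Psi)\,\mathcal{P}\phi^{(1)}\cdot \mathcal{P}\phi^{(2)},
\]
which is precisely the statement that decomposed in the $\{L,X,Y,Z\}$-frame, no $X\phi^{(1)}\cdot X\phi^{(2)}$ term survives after multiplying by $\upmu$ (equivalently, $\bX\phi^{(1)}\cdot\bX\phi^{(2)}$ is absent). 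The recurring smooth coefficients are functions only of $L^i$ and $\Psi$, whose derivatives are controlled by the bootstrap assumptions \eqref{BA:LARGERIEMANNINVARIANTLARGE}--\eqref{BA:W.Li.small} and Proposition~\ref{prop:geometric.low}.

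For \eqref{eq:null.form.basic}, I would apply $\mathcal{P}^N$ to each of the three schematic products, expand by the Leibniz rule, and distribute the derivatives among the factors. In each resulting monomial, all but one factor is placed in $L^\infty$: factors of the form $\mathcal{P}^{\leq\Ntop-\toprate-5}$ applied to $L^i,\Psi,\upmu$ are bounded by $\lesssim 1$, and the hypothesized bounds \eqref{eq:null.form.abstract.Li} are used for the $\phi^{(i)}$-factors. One then case-splits on which factor carries the top derivatives: if they land on $\mathcal{P}\phi^{(2)}$ in the first product, bounding $\bX\phi^{(1)}$ by $\mathfrak{d}^{(1,1)}$ yields $\mathfrak{d}^{(1,1)}|\mathcal{P}^{[1,N+1]}\phi^{(2)}|$; if on $\bX\phi^{(1)}$, bounding $\mathcal{P}\phi^{(2)}$ by $\mathfrak{d}^{(2,2)}$ yields $\mathfrak{d}^{(2,2)}|\mathcal{P}^{[1,N]}\bX\phi^{(1)}|$; if on $\smoothfunction(L^i,\Psi)$, using both $L^\infty$-bounds produces the final $\max\{\mathfrak{d}^{(1,1)}\mathfrak{d}^{(2,2)},\mathfrak{d}^{(1,2)}\mathfrak{d}^{(2,1)}\}|\mathcal{P}^{[2,N]}(\upmu,L^i,\Psi)|$ term. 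The second and third schematic products are handled symmetrically, with the third producing analogous contributions weighted by $\upmu=\mathcal{O}(1)$.

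For the improved estimate \eqref{eq:null.form.basic.1}, I apply $\mathcal{P}^{[1,N]}$ (so at least one $\mathcal{P}$ derivative is forced onto $\upmu\mathcal{Q}$) and repeat the Leibniz expansion. The key observation is to track where this guaranteed extra $\mathcal{P}$ lands. If it lands on either of the two ``derivative of $\phi$'' factors, we genuinely gain a derivative, which is why the $\phi^{(i)}$ terms now appear as $\mathcal{P}^{[2,N+1]}\phi^{(i)}$ rather than $\mathcal{P}^{[1,N+1]}\phi^{(i)}$. If instead the extra $\mathcal{P}$ lands on $\smoothfunction(L^i,\Psi)$, then one factor of $\mathcal{P}(L^i,\Psi)$ is present at low order, which is $\mathcal{O}(\epd^{1/2})$ by \eqref{BA:W.Li.small} and Proposition~\ref{prop:geometric.low}, producing the $\mathscr{A}$ contribution. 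Finally, when the extra $\mathcal{P}$ lands on the $\upmu$-weight in the third schematic product, one obtains $(\mathcal{P}\upmu)\cdot\smoothfunction\cdot\mathcal{P}\phi^{(1)}\cdot\mathcal{P}\phi^{(2)}$; here $Y\upmu,Z\upmu$ are $\mathcal{O}(\epd^{1/2})$ (and thus absorbed into $\mathscr{A}$) but $L\upmu$ is only $\mathcal{O}(1)$, which produces the non-small term $\mathscr{B}$. The residual contributions with top derivatives on the coefficient give the same max-term as before.

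The whole argument is essentially a careful product-rule expansion: the main obstacle is simply organizing the combinatorics of distributing $\mathcal{P}^N$ among up to four factors so that each case falls cleanly into one of the buckets on the right-hand sides of \eqref{eq:null.form.basic}--\eqref{eq:null.form.basic.1}, while being careful to track which $L^\infty$-bound ($\mathfrak{d}^{(i,1)}$ versus $\mathfrak{d}^{(i,2)}$) is invoked in each case.
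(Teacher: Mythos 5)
Your proposal is correct and follows essentially the same route as the paper: decompose $\upmu\mathcal{Q}$ via Lemma~\ref{L:CRUCIALPROPERTIESOFNULLFORMS} into the three schematic products, expand with the Leibniz rule, bound all but one factor in $L^\infty$, and case-split on where the top derivatives fall; for \eqref{eq:null.form.basic.1} the paper likewise tracks where the guaranteed extra $\mathcal{P}$ lands, obtaining $\mathscr{A}$ when it hits $\smoothfunction(L^i,\Psi)$ (with $\epd^{1/2}$ smallness) and $\mathscr{B}$ when it hits the $\upmu$-weighted product $I'$, where one of the two $\mathcal{P}\phi^{(i)}$ factors is placed in $L^{\infty}$. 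Your slightly finer observation that inside $I'$ it is specifically $L\upmu$ (as opposed to $Y\upmu,Z\upmu$) that is merely $\mathcal{O}(1)$ and thus blocks absorption into $\mathscr{A}$ is a small refinement of bookkeeping that the paper elides, but the argument and the output bounds are the same.
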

\begin{proof}
Throughout this proof, $\smoothfunction(\cdot)$ denotes a smooth function of its arguments
that is free to vary from line to line. By \eqref{E:CRUCIALPROPERTIESOFNULLFORMS},
we need to control:
$$\underbrace{\mathcal{P}^N [\smoothfunction(L^i, \Psi) \upmu (\mathcal{P}\phi^{(1)}) (\mathcal{P}\phi^{(2)})]}_{\doteq I},\quad \underbrace{\mathcal{P}^N [\smoothfunction(L^i, \Psi) (\mathcal{P}\phi^{(1)}) (\bX\phi^{(2)})]}_{\doteq II},\quad  \underbrace{\mathcal{P}^N [\smoothfunction(L^i, \Psi) (\bX \phi^{(1)}) (\mathcal{P}\phi^{(2)})]}_{\doteq III}.$$
We first prove \eqref{eq:null.form.basic}. Consider term $II$. Arguing as in the proof of \eqref{eq:pointwise.of.the.most.basic.transport.inho} and then using \eqref{eq:null.form.abstract.Li}, we obtain:
\begin{equation*}
\begin{split}
&\: |\mathcal{P}^N [\smoothfunction(L^i, \Psi,\upmu) (\mathcal{P}\phi^{(1)}) (\bX\phi^{(2)})]| 
	\\
\ls &\: |\mathcal{P}^{[1, \Ntop - \toprate-5]}  \phi^{(1)}| |\mathcal{P}^{[1,N]} \bX\phi^{(2)}| + |\mathcal{P}^{[1,N+1]}  \phi^{(1)}| |\mathcal{P}^{\leq \Ntop- \toprate - 5} \bX\phi^{(2)}| \\
&\: + |\mathcal{P}^{\leq \Ntop - \toprate-5}  \phi^{(1)}| |\mathcal{P}^{\leq \Ntop - \toprate - 5} \bX\phi^{(2)}| |\mathcal{P}^{[2, N]} (\upmu, L^i, \Psi)| \\
\ls &\: \mathfrak d^{(1,2)} |\mathcal{P}^{[1,N]} \bX\phi^{(2)}| + \mathfrak d^{(2,1)} |\mathcal{P}^{[1,N+1]}  \phi^{(1)}| + \mathfrak d^{(1,2)} \mathfrak d^{(2,1)}|\mathcal{P}^{[2, N]} (\upmu, L^i, \Psi)|,
\end{split}
\end{equation*}
which is bounded from above by the RHS of \eqref{eq:null.form.basic}.

Next, we observe that the term $III$ can be
handled just like term
$II$, after we interchange the roles of $\phi^{(1)}$ and $\phi^{(2)}$. Moreover, the
term $I$ is even easier to handle
because $\mathfrak d^{(1,1)} \gtrsim \mathfrak d^{(1,2)}$, $\mathfrak d^{(2,1)} \gtrsim \mathfrak d^{(2,2)}$. 

We finally turn to the proof of \eqref{eq:null.form.basic.1}, in which we need 
to show an improvement compared to \eqref{eq:null.form.basic}
using the fact that on the LHS of the estimate, the $\upmu$-weighted null form 
is differentiated by \emph{at least one} $\mathcal{P}$. More precisely, we need to improve $\mathfrak d^{(1,1)} |\mathcal{P}^{[1,N+1]} \phi^{(2)}|$, $\mathfrak d^{(1,2)}  | \mathcal{P}^{[1, N]} \bX \phi^{(2)}|$ to $\mathfrak d^{(1,1)} |\mathcal{P}^{[2,N+1]} \phi^{(2)}|$, $\mathfrak d^{(1,2)}  | \mathcal{P}^{[2, N]} \bX \phi^{(2)}|$, at the expense of 
incurring terms of the type $\mathscr A$ and $\mathscr B$ in \eqref{eq:null.form.basic.1}. 

It is straightforward to use the arguments given in the previous paragraph to confirm that if $N\geq 2$, then $\mathfrak d^{(1,1)} |\mathcal{P}^{[1,N+1]} \phi^{(2)}|$, $\mathfrak d^{(1,2)}  | \mathcal{P}^{[1, N]} \bX \phi^{(2)}|$  
on RHS~\eqref{eq:null.form.basic} 
can be replaced by $\mathfrak d^{(1,1)} |\mathcal{P}^{[2,N+1]} \phi^{(2)}|$, $\mathfrak d^{(1,2)}  | \mathcal{P}^{[2, N]} \bX \phi^{(2)}|$. We are thus only concerned with the following terms in the case when $N=1$:
$$\underbrace{[\mathcal{P} (\smoothfunction(L^i, \Psi) \upmu)] (\mathcal{P}\phi^{(1)}) (\mathcal{P}\phi^{(2)})}_{\doteq I'},\quad \underbrace{[\mathcal{P} \smoothfunction(L^i, \Psi)] (\mathcal{P}\phi^{(1)}) (\bX\phi^{(2)})}_{\doteq II'},\quad  \underbrace{[\mathcal{P} \smoothfunction(L^i, \Psi)] (\bX \phi^{(1)}) (\mathcal{P}\phi^{(2)})}_{\doteq III'}.$$
{Next, we observe that for the terms 
$II'$ and $III'$, when the $\mathcal{P}$ derivative falls on $\smoothfunction(L^i, \Psi)$, 
\eqref{BA:W.Li.small} and Proposition~\ref{prop:geometric.low} yield
a smallness factor of $\epd^{\f 12}$. Thus,} $II'$ and $III'$ can be bounded by $\mathscr A$. 
Finally, {to handle} the term $I'${,} we can {control} either 
$\mathcal{P}\phi^{(1)}$ or $\mathcal{P}\phi^{(2)}$ in $L^\infty${, 
which allows us to bound $I'$ by} $\mathscr B$. 
\qedhere
\end{proof}

\subsection{Estimates of the inhomogeneous terms in the transport equations for \protect$\mathcal{C}$ and \protect$\mathcal{D}$}\label{sec:C.D.inhomogeneous}

\begin{proposition}[\textbf{Below-top-order estimates for the main inhomogeneous terms 
	in the transport equations for the modified fluid variables}]
\label{prop:transport.inhom.1}
For\footnote{Note that in the case $N=\Ntop$, the error terms on the RHS involving $\mathbb V_{\leq N+1}$ and $\mathbb S_{\leq N+1}$ have \emph{not} been estimated in Section~\ref{sec:transport.easy}. It is for this reason that we only consider $0\leq N \leq \Ntop-1$ at this point.} $0\leq N \leq \Ntop-1$, the main terms $\mathfrak M \in \{\mathfrak M^i_{(\mathcal{C})},\,\mathfrak M_{(\mathcal{D})}\}$ 
 (see \eqref{eq:def.M.C}--\eqref{eq:def.M.D})
can be estimated as follows for every $(t,u) \in [0,\Tboot) \times [0,U_0]$:
\begin{equation} \label{E:transport.inhom.1}
\begin{split}
\|\mathcal{P}^N (\upmu \mathfrak M) \|_{L^2(\mathcal M_{t,u})}^2
\ls &\:  \epd^3 \max\{1, \upmu_{\star}^{-2 \toprate+2\Ntop-2N+0.8}(t)\}.
\end{split}
\end{equation}
\end{proposition}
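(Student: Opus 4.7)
The plan is to decompose $\mathfrak M^i_{(\mathcal C)}$ and $\mathfrak M_{(\mathcal D)}$ into two groups---products with manifest $g$-null structure, and products that either carry extra smallness or reduce to null forms via the transport equations---and to handle each using Lemma~\ref{lem:null.form} together with the energy and $L^\infty$ estimates already established.

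First I would observe that the two products $\epsilon_{iab}\exp(-\Densrenormalized)(\partial_a v^j)\partial_b\Vortrenormalized^k$ and $\epsilon_{ajk}\exp(-\Densrenormalized)(\partial_a v^i)\partial_j\Vortrenormalized^k$ on the first two lines of \eqref{eq:def.M.C}, as well as the antisymmetrized combination $(\partial_a v^a)\partial_b\GradEnt^b-(\partial_a v^b)\partial_b\GradEnt^a$ on the first line of \eqref{eq:def.M.D}, are $g$-null forms of type \eqref{E:Qij.def} in the appropriate pair of fluid variables. For each of these I would invoke Lemma~\ref{lem:null.form} with sizes
\[
\mathfrak d^{(1,1)}\sim \mathring{\updelta},\qquad \mathfrak d^{(1,2)}\sim \epd^{1/2},\qquad \mathfrak d^{(2,1)}\sim \mathfrak d^{(2,2)}\sim \epd,
\]
where $\mathfrak d^{(1,1)}$ reflects the single genuinely large quantity $\bX\mathcal R_{(+)}$ from \eqref{BA:LARGERIEMANNINVARIANTLARGE}, and the remaining sizes come from \eqref{BA:SMALLWAVEVARIABLESUPTOONETRANSVERSALDERIVATIVE}--\eqref{BA:S} and Proposition~\ref{P:LINFTYHIGHERTRANSVERSAL}. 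Taking $L^2(\mathcal M_{t,u})$ in the resulting pointwise bound \eqref{eq:null.form.basic}, the four types of product that arise are estimated as follows: the factors $|\mathcal P^{[1,N+1]} v|$ via the wave-energy bootstrap \eqref{BA:W1}--\eqref{BA:W2}; the factors $|\mathcal P^{[1,N]}\bX v|$ via the commuted bound \eqref{eq:wave.energy.commuted}; the factors $|\mathcal P^{[1,N+1]}(\Vortrenormalized,\GradEnt)|$ via the non-degenerate $\mathcal F_u$-flux portion of $\mathbb V_{N+1}$ and $\mathbb S_{N+1}$ furnished by Proposition~\ref{prop:V.S}; and the factors $|\mathcal P^{[1,N]}\bX(\Vortrenormalized,\GradEnt)|$ via Proposition~\ref{prop:transverse.from.transport}. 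Residual geometric commutator contributions of the schematic form $|\mathcal P^{[2,N]}(\upmu,L^i,\Psi)|$ are absorbed via Proposition~\ref{prop:geometric.top}, and time-integration using Proposition~\ref{prop:mus.int} upgrades $L^2(\Sigma_{t'}^u)$ bounds to $L^2(\mathcal M_{t,u})$ bounds with one improved power of $\upmu_\star$, matching the target exponent $-2\toprate+2\Ntop-2N+0.8$.

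For the material-derivative products on the last two lines of \eqref{eq:def.M.C}, I would substitute the transport equations \eqref{E:RENORMALIZEDVORTICTITYTRANSPORTEQUATION}--\eqref{E:GRADENTROPYTRANSPORT} to replace $\Transport\Vortrenormalized$ and $\Transport\GradEnt$ by the quadratically small expressions $\mathfrak L_{(\Vortrenormalized)}$ and $\mathfrak L_{(\GradEnt)}$, and rewrite $\partial\GradEnt$ using the defining identity \eqref{E:RENORMALIZEDDIVOFENTROPY} for $\DivofEntrenormalized$ where convenient. The resulting products either reduce to the null-form case just treated (with extra $\epd$-smallness from the transport variables to spare), or are direct products of small transport variables times first Cartesian derivatives of wave variables, easily bounded. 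For the non-null-form product $\exp(-\Densrenormalized)\delta_{ab}(\Flatcurl\Vortrenormalized)^a\GradEnt^b$ in \eqref{eq:def.M.D}, I would use Lemma~\ref{lem:Cart.to.geo} to write $\upmu\,\Flatcurl\Vortrenormalized$ as a smooth combination of $\bX\Vortrenormalized$ and $\upmu\slashed{\mathcal P}\Vortrenormalized$, after which the $L^\infty$ bound $\|\GradEnt\|_{L^\infty}\lesssim \epd$ from \eqref{BA:S}, combined with Proposition~\ref{prop:transverse.from.transport} on the $\bX\Vortrenormalized$ piece and the $\mathcal F_u$-flux portion of $\mathbb V_{N+1}$ on the tangential piece, produces a bound with several extra factors of $\epd$ to spare.

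The hard part: the target rate is saturated precisely by the single product $\mathring{\updelta}\cdot|\mathcal P^{N+1}\Vortrenormalized|$ arising from the $\mathfrak d^{(1,1)}|\mathcal P^{[1,N+1]}\phi^{(2)}|$ term in Lemma~\ref{lem:null.form}, which via the non-degenerate $\mathcal F_u$-flux of $\mathbb V_{N+1}$ contributes exactly $\mathring{\updelta}^2\int_0^u\mathbb V_{N+1}(t,u')\,du'\lesssim \epd^3\upmu_\star^{-2\toprate+2\Ntop-2N+0.8}$, and this alone is what forces the restriction to $N\le\Ntop-1$ (so that $\mathbb V_{N+1}$ is defined). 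The main subtlety is to verify that no other product is \emph{more} singular than this; the a~priori dangerous one is the $\mathfrak d^{(2,1)}|\mathcal P^{[1,N+1]}v|$-type contribution, because $|\mathcal P^{[1,N+1]}v|$ is controlled in $L^2(\Sigma_t^u)$ only through the degenerate estimate $\upmu_\star^{-1}\mathbb E_N\lesssim \epd\upmu_\star^{-2\toprate+2\Ntop-2N+0.8}$. After time-integration via Proposition~\ref{prop:mus.int} this yields only $\epd^3\upmu_\star^{-2\toprate+2\Ntop-2N+1.8}$ in $L^2(\mathcal M_{t,u})$, which is \emph{less} singular than the target (since $\upmu_\star\le 1$) and is therefore harmlessly absorbed. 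It is crucially the null-form structure of $\mathfrak M$ built into Theorem~\ref{T:GEOMETRICWAVETRANSPORTSYSTEM} that excludes the forbidden product $\bX v\cdot\bX\Vortrenormalized$; were it present, it would pair the large quantity $\bX\mathcal R_{(+)}$ against a top-order wave/transport derivative with no gain from any non-degenerate flux, and no bound compatible with the rate hierarchy \eqref{E:transport.inhom.1} would be available.
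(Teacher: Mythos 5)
Your high-level structure — identify null forms, apply Lemma~\ref{lem:null.form} with the hierarchy of $L^\infty$ sizes, bound the resulting factors via the wave/transport energy bootstrap and Propositions~\ref{prop:V.S}, \ref{prop:transverse.from.transport}, \ref{prop:geometric.top}, \ref{prop:mus.int} — is the paper's approach, and your identification of the dominant contribution ($\int_0^u\mathbb V_{N+1}\,du'$) and of why the restriction $N\le\Ntop-1$ is needed is exactly right. But your treatment of the third and fourth lines of \eqref{eq:def.M.C} has a genuine gap.

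You propose to substitute the transport equations \eqref{E:RENORMALIZEDVORTICTITYTRANSPORTEQUATION}--\eqref{E:GRADENTROPYTRANSPORT} for $\Transport\Vortrenormalized$ and $\Transport\GradEnt$ on those lines. First, those lines contain $\Transport v$, not $\Transport\Vortrenormalized$, so your substitution simply does not reach the factors $\Transport v^i$ and $\Transport v^a$; the velocity has no independent transport equation (only \eqref{eq:Euler.2}, whose right side is a first Cartesian derivative of the wave variables, which is \emph{not} small). Second, and more fundamentally, the substitution destroys the null structure rather than exploiting it. Take the first bracket on line~3, $(\Transport\GradEnt^a)\partial_a v^i$: after substituting $\Transport\GradEnt^a=\mathfrak L^a_{(\GradEnt)}=-\GradEnt^c\partial_c v^a+\cdots$, this becomes (schematically) $\GradEnt\cdot\partial v\cdot\partial v$. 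Multiplying by $\upmu$, the single available $\upmu$ weight can regularize only \emph{one} of the two singular $\partial v$ factors; the remaining factor still blows up like $\upmu^{-1}$, and $\epd/\upmu$ is not square-integrable on $\mathcal M_{t,u}$ since $\upmu_\star\to 0$ linearly. The term $(\Transport v^i)\partial_a\GradEnt^a$ that you leave unsubstituted is of exactly the same type — $\upmu \Transport v^i$ is bounded but $\partial_a\GradEnt^a$ is not, or vice versa — so it cannot be bounded in isolation either.

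The resolution is that the \emph{difference} $(\Transport\GradEnt^a)\partial_a v^i-(\Transport v^i)\partial_a\GradEnt^a = \Transport^\beta\{(\partial_\beta\GradEnt^a)(\partial_a v^i)-(\partial_a\GradEnt^a)(\partial_\beta v^i)\}$ is a $g$-null form of type~\eqref{E:Qij.def} (with $\alpha=\beta$, $\beta=a$ in the notation of Definition~\ref{D:NULLFORMS}, summed over $a$, and coefficient $\Transport^\beta=\smoothfunction(\Psi)$); likewise for line~4. Lemma~\ref{L:CRUCIALPROPERTIESOFNULLFORMS} then guarantees that $\upmu$-weighting this combination produces no $\bX\cdot\bX$ term — the dangerous $\bX v\cdot\bX\GradEnt$ pieces arising from the two terms cancel. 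This cancellation is only visible at the level of the combination; splitting it and substituting a transport equation into one piece discards the structure. This is why the paper simply observes that \emph{all} of $\mathfrak M^i_{(\mathcal C)}$, including the $\Transport$-bearing lines, is a sum of null forms $\mathcal Q(\rd\Psi,\rd\Vortrenormalized)$ and $\mathcal Q(\rd\Psi,\rd\GradEnt)$, and feeds the whole thing through Lemma~\ref{lem:null.form} uniformly to obtain the pointwise bound \eqref{eq:transport.C.D.inho.pointwise}.

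Your remaining steps — the handling of the antisymmetrized term in $\mathfrak M_{(\mathcal D)}$ as a null form, and the frame decomposition of $\upmu\,\Flatcurl\Vortrenormalized$ to treat the $\exp(-\Densrenormalized)\delta_{ab}(\Flatcurl\Vortrenormalized)^a\GradEnt^b$ term — are sound and match the argument below \eqref{eq:pointwise.of.the.most.basic.transport.inho} that the paper invokes.
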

\begin{proof}
Note that $\mathfrak M^i_{(\mathcal{C})}$ consists of null forms 
(see Definition~\ref{D:NULLFORMS}) 
$\mathcal Q(\rd \Psi,\rd\Vr)$, $\mathcal Q(\rd \Psi,\rd S)$. Therefore, by Lemma~\ref{lem:null.form} 
(with $\phi^{(1)} = \Vr^i, S^i$; $\phi^{(2)} = \Psi$; 
$\mathfrak d^{(1,1)}= \mathfrak d^{(1,2)} \doteq \epd$; 
$\mathfrak d^{(2,2)} \doteq \epd^{1/2}$;
and
$\mathfrak d^{(2,1)} = \mathcal{O}(1)$ 
by virtue of the bootstrap assumptions \eqref{BA:LARGERIEMANNINVARIANTLARGE}--\eqref{BA:S}),\footnote{Note that by Lemma~\ref{lem:null.form}, 
there is also a term $\epd |\mathcal{P} \Psi|$, which we bound by $\lesssim \epd^{\f 32}$ using \eqref{BA:W.Li.small}.} 
we have:
\begin{equation}\label{eq:transport.C.D.inho.pointwise}
\begin{split}
 |\mathcal{P}^N (\upmu \mathfrak M^i_{(\mathcal{C})}) | 
 \ls &\: \epd^{\f 32} + \underbrace{|\mathcal{P}^{\leq N+1} (\Vr,S)| 
+
|\mathcal{P}^{\leq N} \bX(\Vr,S)|}_{\doteq I} 
+ 
\underbrace{\epd(|\mathcal{P}^{[2,N+1]} \Psi| + | \mathcal{P}^{[1,N]} \bX \Psi|)}_{\doteq II} \\
 &\: + \underbrace{\epd |\mathcal{P}^{[2,N]}(\upmu, L^i, \Psi)|}_{\doteq III}.
\end{split}
\end{equation}
We recall the expression for $\mathfrak M_{(\DivofEntrenormalized)}$
given by \eqref{eq:def.M.D}. The term 
$2 \exp(-2 \Densrenormalized) 
			\left\lbrace
				(\partial_a v^a) \partial_b \GradEnt^b -
				(\partial_a v^b) \partial_b \GradEnt^a
			\right\rbrace$ is a null form of type $\mathcal Q(\rd \Psi,\rd S)$. Thus,
			using the same arguments we gave when handling $\mathfrak M^i_{(\mathcal{C})}$,
			we can pointwise bound
			its $\mathcal{P}^N(\upmu \cdot)$ derivatives
			by RHS~\eqref{eq:transport.C.D.inho.pointwise}. 
			Moreover, using the same arguments given below 
			\eqref{eq:pointwise.of.the.most.basic.transport.inho},
			we see that the $\mathcal{P}^N$ derivatives of the term
			$\upmu \exp(-\Densrenormalized) \delta_{ab} (\Flatcurl \Vortrenormalized)^a \GradEnt^b$ 
			can be pointwise bounded by RHS~\eqref{eq:transport.C.D.inho.pointwise}.
			From now on, 	
			it therefore suffices to consider the terms on the RHS of \eqref{eq:transport.C.D.inho.pointwise}.

The term $I$ can be controlled using Propositions~\ref{prop:V.S} and \ref{prop:transverse.from.transport} so that:
\begin{equation}\label{eq:transport.C.D.inho.top}
\begin{split}
&\: \|\mathcal{P}^{\leq N+1} (\Vr,S)\|_{L^2(\Mtu)}^2 + \|\mathcal{P}^{\leq N} \bX(\Vr,S)\|_{L^2(\Mtu)}^2 \ls \epd^3 \max\{1, \upmu_{\star}^{-2 \toprate + 2\Ntop - 2N + 0.8}(t)\}.
\end{split}
\end{equation}

For the term $II$ in \eqref{eq:transport.C.D.inho.pointwise}, we use the bootstrap assumptions 
\eqref{BA:W1}, \eqref{BA:W2}, and \eqref{BA:W.Li.small} and the estimates of 
Propositions~\ref{prop:almost.monotonicity} and \ref{prop:commutators} 
to obtain:
\begin{equation}
\begin{split}
&\: \epd^2 \|\mathcal{P}^{[2,N+1]} \Psi\|_{L^2(\Mtu)}^2 
+ 
\epd^2
\|  \mathcal{P}^{[1,N]} \bX \Psi \|_{L^2(\Mtu)}^2 \\
\ls &\: \epd^2 \mathbb K_{[1,N]}(t,u) + \epd^2 \int_{u'=0}^{u'=u} \mathbb{F}_{[1,N]}(t,u') \, du' + \epd^2 \int_{t'=0}^{t'=t} \mathbb{E}_{[1,N]}(t',u)\, dt' \\
& \ \  + \epd^3 \max\{1, \upmu_{\star}^{-2 \toprate + 2\Ntop - 2N + 1.8}(t) \}
	\\
\ls &\: \epd^3 \max\{1, \upmu_{\star}^{-2 \toprate + 2\Ntop - 2N + 1.8}(t) \}.
\end{split}
\end{equation}

The term $III$ in \eqref{eq:transport.C.D.inho.pointwise} is the same as the term $III$ in \eqref{eq:pointwise.of.the.most.basic.transport.inho}, and can be bounded as in 
the proof of
Proposition~\ref{prop:Vr.S.inho}, which, when combined with Propositions~\ref{prop:V.S},
implies that it is bounded by $\lesssim \epd^3 \max\{1, \upmu_{\star}^{-2 \toprate + 2\Ntop - 2N + 1.8}(t) \}$.

Combining the above estimates, we conclude the desired estimate \eqref{E:transport.inhom.1}.
\qedhere


\end{proof}

\begin{proposition}[\textbf{$L^2$ control of some null forms in the modified fluid variable transport equations}]
\label{prop:transport.inhom.2}
For $0\leq N \leq \Ntop$, the terms $\mathfrak Q \in \{ \mathfrak Q_{(\mathcal{C})}^i,\,\mathfrak Q_{(\mathcal{D})}\}$ 
(see \eqref{E:RENORMALIZEDVORTICITYCURLNULLFORM}--\eqref{E:DIVENTROPYGRADIENTNULLFORM})
can be estimated as follows for all $(t,u)\in [0,\Tboot)\times [0,U_0]$:
\begin{equation}
\begin{split}
&\: \|\mathcal{P}^N (\upmu \mathfrak Q)\|_{L^2(\mathcal M_{t,u})}^2 \ls \epd^3 \max\{1, \upmu_{\star}^{-2 \toprate+2\Ntop-2N+0.8}(t)\}.
\end{split}
\end{equation}
\end{proposition}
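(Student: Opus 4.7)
The plan is to exploit the fact that every summand in $\mathfrak{Q}_{(\CurlofVortrenormalized)}^i$ and $\mathfrak{Q}_{(\DivofEntrenormalized)}$ has the schematic form $\mathrm{f}(\Psi, L^i) \cdot (\Vortrenormalized^a, \GradEnt^a) \cdot \mathcal{Q}^{(g)}(\rd \phi^{(1)}, \rd \phi^{(2)})$, where $\phi^{(1)}, \phi^{(2)}$ are scalar components of $\threePsi$ and $\mathcal{Q}^{(g)}$ is a $g$-null form in the sense of Definition~\ref{D:NULLFORMS}. The terms involving $\Transport \Psi$ factors fit into this framework after the $\upmu$-weighting of the equation, since $\upmu\Transport = \Rad + \upmu\Lunit$, so that such products reduce, after the chain rule, to combinations of $X \phi^{(1)} \cdot \mathcal{P} \phi^{(2)}$ and $\upmu \mathcal{P}\phi^{(1)} \cdot \mathcal{P}\phi^{(2)}$ exactly as in the second clause of Lemma~\ref{L:CRUCIALPROPERTIESOFNULLFORMS}.

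I would first apply $\mathcal{P}^N$ with the product rule, distributing the derivatives between the transport factor $(\Vortrenormalized, \GradEnt)$, the null form $\upmu\mathcal{Q}^{(g)}(\rd\Psi,\rd\Psi)$, and the smooth coefficient $\mathrm{f}(\Psi, L^i)$, and then bound the smooth-coefficient pieces exactly as in the proof of Proposition~\ref{prop:Vr.S.inho} (see the discussion surrounding \eqref{eq:example.product}--\eqref{eq:example.product.3}). For the null-form factor, I would invoke Lemma~\ref{lem:null.form} with $\phi^{(1)}, \phi^{(2)} \in \threePsi$, using the bootstrap assumptions \eqref{BA:LARGERIEMANNINVARIANTLARGE}, \eqref{BA:SMALLWAVEVARIABLESUPTOONETRANSVERSALDERIVATIVE}, and \eqref{BA:W.Li.small} to take $\mathfrak{d}^{(j,1)} \lesssim \mathring{\updelta}$ and $\mathfrak{d}^{(j,2)} \lesssim \epd^{1/2}$. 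This produces a pointwise bound of the schematic form
\begin{align*}
|\mathcal{P}^{N}[\upmu\mathfrak{Q}]|
& \lesssim \sum_{N_1 + N_2 = N} |\mathcal{P}^{N_1}(\Vortrenormalized, \GradEnt)| \cdot \left\{ \mathring{\updelta} |\mathcal{P}^{[1, N_2 + 1]} \Psi| + \epd^{1/2} |\mathcal{P}^{[1, N_2]} \Rad \Psi| + \epd^{1/2} |\mathcal{P}^{[2, N_2]}(\upmu, L^i, \Psi)| \right\},
\end{align*}
up to lower-order terms treated as in Proposition~\ref{prop:Vr.S.inho}.

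Then I would take the $L^2(\mathcal{M}_{t,u})$-norm, splitting each summand according to whether $N_1 \leq \Ntop - \toprate - 2$ or $N_2 \leq \Ntop - \toprate - 5$. In the first case, I bound $(\Vortrenormalized, \GradEnt)$ in $L^\infty$ by $\lesssim \epd$ via \eqref{BA:V}--\eqref{BA:S} and estimate the remaining factor in $L^2$ using \eqref{BA:W1}--\eqref{BA:W2}, Proposition~\ref{prop:geometric.top}, Proposition~\ref{prop:commutators}, and Proposition~\ref{prop:mus.int}; in the second case, I bound the null-form pointwise factor in $L^\infty$ by $\lesssim \mathring{\updelta} \epd^{1/2}$ using Proposition~\ref{P:LINFTYHIGHERTRANSVERSAL} and Proposition~\ref{prop:geometric.low}, and estimate $\|\mathcal{P}^{\leq N}(\Vortrenormalized, \GradEnt)\|_{L^2(\mathcal{M}_{t,u})}$ via Proposition~\ref{prop:V.S} after a time integration controlled by Propositions~\ref{prop:mus.int} and \ref{prop:almost.monotonicity}. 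In all cases the extra factor of $\epd$ supplied by $(\Vortrenormalized, \GradEnt)$, combined with the $\mathring{\updelta}$ or $\epd^{1/2}$ smallness of the null-form factor, collectively yields the claimed $\epd^3$ prefactor after squaring.

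The main obstacle will be verifying that the sharp power of $\upmu_\star$ indeed closes at $-2\toprate + 2\Ntop - 2N + 0.8$ in every case; in particular, when $N_2$ is close to $\Ntop$ so that the estimate couples to the top-order wave-energy bound \eqref{BA:W1}, the naive time integration of $\upmu_\star^{-2\toprate + 2\Ntop - 2 N_2 + 1.8}$ produces $\upmu_\star^{-2\toprate + 2\Ntop - 2N + 0.8}$ only because of Proposition~\ref{prop:mus.int}, which requires $-2\toprate + 2\Ntop - 2N_2 + 1.8 > 1$, i.e., $N_2 \leq \Ntop + \toprate - \tfrac{1}{2}$; this is automatic for $N \leq \Ntop$. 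The symmetric case where $N_1$ is large relies on the fact that $\mathbb{V}_N$ and $\mathbb{S}_N$ have better singularity strength than $\mathbb{W}_N$ by a factor of $\upmu_\star$, absorbing the extra power of $\upmu_\star$ lost by pairing with the $L^\infty$ estimates from Proposition~\ref{P:LINFTYHIGHERTRANSVERSAL}.
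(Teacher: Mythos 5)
Your proposal is correct and follows essentially the same route as the paper: decompose $\mathfrak{Q}$ as $(\Vr,S)$ times a $g$-null form, apply Lemma~\ref{lem:null.form} for the pointwise bound, split by which factor absorbs the bulk of the derivatives, and estimate the top-derivative $(\Vr,S)$ contribution via $\mathbb V_{\leq N}$, $\mathbb S_{\leq N}$ -- the crucial point being that only $N$ (not $N+1$) derivatives of $(\Vr,S)$ appear, which is precisely what lets the estimate close at $N = \Ntop$. The algebraic step in your final paragraph (``$-2\toprate + 2\Ntop - 2N_2 + 1.8 > 1$, i.e., $N_2 \leq \Ntop + \toprate - \tfrac{1}{2}$'') is incorrect as written, but this does not invalidate the argument: the paper handles the wave-factor contribution using the $\mathbb{K}$ and $\mathbb{F}$ components of the energy rather than raw time integration of $\mathbb{E}$, obtaining the stronger exponent $+1.8$ which a fortiori dominates the claimed $+0.8$.
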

\begin{proof}
The $\mathfrak Q$ terms can all be expressed as $S$ multiplied by a null form $\mathcal Q(\rd\Psi, \rd\Psi)$. We control the null form using \eqref{eq:null.form.basic} with $\mathfrak d^{(1,1)},\,\mathfrak d^{(1,2)} ,\,\mathfrak d^{(2,1)},\,\mathfrak d^{(2,2)} \ls 1$ (justified by \eqref{BA:LARGERIEMANNINVARIANTLARGE}--\eqref{BA:W.Li.small}) 
so that:
\begin{equation}\label{eq:transport.C.D.inho.null.pointwise}
\begin{split}
&\:  |\mathcal{P}^N (\upmu \mathfrak Q) | \\
\ls &\: \sum_{N_1+N_2\leq N}|\mathcal{P}^{\leq N_1} (\Vr,S)| ( |\mathcal{P}^{[1,N_2+1]} \Psi| 
+ 
|\mathcal{P}^{[1,N_2]}\bX \Psi| +  |\mathcal{P}^{[2,N_2]}(\upmu, L^i, \Psi)|)\\
 \ls &\: \underbrace{|\mathcal{P}^{\leq N} (\Vr,S)|}_{\doteq I} + \underbrace{\epd(|\mathcal{P}^{[2,N+1]} \Psi| + 
|\mathcal{P}^{[1, N]} \bX\Psi|)}_{\doteq II}   + \underbrace{\epd |\mathcal{P}^{[2,N]}(\upmu, L^i, \Psi)|}_{\doteq III},
\end{split}
\end{equation}
where in the last line, we used the $L^\infty$ estimates \eqref{BA:V}, \eqref{BA:S} for 
$(\Vr,S)$ if $N_1 \leq \Ntop - \toprate-5$, 
and otherwise, we used the $L^\infty$ estimates  
\eqref{BA:LARGERIEMANNINVARIANTLARGE}--\eqref{BA:W.Li.small} and
Proposition~\ref{prop:geometric.low} for $\Psi$, $\upmu$, and $L^i$.

Next, we observe that the terms $II$ and $III$ are exactly the same as $II$ and $III$ in \eqref{eq:transport.C.D.inho.pointwise} in Proposition~\ref{prop:transport.inhom.1}. We can therefore argue exactly as in Proposition~\ref{prop:transport.inhom.1} to show that these terms in $\|\cdot \|_{L^2(\Mtu)}^2$ are bounded above by $\epd^3 \max\{1, \upmu_{\star}^{-2 \toprate + 2\Ntop - 2N + 1.8}(t) \}$. Notice in particular that while Proposition~\ref{prop:transport.inhom.1} was only stated for $0\leq N\leq \Ntop -1$, the bounds for these two terms in fact also hold (and can be proved in the same way) for $N = \Ntop$.

It thus remains to consider the term $I$ in \eqref{eq:transport.C.D.inho.null.pointwise}. Importantly, notice that term $I$ in \eqref{eq:transport.C.D.inho.null.pointwise} is \emph{better} than the corresponding term $I$ in \eqref{eq:transport.C.D.inho.pointwise} because it has up to $N$, as opposed to $N+1$ derivatives. We control this term using the definition of $\mathbb V_{\leq N}$, $\mathbb S_{\leq N}$ and Proposition~\ref{prop:V.S} as follows:
\begin{equation*}
\begin{split}
&\: \| \mathcal{P}^{\leq N} (\Vr,S) \|_{L^2(\Mtu)}^2 \ls \int_{u'=0}^{u'=u} [\mathbb V_{\leq N} + \mathbb S_{\leq N}](t,u') \, du' \ls \epd^3 \max\{1, \upmu_{\star}^{-2 \toprate+2\Ntop-2N+2.8}(t)\}.
\end{split}
\end{equation*}

Combining the above estimates, we conclude the proposition. \qedhere

\end{proof}

\begin{proposition}[\textbf{$L^2$ control of some easy terms in the transport equation for $\mathcal{C}$}]
\label{prop:transport.inhom.3}
For $0\leq N\leq \Ntop$, the 
term $\mathfrak L^i_{(\mathcal{C})}$ (see \eqref{E:RENORMALIZEDVORTICITYCURLLINEARORBETTER})
can be estimated as follows for all $(t,u)\in [0,\Tboot)\times [0,U_0]$:
\begin{equation*}
\|\mathcal{P}^N (\upmu \mathfrak L^i_{(\mathcal{C})}) \|_{L^2(\mathcal M_{t,u})} \ls \epd^3 \max \{1, \upmu_{\star}^{-2 \toprate+2\Ntop-2N+0.8}(t)\}.
\end{equation*}
\end{proposition}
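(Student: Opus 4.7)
\noindent \textbf{Proof proposal for Proposition~\ref{prop:transport.inhom.3}.}
My plan is to exploit the schematic structure of $\mathfrak{L}^i_{(\mathcal{C})}$, namely that each summand in \eqref{E:RENORMALIZEDVORTICITYCURLLINEARORBETTER} is of the form $\smoothfunction(\Densrenormalized,\Ent) \cdot \GradEnt \cdot \GradEnt \cdot \Transport v$, where $\smoothfunction$ is a smooth function. Using the identity $\upmu\Transport = \bX + \upmu\Lunit$ from \eqref{E:TRANSPORTVECTORFIELDINTERMSOFLUNITANDRADUNIT}, I will rewrite the weighted inhomogeneous term as
\begin{align*}
\upmu\mathfrak{L}^i_{(\mathcal{C})}
 = \smoothfunction(\Densrenormalized,\Ent,\Lunit^j) \cdot \GradEnt \cdot \GradEnt \cdot \bX v
+ \smoothfunction(\Densrenormalized,\Ent,\Lunit^j) \cdot \upmu \cdot \GradEnt \cdot \GradEnt \cdot \Lunit v,
\end{align*}
so that no factor of $\upmu^{-1}$ ever arises. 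The heuristic driving the estimate is then transparent: since \eqref{BA:S} provides two independent factors of $\epd$ in $L^\infty$ from the pair of $\GradEnt$-factors (one more than in Proposition~\ref{prop:transport.inhom.2}), we expect a full gain of $\epd^2$ in $L^\infty$ and hence the improved $\epd^3$ amplitude in the $L^2$ bound, even though the $\upmu_\star$-weight is allowed to be as singular as in the transport-inhomogeneity estimates above.

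The execution will mirror the arguments of Propositions~\ref{prop:transport.inhom.1} and \ref{prop:transport.inhom.2}, so I will be brief. First I would apply $\mathcal{P}^N$ and distribute the derivatives via the product rule, exactly as in Step~1 of the proof of Proposition~\ref{prop:Vr.S.inho}: every resulting summand is a product of factors of the form $\mathcal{P}^{N_k}(\upmu,\Lunit^i,\Densrenormalized,\Ent)$ together with two $\mathcal{P}^{n_k}\GradEnt$ factors and one $\mathcal{P}^{m}(\bX v)$ or $\mathcal{P}^{m}(\upmu\Lunit v)$ factor. I will then case-split on which factor carries the most derivatives, placing that factor in $L^2(\mathcal{M}_{t,u})$ and all other factors in $L^{\infty}$; the $L^\infty$-estimates available are \eqref{BA:LARGERIEMANNINVARIANTLARGE}--\eqref{BA:S}, \eqref{BA:W.Li.small}, and Propositions~\ref{prop:geometric.low}, \ref{prop:geometric.low.2}, and \ref{P:LINFTYHIGHERTRANSVERSAL}. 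Crucially, at least one of the two $\GradEnt$-factors always admits an $L^{\infty}$ bound by $\lesssim\epd$ (since $N\le\Ntop$ leaves at most one factor of $\GradEnt$ at high order), and in the subcase where both $\GradEnt$-factors are low-order we gain a full $\epd^2$ in $L^{\infty}$.

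The $L^2(\mathcal{M}_{t,u})$-bound for the ``high-order'' factor is then harvested from previous estimates: $\mathcal{P}^{[2,N]}(\upmu,\Lunit^i)$ from Proposition~\ref{prop:geometric.top}, $\mathcal{P}^{[2,N+1]}\Psi$ and $\mathcal{P}^{[1,N]}\bX\Psi$ from the bootstrap assumptions \eqref{BA:W1}--\eqref{BA:W2} combined with \eqref{eq:wave.energy.commuted}, and $\mathcal{P}^{\leq N}\GradEnt$ and $\mathcal{P}^{[1,N-1]}\bX\GradEnt$ from Propositions~\ref{prop:V.S} and \ref{prop:transverse.from.transport}. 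In every case, Proposition~\ref{prop:mus.int} converts the resulting $\int_0^t \upmu_\star^{-b}(t')\,dt'$ integrals into one power-gain, and the worst case produces the weight $\upmu_\star^{-2\toprate+2\Ntop-2N+0.8}(t)$ claimed in the statement. Combining the $\epd^2$ from the two $\GradEnt$-factors in $L^{\infty}$ with the square root of the highest of the available $L^2$ bounds (each of which is itself of size $\epd^{1/2}$ or $\epd^{3/2}$ times a power of $\upmu_\star^{-1}$) yields precisely $\epd^3$, as required.

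The only mildly delicate step I anticipate is the bookkeeping in the subcase where $\mathcal{P}^{N'}\GradEnt$ with $N' > \Ntop - \toprate - 2$ is the ``large'' factor and simultaneously appears twice; but this cannot happen because the second $\GradEnt$-factor is then necessarily of order $\leq N - N' \leq \toprate + 2 \leq \Ntop - \toprate - 2$ (using $\Ntop \geq 2\toprate+10$), so its $L^{\infty}$-bound \eqref{BA:S} is available. With this observation the case analysis closes without circularity, and the proof concludes by summing over all cases and all terms in the explicit expression \eqref{E:RENORMALIZEDVORTICITYCURLLINEARORBETTER}.
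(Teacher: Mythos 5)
Your approach is essentially the same as the paper's: recognize that every summand of $\mathfrak{L}^i_{(\mathcal{C})}$ has the schematic form $\smoothfunction(\Psi)\cdot (\Transport v)\cdot \GradEnt\cdot\GradEnt$, decompose $\upmu\Transport=\bX+\upmu\Lunit$, distribute $\mathcal{P}^N$ and case-split on which factor carries the most derivatives, place the rest in $L^\infty$, and harvest $L^2$ bounds from Propositions~\ref{prop:V.S}, \ref{prop:transverse.from.transport}, \ref{prop:geometric.top} and the wave bootstrap. The paper packages this in three lines: it first records the pointwise bound $|\mathcal{P}^{\leq N}(\upmu\mathfrak L^i_{(\mathcal C)})| \ls \epd|\mathcal{P}^{\leq N}(\Vr,S)| + \epd^2(|\mathcal{P}^{[2,N+1]}\Psi|+|\mathcal{P}^{[1,N]}\bX\Psi|)+\epd^2|\mathcal{P}^{[2,N]}(\upmu,L^i)|$, then notes that the right-hand side is dominated by the right-hand side of \eqref{eq:transport.C.D.inho.null.pointwise}, and then simply cites the $L^2(\Mtu)$-argument from the proof of Proposition~\ref{prop:transport.inhom.2}.

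Two minor remarks. First, your final power-count ``$\epd^2$ times $\epd^{1/2}$ or $\epd^{3/2}$ yields precisely $\epd^3$'' is slightly off: in the case where one $\GradEnt$-factor is high-order you pick up $\epd\cdot\epd^{3/2}=\epd^{5/2}$, and in the case where the $\Psi$/geometry factor is high-order you pick up $\epd^2\cdot\epd^{1/2}=\epd^{5/2}$; in neither case does the unsquared $L^2(\Mtu)$-norm come out to $\epd^3$. What the argument actually delivers is $\|\mathcal{P}^N(\upmu\mathfrak L^i_{(\mathcal C)})\|_{L^2(\Mtu)}^2 \ls \epd^5\max\{\ldots\}\leq\epd^3\max\{\ldots\}$, i.e.\ the squared $L^2$-norm is at most $\epd^3$ times the weight --- exactly as in the companion statements Propositions~\ref{prop:transport.inhom.1} and~\ref{prop:transport.inhom.2}, which both carry the square; the missing square in the displayed estimate of Proposition~\ref{prop:transport.inhom.3} appears to be a typographical slip (it is used in Proposition~\ref{prop:C.D} only in its squared form via Proposition~\ref{prop:transport.higher.order}). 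Second, your appeal to bounds on $\mathcal{P}^{[1,N-1]}\bX\GradEnt$ is not needed here: $\mathfrak L^i_{(\mathcal C)}$ is quadratic in $\GradEnt$ but contains no derivatives of $\GradEnt$, so no $\bX\GradEnt$-factors arise when one distributes $\mathcal{P}^N$.
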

\begin{proof}
We begin with the following pointwise estimate:
\begin{equation*}
\begin{split}
|\mathcal{P}^{\leq N}(\upmu\mathfrak L^i_{(\mathcal{C})})| \ls &\: \epd |\mathcal{P}^{\leq N} (\Vr,S)| + \epd^2 (|\mathcal{P}^{[2,N+1]} \Psi| + | \mathcal{P}^{[1,N]} \bX\Psi|)+ \epd^2|\mathcal{P}^{[2,N]}(\upmu,L^i)|,
\end{split}
\end{equation*}
which
can be derived by using the same arguments we used to obtain
\eqref{eq:pointwise.of.the.most.basic.transport.inho}. Notice that all the above terms can be bounded above by the RHS of \eqref{eq:transport.C.D.inho.null.pointwise}. They can therefore be 
bounded in the norm $\| \cdot \|_{L^2(\mathcal M_{t,u})}$
via exactly the same arguments we used in the proof of Proposition~\ref{prop:transport.inhom.2}.
This yields the desired conclusion.
 \qedhere
\end{proof}

\subsection{Below top-order estimates for $\mathbb C$ and $\mathbb D$}\label{sec:C.D.everything}

\begin{proposition}[\textbf{Below top-order estimates for the modified fluid variables}]
\label{prop:C.D}
For $0\leq  N \leq \Ntop-1$, the following holds for $(t,u)\in [0,\Tboot)\times [0,U_0]$:
$$\mathbb C_N(t,u) + \mathbb D_N(t,u) \ls \epd^3 \max\{1,\upmu_{\star}^{-2 \toprate+2\Ntop-2N+0.8}(t)\}. $$

\end{proposition}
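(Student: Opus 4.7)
The plan is to apply Proposition~\ref{prop:transport.higher.order} directly with $\phi$ taken to be (the Cartesian components of) $\mathcal{C}^i$ or $\mathcal{D}$, treating the transport equations \eqref{E:EVOLUTIONEQUATIONFLATCURLRENORMALIZEDVORTICITY} and \eqref{E:TRANSPORTFLATDIVGRADENT} in the $\upmu\Transport$ form. That is, for $\phi\in\{\mathcal{C}^1,\mathcal{C}^2,\mathcal{C}^3,\mathcal{D}\}$ we have a schematic identity $\upmu\Transport\phi = \inhom$, where $\inhom$ is $\upmu$ times a linear combination drawn from $\{\mathfrak{M}_{(\mathcal{C})}^i, \mathfrak{Q}_{(\mathcal{C})}^i, \mathfrak{L}_{(\mathcal{C})}^i\}$ or $\{\mathfrak{M}_{(\mathcal{D})}, \mathfrak{Q}_{(\mathcal{D})}\}$. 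The remaining task is to feed the right-hand side estimates for $\inhom$ into the transport energy machinery and then close with Gr\"onwall in $u$.

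For the initial data on RHS of Proposition~\ref{prop:transport.higher.order}, the assumption \eqref{assumption:very.small.modified} gives $\|\mathcal{P}^{\leq N}(\mathcal{C},\mathcal{D})\|_{L^2(\Sigma_0)}^2 \lesssim \epd^3$, which is consistent with the target bound. For the inhomogeneous-term term on that RHS, I would combine Propositions~\ref{prop:transport.inhom.1}, \ref{prop:transport.inhom.2}, and \ref{prop:transport.inhom.3} (summing over $N'\leq N$) to obtain
\[
\|\mathcal{P}^{\leq N}(\upmu\,\inhom)\|_{L^2(\mathcal{M}_{t,u})}^2 \lesssim \epd^3 \max\{1,\upmu_\star^{-2\toprate+2\Ntop-2N+0.8}(t)\},
\]
which is exactly the target singularity strength. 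Here the hypothesis $N\leq \Ntop-1$ enters precisely because Proposition~\ref{prop:transport.inhom.1} requires one derivative to spare for the $\mathfrak{M}$ terms (the top-order case is the subject of Section~\ref{sec:transport.hard} and requires elliptic estimates for $\Vortrenormalized$ and $S$). For the final error term in Proposition~\ref{prop:transport.higher.order}, the bootstrap assumption \eqref{BA:C.D} yields $\|\mathcal{P}^{[1,\Ntop-\toprate-5]}(\mathcal{C},\mathcal{D})\|_{L^\infty(\mathcal{M}_{t,u})}^2 \lesssim \epd^2$, so this contribution is $\lesssim \epd^3 \max\{1,\upmu_\star^{-2\toprate+2\Ntop-2N+3.8}(t)\}$, which is strictly less singular than the target and hence absorbed.

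Plugging the three bounds into Proposition~\ref{prop:transport.higher.order} applied to each of $\mathcal{C}^i$ and $\mathcal{D}$ and taking a maximum over $i=1,2,3$ (using the array conventions of Definition~\ref{def:array.convention}) yields
\[
\mathbb{C}_{\leq N}(t,u)+\mathbb{D}_{\leq N}(t,u) \lesssim \epd^3 \max\{1,\upmu_\star^{-2\toprate+2\Ntop-2N+0.8}(t)\} + \int_{u'=0}^{u'=u}\bigl(\mathbb{C}_{\leq N}(t,u')+\mathbb{D}_{\leq N}(t,u')\bigr)\,du',
\]
where the $u$-integral arises from the $\mathcal{F}_{u'}^t$ flux terms implicit in Proposition~\ref{prop:transport.higher.order} (cf.\ the proof of Proposition~\ref{prop:V.S}). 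Applying Gr\"onwall's inequality in $u$ on the bounded interval $[0,U_0]$ absorbs the integral term into a harmless multiplicative constant and yields the desired bound.

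There is no real obstacle in this argument; it is essentially bookkeeping once the three inhomogeneous-term propositions and the general transport estimate are in hand. The only subtlety to keep track of is the restriction $N\leq \Ntop-1$: at $N=\Ntop$ one loses the top-order control of $\mathcal{P}^{N+1}(\Vortrenormalized,S)$ that Propositions~\ref{prop:V.S} and \ref{prop:transverse.from.transport} implicitly supply to Proposition~\ref{prop:transport.inhom.1}, and that loss can only be repaired via the elliptic/top-order analysis reserved for Section~\ref{sec:transport.hard}.
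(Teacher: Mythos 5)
Your approach is exactly the one the paper takes: apply Proposition~\ref{prop:transport.higher.order} to $\phi \in \lbrace \mathcal{C}^1,\mathcal{C}^2,\mathcal{C}^3,\mathcal{D} \rbrace$, feed in the initial-data bound from \eqref{assumption:very.small.modified}, the bootstrap bound from \eqref{BA:C.D} for the $L^\infty$ error term, and the inhomogeneous-term estimates from Propositions~\ref{prop:transport.inhom.1}--\ref{prop:transport.inhom.3}. Your analysis of the exponents is also correct, including the observation that the $\epd\cdot\upmu_\star^{-2\toprate+2\Ntop-2N+3.8}$ contribution is dominated by the $0.8$-exponent term since $\upmu_\star\leq 1$.

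One small point of bookkeeping: the final Gr\"onwall step you add is superfluous, and its justification is slightly misattributed. The conclusion of Proposition~\ref{prop:transport.higher.order} has no $u$-integral of the energy on its right-hand side (the Gr\"onwall in $u$ has already been performed inside the proof of that proposition), so plugging in Propositions~\ref{prop:transport.inhom.1}--\ref{prop:transport.inhom.3} directly yields the claimed bound. The extra $u$-integral in the proof of Proposition~\ref{prop:V.S} did not come from flux terms ``implicit'' in Proposition~\ref{prop:transport.higher.order}; it came from the inhomogeneity estimate \eqref{eq:Vr.S.inho.weak}, whose right-hand side genuinely contained $\int_0^u(\mathbb{V}_{\leq N}+\mathbb{S}_{\leq N})\,du'$. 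By contrast, Propositions~\ref{prop:transport.inhom.1}--\ref{prop:transport.inhom.3} already give closed bounds in terms of $\epd^3\upmu_\star^{-2\toprate+2\Ntop-2N+0.8}(t)$ with no residual $\mathbb{C}$-- or $\mathbb{D}$--dependence (because those propositions themselves invoked Propositions~\ref{prop:V.S} and \ref{prop:transverse.from.transport} to resolve the $(\Vr,S)$-dependence). The extra Gr\"onwall is harmless and the bound still follows, but it is not needed.
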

\begin{proof}
This follows from combining Proposition~\ref{prop:transport.higher.order} 
{for $\phi = \mathcal{C}^i,\mathcal{D}^i$}
with the initial data 
{size assumptions in \eqref{assumption:very.small.modified}, 
the bootstrap assumptions \eqref{BA:C.D},}
and the inhomogeneous term estimates (in Propositions~\ref{prop:transport.inhom.1}--\ref{prop:transport.inhom.3}) 
for the terms on the RHSs of the transport equations \eqref{E:EVOLUTIONEQUATIONFLATCURLRENORMALIZEDVORTICITY} and
\eqref{E:TRANSPORTFLATDIVGRADENT}.
\qedhere
\end{proof}

\section{Top-order transport and elliptic estimates for the specific vorticity and the entropy gradient}
\label{sec:transport.hard}
We continue to work under the assumptions of Theorem~\ref{thm:bootstrap}.

In this section, we derive top-order estimates for the modified fluid variables
$\mathcal{C}$ and $\mathcal{D}$. The key difference with the lower-order estimates 
(which we derived in Proposition~\ref{prop:C.D})
is that we can\underline{not} bound the top-order
derivatives of $\Vr$ and $S$ using the $\mathbb V$ and $\mathbb S$ norms; that approach
would lead to a loss of a derivative, which is not permissible at the top-order. 
To avoid losing a derivative, we rely on the following additional ingredient: \emph{weighted} elliptic estimates 
for the specific vorticity and entropy gradient
(recall Sections~\ref{sec:intro.elliptic}, \ref{sec:intro.transport.top.order}).

In Section~\ref{sec:tran.C.D.top}, we derive top-order transport estimates. 
The estimates are similar to the ones we derived in Section~\ref{sec:transport.harder}, 
except there are some top-order inhomogeneous terms. We derive the elliptic estimates in Sections~\ref{sec:elliptic.general} and \ref{sec:elliptic.top.order}. For the final estimate, see Section~\ref{sec:elliptic.everything}.

In our analysis, we rely on elliptic estimates relative to the 
\underline{Cartesian} spatial coordinates. In deriving these estimates,
we will use the ``Cartesian pointwise norms'' from the following definition.
\begin{definition}\label{def:spatial.derivatives}
Denote by $\rdb$ the gradient with respect to the Cartesian spatial coordinates.
For a scalar function $f$ and a one-form $\phi$, 
define respectively:
$$|\rdb f|^2 \doteq \sum_{i=1}^3 |\rd_i f|^2,\quad |\rdb \phi|^2 \doteq \sum_{i,j=1}^3 |\rd_i\phi_j|^2.$$
\end{definition}

\subsection{Top-order transport estimates for $\protect\mathbb C_{\protect\Ntop}$ and $\protect\mathbb D_{\protect\Ntop}$}\label{sec:tran.C.D.top}

\begin{proposition}[\textbf{Preliminary top-order $L^2$ estimates for the modified fluid variables}]
\label{prop:C.D.transport.main}
Let $\varsigma \in (0,1]$.
There exists a constant $C > 0$ \textbf{independent of $\varsigma$}
and a constant
$\mathfrak{c}_{\varsigma} > 0$ (depending on $\varsigma$) such that whenever 
$\mathfrak{c} \geq \mathfrak{c}_{\varsigma}$, the following estimate holds
for every $(t,u) \in [0,\Tboot)\times [0,U_0]$ (with $u'$ denoting the $u$-value of the integrand):
\begin{equation} \label{E:C.D.transport.main}
\begin{split}
&\: \| e^{-\f{\mathfrak{c} u'}2} \sqrt{\upmu} 
	\mathcal{P}^{\Ntop} (\mathcal{C},\mathcal{D})\|_{L^2(\Sigma_{t}^{u})}^2 
+ 
\| e^{-\f{\mathfrak{c} u'}2}  \mathcal{P}^{\Ntop} (\mathcal{C},\mathcal{D}) \|_{L^2(\mathcal F_{u}^{t})}^2 + \f{\mathfrak{c}}{2} \| e^{-\f{\mathfrak{c} u'}2} \mathcal{P}^{\Ntop} (\mathcal{C},\mathcal{D}) \|_{L^2(\Mtu)}^2 \\
\leq &\: C \epd^3 \upmu_{\star}^{-2 \toprate+0.8}(t) + \varsigma \int_{t'=0}^{t'=t} \f{1}{\upmu_{\star}(t')}\|e^{-\f{\mathfrak{c}u'}{2}} \sqrt \upmu \rdb \mathcal{P}^{\Ntop} (\Vr,S)\|_{L^2(\Sigma_{t'}^u)}^2 \,dt'.
\end{split}
\end{equation}
\end{proposition}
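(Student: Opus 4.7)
The plan is to run a weighted transport energy argument for $\mathcal{P}^{\Ntop}(\mathcal{C},\mathcal{D})$, where the scalar weight $e^{-\mathfrak{c}u}$ generates a large coercive spacetime bulk on the LHS that can absorb both the bad bulk error term and, after a Cauchy--Schwarz split, the part of the top-order inhomogeneity that is quadratic in $\mathcal{P}^{\Ntop}(\mathcal{C},\mathcal{D})$. The first step is to observe, using $Lu=0$ and $\bX u=1$ from Lemma~\ref{L:VFBASIC}, that $\upmu Bu = 1$ and hence $\upmu B(e^{-\mathfrak{c}u}) = -\mathfrak{c}\,e^{-\mathfrak{c}u}$; repeating the divergence computation underlying \eqref{eq:transport.basic} but with the weight $e^{-\mathfrak{c}u}$ inserted, I obtain, for any $\phi$ compactly supported in $u\in[0,U_0]$ with $\upmu B\phi = \mathfrak{F}$,
\begin{align*}
&\int_{\Sigma_t^u}e^{-\mathfrak{c}u}\upmu\phi^2\,d\tvol + \int_{\mathcal{F}_u^t}e^{-\mathfrak{c}u}\phi^2\,d\conevol + \mathfrak{c}\int_{\Mtu}e^{-\mathfrak{c}u}\phi^2\,d\vol\\
&\qquad = \int_{\Sigma_0^u}\upmu\phi^2\,d\tvol + \int_{\Mtu}2e^{-\mathfrak{c}u}\phi\mathfrak{F}\,d\vol + \int_{\Mtu}(L\upmu+\upmu\mytr\angk)e^{-\mathfrak{c}u}\phi^2\,d\vol.
\end{align*}
Since $|L\upmu+\upmu\mytr\angk|\leq C$ uniformly by Propositions~\ref{prop:geometric.low}--\ref{prop:geometric.low.2}, taking $\mathfrak{c}$ large (independently of $\varsigma$) absorbs the last bulk term into half of the $\mathfrak{c}$-bulk, leaving $(\mathfrak{c}/2)\int_{\Mtu}e^{-\mathfrak{c}u}\phi^2$ on the LHS.

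Next I would apply this identity to each component of $\mathcal{P}^{\Ntop}\mathcal{C}$, and then of $\mathcal{P}^{\Ntop}\mathcal{D}$. After commuting \eqref{E:EVOLUTIONEQUATIONFLATCURLRENORMALIZEDVORTICITY}--\eqref{E:TRANSPORTFLATDIVGRADENT} with $\mathcal{P}^{\Ntop}$, the inhomogeneity splits as $\mathfrak{F} = \mathcal{P}^{\Ntop}(\upmu\mathfrak{M}) + \mathcal{P}^{\Ntop}(\upmu\mathfrak{Q}) + \mathcal{P}^{\Ntop}(\upmu\mathfrak{L}) + [\upmu B,\mathcal{P}^{\Ntop}](\mathcal{C},\mathcal{D})$. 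The $\mathfrak{Q}$- and $\mathfrak{L}$-pieces are bounded at $N=\Ntop$ by Propositions~\ref{prop:transport.inhom.2} and \ref{prop:transport.inhom.3} respectively, giving $L^2(\Mtu)$-norms $\lesssim \epd^3\upmu_\star^{-2\toprate+0.8}(t)$; the commutator admits the same bound by Proposition~\ref{prop:commutators} combined with Proposition~\ref{prop:C.D}. For any such ``good'' contribution $g$, the cross term can be handled by the Young split $|\int e^{-\mathfrak{c}u}\phi g|\leq (\mathfrak{c}/8)\int e^{-\mathfrak{c}u}\phi^2 + (C/\mathfrak{c})\int e^{-\mathfrak{c}u}g^2$, absorbing the first piece into the LHS bulk and feeding the second, together with the data $\int_{\Sigma_0^u}\upmu\phi^2 \lesssim \epd^3$ from \eqref{assumption:very.small.modified}, into the first RHS term $C\epd^3\upmu_\star^{-2\toprate+0.8}(t)$ of \eqref{E:C.D.transport.main}.

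The real difficulty is the top-order part of $\mathcal{P}^{\Ntop}(\upmu\mathfrak{M}_{(\mathcal{C},\mathcal{D})})$, which schematically contains the products $\partial v\cdot\partial\Vr$ and $\partial v\cdot\partial S$ whose order-$(\Ntop+1)$ derivatives with a Cartesian spatial derivative landing on $(\Vr,S)$ are \emph{not} controlled by $\mathbb{V}_{\leq\Ntop}$ or $\mathbb{S}_{\leq\Ntop}$. Here the null-form structure of $\mathfrak{M}_{(\mathcal{C})}$, $\mathfrak{M}_{(\mathcal{D})}$ is essential: Lemma~\ref{L:CRUCIALPROPERTIESOFNULLFORMS} rewrites each weighted product $\upmu\mathcal{Q}(\partial\phi^{(1)},\partial\phi^{(2)})$ as $\mathrm{f}\,\bX\phi^{(1)}\cdot\mathcal{P}\phi^{(2)} + \mathrm{f}\,\bX\phi^{(2)}\cdot\mathcal{P}\phi^{(1)} + \upmu\,\mathrm{f}\,\mathcal{P}\phi^{(1)}\cdot\mathcal{P}\phi^{(2)}$, eliminating the catastrophic $\bX\bX$ combination. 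Distributing $\mathcal{P}^{\Ntop}$ across this decomposition and using the $L^\infty$ bounds of Propositions~\ref{prop:geometric.low}--\ref{prop:geometric.low.2}, \ref{P:LINFTYHIGHERTRANSVERSAL} together with \eqref{BA:V}--\eqref{BA:S} to handle all factors where derivatives land on $\Psi$, $\Lunit^i$, $\upmu$, or low-order $(\Vr,S)$, the only piece that survives is of the schematic form $h\cdot\rdb\mathcal{P}^{\Ntop}(\Vr,S)$ with $\|h\|_{L^\infty}\leq C_h$, modulo $L^2(\Mtu)$-errors of size $\lesssim \epd^3\upmu_\star^{-2\toprate+0.8}(t)$. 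The hardest technical step will be the careful case analysis that verifies this reduction: keeping track of which frame component each of the $\Ntop$ commutator derivatives lands on, and checking that no surviving $\bX\bX$-type combination remains.

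Finally, for this residual piece I would apply the weighted Cauchy--Schwarz bound
\begin{align*}
\Bigl|\int_{\Mtu}e^{-\mathfrak{c}u}\phi\cdot h\cdot\rdb\mathcal{P}^{\Ntop}(\Vr,S)\,d\vol\Bigr|
&\leq \varsigma\int_{0}^{t}\frac{1}{\upmu_\star(t')}\bigl\|e^{-\mathfrak{c}u'/2}\sqrt{\upmu}\,\rdb\mathcal{P}^{\Ntop}(\Vr,S)\bigr\|_{L^2(\Sigma_{t'}^u)}^2\,dt'\\
&\quad + \frac{C_h^2}{4\varsigma}\int_{\Mtu}\frac{\upmu_\star(t')}{\upmu}\,e^{-\mathfrak{c}u}\phi^2\,d\vol,
\end{align*}
and use the pointwise bound $\upmu_\star(t')\leq\upmu$ to drop the $\upmu_\star/\upmu$ factor in the second integral. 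Setting $\mathfrak{c}_\varsigma \doteq C_h^2/\varsigma$, for every $\mathfrak{c}\geq\mathfrak{c}_\varsigma$ the second integral is absorbed into the $(\mathfrak{c}/2)$-bulk on the LHS, while the first integral is exactly the $\varsigma$-weighted contribution appearing on the RHS of \eqref{E:C.D.transport.main}. Summing over the components of $\mathcal{P}^{\Ntop}\mathcal{C}$ and $\mathcal{P}^{\Ntop}\mathcal{D}$ yields the stated estimate.
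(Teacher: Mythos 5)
Your overall strategy—deriving a weighted transport energy identity with weight $e^{-\mathfrak{c}u}$ so that the coercive $\mathfrak{c}$-bulk can absorb both the bad bulk term and, via a Young split with a small parameter, the quadratic-in-$\mathcal{P}^{\Ntop}(\mathcal{C},\mathcal{D})$ part, and then isolating a $\varsigma$-weighted contribution from $\rdb\mathcal{P}^{\Ntop}(\Vr,S)$—matches the paper's approach. Your final weighted Cauchy--Schwarz, with the insertion of $\sqrt{\upmu/\upmu_\star(t')}$ and its reciprocal and the pointwise bound $\upmu_\star(t')\leq\upmu$, is also essentially the computation in the paper's Step~3.

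However, there is a genuine gap in the middle of the argument: you assert that after applying the null-form decomposition of Lemma~\ref{L:CRUCIALPROPERTIESOFNULLFORMS} to $\upmu\mathfrak{M}$ and distributing $\mathcal{P}^{\Ntop}$, ``the only piece that survives is of the schematic form $h\cdot\rdb\mathcal{P}^{\Ntop}(\Vr,S)$,'' and you defer the verification to ``the hardest technical step.'' But that reduction is not merely a careful case analysis—it requires a specific nonobvious observation that your sketch does not contain. What actually emerges at top order from $\mathcal{P}^{\Ntop}\bigl(\mathrm{f}\,\Rad\Psi\cdot\mathcal{P}(\Vr,S) + \cdots\bigr)$ is (schematically) $\Rad\Psi\cdot\mathcal{P}^{\Ntop+1}(\Vr,S)$ and $\mathcal{P}\Psi\cdot\mathcal{P}^{\Ntop}\Rad(\Vr,S)$. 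The first of these contains $L\mathcal{P}^{\Ntop}(\Vr,S)$, which is an $(\Ntop+1)$-fold $\mathcal{P}$-derivative and is controlled neither by the energies $\mathbb{V}_{\leq\Ntop},\mathbb{S}_{\leq\Ntop}$ nor by $\rdb\mathcal{P}^{\Ntop}(\Vr,S)$: the frame vectorfield $L$ has a nontrivial $\partial_t$ component, so no linear combination of Cartesian \emph{spatial} derivatives $\rd_a$ reproduces it. The missing step (in the paper this is the main point of Step~3 of the proof) is to decompose $L\mathcal{P}^{\Ntop}(\Vr,S) = B\mathcal{P}^{\Ntop}(\Vr,S) - X\mathcal{P}^{\Ntop}(\Vr,S)$, observe that $X$, $Y$, $Z$ are $\Sigma_t$-tangent (hence genuinely controlled by $\rdb$), and then separately bound $\|e^{-\mathfrak{c}u'/2}B\mathcal{P}^{\Ntop}(\Vr,S)\|_{L^2(\Mtu)}^2$ by the ``good'' error $\lesssim\epd^3\upmu_\star^{-2\toprate+0.8}(t)$ via the identity $B\mathcal{P}^{\Ntop}(\Vr,S) = \upmu^{-1}\mathcal{P}^{\Ntop}\bigl(\upmu B(\Vr,S)\bigr) + \upmu^{-1}[\upmu B,\mathcal{P}^{\Ntop}](\Vr,S)$, the inhomogeneous term estimate \eqref{eq:Vr.S.inho.weak}, and the commutator bound of Proposition~\ref{prop:commutators}. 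Without this observation, the reduction you defer would stall on the $L$-derivative, so the gap is not just a matter of bookkeeping—it is a missing idea that the proof of the proposition actually turns on.
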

\begin{proof}
Let $\varsigma',\,\mathfrak{c}>0$ be constants to be specified later.
It is crucial that all explicit constants $C > 0$ and
implicit constants in this proof are \underline{independent} of $\varsigma'$ and $\mathfrak{c}$.
At the end of the proof, there will be a large constant $C$ such that we will choose $\varsigma'$ to satisfy
$\varsigma = C \varsigma'$, where $\varsigma > 0$ is the constant from the statement of the proposition.

\pfstep{Step~1: Transport estimate in the weighted norms} Since $\upmu B u =1$ by \eqref{E:LUNITANDRADOFUANDT}, \eqref{E:TRANSPORTVECTORFIELDINTERMSOFLUNITANDRADUNIT}, we have:
\begin{align}\label{eq:top.C.transport.weighted}
\upmu B (e^{-\f{\mathfrak{c} u}2} \mathcal{P}^{\Ntop} \mathcal{C}) 
	& = - \f {\mathfrak{c}} 2 e^{-\f{\mathfrak{c} u}2} \mathcal{P}^{\Ntop} \mathcal{C} + e^{-\f{\mathfrak{c} u}2} \upmu B (\mathcal{P}^{\Ntop} \mathcal{C}),
	\\
\upmu B (e^{-\f{\mathfrak{c} u}2} \mathcal{P}^{\Ntop} \mathcal{D}) 
& = - \f{\mathfrak{c}}2 e^{-\f{\mathfrak{c} u}2} \mathcal{P}^{\Ntop} \mathcal{D} + e^{-\f{\mathfrak{c} u}2} \upmu B (\mathcal{P}^{\Ntop} \mathcal{D}).
	\label{eq:top.D.transport.weighted}
\end{align}

Starting with \eqref{eq:top.C.transport.weighted} and \eqref{eq:top.D.transport.weighted}, we now argue using the identity \eqref{eq:transport.basic} with $\phi \doteq (\mathcal{C}^i, \mathcal{D}^i)$, 
except now, 
unlike in the proof of Proposition~\ref{prop:transport}, we do not use Gr\"onwall's inequality but instead take advantage of the good terms associated with the 
terms
$- \f {\mathfrak{c}} 2 e^{-\f{\mathfrak{c} u}2} \mathcal{P}^{\Ntop} \mathcal{C}$ and $- \f {\mathfrak{c}} 2 e^{-\f{\mathfrak{c} u}2} \mathcal{P}^{\Ntop} \mathcal{D}$
on RHSs~\eqref{eq:top.C.transport.weighted}--\eqref{eq:top.D.transport.weighted}. We thus obtain, for any $\varsigma' >0$ (here, $u'$ denotes the $u$-value of the integrand):
\begin{equation}\label{eq:top.C.D.estimate}
\begin{split}
&\: \| e^{-\f{\mathfrak{c} u'}2}  \sqrt{\upmu} \mathcal{P}^{\Ntop} \mathcal{C} \|_{L^2(\Sigma_{t}^{u})}^2 
+ 
\| e^{-\f{\mathfrak{c} u'}2}  \mathcal{P}^{\Ntop} \mathcal{C} \|_{L^2(\mathcal F_{u}^{t})}^2 + \mathfrak{c} \| e^{-\f{\mathfrak{c} u'}2} \mathcal{P}^{\Ntop} \mathcal{C} \|_{L^2(\Mtu)}^2 \\
&\: + \| e^{-\f{\mathfrak{c} u'}2}  \sqrt{\upmu} \mathcal{P}^{\Ntop} \mathcal{D} \|_{L^2(\Sigma_{t}^{u})}^2 + \| e^{-\f{\mathfrak{c} u'}2}  \mathcal{P}^{\Ntop} \mathcal{D} \|_{L^2(\mathcal F_{u}^{t})}^2 + \mathfrak{c} \| e^{-\f{\mathfrak{c} u'}2} \mathcal{P}^{\Ntop} \mathcal{D} \|_{L^2(\Mtu)}^2 \\
\ls  &\:  \| e^{-\f{\mathfrak{c}u'}{2}}  \sqrt{\upmu} \mathcal{P}^{\Ntop} (\mathcal{C}, \mathcal{D}) \|_{L^2(\Sigma_0^u)}^2 + \|e^{-\f{\mathfrak{c}u'}{2}} \mathcal{P}^{\Ntop} (\mathcal{C},\mathcal{D}) \|_{L^2(\Mtu)}^2 
	\\
&\: +  \|e^{-\f{\mathfrak{c}u'}{2}} \mathcal{P}^{\Ntop} (\mathcal{C},\mathcal{D}) \|_{L^2(\Mtu)} 
\|e^{-\f{\mathfrak{c}u'}{2}} \upmu B \mathcal{P}^{\Ntop} (\mathcal{C},\mathcal{D})\|_{L^2(\Mtu)} 
	\\
\ls &\: \| e^{-\f{\mathfrak{c}u'}{2}}  \sqrt{\upmu} \mathcal{P}^{\Ntop} (\mathcal{C}, \mathcal{D}) \|_{L^2(\Sigma_0^u)}^2 + (1+(\varsigma')^{-1}) \|e^{-\f{\mathfrak{c}u'}{2}} \mathcal{P}^{\Ntop} (\mathcal{C},\mathcal{D}) \|_{L^2(\Mtu)}^2 \\
&\: + \varsigma' \|e^{-\f{\mathfrak{c}u'}{2}} \upmu B \mathcal{P}^{\Ntop} (\mathcal{C},\mathcal{D})\|_{L^2(\Mtu)}^2. 
\end{split}
\end{equation}

\pfstep{Step~2: Estimating the easy terms} We now consider the
terms on the RHS of \eqref{eq:top.C.D.estimate}. First, the assumptions 
\eqref{assumption:very.small.modified}
on the initial data 
and the simple bound $\|\upmu\|_{L^\infty(\Sigma_0)} \ls 1$ from Proposition~\ref{prop:geometric.low}
give:
\begin{equation}\label{eq:top.C.D.data}
\| e^{-\f{\mathfrak{c}u'}{2}}  \sqrt{\upmu} 
\mathcal{P}^{\Ntop} (\mathcal{C}, \mathcal{D}) \|_{L^2(\Sigma_0^u)}^2  \ls \epd^3.
\end{equation}

Recalling the transport equations \eqref{E:EVOLUTIONEQUATIONFLATCURLRENORMALIZEDVORTICITY},
\eqref{E:TRANSPORTFLATDIVGRADENT}, we notice that the terms 
$\|e^{-\f{\mathfrak{c}u'}{2}} \upmu B \mathcal{P}^{\Ntop} \mathcal{C}\|_{L^2(\Mtu)}$ and $\|e^{-\f{\mathfrak{c}u'}{2}} \upmu B \mathcal{P}^{\Ntop} \mathcal{D}\|_{L^2(\Mtu)}$ have essentially 
been estimated in 
Propositions~\ref{prop:transport.inhom.1}--\ref{prop:transport.inhom.3}
(using $e^{-\f{\mathfrak{c}u'}{2}} \leq 1$). Crucially, however, unlike in Proposition~\ref{prop:transport.inhom.1}, 
we have not yet bounded the following terms in \eqref{eq:transport.C.D.inho.top}:
$$\|e^{-\f{\mathfrak{c}u'}{2}} \mathcal{P}^{\Ntop+1} (\Vr,S)\|_{L^2(\mathcal M_{t,u})}^2  
+ \|e^{-\f{\mathfrak{c}u'}{2}} \mathcal{P}^{\Ntop} \bX(\Vr, S)\|_{L^2(\Mtu)}^2$$
(since this is one more derivative than $\mathbb V_{\Ntop}$ and $\mathbb S_{\Ntop}$ control). In other words, simply repeating the argument in Propositions~\ref{prop:transport.inhom.1}--\ref{prop:transport.inhom.3}
and separating the error terms that depend on $\Ntop+1$ derivatives of $(\Vr,S)$,
we obtain:
\begin{equation}\label{eq:top.C.D.uncontrollable}
\begin{split}
&\: \|e^{-\f{\mathfrak{c}u'}{2}} \upmu B \mathcal{P}^{\Ntop} \mathcal{C}\|_{L^2(\Mtu)}^2 
+ 
\|e^{-\f{\mathfrak{c}u'}{2}} \upmu B \mathcal{P}^{\Ntop} \mathcal{D}\|_{L^2(\Mtu)}^2 \\
\ls &\: \epd^3 \upmu_{\star}^{-2 \toprate+0.8}(t) 
+ 
\|e^{-\f{\mathfrak{c}u'}{2}} \mathcal{P}^{ \Ntop+1} (\Vr,S)\|_{L^2(\mathcal M_{t,u})}^2  
+ 
\|e^{-\f{\mathfrak{c}u'}{2}}\mathcal{P}^{ \Ntop} \bX(\Vr, S)\|_{L^2(\Mtu)}^2.
\end{split}
\end{equation}

\pfstep{Step~3: Controlling the top-order terms} We now consider the terms on the RHS of \eqref{eq:top.C.D.uncontrollable}. First, using the commutator estimates \eqref{eq:commutator.L2.main}, 
Proposition~\ref{prop:V.S},
and 
the bootstrap assumptions \eqref{BA:V}--\eqref{BA:S} 
to control $[\mathcal{P}^{ \Ntop},\bX](\Vr,S)$ 
(and using $e^{-\f{\mathfrak{c}u'}{2}}\leq 1$), we see that:
\begin{equation}
\begin{split}
&\: \|e^{-\f{\mathfrak{c}u'}{2}} \mathcal{P}^{ \Ntop+1} (\Vr,S)\|_{L^2(\mathcal M_{t,u})}^2  
+ \|e^{-\f{\mathfrak{c}u'}{2}}\mathcal{P}^{ \Ntop} \bX(\Vr, S)\|_{L^2(\Mtu)}^2 \\
\ls &\: \epd^3 \upmu_{\star}^{-2 \toprate+2.8}(t)  + \|e^{-\f{\mathfrak{c}u'}{2}} \bX \mathcal{P}^{\Ntop} (\Vr, S)\|_{L^2(\Mtu)}^2  + \|e^{-\f{\mathfrak{c}u'}{2}} L \mathcal{P}^{\Ntop} (\Vr, S)\|_{L^2(\Mtu)}^2 \\
&\: + \|e^{-\f{\mathfrak{c}u'}{2}} Y \mathcal{P}^{\Ntop} (\Vr, S)\|_{L^2(\Mtu)}^2+ \|e^{-\f{\mathfrak{c}u'}{2}} Z \mathcal{P}^{\Ntop} (\Vr, S)\|_{L^2(\Mtu)}^2 \\
\ls &\: \epd^3  \upmu_{\star}^{-2 \toprate+2.8}(t)  
+ \|e^{-\f{\mathfrak{c}u'}{2}} B \mathcal{P}^{\Ntop} (\Vr, S)\|_{L^2(\Mtu)}^2 \\
&\: + \|e^{-\f{\mathfrak{c}u'}{2}} \rdb \mathcal{P}^{\Ntop} (\Vr, S)\|_{L^2(\Mtu)}^2,
\end{split}
\end{equation}
where we have replaced $L \mathcal{P}^{\Ntop} (\Vr, S) = B \mathcal{P}^{\Ntop} (\Vr, S) - X \mathcal{P}^{\Ntop} (\Vr, S)$ (by \eqref{E:TRANSPORTVECTORFIELDINTERMSOFLUNITANDRADUNIT}) and also used 
Lemmas~\ref{lem:slashed} and~\ref{L:GEOMETRICCOORDINATEVECTORFIELDSINTERMSOFCARTESIANVECTORFIELDS}
to express $(X,Y,Z)$ in terms of the Cartesian coordinate spatial partial derivative vectorfields,
and Propositions~\ref{prop:geometric.low} and \ref{prop:geometric.low.2} to bound the coefficients in the expressions
by $\lesssim 1$. 
Moreover, using the commutator identity
$B \mathcal{P}^{\Ntop} (\Vr, S)
=
\upmu^{-1}
\mathcal{P}^{\Ntop} [\upmu B (\Vr, S)]
+
\upmu^{-1}
[\upmu B,\mathcal{P}^{\Ntop}](\Vr, S)
$,
the commutator estimates of Proposition~\ref{prop:commutators} with $\phi \doteq (\Vr^i, S^i)$,
the bootstrap assumptions \eqref{BA:V}--\eqref{BA:S},
Proposition~\ref{prop:V.S},
the estimate \eqref{eq:Vr.S.inho.weak},
and Proposition~\ref{prop:mus.int},
we deduce (also using $e^{-\f{\mathfrak{c}u'}{2}} \leq 1$) that
$\|e^{-\f{\mathfrak{c}u'}{2}} B \mathcal{P}^{\Ntop} (\Vr, S)\|_{L^2(\Mtu)}^2
\lesssim
\epd^3 \upmu_{\star}^{-2 \toprate+0.8}(t)
$.
Combining the above results, we deduce:
\begin{equation}\label{eq:top.C.D.controlling.the.uncontrollable}
\begin{split}
&\:  \|e^{-\f{\mathfrak{c}u'}{2}} \mathcal{P}^{ \Ntop + 1} (\Vr,S)\|_{L^2(\mathcal M_{t,u})}^2  
+ \|e^{-\f{\mathfrak{c}u'}{2}}\mathcal{P}^{ \Ntop} \bX(\Vr, S)\|_{L^2(\Mtu)}^2 \\
\ls &\: \epd^3 \upmu_{\star}^{-2 \toprate + 0.8}(t) + \int_{t'=0}^{t'=t} \f{1}{\upmu_{\star}(t')}\|e^{-\f{\mathfrak{c}u'}{2}} \sqrt \upmu \rdb \mathcal{P}^{\Ntop} (\Vr,S)\|_{L^2(\Sigma_{t'}^u)}^2 \,dt'.
\end{split}
\end{equation}

\pfstep{Step~4: Putting everything together} 
Using
\eqref{eq:top.C.D.data}, \eqref{eq:top.C.D.uncontrollable} and \eqref{eq:top.C.D.controlling.the.uncontrollable} 
to control the terms on RHS~\eqref{eq:top.C.D.estimate},
we see that there is a $C>0$ such that:
\begin{equation}\label{eq:top.C.D.estimate.almost.final}
\begin{split}
&\: \| e^{-\f{\mathfrak{c} u'}2}  \sqrt{\upmu} \mathcal{P}^{\Ntop} \mathcal{C} \|_{L^2(\Sigma_{t}^{u})}^2 + \| e^{-\f{\mathfrak{c} u'}2}  \mathcal{P}^{\Ntop} \mathcal{C} \|_{L^2(\mathcal F_{u}^{t})}^2 + \mathfrak{c} \| e^{-\f{\mathfrak{c} u'}2} \mathcal{P}^{\Ntop} \mathcal{C} \|_{L^2(\Mtu)}^2 \\
&\: + \| e^{-\f{\mathfrak{c} u'}2}  \sqrt{\upmu} \mathcal{P}^{\Ntop} \mathcal{D} \|_{L^2(\Sigma_{t}^{u})}^2 + \| e^{-\f{\mathfrak{c} u'}2}  \mathcal{P}^{\Ntop} \mathcal{D} \|_{L^2(\mathcal F_{u}^{t})}^2 + \mathfrak{c} \| e^{-\f{\mathfrak{c} u'}2} \mathcal{P}^{\Ntop} \mathcal{D} \|_{L^2(\Mtu)}^2 \\
\leq &\: C (1 + \varsigma') \epd^3 \upmu_{\star}^{-2 \toprate+0.8}(t) + C(1+(\varsigma')^{-1}) \|e^{-\f{\mathfrak{c}u'}{2}} \mathcal{P}^{\Ntop} (\mathcal{C},\mathcal{D}) \|_{L^2(\Mtu)}^2 \\
&\: + C \varsigma' \int_{t'=0}^{t'=t} \f{1}{\upmu_{\star}(t')}\|e^{-\f{\mathfrak{c}u'}{2}} \sqrt \upmu \rdb \mathcal{P}^{\Ntop} (\Vr,S)\|_{L^2(\Sigma_{t'}^u)}^2 \,dt'.
\end{split}
\end{equation}

Finally, relabeling the coefficients 
$C \varsigma'$
on RHS~\eqref{eq:top.C.D.estimate.almost.final} 
by setting $\varsigma \doteq C \varsigma'$,
bounding the data term $C (1 + \varsigma') \epd^3 \upmu_{\star}^{-2 \toprate+0.8}(t)$
by a new constant ``$C$'' times $\epd^3 \upmu_{\star}^{-2 \toprate+0.8}(t)$ via the assumption
$\varsigma \in (0,1]$,
taking $\mathfrak{c}_{\varsigma}$ sufficiently large (depending on $\varsigma$) so that: 
$$C(1+(\varsigma')^{-1}) \|e^{-\f{\mathfrak{c}u'}{2}} \mathcal{P}^{\Ntop} (\mathcal{C},\mathcal{D}) \|_{L^2(\Mtu)}^2 \leq \f{\mathfrak{c}_{\varsigma}}{2} \int_{\Mtu} e^{-\mathfrak{c} u'} [|\mathcal{P}^{\Ntop} \mathcal{C}|^2 
+ |\mathcal{P}^{\Ntop} \mathcal{D}|^2 ] \, d\vol,$$
now allowing $\mathfrak{c}$ to be any constant such that $\mathfrak{c} \geq \mathfrak{c}_{\varsigma}$,
and subtracting 
$\f{\mathfrak{c}}{2} 
\int_{\Mtu} e^{-\mathfrak{c} u'} [|\mathcal{P}^{\Ntop} \mathcal{C}|^2 + |\mathcal{P}^{\Ntop} \mathcal{D}|^2 ] \, d\vol$ from both sides of \eqref{eq:top.C.D.estimate.almost.final}, 
we obtain the desired 
inequality \eqref{E:C.D.transport.main}. \qedhere
\end{proof}

\subsection{General elliptic estimates on $\mathbb R\times \mathbb T^2$}\label{sec:elliptic.general}
We begin with a standard weighted \emph{Euclidean} elliptic estimate on $\mathbb R\times \mathbb T^2$ in Proposition~\ref{prop:elliptic.Euclidean}. We then apply this in our geometric setting for general one-forms in Proposition~\ref{prop:elliptic}.

\begin{proposition}[\textbf{Weighted Euclidean elliptic estimates}]
\label{prop:elliptic.Euclidean}
Let $w:\mathbb R\times \mathbb T^2 \to \mathbb R_{>0}$ be a smooth, strictly positive, bounded weight function.

The following inequality holds for all 
one-forms $\phi = \phi_a dx^a \in C^2_c(\mathbb R \times \mathbb T^2)$:
\begin{equation*}
\begin{split}
&\: \|\sqrt{w} \rdb \phi \|_{L^2(\mathbb R\times \mathbb T^2,dx)}^2 \\
\leq &\: 4 \|\sqrt w \mathrm{curl}\, \phi\|_{L^2(\mathbb R\times \mathbb T^2, dx)}^2 + 4 \|\sqrt w \mathrm{div}\, \phi\|_{L^2(\mathbb R\times \mathbb T^2, dx)}^2 + 3 \|\rdb \log w\|_{L^\infty(\mathbb R\times \mathbb T^2)}^2 \|\sqrt w \phi\|_{L^2(\mathbb R\times \mathbb T^2, dx)}^2,
\end{split}
\end{equation*}
where $\rdb$ is as in Definition~\ref{def:spatial.derivatives}, $\| \upxi \|_{L^2(\mathbb R\times \mathbb T^2,dx)}^2 
\doteq \int_{\mathbb R\times \mathbb T^2} |\upxi|_{e}^2 \, dx$ for tensorfields $\upxi$,
$|\upxi|_e$ denotes the standard Euclidean pointwise norm of $\upxi$,
and $dx = dx^1\, dx^2\, dx^3$.
\end{proposition}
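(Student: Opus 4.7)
My plan is to establish the estimate via the standard div-curl identity combined with integration by parts against the weight $w$. The proof reduces to elementary manipulations since $\phi$ is compactly supported.

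First, I will observe the pointwise algebraic identity in $\mathbb{R}^3$ (summation over $a,b$ from $1$ to $3$):
\begin{equation*}
|\bar\partial \phi|_e^2 = \sum_{a,b}(\partial_a \phi_b)^2 = |\mathrm{curl}\,\phi|_e^2 + \sum_{a,b}\partial_a \phi_b \, \partial_b \phi_a,
\end{equation*}
which follows from $\sum_i \epsilon_{iab}\epsilon_{ia'b'} = \delta_{aa'}\delta_{bb'} - \delta_{ab'}\delta_{ba'}$. Multiplying by $w$ and integrating over $\mathbb{R}\times \mathbb{T}^2$ (with $dx$), the task becomes to compute $\int w \,\partial_a \phi_b\, \partial_b\phi_a \, dx$.

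Next, I will integrate by parts twice (the compact support of $\phi$ eliminating all boundary terms). Moving $\partial_a$ off the first factor and then $\partial_b$ off the resulting second derivative of $\phi$, I obtain
\begin{equation*}
\int w \,\partial_a \phi_b\, \partial_b\phi_a \, dx = \int w\, (\mathrm{div}\,\phi)^2\, dx + \int (\partial_b w)\phi_b\, (\partial_a \phi_a)\, dx - \int (\partial_a w)\phi_b\, (\partial_b \phi_a)\, dx.
\end{equation*}
Combining this with the algebraic identity yields
\begin{equation*}
\|\sqrt{w}\,\bar\partial\phi\|_{L^2(\mathbb{R}\times\mathbb{T}^2,dx)}^2 = \|\sqrt{w}\,\mathrm{curl}\,\phi\|_{L^2}^2 + \|\sqrt{w}\,\mathrm{div}\,\phi\|_{L^2}^2 + \mathcal{R},
\end{equation*}
where the remainder $\mathcal{R}$ consists of two cross terms, each bounded in absolute value by $\|\bar\partial \log w\|_{L^\infty}\|\sqrt{w}\,\phi\|_{L^2}\|\sqrt{w}\,\bar\partial\phi\|_{L^2}$ via Cauchy--Schwarz and the trivial identity $\partial_a w = \sqrt w \cdot \sqrt w \partial_a \log w$.

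Finally, I will apply Young's inequality $2XY \leq \lambda X^2 + \lambda^{-1}Y^2$ with a suitable $\lambda \in (0,1)$ to absorb the $\|\sqrt{w}\,\bar\partial\phi\|_{L^2}^2$ factor in $\mathcal{R}$ into the left-hand side, yielding the stated estimate once $\lambda$ is optimized to produce the advertised constants $4$ and $3$ (any non-optimal choice of $\lambda$ suffices for later applications, so the precise numerical values are not essential). There is no significant obstacle here: the argument is a textbook weighted Hodge-type estimate, and the compact support of $\phi$ justifies all boundary terms vanishing in the integrations by parts.
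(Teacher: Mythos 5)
Your proof is correct and follows essentially the same route as the paper's: a weighted div-curl integration-by-parts argument, with Cauchy--Schwarz on the $\bar\partial w$ cross terms and Young's inequality to absorb the $\|\sqrt{w}\,\bar\partial\phi\|$ factor. The only cosmetic difference is that you isolate the curl contribution up front via the pointwise identity $|\bar\partial\phi|_e^2 = |\mathrm{curl}\,\phi|_e^2 + \sum_{a,b}\partial_a\phi_b\,\partial_b\phi_a$, whereas the paper integrates by parts first and re-expresses $\partial_{ii}^2\phi_j$ in terms of div and curl; both variants yield the stated inequality (and your version actually gives slightly sharper constants, which is harmless since only the existence of some finite constants matters downstream).
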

\begin{proof}
Integrating by parts and using H\"older's inequality, we find that:
\begin{equation}\label{eq:Cartesian.elliptic.1}
\begin{split}
&\: \|\sqrt w \rdb \phi\|_{L^2(\mathbb R\times \mathbb T^2, dx)}^2 = \sum_{i,j=1}^3 \int_{\mathbb R\times \mathbb T^2} w (\rd_i\phi_j)^2 \,dx \\
= &\: - \sum_{i,j=1}^3 \left\lbrace \int_{\mathbb R\times \mathbb T^2} w \phi_j (\rd_{ii}^2\phi_j) \, dx 
+
 \int_{\mathbb R\times \mathbb T^2} (\rd_i w) \phi_j (\rd_i\phi_j) \, dx \right\rbrace \\
= & - \sum_{i,j=1}^3\int_{\mathbb R\times \mathbb T^2} w \phi_j \rd_{ij}^2\phi_i \, dx + \sum_{i,j=1}^3\int_{\mathbb R\times \mathbb T^2} w \phi_j \rd_i(\rd_j\phi_i - \rd_i\phi_j) \, dx - \sum_{i,j=1}^3 \int_{\mathbb R\times \mathbb T^2} (\rd_i w) \phi_j (\rd_i\phi_j) \, dx\\
= &\: \sum_{i,j=1}^3\int_{\mathbb R\times \mathbb T^2} w (\rd_j\phi_j )(\rd_{i}\phi_i) \, dx - \sum_{i,j=1}^3\int_{\mathbb R\times \mathbb T^2} w (\rd_i\phi_j )(\rd_j\phi_i - \rd_i\phi_j) \, dx \\
&\: + \sum_{i,j=1}^3 \int_{\mathbb R\times \mathbb T^2} (\rd_j w) \phi_j (\rd_i\phi_i) \, dx - \sum_{i,j=1}^3 \int_{\mathbb R\times \mathbb T^2} (\rd_i w) \phi_j (\rd_j \phi_i - \rd_i\phi_j) \, dx- \sum_{i,j=1}^3 \int_{\mathbb R\times \mathbb T^2} (\rd_i w) \phi_j (\rd_i\phi_j) \, dx\\
\leq &\: \|\sqrt w \,\mathrm{div}\,\phi\|_{L^2(\mathbb R\times \mathbb T^2, dx)}^2 + \|\sqrt w \, \rdb\phi\|_{L^2(\mathbb R\times \mathbb T^2, dx)} \|\sqrt w \mathrm{curl}\, \phi\|_{L^2(\mathbb R\times \mathbb T^2, dx)} \\
&\: +  \|\rdb \log w\|_{L^\infty( \mathbb R\times \mathbb T^2)} \|\sqrt w \phi \|_{L^2(\mathbb R\times \mathbb T^2, dx)} 
	\left\lbrace\|\sqrt w \,\mathrm{div}\,\phi\|_{L^2(\mathbb R\times \mathbb T^2, dx)} +  \|\sqrt w \mathrm{curl}\, \phi\|_{L^2(\mathbb R\times \mathbb T^2, dx)}	\right\rbrace \\
&\: +  \|\rdb \log w\|_{L^\infty( \mathbb R\times \mathbb T^2)} \|\sqrt w \phi \|_{L^2(\mathbb R\times \mathbb T^2, dx)} \|\sqrt w \, \rdb\phi\|_{L^2(\mathbb R\times \mathbb T^2, dx)}.
\end{split}
\end{equation}

Using $|ab| \leq \f{a^2} 4+ b^2$, we find that:
\begin{equation}\label{eq:Cartesian.elliptic.2}
\begin{split}
&\: \|\sqrt w \rdb \phi\|_{L^2(\mathbb R\times \mathbb T^2, dx)}^2 \\
\leq &\: \f 12 \|\sqrt w \, \rdb\phi\|_{L^2(\mathbb R\times \mathbb T^2, dx)}^2 + 2 \|\sqrt w \,\mathrm{div}\,\phi\|_{L^2(\mathbb R\times \mathbb T^2, dx)}^2 \\
&\: + 2 \|\sqrt w \mathrm{curl}\, \phi\|_{L^2(\mathbb R\times \mathbb T^2, dx)}^2 
+ 
\f 32 \|\rdb \log w\|_{L^\infty( \mathbb R\times \mathbb T^2)}^2 \|\sqrt w \phi\|_{L^2(\mathbb R\times \mathbb T^2,dx)}^2.
\end{split}
\end{equation}

The conclusion of the lemma follows from subtracting 
$\f 12 \|\sqrt w \, \rdb\phi\|_{L^2(\mathbb R\times \mathbb T^2, dx)}^2$ 
from both sides of \eqref{eq:Cartesian.elliptic.2}. \qedhere

\end{proof}

\begin{proposition}[\textbf{Euclidean elliptic estimates with $u$-weights}]
\label{prop:elliptic}
Let $\phi = \phi_a dx^a$ be a smooth compactly supported one-form on $\Sigma_t$. Then for each $\mathfrak{c}>0$ and each $t\in [0,\Tboot)$, the following elliptic estimate holds,
where the implicit constants are \underline{independent} of $\mathfrak{c}$:
$$\|e^{-\f{\mathfrak{c} u}2}\sqrt\upmu \, \rdb\phi\|_{L^2(\Sigma_t)}\ls \|e^{-\f{\mathfrak{c} u}2} \sqrt\upmu \,\mathrm{div}\,\phi\|_{L^2(\Sigma_t)} + \|e^{-\f{\mathfrak{c} u}2} \sqrt\upmu \,\mathrm{curl}\,\phi\|_{L^2(\Sigma_t)} + \mathfrak{c} \upmu_{\star}^{-1}(t) \| e^{-\f{\mathfrak{c} u}2} \sqrt \upmu \, \phi\|_{L^2(\Sigma_t)}.$$
\end{proposition}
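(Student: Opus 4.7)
The plan is to reduce Proposition~\ref{prop:elliptic} to Proposition~\ref{prop:elliptic.Euclidean} by an appropriate choice of weight that converts between the Cartesian volume form $dx$ and the geometric volume form $\upmu\,d\tvol$. First I would use the identity $dx = \upmu\Speed^3\,d\tvol$ from \eqref{eq:volume.form.Sigma} to rewrite, for any compactly supported function $f$ on $\Sigma_t$,
\[
\|e^{-\mathfrak{c}u/2}\sqrt{\upmu}\,f\|_{L^2(\Sigma_t)}^2
= \int_{\Sigma_t} e^{-\mathfrak{c}u}\upmu f^2 \, d\tvol
= \int_{\Sigma_t} \frac{e^{-\mathfrak{c}u}}{\Speed^3}\, f^2 \, dx.
\]
This identifies the correct weight $w \doteq e^{-\mathfrak{c}u}\Speed^{-3}$ (extended off the support of $\phi$ to a smooth, strictly positive, bounded function on $\mathbb{R}\times\mathbb{T}^2$ by gluing to a positive constant outside a large compact set; this extension does not affect any $\phi$-dependent integral). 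With this choice, each of the four $L^2(\Sigma_t)$ norms appearing in the statement equals the corresponding $\|\sqrt{w}(\cdot)\|_{L^2(\mathbb{R}\times\mathbb{T}^2,dx)}$ norm from Proposition~\ref{prop:elliptic.Euclidean} applied to $\phi$.

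The substantive step is bounding $\|\rdb\log w\|_{L^\infty}$ on the support of $\phi$. Since $\rdb \log w = -\mathfrak{c}\,\rdb u - 3\,\rdb\log \Speed$, the key is to control $\rdb u$. Contracting the eikonal equation \eqref{E:INTROEIKONAL} against the inverse metric formula \eqref{E:INVERSEACOUSTICALMETRIC} yields
\[
0 = (g^{-1})^{\alpha\beta}\partial_\alpha u\,\partial_\beta u = -(Bu)^2 + \Speed^2|\rdb u|^2.
\]
From Definition~\ref{D:FIRSTUPMU} together with Lemma~\ref{L:VFBASIC} (using $\Lunit u = 0$, $\Rad u = 1$, $\Rad = \upmu\Radunit$, and $B = \Lunit + \Radunit$), one finds $Bu = \Radunit u = \upmu^{-1}$. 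Hence $|\rdb u| = (\Speed\upmu)^{-1}$, and since $\Speed\approx 1$ by Proposition~\ref{prop:geometric.low.2} and $\upmu \geq \upmu_\star(t)$ pointwise on $\Sigma_t$, we get $\|\rdb u\|_{L^\infty(\Sigma_t)} \lesssim \upmu_\star^{-1}(t)$. The remaining term $\rdb\log\Speed$ is bounded by a constant via the bootstrap assumptions, Proposition~\ref{prop:geometric.low.2}, and the frame expansion \eqref{eq:Cart.to.geo.1}--\eqref{eq:Cart.to.geo.2}. Together these give $\|\rdb\log w\|_{L^\infty} \lesssim \mathfrak{c}\,\upmu_\star^{-1}(t) + 1 \lesssim \mathfrak{c}\,\upmu_\star^{-1}(t)$.

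Plugging the above into Proposition~\ref{prop:elliptic.Euclidean} produces
\begin{align*}
\|e^{-\mathfrak{c}u/2}\sqrt{\upmu}\,\rdb\phi\|_{L^2(\Sigma_t)}^2
&\leq 4\|e^{-\mathfrak{c}u/2}\sqrt{\upmu}\,\mathrm{curl}\,\phi\|_{L^2(\Sigma_t)}^2
+ 4\|e^{-\mathfrak{c}u/2}\sqrt{\upmu}\,\mathrm{div}\,\phi\|_{L^2(\Sigma_t)}^2 \\
&\quad + C\,\mathfrak{c}^2\upmu_\star^{-2}(t)\|e^{-\mathfrak{c}u/2}\sqrt{\upmu}\,\phi\|_{L^2(\Sigma_t)}^2,
\end{align*}
and the claimed inequality follows on taking square roots and using $\sqrt{a^2+b^2+c^2}\leq a+b+c$ for $a,b,c\geq 0$. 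There is no genuine obstacle; the proof is essentially bookkeeping. The one conceptually instructive point is that the $\upmu_\star^{-1}(t)$ factor arises \emph{necessarily} from the eikonal identity $Bu = \upmu^{-1}$, which forces $|\rdb u|$ to blow up at the same rate as $\upmu^{-1}$ — the very mechanism responsible for shock formation.
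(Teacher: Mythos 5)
Your proof is correct and follows essentially the same route as the paper: apply Proposition~\ref{prop:elliptic.Euclidean} with an exponential $u$-weight, bound $\|\rdb\log w\|_{L^\infty}$ by $\mathfrak{c}\upmu_\star^{-1}(t)$, and convert between the Cartesian measure $dx$ and the geometric measure $\upmu\,d\tvol$. The only (cosmetic) differences are that you absorb the factor $\Speed^{-3}$ into the weight to make the norm identifications exact rather than up to comparability, and you derive $|\rdb u|=(\Speed\upmu)^{-1}$ directly from the eikonal equation rather than by decomposing $\rd_i u$ via Lemma~\ref{lem:Cart.to.geo} and \eqref{E:LUNITANDRADOFUANDT} as the paper does — both give the same bound.
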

\begin{proof}
In this proof, the implicit constants in $\ls$ are \underline{independent} of $\mathfrak{c}$.

We apply Proposition~\ref{prop:elliptic.Euclidean} with $w \doteq e^{- \mathfrak{c} u}$. 
By 
Lemma~\ref{lem:Cart.to.geo}, 
\eqref{E:LUNITANDRADOFUANDT}, 
and Proposition~\ref{prop:geometric.low.2},
we have: $\|\rdb \log w\|_{L^\infty( \mathbb R\times \mathbb T^2)} 
\ls \mathfrak{c} \upmu_{\star}^{-1}(t)$. Hence,
$$\| e^{-\f{\mathfrak{c} u}2} \rdb\phi\|_{L^2(\Sigma_t,dx)}
\ls 
\| e^{-\f{\mathfrak{c} u}2}  \,\mathrm{div}\,\phi\|_{L^2(\Sigma_t,dx)} 
+ 
\| e^{-\f{\mathfrak{c} u}2} \,\mathrm{curl}\,\phi\|_{L^2(\Sigma_t,dx)} 
+ 
\mathfrak{c} \upmu_{\star}^{-1}(t) \|  e^{-\f{\mathfrak{c} u}2} \, \phi\|_{L^2(\Sigma_t,dx)}.$$

The conclusion thus follows from the fact
that the volume measures $\upmu dx$ and $d\tvol$ are comparable, 
which in turn follows from 
\eqref{eq:volume.form.Sigma}
and Proposition~\ref{prop:geometric.low.2}.
\qedhere
\end{proof}

\subsection{Top-order elliptic estimates for $\Vr$ and $S$}\label{sec:elliptic.top.order}
In this section, we derive top-order elliptic estimates for $\Vr$ and $\GradEnt$.

There are four main steps. 
Ultimately, our goal is to exploit the preliminary 
energy inequality for $(\mathcal{P}^{\Ntop} \mathcal{C}, \mathcal{P}^{\Ntop} \mathcal{D})$
that we derived in in Proposition~\ref{prop:C.D.transport.main}, 
and to do this, we have to control the integrand term 
$\|e^{-\f{\mathfrak{c}u'}{2}} \sqrt \upmu \rdb \mathcal{P}^{\Ntop} (\Vr,S)\|_{L^2(\Sigma_{t'}^u)}^2$ 
on RHS~\eqref{E:C.D.transport.main} with the help of elliptic estimates. 
To achieve this, we first
commute the top-order operators
$\mathcal{P}^{\Ntop}$ through the Euclidean operators $\mathrm{div}$ and $\mathrm{curl}$.
To avoid uncontrollable commutator terms, we introduce a $\upmu$ weight into the commutators.
In the second step, we have to control
$(\mathrm{div} \mathcal{P}^{\Ntop} \Vr,\mathrm{div} \mathcal{P}^{\Ntop} \GradEnt)$
and
$(\mathrm{curl} \mathcal{P}^{\Ntop} \Vr,\mathrm{curl} \mathcal{P}^{\Ntop} \GradEnt)$
in terms of the modified fluid variables
$(\mathcal{P}^{\Ntop} \mathcal{C}, \mathcal{P}^{\Ntop} \mathcal{D})$ 
from \eqref{E:RENORMALIZEDCURLOFSPECIFICVORTICITY}--\eqref{E:RENORMALIZEDDIVOFENTROPY}
plus simpler error terms.
The first and second steps are carried out in Lemmas~\ref{lem:curlVr}--\ref{lem:divS}.

Next, in Proposition~\ref{prop:C.D.elliptic.final},
we use the weighted elliptic estimates
on $\Sigma_t$ provided by Proposition~\ref{prop:elliptic}
and the results of the first two steps 
to obtain:
$
\|e^{-\f{\mathfrak{c} u}2} \sqrt{\upmu} ( \rdb \mathcal{P}^{\Ntop}\Vr, \rdb \mathcal{P}^{\Ntop} \GradEnt) \|_{L^2(\Sigma_t)}^2
\lesssim 
\|e^{-\f{\mathfrak{c} u}2}\sqrt\upmu  (\mathcal{P}^{\Ntop}\mathcal{C},\mathcal{P}^{\Ntop}\mathcal{D})\|_{L^2(\Sigma_t)}^2
+
\cdots
$, 
where ``$\cdots$'' denotes simpler error terms for which we already have an independent bound.
Finally, in Proposition~\ref{prop:elliptic.putting.everything.together},
we combine all of these results to obtain our main $L^2$ estimate\footnote{We clarify that although the estimate for $(\mathcal{P}^{\Ntop} \mathcal{C},\mathcal{P}^{\Ntop} \mathcal{D})$
and the aforementioned estimates
$
\|e^{-\f{\mathfrak{c} u}2}\sqrt{\upmu}  
(\rdb \mathcal{P}^{\Ntop} \Vr,\rdb \mathcal{P}^{\Ntop}  \GradEnt) \|_{L^2(\Sigma_t)}^2
\lesssim 
\|e^{-\f{\mathfrak{c} u}2}\sqrt\upmu  (\mathcal{P}^{\Ntop} \mathcal{C},\mathcal{P}^{\Ntop} \mathcal{D})\|_{L^2(\Sigma_t)}^2
+
\cdots
$
together imply a top-order $L^2$ estimate for 
$(\rdb \mathcal{P}^{\Ntop} \Vr,\rdb \mathcal{P}^{\Ntop} \GradEnt)$,
we do not explicitly state such an estimate in the paper because we do not need it for our main results. 
} for
$(\mathcal{P}^{\Ntop} \mathcal{C},\mathcal{P}^{\Ntop} \mathcal{D})$.

\subsubsection{Controlling $\mathrm{curl}\,\mathcal{P}^{\Ntop} \Vr$ and $\mathrm{div}\,\mathcal{P}^{\Ntop} \Vr$}
We start with a simple commutation lemma.

\begin{lemma}[\textbf{Commuting geometric vectorfields with $\upmu$-weighted Cartesian vectorfields}]
\label{lem:commute.with.Cartesian}
Let $\phi$ be a smooth function such that:
$$\|\mathcal{P}^{\leq \Ntop- \toprate-5} \phi\|_{L^\infty(\Sigma_t)} \leq \epd, \quad  
\| \mathcal{P}^{\leq \Ntop- \toprate - 5} \bX\phi\|_{L^\infty(\Sigma_t)} \leq \epd$$ 
for all $t \in [0,\Tboot)$.

Then, for $0\leq N \leq \Ntop$, the following holds in $\mathcal M_{\Tboot,U_0}$:
$$| [\upmu \rd_i, \mathcal{P}^N]\phi |\ls |\mathcal{P}^{[1,N]} \phi| + |\mathcal{P}^{\leq N-1} \bX \phi| +  \epd ( | \mathcal{P}^{[2,N]} (\upmu, L^i,\Psi) | +  |\mathcal{P}^{[2,N-1]} \bX\Psi |).$$
\end{lemma}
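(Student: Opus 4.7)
The plan is to reduce $[\upmu \rd_i,\mathcal{P}^N]\phi$ to commutators involving the geometric frame vectorfields $\{\bX,Y,Z\}$ and then apply the generalized Leibniz rule. Starting from the frame identities \eqref{eq:Cart.to.geo.1}--\eqref{eq:Cart.to.geo.2}, and using that the Cartesian components $X^1,X^2,X^3$ are smooth functions of $(L^j,\Psi)$ (see \eqref{E:TRANSPORTVECTORFIELDINTERMSOFLUNITANDRADUNIT} together with $L^i+X^i-v^i=0$), one obtains an identity of the form
\begin{equation*}
\upmu\rd_i \;=\; \smoothfunction(L^j,\Psi)\,\bX \;+\; \upmu\,\smoothfunction(L^j,\Psi)\,Y \;+\; \upmu\,\smoothfunction(L^j,\Psi)\,Z,
\end{equation*}
where the symbols $\smoothfunction$ denote (possibly varying) smooth functions of their arguments. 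The structural feature to exploit is that the $\bX$ coefficient is $\upmu$-independent while the $Y,Z$ coefficients carry an explicit factor of $\upmu$.

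Next, I would split each summand via the algebraic identity
\begin{equation*}
[\mathrm{coef}\cdot\mathcal{V},\mathcal{P}^N]\phi \;=\; \mathrm{coef}\cdot[\mathcal{V},\mathcal{P}^N]\phi \;-\; [\mathcal{P}^N,\mathrm{coef}\cdot](\mathcal{V}\phi),
\end{equation*}
and then handle each piece separately. For the first piece with $\mathcal{V}=\bX$, invoke Proposition~\ref{prop:commutators.Li}. For the first piece with $\mathcal{V}\in\{Y,Z\}$, use that $Y,Z\in\mathscr{P}$ and apply \eqref{E:TANGENTIALSIMPLECOMMUTATORIDENTITY} iteratively, by the same bookkeeping argument used to prove \eqref{eq:commutators.Li.1}, to obtain bounds in terms of $|\mathcal{P}^{[1,N]}\phi|$ modulated by lower-order coefficient derivatives. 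For the second piece (multiplication commutator), expand via the generalized Leibniz rule into a finite sum (schematically) $\sum_{k=1}^{N}\mathcal{P}^{k}(\mathrm{coef})\cdot\mathcal{P}^{N-k}(\mathcal{V}\phi)$, noting that $\mathcal{P}^{k}(\mathrm{coef})$ is controlled by $|\mathcal{P}^{[1,k]}(\upmu,L^j,\Psi)|$ multiplied by smooth-function factors of $(L^j,\Psi)$.

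The terms arising from these expansions are then bounded by a standard trichotomy, analogous to the one in Step~1 of the proof of Proposition~\ref{prop:Vr.S.inho}. When each coefficient derivative appearing has order $\leq \Ntop-\toprate-5$, bound such factors in $L^{\infty}$ by $\lesssim 1$ using \eqref{BA:LARGERIEMANNINVARIANTLARGE}--\eqref{BA:W.Li.small} and Proposition~\ref{prop:geometric.low}, which leaves $|\mathcal{P}^{[1,N]}\phi|$ (from the $Y,Z$ pieces) and $|\mathcal{P}^{\leq N-1}\bX\phi|$ (from the $\bX$ piece, the index range being forced by $k\geq 1$ in the Leibniz sum). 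When instead one factor of $\mathcal{P}^{k}(\mathrm{coef})$ has order above that threshold, bound the $\mathcal{V}\phi$-factor in $L^{\infty}$ by $\epd$ using the hypotheses on $\phi$; this produces the contributions $\epd\,|\mathcal{P}^{[2,N]}(\upmu,L^j,\Psi)|$. The remaining contribution $\epd\,|\mathcal{P}^{[2,N-1]}\bX\Psi|$ originates specifically from the last sum on the RHS of \eqref{eq:commutators.Li.1} applied to $[\bX,\mathcal{P}^N]\phi$, after the $L^{\infty}$-hypothesis on $\phi$ is used to extract the $\epd$ factor from the $|\mathcal{P}^{[1,N_2]}\phi|$ factor.

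The main obstacle is bookkeeping: producing the sharper control $|\mathcal{P}^{\leq N-1}\bX\phi|$, rather than the wasteful $|\mathcal{P}^{\leq N}\bX\phi|$, on the right-hand side. This sharpening is essential because the statement omits any $\mathcal{P}^{N}\bX\phi$ term; such a term does not appear because (i) the Leibniz expansion of $[\mathrm{coef}_1,\mathcal{P}^N](\bX\phi)$ runs only over $k\geq 1$ (so $N-k\leq N-1$), and (ii) Proposition~\ref{prop:commutators.Li} ensures no $\mathcal{P}^N\bX\phi$ appears within $[\bX,\mathcal{P}^N]\phi$ itself (the $\bX\mathcal{P}^N\phi$ and $\mathcal{P}^N\bX\phi$ contributions formally cancel before commutator error terms are produced). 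Once this structural observation is in hand, the remainder of the argument is a mechanical casework.
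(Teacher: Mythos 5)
Your proposal is correct and follows essentially the same route as the paper: express $\upmu\partial_i$ via Lemma~\ref{lem:Cart.to.geo} and then run the iterated commutation and bookkeeping argument of Proposition~\ref{prop:commutators.Li}. The paper's published proof is a one-line pointer to exactly these two ingredients, while your write-up fills in the details (in particular the observation that the $\bX$-coefficient in the $\upmu\partial_i$ decomposition is $\upmu$-free while the $Y,Z$-coefficients carry the explicit $\upmu$, and the explanation of why only $\mathcal{P}^{\leq N-1}\bX\phi$, not $\mathcal{P}^{\leq N}\bX\phi$, appears) --- all of which is consistent with what the cited argument yields.
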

\begin{proof}
We first use Lemma~\ref{lem:Cart.to.geo} to express $\upmu\rd_i$ in terms of the geometric vectorfields and then argue as in Proposition~\ref{prop:commutators.Li}. \qedhere
\end{proof}

\begin{lemma}[\textbf{$L^2$ estimates for the Euclidean curl of the derivatives of 
$\Vr$ in terms of the derivatives of $\mathcal{C}$}]
\label{lem:curlVr}
Let $\mathfrak{c} \geq 0$ be a real number.
The following estimate holds for all $t\in [0,\Tboot)$,
where the implicit constants are \underline{independent} of $\mathfrak{c}$:
$$\|e^{-\f{\mathfrak{c}u'}2} \sqrt\upmu \, \mathrm{curl}\, \mathcal{P}^{\Ntop} \Vr \|^2_{L^2(\Sigma_t)} \ls \epd^3 \upmu_{\star}^{-2 \toprate + 0.8}(t) + \|e^{-\f{\mathfrak{c}u'}2} \sqrt\upmu \,\mathcal{P}^{\Ntop} \mathcal{C}\|_{L^2(\Sigma_t)}^2.$$
\end{lemma}

\begin{proof}
We first compute the commutator $[\upmu \curl, \mathcal{P}^{\Ntop}]$ using Lemma~\ref{lem:commute.with.Cartesian} and the bootstrap assumption \eqref{BA:V}:
\begin{equation}\label{eq:PN.mu.curl.Vr.commutator}
\begin{split}
&\: | [\upmu \,\mathrm{curl}, \mathcal{P}^{\Ntop}] \Vr |\\
\ls &\: | \mathcal{P}^{\leq \Ntop} \Vr| + | \mathcal{P}^{\leq {\Ntop}-1} \bX \Vr| 
+
\epd \left( | \mathcal{P}^{[2,\Ntop]} (\upmu,L^i,\Psi) | + |\mathcal{P}^{[2,\Ntop-1]}\bX\Psi | \right).
\end{split}
\end{equation}
On the other hand, by \eqref{E:RENORMALIZEDCURLOFSPECIFICVORTICITY},
Lemma~\ref{lem:Cart.to.geo},
the bootstrap assumptions \eqref{BA:LARGERIEMANNINVARIANTLARGE}--\eqref{BA:C.D}, 
and Propositions~\ref{prop:geometric.low} and 
\ref{prop:geometric.low.2}, we have:
\begin{equation}\label{eq:PN.mu.curl.Vr.main.computation}
\begin{split}
&\:| \mathcal{P}^{\Ntop} (\upmu \,\mathrm{curl}\,\Vr)| \\
= &\: \left|\mathcal{P}^{\Ntop} \left\{ \upmu \left[ \exp(\rr) \mathcal{C} - \exp(-2\rr) c_s^{-2} \f{p_{;s}}{\bar{\varrho}} \GradEnt^a \rd_a v + \exp(-2\rr) c_s^{-2} \f{p_{;s}}{\bar{\varrho}} (\rd_av^a)S \right] \right\} \right| \\
\ls &\: \upmu |\mathcal{P}^{\Ntop} \mathcal{C}| + |\mathcal{P}^{\leq {\Ntop}-1}\mathcal{C}| + |\mathcal{P}^{\leq \Ntop} S| \\
&\: + \epd \left(|\mathcal{P}^{[2, \Ntop]} (\upmu, L^i)| + \upmu |\mathcal{P}^{\Ntop+1} \Psi| + |\mathcal{P}^{[2, \Ntop]} \Psi| + |\mathcal{P}^{[1, \Ntop]} \bX \Psi| \right).
\end{split}
\end{equation}
We stress that on RHS~\eqref{eq:PN.mu.curl.Vr.main.computation}, it is important that the top-order 
terms $\mathcal{P}^{\Ntop} \mathcal{C}$ and $\mathcal{P}^{\Ntop+1} \Psi$ 
are accompanied by a factor of $\upmu$. 

We can therefore use \eqref{eq:PN.mu.curl.Vr.commutator} and \eqref{eq:PN.mu.curl.Vr.main.computation} (to write $\upmu \mathrm{curl} \mathcal{P}^{\Ntop} \Vr = [\upmu \,\mathrm{curl}, \mathcal{P}^{\Ntop}] \Vr + \mathcal{P}^{\Ntop} (\upmu \,\mathrm{curl}\,\Vr)$), multiply by $e^{-\f{\mathfrak{c} u}{2}} \upmu^{-\f 12}$, take the $L^2(\Sigma_t)$ norm, and then use $e^{-\f{\mathfrak{c} u}{2}}\leq 1$ to obtain:
\begin{equation}\label{eq:top.order.elliptic.error.example}
\begin{split}
&\: \|e^{-\f{\mathfrak{c} u'}{2}} \sqrt\upmu\, \mathrm{curl} \, (\mathcal{P}^{\Ntop} \Vr)\|_{L^2(\Sigma_t)} \\
\ls &\: \|e^{-\f{\mathfrak{c} u'}{2}} \sqrt\upmu \mathcal{P}^{\Ntop} \mathcal{C}\|_{L^2(\Sigma_t)} + \upmu_{\star}^{-1}(t) \|\sqrt\upmu \mathcal{P}^{\leq \Ntop-1} \mathcal{C} \|_{L^2(\Sigma_t)} +  \upmu_{\star}^{-1}(t) \|\sqrt\upmu \mathcal{P}^{\leq \Ntop} (\Vr,S) \|_{L^2(\Sigma_t)} \\
&\: +  \upmu_{\star}^{-1}(t) \|\sqrt\upmu  \mathcal{P}^{\leq \Ntop-1} \bX \Vr \|_{L^2(\Sigma_t)} +\epd \upmu_{\star}^{-\f 12}(t) \| \mathcal{P}^{[2,\Ntop]} (\upmu, L^i) \|_{L^2(\Sigma_t)}  \\
&\: + \epd ( \|\sqrt\upmu \mathcal{P}^{\Ntop+1} \Psi\|_{L^2(\Sigma_t)} + \upmu_{\star}^{-\f 12}(t) \|\mathcal{P}^{[1,\Ntop]} \bX \Psi\|_{L^2(\Sigma_t)} + \upmu_{\star}^{-1}(t) \|\sqrt\upmu \mathcal{P}^{[2,\Ntop]} \Psi\|_{L^2(\Sigma_t)}) \\
\ls &\: \|e^{-\f{\mathfrak{c} u'}{2}} \sqrt\upmu \mathcal{P}^{\Ntop} \mathcal{C}\|_{L^2(\Sigma_t)} + \epd^{\f 32} \upmu_{\star}^{- \toprate+0.4}(t),
\end{split}
\end{equation}
where we have used Proposition~\ref{prop:C.D} to bound $\upmu_{\star}^{-1}(t) \|\sqrt\upmu \mathcal{P}^{\leq \Ntop-1} \mathcal{C} \|_{L^2(\Sigma_t)}$; Proposition~\ref{prop:V.S} to bound  $\upmu_{\star}^{-1}(t) \|\sqrt\upmu \mathcal{P}^{\leq \Ntop} (\Vr,S) \|_{L^2(\Sigma_t)}$; Proposition~\ref{prop:transverse.from.transport} to bound $\upmu_{\star}^{-1}(t) \|\sqrt\upmu  \mathcal{P}^{\leq \Ntop-1} \bX \Vr \|_{L^2(\Sigma_t)}$; Proposition~\ref{prop:geometric.top} to bound $\epd \upmu_{\star}^{-\f 12}(t) \| \mathcal{P}^{[2,\Ntop]} (\upmu, L^i) \|_{L^2(\Sigma_t)}$; and the bootstrap assumptions \eqref{BA:W1}, \eqref{BA:W2}, and \eqref{eq:wave.energy.commuted} to estimate all the remaining terms. (We remark that the worst terms are $\upmu_{\star}^{-1}(t) \|\sqrt\upmu \mathcal{P}^{\leq \Ntop-1} \mathcal{C} \|_{L^2(\Sigma_t)}$, $\upmu_{\star}^{-1}(t) \|\sqrt\upmu \mathcal{P}^{\leq \Ntop} (\Vr,S) \|_{L^2(\Sigma_t)}$, $\upmu_{\star}^{-1}(t) \|\sqrt\upmu  \mathcal{P}^{\leq \Ntop-1} \bX \Vr \|_{L^2(\Sigma_t)}$, and $\upmu_{\star}^{-\f 12}(t) \| \mathcal{P}^{[1,\Ntop]} \bX \Psi\|_{L^2(\Sigma_t)}$, 
which determine the blowup-exponent $-\toprate +0.4$ for $\upmu_{\star}$ on RHS~\eqref{eq:top.order.elliptic.error.example}). Squaring \eqref{eq:top.order.elliptic.error.example}, we arrive at the desired result.
 \qedhere
\end{proof}

\begin{lemma}[\textbf{$L^2$ estimates for the Euclidean divergence of the derivatives of $\Vr$}]
\label{lem:divVr}
Let $\mathfrak{c} \geq 0$ be a real number.
The following estimate holds for all $t\in [0,\Tboot)$,
where the implicit constant is \underline{independent} of $\mathfrak{c}$:
$$\|e^{-\f{\mathfrak{c} u'}2}\sqrt\upmu  \mathrm{div}\, \mathcal{P}^{\Ntop} \Vr \|^2_{L^2(\Sigma_t)} \ls \epd^3 \upmu_{\star}^{-2 \toprate + 0.8}(t).$$
\end{lemma}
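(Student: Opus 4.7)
The strategy is to mirror the proof of Lemma~\ref{lem:curlVr}, but using the \emph{much simpler} structure that the Euclidean divergence of $\Vortrenormalized$ contains no top-order modified fluid variable at all. Indeed, according to \eqref{E:FLATDIVOFRENORMALIZEDVORTICITY}--\eqref{E:RENORMALIZEDVORTICITYDIVLINEARORBETTER}, we have the pointwise identity
\begin{equation*}
\upmu\,\mathrm{div}\,\Vortrenormalized = \upmu \mathfrak{L}_{(\Flatdiv \Vortrenormalized)} = -\upmu\,\Vortrenormalized^a \partial_a \Densrenormalized.
\end{equation*}
In particular, unlike in \eqref{E:RENORMALIZEDCURLOFSPECIFICVORTICITY}, no $\mathcal{C}$ appears, so the estimate will hold \emph{without} the $\|e^{-\mathfrak c u/2}\sqrt\upmu\,\mathcal{P}^{\Ntop}\mathcal{C}\|_{L^2(\Sigma_t)}^2$ term that is present in the curl case.

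The first step is to split $\sqrt\upmu\,\mathrm{div}\,\mathcal{P}^{\Ntop}\Vortrenormalized = \upmu^{-1/2}\{\mathcal{P}^{\Ntop}(\upmu\,\mathrm{div}\,\Vortrenormalized) + [\upmu\,\mathrm{div},\mathcal{P}^{\Ntop}]\Vortrenormalized\}$. For the commutator, I will apply Lemma~\ref{lem:commute.with.Cartesian} with $\phi = \Vortrenormalized^i$ (the hypotheses are satisfied by the bootstrap assumption \eqref{BA:V}), which yields the pointwise bound
\begin{equation*}
|[\upmu\,\mathrm{div},\mathcal{P}^{\Ntop}]\Vortrenormalized|
\lesssim |\mathcal{P}^{[1,\Ntop]}\Vortrenormalized| + |\mathcal{P}^{\leq \Ntop-1}\bX\Vortrenormalized| + \epd\bigl(|\mathcal{P}^{[2,\Ntop]}(\upmu,L^i,\Psi)| + |\mathcal{P}^{[2,\Ntop-1]}\bX\Psi|\bigr).
\end{equation*}
This is identical to the commutator term appearing in \eqref{eq:PN.mu.curl.Vr.commutator} in the proof of Lemma~\ref{lem:curlVr}, and after multiplication by $e^{-\mathfrak c u/2}\upmu^{-1/2}$, the $L^2(\Sigma_t)$ norm contributes at most $\epd^{3}\upmu_\star^{-2\toprate+0.8}(t)$ by the same applications of Propositions~\ref{prop:V.S}, \ref{prop:transverse.from.transport}, \ref{prop:geometric.top}, and the bootstrap assumptions \eqref{BA:W1}--\eqref{BA:W2}, \eqref{eq:wave.energy.commuted}.

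For the main term $\mathcal{P}^{\Ntop}(\upmu\,\mathrm{div}\,\Vortrenormalized) = -\mathcal{P}^{\Ntop}(\upmu\,\Vortrenormalized^a \partial_a\Densrenormalized)$, I will use Lemma~\ref{lem:Cart.to.geo} to write $\upmu\partial_a = \smoothfunction(L^i,\Psi)\bX + \upmu\,\smoothfunction(L^i,\Psi)\mathcal{P}$, so that $\upmu\,\Vortrenormalized^a\partial_a\Densrenormalized$ becomes a product of $\Vortrenormalized$ with $\bX\Psi$ or $\upmu\mathcal{P}\Psi$. Expanding $\mathcal{P}^{\Ntop}$ via the Leibniz rule and arguing as in Step~1 of the proof of Proposition~\ref{prop:Vr.S.inho}, one obtains the schematic pointwise bound
\begin{equation*}
|\mathcal{P}^{\Ntop}(\upmu\,\mathrm{div}\,\Vortrenormalized)|
\lesssim |\mathcal{P}^{\leq \Ntop}\Vortrenormalized| + \epd\bigl(\upmu|\mathcal{P}^{[1,\Ntop+1]}\Psi| + |\mathcal{P}^{[1,\Ntop]}\bX\Psi| + |\mathcal{P}^{[2,\Ntop]}(\upmu,L^i,\Psi)|\bigr).
\end{equation*}
The crucial observation is that the top-order $\mathcal{P}^{\Ntop+1}\Psi$ term comes weighted with $\upmu$. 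Multiplying by $e^{-\mathfrak c u/2}\upmu^{-1/2}$ and taking $L^2(\Sigma_t)$, the first summand is bounded using $\|\sqrt\upmu\,\mathcal{P}^{\leq\Ntop}\Vortrenormalized\|_{L^2(\Sigma_t)}^2 \lesssim \mathbb V_{\leq \Ntop}(t)$ combined with Proposition~\ref{prop:V.S} and a factor $\upmu_\star^{-1}(t)$ arising from the unmatched $\upmu^{-1/2}$; the wave-derivative terms are controlled exactly as on RHS~\eqref{eq:top.order.elliptic.error.example}, using \eqref{BA:W1}--\eqref{BA:W2}, \eqref{eq:wave.energy.commuted}, and Proposition~\ref{prop:geometric.top}. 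All contributions are dominated by $\epd^{3}\upmu_\star^{-2\toprate+0.8}(t)$, which is the claimed bound.

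The proof is essentially routine once the identity $\upmu\,\mathrm{div}\,\Vortrenormalized = -\upmu\,\Vortrenormalized^a\partial_a\Densrenormalized$ is invoked; the only mild subtlety, and the single place where one must be careful, is tracking the exponent of $\upmu_\star$ and verifying that the worst terms (namely the ones yielding $\upmu_\star^{-1}\|\sqrt\upmu\,\mathcal{P}^{\leq\Ntop}\Vortrenormalized\|_{L^2(\Sigma_t)}$ and $\upmu_\star^{-1/2}\|\mathcal{P}^{[1,\Ntop]}\bX\Psi\|_{L^2(\Sigma_t)}$, as in \eqref{eq:top.order.elliptic.error.example}) produce exactly the rate $\upmu_\star^{-\toprate+0.4}(t)$ after multiplication by $\epd^{3/2}$, which upon squaring yields the stated exponent $-2\toprate+0.8$. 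Independence of the implicit constant from $\mathfrak c$ is automatic since $e^{-\mathfrak c u/2}\leq 1$ is the only way $\mathfrak c$ enters.
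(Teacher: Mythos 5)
Your proposal follows essentially the same approach as the paper's proof: you identify $\upmu\,\mathrm{div}\,\Vortrenormalized = -\upmu\,\Vortrenormalized^a\partial_a\Densrenormalized$ via \eqref{E:FLATDIVOFRENORMALIZEDVORTICITY}, reuse the commutator estimate from Lemma~\ref{lem:commute.with.Cartesian} exactly as in \eqref{eq:PN.mu.curl.Vr.commutator}, derive the same schematic pointwise bound on $\mathcal{P}^{\Ntop}(\upmu\,\mathrm{div}\,\Vortrenormalized)$ as the paper's \eqref{eq:divVr.2}, and then run the $L^2(\Sigma_t)$ estimate precisely in the style of \eqref{eq:top.order.elliptic.error.example} while noting the absence of any $\mathcal{C}$ term. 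The bookkeeping of the $\upmu_\star$ exponent and of the independence from $\mathfrak c$ also matches the paper.
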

\begin{proof}
The commutator $[\upmu \mathrm{div}, \mathcal{P}^{\Ntop}]  \Vr$ can be computed 
in an identical manner as \eqref{eq:PN.mu.curl.Vr.commutator}. Thus, we have:
\begin{equation}\label{eq:divVr.1}
\begin{split}
 |[\upmu \mathrm{div}, \mathcal{P}^{\Ntop}] \Vr | \ls \mbox{RHS of \eqref{eq:PN.mu.curl.Vr.commutator}}.
\end{split}
\end{equation}
We also use Lemma~\ref{lem:Cart.to.geo},
the fact that the Cartesian component functions $X^1,X^2,X^3$ are smooth functions of
	the $L^i$ and $\Psi$ (see \eqref{E:TRANSPORTVECTORFIELDINTERMSOFLUNITANDRADUNIT}),
\eqref{E:FLATDIVOFRENORMALIZEDVORTICITY}, and the $L^\infty$ bounds 
in \eqref{BA:LARGERIEMANNINVARIANTLARGE}--\eqref{BA:V}
and Proposition~\ref{prop:geometric.low}
to deduce:
\begin{equation}\label{eq:divVr.2}
\begin{split}
&\:| \mathcal{P}^{\Ntop} (\upmu \,\mathrm{div}\,\Vr)|  =| \mathcal{P}^{\Ntop} (\upmu \, \Vr^a \rd_a \rr)| \\
\ls &\: |\mathcal{P}^{\leq \Ntop} \Vr| + \epd (|\mathcal{P}^{[2,\Ntop]} \upmu| + \upmu|\mathcal{P}^{\Ntop+1} \Psi | + |\mathcal{P}^{[2,\Ntop]} \Psi| + |\mathcal{P}^{[1,\Ntop]} \bX \Psi| ). 
\end{split}
\end{equation}

Notice that every term on the RHS of \eqref{eq:divVr.2} has already appeared on the RHSs of \eqref{eq:PN.mu.curl.Vr.commutator} and \eqref{eq:PN.mu.curl.Vr.main.computation}. Hence, 
with the help of the simple identity 
$ \upmu \mathrm{div}\, \mathcal{P}^{\Ntop} \Vr
=
\mathcal{P}^{\Ntop} (\upmu \,\mathrm{div}\,\Vr)
+ [\upmu \mathrm{div}, \mathcal{P}^{\Ntop}] \Vr$
and the estimates obtained above,
we can argue exactly as in Lemma~\ref{lem:curlVr} to obtain the same estimate. (Note that here there are no $\mathcal{C}$ terms and so we do not have the term $\|e^{-\f{\mathfrak{c}u'}2} \sqrt\upmu \,\mathcal{P}^{\Ntop} \mathcal{C}\|_{L^2(\Sigma_t)}^2$.) \qedhere

\end{proof}

\subsubsection{Controlling $\mathrm{curl} \,\mathcal{P}^{\Ntop} S$ and $\mathrm{div}\,\mathcal{P}^{\Ntop} S$}

\begin{lemma}[\textbf{$L^2$ estimates for the Euclidean curl of the derivatives of $S$}]
\label{lem:curlS}
Let $\mathfrak{c} \geq 0$ be a real number.
The following estimate holds for all $t\in [0,\Tboot)$,
where the implicit constant is \underline{independent} of $\mathfrak{c}$:
$$\|e^{-\f{\mathfrak{c} u'}2} \sqrt\upmu \mathrm{curl} \mathcal{P}^{\Ntop} S\|_{L^2(\Sigma_t)}^2 \ls \epd^3 \upmu_{\star}^{-2 \toprate+0.8}(t).$$
\end{lemma}
\begin{proof}
By \eqref{E:CURLGRADENT}, $\mathrm{curl} S = 0$. Hence, using Lemma~\ref{lem:commute.with.Cartesian} and the bootstrap assumption \eqref{BA:S}, 
\begin{equation}\label{eq:PN.mu.curl.S.commutator}
\begin{split}
&\:  |\upmu\,\mathrm{curl}\, \mathcal{P}^{\Ntop} S |= | [\upmu \,\mathrm{curl}, \mathcal{P}^{\Ntop}] S |\\
\ls &\: | \mathcal{P}^{\leq {\Ntop} } S| + | \mathcal{P}^{\leq {\Ntop}-1} \bX S| + \epd ( | \mathcal{P}^{[2,\Ntop]} (\upmu,L^i,\Psi) | + |\mathcal{P}^{[2,\Ntop-1]}\bX\Psi |).
\end{split}
\end{equation}

The only new terms here compared to \eqref{eq:PN.mu.curl.Vr.commutator} and \eqref{eq:PN.mu.curl.Vr.main.computation} 
are 
$| \mathcal{P}^{\leq {\Ntop} } S|$
and
$|\mathcal{P}^{\leq {\Ntop}-1} \bX S|$, 
which can be handled using 
Propositions~\ref{prop:V.S} and \ref{prop:transverse.from.transport} 
in the same way that we handled the corresponding terms
$\upmu_{\star}^{-1} \|\sqrt\upmu \mathcal{P}^{\leq \Ntop} \Vr \|_{L^2(\Sigma_t)}$
and
$\upmu_{\star}^{-1} \|\sqrt\upmu \mathcal{P}^{\leq \Ntop-1} \bX \Vr \|_{L^2(\Sigma_t)}$ in
the proof of 
Lemma~\ref{lem:curlVr}. \qedhere 
\end{proof}

\begin{lemma}[\textbf{$L^2$ estimates for the Euclidean divergence of the derivatives of 
$S$ in terms of the derivatives of $\mathcal{D}$}]
\label{lem:divS}
Let $\mathfrak{c} \geq 0$ be a real number.
The following estimate holds for all $t\in [0,\Tboot)$,
where the implicit constants are \underline{independent} of $\mathfrak{c}$:
$$\|e^{-\f{\mathfrak{c} u'}2}\sqrt\upmu \mathrm{div} \mathcal{P}^{\Ntop} S\|_{L^2(\Sigma_t)}^2 \ls \epd^3 \upmu_{\star}^{-2 \toprate+0.8} + \|e^{-\f{\mathfrak{c}u'}2} \sqrt\upmu \,\mathcal{P}^{\Ntop} \mathcal{D}\|_{L^2(\Sigma_t)}^2.$$
\end{lemma}

\begin{proof}
Using Lemma~\ref{lem:commute.with.Cartesian} and the bootstrap assumption \eqref{BA:S}, 
we find that:
\begin{equation*}
\begin{split}
&\: | [\upmu \,\mathrm{div}, \mathcal{P}^{\Ntop}] S |
\ls \mbox{RHS of \eqref{eq:PN.mu.curl.S.commutator}}.
\end{split}
\end{equation*}
Therefore, we can therefore handle $|[\upmu \,\mathrm{div}, \mathcal{P}^{\Ntop}] S|$ 
by using the same arguments we gave in the proof of Lemma~\ref{lem:curlS}.

We then express $\mathrm{div}S$ in terms of $\mathcal{D}$ using \eqref{E:RENORMALIZEDDIVOFENTROPY}
and use Lemma~\ref{lem:Cart.to.geo}{,
the fact that the Cartesian component functions $X^1,X^2,X^3$ are smooth functions of
the $L^i$ and $\Psi$ (see \eqref{E:TRANSPORTVECTORFIELDINTERMSOFLUNITANDRADUNIT}),}
and the $L^\infty$ bounds in 
{\eqref{BA:LARGERIEMANNINVARIANTLARGE}--\eqref{BA:W.Li.small},} \eqref{BA:S}, {\eqref{BA:C.D},}
and {Proposition~\ref{prop:geometric.low}} to deduce:
\begin{equation*}
\begin{split}
&\: |\mathcal{P}^{\Ntop} (\upmu \, \mathrm{div} \, S)| \leq  |\mathcal{P}^{\Ntop} (\upmu \exp(2\rr) \mathcal{D})| + |\mathcal{P}^{\Ntop} (\upmu\exp(2\rr) \GradEnt^a\rd_a\rr)| \\
\ls &\: \upmu |\mathcal{P}^{\Ntop} \mathcal{D}| + |\mathcal{P}^{\leq \Ntop -1} \mathcal{D}| + |\mathcal{P}^{\leq \Ntop} S| \\
&\: + \epd ( |\mathcal{P}^{[2, \Ntop]} (\upmu, L^i)| + \upmu |\mathcal{P}^{\Ntop+1} \Psi| + |\mathcal{P}^{[2, \Ntop]} \Psi| + |\mathcal{P}^{[1, \Ntop]} \bX \Psi| ).
\end{split}
\end{equation*}
The new terms here compared to \eqref{eq:PN.mu.curl.Vr.commutator} and \eqref{eq:PN.mu.curl.Vr.main.computation} are 
{$|\mathcal{P}^{\leq \Ntop} S|$, which we handled just below
\eqref{eq:PN.mu.curl.S.commutator}, and}
$\upmu |\mathcal{P}^{\Ntop} \mathcal{D}|$ and $|\mathcal{P}^{\leq \Ntop -1} \mathcal{D}|$, which can be treated 
{using the same arguments we used to handle the terms}
$\upmu |\mathcal{P}^{\Ntop} \mathcal{C}|$ and $|\mathcal{P}^{\leq \Ntop -1} \mathcal{C}|$ in 
{our proof of}
Lemma~\ref{lem:curlVr}{. Hence, the weighted{, squared $L^2(\Sigma_t)$} norms 
corresponding to these new terms are}
bounded above by $\epd^3 \upmu_{\star}^{-2 \toprate+0.8} + \|e^{-\f{\mathfrak{c}u'}2} \sqrt\upmu \,\mathcal{P}^{\Ntop} \mathcal{D}\|_{L^2(\Sigma_t)}^2$. \qedhere
\end{proof}

\subsubsection{Proving the elliptic estimates}
We now combine Lemmas~\ref{lem:curlVr}--\ref{lem:divS} and the elliptic estimates in Proposition~\ref{prop:elliptic} to obtain the following proposition.
\begin{proposition}[\textbf{Preliminary top-order elliptic estimates for $\Vr$ and $S$}]
\label{prop:C.D.elliptic.final}
Let $\mathfrak{c} \geq 0$ be a real number.
The following estimates hold for all $t\in [0,\Tboot)$,
where the implicit constants are \underline{independent} of $\mathfrak{c}$:
\begin{equation}\label{eq:C.elliptic.final}
\|e^{-\f{\mathfrak{c} u}2}\sqrt{\upmu} \rdb \mathcal{P}^{\Ntop} \Vr\|_{L^2(\Sigma_t)}^2\ls \epd^3 (1+\mathfrak{c}^2) \upmu_{\star}^{-2 \toprate + 0.8}(t) + \|e^{-\f{\mathfrak{c} u}2}\sqrt\upmu \mathcal{P}^{\Ntop} \mathcal{C}\|_{L^2(\Sigma_t)}^2,
\end{equation}
and: 
\begin{equation}\label{eq:D.elliptic.final}
\|e^{-\f{\mathfrak{c} u}2}\sqrt{\upmu} \rdb \mathcal{P}^{\Ntop} S \|_{L^2(\Sigma_t)}^2 \ls \epd^3 (1+\mathfrak{c}^2) \upmu_{\star}^{-2 \toprate + 0.8}(t) + 
\|e^{-\f{\mathfrak{c} u}2}\sqrt\upmu \mathcal{P}^{\Ntop} \mathcal{D}\|_{L^2(\Sigma_t)}^2.
\end{equation}
\end{proposition}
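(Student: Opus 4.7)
The plan is to combine the weighted Euclidean elliptic estimate of Proposition~\ref{prop:elliptic} with the divergence and curl bounds of Lemmas~\ref{lem:curlVr}--\ref{lem:divS} and the $L^2$ control of $\Vr$ and $S$ themselves provided by Proposition~\ref{prop:V.S}. No new algebraic identities are needed; the proof is essentially an assembly of pieces already established in the preceding subsection.

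First, I would view $\mathcal{P}^{\Ntop}\Vr$ as a one-form on $\Sigma_t$ by identifying its Cartesian components with those of a one-form. Lemma~\ref{lem:localization} implies that $(\Vr,\GradEnt)$ (hence all of their $\mathcal{P}$-derivatives) are supported in $\{0\le u\le U_0\}$, so the compact-support hypothesis of Proposition~\ref{prop:elliptic} is met. Applying that proposition with $\phi = \mathcal{P}^{\Ntop}\Vr$ yields
\[
\|e^{-\tfrac{\mathfrak c u}{2}}\sqrt\upmu\,\rdb\mathcal{P}^{\Ntop}\Vr\|_{L^2(\Sigma_t)}
\ls \|e^{-\tfrac{\mathfrak c u}{2}}\sqrt\upmu\,\mathrm{div}\,\mathcal{P}^{\Ntop}\Vr\|_{L^2(\Sigma_t)}
+\|e^{-\tfrac{\mathfrak c u}{2}}\sqrt\upmu\,\mathrm{curl}\,\mathcal{P}^{\Ntop}\Vr\|_{L^2(\Sigma_t)}
+\mathfrak c\,\upmu_\star^{-1}(t)\|e^{-\tfrac{\mathfrak c u}{2}}\sqrt\upmu\,\mathcal{P}^{\Ntop}\Vr\|_{L^2(\Sigma_t)},
\]
with implicit constants independent of $\mathfrak c$.

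Next I would square both sides (absorbing cross terms by the triangle inequality) and bound each summand in turn. The curl term is controlled directly by Lemma~\ref{lem:curlVr}, contributing $\ls \epd^3 \upmu_\star^{-2\toprate+0.8}(t) + \|e^{-\tfrac{\mathfrak c u}{2}}\sqrt\upmu\,\mathcal{P}^{\Ntop}\mathcal C\|_{L^2(\Sigma_t)}^2$. The divergence term is controlled by Lemma~\ref{lem:divVr}, and crucially it involves no $\mathcal C$ factor (because $\Flatdiv\Vr$ is the purely lower-order expression \eqref{E:FLATDIVOFRENORMALIZEDVORTICITY}), yielding $\ls \epd^3 \upmu_\star^{-2\toprate+0.8}(t)$. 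For the zeroth-order term, I invoke Proposition~\ref{prop:V.S} at $N=\Ntop$: since $\|\sqrt\upmu\,\mathcal{P}^{\Ntop}\Vr\|_{L^2(\Sigma_t)}^2 \le \mathbb V_{\Ntop}(t)\ls \epd^3 \upmu_\star^{-2\toprate+2.8}(t)$ and $e^{-\mathfrak c u}\le 1$, the square of this contribution is bounded by $\ls \mathfrak c^2 \upmu_\star^{-2}(t)\cdot \epd^3 \upmu_\star^{-2\toprate+2.8}(t) = \mathfrak c^2\epd^3 \upmu_\star^{-2\toprate+0.8}(t)$. Summing these three contributions produces \eqref{eq:C.elliptic.final}; the factor $(1+\mathfrak c^2)$ on the right-hand side originates solely from this zeroth-order term.

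The proof of \eqref{eq:D.elliptic.final} is virtually identical after swapping the roles of the divergence and the curl: Lemma~\ref{lem:divS} now supplies the $\mathcal D$-dependent contribution, Lemma~\ref{lem:curlS} provides the curl bound with no $\mathcal D$ factor (since $\Flatcurl\GradEnt = 0$ by \eqref{E:CURLGRADENT}), and the zeroth-order term is absorbed using the $\mathbb S_{\Ntop}$ bound of Proposition~\ref{prop:V.S} in place of $\mathbb V_{\Ntop}$. I do not anticipate any real obstacle: every required estimate is already in hand, and the only care needed is bookkeeping, namely verifying that $\mathfrak c$ enters only through the factor $1+\mathfrak c^2$ on the right. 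This is automatic because Proposition~\ref{prop:elliptic} produces the only $\mathfrak c$-dependent factor, and every other estimate that feeds into the argument has $\mathfrak c$-independent implicit constants (the weight $e^{-\mathfrak c u/2}\le 1$ is always discarded harmlessly).
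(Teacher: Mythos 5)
Your proposal is correct and follows essentially the same argument as the paper's proof: apply Proposition~\ref{prop:elliptic} to $\phi = \mathcal{P}^{\Ntop}\Vr$ (resp.\ $\mathcal{P}^{\Ntop}S$), then bound the resulting divergence, curl, and zeroth-order terms via Lemmas~\ref{lem:curlVr}--\ref{lem:divS} and Proposition~\ref{prop:V.S}, tracking that $\mathfrak c$ enters only through the elliptic estimate's last term.
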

\begin{proof}
Applying first Proposition~\ref{prop:elliptic}, and then Lemmas~\ref{lem:curlVr}, \ref{lem:divVr}, Proposition~\ref{prop:V.S} (and using $e^{-\f{\mathfrak{c} u}2}\leq 1$,) we obtain:
\begin{equation*}
\begin{split}
&\: \|e^{-\f{\mathfrak{c} u}2}\sqrt{\upmu} \rdb \mathcal{P}^{\Ntop} \Vr\|_{L^2(\Sigma_t)}^2 \\
\ls &\: \|e^{-\f{\mathfrak{c} u}2} \sqrt\upmu \,\mathrm{div}\,\mathcal{P}^{\Ntop} \Vr \|_{L^2(\Sigma_t)}^2 + \|e^{-\f{\mathfrak{c} u}2} \sqrt\upmu \,\mathrm{curl}\,\mathcal{P}^{\Ntop} \Vr\|_{L^2(\Sigma_t)}^2 + \mathfrak{c}^2 \upmu_{\star}^{-2}(t) \| e^{-\f{\mathfrak{c} u}2} \sqrt \upmu \, \mathcal{P}^{\Ntop} \Vr \|_{L^2(\Sigma_t)}^2 \\
\ls &\: \epd^3 (1 + \mathfrak{c}^2) \upmu_{\star}^{-2 \toprate + 0.8}(t) + \|e^{-\f{\mathfrak{c} u}2}\sqrt\upmu \mathcal{P}^{\Ntop} \mathcal{C}\|_{L^2(\Sigma_t)}^2,
\end{split}
\end{equation*}
which proves \eqref{eq:C.elliptic.final}. The proof of \eqref{eq:D.elliptic.final} is similar, except we use
Lemmas~\ref{lem:curlS}, \ref{lem:divS} instead of Lemmas~\ref{lem:curlVr}, \ref{lem:divVr}. \qedhere
\end{proof}

\subsection{Putting everything together}\label{sec:elliptic.everything}
\begin{proposition}[\textbf{The main top-order estimates for the modified fluid variables}]
\label{prop:elliptic.putting.everything.together}
The following estimate holds for every $(t,u)\in [0,\Tboot)\times [0,U_0]$:
$$\mathbb C_{\Ntop}(t,u) + \mathbb D_{\Ntop}(t,u) 
\ls  \epd^3 \upmu_{\star}^{-2 \toprate+0.8}(t). $$
\end{proposition}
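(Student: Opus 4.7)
The proof combines the preliminary transport inequality of Proposition~\ref{prop:C.D.transport.main} with the weighted elliptic estimates of Proposition~\ref{prop:C.D.elliptic.final}, closed by a continuity/absorption argument that exploits the freedom to choose the parameter $\varsigma$ in Proposition~\ref{prop:C.D.transport.main} arbitrarily small. First I would reduce the proof to the case $u = U_0$: by Lemma~\ref{lem:localization}, the variables $\Vr$, $S$, $\mathcal C$ and $\mathcal D$ all vanish outside $\{u \in (0,U_0)\}$, so any $\Sigma_t$ norm of these quantities equals the corresponding $\Sigma_t^{U_0}$ norm, and for every $u \in [0,U_0]$ one has the monotonicity $\mathbb C_\Ntop(t,u) + \mathbb D_\Ntop(t,u) \leq \mathbb C_\Ntop(t,U_0) + \mathbb D_\Ntop(t,U_0)$. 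Fixing $\varsigma \in (0,1]$ (to be specified below) and then $\mathfrak c \geq \mathfrak c_\varsigma$ from Proposition~\ref{prop:C.D.transport.main}, the weight $e^{-\mathfrak c u'/2}$ is pinched between $e^{-\mathfrak c U_0/2}$ and $1$, so it suffices to control
\[
F(t) \doteq \sup_{t'\in[0,t]} \|e^{-\mathfrak c u'/2} \sqrt{\upmu}\,\mathcal{P}^\Ntop(\mathcal C,\mathcal D)\|^2_{L^2(\Sigma_{t'}^{U_0})} + \sup_{u'\in[0,U_0]} \|e^{-\mathfrak c u'/2}\,\mathcal{P}^\Ntop(\mathcal C,\mathcal D)\|^2_{L^2(\mathcal F_{u'}^t)}.
\]

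Next, I would substitute the elliptic bounds \eqref{eq:C.elliptic.final}--\eqref{eq:D.elliptic.final} into the RHS of \eqref{E:C.D.transport.main}, using Lemma~\ref{lem:localization} to identify $\Sigma_{t'}$ with $\Sigma_{t'}^{U_0}$ so that $\|e^{-\mathfrak c u'/2}\sqrt{\upmu}\mathcal{P}^\Ntop(\mathcal C,\mathcal D)\|^2_{L^2(\Sigma_{t'})} \leq F(t')$. To dispose of the resulting ``data-like'' integral $\int_0^t \upmu_\star^{-1}(t')\cdot \epd^3\upmu_\star^{-2\toprate+0.8}(t')\,dt'$, I would apply Proposition~\ref{prop:mus.int} with $b = 2\toprate + 0.2 \in (1,100\toprate]$, yielding $\int_0^t \upmu_\star^{-(2\toprate+0.2)}(t')\,dt' \ls \upmu_\star^{-2\toprate+0.8}(t)$. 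Collecting terms, the bound takes the form
\[
F(t) \leq A\,\epd^3\,\upmu_\star^{-2\toprate+0.8}(t) + B\,\varsigma \int_0^t \upmu_\star^{-1}(t')\,F(t')\,dt',
\]
where $A$ and $B$ depend on $\varsigma$, $\mathfrak c$, and the fixed parameters of the problem, but \emph{not} on $\epd$.

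The main obstacle is the integral on the RHS: because $\upmu_\star^{-1}(t')$ is not integrable up to the shock time, a naive Gr\"onwall argument would produce an extra factor of $\upmu_\star^{-B\varsigma}(t)$ that would strictly worsen the blowup exponent beyond the claimed $-2\toprate+0.8$. To avoid this, I would instead run a continuity/absorption argument, exploiting the fact that both the exponent $0.8$ and the multiplicative constant $B\varsigma$ on the integral term are at our disposal. Making the bootstrap hypothesis $F(t) \leq M\epd^3 \upmu_\star^{-2\toprate+0.8}(t)$ on some initial subinterval $[0,T] \subseteq [0,\Tboot)$, Proposition~\ref{prop:mus.int} applied one more time (again with $b = 2\toprate+0.2$) bounds $\int_0^t \upmu_\star^{-1}(t')F(t')\,dt' \leq C M\epd^3\,\upmu_\star^{-2\toprate+0.8}(t)$, so the inequality self-improves to $F(t) \leq (A + BC\varsigma M)\epd^3\,\upmu_\star^{-2\toprate+0.8}(t)$. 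Choosing $\varsigma$ small enough that $BC\varsigma \leq \tfrac14$ — which fixes $\varsigma$ and hence $\mathfrak c$ in terms of the equation of state, $\bar\varrho$ and $\toprate$ — and then taking $M = 2A$ yields $A + BC\varsigma M \leq \tfrac{3}{2}A = \tfrac{3}{4}M$, strictly improving the bootstrap. Combined with the initial data bound $F(0) \ls \epd^3$ (from \eqref{assumption:very.small.modified}, together with $\upmu|_{\Sigma_0} \approx 1$ and $\upmu_\star(0) \approx 1$), a standard continuity argument extends $F(t) \leq M\epd^3\upmu_\star^{-2\toprate+0.8}(t)$ to all of $[0,\Tboot)$. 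Undoing the weight (multiplication by the fixed constant $e^{\mathfrak c U_0}$) and the reduction to $u = U_0$ (via the monotonicity above) produces the claimed bound on $\mathbb C_\Ntop(t,u) + \mathbb D_\Ntop(t,u)$ with the exact exponent $-2\toprate+0.8$.
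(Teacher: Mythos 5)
Your argument is correct and uses exactly the same ingredients and the same core idea as the paper's: compose the weighted transport inequality (Proposition~\ref{prop:C.D.transport.main}) with the weighted elliptic estimate (Proposition~\ref{prop:C.D.elliptic.final}), note that the coefficient of the Gr\"onwall-dangerous integral can be made arbitrarily small by shrinking $\varsigma$, and close with a continuity argument rather than a naive Gr\"onwall inequality (which, as you correctly observe, would spoil the exponent). The only cosmetic difference is which intermediate quantity carries the bootstrap hypothesis: the paper runs its continuity argument on $\|e^{-\mathfrak c u/2}\sqrt{\upmu}\,\rdb\mathcal{P}^{\Ntop}(\Vr,S)\|^2_{L^2(\Sigma_t)}$ (Step~1 of its proof) and then substitutes the resulting bound back into \eqref{E:C.D.transport.main} to read off $\mathbb C_{\Ntop}+\mathbb D_{\Ntop}$ (Step~2), whereas you bootstrap directly on the weighted $(\mathcal C,\mathcal D)$-energy $F(t)$ after substituting the elliptic estimate into the transport inequality; either order of composition yields the same linear integral inequality with a small $\mathcal{O}(\varsigma)$ coefficient, and your hierarchy of choices ($\varsigma$ first, then $\mathfrak c_\varsigma$, then $A$, then $M=2A$) is consistent since the constant multiplying the integral is $\mathfrak c$-independent, exactly as the paper also arranges.
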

\begin{proof}

\pfstep{Step~1: Controlling $\|e^{-\f{\mathfrak{c} u}2}\sqrt{\upmu} \rdb \mathcal{P}^{\Ntop} (\Vr, S) \|_{L^2(\Sigma_t)}^2$ via Gr\"onwall-type argument} Given $\varsigma>0$, we first apply Proposition~\ref{prop:C.D.elliptic.final} and then use\footnote{Here, we again relabeled the $\varsigma$ from Proposition~\ref{prop:C.D.transport.main}} Proposition~\ref{prop:C.D.transport.main} (for\footnote{Note that in view of the fact that $\Vr$, $S$ are compactly supported in $u\in [0, U_0]$ (by Lemma~\ref{lem:localization}), it follows that the integral on $\Sigma_t^{U_0}$ is the same as the integral on $\Sigma_t$.} $u = U_0$) 
 to deduce that if $\mathfrak{c} > 0$ is sufficiently large (depending on $\varsigma$), then
the following estimate holds, where the constants $C > 0$ and $C_* > 0$
are independent of $\mathfrak{c}$ and $\varsigma$:
\begin{equation}\label{eq:elliptic.putting.everything.together.main}
\begin{split}
&\: \|e^{-\f{\mathfrak{c} u}2}\sqrt{\upmu} \rdb \mathcal{P}^{\Ntop} (\Vr, S) \|_{L^2(\Sigma_t)}^2\\
\leq &\: C\epd^3 (1+\mathfrak{c}^2) \upmu_{\star}^{-2 \toprate + 0.8}(t) + C\|e^{-\f{\mathfrak{c} u}2}\sqrt\upmu \mathcal{P}^{\Ntop} (\mathcal{C}, \mathcal{D})\|_{L^2(\Sigma_t)}^2 \\
\leq &\: C_* \epd^3 (1+\mathfrak{c}^2) \upmu_{\star}^{-2 \toprate+0.8}(t) + \varsigma \int_{t'=0}^{t'=t} \f{1}{\upmu_{\star}(t')}\|e^{-\f{\mathfrak{c}u}{2}} \sqrt \upmu \rdb \mathcal{P}^{\Ntop} (\Vr,S)\|_{L^2(\Sigma_{t'})}^2 \,dt'.
\end{split}
\end{equation}
We clarify that it is only for notational convenience for the argument in \eqref{eq:elliptic.putting.everything.together.goal}--\eqref{eq:elliptic.putting.everything.together.contradiction.2} below that
we have used the symbol $C_* > 0$ to denote the fixed constant on the last line of 
\eqref{eq:elliptic.putting.everything.together.main}.

We now argue by a continuity argument to show that, after choosing $\varsigma$ smaller 
 and $\mathfrak{c}$ larger
if necessary, \eqref{eq:elliptic.putting.everything.together.main} implies the following estimate:
\begin{equation}\label{eq:elliptic.putting.everything.together.goal}
\|e^{-\f{\mathfrak{c} u}2}\sqrt{\upmu} \rdb \mathcal{P}^{\Ntop} (\Vr, S) \|_{L^2(\Sigma_t)}^2 \leq 2 C_* \epd^3 (1+\mathfrak{c}^2) \upmu_{\star}^{-2 \toprate+0.8}(t).
\end{equation}

If it is not the case that \eqref{eq:elliptic.putting.everything.together.goal} holds on $[0,\Tboot)$, 
then by continuity, there exists $T_* \in [0, \Tboot)$ such that \eqref{eq:elliptic.putting.everything.together.goal} holds for all 
$t\in [0,T_*]$ and that: 
\begin{equation}\label{eq:elliptic.putting.everything.together.contradiction}
\|e^{-\f{\mathfrak{c} u}2}\sqrt{\upmu} \rdb \mathcal{P}^{\Ntop} (\Vr, S) \|_{L^2(\Sigma_{T_*})}^2 
= 
2 C_* \epd^3 (1+\mathfrak{c}^2) \upmu_{\star}^{-2 \toprate+0.8}(T_*).
\end{equation}

However, plugging the estimate \eqref{eq:elliptic.putting.everything.together.goal}
(which by assumption holds for $t \in [0,T_*]$)
into the integral in
\eqref{eq:elliptic.putting.everything.together.main}, 
using Proposition~\ref{prop:mus.int} 
(and $\toprate \geq 1$) to integrate away a negative power of $\upmu_{\star}$, and finally choosing $\varsigma$ sufficiently small, we obtain that for $t\in [0,T_*]$, we have:
\begin{equation}\label{eq:elliptic.putting.everything.together.contradiction.2}
\|e^{-\f{\mathfrak{c} u}2}\sqrt{\upmu} \rdb \mathcal{P}^{\Ntop} (\Vr, S) \|_{L^2(\Sigma_t)}^2 \leq \f 32 C_* \epd^3 (1+\mathfrak{c}^2) \upmu_{\star}^{-2 \toprate+0.8}(t),
\end{equation}
which obviously contradicts \eqref{eq:elliptic.putting.everything.together.contradiction} when $t = T_*$. 
It therefore follows that our desired estimate \eqref{eq:elliptic.putting.everything.together.goal} holds
for all $t\in [0,\Tboot)$.

\pfstep{Step~2: Deducing the estimates for $\mathbb C_{\Ntop}(t,u)$ and $\mathbb D_{\Ntop}(t,u)$} At this point, we can fix the constants $\mathfrak{c}$, $\varsigma$, which we will absorb into the 
ensuring generic constants ``$C$''. 
Moreover, since $u\in [0,U_0]$ on the support of $\Omega$ and $S$ (by Lemma~\ref{lem:localization}), we will also absorb the weights $e^{-\f{\mathfrak{c} u}2}$ into the constants. Hence, plugging \eqref{eq:elliptic.putting.everything.together.goal} 
into RHS~\eqref{E:C.D.transport.main} and then using Proposition~\ref{prop:mus.int}, we obtain:
\begin{equation}
\begin{split}
&\: \mathbb C_{\Ntop}(t,u) + \mathbb D_{\Ntop}(t,u) \\
\ls &\: \epd^3 \upmu_{\star}^{-2 \toprate+0.8}(t) +  \int_{t'=0}^{t'=t} \f{1}{\upmu_{\star}(t')}\|e^{-\f{\mathfrak{c}u'}{2}} \sqrt \upmu \rdb \mathcal{P}^{\Ntop} (\Vr,S)\|_{L^2(\Sigma_{t'})}^2 \,dt' \\
\ls &\: \epd^3 \upmu_{\star}^{-2 \toprate+0.8}(t) + \epd^3 \int_{t'=0}^{t'=t} \upmu_{\star}^{-2 \toprate - 0.2}(t') \,dt' \ls \epd^3 \upmu_{\star}^{-2 \toprate+0.8}(t),
\end{split}
\end{equation}
as desired. \qedhere

\end{proof}

\section{Wave estimates for the fluid variables}\label{sec:wave.main.est}
We continue to work under the assumptions of Theorem~\ref{thm:bootstrap}.

In this section, we 
derive a priori energy estimates for the wave variables,
which will in particular yield strict improvements of the bootstrap assumptions \eqref{BA:W1}--\eqref{BA:W2}.
In Section~\ref{sec:wave.black.box},
we start by providing a somewhat general\footnote{Using a slight reorganization of the paper, 
these estimates could be upgraded so that they are ``black box'' estimates for inhomogeneous wave equations.
Given the setup of this paper, they are not quite black box estimates because the proofs
rely on the estimates of Section~\ref{sec:geometry}, some of which 
(e.g., some of the estimates in Proposition~\ref{P:LINFTYHIGHERTRANSVERSAL}) 
depend on the structure of the inhomogeneous terms in the wave equations.
\label{FN:NOTQUITEBLACKBOX}}  
``auxiliary'' proposition, which yields
energy estimates for solutions to \emph{inhomogeneous} quasilinear wave equations 
\emph{in terms of norms of the inhomogeneity}. The difficult aspect of the proof
is that we have to close the estimates even though $\upmu$ can be tending towards $0$,
that is, even though the shock may be forming.
We delay discussing the proof of the auxiliary proposition until Appendix~\ref{app:elliptic};
as we will explain, 
modulo small modifications based on established techniques, 
the proposition was proved as
\cite[Proposition~14.1]{LS} (see also \cite[Proposition~14.1]{jSgHjLwW2016}).
Then, in Section~\ref{sec:wave.inho}, we bound the specific inhomogeneous terms 
that are are relevant for our main results, that is, the inhomogeneous
terms on the RHSs of the fluid wave equations \eqref{E:VELOCITYWAVEEQUATION}--\eqref{E:ENTROPYWAVEEQUATION}.
Finally, in Section~\ref{sec:wave.everything}, we prove the final a priori energy estimates.

\subsection{The main estimates for inhomogeneous covariant wave equations}\label{sec:wave.black.box}
In this section, we state the ``auxiliary'' Proposition~\ref{prop:wave}, 
which yields energy estimates for solutions to the fluid wave equations. In this section, 
we ignore the precise structure of the inhomogeneous terms and
simply denote them by $\mathfrak{G}$. That is, we state the estimates 
of Proposition~\ref{prop:wave} in terms of various norms of $\mathfrak{G}$.
Later on, in Proposition~\ref{prop:wave.final}, we will control the relevant norms of $\mathfrak{G}$
to obtain our final a priori energy estimates for the wave variables.
Proposition~\ref{prop:wave} is of independent interest in the sense that
with small modifications, it could be used to study shock formation 
for compressible Euler flow with given smooth forcing terms.

\begin{proposition}[The main estimates for the inhomogeneous geometric wave equations]
\label{prop:wave}
Let 
		$
			\threePsi
			\doteq 
			(\Psi_1,\Psi_2,\Psi_3,\Psi_4,\Psi_5)
			\doteq
			(\mathcal{R}_{(+)},
			 \mathcal{R}_{(-)},
				v^2, v^3,s)$,
				as in \eqref{E:THREEVECPSI}. 
Recall that the $\Psi_{\imath}$ are solutions
to the inhomogeneous covariant wave system $$\upmu \square_{g(\vec{\Psi})}\Psi_{\imath} = \mathfrak G_{\imath},$$
where $\vec{\mathfrak G} = (\mathfrak{G}_1,\mathfrak{G}_2,\mathfrak{G}_3,\mathfrak{G}_4,\mathfrak{G}_5)$ 
is the array whose entries are the product of $\upmu$ and 
the inhomogeneous terms on the
RHSs of the five scalar wave equations 
\eqref{E:VELOCITYWAVEEQUATION}--\eqref{E:ENTROPYWAVEEQUATION}.
Assume that the following smallness bound holds:\footnote{
We clarify that in our main results,
			in the proof of Proposition~\ref{prop:wave.final},
			we will show that the smallness assumption \eqref{eq:F.smallness}
			is satisfied for the particular inhomogeneous terms $\vec{\mathfrak G}$ 
			stated in the hypotheses of the proposition.
			However, for the purposes of proving Proposition~\ref{prop:wave},
			the precise structure of $\vec{\mathfrak G}$ is not important.}
\begin{equation}\label{eq:F.smallness}
 \|\mathcal{P}^{\leq\lceil \f{\Ntop}{2} \rceil} \mathfrak G\|_{L^\infty(\Mtu)} \leq \epd^{\f 12}.
 \end{equation}

Then there exists an \textbf{absolute constant} $\toprate \in \mathbb N$, 
independent of the equation of state and all other
parameters in the problem, such that the following hold.
As in Theorem~\ref{thm:bootstrap},
let $\Tboot \in [0, 2\mathring{\updelta}_*^{-1}]$,
and assume that:
\begin{enumerate}
\item The bootstrap assumptions \eqref{BA:W1}--\eqref{BA:C.D} all hold for all $t\in [0,\Tboot)$,
	where we recall that in the bootstrap assumptions, $\Ntop$ is any integer
	satisfying $\Ntop \geq 2 \toprate + 10$;
\item  In \eqref{BA:LARGERIEMANNINVARIANTLARGE}, the parameter 
$\mathring{\upalpha}$ is sufficiently small in a manner 
only on the equation of state and $\bar{\varrho}$;
\item The parameter $\epd > 0$ in \eqref{BA:W1}--\eqref{BA:C.D}
satisfies $\epd^{\frac{1}{2}} \leq \mathring{\upalpha}$ and
is sufficiently small in
a manner that depends only on 
the equation of state, 
$\Ntop$,
$\bar{\varrho}$, 
$\mathring{\upsigma}$, 
$\mathring{\updelta}$, 
and $\mathring{\updelta}_*^{-1}$;
	\\
and
\item The soft bootstrap assumptions stated in Section~\ref{SSS:SOFTBOOTSTRAP} hold 
	(including $\upmu>0$ in $[0,\Tboot) \times \mathbb{R} \times \mathbb{T}^2$).
\end{enumerate}

Then the following estimates hold for every 
$(t,u)\in [0,\Tboot)\times [0,U_0]$,
where $\upmu_{\star}$ is defined in Definition~\ref{def:mustar}:

\begin{enumerate}
\item The top- and penultimate-order wave energies defined in \eqref{eq:wave.energy.def.4}
obey the following estimates:
\begin{equation}\label{eq:main.EE.prop.top}
\begin{split}
&\: \sup_{\hat{t} \in [0,t]}\upmu_{\star}^{2\toprate -1.8}(\hat{t})\mathbb{W}_{[1,\Ntop]}(\hat{t},u)  + \sup_{\hat{t} \in [0,t]}\upmu_{\star}^{2\toprate -3.8}(\hat{t})\mathbb{W}_{[1,\Ntop-1]}(\hat{t},u) \\
\ls &\:  \epd^2 +  \sup_{\hat{t}\in [0,t]} \upmu_{\star}^{2\toprate-1.8}(\hat{t}) \int_{t'=0}^{t' = \hat{t}} \upmu_{\star}^{-\f 32}(t') \left\{\int_{s=0}^{s=t'} \|\mathcal{P}^{[1,\Ntop]}\mathfrak G\|_{L^2(\Sigma_s^u)} \, ds\right\}^2 \, dt'  \\
&\: + \sup_{\hat{t}\in [0,t]} \upmu_{\star}^{2\toprate-1.8}(\hat{t}) \|(|L\mathcal{P}^{[1,\Ntop]} \Psi| + |\bX \mathcal{P}^{[1,\Ntop]} \Psi|) |\mathcal{P}^{[1,\Ntop]} \mathfrak G| \|_{L^1(\mathcal M_{\hat{t},u})} \\
&\: + \sup_{\hat{t}\in [0,t]} \upmu_{\star}^{2\toprate-3.8}(\hat{t}) \|(|L\mathcal{P}^{[1,\Ntop-1]} \Psi| + |\bX \mathcal{P}^{[1,\Ntop-1]} \Psi|) |\mathcal{P}^{[1,\Ntop-1]} \mathfrak G| \|_{L^1(\mathcal M_{\hat{t},u})};
\end{split}
\end{equation}
\item  For $1\leq N\leq \Ntop-1$, the lower-order wave energies $\mathbb{W}_{[1,N]}$ 
	defined in \eqref{eq:wave.energy.def.3}
	obey the following estimates:
\begin{equation}\label{eq:main.EE.prop.low}
\begin{split}
&\: \mathbb{W}_{[1,N]}(t,u) \\
\ls &\:  \epd^2 + \max\{1, \upmu_{\star}^{-2 \toprate+2\Ntop-2N+1.8}(t) \} \left(\sup_{s\in [0,t]} \min\{1,\upmu_{\star}^{2\toprate-2\Ntop+2N+0.2}(s)\} \mathbb{Q}_{[1,N+1]}(s)\right)  \\
&\: + \| (|L\mathcal{P}^{[1,N]} \Psi| + |\bX \mathcal{P}^{[1,N]} \Psi|) |\mathcal{P}^{[1,N]} \mathfrak G| \|_{L^1(\Mtu)}.
\end{split}
\end{equation}
\end{enumerate}
\end{proposition}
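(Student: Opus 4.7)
\noindent\textbf{Proof proposal for Proposition~\ref{prop:wave}.}

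The plan is to reduce Proposition~\ref{prop:wave} to (minor modifications of) the wave-equation analysis developed in \cites{dC2007,jSgHjLwW2016,LS}. Indeed, as Footnote~\ref{FN:NOTQUITEBLACKBOX} hints, the precise algebraic form of the inhomogeneity $\vec{\mathfrak G}$ enters only through the smallness condition \eqref{eq:F.smallness} plus the $L^\infty$ bounds already extracted in Section~\ref{sec:geometry} (in particular Proposition~\ref{P:LINFTYHIGHERTRANSVERSAL}), so the bulk of the proof is a quasilinear wave-equation argument with an extra inhomogeneous forcing. Concretely, for each scalar component $\Psi = \Psi_\imath$ satisfying $\upmu\square_g\Psi = \mathfrak G_\imath$, we apply the multiplier vectorfield $T = (1+2\upmu)L + 2\bX$ to the energy-momentum tensor of $\Psi$. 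The resulting divergence identity, integrated over $\mathcal{M}_{t,u}$ with respect to $d\vol$, produces the coercive bulk $\mathbb{K}_0$, the boundary terms $\mathbb{E}_0(t,u) + \mathbb{F}_0(t,u)$ on $\Sigma_t^u \cup \mathcal{F}_u^t$, and a volume error that splits into (i)~a deformation-tensor part, (ii)~the null-form-type ``easy'' terms already controlled in \cites{jSgHjLwW2016,LS}, and (iii)~a new ``forcing'' contribution of the schematic shape $T\Psi\cdot\mathfrak{G}$. The last piece is absorbed by the $L^1(\mathcal{M}_{t,u})$ terms that appear on the right-hand sides of \eqref{eq:main.EE.prop.top}--\eqref{eq:main.EE.prop.low}, which is why those RHSs look the way they do.

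Next, I would commute the equation with each $\mathcal{P}^N$ word, $N \leq \Ntop$, to obtain $\upmu\square_g (\mathcal{P}^N\Psi) = \mathcal{P}^N\mathfrak{G} + \mathfrak{E}_N$, where $\mathfrak{E}_N$ is the commutator error. Using Proposition~\ref{prop:commutators} and Lemma~\ref{L:SIMPLECOMMUTATORIDENTITY}, $\mathfrak{E}_N$ decomposes into Harmless$_N$ (involving $\leq N-1$ derivatives of the eikonal-function quantities $\upmu, L^i$, which are already controlled by Propositions~\ref{prop:geometric.low}, \ref{prop:geometric.top} together with the bootstrap assumptions) plus the principal commutator identified in Proposition~\ref{prop:identity.main.wave.commutator.terms}. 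This principal piece depends on $\angD\mytr\upchi$ at top order, which would lose a derivative if estimated naively. To circumvent this, I would follow Christodoulou's fully- and partially-modified quantities $\upchifullmodarg{\mathcal{P}^N}$, $\upchipartialmodarg{\mathcal{P}^N}$ from Section~\ref{SS:TRANSPORT}: these satisfy transport equations along $L$ with \emph{renormalized} sources that only require $\angD\Psi$ at order $N+1$, not $\mathcal{P}\mytr\upchi$ at order $N$. Combined with elliptic estimates for $\mytr\upchi - \slashed{\text{stuff}}$ on $\ell_{t,u}$, one recovers the missing derivative at the price of a weight $\upmu_\star^{-1}$ in the associated $L^2$ estimate—precisely the mechanism that produces the blowup rates in \eqref{eq:main.EE.prop.top}.

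With the commuted energy identity in hand, the proof reduces to a Gr\"onwall--descent scheme. Writing $\mathbb{Q}_N(t,u) = \mathbb{E}_N+\mathbb{F}_N$, the top-order energy identity schematically yields
\begin{align*}
\mathbb{Q}_N(t,u) + \mathbb{K}_N(t,u)
& \leq C\epd^2
+ C\int_0^t \frac{\|L\upmu/\upmu\|_{L^\infty}}{\upmu_\star(s)}\mathbb{Q}_N(s,u)\,ds
+ C\int_0^u \mathbb{Q}_N(t,u')\,du'
\\
& \quad + (\text{forcing terms involving }\mathcal{P}^{[1,N]}\mathfrak{G}).
\end{align*}
The crucial sharp estimate $\int_0^t\upmu_\star^{-b}(s)\,ds\lesssim (1+(b-1)^{-1})\upmu_\star^{-b+1}(t)$ from Proposition~\ref{prop:mus.int}, combined with the approximate monotonicity of Proposition~\ref{prop:almost.monotonicity}, converts this into the weighted bound \eqref{eq:main.EE.prop.top}, where the exponent $2\toprate-1.8$ is chosen so that the Gr\"onwall factor is finite and so that the inductive descent
\begin{equation*}
\mathbb{W}_{N-1} \text{ is less singular than } \mathbb{W}_N \text{ by exactly two powers of } \upmu_\star
\end{equation*}
is consistent. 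The descent from $N=\Ntop$ down to $N\leq\Ntop-1$ is then standard, yielding \eqref{eq:main.EE.prop.low} after Gr\"onwall in $u$; the forcing contribution $\mathcal{P}^{[1,N]}\mathfrak{G}$ rides along as an inhomogeneous term and is absorbed into the $L^1(\mathcal{M}_{\hat{t},u})$ norms on the right.

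The main obstacle is the top-order step: it requires that the modified-quantity transport estimates, the elliptic estimates for $\mytr\upchi$ on $\ell_{t,u}$, and the singular $\upmu_\star^{-1}$ Gr\"onwall all fit together with compatible powers of $\upmu_\star$ and with the smallness parameter $\epd$. The integer $\toprate$ must be chosen large enough (as an absolute constant, independent of the equation of state) so that the auxiliary inequality $2\toprate - 1.8 > 2 + (\text{constant from Gr\"onwall})$ closes; this is the same choice made in \cite{jSgHjLwW2016}, and one must check that the new forcing $\mathfrak{G}$, under \eqref{eq:F.smallness}, does not worsen the required value of $\toprate$—which it does not, because $\mathfrak G$ enters only linearly in the energy identity and carries an extra smallness $\epd^{1/2}$ via \eqref{eq:F.smallness} and the $L^\infty$ bounds for $T\Psi$. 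The verification of these compatibility conditions is the technical heart of the argument and is what occupies Appendix~\ref{app:elliptic}.
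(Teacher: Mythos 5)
Your outline is broadly aligned with the paper's strategy: a multiplier-vectorfield energy identity (the paper's multiplier is $(1+2\upmu)L\mathcal{P}^{N}\Psi + \bX\mathcal{P}^{N}\Psi$, not $(1+2\upmu)L + 2\bX$, though this is cosmetic), commutation by $\mathcal{P}^N$, Christodoulou-type fully/partially modified quantities to avoid derivative loss in the $\mytr\upchi$-commutator term, elliptic estimates on $\ell_{t,u}$ to recover $\hat{\upchi}$ from $\mytr\upchi$, and a singular Gr\"onwall-descent with the sharp $\int \upmu_\star^{-b}\lesssim (b-1)^{-1}\upmu_\star^{-b+1}$ bounds. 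The statement that $\mathfrak{G}$ contributes only the $L^1(\mathcal{M}_{t,u})$ and iterated-time-integral forcing norms on the RHS, and that this does not worsen $\toprate$ because $\mathfrak{G}$ enters linearly with $\epd^{1/2}$ smallness, is essentially what happens.

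However, there is a genuine gap in your argument for why $\toprate$ can be chosen as an \emph{absolute} constant. You write a single schematic Gr\"onwall inequality of the form $\mathbb{Q}_N \leq C\epd^2 + C\int \upmu_\star^{-1}\mathbb{Q}_N + \cdots$ and assert that ``the exponent $2\toprate - 1.8$ is chosen so that the Gr\"onwall factor is finite.'' But the coefficient multiplying the borderline error integral $\int \upmu_\star^{-1}\|[L\upmu]_-\|_{L^\infty}\mathbb{Q}_{[1,\Ntop]}\,dt'$ determines $\toprate$, and in general this coefficient depends on the equation of state, $\bar{\varrho}$, $\mathring{\updelta}$, and $\mathring{\updelta}_*^{-1}$ (through the sizes of $\bX\mathcal{R}_{(+)}$ and the speed-of-sound functionals appearing in $\vec{G}_{LL}$). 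If you do not isolate the part of the constant that is truly universal, the Gr\"onwall argument produces a blowup-rate $\upmu_\star^{-2\toprate}$ with $\toprate$ depending on those parameters, and then the statement that $\toprate$ is absolute is unjustified. The paper's remedy is precisely the \emph{partial energies} $\mathbb{Q}^{(Partial)}_{[1,\Ntop]}$ of Definition~\ref{D:JAREDPARTIALENERGIES}, which control all wave variables \emph{except} $\mathcal{R}_{(+)}$: in Proposition~\ref{P:JAREDimproved.critical.constant}, the borderline error integrals are split so that those whose constant is genuinely absolute ($\boxed{M_{\mathrm{abs}}}$) multiply the full top-order energy, while those with parameter-dependent constants ($\boxed{\Ccrit}$) multiply only the partial top-order energy, and the partial energy obeys an inequality \eqref{E:JAREDPARTIALTOPORDERINTEGRALINEQUALITY} with \emph{no} boxed-constant borderline terms at all (because $\bX\widetilde{\Psi}$ is $\mathcal{O}(\epd^{1/2})$ for $\widetilde{\Psi}\in\{\mathcal{R}_{(-)},v^2,v^3,s\}$). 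Feeding the partial inequality into the full one eliminates the $\Ccrit$-terms and leaves only $M_{\mathrm{abs}}$-governed borderline integrals, which is what makes $\toprate$ absolute. Correspondingly, the actual Gr\"onwall is a coupled system in the three functions $F$, $G$, $H$ (weighted full top-order, partial top-order, and penultimate-order energies), with a delicate smallness condition $\upalpha_1 + 4\upalpha_2\upgamma_1 + \upalpha_3\upbeta_1 + 4\upalpha_3\upbeta_2\upgamma_1 < 1$ on the resulting coefficients; the top- and penultimate-order estimates in \eqref{eq:main.EE.prop.top} must be derived \emph{simultaneously}, not in sequence, because $H$ feeds back into $F$ through the lossy commutator. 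Without this two-tier energy structure, the argument you sketch cannot deliver an equation-of-state-independent $\toprate$.
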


\begin{remark}
The proof of Proposition~\ref{prop:wave} follows from almost exactly the same arguments used in the proof of
\cite[Proposition~14.1]{LS}. 
The only differences are the following two changes:
\begin{enumerate}
\item We have to track the influence of the inhomogeneous terms $\mathfrak G$ 
	on the estimates.
\item In $3$D, the second fundamental form of the null hypersurfaces of the acoustical metric has three (as opposed to one) independent components. This necessitates an additional elliptic estimate
that was not needed in the $2$D case treated in \cite{LS}. 
This elliptic estimate is standard; 
see \cites{dCsK1993,sKiR2003,dC2007}.
\end{enumerate}
These differences necessitate minor modifications to the proof of \cite[Proposition~14.1]{LS}. 
We will sketch them in Appendix~\ref{app:elliptic}.
\end{remark}

\begin{remark}[\textbf{Additional term in the top-order estimate}]
In Proposition~\ref{prop:wave}, the inhomogeneous term $\vec{\mathfrak G}$ makes an additional appearance in the top- and penultimate-order estimates as compared to the estimates of all the lower orders.
By ``additional appearance,'' we are referring to the double time integral,
which comes from a difficult top-order commutator term that depends on the acoustic geometry;
this difficult term has to be controlled by first integrating a transport equation, 
which explains the double time-integration; see Appendix~\ref{app:elliptic}.
\end{remark}

\subsection{Estimates for the inhomogeneous terms}\label{sec:wave.inho}
We start by controlling the null forms in the wave equations.
\begin{proposition}[\textbf{Control of wave equation error terms involving null forms}]
\label{prop:wave.inho.null}
For $\mathfrak Q\in \{\mathfrak{Q}_{(v)}^i,\mathfrak Q_{(\pm)}\}$ (see \eqref{E:VELOCITYNULLFORM}, \eqref{E:DENSITYNULLFORM}) and $1\leq N \leq \Ntop$, the following holds for all $(t,u) \in [0,\Tboot)\times [0,U_0]$ and for all $\varsigma \in (0,1]$, where the implicit constants are independent of $\varsigma$:
\begin{equation}\label{eq:Psi.null.condition}
\begin{split}
&\: \| (|L\mathcal{P}^{[1,N]} \Psi| + |\bX \mathcal{P}^{[1,N]} \Psi|)\mathcal{P}^{[1,N]} (\upmu \mathfrak Q)\|_{L^1(\Mtu)}\\
\ls &\:  \epd^2 \max\{1, \upmu_{\star}^{-2\toprate + 2\Ntop -2N +1.8}(t)\} 
	\\
&\: +\varsigma \mathbb K_{[1,N]}(t,u) + (1+\varsigma^{-1})
\left(\int_{u' =0}^{u' = u} \mathbb{F}_{[1,N]}(t,u') \, du' +  \int_{t'=0}^{t'=t} \mathbb{E}_{[1,N]}(t',u)\, dt' \right),
\end{split}
\end{equation}
and: 
\begin{equation}\label{eq:Psi.null.condition.top}
\begin{split}
&\:  \int_{t'=0}^{t'=t} \upmu_{\star}^{-\f 32}(t') \left\{ \int_{s=0}^{s=t'} 
\|\mathcal{P}^{[1,\Ntop]} (\upmu \mathfrak Q)\|_{L^2(\Sigma_s^u)}\, ds\right\}^2 \, dt' \\
\ls &\: \epd^2 \upmu_{\star}^{-2\toprate +1.8}(t)
 + \int_{t'=0}^{t'=t} \upmu_{\star}^{-\f 32}(t') \left\{ \int_{s=0}^{s=t'} \upmu_{\star}^{-\f 12}(s)\mathbb{E}_{[1,\Ntop]}^{\f 12}(s)  \, ds\right\}^2 \, dt'.
\end{split}
\end{equation}

\end{proposition}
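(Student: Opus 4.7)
The plan is to exploit the $g$-null form structure of $\mathfrak Q$ via Lemma~\ref{lem:null.form}, which thanks to \eqref{E:CRUCIALPROPERTIESOFNULLFORMS} ensures that no $\bX \Psi \cdot \bX \Psi$ term appears when $\upmu \mathfrak Q$ is decomposed in the frame $\{L, \bX, Y, Z\}$. I would then integrate the resulting pointwise estimates against $|L\mathcal{P}^{[1,N]}\Psi| + |\bX \mathcal{P}^{[1,N]}\Psi|$, slicing $\Mtu$ either by $\Sigma_{t'}^u$-leaves (to use $\mathbb E$-control of $\bX$-derivatives) or by $\mathcal F_{u'}^t$-leaves (to use $\mathbb F$-control of $L$-derivatives), and reserving the region $\{\upmu \leq 1/4\}$ to absorb the most degenerate pieces into $\mathbb K_{[1,N]}$.

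The first step is a pointwise bound. Applying inequality \eqref{eq:null.form.basic.1} with $\phi^{(1)}, \phi^{(2)}$ ranging over the components of $\threePsi$, and using the bootstrap assumptions \eqref{BA:LARGERIEMANNINVARIANTLARGE}--\eqref{BA:W.Li.small} to set $\mathfrak d^{(*,1)} \ls \mathring{\updelta}$ (from $\|\bX \mathcal R_{(+)}\|_{L^\infty}$) and $\mathfrak d^{(*,2)} \ls \epd^{1/2}$, I expect the schematic estimate
\begin{equation*}
|\mathcal{P}^{[1,N]}(\upmu \mathfrak Q)| \ls |\mathcal{P}^{[2,N+1]}\Psi| + |\mathcal{P}^{[1,N]}\bX \Psi| + \epd^{1/2}|\mathcal{P}\Psi| + |\mathcal{P}^{[2,N]}(\upmu, L^i, \Psi)|,
\end{equation*}
with implicit constant depending on the equation of state and on $\mathring{\updelta}$.

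To obtain \eqref{eq:Psi.null.condition}, I multiply by $|L\mathcal{P}^{[1,N]}\Psi| + |\bX\mathcal{P}^{[1,N]}\Psi|$ and integrate over $\Mtu$. For products with $|\bX\mathcal{P}^{[1,N]}\Psi|$ I slice in $\Sigma_{t'}^u$ and apply Cauchy--Schwarz, producing $\int_0^t \mathbb E_{[1,N]}^{1/2}(t',u) \|\cdot\|_{L^2(\Sigma_{t'}^u)} dt'$; for products with $|L\mathcal{P}^{[1,N]}\Psi|$ I slice in $\mathcal F_{u'}^t$ to produce $\int_0^u \mathbb F_{[1,N]}^{1/2}(t,u') \|\cdot\|_{L^2(\mathcal F_{u'}^t)} du'$. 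The dominant term $|\mathcal{P}^{[2,N+1]}\Psi|$ consists purely of $\mathcal F_u$-tangential derivatives, whose $L$-component feeds into $\mathbb F_{[1,N]}$; its $Y,Z$-components, on splitting $\Mtu$ into $\{\upmu\le 1/4\}$ and its complement, yield via Young's inequality either a $\varsigma \mathbb K_{[1,N]}$ contribution in the degenerate region, or, in the non-degenerate region where $\upmu^{-1}\ls 1$, a contribution controlled by $(1+\varsigma^{-1})\int_0^t \mathbb E_{[1,N]} dt'$. The $|\mathcal{P}^{[1,N]}\bX\Psi|$ term is handled via Proposition~\ref{prop:commutators}, and the $|\mathcal{P}^{[2,N]}(\upmu, L^i,\Psi)|$ and $\epd^{1/2}|\mathcal{P}\Psi|$ pieces are estimated using Proposition~\ref{prop:geometric.top}, the bootstrap bounds \eqref{BA:W1} and \eqref{BA:W.Li.small}, and Propositions~\ref{prop:mus.int}, \ref{prop:almost.monotonicity} to produce the data-like contribution $\epd^2\max\{1, \upmu_\star^{-2\toprate+2\Ntop-2N+1.8}(t)\}$.

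For \eqref{eq:Psi.null.condition.top}, I instead compute $\|\mathcal{P}^{[1,\Ntop]}(\upmu \mathfrak Q)\|_{L^2(\Sigma_s^u)}$ using the same pointwise bound. Inserting a $\sqrt\upmu/\sqrt\upmu$ factor, the leading $\|\mathcal{P}^{[2,\Ntop+1]}\Psi\|_{L^2(\Sigma_s^u)}$ is bounded by $\ls \upmu_\star^{-1/2}(s) \mathbb E_{[1,\Ntop]}^{1/2}(s)$, matching the second term on the RHS of \eqref{eq:Psi.null.condition.top}; the $\|\mathcal{P}^{[1,\Ntop]}\bX\Psi\|_{L^2(\Sigma_s^u)}$ term is bounded similarly using \eqref{eq:wave.energy.commuted} and Proposition~\ref{prop:commutators}; and the $\|\mathcal{P}^{[2,\Ntop]}(\upmu, L^i, \Psi)\|_{L^2(\Sigma_s^u)}$ piece yields the $\epd^2 \upmu_\star^{-2\toprate+1.8}(t)$ contribution after Proposition~\ref{prop:geometric.top} and a final application of Proposition~\ref{prop:mus.int} to the outer $\upmu_\star^{-3/2}(t')$-weighted integral.

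The main obstacle will be careful book-keeping of $\upmu$-weights so that no unabsorbable $\upmu_\star^{-1}$ loss leaks out. The null form identity \eqref{E:CRUCIALPROPERTIESOFNULLFORMS} is essential: absent it, one would face $\mathring{\updelta}^2 |\bX \mathcal{P}^{[1,N]}\Psi|^2$-type terms with no accompanying $\upmu$-weight, which could never be absorbed into $\mathbb E_{[1,N]}$ in a Gr\"onwall-closeable fashion. The remaining care lies in distinguishing the $\mathcal F_u$-tangential top-order derivatives of $\Psi$ (which cost a $\upmu_\star^{-1/2}$ factor when converting between $\sqrt{\upmu}$-weighted and un-weighted $L^2$ norms) from the $\bX$-derivatives, and in exploiting the degenerate region $\{\upmu \leq 1/4\}$ precisely to route the worst tangential losses into $\varsigma \mathbb K_{[1,N]}$, which is then absorbed on the left-hand side of \eqref{eq:main.EE.prop.top} in Proposition~\ref{prop:wave}.
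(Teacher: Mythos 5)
Your plan is structurally the same as the paper's: apply Lemma~\ref{lem:null.form} to $\upmu\mathfrak Q$, then multiply by $|L\mathcal{P}^{[1,N]}\Psi| + |\bX\mathcal{P}^{[1,N]}\Psi|$ and integrate, routing the pieces of $|\mathcal{P}^{[2,N+1]}\Psi|$ into $\mathbb K_{[1,N]}$, $\mathbb F_{[1,N]}$, and $\mathbb E_{[1,N]}$ via the $\{\upmu\leq 1/4\}$ split, and using \eqref{eq:wave.energy.commuted}, Proposition~\ref{prop:geometric.top}, and Proposition~\ref{prop:mus.int} for the remaining terms. The second part of your plan for \eqref{eq:Psi.null.condition.top} also matches the paper's.

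There is, however, a concrete error in the pointwise estimate you write down, and it is fatal for closing the argument. Having correctly identified $\mathfrak d^{(1,2)}, \mathfrak d^{(2,2)} \ls \epd^{1/2}$ from \eqref{BA:W.Li.small}, you then drop the resulting $\epd^{1/2}$ factors when transcribing the output of Lemma~\ref{lem:null.form}. Looking at \eqref{eq:null.form.basic.1}: the term $|\mathcal{P}^{[1,N]}\bX\Psi|$ appears only multiplied by $\mathfrak d^{(2,2)}$ and $\mathfrak d^{(1,2)}$ (hence with a factor of $\epd^{1/2}$), and the term $|\mathcal{P}^{[2,N]}(\upmu,L^i,\Psi)|$ appears multiplied by $\max\{\mathfrak d^{(1,1)}\mathfrak d^{(2,2)}, \mathfrak d^{(1,2)}\mathfrak d^{(2,1)}\} \ls \epd^{1/2}$. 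The correct pointwise bound, and the one the paper uses in \eqref{eq:derivative.of.wave.null}, is therefore
\begin{align*}
|\mathcal{P}^{[1,N]}(\upmu\mathfrak Q)| \ls |\mathcal{P}^{[2,N+1]}\Psi| + \epd^{\frac{1}{2}}\left\{|\mathcal{P}^{[1,N]}\bX\Psi| + |\mathcal{P}\Psi| + |\mathcal{P}^{[2,N]}(\upmu,L^i)|\right\},
\end{align*}
whereas you wrote the second and fourth terms without the $\epd^{1/2}$ prefactor. This is not a cosmetic point: when you square and apply \eqref{eq:wave.energy.commuted}, the $\bX\Psi$ contribution becomes $\epd^{1/2} \cdot \epd^{1/2} = \epd$ per factor, and you need both of these half-powers plus the $\epd$ from the bootstrap bound $\|\mathcal{P}^N\bX\Psi\|^2_{L^2(\Sigma_t^u)} \ls \epd\max\{\cdots\}$ to produce the claimed $\epd^2$. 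Likewise for $|\mathcal{P}^{[2,N]}(\upmu,L^i,\Psi)|$ estimated via Proposition~\ref{prop:geometric.top}. Without the $\epd^{1/2}$ factors, your later claim that these pieces yield $\epd^2\max\{1,\upmu_\star^{-2\toprate+2\Ntop-2N+1.8}(t)\}$ simply does not follow — you would only obtain $\epd\max\{\cdots\}$, which is not $\leq$ the stated right-hand side of either \eqref{eq:Psi.null.condition} or \eqref{eq:Psi.null.condition.top}. The fix is simply to retain the $\epd^{1/2}$ factors that Lemma~\ref{lem:null.form} hands you; everything else in your sketch then goes through.
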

\begin{proof}
\pfstep{Step~1: Proof of \eqref{eq:Psi.null.condition}} To bound the LHS of \eqref{eq:Psi.null.condition}, we use the Cauchy--Schwarz and the Young inequalities to obtain, 
for any $\varsigma>0$:
\begin{equation}\label{eq:Psi.condition.pf.1}
\begin{split}
&\: \| (|L\mathcal{P}^{[1,N]} \Psi| + |\bX \mathcal{P}^{[1,N]} \Psi|)\mathcal{P}^{[1,N]} (\upmu \mathfrak Q)\|_{L^1(\Mtu)} \\
\ls &\: (1 + \varsigma^{-1})  \left(\int_{u' =0}^{u' = u} \mathbb{F}_{[1,N]}(t,u') \, du' +  \int_{t'=0}^{t'=t} \mathbb{E}_{[1,N]}(t',u)\, dt' \right) 
+ \varsigma \|\mathcal{P}^{[1,N]} (\upmu \mathfrak Q)\|_{L^2(\Mtu)}^2.
\end{split}
\end{equation}
 
By inspection, 
it can be checked that $\mathfrak Q$ is a 
$g$-null form (see Definition~\ref{D:NULLFORMS}) that is 
quadratic in the wave variables. 
Hence, applying \eqref{eq:null.form.basic.1} with $\phi^{(1)},\,\phi^{(2)} = \Psi$, $\mathfrak d^{(1,1)},\, \mathfrak d^{(2,1)} \ls 1$, $\mathfrak d^{(1,2)},\, \mathfrak d^{(2,2)} \ls \epd^{\f 12}$ 
(which is justified by the bootstrap assumptions 
\eqref{BA:LARGERIEMANNINVARIANTLARGE}--\eqref{BA:W.Li.small}), we obtain:
\begin{equation}\label{eq:derivative.of.wave.null}
\begin{split}
 |\mathcal{P}^{[1,N]} (\upmu \mathfrak Q)| 
\ls &\: |\mathcal{P}^{[2,N+1]}\Psi| + \epd^{\f 12} \{ |\mathcal{P}^{[1,N]} \bX \Psi| + |\mathcal{P} \Psi| + |\mathcal{P}^{[2,N]} (\upmu, L^i)| \}.
\end{split}
\end{equation}

To bound \eqref{eq:derivative.of.wave.null} in $L^2(\Mtu)$, we control $|\mathcal{P}^{[2,N+1]}\Psi|$ by the energies \eqref{eq:wave.energy.def.1}--\eqref{eq:wave.energy.def.3}, control  $|\mathcal{P}^{[1,N]} \bX \Psi|$ by 
\eqref{eq:wave.energy.commuted}, bound $|\mathcal{P}\Psi|$ by \eqref{BA:W.Li.small}, and $|\mathcal{P}^{[2,N]} (\upmu, L^i)|$ by Proposition~\ref{prop:geometric.top}. We thus obtain the following bound for any
$\varsigma \in (0,1]$, where the implicit constants are independent of $\varsigma$:
\begin{equation}\label{eq:Psi.condition.pf.2}
\begin{split}
&\: \varsigma \|\mathcal{P}^{[1,N]} (\upmu \mathfrak Q)\|_{L^2(\Mtu)}^2 \\
\ls &\: \varsigma 
				\left\lbrace
				\mathbb K_{[1,N]}(t,u) + \int_{u' =0}^{u' = u} \mathbb{F}_{[1,N]}(t,u') \, du' +  \int_{t'=0}^{t'=t} \mathbb{E}_{[1,N]}(t',u)\, dt'\right\rbrace \\
&\: + \epd^2 \int_{t'=0}^{t'=t} \max\{1,\upmu_{\star}^{-2\toprate+2\Ntop-2N + 1.8}(t')\}  \, dt' + \epd^2 \\
\ls &\: \epd^2 \max\{1, \upmu_{\star}^{-2\toprate + 2\Ntop -2N +1.8}(t)\}\\
&\: +\varsigma \left\lbrace
	\mathbb K_{[1,N]}(t,u) + \int_{u' =0}^{u' = u} \mathbb{F}_{[1,N]}(t,u') \, du' 
+  
\int_{t'=0}^{t'=t} \mathbb{E}_{[1,N]}(t',u)\, dt'
\right\rbrace,
\end{split}
\end{equation}
where in the last line, we have used Proposition~\ref{prop:almost.monotonicity}.

Putting \eqref{eq:Psi.condition.pf.1}--\eqref{eq:Psi.condition.pf.2} together, we obtain \eqref{eq:Psi.null.condition}.

\pfstep{Step~2: Proof of \eqref{eq:Psi.null.condition.top}} We begin with \eqref{eq:derivative.of.wave.null} when $N = \Ntop$. Notice that unlike in Step~1, we now have to control $|\mathcal{P}^{[2,N+1]}\Psi|$ only with the $\mathbb{E}$ (but not $\mathbb{F}$ and $\mathbb K$) energy (since we need an estimate on a fixed-$t$ hypersurface). This gives a $\upmu_{\star}^{-\f 12}$ degeneration. The other terms can be controlled by using arguments similar to the ones we used in Step~1.
In total, we have:
\begin{equation}
\begin{split}
 \|\mathcal{P}^{[1,\Ntop]} (\upmu \mathfrak Q)\|_{L^2(\Sigma_s^u)} 
\ls &\: \upmu_{\star}^{-\f 12}(s)\mathbb{E}_{[1,\Ntop]}^{\f 12}(s) + \epd \max\{1, \upmu_{\star}^{-\toprate + 0.9}(t)\}.
\end{split}
\end{equation}
Finally, integrating with respect to time and using
Proposition~\ref{prop:mus.int}, we obtain \eqref{eq:Psi.null.condition.top}. \qedhere
\end{proof}

Next, we control the easy linear terms in the wave equations.

\begin{proposition}[\textbf{Control of wave equation error terms involving easy linear inhomogeneous terms}]
\label{prop:wave.inho.linear}
For $\mathfrak L \in \{ \mathfrak L_{(v)}^i,\,\mathfrak L_{(\pm)},\,\mathfrak L_{(s)} \}$ (see \eqref{E:VELOCITYILINEARORBETTER}, \eqref{E:PMLINEARORBETTER}, \eqref{E:ENTROPYLINEARORBETTER}) and $1\leq N \leq \Ntop$, the following holds for all $(t,u) \in [0,\Tboot)\times [0,U_0]$:
\begin{equation}\label{eq:wave.inho.linear.1}
 \| (|L\mathcal{P}^{[1,N]} \Psi| + |\bX \mathcal{P}^{[1,N]} \Psi|)\mathcal{P}^{[1,N]} (\upmu \mathfrak L)\|_{L^1(\Mtu)} \ls \mbox{\upshape RHS~\eqref{eq:Psi.null.condition}},
\end{equation}
and:
\begin{equation}\label{eq:wave.inho.linear.2}
\int_{t'=0}^{t'=t} \upmu_{\star}^{-\f 32}(t') \left\{ \int_{s=0}^{s=t'} \|\mathcal{P}^{[1,\Ntop]} (\upmu \mathfrak L)\|_{L^2(\Sigma_s^u)}\, ds\right\}^2 \, dt' \ls \mbox{\upshape RHS~\eqref{eq:Psi.null.condition.top}}.
\end{equation}
\end{proposition}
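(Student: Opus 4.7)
The plan is to follow the same two-step structure as Proposition~\ref{prop:wave.inho.null}: first establish a pointwise decomposition of $|\mathcal{P}^{[1,N]}(\upmu\mathfrak{L})|$, then convert this into the $L^1(\Mtu)$ and double-time-integral bounds by pairing with $|L\mathcal{P}^{[1,N]}\Psi|$ and $|\bX\mathcal{P}^{[1,N]}\Psi|$ via Cauchy-Schwarz and Young's inequality, feeding in the wave, transport, and geometric estimates from the preceding sections. The analysis of the inhomogeneities closely mirrors Step~1 of Proposition~\ref{prop:Vr.S.inho}, whose pointwise bound \eqref{eq:pointwise.of.the.most.basic.transport.inho} already handles identical structures in an $L^2(\Mtu)$ sense.

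For Step~1 (pointwise estimate), I observe that each of $\mathfrak{L}_{(v)}^i$, $\mathfrak{L}_{(\pm)}$, $\mathfrak{L}_{(s)}$ as displayed in \eqref{E:VELOCITYILINEARORBETTER}--\eqref{E:ENTROPYLINEARORBETTER} has the schematic form $(\Vr, S)\cdot \partial\Psi + (\Vr, S)\cdot(\Vr, S)\cdot \smoothfunction(\Psi)$, where factors of $\Transport\Psi$ are converted using $\upmu\Transport = \upmu L + \bX$ and Cartesian derivatives $\upmu\partial$ are re-expressed in the geometric frame via Lemma~\ref{lem:Cart.to.geo} with coefficients smooth in $L^i, \Psi$. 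Running the Leibniz-type argument of Step~1 of Proposition~\ref{prop:Vr.S.inho}, with the bootstrap $L^\infty$ bounds \eqref{BA:LARGERIEMANNINVARIANTLARGE}--\eqref{BA:C.D} and Propositions~\ref{prop:geometric.low}--\ref{prop:geometric.low.2} used to place all but the highest-order factor in $L^\infty$, yields
\[
 |\mathcal{P}^{[1,N]}(\upmu\mathfrak{L})|
 \ls
 |\mathcal{P}^{\leq N}(\Vr, S)|
 + \epd \bigl(|\mathcal{P}^{[2,N+1]}\Psi|+|\mathcal{P}^{[1,N]}\bX\Psi|\bigr)
 + \epd\, |\mathcal{P}^{[2,N]}(\upmu, L^i)|,
\]
which is the same shape as \eqref{eq:pointwise.of.the.most.basic.transport.inho}.

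For Step~2 (proof of the two bounds), to obtain \eqref{eq:wave.inho.linear.1} we apply Cauchy-Schwarz and Young's inequality exactly as in Step~1 of Proposition~\ref{prop:wave.inho.null}, reducing the task to bounding $\varsigma\|\mathcal{P}^{[1,N]}(\upmu\mathfrak{L})\|_{L^2(\Mtu)}^2$. Squaring the Step~1 pointwise bound, the four contributions are controlled by: the $(\Vr,S)$ piece via Proposition~\ref{prop:V.S} (giving $\epd^3\max\{1,\upmu_\star^{-(2\toprate+2\Ntop-2N+2.8)}\}$); the $\mathcal{P}^{[2,N+1]}\Psi$ piece by the wave energies $\mathbb{K}_{[1,N]},\mathbb{F}_{[1,N]},\mathbb{E}_{[1,N]}$; the $\mathcal{P}^{[1,N]}\bX\Psi$ piece via \eqref{eq:wave.energy.commuted} and Proposition~\ref{prop:mus.int}; and the $(\upmu,L^i)$ piece via Proposition~\ref{prop:geometric.top}, exactly as in the bound \eqref{eq:Psi.condition.pf.2} in the null form case. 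For \eqref{eq:wave.inho.linear.2} we imitate Step~2 of Proposition~\ref{prop:wave.inho.null}: we take $L^2(\Sigma_s^u)$ norms of the Step~1 pointwise estimate, integrate in $s$, and apply the outer $\upmu_\star^{-3/2}(t')$-weighted time integral; the $\mu_\star$-singular contributions are absorbed via Proposition~\ref{prop:mus.int} to produce expressions consistent with RHS~\eqref{eq:Psi.null.condition.top}.

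The main obstacle, compared to the null form case, is structural: the summands of $\mathfrak{L}$ are \emph{not} $g$-null forms, so when all $\mathcal{P}^{[1,N]}$ derivatives in Step~1 fall on the $(\Vr, S)$ factor, the companion $\partial\Psi$ factor can be as large as $|\bX\mathcal{R}_{(+)}|\ls \mathring{\updelta}$, leaving the unweighted contribution $|\mathcal{P}^{\leq N}(\Vr, S)|$ in the pointwise bound --- a feature absent from \eqref{eq:derivative.of.wave.null}. The saving grace is that Proposition~\ref{prop:V.S} endows $\mathbb{V}_N,\mathbb{S}_N$ with an extra power of $\epd$ relative to the wave energies $\mathbb{W}_N$ (namely $\epd^3$ rather than $\epd^2$); once $\epd$ is taken sufficiently small depending on $\mathring{\updelta}_*^{-1}$, this extra factor compensates for the single additional power of $\upmu_\star^{-1}$ in the blowup-rate of $\mathbb{V},\mathbb{S}$, so that the resulting $\epd^3\upmu_\star^{-(2\toprate+2\Ntop-2N+2.8)}$ contribution fits into the $\epd^2\upmu_\star^{-(2\toprate+2\Ntop-2N+1.8)}$ budget on RHS~\eqref{eq:Psi.null.condition}. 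Aside from tracking this mild loss, the remainder of the argument is structurally identical to the null form case.
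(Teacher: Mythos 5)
Your overall structure (pointwise decomposition, then $L^1(\Mtu)$ and double-time-integral bounds, reusing the machinery from Proposition~\ref{prop:wave.inho.null} plus Proposition~\ref{prop:V.S} for the $(\Vr,S)$ piece) matches the paper's proof. However, the explanation you give for why the $(\Vr,S)$ contribution fits inside RHS~\eqref{eq:Psi.null.condition} is based on a misreading of the exponents that, if taken at face value, would cause the argument to fail.

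You write the $(\Vr,S)$ bound as $\epd^3\upmu_\star^{-(2\toprate+2\Ntop-2N+2.8)}$ and the budget as $\epd^2\upmu_\star^{-(2\toprate+2\Ntop-2N+1.8)}$, and you claim the extra $\epd$ ``compensates for the single additional power of $\upmu_\star^{-1}$ in the blowup-rate of $\mathbb{V},\mathbb{S}$.'' This is wrong in two ways. First, the exponents in Proposition~\ref{prop:V.S} and in RHS~\eqref{eq:Psi.null.condition} are $-2\toprate+2\Ntop-2N+2.8$ and $-2\toprate+2\Ntop-2N+1.8$ respectively, \emph{without} the outer minus sign you inserted; you flipped the sign of the whole exponent and thereby flipped the direction of the comparison. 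Second, and more importantly, if the $\mathbb{V},\mathbb{S}$ energies \emph{did} carry an additional power of $\upmu_\star^{-1}$ compared to the target, then no amount of $\epd$-smallness could help: $\epd\,\upmu_\star^{-1}(t) \to \infty$ as $t\uparrow T_{(Sing)}$, and choosing $\epd$ small ``depending on $\mathring{\updelta}_*^{-1}$'' controls the constant in front, not the $\upmu_\star^{-1}$ blowup. The actual reason the estimate closes is the opposite of what you state: because the transport energies carry a non-$\upmu$-degenerate flux on $\mathcal{F}_u$, Proposition~\ref{prop:V.S} gives $\mathbb{V}_N,\mathbb{S}_N$ a blowup exponent that is one unit \emph{more favorable} (i.e., one fewer power of $\upmu_\star^{-1}$) than the wave budget, so the ratio of $(\Vr,S)$ contribution to budget is $\epd\,\upmu_\star(t) \leq \epd$, and the bound holds with room to spare and without any delicate balancing. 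You should correct this before relying on similar exponent bookkeeping in the top-order elliptic estimates of Section~\ref{sec:transport.hard}, where the margins are genuinely tight.

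Aside from this, a small remark on Step~2: you reduce \eqref{eq:wave.inho.linear.1} to bounding $\varsigma\|\mathcal{P}^{[1,N]}(\upmu\mathfrak{L})\|_{L^2(\Mtu)}^2$, putting the $(\Vr,S)$ piece under a $\varsigma$-weight. The paper instead applies Young's inequality directly to the product $(|L\mathcal{P}^{[1,N]}\Psi|+|\bX\mathcal{P}^{[1,N]}\Psi|)\,\mathcal{P}^{\leq N}(\Vr,S)$ without a $\varsigma$ split, pairing the wave terms against the transport-flux term $\int_0^u\|\mathcal{P}^{\leq N}(\Vr,S)\|_{L^2(\mathcal{F}_{u'}^t)}^2\,du'$. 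Both routes work; yours is only better (the $\varsigma\leq 1$ helps), so this is merely a cosmetic difference.
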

\begin{proof}
We first pointwise
bound $\upmu \mathfrak L \in 
\{ \upmu \mathfrak L_{(v)}^i,\, \upmu \mathfrak L_{(\pm)},\, \upmu \mathfrak L_{(s)}\}$ in a similar manner\footnote{In fact, we can even do better than terms in \eqref{eq:derivative.of.wave.null} because of the extra smallness in $\epd$ we have from the bootstrap assumptions. However, we do not need this improvement for our proof.}  
as \eqref{eq:derivative.of.wave.null}:
\begin{equation}\label{eq:wave.inho.linear.3}
|\mathcal{P}^{[1,N]} (\upmu \mathfrak L) | \ls |\mathcal{P}^{\leq N}(\Vr,S)|+  \mbox{terms already in \eqref{eq:derivative.of.wave.null}}.
\end{equation}

\pfstep{Proof of \eqref{eq:wave.inho.linear.1}} The terms in \eqref{eq:wave.inho.linear.3} that are already in \eqref{eq:derivative.of.wave.null} can of course be controlled as in Proposition~\ref{prop:wave.inho.null}. We therefore focus on $|\mathcal{P}^{\leq N}(\Vr,S)|$, for which we have the following estimate using the Cauchy--Schwarz and H\"older inequalities and Proposition~\ref{prop:V.S}:
\begin{equation}
\begin{split}
&\: \| (|L\mathcal{P}^{[1,N]} \Psi| + |\bX \mathcal{P}^{[1,N]} \Psi |)\mathcal{P}^{\leq N}(\Vr,S)\|_{L^1(\Mtu)} \\
\ls &\: \|L\mathcal{P}^{[1,N]} \Psi \|_{L^2(\Mtu)}^2 + \|\bX \mathcal{P}^{[1,N]} \Psi \|_{L^2(\Mtu)}^2 + \int_0^u \|\mathcal{P}^{\leq N}(\Vr,S) \|_{L^2(\mathcal F_{u'}^t)}^2 \, du' \\
\ls &\: \|L\mathcal{P}^{[1,N]} \Psi \|_{L^2(\Mtu)}^2 + \|\bX \mathcal{P}^{[1,N]} \Psi \|_{L^2(\Mtu)}^2 
+ 
\epd^3 \max\{1, \upmu_{\star}^{-2\toprate+2\Ntop + 2N +2.8}(t)\},
\end{split}
\end{equation}
which can indeed be bounded above by RHS of \eqref{eq:Psi.null.condition} as claimed.

\pfstep{Proof of \eqref{eq:wave.inho.linear.2}} Again, we only focus on the $|\mathcal{P}^{\leq \Ntop}(\Vr,S)|$ term in \eqref{eq:wave.inho.linear.3}. Using the definitions of the $\mathbb V$ and $\mathbb S$ norms and Propositions~\ref{prop:mus.int} and ~\ref{prop:V.S}, we deduce:
\begin{equation}
\begin{split}
&\: \int_{t'=0}^{t'=t} \upmu_{\star}^{-\f 32}(t') \left\{ \int_{s=0}^{s=t'} 
\|\mathcal{P}^{\leq \Ntop} (\Vr,S)\|_{L^2(\Sigma_s^u)}\, ds\right\}^2 \, dt' \\
\ls &\: \int_{t'=0}^{t'=t} \upmu_{\star}^{-\f 32}(t') \left\{ \int_{s=0}^{s=t'} \upmu_{\star}^{-\f 12}(s) [\mathbb V_{\leq \Ntop}^{\f 12}(s) + \mathbb S_{\leq \Ntop}^{\f 12}(s)]\, ds\right\}^2 \, dt' \\
\ls &\: \epd^3 \int_{t'=0}^{t'=t} \upmu_{\star}^{-\f 32}(t') \left\{ \int_{s=0}^{s=t'} 
\upmu_{\star}^{-\toprate+ 0.9}(s)\, ds\right\}^2 \, dt' \\
\ls &\: \epd^3 \max\{1, \upmu_{\star}^{-2\toprate +3.3}(t) \}
\ls \epd^2  \max\{1, \upmu_{\star}^{-2\toprate +1.8}(t) \},
\end{split}
\end{equation}
which can indeed be bounded above by RHS of \eqref{eq:Psi.null.condition.top} as claimed. \qedhere
\end{proof}

Finally, we consider the linear terms involving $\mathcal{C}$ and $\mathcal{D}$.
\begin{proposition}[\textbf{Control of wave equation error terms involving $\mathcal{C}$ and $\mathcal{D}$}]
\label{prop:wave.inho.main}
For $\mathfrak M \in \{ c^2 \exp(2\Densrenormalized) \mathcal{C}^i, \, \Speed \exp(\rr) \frac{p_{;s}}{\bar{\varrho}} \mathcal{D},\,\Speed^2 \exp(2\rr) \DivofEntrenormalized,\, F_{;\Ent } \Speed^2 \exp(2\rr) \DivofEntrenormalized \}$ (cf.~main terms in \eqref{E:VELOCITYWAVEEQUATION}--\eqref{E:ENTROPYWAVEEQUATION}) and $1\leq N \leq \Ntop$, the following holds for all $(t,u)\in [0,\Tboot)\times [0,U_0]$:
\begin{equation}\label{eq:wave.inho.main.1}
\|( |L\mathcal{P}^{[1,N]}\Psi| + |\bX\mathcal{P}^{[1,N]}\Psi |)\mathcal{P}^{[1,N]} (\upmu \mathfrak M) \|_{L^1(\Mtu)} \ls \mbox{\upshape RHS~\eqref{eq:Psi.null.condition}},
\end{equation}
and:
\begin{equation}\label{eq:wave.inho.main.2}
\int_{t'=0}^{t'=t} \upmu_{\star}^{-\f 32}(t') 
\left\{ \int_{s=0}^{s=t'} \|\mathcal{P}^{[1,\Ntop]} (\upmu \mathfrak M)\|_{L^2(\Sigma_s^u)}\, ds\right\}^2 \, dt' \ls \mbox{\upshape RHS~\eqref{eq:Psi.null.condition.top}}.
\end{equation}
\end{proposition}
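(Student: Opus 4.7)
The plan is to follow the template of Propositions~\ref{prop:wave.inho.null} and \ref{prop:wave.inho.linear}, with the new input being the transport estimates for $\mathcal{C}$ and $\mathcal{D}$ from Proposition~\ref{prop:C.D} (below top order) and the top-order elliptic-transport estimate of Proposition~\ref{prop:elliptic.putting.everything.together}. Each $\mathfrak{M}$ in the given list has the schematic form $f(\threePsi)\cdot(\mathcal{C}\text{ or }\mathcal{D})$ with $f$ smooth, so a Leibniz-rule case analysis identical to Step~1 of the proof of Proposition~\ref{prop:Vr.S.inho}, combined with the $L^\infty$ bootstrap \eqref{BA:C.D}, yields the pointwise decomposition
\[
|\mathcal{P}^{[1,N]}(\upmu\mathfrak{M})|\;\lesssim\;\upmu|\mathcal{P}^{\leq N}(\mathcal{C},\mathcal{D})|\;+\;|\mathcal{P}^{\leq N-1}(\mathcal{C},\mathcal{D})|\;+\;\text{(terms already bounded by RHS of \eqref{eq:derivative.of.wave.null})}.
\]
The terms in the last collection arise when the $\mathcal{P}$'s hit $\upmu$ or $f(\threePsi)$ and the factor $(\mathcal{C},\mathcal{D})$ remains low-order (hence $L^\infty$-small by \eqref{BA:C.D}); these are absorbed via the identical arguments as in Proposition~\ref{prop:wave.inho.linear}. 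The task therefore reduces to the two genuinely new contributions.

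For \eqref{eq:wave.inho.main.1}, I will pair the $\Psi$-derivatives against the $(\mathcal{C},\mathcal{D})$-derivatives by Cauchy--Schwarz along the null hypersurfaces and then apply Young's inequality with parameter $\varsigma\in(0,1]$:
\[
\|(|L\mathcal{P}^{[1,N]}\Psi|+|\bX\mathcal{P}^{[1,N]}\Psi|)\,\mathcal{P}^{\leq N}(\mathcal{C},\mathcal{D})\|_{L^1(\Mtu)}\;\ls\;\varsigma\int_{0}^{u}\mathbb{F}_{[1,N]}(t,u')\,du'\;+\;\varsigma^{-1}\int_{0}^{u}(\mathbb{C}_{\leq N}+\mathbb{D}_{\leq N})(t,u')\,du'.
\]
The first piece already appears on RHS~\eqref{eq:Psi.null.condition}. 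The second is estimated by Propositions~\ref{prop:C.D} and \ref{prop:elliptic.putting.everything.together} by $\ls\epd^{3}\upmu_\star^{-2\toprate+2\Ntop-2N+0.8}(t)$; since $\upmu_\star\leq 1$ and $\epd$ is small, this is dominated by the target $\epd^{2}\upmu_\star^{-2\toprate+2\Ntop-2N+1.8}(t)$ on RHS~\eqref{eq:Psi.null.condition}. The lower-order piece $|\mathcal{P}^{\leq N-1}(\mathcal{C},\mathcal{D})|$ is treated identically, using the $\upmu_\star^{-2}$-better bound on $\mathbb{C}_{\leq N-1}+\mathbb{D}_{\leq N-1}$.

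For \eqref{eq:wave.inho.main.2}, the main structural observation is that the outer factor $\upmu$ in $\upmu\mathfrak{M}$ can be split as $\upmu=\sqrt{\upmu}\cdot\sqrt{\upmu}$ and paired with the $\sqrt{\upmu}$-weight built into $\mathbb{C}_{\Ntop}+\mathbb{D}_{\Ntop}$. Since $\|\sqrt{\upmu}\|_{L^\infty(\Sigma_s^u)}\ls 1$, Proposition~\ref{prop:elliptic.putting.everything.together} yields
\[
\|\upmu\,\mathcal{P}^{\leq\Ntop}(\mathcal{C},\mathcal{D})\|_{L^2(\Sigma_s^u)}\;\ls\;\|\sqrt{\upmu}\,\mathcal{P}^{\leq\Ntop}(\mathcal{C},\mathcal{D})\|_{L^2(\Sigma_s^u)}\;\ls\;\epd^{3/2}\upmu_\star^{-\toprate+0.4}(s).
\]
Two applications of Proposition~\ref{prop:mus.int} (first in $s$, then in $t'$) give
\[
\int_{0}^{t}\upmu_\star^{-3/2}(t')\Bigl\{\int_{0}^{t'}\epd^{3/2}\upmu_\star^{-\toprate+0.4}(s)\,ds\Bigr\}^{2}\,dt'\;\ls\;\epd^{3}\upmu_\star^{-2\toprate+2.3}(t)\;\ls\;\epd^{2}\upmu_\star^{-2\toprate+1.8}(t),
\]
which is the first term on RHS~\eqref{eq:Psi.null.condition.top}. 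The lower-order piece $|\mathcal{P}^{\leq\Ntop-1}(\mathcal{C},\mathcal{D})|$ does \emph{not} carry an outer $\upmu$, but by Proposition~\ref{prop:C.D} its $\sqrt{\upmu}$-weighted $L^2$ norm is $\upmu_\star^{-1}$-better; this $\upmu_\star^{-1}$ exactly compensates for the $\upmu_\star^{-1/2}$ incurred when dropping the $\sqrt{\upmu}$ weight, and so the same blowup rate is reached. The main delicate point throughout is the $\upmu_\star$-bookkeeping: one must exploit the outer $\upmu$ in $\upmu\mathfrak{M}$ to neutralize the half-power of $\upmu_\star^{-1}$ that top-order control of $\mathbb{C}_{\Ntop}+\mathbb{D}_{\Ntop}$ costs, and one must check that dropping a derivative onto $\upmu$ (rather than onto $\mathcal{C}$ or $\mathcal{D}$) saves two full powers of $\upmu_\star$, keeping the resulting blowup exponents consistent with those in \eqref{eq:Psi.null.condition} and \eqref{eq:Psi.null.condition.top}.
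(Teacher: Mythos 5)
Your treatment of \eqref{eq:wave.inho.main.2} is essentially correct and matches the paper: split the outer $\upmu$ as $\sqrt{\upmu}\cdot\sqrt{\upmu}$, invoke Proposition~\ref{prop:elliptic.putting.everything.together} for the top-order piece and Proposition~\ref{prop:C.D} (paying a $\upmu_\star^{-1/2}$ to shed the $\sqrt{\upmu}$ weight) for the lower-order piece, then integrate twice with Proposition~\ref{prop:mus.int}.

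However, your proof of \eqref{eq:wave.inho.main.1} has two genuine gaps. First, the displayed Cauchy--Schwarz/Young inequality uses $\mathbb{F}_{[1,N]}$ to control $\bX\mathcal{P}^{[1,N]}\Psi$, but by definition (see \eqref{eq:wave.energy.def.1}--\eqref{eq:wave.energy.def.4}) the flux $\mathbb{F}$ only controls the $\mathcal{F}_u$-tangential derivatives $L\mathcal{P}^N\Psi$ and $\sqrt{\upmu}\slashed{d}\mathcal{P}^N\Psi$; the transversal derivative $\bX\mathcal{P}^N\Psi$ lives in $\mathbb{E}$ on $\Sigma_t$-slices. You cannot pair the $\bX$-term against the $\mathcal{F}_{u'}$-flux of $\mathcal{C},\mathcal{D}$ via this route. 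Second and more fundamentally, the rate comparison is backwards: for $\upmu_\star\leq 1$ one has $\upmu_\star^{-2\toprate+2\Ntop-2N+0.8}\geq\upmu_\star^{-2\toprate+2\Ntop-2N+1.8}$ (the smaller exponent gives the worse blowup), so $\epd^{3}\upmu_\star^{-2\toprate+2\Ntop-2N+0.8}\leq\epd^{2}\upmu_\star^{-2\toprate+2\Ntop-2N+1.8}$ would require $\epd\leq\upmu_\star$, which fails as $\upmu_\star\to 0$ for any fixed $\epd>0$. No amount of $\epd$-smallness rescues a factor of $\upmu_\star^{-1}$. The root cause is that you discarded the outer $\upmu$ on the top-order $\mathcal{C},\mathcal{D}$ factor. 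The correct route is to keep it, split $\upmu=\sqrt{\upmu}\cdot\sqrt{\upmu}$, Cauchy--Schwarz on each $\Sigma_{t'}^u$-slice (pairing $\sqrt{\upmu}\,L\mathcal{P}^{[1,N]}\Psi$ and $\bX\mathcal{P}^{[1,N]}\Psi$, both controlled by $\mathbb{E}_{[1,N]}$, against $\sqrt{\upmu}\,\mathcal{P}^{N}(\mathcal{C},\mathcal{D})$), then integrate in $t'$ to obtain $\int_0^t\mathbb{E}^{1/2}_{[1,N]}[\mathbb{C}^{1/2}_{\leq N}+\mathbb{D}^{1/2}_{\leq N}]\,dt'\ls\epd^{2}\upmu_\star^{-2\toprate+2\Ntop-2N+2.3}(t)$ via the bootstrap, Propositions~\ref{prop:C.D}, \ref{prop:elliptic.putting.everything.together}, and \ref{prop:mus.int}; since $2.3>1.8$ this is dominated by the target. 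The lower-order piece $|\mathcal{P}^{\leq N-1}(\mathcal{C},\mathcal{D})|$ (which has no $\upmu$ weight) can instead be handled by a crude Young inequality against $\|L\mathcal{P}^{[1,N]}\Psi\|^2_{L^2(\Mtu)}+\|\bX\mathcal{P}^{[1,N]}\Psi\|^2_{L^2(\Mtu)}$ together with the $\mathcal{F}_{u'}$-flux component of $\mathbb{C}_{\leq N-1}+\mathbb{D}_{\leq N-1}$, which indeed yields $\epd^{3}\upmu_\star^{-2\toprate+2\Ntop-2N+2.8}\leq\epd^{2}\upmu_\star^{-2\toprate+2\Ntop-2N+1.8}$ because there the exponent is larger.
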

\begin{proof}

We first use the 
bootstrap assumptions \eqref{BA:LARGERIEMANNINVARIANTLARGE}--\eqref{BA:W.Li.small} and \eqref{BA:C.D}
and Proposition~\ref{prop:geometric.low} to deduce:
\begin{equation}\label{eq:est.C.in.wave.main}
\begin{split}
 |\mathcal{P}^N (\upmu \mathfrak M)| 
\ls &\: \underbrace{\upmu |\mathcal{P}^{N} (\mathcal{C}, \mathcal{D})|}_{\doteq I} + \underbrace{|\mathcal{P}^{\leq N-1} (\mathcal{C}, \mathcal{D})|}_{\doteq II} + \mbox{ terms already in \eqref{eq:derivative.of.wave.null}}.
\end{split}
\end{equation}

\pfstep{Step~1: Proof of \eqref{eq:wave.inho.main.1}} 
The terms already in \eqref{eq:derivative.of.wave.null} were handled in the proof of \eqref{eq:Psi.null.condition}, so we only have to handle $I$ and $II$ in \eqref{eq:est.C.in.wave.main}. 
We will use slightly different arguments for each of these two terms. For $I$, we have\footnote{Note that it is only at the top $N = \Ntop$ level that 
$\mathbb C^{\f 12}_{\leq N}$  and $\mathbb D^{\f 12}_{\leq N}$ is only bounded by 
$\upmu_{\star}^{-\toprate+\Ntop-N+0.4}(t')$. 
For $N< \Ntop$, we have the stronger estimates in Proposition~\ref{prop:C.D}, which in principle 
would allow us to avoid controlling the term $I$ separately.} 
by the Cauchy--Schwarz inequality, Propositions~\ref{prop:C.D}, 
\ref{prop:elliptic.putting.everything.together}, the bootstrap assumptions \eqref{BA:W1}, \eqref{BA:W2}, 
and Propositions~\ref{prop:geometric.low} and \ref{prop:mus.int} that:
\begin{equation}\label{eq:est.C.in.wave.1}
\begin{split}
&\: \|( |L\mathcal{P}^{[1,N]}\Psi| + |\bX\mathcal{P}^{[1,N]}\Psi |)\upmu \mathcal{P}^{N} (\mathcal{C}, \mathcal{D}) \|_{L^1(\Mtu)} \\
\ls &\: \int_{t'=0}^{t'=t} \mathbb{E}_{[1,N]}^{\f 12}(t',u)[\mathbb C^{\f 12}_{\leq N} + \mathbb D^{\f 12}_{\leq N}](t',u) \, dt' \\
\ls &\: \epd^{\f 12} \epd^{\f 32} \int_{t'=0}^{t'=t} \max\{1,\upmu_{\star}^{-\toprate+\Ntop-N+0.9}(t')\} \max\{ 1, \upmu_{\star}^{-\toprate+\Ntop-N+0.4}(t')\} \, dt' \\
\ls &\: \epd^2 \max\{1, \upmu_{\star}^{-2\toprate+2\Ntop-2N+2.3}(t)\}.
\end{split}
\end{equation}

For $II$ in \eqref{eq:est.C.in.wave.main}, we use Cauchy--Schwarz and Proposition~\ref{prop:C.D} to obtain:
\begin{equation}\label{eq:est.C.in.wave.2}
\begin{split}
&\: \|( |L\mathcal{P}^{[1,N]}\Psi| + |\bX\mathcal{P}^{[1,N]}\Psi |) \mathcal{P}^{\leq N-1} (\mathcal{C}, \mathcal{D}) \|_{L^1(\Mtu)} \\
\ls &\: \|L\mathcal{P}^{[1,N]}\Psi\|_{L^2(\Mtu)}^2 + \|\bX \mathcal{P}^{[1,N]}\Psi\|_{L^2(\Mtu)}^2 + \|\mathcal{P}^{\leq N-1} (\mathcal{C}, \mathcal{D}) \|_{L^2(\Mtu)}^2 \\
\ls &\: \|L\mathcal{P}^{[1,N]}\Psi\|_{L^2(\Mtu)}^2 + \|\bX \mathcal{P}^{[1,N]}\Psi\|_{L^2(\Mtu)}^2 + \int_{u'=0}^{u'=u} [\mathbb C_{\leq N-1} + \mathbb D_{\leq N-1}](t,u')\, du' \\
\ls &\: \|L\mathcal{P}^{[1,N]}\Psi\|_{L^2(\Mtu)}^2 + \|\bX \mathcal{P}^{[1,N]}\Psi\|_{L^2(\Mtu)}^2 + \epd^3 \max\{1, \upmu_{\star}^{-2\toprate + 2\Ntop - 2N + 2.8}(t) \}.
\end{split}
\end{equation}

Finally, we observe that RHS~\eqref{eq:est.C.in.wave.1} and RHS~\eqref{eq:est.C.in.wave.2} are 
$\leq {\mbox{\upshape RHS}~\eqref{eq:Psi.null.condition}}$. We have therefore proved \eqref{eq:wave.inho.main.1}.

\pfstep{Step~2: Proof of \eqref{eq:wave.inho.main.2}} Returning to \eqref{eq:est.C.in.wave.main}, 
we again note that we only have to consider terms 
not already controlled in Proposition~\ref{prop:wave.inho.null}. 

Applying Propositions~\ref{prop:geometric.low}, \ref{prop:mus.int}, \ref{prop:C.D}, and \ref{prop:elliptic.putting.everything.together}, 
we have:
\begin{equation*}
\begin{split}
&\: \int_{t'=0}^{t'=t} \upmu_{\star}^{-\f 32}(t') \left\{ \int_{s=0}^{s=t'} [\|\upmu \mathcal{P}^{\leq \Ntop} (\mathcal{C}, \mathcal{D})\|_{L^2(\Sigma_s)} +  \|\mathcal{P}^{\leq \Ntop-1} (\mathcal{C}, \mathcal{D})\|_{L^2(\Sigma_s)}]\, ds\right\}^2 \, dt' \\
\ls &\:  \int_{t'=0}^{t'=t} \upmu_{\star}^{-\f 32}(t') 
\left\{ \int_{s=0}^{s=t'} [\mathbb C_{\leq \Ntop}^{\f 12}
+ 
\mathbb D_{\leq \Ntop}^{\f 12}](s) 
+ 
\f1{\upmu_{\star}^{\f 12}(s)}[\mathbb C_{\leq \Ntop-1}^{\f 12} 
+ 
\mathbb D_{\leq \Ntop-1}^{\f 12}](s)\, ds
\right\}^2 \, dt' \\
\ls &\:  \epd^3\int_{t'=0}^{t'=t} \upmu_{\star}^{-\f 32}(t') \left\{ \int_{s=0}^{s=t'} 
 \upmu_{\star}^{-\toprate+0.4}(s) \, ds \right\}^2 \, dt' \\
\ls &\: \epd^3 \upmu_{\star}^{-2\toprate+2.3}(t) 
\ls 
\epd^2 \upmu_{\star}^{-2\toprate+1.8}(t),
\end{split}
\end{equation*}
which is therefore bounded above by the RHS of \eqref{eq:Psi.null.condition.top}. \qedhere

\end{proof}

\subsection{Putting everything together}\label{sec:wave.everything}

\begin{proposition}[\textbf{Main $L^2$ estimates for the wave variables}]\label{prop:wave.final}
For $1\leq N \leq \Ntop$, the following holds for all $(t,u) \in [0,\Tboot)\times [0,U_0]$:
\begin{equation}\label{eq:wave.est.goal}
\begin{split}
\mathbb{W}_{[1,N]}(t,u) \ls \epd^2 \max\{1, \upmu_{\star}^{-2 \toprate + 2\Ntop-2N+1.8}(t)\}.
\end{split}
\end{equation}
\end{proposition}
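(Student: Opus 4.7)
My plan is to combine the auxiliary wave estimates of Proposition~\ref{prop:wave} with the inhomogeneity bounds from Propositions~\ref{prop:wave.inho.null}--\ref{prop:wave.inho.main}, and then close the top- and penultimate-order estimates by a Gronwall absorption argument, after which the remaining orders follow by downward induction on $N$.

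First I would verify the smallness hypothesis \eqref{eq:F.smallness}. Inspecting the wave equations \eqref{E:VELOCITYWAVEEQUATION}--\eqref{E:ENTROPYWAVEEQUATION}, the array $\vec{\mathfrak{G}}$ is (apart from the factor $\upmu$) a polynomial expression in $\Psi$, $\mathcal{P}\Psi$, $\bX\Psi$, $\Vr$, $S$, $\mathcal{C}$, $\mathcal{D}$, $L^i$, and the smooth thermodynamic coefficients. Since the assumption $\Ntop \geq 2\toprate + 10$ forces $\lceil \Ntop/2 \rceil \leq \Ntop - \toprate - 4$, the $L^\infty$ bootstrap assumptions \eqref{BA:LARGERIEMANNINVARIANTLARGE}--\eqref{BA:C.D}, together with Propositions~\ref{prop:geometric.low}--\ref{prop:geometric.low.2}, imply $\|\mathcal{P}^{\leq \lceil \Ntop/2 \rceil} \mathfrak{G}\|_{L^\infty(\mathcal{M}_{t,u})} \ls \epd^{1/2}$, which is \eqref{eq:F.smallness}. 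Hence Proposition~\ref{prop:wave} applies.

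For the top- and penultimate-order levels, I would invoke \eqref{eq:main.EE.prop.top}. Each of the three classes of inhomogeneous terms appearing in $\mathfrak{G}$ (the null forms, the easy linear terms, and the $\mathcal{C}$/$\mathcal{D}$-terms) is controlled by Propositions~\ref{prop:wave.inho.null}, \ref{prop:wave.inho.linear}, and \ref{prop:wave.inho.main} respectively, both in the spacetime $L^1$ norm \eqref{eq:Psi.null.condition} and in the double-time-integral norm \eqref{eq:Psi.null.condition.top}. Substituting these into \eqref{eq:main.EE.prop.top} and using $\mathbb{E}_{[1,\Ntop]} \leq \mathbb{W}_{[1,\Ntop]}$ (and similarly at order $\Ntop-1$), I obtain a relation of the form
\begin{align*}
& \sup_{\hat t \leq t}\upmu_\star^{2\toprate -1.8}(\hat t)\,\mathbb{W}_{[1,\Ntop]}(\hat t,u) + \sup_{\hat t \leq t}\upmu_\star^{2\toprate -3.8}(\hat t)\,\mathbb{W}_{[1,\Ntop-1]}(\hat t,u) \\
&\qquad \leq C \epd^2 + C\varsigma \sup_{\hat t \leq t}\upmu_\star^{2\toprate -1.8}(\hat t)\,\mathbb{W}_{[1,\Ntop]}(\hat t,u) + C\varsigma \sup_{\hat t \leq t}\upmu_\star^{2\toprate -3.8}(\hat t)\,\mathbb{W}_{[1,\Ntop-1]}(\hat t,u),
\end{align*}
where the residual $t$-integrals produced by the right-hand sides of \eqref{eq:Psi.null.condition} and \eqref{eq:Psi.null.condition.top} have been handled using Proposition~\ref{prop:mus.int} together with a continuity/bootstrap argument analogous to the one in the proof of Proposition~\ref{prop:elliptic.putting.everything.together}. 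Choosing $\varsigma$ small enough to satisfy $C\varsigma \leq 1/2$ then absorbs the last two terms into the LHS, yielding \eqref{eq:wave.est.goal} at $N = \Ntop$ and $N = \Ntop - 1$.

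For $1 \leq N \leq \Ntop - 2$, I would proceed by downward induction on $N$. At each step, the estimate \eqref{eq:main.EE.prop.low} expresses $\mathbb{W}_{[1,N]}$ in terms of $\mathbb{Q}_{[1,N+1]}$ (which, by the inductive hypothesis, is bounded by $\epd^2 \max\{1,\upmu_\star^{-2\toprate + 2\Ntop - 2N - 0.2}\}$) and of an $L^1$ spacetime norm involving $\mathfrak{G}$ and first derivatives of $\Psi$. The latter is again controlled via Propositions~\ref{prop:wave.inho.null}--\ref{prop:wave.inho.main}, and a direct computation checks that the resulting $\upmu_\star$-weight is compatible with $\upmu_\star^{-2\toprate + 2\Ntop - 2N + 1.8}$, which closes the induction.

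The main obstacle is ensuring that the top-order Gronwall step closes with the \emph{specific} blowup rate $\upmu_\star^{-2\toprate + 1.8}$ rather than a worse one. This hinges on two structural features: the null structure of the wave-equation inhomogeneities, which yields the crucial $\varsigma$-smallness in \eqref{eq:Psi.null.condition} via Lemma~\ref{L:CRUCIALPROPERTIESOFNULLFORMS} (in particular, the absence of $\bX\Psi\cdot\bX\Psi$ contributions); and the elliptic-improved top-order bound for the modified fluid variables $(\mathcal{C},\mathcal{D})$ from Proposition~\ref{prop:elliptic.putting.everything.together}, which prevents the transport sector from degrading the wave blowup rate. A careful bookkeeping of the various $\upmu_\star$ powers produced by Proposition~\ref{prop:mus.int} when bounding the Gronwall-type double time integrals on RHS~\eqref{eq:main.EE.prop.top} is needed to confirm that all exponents line up with the claimed $\upmu_\star^{-2\toprate + 1.8}$.
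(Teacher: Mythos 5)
Your overall architecture is the same as the paper's: verify \eqref{eq:F.smallness} via the bootstrap $L^\infty$ bounds and the derivative count $\lceil \Ntop/2 \rceil \leq \Ntop - \toprate - 4$, invoke Proposition~\ref{prop:wave}, control $\mathfrak{G}$ via Propositions~\ref{prop:wave.inho.null}--\ref{prop:wave.inho.main}, close the top order, and descend by induction using \eqref{eq:main.EE.prop.low}. The verification of \eqref{eq:F.smallness} and the downward induction for the lower orders are both correct.

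However, there is a genuine gap in how you propose to close the top-order step. You write that the residual $t$-integrals are disposed of ``using a continuity/bootstrap argument analogous to the one in the proof of Proposition~\ref{prop:elliptic.putting.everything.together}'' and then that ``choosing $\varsigma$ small enough to satisfy $C\varsigma \leq 1/2$ then absorbs the last two terms.'' This does not work here, for the following reason. The $\varsigma$-tunable smallness in Propositions~\ref{prop:wave.inho.null}--\ref{prop:wave.inho.main} only sits in front of the $\mathbb{K}_{[1,N]}$ term (see RHS~\eqref{eq:Psi.null.condition}); the $\mathbb{E}$- and $\mathbb{F}$-integrals carry a \emph{large} factor $(1+\varsigma^{-1})$, and, crucially, the double time integral
\begin{equation*}
\sup_{\hat t\in [0,t]} \upmu_\star^{2\toprate-1.8}(\hat t) \int_{t'=0}^{t'=\hat t} \upmu_\star^{-\f 32}(t') \left\{ \int_{s=0}^{s=t'} \upmu_\star^{-\f 12}(s)\,\mathbb E^{\f 12}_{[1,\Ntop]}(s)\,ds\right\}^2 dt'
\end{equation*}
on RHS~\eqref{eq:main.EE.prop.top} (and on RHS~\eqref{eq:Psi.null.condition.top}) comes with a coefficient that is $\mathcal{O}(1)$, not $\varsigma$-small, and implicitly depends on $\toprate$ through the Gronwall bounds of Proposition~\ref{prop:mus.int}. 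The continuity argument in Proposition~\ref{prop:elliptic.putting.everything.together} closed precisely because the troublesome Gronwall integral there was multiplied by a freely tunable $\varsigma$; no such tunable smallness is present in the double time integral here, so the analogy fails.

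The paper's actual closing device is an exponentially weighted Gronwall estimate. One introduces
$\iota(t) \doteq \exp\left(\int_{s=0}^{s=t} \upmu_\star^{-0.9}(s)\,ds\right)$
and the renormalized quantity
$H(t) \doteq \sup_{\hat t \in [0,t]} \iota^{-2\mathfrak C}(\hat t)\,\upmu_\star^{2\toprate-1.8}(\hat t)\,\mathbb E_{[1,\Ntop]}(\hat t)$
for a large constant $\mathfrak C > 0$ to be chosen. Substituting $\mathbb E_{[1,\Ntop]}(s) \leq \iota^{2\mathfrak C}(s)\upmu_\star^{-2\toprate+1.8}(s) H(t)$ into the double time integral and exploiting the elementary bound $\int_0^{t'} \iota^{n\mathfrak C}(s)\upmu_\star^{-0.9}(s)\,ds \leq \frac{1}{n\mathfrak C}\iota^{n\mathfrak C}(t')$ (one such gain per iterated integration against the $\iota$-weight) yields a factor $2^{4\toprate-5.6}/\mathfrak C^3$ multiplying $H(t)$ on the right-hand side. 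Choosing $\mathfrak C$ large enough that this factor is $\leq 1/2$ absorbs the Gronwall term and gives $H(t) \lesssim \epd^2$, hence the claimed top-order bound. Without a mechanism of this kind to beat the $\toprate$-dependent constants, the top-order estimate does not close, and this is where your proposal needs to be repaired.
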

\begin{proof}
We first use the pointwise bounds \eqref{eq:derivative.of.wave.null}, \eqref{eq:wave.inho.linear.3}, \eqref{eq:est.C.in.wave.main}, 
the bootstrap assumptions \eqref{BA:W.Li.small}--\eqref{BA:C.D},
and Proposition~\ref{prop:geometric.low}
to deduce that the assumption \eqref{eq:F.smallness} in Proposition~\ref{prop:wave} on the inhomogeneous terms $\vec{\mathfrak G}$,
i.e., the terms on the RHSs of \eqref{E:VELOCITYWAVEEQUATION}--\eqref{E:ENTROPYWAVEEQUATION},
is satisfied. 
Hence, the results of Proposition~\ref{prop:wave} are valid, and 
we will use them throughout the rest of this proof.
We will also silently use the basic fact that 
$\upmu_{\star}(t,u) \leq 1$ and $\upmu_{\star}(t) \leq 1$; 
see Definition~\ref{def:mustar}.

\pfstep{Step~1: $N = \Ntop$} 
By the top- and penultimate- order general wave estimates \eqref{eq:main.EE.prop.top} in Proposition~\ref{prop:wave}, the initial data assumptions in \eqref{assumption:support}, \eqref{assumption:R+}--\eqref{assumption:small},  
and the bounds for the inhomogeneous terms in Propositions~\ref{prop:wave.inho.null}--\ref{prop:wave.inho.main}, we obtain
the following bound for any $\varsigma \in (0,1]$
(with implicit constants that are independent of $\varsigma$): 
\begin{equation}\label{eq:wave.est.final.0}
\begin{split}
&\: \sup_{\hat{t} \in [0,t]} \upmu_{\star}^{2\toprate - 1.8}(\hat{t})
\left(\mathbb{E}_{[1,\Ntop]}(\hat{t},u) + \mathbb{F}_{[1,\Ntop]}(\hat{t},u) + \mathbb K_{[1,\Ntop]}(\hat{t},u) \right) \\
&\: + \sup_{\hat{t} \in [0,t]} \upmu_{\star}^{2\toprate - 3.8}(\hat{t})
\left(\mathbb{E}_{[1,\Ntop-1]}(\hat{t},u) + \mathbb{F}_{[1,\Ntop-1]}(\hat{t},u) + \mathbb K_{[1,\Ntop-1]}(\hat{t},u) \right)  \\
\ls &\:  \epd^2 +  \sup_{\hat{t}\in [0,t]} \upmu_{\star}^{2\toprate-1.8}(\hat{t}) \int_{t'=0}^{t'=\hat{t}} \upmu_{\star}^{-\f 32}(t') \left\{\int_{s=0}^{s=t'} \|\mathcal{P}^{[1,\Ntop]}\mathfrak G\|_{L^2(\Sigma_s)} \, ds\right\}^2 \, dt'  \\
&\: + \sup_{\hat{t}\in [0,t]} \upmu_{\star}^{2\toprate-1.8}(\hat{t}) \|(|L\mathcal{P}^{[1,\Ntop]} \Psi| + |\bX \mathcal{P}^{[1,\Ntop]} \Psi|) |\mathcal{P}^{[1,\Ntop]} \mathfrak G| \|_{L^1(\mathcal M_{\hat{t},u})} \\
&\:  + \sup_{\hat{t}\in [0,t]} \upmu_{\star}^{2\toprate-3.8}(\hat{t}) \|(|L\mathcal{P}^{[1,\Ntop-1]} \Psi| + |\bX \mathcal{P}^{[1,\Ntop-1]} \Psi|) |\mathcal{P}^{[1,\Ntop-1]} \mathfrak G| \|_{L^1(\mathcal M_{\hat{t},u})} \\
\ls &\: \epd^2  + \sup_{\hat{t}\in [0,t]} \upmu_{\star}^{2\toprate-1.8}(\hat{t}) \int_{t'=0}^{t'=t} \upmu_{\star}^{-\f 32}(t',u) \left\{ \int_{s=0}^{s=t'} \upmu_{\star}^{-\f 12}(s)\mathbb{E}^{\f 12}_{[1,\Ntop]}(s)  \, ds\right\}^2 \, dt' \\
&\: + \sup_{\hat{t}\in [0,t]} \upmu_{\star}^{2\toprate - 1.8}(\hat{t}) \Bigg\{ \varsigma \mathbb K_{[1,\Ntop]}(\hat{t},u)  \\
&\: \qquad\qquad\qquad\qquad + (1+\varsigma^{-1}) \left(\int_{t'=0}^{t'=\hat{t}} \mathbb{E}_{[1,\Ntop]}(t',u) \, dt'  + \int_{u'=0}^{u'=u} \mathbb{F}_{[1,\Ntop]}(\hat{t},u')\, du'\right) \Bigg\} \\
&\:  + \sup_{\hat{t}\in [0,t]} \upmu_{\star}^{2\toprate - 3.8}(\hat{t}) 
\Bigg\{ \varsigma \mathbb K_{[1,\Ntop-1]}(\hat{t},u) \\
&\: \qquad\qquad\qquad + (1+\varsigma^{-1}) \left(\int_{t'=0}^{t'=\hat{t}} \mathbb{E}_{[1,\Ntop-1]}(t',u) \, dt'  + \int_{u'=0}^{u'=u} \mathbb{F}_{[1,\Ntop-1]}(\hat{t},u')\, du'\right) \Bigg\}.
\end{split}
\end{equation}

We now argue as follows using \eqref{eq:wave.est.final.0}:
\begin{itemize}
\item We choose $\varsigma>0$ sufficiently small and absorb the terms 
$$\varsigma \sup_{\hat{t}\in [0,t]} \upmu_{\star}^{2\toprate - 1.8}(\hat{t})\mathbb K_{[1,\Ntop]}(\hat{t}, u),\quad \varsigma \sup_{\hat{t}\in [0,t]} \upmu_{\star}^{2\toprate - 3.8}(\hat{t})\mathbb K_{[1,\Ntop-1]}(\hat{t}, u)$$ 
appearing on the RHS 
by the terms 
$$\sup_{\hat{t}\in [0,t]} \upmu_{\star}^{2\toprate - 1.8}(\hat{t})\mathbb K_{[1,\Ntop]}(\hat{t}, u),\quad \sup_{\hat{t}\in [0,t]} \upmu_{\star}^{2\toprate - 3.8}(\hat{t})\mathbb K_{[1,\Ntop-1]}(\hat{t}, u)$$ 
on the LHS.
\item We then apply Proposition~\ref{prop:almost.monotonicity}
(using that the exponents $2\toprate - 1.8$ and $2\toprate - 3.8$ are positive) and
Gr\"onwall's inequality to handle the terms involving the integrals of $\mathbb{E}$ and $\mathbb{F}$.
\end{itemize}

This leads to the following estimate (where on the LHS, we have dropped the below-top-order energies):
\begin{align} 
\begin{split} \label{ANOTHEReq:wave.est.final.0}
	&
	\sup_{\hat{t} \in [0,t]} \upmu_{\star}^{2\toprate - 1.8}(\hat{t})\left(\mathbb{E}_{[1,\Ntop]}
	(\hat{t},u) + \mathbb{F}_{[1,\Ntop]}(\hat{t},u) + \mathbb K_{[1,\Ntop]}(\hat{t},u) \right)
		\\
	& \lesssim \epd^2  + 
	\sup_{\hat{t}\in [0,t]} \upmu_{\star}^{2\toprate-1.8}(\hat{t}) \int_{t'=0}^{t'=t} 
	\upmu_{\star}^{-\f 32}(t',u) \left\{ \int_{s=0}^{s=t'} 
	\upmu_{\star}^{-\f 12}(s)\mathbb{E}^{\f 12}_{[1,\Ntop]}(s)  
	\, ds\right\}^2 \, dt'.
\end{split}
\end{align}

We will now apply a further Gr\"onwall-type argument to \eqref{ANOTHEReq:wave.est.final.0}.
Define:
$$\iota(t) \doteq \exp\left( \int_{s=0}^{s=t} \upmu_{\star}^{-0.9}(s)\, ds \right),$$
and, for a large $\mathfrak C>0$ to be chosen later:
$$H(t)\doteq \sup_{\hat t \in [0,t]} \iota^{-2\mathfrak C}(\hat t) \upmu_{\star}^{2\toprate-1.8}(\hat t) \mathbb{E}_{[1,\Ntop]}(\hat t).$$

{From the definitions 
of 
$\mathbb{E}_{[1,\Ntop]}$,
$\iota$, 
and
$H$,
the fact that $\iota$ is increasing,
and the estimate \eqref{ANOTHEReq:wave.est.final.0}, we find that} 
there exists a constant\footnote{We call the constant $C_{**}$ so as to make the notation clearer later in the proof.} 
$C_{**}>0$ \textbf{independent of} $\mathfrak C > 0$ so that:
\begin{equation}\label{eq:wave.est.final.1}
\begin{split}
H(t)
\leq &\:  C_{**} \left(\epd^2  + \sup_{\hat t\in [0,t]}\upmu_{\star}^{2\toprate -1.8}(\hat t) \iota^{-2\mathfrak C}(\hat{t})\int_{t'=0}^{t'=\hat t} \upmu_{\star}^{-\f 32}(t') \left\{ \int_{s=0}^{s=t'} 
	 \upmu_{\star}^{-\f 12}(s)
	\mathbb{E}^{\f 12}_{[1,\Ntop]}(s) \, ds\right\}^2 \, dt' \right).
\end{split}
\end{equation}

Before we proceed, note that for $n=1,2$, an easy change of variables gives:
\begin{equation}\label{eq:stupid.change.of.variables}
\int_{s=0}^{s=t'} \iota^{n \mathfrak C}(s) \upmu_{\star}^{-0.9}(s)\,ds =\int_{y=0}^{y=\int_{\tau=0}^{\tau=t'} \upmu_{\star}^{-0.9}(\tau)\, d\tau}  e^{n \mathfrak C y}\, dy \leq 
\f{1}{n \mathfrak C}\iota^{n \mathfrak C}(t').
\end{equation}

Fix $t\in [0,\Tboot)$ and $\hat t\in [0,t]$. Since $\iota^{-\mathfrak C}$ is decreasing and $\upmu_{\star}$ is almost decreasing by Proposition~\ref{prop:almost.monotonicity}, we have, using \eqref{eq:stupid.change.of.variables}
and the estimate \eqref{eq:wave.est.final.1} for $H$, the following bound
for the terms under the sup on RHS~\eqref{eq:wave.est.final.1}:
\begin{equation}\label{eq:wave.est.final.2}
\begin{split}
&\: \upmu_{\star}^{2\toprate - 1.8}(\hat t) \iota^{- 2\mathfrak C}(\hat t) \int_{t'=0}^{t'= \hat t} \upmu_{\star}^{-\f 32}(t') \left\{ \int_{s=0}^{s=t'} \upmu_{\star}^{-\f 12}(s) \mathbb{E}^{\f 12}_{[1,\Ntop]}(s)  \, ds\right\}^2 \, dt' \\
\leq &\: \upmu_{\star}^{2\toprate - 1.8}(\hat t)\iota^{-2 \mathfrak C}(\hat t) \\
&\: \times \int_{t'=0}^{t'=\hat t} 
\upmu_{\star}^{-\f 32}(t') \left\{ \int_{s=0}^{s=t'} \upmu_{\star}^{- \toprate + 1.3}(s) 
\left[\iota^{\mathfrak C}(s) \upmu_{\star}^{-0.9}(s)\right] \left[\iota^{- \mathfrak C}(s) \upmu_{\star}^{\toprate - 0.9}(s)
	\mathbb{E}^{\f 12}_{[1,\Ntop]}(s)\right]  \, ds\right\}^2 \, dt' \\
\leq &\: 
2^{2 \toprate - 2.6}
\upmu_{\star}^{2\toprate - 1.8}(\hat t) \iota^{-2 \mathfrak C}(\hat t) 
\int_{t'=0}^{t'=\hat t} 
\upmu_{\star}^{-2 \toprate + 1.1}(t') 
	\left\{ \int_{s=0}^{s=t'}  	
		\iota^{\mathfrak C}(s) \upmu_{\star}^{-0.9}(s) 
		H^{\f 12}(s)  
	\, ds\right\}^2 \, dt' \\
\leq &\: 
2^{2 \toprate - 2.6}
\upmu_{\star}^{2\toprate - 1.8}(\hat t) 
\iota^{-2 \mathfrak C}(\hat t) 
H(t)
\int_{t'=0}^{t'=\hat t} 
\upmu_{\star}^{-2 \toprate + 1.1}(t') 
	\left\{ \int_{s=0}^{s=t'}  	
		\iota^{\mathfrak C}(s) \upmu_{\star}^{-0.9}(s) 
	\, ds\right\}^2 \, dt' \\
\leq &\: 
2^{2 \toprate - 2.6}
\upmu_{\star}^{2\toprate - 1.8}(\hat t) 
 \iota^{-2 \mathfrak C}(\hat t)
\frac{H(t)}{\mathfrak{C}^2}
\int_{t'=0}^{t'=\hat t} 
	\left[\iota^{2 \mathfrak C}(t') \upmu_{\star}^{-0.9}(t') \right]
	\upmu_{\star}^{-2 \toprate + 2}(t') 
\, dt'
	\\
\leq &\: 
2^{4 \toprate - 4.6}
\upmu_{\star}^{0.2}(\hat t) 
\iota^{-2 \mathfrak C}(\hat t) 
\frac{H(t)}{\mathfrak{C}^2}
\int_{t'=0}^{t'=\hat t} 
	\iota^{2 \mathfrak C}(t') \upmu_{\star}^{-0.9}(t')
\, dt'
	\\
\leq &\: 
2^{4 \toprate - 5.6}
\upmu_{\star}^{0.2}(\hat{t}) 
\frac{H(t)}{\mathfrak{C}^3}
\leq
2^{4 \toprate - 5.6}
\frac{H(t)}{\mathfrak{C}^3}.
\end{split}
\end{equation}

Plugging \eqref{eq:wave.est.final.2} into \eqref{eq:wave.est.final.1}, 
we obtain:
\begin{equation}\label{eq:wave.est.final.3}
\begin{split}
H(t)
\leq &\:  C_{**} 
\left\lbrace
\epd^2 
+ 
2^{4 \toprate - 5.6}
\frac{H(t)}{\mathfrak{C}^3}
\right\rbrace.
\end{split}
\end{equation}
Choosing $\mathfrak C>0$ sufficiently large 
such that $\frac{2^{4 \toprate - 5.6}}{\mathfrak{C}^3} \leq \f 12$,
we immediately infer
from \eqref{eq:wave.est.final.3} that $H(t) \leq 2C_{**} \epd^2$. 
From this estimate, 
\eqref{eq:wave.est.final.2}
the definition of $\iota(t)$,
and the estimate \eqref{eq:mus.int.2},
we find that 
$\mbox{RHS~\eqref{ANOTHEReq:wave.est.final.0}} \leq C \epd^2$,
where $C$ is allowed to depend on $\mathfrak{C}$.
From this estimate and
the definition of
$\mathbb{W}_{[1,N]}(t,u)$, we conclude
\eqref{eq:wave.est.goal} in the case $N = \Ntop$. 

\pfstep{Step~2: $1\leq N \leq \Ntop -1$} Let $1\leq N \leq \Ntop-1$. Arguing
like we did at the 
beginning of Step~1, except for using \eqref{eq:main.EE.prop.low} instead of \eqref{eq:main.EE.prop.top}, we obtain:
\begin{equation}\label{eq:wave.est.for.induction}
\begin{split}
&\: \mathbb{E}_{[1,N]}(t,u) + \mathbb{F}_{[1,N]}(t,u) + \mathbb K_{[1,N]}(t,u) \\
\ls &\:  \epd^2 \max\{1, \upmu_{\star}^{-2\toprate + 2\Ntop -2N +1.8} (t)\} 
	\\
&\: + \max\{1, \upmu_{\star}^{-2\toprate+2\Ntop-2N+1.8}(t) \} \left(\sup_{s\in [0,t]} 
\min\{1,\upmu_{\star}^{2 \toprate-2\Ntop+2N+0.2}(s)\} \mathbb{Q}_{[1,N+1]}(s)\right) \\ 
&\: + \varsigma \mathbb K_{[1,N]} + (1+\varsigma^{-1}) \left(\int_{t'=0}^{t'=t} \mathbb{E}_{[1,N]}(t',u) \, dt'  + \int_{u'=0}^{u'=u} \mathbb{F}_{[1,N]}(t,u')\, du' \right) \\
\ls &\: \epd^2 \max\{1, \upmu_{\star}^{-2\toprate + 2\Ntop -2N +1.8} (t)\} 
	\\
 &\: + \max\{1, \upmu_{\star}^{-2\toprate+2\Ntop-2N+1.8}(t) \} \left(\sup_{s\in [0,t]} \min\{1,\upmu_{\star}^{2\toprate-2\Ntop+2N+0.2}(s)\} \mathbb{Q}_{[1,N+1]}(s)\right),
\end{split}
\end{equation}
where to obtain the last inequality, we first took $\varsigma$ to be sufficiently small to absorb $\varsigma \mathbb K_{[1,N]}$, and then used Gr\"onwall's inequality.

Using \eqref{eq:wave.est.for.induction}, we easily obtain \eqref{eq:wave.est.goal} by induction in decreasing $N$. Notice in particular that the base case $N = \Ntop$ has already been proven in Step~1. \qedhere
\end{proof}

\section{Proving the $L^\infty$ estimates}\label{sec:Linfty}
We continue to work under the assumptions of Theorem~\ref{thm:bootstrap}.

In this section{, we derive $L^\infty$ estimates that in particular yield an improvement 
over the bootstrap assumptions we made in Section~\ref{SS:BOOTSTRAPASSUMPTIONS}.}
This is the final section in which we derive PDE estimates that are 
needed for the proof of Theorem~\ref{thm:bootstrap};
aside from the appendix, the rest of the paper (i.e., Section~\ref{sec:everything}) 
entails deriving consequences of the estimates and assembling the logic of the proof.} 
We first bound (in Propositions~\ref{prop:Linfty.est}, \ref{prop:Linfty.est.1}) the $L^\infty$ norm of the fluid variables, specific vorticity, entropy gradient and modified fluid variables and their $\mathcal{P}$ derivatives using the energy estimates we have already obtained and Sobolev embedding (Lemma~\ref{lem:Sobolev}). 
Then, in Propositions~\ref{prop:Linfty.est.1} and \ref{prop:trans.V.S}, 
we control derivatives of these variables  
that involve one factor of $\bX$ 
by combining the just-obtained 
$L^\infty$-estimates for
$\mathcal{P}$-derivatives
with the (wave or transport) equations.

\begin{lemma}[\textbf{Sobolev embedding estimates}]
\label{lem:Sobolev}
Suppose $\phi$ is a smooth function with $u$-support in 
$[0,U_0]$. Then, for every $t\in [0,\Tboot)$,
we have the following estimate:
\begin{align}\label{eq:stupid.embedding}
\|\phi\|_{L^\infty(\Sigma_t)}
&
\ls
\sup_{u \in [0,U_0]} \| \mathcal{P}^{\leq 2} \phi\|_{L^2(\ell_{0,u})} 
+ 
\sup_{u \in [0,U_0]} \|\Lunit \mathcal{P}^{\leq 2} \phi\|_{L^2(\mathcal F_u^t)}. 
\end{align}
\end{lemma}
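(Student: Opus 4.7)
\medskip

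\noindent The plan is to combine a two-dimensional Sobolev embedding on the tori $\ell_{t,u}$ with a transport-type estimate along the integral curves of $\Lunit$. Fix $t \in [0,\Tboot)$ and $u \in [0,U_0]$. Since $\ell_{t,u}$ is diffeomorphic to $\mathbb{T}^2$ and its induced Riemannian metric $\gsphere$ is uniformly comparable to a fixed reference metric on $\mathbb{T}^2$ (as follows from Lemma~\ref{lem:induced.metric} together with the $L^\infty$ bounds of Propositions~\ref{prop:geometric.low} and \ref{prop:geometric.low.2}), the standard Sobolev embedding $H^2(\mathbb{T}^2) \hookrightarrow L^\infty(\mathbb{T}^2)$ gives
\begin{equation*}
\|\phi\|_{L^\infty(\ell_{t,u})} \lesssim \|\slashed{\mathcal{P}}^{\leq 2} \phi\|_{L^2(\ell_{t,u})} \lesssim \|\mathcal{P}^{\leq 2} \phi\|_{L^2(\ell_{t,u})},
\end{equation*}
where the first inequality uses Lemma~\ref{L:SIMPLECOMPARISON} to express $\angD$-derivatives in terms of the $\ell_{t,u}$-tangent commutation vectorfields $Y,Z$, and the second inequality is trivial since $\slashed{\mathcal{P}} \subset \mathcal{P}$.

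\medskip

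\noindent Next, I would reduce $\|\mathcal{P}^{\leq 2}\phi\|_{L^2(\ell_{t,u})}$ to data at $t=0$ and to an integral along $\mathcal{F}_u^t$. Since $\Lunit u = 0$ (by Lemma~\ref{L:VFBASIC}) and $\Lunit t = 1$, the integral curves of $\Lunit$ foliate $\mathcal{F}_u^t$ and carry $\ell_{0,u}$ diffeomorphically onto $\ell_{t',u}$ for each $t' \in [0,t]$. Writing $\psi \doteq \mathcal{P}^{\leq 2}\phi$ for brevity and using $\Lie_\Lunit (d\spherevol) = (\mytr\upchi)\, d\spherevol$, one computes
\begin{equation*}
\frac{d}{dt'} \int_{\ell_{t',u}} \psi^2 \, d\spherevol
= \int_{\ell_{t',u}} \bigl( 2\psi \cdot \Lunit\psi + (\mytr\upchi)\, \psi^2 \bigr)\, d\spherevol.
\end{equation*}
Integrating in $t'$ from $0$ to $t$, applying Cauchy--Schwarz to the cross term (which produces $\|\psi\|_{L^2(\mathcal{F}_u^t)}\|\Lunit\psi\|_{L^2(\mathcal{F}_u^t)}$ via Definition~\ref{D:NONDEGENERATEVOLUMEFORMS}), applying Young's inequality, and invoking the $L^\infty$ bound $|\mytr\upchi| \lesssim 1$ (from \eqref{E:TRCHIINTERMSOFOTHERVARIABLES}, the bootstrap assumptions \eqref{BA:LARGERIEMANNINVARIANTLARGE}--\eqref{BA:W.Li.small}, and Proposition~\ref{prop:geometric.low}), one obtains
\begin{equation*}
\|\psi\|_{L^2(\ell_{t,u})}^2 \lesssim \|\psi\|_{L^2(\ell_{0,u})}^2 + \|\Lunit\psi\|_{L^2(\mathcal{F}_u^t)}^2 + \int_0^t \|\psi\|_{L^2(\ell_{t',u})}^2 \, dt'.
\end{equation*}
Gr\"onwall's inequality (with time-length bounded by $2\mathring{\updelta}_*^{-1}$) then yields
\begin{equation*}
\|\mathcal{P}^{\leq 2}\phi\|_{L^2(\ell_{t,u})} \lesssim \|\mathcal{P}^{\leq 2}\phi\|_{L^2(\ell_{0,u})} + \|\Lunit \mathcal{P}^{\leq 2}\phi\|_{L^2(\mathcal{F}_u^t)}.
\end{equation*}

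\medskip

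\noindent Combining these two steps yields, for each fixed $u \in [0,U_0]$, the pointwise bound $\|\phi\|_{L^\infty(\ell_{t,u})}$ in terms of the right-hand side of \eqref{eq:stupid.embedding}. Since $\phi$ has $u$-support in $[0,U_0]$ and $\Sigma_t = \bigcup_{u \in [0,U_0]} \ell_{t,u}$ modulo the trivial region, taking the supremum over $u \in [0,U_0]$ on both sides produces \eqref{eq:stupid.embedding}. The only mildly subtle point---and the one I expect to require the most care---is verifying that the Sobolev constant on $(\ell_{t,u}, \gsphere)$ is uniform in $(t,u)$; this is handled by the quantitative comparability of $\gsphere$ with a background flat torus metric, which in turn rests on the already-established $L^\infty$ control of $L^i$ and $\Psi$.
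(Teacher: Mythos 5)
Your proof is correct and follows the same overall strategy as the paper's own argument: a two-dimensional Sobolev embedding on the tori $\ell_{t,u}$ followed by an $L^2$ transport estimate along the integral curves of $\Lunit$, closed with Gr\"onwall. The only difference is cosmetic: you differentiate $\int_{\ell_{t',u}}\psi^2\, d\spherevol$ using the intrinsic identity $\Lie_\Lunit(d\spherevol) = (\mytr\upchi)\,d\spherevol$, whereas the paper differentiates $\int_{\ell_{t',u}}\varphi^2\, dx^2\, dx^3$ with respect to the coordinate measure, substitutes $\srd_t = \Lunit - \Lunit^A\srd_A$, and integrates by parts in $\srd_A$ to produce the bounded factor $\srd_A \Lunit^A$, then compares volume forms at the end. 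These two calculations are equivalent (the two error coefficients $\mytr\upchi$ and $\srd_A\Lunit^A$ differ only by the term $\Lunit\log\sqrt{\det\gsphere}$, which is absorbed in either formulation and is $\mathcal{O}(1)$ in $L^\infty$ by the bootstrap bounds and Propositions~\ref{prop:geometric.low}--\ref{prop:geometric.low.2}). Your version is slightly more geometric; the paper's avoids invoking the Lie-derivative formula for $d\spherevol$. Both close in the same way.
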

\begin{proof}
First, using standard Sobolev embedding on $\mathbb T^2$, 
using 
\eqref{E:GEOMETRIC2COORDINATEPARTIALDERIVATIVESINTERMSOFOTHERVECTORFIELDS}--\eqref{E:GEOMETRIC3COORDINATEPARTIALDERIVATIVESINTERMSOFOTHERVECTORFIELDS}
to express $\srd_2,\srd_3$ in terms of derivatives with respect to $\{Y,Z\}$,
comparing the volume forms using Definition~\ref{D:NONDEGENERATEVOLUMEFORMS},
and using the estimates of Proposition~\ref{prop:geometric.low.2}, 
we deduce: 
\begin{align}\label{E:TORUSSOB}
 \|\phi\|_{L^\infty(\ell_{t,u})} 
	& 
	\ls
	\sum_{i+j\leq 2}\left(\int_{\ell_{t,u}} |\slashed \rd_2^i \slashed \rd_3^j  \phi|^2\, dx^2 \, dx^3 \right)^{\f 12} 
	\ls 
	\left(\int_{\ell_{t,u}} |\mathcal{P}^{\leq 2} \phi|^2 \, d \lambda_{\slashed g} \right)^{\f 12}
	\doteq 
	\| \mathcal{P}^{\leq 2} \phi \|_{L^2(\ell_{t,u})}.
 \end{align}
To complete the proof of \eqref{eq:stupid.embedding}, 
it remains only for us to control RHS~\eqref{E:TORUSSOB} by
showing that
for any smooth function $\varphi$ 
(where the role of $\varphi$ will be played by $\mathcal{P}^{\leq 2} \phi$), we have:
\begin{align} \label{E:L2FTCALONGNULLHYPERSURFACES}
	\| \varphi \|_{L^2(\ell_{t,u})}
	& 
	\leq 
	C
	\| \varphi \|_{L^2(\ell_{0,u})}
	+
	C \| L \varphi\|_{L^2(\mathcal F_u^t)}.
\end{align}
To prove \eqref{E:L2FTCALONGNULLHYPERSURFACES},
we start by using the identity $\srd_t = \Lunit - \Lunit^A \srd_A$ 
(see \eqref{eq:slashes.1})
to deduce that:
\begin{align} \label{E:FIRSTSTEPL2FTCALONGNULLHYPERSURFACES}
	\frac{\partial}{\partial t}
	\int_{\ell_{t,u}} \varphi^2 \, dx^2 \, dx^3
	& =
	2
	\int_{\ell_{t,u}} \varphi \srd_t \varphi \, dx^2 \, dx^3
	=
	2
	\int_{\ell_{t,u}} \varphi \Lunit \varphi \, dx^2 \, dx^3
	-
	2
	\int_{\ell_{t,u}} \varphi \Lunit^A \srd_A \varphi \, dx^2 \, dx^3
		\\
& = 2
		\int_{\ell_{t,u}} \varphi \Lunit \varphi \, dx^2 \, dx^3
		+
		\int_{\ell_{t,u}} \varphi^2 (\srd_A \Lunit^A) \, dx^2 \, dx^3,
	\notag
\end{align}
where in the last step, we integrated the geometric coordinate partial derivatives $\srd_A$ by parts
(and we recall that capital Latin indices vary over $2,3$).
Again using 
\eqref{E:GEOMETRIC2COORDINATEPARTIALDERIVATIVESINTERMSOFOTHERVECTORFIELDS}--\eqref{E:GEOMETRIC3COORDINATEPARTIALDERIVATIVESINTERMSOFOTHERVECTORFIELDS} 
to express $\srd_2,\srd_3$ in terms of derivatives with respect to $\{Y,Z\}$,
and using the estimates of Propositions~\ref{prop:geometric.low} and \ref{prop:geometric.low.2},
we find that $|\srd_A \Lunit^A| \leq C$.
From this estimate, 
\eqref{E:FIRSTSTEPL2FTCALONGNULLHYPERSURFACES},
and Young's inequality,
we deduce that:
\begin{align} \label{E:SECONDSTEPL2FTCALONGNULLHYPERSURFACES}
	\left|
		\frac{\partial}{\partial t}
		\int_{\ell_{t,u}} \varphi^2 \, dx^2 \, dx^3
	\right|
	& \leq
		C
		\int_{\ell_{t,u}} |\Lunit \varphi|^2 \, dx^2 \, dx^3
		+
		C
		\int_{\ell_{t,u}} \varphi^2 \, dx^2 \, dx^3.
\end{align}
Integrating \eqref{E:SECONDSTEPL2FTCALONGNULLHYPERSURFACES} 
with respect to time, using the fundamental theorem of calculus,
and then applying Gr\"onwall's inequality, 
we find that:
\begin{align} \label{E:THIRDSTEPL2FTCALONGNULLHYPERSURFACES}
	\int_{\ell_{t,u}} \varphi^2 \, dx^2 \, dx^3
	& \leq 
		C \int_{\ell_{0,u}} \varphi^2 \, dx^2 \, dx^3
		+
		C
		\int_{t'=0}
			\int_{\ell_{t',u}} |\Lunit \varphi|^2 \, dx^2 \, dx^3
		\, dt'.
\end{align}
Again comparing the volume forms using Definition~\ref{D:NONDEGENERATEVOLUMEFORMS}
and using the estimates of Proposition~\ref{prop:geometric.low.2},
we arrive at the desired bound \eqref{E:L2FTCALONGNULLHYPERSURFACES}. \qedhere
\end{proof}

\begin{proposition}\label{prop:Linfty.est}
The following $L^\infty$ estimates hold for all $t\in [0,\Tboot)$:
\begin{equation}\label{eq:Linfty.est.1}
\|\mathcal{P}^{[1,\Ntop - \toprate -2]} \Psi \|_{L^\infty(\Sigma_t)} \ls \epd, 
\end{equation}
\begin{equation}\label{eq:Linfty.est.2}
\|\mathcal{P}^{\leq \Ntop - \toprate -2} (\Vr, S) \|_{L^\infty(\Sigma_t)} 
+ 
\|\mathcal{P}^{\leq \Ntop - \toprate -3} (\mathcal{C}, \mathcal{D}) \|_{L^\infty(\Sigma_t)} \ls \epd^{\f 32}.
\end{equation}
\end{proposition}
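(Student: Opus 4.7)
The proof is a direct application of the Sobolev embedding in Lemma~\ref{lem:Sobolev} combined with the initial data size assumptions from Section~\ref{SS:FLUIDVARIABLEDATAASSUMPTIONS} and the energy estimates established in Sections~\ref{sec:transport.easy.main}--\ref{sec:wave.main.est}. The arithmetic key to the whole argument is that the derivative thresholds $\Ntop-\toprate-2$ and $\Ntop-\toprate-3$ in the statement are chosen precisely so that, after Lemma~\ref{lem:Sobolev} consumes up to two additional $\ell_{t,u}$-tangent derivatives and one further $\Lunit$-derivative, the relevant wave, specific-vorticity/entropy-gradient, and modified-fluid energies still lie in the regime where the $\max\{1,\upmu_\star^{\,\cdot\,}\}$ factors appearing in Propositions~\ref{prop:wave.final}, \ref{prop:V.S}, and \ref{prop:C.D} collapse to $1$ (rather than blowing up as $\upmu_\star \to 0^+$).

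To prove \eqref{eq:Linfty.est.1}, I would fix $1 \leq M \leq \Ntop-\toprate-2$, a degree-$M$ operator $\mathcal{P}^M \in \mathscr{P}^{(M)}$, an index $\imath \in \{1,\ldots,5\}$, set $\phi \doteq \mathcal{P}^M \Psi_\imath$, and apply Lemma~\ref{lem:Sobolev}. The first term on its right-hand side is bounded by $\sup_{u}\|\mathcal{P}^{[1,M+2]}\Psi\|_{L^2(\ell_{0,u})} \leq \epd$, directly from the initial data assumption \eqref{assumption:small}, since $M+2 \leq \Ntop-\toprate$. For the second term, I would estimate
\begin{equation*}
\|\Lunit\mathcal{P}^{\leq 2}\phi\|_{L^2(\mathcal{F}_u^t)}^2
\leq
\mathbb{F}_{[1,M+2]}(t,u)
\leq
\mathbb{W}_{[1,M+2]}(t,u),
\end{equation*}
and then invoke Proposition~\ref{prop:wave.final}. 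In this range, the exponent $-2\toprate+2\Ntop-2(M+2)+1.8$ is $\geq 1.8 > 0$, so because $\upmu_\star \leq 1$ the ``max'' in \eqref{eq:wave.est.goal} equals $1$, yielding $\mathbb{W}_{[1,M+2]}(t,u) \ls \epd^2$ and hence $\|\phi\|_{L^\infty(\Sigma_t)} \ls \epd$. Taking the maximum over $\mathcal{P}^M$ and $\imath$ delivers \eqref{eq:Linfty.est.1}.

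Both bounds in \eqref{eq:Linfty.est.2} follow the same template. For the $(\Vr,S)$-bound, I would apply Lemma~\ref{lem:Sobolev} to $\phi = \mathcal{P}^M \Vr^i$ or $\mathcal{P}^M S^i$ with $0 \leq M \leq \Ntop-\toprate-2$: the data term is bounded by $\epd^{3/2}$ via \eqref{assumption:very.small} (since $M+2 \leq \Ntop-\toprate$), and the dynamic term by $(\mathbb{V}_{\leq M+3}+\mathbb{S}_{\leq M+3})^{1/2}$, to which Proposition~\ref{prop:V.S} applies with exponent $-2\toprate+2\Ntop-2(M+3)+2.8 \geq 0.8 > 0$, so the ``max'' again reduces to $1$ and the total is $\ls \epd^{3/2}$. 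For the $(\mathcal{C},\mathcal{D})$-bound, the identical argument with $0 \leq M \leq \Ntop-\toprate-3$, the data assumption \eqref{assumption:very.small.modified}, and Proposition~\ref{prop:C.D} (whose exponent $-2\toprate+2\Ntop-2(M+3)+0.8 \geq 0.8 > 0$ in this range) yields $\ls \epd^{3/2}$. There is no genuine obstacle in the proof; the only subtlety is the bookkeeping of derivative counts, and any commutator errors produced by rewriting $\Lunit\mathcal{P}^{\leq 2}\mathcal{P}^M$ as an operator of total degree $\leq M+3$ are of strictly lower order and can be absorbed via Proposition~\ref{prop:commutators}.
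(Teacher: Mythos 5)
Your proposal is correct and takes essentially the same route as the paper: apply Lemma~\ref{lem:Sobolev}, bound the $\ell_{0,u}$-data term using \eqref{assumption:small}--\eqref{assumption:very.small.modified}, and bound the null-flux term by the corresponding energy from Propositions~\ref{prop:wave.final}, \ref{prop:V.S}, \ref{prop:C.D}, noting the $\upmu_\star$-exponent remains positive at the relevant derivative levels so the $\max\{\cdot\}$ factor is $1$. The paper states this as an immediate consequence without spelling out the arithmetic; your write-up just makes the derivative bookkeeping explicit. (Your closing remark about commutator errors from Proposition~\ref{prop:commutators} is harmless but unnecessary: $\Lunit\mathcal{P}^{\leq 2}\mathcal{P}^{M}$ is already of the exact form controlled by the flux energies, since $\Lunit\in\mathscr{P}$ and $\mathbb{F}_N$ directly controls $L\mathcal{P}^N\Psi$, while $\mathbb{V}_N,\mathbb{S}_N,\mathbb{C}_N,\mathbb{D}_N$ control $\mathcal{P}^N(\cdot)$ on $\mathcal{F}_u^t$ with no $L$-prefix issue.)
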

\begin{proof}
{These two estimates follow as immediate consequences}
of the energy estimates (respectively for $(\mathbb V, \mathbb S)$, $(\mathbb C, \mathbb D)$ and $\mathbb{W}$) in Propositions~\ref{prop:V.S}, \ref{prop:C.D}, and \ref{prop:wave.final}{, Lemma~\ref{lem:Sobolev}, and the initial data size-assumptions \eqref{assumption:small}--\eqref{assumption:very.small.modified}.} 
\qedhere
\end{proof}

\begin{proposition}\label{prop:Linfty.est.1}
The following $L^\infty$ estimates hold for all $t\in [0,\Tboot)$:
\begin{subequations}
\begin{align}
\label{eq:Psi.itself.Li}
 \|\mathcal R_{(+)}\|_{L^\infty(\Sigma_t)} \ls_{\mydiam} \mathring{\upalpha},\quad 
\|(\mathcal R_{(-)},v^2,v^3,s)\|_{L^\infty(\Sigma_t)} \ls \epd, \\
\label{eq:XPsi.Li}
\| \bX \mathcal R_{(+)} \|_{L^\infty(\Sigma_t)} \leq 2\mathring{\updelta}, 
\quad \| \bX (\mathcal R_{(-)},v^2,v^3,s) \|_{L^\infty(\Sigma_t)} \ls \epd, \\
\label{eq:PXPsi.Li}
 \|\mathcal{P}^{ [1, \Ntop - \toprate-4]} \bX \Psi\|_{L^\infty(\Sigma_t)} \ls \epd.
\end{align}
\end{subequations}
\end{proposition}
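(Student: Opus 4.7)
\medskip

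\noindent\emph{Proof proposal.} The strategy is to integrate each quantity along the integral curves of $\Lunit$, which are parametrized by $t$ thanks to the identity $\Lunit t = 1$ from \eqref{E:LUNITANDRADOFUANDT}. For any smooth function $\phi$ we have
\begin{align*}
\phi(t,u,x^2,x^3)
& = \phi(0,u,x^2,x^3) + \int_{0}^{t} (\Lunit \phi)(t',u,x^2,x^3)\, dt',
\end{align*}
so it suffices to bound the initial value in $L^{\infty}$ (from the assumptions in Section~\ref{SS:FLUIDVARIABLEDATAASSUMPTIONS}) and to bound $\Lunit\phi$ in $L^{\infty}(\Sigma_{t'})$, then exploit $t \leq \Tboot \leq 2\mathring{\updelta}_*^{-1}$ to make the time integral small. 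The main obstacle is to bound $\Lunit(\bX\Psi)$ in $L^{\infty}$; the naive approach via the commutator identity $\Lunit(\bX\Psi) = \bX(\Lunit\Psi) + [\Lunit,\bX]\Psi$ fails because we have no direct $L^{\infty}$ control of $\bX \Lunit \Psi$.

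For \eqref{eq:Psi.itself.Li}, I would simply apply the fundamental theorem of calculus to $\phi = \Psi_{\imath}$. The initial data assumptions \eqref{assumption:R+} and \eqref{assumption:small.trans} give $\|\mathcal{R}_{(+)}\|_{L^\infty(\Sigma_0)} \leq \Psiep$ and $\|(\mathcal{R}_{(-)}, v^2, v^3, s)\|_{L^\infty(\Sigma_0)} \leq \epd$, while the already-established \eqref{eq:Linfty.est.1} bounds $\|\Lunit \Psi\|_{L^\infty(\Sigma_{t'})}$ by $\lesssim \epd$. Integrating in $t'$ over an interval of length $\leq 2\mathring{\updelta}_*^{-1}$ yields the desired bounds, the $\mathring{\upalpha}$-bound for $\mathcal{R}_{(+)}$ following from $\Psiep + C\mathring{\updelta}_*^{-1}\epd \lesssim_{\mydiam} \mathring{\upalpha}$ (using $\epd^{1/2} \leq \mathring{\upalpha}$).

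For \eqref{eq:XPsi.Li}, the key step is to produce a transport equation along $\Lunit$ for $\bX\Psi$ with a right-hand side that is pointwise small. To achieve this, I would use the standard decomposition of the covariant wave operator (of the form $\upmu\square_g\Psi = -\Lunit(\bX\Psi) - \Lunit(\upmu\Lunit\Psi) + \upmu\angLap\Psi + (\mathrm{lower order})$, as employed in the proof of Proposition~\ref{P:LINFTYHIGHERTRANSVERSAL} and the appendix) together with the wave equations \eqref{E:VELOCITYWAVEEQUATION}--\eqref{E:ENTROPYWAVEEQUATION}. This expresses $\Lunit(\bX\Psi)$ as a sum of: the inhomogeneity $\mathfrak{G} = \upmu\square_g\Psi$; the quantity $\upmu\angLap\Psi$ (which is a combination of two $\mathcal{P}$-derivatives of $\Psi$ multiplied by bounded coefficients); and lower order terms polynomial in $\mathcal{P}\Psi$, $\bX\Psi$ and the geometric quantities of Section~\ref{sec:geometry}. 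Each of these is $\lesssim \epd^{1/2}$ in $L^{\infty}(\Sigma_{t'})$ by virtue of Proposition~\ref{prop:Linfty.est}, Proposition~\ref{prop:geometric.low}, the bootstrap assumption \eqref{BA:LARGERIEMANNINVARIANTLARGE}, and the proof of smallness of $\mathfrak{G}$ carried out already for Proposition~\ref{prop:wave.final} (which uses the $L^\infty$ bounds for $(\Vortrenormalized,\GradEnt,\mathcal{C},\mathcal{D})$ from Proposition~\ref{prop:Linfty.est} together with the null structure). Combined with the data bound $\|\bX\mathcal{R}_{(+)}\|_{L^\infty(\Sigma_0)} \leq \mathring{\updelta}$ from \eqref{assumption:R+.trans} (respectively $\|\bX(\mathcal{R}_{(-)},v^2,v^3,s)\|_{L^\infty(\Sigma_0)} \leq \epd$ from \eqref{assumption:small.trans}) and the length of the $t$-interval $\leq 2\mathring{\updelta}_*^{-1}$, we obtain $\|\bX\mathcal{R}_{(+)}\|_{L^\infty(\Sigma_t)} \leq \mathring{\updelta} + C\mathring{\updelta}_*^{-1}\epd^{1/2} \leq 2\mathring{\updelta}$ and $\|\bX(\mathcal{R}_{(-)},v^2,v^3,s)\|_{L^\infty(\Sigma_t)} \lesssim \epd$ for $\epd$ sufficiently small relative to $\mathring{\updelta}$ and $\mathring{\updelta}_*$, as claimed.

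For \eqref{eq:PXPsi.Li}, I would commute $\mathcal{P}^N$ through the identity used above for each $1 \leq N \leq \Ntop - \toprate - 4$ to obtain a transport equation $\Lunit(\mathcal{P}^N \bX\Psi) = (\mathrm{controlled})$. Commutators between $\mathcal{P}^N$ and $\bX$ (respectively $\mathcal{P}^N$ and $\Lunit$, and between $\mathcal{P}^N$ and $\upmu\square_g$) are handled by Proposition~\ref{prop:commutators.Li} together with the $L^\infty$ control of $\mathcal{P}^{[1,\Ntop-\toprate-2]}\Psi$, $\mathcal{P}^{[1,\Ntop-\toprate-4]}(\upmu,L^i)$, and $\mathcal{P}^{\leq\Ntop-\toprate-3}(\mathcal{C},\mathcal{D})$ already established in Proposition~\ref{prop:Linfty.est}, Proposition~\ref{prop:geometric.low}, and Proposition~\ref{prop:geometric.low.2}; note that the derivative count $N \leq \Ntop-\toprate-4$ leaves enough room to place every factor in $L^{\infty}$. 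Combined with the initial data assumption \eqref{assumption:small}, integration in $t$ yields $\|\mathcal{P}^{[1,\Ntop-\toprate-4]}\bX\Psi\|_{L^\infty(\Sigma_t)} \lesssim \epd$, completing the proof.
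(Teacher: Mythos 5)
Your overall strategy matches the paper's — integrate along the $\Lunit$-integral curves using the wave-operator decomposition \eqref{eq:wave.in.terms.of.geometric} to produce a transport equation for $\bX\Psi$ — and your treatment of \eqref{eq:Psi.itself.Li} is correct and essentially identical to the paper's. But there is a genuine gap in your treatment of \eqref{eq:XPsi.Li} and \eqref{eq:PXPsi.Li}: you only obtain an $\mathcal{O}(\epd^{1/2})$ bound on $\Lunit(\bX\Psi)$, and this is not enough to conclude $\|\bX(\mathcal{R}_{(-)},v^2,v^3,s)\|_{L^\infty(\Sigma_t)}\ls\epd$ or $\|\mathcal{P}^{[1,\Ntop-\toprate-4]}\bX\Psi\|_{L^\infty(\Sigma_t)}\ls\epd$. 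Indeed you write that each term on the right-hand side of the transport equation is $\lesssim\epd^{1/2}$, and then claim the $\lesssim\epd$ bound for the small wave variables; these two statements are inconsistent since integrating an $\mathcal{O}(\epd^{1/2})$ source over a time interval of length $\approx\mathring{\updelta}_*^{-1}$ only produces $\mathcal{O}(\epd^{1/2})$.

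The obstruction is the product $\mytr\upchi\cdot\Rad\Psi$ in the decomposition \eqref{eq:wave.in.terms.of.geometric}. Since $\Rad\mathcal{R}_{(+)}$ is of size $\mathring{\updelta}$ (not small), this term is $\mathcal{O}(\epd)$ only if $\mytr\upchi$ is $\mathcal{O}(\epd)$; but $\mytr\upchi$ depends on $\mathcal{P}\Lunit^i$ and $\mathcal{P}\Psi$ (via \eqref{E:TRCHIINTERMSOFOTHERVARIABLES}), and at this stage of the argument the only available bounds for $\mathcal{P}\Lunit^i$ come from Proposition~\ref{prop:geometric.low}, which gives size $\epd^{1/2}$. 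The paper therefore inserts an auxiliary step before \eqref{eq:XPsi.Li}: it re-runs the transport equation \eqref{E:LLUNITI} for $\Lunit^i$ — whose source terms do \emph{not} involve $\Rad\Psi$ — now feeding in the improved $L^\infty$ bound $\|\mathcal{P}^{[1,\Ntop-\toprate-2]}\Psi\|_{L^\infty(\Sigma_t)}\ls\epd$ from Proposition~\ref{prop:Linfty.est}, to upgrade $\|\mathcal{P}^{[1,\Ntop-\toprate-3]}L^i\|_{L^\infty(\Sigma_t)}$ and hence $\|\mathcal{P}^{\leq\Ntop-\toprate-4}\mytr\upchi\|_{L^\infty(\Sigma_t)}$ from $\epd^{1/2}$ to $\epd$. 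With this improvement in hand, the right-hand side of the transport equation for $\bX\Psi$ is genuinely $\mathcal{O}(\epd)$, and the claimed conclusions follow. Your proof is missing this bootstrapping step, so to make it rigorous you would need to add the argument upgrading $\mathcal{P}L^i$ and $\mytr\upchi$ before invoking the wave decomposition.
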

\begin{proof}
\pfstep{Step~1: Proof of \eqref{eq:Psi.itself.Li}} 

Since $\Lunit t = 1$, we can apply the fundamental theorem of calculus along the integral curves of $\Lunit$ to deduce
that for any scalar function $\phi$, we have:
\begin{align} \label{E:SIMPLEFTCESTIMATE}
	\|\phi \|_{L^\infty(\Sigma_t)} 
	& \leq
		\|\phi \|_{L^\infty(\Sigma_0)} 
	+
	\int_{t'=0}^t
		\|L \phi \|_{L^\infty(\Sigma_{t'})}
	\, dt'.
\end{align}
By Proposition~\ref{prop:Linfty.est}, we have $\|L \Psi\|_{L^\infty(\Sigma_t)} \ls \epd$. 
From this estimate,
the data assumptions \eqref{assumption:R+} and \eqref{assumption:small.trans},
and \eqref{E:SIMPLEFTCESTIMATE} with $\phi \doteq \Psi$,
we conclude the desired bounds in \eqref{eq:Psi.itself.Li}.

\pfstep{Step~2: An auxiliary estimate for $\mytr \upchi$} We need an auxiliary estimate before proving \eqref{eq:XPsi.Li}.
To start, we note that the same arguments used to prove
Proposition~\ref{prop:geometric.low}, based on the transport equation\footnote{Note importantly that the RHS of \eqref{E:LLUNITI} does not contain an $\bX\Psi$ term!} \eqref{E:LLUNITI}, but now with the 
estimate \eqref{eq:Linfty.est.1} in place of the $L^{\infty}$ bootstrap assumptions 
for $\|\mathcal{P}^{[1,\Ntop - \toprate -2]} \Psi \|_{L^\infty(\Sigma_t)}$
in \eqref{assumption:small},
yield the following estimate:
\begin{equation}\label{eq:L.improved}
\|\mathcal{P}^{[1,\Ntop- \toprate-3]} L^i\|_{L^\infty(\Sigma_t)}\ls \epd.
\end{equation}
We next use 
Lemma~\ref{lem:slashed},
Lemma~\ref{lem:induced.metric},
and the fact that the Cartesian component functions $X^1,X^2,X^3$ are smooth functions of
the $L^i$ and $\Psi$ (see \eqref{E:TRANSPORTVECTORFIELDINTERMSOFLUNITANDRADUNIT})
to write the identity \eqref{E:TRCHIINTERMSOFOTHERVARIABLES}
in the following form, where ``$\smoothfunction$'' schematically denotes smooth functions:
$\mytr \upchi 
= 
\smoothfunction(\Lunit^i,\Psi)\mathcal{P} \Lunit^i + \smoothfunction(\Lunit^i,\Psi)\mathcal{P} \Psi$.
From this equation,
the estimates of Proposition~\ref{prop:Linfty.est},
\eqref{eq:Psi.itself.Li},
and
\eqref{eq:L.improved},
we obtain the desired auxiliary estimate:
\begin{equation}\label{eq:chi.smaller}
\|\mathcal{P}^{\leq \Ntop- \toprate-4} \mytr \upchi \|_{L^\infty(\Sigma_t)} \ls \epd.
\end{equation}

\pfstep{Step~3: Controlling $\mathcal{P}^{\leq \Ntop- \toprate-4} L \bX \Psi$} By \cite[Proposition~2.16]{jSgHjLwW2016}, the wave operator is given by:\footnote{Here, $\slashed{\Delta}$ is the Laplace--Beltrami operator on $\ell_{t,u}$, which can be expressed as a second order differential operator in $Y$ and $Z$ with regular coefficients.} 
\begin{equation}\label{eq:wave.in.terms.of.geometric}
\begin{split}
\upmu \square_{g(\threePsi)} f 
			& = - \Lunit(\upmu \Lunit f + 2 \Rad f)
				+ \upmu \angLap f
				- \mytr \upchi \Rad f
				- \upmu \mytr \angk \Lunit f
				- 2 \upmu \upzeta^{\#} \cdot \angdiff f.
				\end{split}
				\end{equation}
				
Consider now the wave equations \eqref{E:VELOCITYWAVEEQUATION}--\eqref{E:ENTROPYWAVEEQUATION}. We will now bound the inhomogeneous terms in these equations. For $\mathfrak Q\in \{\mathfrak{Q}_{(v)}^i,\mathfrak Q_{(\pm)},\mathfrak Q_{(s)}\}$, we first apply \eqref{eq:null.form.basic} with $\phi^{(1)},\,\phi^{(2)} = \Psi$, $\mathfrak d^{(1,1)},\, \mathfrak d^{(2,1)} \ls 1$, $\mathfrak d^{(1,2)},\, \mathfrak d^{(2,2)} \ls \epd$ (which is justified by Proposition~\ref{prop:Linfty.est} 
and the bootstrap assumptions \eqref{BA:LARGERIEMANNINVARIANTLARGE}--\eqref{BA:W.Li.small}), and then use \eqref{BA:LARGERIEMANNINVARIANTLARGE}--\eqref{BA:W.Li.small} and Propositions~\ref{prop:geometric.low} and \ref{prop:Linfty.est} to obtain:
\begin{equation}\label{eq:derivative.of.wave.null.Linfty}
\begin{split}
&\:  |\mathcal{P}^{\leq \Ntop- \toprate-4} (\upmu \mathfrak Q)| \\
\ls &\: |\mathcal{P}^{[1,\Ntop- \toprate-3]}\Psi| 
+ 
\epd \left\{ |\mathcal{P}^{[1,\Ntop- \toprate-4]} \bX \Psi| 
+ 
|\mathcal{P}^{[2,\Ntop- \toprate-4]} (\upmu, L^i)| \right\} \ls \epd.
\end{split}
\end{equation}
For $\mathfrak L \in \{ \mathfrak L_{(v)}^i,\,\mathfrak L_{(\pm)},\,\mathfrak L_{(s)} \}$ and $\mathfrak M \in \{ c^2 \exp(2\Densrenormalized) \mathcal{C}^i, \, \Speed \exp(\rr) \frac{p_{;s}}{\bar{\varrho}} \mathcal{D},\,\Speed^2 \exp(2\rr) \DivofEntrenormalized \}$, we use the pointwise bounds \eqref{eq:wave.inho.linear.3}, \eqref{eq:est.C.in.wave.main} together with \eqref{BA:LARGERIEMANNINVARIANTLARGE}--\eqref{BA:C.D}
and Propositions~\ref{prop:geometric.low} and \ref{prop:Linfty.est}
to obtain:
\begin{equation}\label{eq:derivative.of.wave.other.Linfty}
\begin{split}
  |\mathcal{P}^{\leq \Ntop- \toprate-4} (\upmu \mathfrak L)| 
	+ 
	|\mathcal{P}^{\leq \Ntop- \toprate-4} (\upmu \mathfrak M)|  
	\ls 
	\epd.
\end{split}
\end{equation}
Combining \eqref{eq:derivative.of.wave.null.Linfty} and \eqref{eq:derivative.of.wave.other.Linfty}, we thus obtain: 
\begin{equation}\label{eq:Box.Psi.est.Li}
|\mathcal{P}^{\leq \Ntop- \toprate-4} (\upmu  \square_g \Psi)| \ls \epd.
\end{equation}
We now use \eqref{eq:Box.Psi.est.Li} together with \eqref{eq:wave.in.terms.of.geometric} to control 
$\mathcal{P}^{\leq \Ntop- \toprate-4} L\bX\Psi$. 
 The key point is that every term in 
$\mathcal{P}^{\leq \Ntop- \toprate-4} \mbox{\eqref{eq:wave.in.terms.of.geometric}}$
except for $\mathcal{P}^{\leq \Ntop- \toprate-4} (-2L\bX \Psi)$
is already known to be bounded in $L^{\infty}$ by $\mathcal{O}(\epd)$.
More precisely, we express the Ricci coefficients 
{on RHS~\eqref{eq:wave.in.terms.of.geometric}}
using \eqref{E:TRCHIINTERMSOFOTHERVARIABLES}--\eqref{eq:upzeta.def} and $\slashed\Delta$ using 
{Lemmas~\ref{L:GEOMETRICCOORDINATEVECTORFIELDSINTERMSOFCARTESIANVECTORFIELDS} and \ref{lem:induced.metric}. We also use the transport equation \eqref{E:UPMUFIRSTTRANSPORT} 
to express\footnote{This step is needed to avoid having to control $\Ntop - \toprate-3$ $\mathcal{P}$-derivatives
of $\upmu$ in $L^{\infty}$, since Proposition~\ref{prop:geometric.low} does not yield $L^{\infty}$
control of that many derivatives of $\upmu$.} 
the factor of $L \upmu$ on RHS~\eqref{eq:wave.in.terms.of.geometric} as RHS~\eqref{E:UPMUFIRSTTRANSPORT}.}
Then using 
Propositions~\ref{prop:geometric.low}, 
\ref{prop:geometric.low.2}, and \ref{prop:Linfty.est},
the estimates \eqref{eq:Psi.itself.Li} and \eqref{eq:L.improved}--\eqref{eq:chi.smaller},
and the bootstrap assumptions \eqref{BA:LARGERIEMANNINVARIANTLARGE}--\eqref{BA:W.Li.small} 
(to control all $\Rad \Psi$-involving products on RHS~\eqref{eq:wave.in.terms.of.geometric} except $-2L\bX\Psi$),
we obtain 
$|\mathcal{P}^{\leq \Ntop- \toprate-4} L\bX\Psi| \ls \epd$. 
Also using the first commutator estimate in \eqref{eq:commutators.Li.2}
with $\phi \doteq \bX \Psi$
and the bootstrap assumption \eqref{BA:W.Li.small},
we further deduce that:
\begin{equation}\label{eq:Linfty.LbXPsi}
\|L \mathcal{P}^{\leq \Ntop- \toprate-4} \bX \Psi\|_{L^\infty(\Sigma_t)} 
\ls 
|\mathcal{P}^{\leq \Ntop- \toprate-4} \Lunit \bX \Psi|
+
\epd^{\frac{1}{2}} 
|\mathcal{P}^{[1, \Ntop- \toprate-4]} \bX \Psi|
\ls
\epd.
\end{equation}

\pfstep{Step~4: Proof of \eqref{eq:XPsi.Li} and \eqref{eq:PXPsi.Li}} We finally conclude \eqref{eq:XPsi.Li} and \eqref{eq:PXPsi.Li} using \eqref{E:SIMPLEFTCESTIMATE} and \eqref{eq:Linfty.LbXPsi}, together with the initial data bounds \eqref{assumption:R+.trans}, \eqref{assumption:small.trans} and \eqref{assumption:small}. \qedhere
\end{proof}

\begin{proposition}\label{prop:trans.V.S}
The following $L^\infty$ estimates hold for all $t\in [0,\Tboot)$:
$$\|\mathcal{P}^{\leq \Ntop - \toprate -4} \bX (\Vr, S)\|_{L^\infty(\Mtu)} \ls \epd^{\f 32}.$$
\end{proposition}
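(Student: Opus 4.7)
The plan is to express $\bX(\Vortrenormalized,\GradEnt)$ algebraically in terms of $\mathcal{F}_u$-tangential derivatives (which are lower order and already controlled in $L^\infty$), and then to distribute the operator $\mathcal{P}^N$ over this expression via the Leibniz rule. The key structural observation is already encoded in Lemma~\ref{L:TRANSPORTTRANSVERSALINTERMSOFTANGENTIAL}, which states
\[
(\Rad \Vortrenormalized, \Rad \GradEnt)
= - \upmu (\Lunit \Vortrenormalized, \Lunit \GradEnt)
+ (\Vortrenormalized,\GradEnt)\cdot \smoothfunction(\Psi,\Lunit^i,\upmu,\Rad\Psi,\mathcal{P}\Psi).
\]
Crucially, \emph{every} term on the RHS carries $(\Vortrenormalized,\GradEnt)$ or a $\Lunit$-derivative of $(\Vortrenormalized,\GradEnt)$ as a factor. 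Since Proposition~\ref{prop:Linfty.est} gives $\|\mathcal{P}^{\leq\Ntop-\toprate-2}(\Vortrenormalized,\GradEnt)\|_{L^\infty(\Sigma_t)}\lesssim\epd^{3/2}$, we automatically inherit an overall smallness factor of $\epd^{3/2}$ in every product we encounter below, provided we distribute at most $\Ntop-\toprate-3$ derivatives of $\mathcal{P}$ onto $(\Vortrenormalized,\GradEnt)$.

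The plan is then to apply $\mathcal{P}^N$, $0\leq N\leq\Ntop-\toprate-4$, to the identity displayed above and expand using the product rule and the chain rule (for the smooth function $\smoothfunction$). Each resulting term is a product of factors drawn from the following list: $\mathcal{P}^{\leq N+1}(\Vortrenormalized,\GradEnt)$ (one factor, giving $\epd^{3/2}$ via Proposition~\ref{prop:Linfty.est}), $\mathcal{P}^{\leq N}\upmu$ and $\mathcal{P}^{\leq N}\Lunit^i$ (bounded by $\mathcal{O}(1)$ via Proposition~\ref{prop:geometric.low}, since $N+1\leq \Ntop-\toprate-3$), $\mathcal{P}^{\leq N+1}\Psi$ (bounded by $\mathcal{O}(\mathring{\upalpha})+\mathcal{O}(\epd)$ via \eqref{eq:Psi.itself.Li} and Proposition~\ref{prop:Linfty.est}), and factors $\mathcal{P}^{\leq N}\Rad\Psi=\mathcal{P}^{\leq N}\bX\Psi$ arising from the chain-rule derivatives of $\smoothfunction$ in the $\Rad\Psi$ slot (bounded by $\mathcal{O}(\mathring{\updelta})$ via \eqref{eq:XPsi.Li} when $N=0$ acts as the identity on $\bX\mathcal{R}_{(+)}$, and by $\mathcal{O}(\epd)$ for all other occurrences via \eqref{eq:PXPsi.Li}). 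All of the non-$(\Vortrenormalized,\GradEnt)$ factors are therefore $\mathcal{O}(1)$ in $L^\infty$, and the overall product is $\mathcal{O}(\epd^{3/2})$ as required.

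In a bit more detail, I would verify the derivative count as follows. After expanding $\mathcal{P}^N$ by Leibniz, a generic term has the schematic form
\[
(\mathcal{P}^{k_1}\upmu)\,(\mathcal{P}^{k_2}\Lunit (\Vortrenormalized,\GradEnt))
\quad\text{or}\quad
(\mathcal{P}^{k_1}(\Vortrenormalized,\GradEnt))\cdot\prod_j\mathcal{P}^{k_j}(\Psi,\Lunit^i,\upmu,\bX\Psi),
\]
with $\sum k_i \le N+1\le\Ntop-\toprate-3$. In each such product, one factor belongs to the $(\Vortrenormalized,\GradEnt)$ family and is bounded by $\epd^{3/2}$ in $L^\infty$ via Proposition~\ref{prop:Linfty.est}; every remaining factor is at order $\le N\le\Ntop-\toprate-4$, which is precisely the range in which Propositions~\ref{prop:geometric.low}, \ref{prop:Linfty.est}, and \ref{prop:Linfty.est.1} deliver uniform $L^\infty$ control. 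The spatial localization is automatic from Lemma~\ref{lem:localization}, which confines everything to $u\in[0,U_0]$.

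The main (and in fact only) potential obstacle is the factor $\bX\mathcal{R}_{(+)}$, which is the sole genuinely \emph{large} quantity in the problem (bounded by $2\mathring{\updelta}$, not by a small parameter). However, it enters only linearly per $\mathcal{P}$-derivative of $\smoothfunction$, it is multiplied by a factor from the $(\Vortrenormalized,\GradEnt)$ family carrying $\epd^{3/2}$, and we only need to accommodate finitely many such factors (with the product of their sizes absorbed into the implicit constant in $\lesssim$, which by convention is allowed to depend on $\mathring{\updelta}$). Hence no smallness is lost, and the desired bound $\|\mathcal{P}^{\leq\Ntop-\toprate-4}\bX(\Vortrenormalized,\GradEnt)\|_{L^\infty(\mathcal{M}_{t,u})}\lesssim\epd^{3/2}$ follows.
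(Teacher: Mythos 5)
Your proposal is correct and follows the same route as the paper: both apply $\mathcal{P}^{\leq\Ntop-\toprate-4}$ to the algebraic identity \eqref{E:TRANSVERSALDERIVATIVEEXPRESSIONFORTRANSPORTVARIABLES} from Lemma~\ref{L:TRANSPORTTRANSVERSALINTERMSOFTANGENTIAL} and then bound the resulting products in $L^\infty$ via Propositions~\ref{prop:geometric.low}, \ref{prop:Linfty.est}, and \ref{prop:Linfty.est.1}. Your extra discussion of the derivative count and of why the large factor $\bX\mathcal{R}_{(+)}$ is harmless (it enters only inside smooth coefficient functions of size $\mathcal{O}(\mathring{\updelta})$, which the convention on implicit constants absorbs) is a correct and slightly more explicit elaboration of the same argument.
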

\begin{proof}
We apply $\mathcal{P}^{\leq \Ntop - \toprate -4}$ 
to \eqref{E:TRANSVERSALDERIVATIVEEXPRESSIONFORTRANSPORTVARIABLES1}--\eqref{E:TRANSVERSALDERIVATIVEEXPRESSIONFORTRANSPORTVARIABLES2}
and then bound all terms on the RHS in $L^\infty$ using 
Propositions~\ref{prop:geometric.low},
\ref{prop:Linfty.est}, and \ref{prop:Linfty.est.1}. 
\qedhere
\end{proof}

\section{Putting everything together}\label{sec:everything}
This is the concluding section. First, in Section~\ref{sec:end.of.bootstrap}, we use the estimates derived in Sections~\ref{sec:trivial}--\ref{sec:Linfty} to conclude our main a priori estimates, i.e., to prove Theorem~\ref{thm:bootstrap}. 

With the help of Theorem~\ref{thm:bootstrap}, all of the main results stated
in Section~\ref{sec:statement} are quite easy to prove. We will prove Theorems~\ref{thm:main} and \ref{thm:shock} in Section~\ref{sec:proof.of.main.theorem}, Corollary~\ref{cor:stupid.nonvanishing} 
in Section~\ref{sec:stupid.nonvanishing}, and finally, Corollary~\ref{cor:stupid.Holder} 
in Section~\ref{sec:stupid.Holder}.

\subsection{Proof of the main a priori estimates}\label{sec:end.of.bootstrap}

\begin{proof}[Proof of Theorem~\ref{thm:bootstrap}] We prove each of the four
conclusions asserted by Theorem~\ref{thm:bootstrap}.
\begin{enumerate}
\item By Proposition~\ref{prop:wave.final}, for $1\leq N \leq \Ntop$, the following wave estimates hold:
$$\mathbb{W}_N(t) \ls \epd^2 \max\{1, \upmu_{\star}^{-2 \toprate+2\Ntop-2N+1.8}(t) \}.$$
	Hence, the inequalities in \eqref{BA:W1}--\eqref{BA:W2} hold with $\epd$ replaced by $C\epd^2$.
\item By \eqref{eq:Psi.itself.Li}--\eqref{eq:XPsi.Li}, the {inequalities}
in \eqref{BA:LARGERIEMANNINVARIANTLARGE} hold with $\mathring{\upalpha}^{\f 12}$ replaced by $C_{\mydiam}\mathring{\upalpha}$ and $3\mathring{\updelta}$ replaced by $2\mathring{\updelta}$.
\item By {\eqref{eq:Linfty.est.1} and \eqref{eq:Psi.itself.Li}--\eqref{eq:PXPsi.Li},
the inequalities in 
\eqref{BA:SMALLWAVEVARIABLESUPTOONETRANSVERSALDERIVATIVE}--\eqref{BA:W.Li.small}} 
hold with $\epd^{\f 12}$ replaced by $C\epd$. 
\item By \eqref{eq:Linfty.est.2} and Proposition~\ref{prop:trans.V.S}, 
the {inequalities}  
\eqref{BA:V}--\eqref{BA:C.D} hold with $\epd$ replaced by $C\epd^{\f 32}$. \qedhere
\end{enumerate}
\end{proof}

\subsection{Proof of the main theorems}\label{sec:proof.of.main.theorem}

\begin{proof}[Proof of the regularity theorem (Theorem~\ref{thm:main})]
By the main a priori estimates (Theorem~\ref{thm:bootstrap}) and a standard continuity argument, all the estimates established in the proof of Theorem~\ref{thm:bootstrap} hold 
on $[0,T) \times \Sigma$.
As a consequence, the energy estimates 
\eqref{conclusion:W.energy}, \eqref{conclusion:V.energy} and \eqref{conclusion:C.energy} follow from Propositions~\ref{prop:wave.final}, \ref{prop:V.S}, \ref{prop:C.D}, and \ref{prop:elliptic.putting.everything.together}. 
As for the $L^\infty$ estimates, \eqref{conclusion:W.P.Li} holds thanks to \eqref{eq:Linfty.est.1} and \eqref{eq:PXPsi.Li}; \eqref{conclusion:Psi.itself.Li} and \eqref{conclusion:XPsi.Li} hold thanks to \eqref{eq:Psi.itself.Li} and \eqref{eq:XPsi.Li} respectively; and \eqref{conclusion:V.S.C.D} holds thanks to \eqref{eq:Linfty.est.2} and Proposition~\ref{prop:trans.V.S}.

Moreover, Lemma~\ref{L:GEOMETRICCOORDINATEVECTORFIELDSINTERMSOFCARTESIANVECTORFIELDS},
the identity $\srd_t = \Lunit - \Lunit^A \srd_A$ 
(see \eqref{eq:slashes.1}),
and
the $L^{\infty}$ estimates mentioned above,
together with those of Propositions~\ref{prop:geometric.low}, \ref{prop:geometric.low.2}, 
and \ref{P:LINFTYHIGHERTRANSVERSAL},
imply that the solution can be smoothly extended\footnote{Note that these estimates
imply that the $\srd_t$ derivatives of many geometric coordinate partial derivatives of the solution
are uniformly bounded on $[0,T] \times \mathbb{R} \times \mathbb{T}^2$, which leads to their
extendibility to $[0,T] \times \mathbb{R} \times \mathbb{T}^2$.} 
to 
$[0,T] \times \mathbb{R} \times \mathbb{T}^2$
as a function of the geometric coordinates
$(t,u,x^2,x^3)$.

It remains for us to show that the solution can be extended 
as a smooth solution of both the geometric and the Cartesian coordinates
as long as $\inf_{t\in [0,T)} \upmu_{\star}(t) >0$.
Now the estimates \eqref{conclusion:W.P.Li}--\eqref{conclusion:XPsi.Li}, Lemma~\ref{lem:Cart.to.geo}, 
and the assumed lower bound on $\upmu_{\star}$ together imply that the fluid variables and their first partial derivatives with respect to the Cartesian coordinates remain bounded. 
Standard local existence results/continuation criteria then imply that the solution can be 
smoothly extended in the Cartesian coordinates
to a Cartesian slab $[0,T + \epsilon] \times \Sigma$ for some $\epsilon > 0$.
Finally, within this Cartesian slab, one can solve the eikonal equation \eqref{E:INTROEIKONAL} 
such that the map $(t,u,x^2,x^3) \rightarrow (t,x^1,x^2,x^3)$
is a diffeomorphism from $[0,T + \epsilon] \times \mathbb{R} \times \mathbb{T}^2$
onto $[0,T + \epsilon] \times \Sigma$; the diffeomorphism property of this map
follows easily from the identity $\srd_u x^1 = \f{\upmu \Speed^2}{X^1}$
(see \eqref{E:GEOMETRICUCOORDINATEPARTIALDERIVATIVESINTERMSOFOTHERVECTORFIELDS})
and the fact that $\f{\upmu \Speed^2}{X^1} < 0$ in $[0,T + \epsilon] \times \mathbb{R} \times \mathbb{T}^2$
whenever $\epsilon$ is small enough, thanks to $\upmu > 0$, \eqref{E:XSMALLDEF}, 
and the estimates of Proposition~\ref{prop:geometric.low.2}
for $X_{(Small)}^i$ and $\Speed - 1$. 
This implies that the solution can also be smoothly extended in the geometric coordinates $(t,u,x^2,x^3)$.
\qedhere
\end{proof}

\begin{proof}[Proof of the shock formation theorem (Theorem~\ref{thm:shock})]
\pfstep{Step~1: Vanishing of $\upmu_{\star}$}
First, we will show that:
\begin{align} \label{E:MUSTARFINALEST}
\upmu_{\star}(t) =  
1 
+ 
\mathcal{O}_{\mydiam}(\mathring{\upalpha}) 
+ 
\mathcal{O}(\epd)
- 
\mathring{\updelta}_*t.
\end{align}

To prove \eqref{E:MUSTARFINALEST},
we start by using \eqref{E:UPMUFIRSTTRANSPORT}, 
\eqref{E:KEYLARGETERMEXPANDED}, and the $L^\infty$ estimates established in 
Propositions~\ref{prop:geometric.low} and \ref{prop:geometric.low.2} and
Theorem~\ref{thm:main}
to deduce that:
\begin{equation}\label{eq:Lmu.main}
L\upmu = -
			\frac{1}{2}
			\Speed^{-1}
			(\Speed^{-1} \Speed_{;\rr} + 1)
			\Rad \mathcal{R}_{(+)} 
			+ 
			\mathcal{O}(\epd)
\end{equation}
and: 
\begin{align}\label{eq:LLmu.main}
L\left\lbrace
			\frac{1}{2}
			\Speed^{-1}
			(\Speed^{-1} \Speed_{;\rr} + 1)
	\right\rbrace 
	& 
	= 
	\mathcal{O}(\epd),
	&
	L\left\lbrace
			\frac{1}{2}
			\Speed^{-1}
			(\Speed^{-1} \Speed_{;\rr} + 1)
			\Rad \mathcal{R}_{(+)} 
	\right\rbrace 
	& 
	= 
	\mathcal{O}(\epd).
\end{align}
Moreover, from \eqref{E:INTROEIKONAL}, \eqref{E:FIRSTUPMU}, and our data assumptions \eqref{assumption:R+} and
\eqref{assumption:small.trans}, we have the following initial condition estimate for $\upmu$:
\begin{align} \label{E:MUDATA}
	\upmu \restriction_{\Sigma_0}
	& = 1 + \mathcal{O}_{\mydiam}(\mathring{\upalpha}) + \mathcal{O}(\epd). 
\end{align}	
From \eqref{eq:Lmu.main}--\eqref{E:MUDATA},
\eqref{assumption:lower.bound},  
and the fundamental theorem of calculus along the integral curves of $\Lunit$
(and recalling that $\Lunit t = 1$), we conclude \eqref{E:MUSTARFINALEST}.

\pfstep{Step~2: Proof of (1), (2), and (3)} Define: 
\begin{equation}\label{eq:def.Tsing}
T_{(Sing)} \doteq \sup \{T\in [0,2\mathring{\updelta}_*^{-1}]: \mbox{a smooth solutions exists with }\upmu> 0 \mbox{ on $[0,T)\times \Sigma$} \}.
\end{equation}
From Theorem~\ref{thm:main}, it follows that either $T_{(Sing)} = 2\mathring{\updelta}_*^{-1}$
or $\liminf_{t\to T^-_{(Sing)}} \upmu_{\star}(t) = 0$.

Using \eqref{eq:Lmu.main}, we infer that $\upmu_{\star}(t)$ first vanishes
at a time equal to 
$\left 
\lbrace 1 
+ 
\mathcal{O}_{\mydiam}(\mathring{\upalpha}) 
+ 
\mathcal{O}(\epd) 
\right\rbrace
\mathring{\updelta}_*^{-1}
$.
From this fact, the definition of $T_{(Sing)}$, and the above discussion, 
it follows that this time of first vanishing of $\upmu_{\star}(t)$ is equal to $T_{(Sing)}$,
which implies \eqref{eq:Tsing.est}.
Using Theorem~\ref{thm:main} again, we have therefore proved parts (1), (2) and (3) of Theorem~\ref{thm:shock}.

\pfstep{Step~3: Proof of (4)} In the next step, we will show that the vanishing of  
$\upmu_{\star}$ along $\Sigma_{T_{(Sing)}}$ coincides with the blowup of $|\partial_1 \mathcal{R}_{(+)}|$
at one or more points in $\Sigma_{T_{(Sing)}}$;
that will show that
$T_{(Sing)}$ is indeed the time of first singularity formation and in particular yields the conclusion (4)
stated in Theorem~\ref{thm:shock}.

\pfstep{Step~4: Proof of (5)}  We now prove that $\mathscr{S}_{Blowup} = \mathscr{S}_{Vanish}$. 
This in particular also implies the blowup-claim in conclusion (4) of Theorem~\ref{thm:shock}. 
We first prove $\subseteq$. 
If $(u,x^2,x^3) \notin \mathscr{S}_{Vanish}$, then $\upmu$ has a lower bound away from $0$ near 
$(T_{(Sing)},u,x^2,x^3)$ and thus the estimates in Theorem~\ref{thm:main} (and Lemma~\ref{lem:Cart.to.geo}) imply that the fluid variables are $C^1$ functions of the geometric coordinates and the Cartesian coordinates
near the point with geometric coordinates $(T_{(Sing)},u,x^2,x^3)$, i.e., $(u,x^2,x^3) \notin \mathscr{S}_{Blowup}$. 

To show $\supseteq$, suppose $(u,x^2,x^3) \in \mathscr{S}_{Vanish}$. 
Let $\beta(t)$ denote the $t$-parameterized integral curve of $L$ emanating from
$(T_{(Sing)},u,x^2,x^3)$. Note in particular that $\upmu \circ \beta(T_{(Sing)}) \doteq \upmu(T_{(Sing)},u,x^2,x^3) = 0$,
and recall that $\Lunit t = 1$.
We next use
\eqref{eq:Lmu.main}--\eqref{E:MUDATA}, \eqref{assumption:lower.bound}, 
\eqref{eq:Tsing.est},
and the fundamental theorem of calculus along the integral curve $\beta(t)$
to deduce that for $0 \leq t \leq T_{(Sing)}$, 
we have
$\frac{1}{2}
			\left|
			\Speed^{-1}
			(\Speed^{-1} \Speed_{;\rr} + 1)
			\right|
			\circ \beta(0) 
			\times
			|\Rad \mathcal{R}_{(+)}|\circ \beta(t) 
				\geq  \f{3 \mathring{\updelta}_*}{4}$
(for otherwise, $\upmu \circ \beta(T_{(Sing)}) = 0$ would not be possible).
Also using \eqref{eq:Cart.to.geo.1},
Propositions~\ref{prop:geometric.low}, \ref{prop:geometric.low.2}, and the $L^{\infty}$ estimates
of Theorem~\ref{thm:bootstrap},
we find that the following estimate holds for
$0 \leq t \leq T_{(Sing)}$:
$\frac{1}{2}
			\left|
			\Speed^{-1}
			(\Speed^{-1} \Speed_{;\rr} + 1)
			\right|
			\circ \beta(0) 
			\times
			|\upmu \partial_1 \mathcal{R}_{(+)}| \circ \beta(t) 
			\geq \f{\mathring{\updelta}_*}{2}$.
In particular, 
also considering Remark~\ref{R:NONVANISHINGFACTOR},
we deduce that
$\limsup_{t \uparrow T_{(Sing)}^-} 
|\partial_1 \mathcal R_{(+)})| \circ \beta(t) 
	\geq 
			\f{\mathring{\updelta}_*}
			{2|\Speed^{-1}
			(\Speed^{-1} \Speed_{;\rr} + 1)| \circ \beta(0)}
			\limsup_{t \uparrow T_{(Sing)}^-}  
			\frac{1}{\upmu \circ \beta(t)}
			= \infty$. Hence 
			$(u,x^2,x^3) \in \mathscr{S}_{Blowup}$,
			which finishes the proof that $\mathscr{S}_{Blowup} = \mathscr{S}_{Vanish}$.

Finally, we prove that $\mathscr{S}_{Vanish} = \mathbb{R} \times \mathbb{T}^2 \setminus \mathscr{S}_{Regular}$. 
The direction $\subseteq$ holds since $\mathscr{S}_{Vanish} = \mathscr{S}_{Blowup}$ and obviously 
$\mathscr{S}_{Blowup} \subseteq \mathbb{R} \times \mathbb{T}^2 \setminus \mathscr{S}_{Regular}$. We now show the direction $\supseteq$. Suppose that $(u,x^2,x^3) \notin \mathscr{S}_{Blowup}$, i.e.,  
$\upmu(T_{(Sing)},u,x^2,x^3) > 0$. Then 
the estimates with respect to the geometric vectorfields established in Theorem~\ref{thm:main} 
and Lemma~\ref{lem:Cart.to.geo} imply that in a neighborhood of $(T_{(Sing)},u,x^2,x^3)$ 
intersected with $\lbrace t \leq T_{(Sing)} \rbrace$,
the fluid variables remain $C^1$
functions of the geometric coordinates and Cartesian coordinates. 
We have therefore proved part (5) of Theorem~\ref{thm:shock}, which completes its proof. \qedhere
\end{proof}

\subsection{Non-triviality of $\Vr$ and $S$ (Proof of Corollary~\ref{cor:stupid.nonvanishing})}\label{sec:stupid.nonvanishing}

\begin{proof}[Proof of Corollary~\ref{cor:stupid.nonvanishing}]
Using equations \eqref{eq:Lmu.main}--\eqref{E:MUDATA},
we deduce (recalling that $\epd^{1/2} \leq \mathring{\upalpha}$ by assumption)
that along any $t$-parameterized integral curve $\beta(t)$ of $L$ emanating from $\Sigma_0$
(i.e., $\beta^0(0)=0$, where $\beta^{\alpha}$ denotes the Cartesian components of $\beta$),
we have
$\upmu 
\circ 
\beta(t) 
= 
1 
-
			\frac{1}{2}
			t
			[\Speed^{-1}
			(\Speed^{-1} \Speed_{;\rr} + 1)
			\Rad \mathcal{R}_{(+)}] \circ \beta(0) 
+
\mathcal{O}_{\mydiam}(\mathring{\upalpha})$.
From this bound,
\eqref{eq:Tsing.est} (which implies that
$0 \leq t \leq T_{(Sing)} = \left\lbrace 1 + \mathcal{O}_{\mydiam}(\mathring{\upalpha}) + \mathcal{O}(\epd) \right\rbrace \mathring{\updelta}_*^{-1}$),
\eqref{assumption:lower.bound}, 
and
the assumption \eqref{eq:problem.concentrated.somewhere},
we see that if
$|u \circ \beta(0) - \mathring{\upsigma} + \mathring{\updelta}_*^{-1}| 
\geq 3 \mathring{\upalpha} \mathring{\updelta}_*^{-1}$
(where $u \circ \beta(0)$ is the value of the $u$ coordinate at $\beta(0)$),
then $\upmu \circ \beta(t) \geq 3/8$ for $0 \leq t \leq T_{(Sing)}$
(assuming that $\mathring{\upalpha}$ and $\epd$ are sufficiently small).

Now fix any $(u_*,x_*^2,x_*^3) \in \mathscr{S}_{Vanish}$ (that is, $\upmu(T_{(Sing)},u_*,x_*^2,x_*^3) = 0$).
We will show that under the assumptions of the corollary, there is a constant $C > 1$ such that:
\begin{align} \label{E:QUANTITATIVENONVANISHING}
	C^{-1} \epd^3 \leq |S(T_{(Sing)},u_*,x_*^2,x_*^3)| \leq C \epd^3,
		\quad
	C^{-1} \epd^2 \leq |\Vr(T_{(Sing)},u_*,x_*^2,x_*^3)| \leq C \epd^2.
\end{align}
Clearly, the bounds \eqref{E:QUANTITATIVENONVANISHING} imply the desired conclusion of the corollary.

To initiate the proof of \eqref{E:QUANTITATIVENONVANISHING},
we let $\beta_{(Sing)}(t)$ denote the $t$-parameterized integral curve of $L$ passing through
$(T_{(Sing)},u_*,x_*^2,x_*^3)$. 
Then since \eqref{E:LUNITANDRADOFUANDT} implies that
the coordinate function $u$ is constant along $\beta_{(Sing)}$ 
(and thus $u \circ \beta_{(Sing)}(0) = u_*$),
the results derived two paragraphs above guarantee that
$|u_* - \mathring{\upsigma} + \mathring{\updelta}_*^{-1}| 
\leq 3 \mathring{\upalpha} \mathring{\updelta}_*^{-1}$.
In particular, in view of the initial condition \eqref{E:INTROEIKONAL} for $u$ along $\Sigma_0$,
we see that
$|\beta_{(Sing)}^1(0) - \mathring{\updelta}_*^{-1}| 
\leq 3 \mathring{\upalpha} \mathring{\updelta}_*^{-1}$,
where $\beta_{(Sing)}^1(0) \doteq x^1 \circ \beta_{(Sing)}(0)$ 
is the $x^1$ coordinate of the point $\beta_{(Sing)}(0) \in \Sigma_0$.
Then, since Proposition~\ref{prop:geometric.low}
yields that
$\frac{d}{dt} \beta^1 = L \beta^1 = L^1 = 1 + L_{(Small)}^1 = 1 + \mathcal{O}_{\mydiam}(\mathring{\upalpha})$,
we can integrate in time and use \eqref{eq:Tsing.est} to deduce
that 
$\beta^1(T_{(Sing)}) 
= 
\beta^1(0) + T_{(Sing)} + \mathcal{O}_{\mydiam}(\mathring{\upalpha}) T_{(Sing)}
=
- 
\mathring{\updelta}_*^{-1}
+ 
T_{(Sing)} 
+ 
\mathcal{O}_{\mydiam}(\mathring{\upalpha}) 
T_{(Sing)}
=
\mathcal{O}_{\mydiam}(\mathring{\upalpha})
\mathring{\updelta}_*^{-1}
$.
That is, the $x^1$ coordinate of the singular point $(T_{(Sing)},u_*,x_*^2,x_*^3)$
is of size $\mathcal{O}_{\mydiam}(\mathring{\upalpha}) \mathring{\updelta}_*^{-1}$.

Let now $\gamma_{(Sing)}$ be the integral curve of $B$ passing through the singular point 
$(T_{(Sing)},u_*,x_*^2,x_*^3)$ as above. 
Since \eqref{E:TRANSPORTVECTORFIELDINTERMSOFLUNITANDRADUNIT} and \eqref{conclusion:Psi.itself.Li}
imply that
$\Transport = \rd_t +\mathcal{O}_{\mydiam}(\mathring{\upalpha}) \rd$,
we can integrate with respect to time along $\gamma_{(Sing)}$ and use
\eqref{eq:Tsing.est} and the bound on the $x^1$ coordinate of the singular point $(T_{(Sing)},u_*,x_*^2,x_*^3)$ proved above
to deduce that $\gamma_{(Sing)}$ intersects $\Sigma_0$ at a point $q$ with
$x^1$ coordinate $q^1$ of size $q^1 = \mathcal{O}_{\mydiam}(\mathring{\upalpha}) \mathring{\updelta}_*^{-1}$.
In view of the initial condition \eqref{E:INTROEIKONAL} for $u$ along $\Sigma_0$,
we see that the $u$ coordinate of $q^1$, which we denote by $u|_q$,
verifies $\Big|u|_q - \mathring{\upsigma} \Big|= \mathcal{O}_{\mydiam}(\mathring{\upalpha}) \mathring{\updelta}_*^{-1}$.
From this bound and the assumption \eqref{eq:V.S.nonvanishing.assumption}, we see that:
\begin{equation}\label{RECALLLEDeq:V.S.nonvanishing.assumption}
\f 12\epd^2 \leq \Big|\Vr|_q \Big|\leq \epd^2,
\quad 
\f 12 \epd^3\leq \Big|S|_q \Big|\leq \epd^3.
\end{equation}
To complete the proof, we need to use \eqref{RECALLLEDeq:V.S.nonvanishing.assumption}
to prove \eqref{E:QUANTITATIVENONVANISHING}.
To this end, we find it convenient to parameterize $\gamma_{(Sing)}$ by the eikonal function.
Since \eqref{E:TRANSPORTVECTORFIELDINTERMSOFLUNITANDRADUNIT} and \eqref{E:LUNITANDRADOFUANDT}
guarantee that $\upmu \Transport u = 1$, this is equivalent to studying integral curves of $\upmu \Transport$.
That is, we slightly abuse notation by denoting the re-parameterized integral curve by the same symbol
$\gamma_{(Sing)}$; i.e., $\gamma_{(Sing)}$ solves the integral curve ODE
$\frac{d}{d u} \gamma_{(Sing)}(u) = \upmu \Transport \circ \gamma_{(Sing)}(u)$.
To proceed, we multiply the transport equations 
\eqref{E:RENORMALIZEDVORTICTITYTRANSPORTEQUATION} and \eqref{E:GRADENTROPYTRANSPORT}
by $\upmu$ and use \eqref{E:TRANSPORTVECTORFIELDINTERMSOFLUNITANDRADUNIT},
\eqref{E:LUNITANDRADOFUANDT},
Lemma~\ref{lem:Cart.to.geo},
Propositions~\ref{prop:geometric.low}, \ref{prop:geometric.low.2}, and the $L^{\infty}$ estimates
of Theorem~\ref{thm:bootstrap}
to deduce that along $\gamma_{(Sing)}$, 
\eqref{E:RENORMALIZEDVORTICTITYTRANSPORTEQUATION} and \eqref{E:GRADENTROPYTRANSPORT}
imply the following evolution equations, expressed in schematic form:
\begin{align}
\frac{d}{du} \Vr \circ \gamma_{(Sing)}(u) 
& = \mathcal{O}(1) \Vr \circ \gamma_{(Sing)}(u) 
	+ \mathcal{O}(1) S \circ \gamma_{(Sing)}(u),
		\label{E:SCHEMATICUPARAMETERIZEDVORTICITYTRANSPORT} \\
\frac{d}{du} S \circ \gamma_{(Sing)}(u) 
& = \mathcal{O}(1) S \circ \gamma_{(Sing)}(u).
\label{E:SCHEMATICUPARAMETERIZEDENTROPYGRADIENTTRANSPORT}
\end{align}
From the evolution equations 
\eqref{E:SCHEMATICUPARAMETERIZEDVORTICITYTRANSPORT}--\eqref{E:SCHEMATICUPARAMETERIZEDENTROPYGRADIENTTRANSPORT}, 
the initial conditions 
\eqref{RECALLLEDeq:V.S.nonvanishing.assumption},
and the fact that $0 \leq u \leq U_0$ in the support of the solution (see Section~\ref{sec:trivial}),
we conclude that if $\epd$ is sufficiently small, 
then there is a $C > 1$ such that \eqref{E:QUANTITATIVENONVANISHING} holds. \qedhere

\end{proof}

\subsection{H\"older estimates (Proof of Corollary~\ref{cor:stupid.Holder})}\label{sec:stupid.Holder}
Throughout this section, we work under the assumptions of Corollary~\ref{cor:stupid.Holder}.


\begin{lemma}[\textbf{A simple calculus lemma}]
\label{lem:trivial.calculus}
Let $J \subseteq \mathbb R$ be an interval. Suppose $f:J\to \mathbb R$ is a $C^3$ function such that:
\begin{enumerate}
\item $f$ is increasing, i.e., $f' \geq 0$;
\item There exists $\mathring{b}>0$ such that
$ f^{(3)}(y) \geq \mathring{b}$ for every $y\in J$,
where $f^{(3)}$ denotes the third derivative of $f$.
\end{enumerate}

Then for any $y_1,\, y_2 \in J$,  the following estimate holds:
$$|f(y_1) -f(y_2)| \geq \f{\mathring{b}}{48} |y_1 - y_2|^3.$$
\end{lemma}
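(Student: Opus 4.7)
The plan is to exploit the symmetry afforded by centering a Taylor expansion at the midpoint of $[y_1, y_2]$, which causes the (a priori unsigned) second-derivative contributions to cancel and leaves only terms with definite signs.

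Without loss of generality assume $y_1 < y_2$, and set $h \doteq y_2 - y_1 > 0$ and $m \doteq (y_1 + y_2)/2$, so that $m \in J$ by convexity of the interval. I would apply Taylor's theorem with Lagrange remainder at order three at the base point $m$, obtaining points $\xi_1 \in (y_1, m)$ and $\xi_2 \in (m, y_2)$ such that
\begin{align*}
f(y_2) &= f(m) + f'(m)\tfrac{h}{2} + \tfrac{1}{2} f''(m)\bigl(\tfrac{h}{2}\bigr)^{\!2} + \tfrac{1}{6} f^{(3)}(\xi_2)\bigl(\tfrac{h}{2}\bigr)^{\!3}, \\
f(y_1) &= f(m) - f'(m)\tfrac{h}{2} + \tfrac{1}{2} f''(m)\bigl(\tfrac{h}{2}\bigr)^{\!2} - \tfrac{1}{6} f^{(3)}(\xi_1)\bigl(\tfrac{h}{2}\bigr)^{\!3}.
\end{align*}
Subtracting, the $f(m)$ and $f''(m)$ contributions cancel, leaving
\begin{equation*}
f(y_2) - f(y_1) = f'(m)\, h + \tfrac{1}{6}\bigl[f^{(3)}(\xi_1) + f^{(3)}(\xi_2)\bigr]\bigl(\tfrac{h}{2}\bigr)^{\!3}.
\end{equation*}
Now I invoke both hypotheses: the assumption $f' \geq 0$ makes the first term non-negative, while $f^{(3)} \geq \mathring{b}$ bounds each $f^{(3)}(\xi_i)$ from below by $\mathring{b}$, so the bracketed quantity is at least $2\mathring{b}$. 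Combining,
\begin{equation*}
f(y_2) - f(y_1) \geq \tfrac{1}{6} \cdot 2\mathring{b} \cdot \tfrac{h^3}{8} = \tfrac{\mathring{b}}{24} h^3 \geq \tfrac{\mathring{b}}{48}|y_2 - y_1|^3,
\end{equation*}
which gives the claim (with room to spare) after noting $f(y_2) \geq f(y_1)$ by monotonicity so absolute values can be dropped.

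There is no real obstacle here: the only subtlety worth flagging is the cancellation of the $f''(m)$ term, which is essential because hypothesis (1) provides no pointwise sign or size control on $f''$ (it could be arbitrarily negative at individual points), so any naive Taylor expansion based at $y_1$ or $y_2$ would leave an uncontrollable $\tfrac{1}{2}f''(y_1) h^2$ term. Centering at the midpoint is what forces the second-order contribution to appear symmetrically and thus cancel when one takes the difference $f(y_2) - f(y_1)$.
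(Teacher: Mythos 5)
Your proof is correct, and it takes a genuinely different route from the paper's. The paper proves the estimate by a case analysis on the sign of $f''$ at $y_1$ and $y_2$: it Taylor-expands with integral remainder at whichever endpoint makes the $f''$-contribution non-positive (Cases~1 and~2), and when $f''$ changes sign on $[y_1,y_2]$ it introduces the unique zero $z$ of $f''$ (which is strictly increasing), applies the two earlier cases on $[y_1,z]$ and $[z,y_2]$, and then uses $|y_2-y_1|\le 2\max\{|y_1-z|,|y_2-z|\}$; the factor $\mathring{b}/48$ in the stated lemma comes precisely from this last step. Your midpoint trick sidesteps the case analysis entirely: expanding at $m=(y_1+y_2)/2$ makes the $f''(m)$ terms cancel identically under subtraction, so no sign information about $f''$ is needed, and you obtain the constant $\mathring{b}/24$, which is strictly better than the paper's $\mathring{b}/48$. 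What the midpoint approach buys is brevity and a sharper constant; what the paper's approach buys is a little more structural information (e.g.\ that $f''$ changes sign at most once, which it anyway needs nowhere else). Both are complete and correct, and your computation checks out — in particular you correctly invoke $f'(m)\ge0$ and $f^{(3)}(\xi_i)\ge\mathring{b}$ at the cancellation step, and you correctly note that monotonicity lets you drop the absolute values.
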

\begin{proof}
First, note that the assumption on $f^{(3)}$ implies that $f''$ is strictly increasing. 
In particular, $f''$ can at most change sign once.

Without loss of generality, assume $y_1 \neq y_2$. We consider three cases: 
the first two are such that $f''(y_1)$ and $f''(y_2)$ are of the same sign,
while the third is such that they have opposite sign.

\textbf{Case~1: $y_1 < y_2$ and $f''(y_1) < f''(y_2) \leq 0$.} By Taylor's theorem, 
\begin{equation*}
\begin{split}
f(y_1) = &\: f(y_2) - f'(y_2) (y_2-y_1) + \f 12 f''(y_2) (y_2 -y_1)^2 \\
&\: - \f 12 (y_2 -y_1)^3 \int_0^1 (1-\tau)^2 f^{(3)}(y_2 + \tau (y_1-y_2)) \, d\tau  \leq  f(y_2) - \f {\mathring{b}}6 (y_2 -y_1)^3,
\end{split}
\end{equation*}
where we have used $f'(y_2) \geq 0$, $f''(y_2)\leq 0$ and $ f^{(3)}(y) \geq \mathring{b}$.

Therefore,
$$|f(y_1) - f(y_2)| = f(y_2) -f(y_1) \geq \f {\mathring{b}}6 (y_2 -y_1)^3.$$

\textbf{Case~2: $y_2 < y_1$ and $f''(y_1)  > f''(y_2) \geq 0$.} 
This can be treated in the same way as Case~1 so that we have:
$$|f(y_1) - f(y_2)| = f(y_1) -f(y_2) \geq \f {\mathring{b}}6 (y_1 -y_2)^3.$$

\textbf{Case~3: $y_1 < y_2$, $f''(y_1) < 0 < f''(y_2)$.} 
Since $f''$ is strictly increasing, there
exists a unique $z\in (y_1,y_2)$ such that $f''(z) = 0$. 
Therefore, using Case~1 (for $y_1$ and $z$) and Case~2 (for $y_2$ and $z$), we have:
$$|f(y_1) - f(y_2)| = f(y_2) - f(z) + f(z) -f(y_1) 
\geq 
\f{\mathring{b}}6 (| y_2-z|^3 + | y_1-z|^3) 
\geq \f{\mathring{b}}{2^3\cdot 6} (y_2 - y_1)^3,$$
where in the very last inequality we have used $y_2 - y_1 \leq 2 \max\{|y_1 - z|,\, |y_2-z|\}$.
 
Combining all three cases, we conclude the desired inequality. \qedhere
\end{proof}

\begin{lemma}[\textbf{Quantitative negativity of $\srd_u^3 x^1$}]
\label{lem:higher.transversal.for.Holder}
Under the assumptions of Corollary~\ref{cor:stupid.Holder},
the following holds at all points such that
$(t,u) \in [\frac{3}{4}T_{(Sing)},T_{(Sing)}) 
\times 
[\f{\mathring{\upsigma}}{2}, \f{3\mathring{\upsigma}}{2}]$:
$$\srd_u^3 x^1 \leq - \mathring{\upbeta}.$$
\end{lemma}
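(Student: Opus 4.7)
My starting observation will be that the initial condition $u\restriction_{\Sigma_0} = \mathring{\upsigma} - x^1$ forces $x^1(0,u,x^2,x^3) = \mathring{\upsigma} - u$, so $\srd_u^3 x^1(0,u,x^2,x^3)\equiv 0$. Since geometric coordinate partial derivatives commute, the fundamental theorem of calculus in $t$ will give
\[
\srd_u^3 x^1(t,u,x^2,x^3) = \int_0^t \srd_u^3(\srd_{t'} x^1)(t',u,x^2,x^3)\, dt'.
\]
Combining $\Lunit = \srd_t + L^A\srd_A$ (Lemma~\ref{lem:slashed}) with $\srd_A x^1 = -X^A/X^1$ (from \eqref{eq:srd.x1}), I have $\srd_t x^1 = L^1 + L^A X^A/X^1$. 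Propositions~\ref{prop:geometric.low} and \ref{prop:geometric.low.2} will bound the second summand and its iterated derivatives by $\mathcal O(\mathring{\upalpha}^{1/2})$, and the tangential correction $\srd_u - \Rad = -\upmu c^2 X^A(X^1)^{-2}\slashed{\mathcal P}_A$ from \eqref{E:GEOMETRICUCOORDINATEPARTIALDERIVATIVESINTERMSOFOTHERVECTORFIELDS} contributes a similarly small error. Hence the problem will reduce to estimating $\int_0^t \Rad^3 L^1\, dt'$ with an $\mathcal O(\mathring{\upalpha}^{1/2}\mathring{\updelta}_*^{-1})$ error.

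I will compute $\Rad^3 L^1$ at $t=0$ in closed form. On $\Sigma_0$ one has $\upmu = 1/c$, $X^1 = -c$, $X^A = 0$ for $A=2,3$, so $\upmu X^i \equiv -\delta^i_1$ identically on $\Sigma_0$; this forces all tangential derivatives of $\upmu X^i$ along $\Sigma_0$ to vanish, and a brief unwinding then shows that $\Rad^k f\restriction_{\Sigma_0} = (-\partial_1)^k f\restriction_{\Sigma_0}$ for every smooth scalar $f$. Using $L^1\restriction_{\Sigma_0} = v^1 + c$ together with $\partial_1 v^1 = \tfrac{1}{2}(\partial_1\mathcal R_{(+)} + \partial_1 \mathcal R_{(-)})$ and $\partial_1 c = \tfrac{c_{;\Densrenormalized}}{2c}\partial_1(\mathcal R_{(+)} - \mathcal R_{(-)}) + \bigl[c_{;\Ent} - \tfrac{c_{;\Densrenormalized}F_{;\Ent}}{c}\bigr]\partial_1 s$, then applying $\partial_1^2$ and using Proposition~\ref{P:LINFTYHIGHERTRANSVERSAL} to absorb all products containing at least one factor of $\Rad^k \mathcal R_{(-)}$, $\Rad^k v^A$ or $\Rad^k s$ (each of $L^\infty$-size $\mathcal O(\epd^{1/2})$), I will arrive at
\[
\Rad^3 L^1(0,u,x^2,x^3) = \tfrac{1}{2}\Rad^2\bigl\{(1 + c^{-1}c_{;\Densrenormalized})\,\Rad \mathcal R_{(+)}\bigr\}(0,u,x^2,x^3) + \mathcal O(\epd^{1/2}).
\]
The non-degeneracy hypothesis \eqref{eq:nondegeneracy} will then yield $\Rad^3 L^1(0,u,x^2,x^3) \leq -3\mathring{\updelta}_* \mathring{\upbeta} + \mathcal O(\epd^{1/2})$ whenever $u\in [\mathring{\upsigma}/2, 3\mathring{\upsigma}/2]$.

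To propagate this bound to all $t'\in [0,t]$ along the integral curve of $\Lunit$ ending at the point under consideration, I will establish $\|\Lunit \Rad^3 L^1\|_{L^\infty(\Mtu)}\leq C\epd^{1/2}$. The strategy will be to expand $\Rad^3 L^1$ by iterating \eqref{E:WEIGHTEDXLUNITIFORMLUA} into a polynomial in $\Rad^{\leq 3}\vec\Psi$, $\Rad^{\leq 3}\upmu$, $\mathcal P \Rad^{\leq 2}\vec\Psi$, $\mathcal P\Rad^{\leq 2}\upmu$, $\Rad^{\leq 2}L^i$, and smooth coefficients, then to apply $\Lunit$; after commuting any remaining $\Lunit$ past residual $\Rad$-factors via Proposition~\ref{prop:commutators.Li} and Lemma~\ref{L:SIMPLECOMMUTATORIDENTITY}, every resulting term should carry at least one factor of $\Lunit\Rad^{\leq 3}\vec\Psi$, $\Lunit \mathcal P\Rad^{\leq 2}L^i$, $\Lunit\Rad^{\leq 2}\upmu$, or $\Lunit\mathcal P\Rad^{\leq 2}\upmu$, each controlled by $\lesssim \epd^{1/2}$ via \eqref{E:LINFINITITYTHREETRANSVERSALOFLARGERIEMANNINVARIANT}--\eqref{E:TWOTRANSVERSALDERIVATIESLUNITILINFTY}. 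Since any $L$-integral curve within $\Mtu$ displaces $(x^2,x^3)$ by at most $\mathcal O(\mathring{\upalpha}\mathring{\updelta}_*^{-1})$ and \eqref{eq:nondegeneracy} holds for all $(x^2,x^3)\in\mathbb T^2$, this will yield $\Rad^3 L^1(t',u,x^2,x^3) \leq -3\mathring{\updelta}_*\mathring{\upbeta} + \mathcal O(\epd^{1/2}\mathring{\updelta}_*^{-1} + \mathring{\upalpha}^{1/2})$ for all $t'\in [0,t]$. Inserting this into the integral representation and using $t\geq \tfrac{3}{4}T_{(Sing)}\geq \tfrac{3}{4}\mathring{\updelta}_*^{-1}(1-\mathcal O(\mathring{\upalpha}))$ from \eqref{eq:Tsing.est}, the linear-in-$t$ contribution will be at most $-\tfrac{9}{4}\mathring{\upbeta}(1-\mathcal O(\mathring{\upalpha}))$, so after shrinking $\mathring{\upalpha}$ and $\epd$ in terms of $\mathring{\upbeta}$ and $\mathring{\updelta}_*^{-1}$, I will conclude $\srd_u^3 x^1(t,u,x^2,x^3) \leq -\mathring{\upbeta}$.

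The main obstacle will be the bookkeeping in the propagation step: I must verify that the expansion of $\Rad^3 L^1$ through iterates of \eqref{E:WEIGHTEDXLUNITIFORMLUA} combined with the commutators $[\Lunit,\Rad^k]$ never produces a term requiring control of $\Rad^4\vec\Psi$ or any other out-of-range quantity, and that every surviving error indeed contains a factor of $\epd^{1/2}$ or $\mathring{\upalpha}^{1/2}$ that can be absorbed by the linear-in-$t$ main contribution under the hypothesis that $\mathring{\upalpha}$ and $\epd$ are small in terms of $\mathring{\upbeta}$.
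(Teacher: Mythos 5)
Your basic strategy matches the paper's in spirit (both observe that the quantity vanishes at $t=0$ because of the exact initial condition, then propagate forward in time, using the non-degeneracy of the third transversal data to land the final negative constant), and your formula $\Rad^3 L^1(0,\cdot) = \tfrac12\Rad^2\{(1+\Speed^{-1}\Speed_{;\Densrenormalized})\Rad\mathcal R_{(+)}\}(0,\cdot) + \mathcal O(\epd^{1/2})$ is algebraically equivalent to what the paper extracts at $\Sigma_0$. But there are two serious problems with the propagation step that I do not see how to fix with the estimates available in the paper.

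First, the paper works with $\srd_u^3 x^1 = \srd_u^2(\tfrac{\upmu\Speed^2}{X^1})$ and performs a \emph{second-order} Taylor expansion in $t$ along $\Lunit$-integral curves: the zeroth Taylor coefficient vanishes identically because $\tfrac{\upmu\Speed^2}{X^1}\equiv -1$ on $\Sigma_0$, and the remainder is bounded by $\|LL\bX\bX(\tfrac{\upmu\Speed^2}{X^1})\|_{L^\infty}$, which involves at most \emph{two} $\Rad$-derivatives of $\upmu$, $L^i$, $\Psi$. That count lines up precisely with what Proposition~\ref{P:LINFTYHIGHERTRANSVERSAL} controls: $\Lunit\Lunit\Rad\Rad\upmu$ (eq.~\eqref{E:LUNITLUNITRADRADUPMULINFTY}), $\Lunit\mathcal P\Rad\Rad L^i$ (eq.~\eqref{E:LTANGENTIALTWOTRANSVERSALDERIVATIESLUNITILINFTY}), and $\Lunit\mathcal P^{\le2}\Rad\Rad\Psi$ (eq.~\eqref{E:LUNITTWORADPSI}). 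Your approach instead pushes a $\srd_t$ onto the integrand, reducing to $\Rad^3 L^1$ and needing $\Lunit\Rad^3 L^1 = \mathcal O(\epd^{1/2})$. Expanding $\Rad^3 L^1$ via \eqref{E:WEIGHTEDXLUNITIFORMLUA} produces terms of the schematic form $\smoothfunction\Rad^2\mathcal P\upmu$; for $\mathcal P\in\{Y,Z\}$ this requires $L^\infty$ control of $\Lunit\Rad^2\slashed{\mathcal P}\upmu$ (two $\Rad$, one $\slashed{\mathcal P}$, one $\Lunit$), and no such estimate appears in Proposition~\ref{P:LINFTYHIGHERTRANSVERSAL} (the $m=2$ transversal estimates there, \eqref{E:LUNITLUNITRADRADUPMULINFTY}--\eqref{E:RADRADUPMULINFTY}, carry no extra $\slashed{\mathcal P}$, and the permutation statement \eqref{E:PERMUTEDESTIMATES} only reshuffles operators already present, it does not convert an $\Lunit$ into a $\slashed{\mathcal P}$).

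Second, your claim that every resulting term carries a factor ``$\Lunit\Rad^{\le2}\upmu$ \dots controlled by $\lesssim\epd^{1/2}$'' is simply false: by \eqref{E:LUNITRADRADUPMULINFTY}, $\|\Lunit\Rad\Rad\upmu\|_{L^\infty(\Mtu)}$ is bounded by $\tfrac12\|\Rad\Rad(\vec G_{LL}\contr\Rad\threePsi)\|_{L^\infty(\Sigma_0)} + C\epd^{1/2}$, and the first term is of order $\mathring{\updelta}$, \emph{not} small. It is precisely the genuinely large size of $\Lunit\Rad\Rad\upmu$ that the paper's second-order Taylor expansion is designed to accommodate: the large quantity sits in the linear-in-$t$ coefficient (where it drives the conclusion), and only the \emph{second} $\Lunit$-derivative, $\Lunit\Lunit\Rad\Rad\upmu$, needs to be small. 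Your one-$\Lunit$ propagation cannot exploit that structure. There is a similar unverified claim about $\srd_u^3(L^A X^A/X^1) = \mathcal O(\mathring{\upalpha}^{1/2})$: the summand itself is small, but $\Rad^3$ of it contains cross-terms such as $(\Rad^2 L^A)(\Rad(X^A/X^1))$ where both factors are only $\mathcal O(1)$ by Proposition~\ref{P:LINFTYHIGHERTRANSVERSAL}; Propositions~\ref{prop:geometric.low}--\ref{prop:geometric.low.2}, which you cite, control tangential derivatives only, not iterated $\Rad$-derivatives. To make your route work you would need sharpened smallness estimates for $\Rad^{\le2} L^A$ ($A=2,3$) and for $\Rad^2\slashed{\mathcal P}\upmu$ that are not established in the paper. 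The paper sidesteps all of this by never taking more than two $\Rad$-derivatives of any geometric quantity.
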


\begin{proof}
In this proof, we will silently use the fact that the Cartesian component functions $X^1,X^2,X^3$ are smooth functions of
the $L^i$ and $\Psi$ (see \eqref{E:TRANSPORTVECTORFIELDINTERMSOFLUNITANDRADUNIT}) 
and the fact that
$\Speed$ is a smooth function of $\Psi$.

By \eqref{eq:srd.x1}, to prove the lemma,
we need to estimate $\srd_u^3 x^1 = \srd_u^2 (\f{\upmu \Speed^2}{X^1})$.
To proceed, we use \eqref{E:GEOMETRICUCOORDINATEPARTIALDERIVATIVESINTERMSOFOTHERVECTORFIELDS} 
(in particular, the fact that $\srd_u - \Rad$ is $\ell_{t,u}$-tangent) and 
the $L^{\infty}$ estimates of Propositions~\ref{prop:geometric.low}, \ref{prop:geometric.low.2},
and \ref{P:LINFTYHIGHERTRANSVERSAL} and
Theorem~\ref{thm:bootstrap} 
to deduce that:
\begin{align}\label{eq:du3.x1.1}
\srd_u^3 x^1 
& = \bX \bX \left(\f{\upmu \Speed^2}{X^1} \right) 
	+ 
	\mathcal{O}(\epd).
\end{align}

We will now estimate the term $\bX \bX (\f{\upmu \Speed^2}{X^1})$ on RHS~\eqref{eq:du3.x1.1}.
We start by noting that the $L^{\infty}$ estimates of Propositions~\ref{prop:geometric.low}, \ref{prop:geometric.low.2},
and \ref{P:LINFTYHIGHERTRANSVERSAL}
and Theorem~\ref{thm:bootstrap} together imply that
$|L L \bX \bX (\f{\upmu \Speed^2}{X^1}) | = \mathcal{O}(\epd)$.
Therefore, letting $\gamma(t)$ be any integral curve of $L$ parametrized by Cartesian time $t$ 
(with $\gamma(0) \in \Sigma_0$)
and recalling that $Lt=1$,
we integrate this estimate twice in time to deduce that for $t \in [0,T_{(Sing)})$,
we have:
\begin{align}
\begin{split} \label{eq:FIRSTest.LXXupmu.2}
\bX \bX \left(\f{\upmu \Speed^2}{X^1} \right) \circ \gamma(t)
& = 
\left[\bX \bX \left(\f{\upmu \Speed^2}{X^1}\right) \right] \circ \gamma(0)
+
t \left[L \bX \bX \left(\f{\upmu \Speed^2}{X^1}\right) \right] \circ \gamma(0)
+ 
\mathcal{O}(\epd)
	\\
& = 
\left[\bX \bX \left(\f{\upmu \Speed^2}{X^1}\right) \right] \circ \gamma(0)
+
t \left[\bX \bX L \left(\f{\upmu \Speed^2}{X^1}\right)\right]\circ \gamma(0)
+ 
\mathcal{O}(\epd),
\end{split}
\end{align}
where to deduce the last equality, we used in particular \eqref{E:PERMUTEDESTIMATES}.

Next, using the transport equation \eqref{E:UPMUFIRSTTRANSPORT}, 
\eqref{E:KEYLARGETERMEXPANDED}, 
the fact that
$X\restriction_{\Sigma_0} = - \Speed \rd_1$ (by \eqref{E:ACOUSTICALMETRIC},
\eqref{E:INTROEIKONAL}, \eqref{eq:Cart.to.geo.1}, and the normalization condition $g(X,X)=1$), 
and the $L^{\infty}$ estimates mentioned above,
we deduce that:
\begin{align}
\begin{split} \label{eq:est.LXXupmu.2}
\left[\bX \bX L \left(\f{\upmu \Speed^2}{X^1} \right) \right] \circ \gamma(0)
& 
= 
\left[\bX \bX \left\{ (L\upmu) \f{\Speed^2}{X^1} \right\} \right] \circ \gamma(0) + \mathcal{O}(\epd) 
	\\
&
=  
\f 12 \left[\bX \bX  \left\{\left(\Speed^{-1} \Speed_{;\rr} +1 \right)(\bX\mathcal R_{(+)}) \right\} \right] \circ \gamma(0)
+ 
\mathcal{O}(\epd).
\end{split}
\end{align}

Next, using that $X\restriction_{\Sigma_0} = - \Speed \rd_1$,
and using that
$\upmu\restriction_{\Sigma_0} = \frac{1}{\Speed}$
(this follows from the initial condition in \eqref{E:INTROEIKONAL}
and the fact that \eqref{E:LUNITANDRADOFUANDT} implies that $\Radunit u = \frac{1}{\upmu}$),
we deduce:
\begin{align} \label{E:VANISHINGINITIALCONDITION}
	\bX \bX\left(\f{\upmu \Speed^2}{X^1} \right)\restriction_{\Sigma_0} 
	& 
	=
	- 
	\bX \bX (1)
	= 0.
\end{align}
Combining \eqref{eq:FIRSTest.LXXupmu.2}--\eqref{E:VANISHINGINITIALCONDITION},
we find that:
\begin{align}\label{eq:SECONDest.LXXupmu.2}
\bX \bX \left(\f{\upmu \Speed^2}{X^1} \right) \circ \gamma(t)
& = 
\frac{t}{2} 
\left[\bX \bX  \left\{ \left(\Speed^{-1} \Speed_{;\rr} +1 \right)(\bX\mathcal R_{(+)}) \right\} \right] \circ \gamma(0)
+ 
\mathcal{O}(\epd).
\end{align}
From \eqref{eq:SECONDest.LXXupmu.2} and our assumption \eqref{eq:nondegeneracy}, 
we deduce that at any point whose corresponding $u$ coordinate\footnote{Recall that $u \restriction_{\Sigma_0} = \mathring{\upsigma} - x^1$ and that the $u$-value is constant along the integral curves of $L$ by virtue of the first equation in \eqref{E:LUNITANDRADOFUANDT}.}
satisfies $u\in [\f{\mathring{\upsigma}}{2}, \f{3\mathring{\upsigma}}{2}]$,
we have:
\begin{align}\label{eq:THIRDest.LXXupmu.2}
\bX \bX \left(\f{\upmu \Speed^2}{X^1} \right) \circ \gamma(t)
& \leq
- 2 t \mathring{\updelta}_* \mathring{\upbeta}
+ 
\mathcal{O}(\epd).
\end{align}
In particular, for points whose corresponding $u$ and $t$ coordinates
satisfy, respectively, $u\in [\f{\mathring{\upsigma}}{2}, \f{3\mathring{\upsigma}}{2}]$ 
and 
$t\in [\frac{3}{4}T_{(Sing)},T_{(Sing)})$,
we have, in view of \eqref{eq:Tsing.est},
the following estimate:
\begin{align}\label{eq:du3.x1.2}
\bX \bX \left(\f{\upmu \Speed^2}{X^1} \right) \circ \gamma(t) 
& 
\leq 
-
\f{3\mathring{\upbeta}}{2}
+ 
\mathcal{O}_{\mydiam}(\mathring{\upalpha}) 
\mathring{\updelta}_* \mathring{\upbeta}
+
\mathcal{O}(\epd).
\end{align}
Combining \eqref{eq:du3.x1.1} and \eqref{eq:du3.x1.2}, we conclude the lemma. \qedhere
\end{proof}

\begin{lemma}[\textbf{The main H\"{o}lder estimate for the eikonal function}]
\label{lem:Lipschitz.cov}
Under the assumptions of Corollary~\ref{cor:stupid.Holder},
the following holds for $t \in [\frac{3}{4}T_{(Sing)},T_{(Sing)})$:
$$\sup_{ \substack{ p_1, p_2 \in \Sigma_t, \, p_1 \neq p_2 
	\\ u(p_i)\in [\f{\mathring{\upsigma}}{2}, \f{3\mathring{\upsigma}}2] }} \f{|u(p_1) - u(p_2)|}{\mathrm{dist}_{Euc}(p_1,p_2)^{\f 13}}  \leq 5 \mathring{\upbeta}^{-\f 13}.$$
Above, $u(p_i)$ denotes the value of the eikonal function at $p_i$,
$x(p_i)$ denotes the Cartesian spatial coordinates of $p_i$, and 
$\mathrm{dist}_{Euc}(p_1,p_2)$ denotes the Euclidean distance in $\Sigma_t$ between $p_1$ and $p_2$.
\end{lemma}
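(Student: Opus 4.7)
The plan is to reduce the three-dimensional H\"older estimate to a one-dimensional one along a coordinate slice on which only $u$ varies, and then to invoke the elementary calculus bound of Lemma~\ref{lem:trivial.calculus}. Given $p_1, p_2 \in \Sigma_t$ as in the hypothesis, I would introduce the auxiliary point $q \in \Sigma_t$ whose geometric coordinates are $(u(p_2), x^2(p_1), x^3(p_1))$. Since $u(q) = u(p_2) \in [\mathring{\upsigma}/2, 3\mathring{\upsigma}/2]$, the closed $u$-interval swept out by the slice from $p_1$ to $q$ lies entirely within $[\mathring{\upsigma}/2, 3\mathring{\upsigma}/2]$, where Lemma~\ref{lem:higher.transversal.for.Holder} is applicable, and one has $|u(p_1) - u(p_2)| = |u(p_1) - u(q)|$.

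\textbf{One-dimensional estimate.} Define $f(u) \doteq -x^1(t, u, x^2(p_1), x^3(p_1))$ on the closed interval between $u(p_1)$ and $u(p_2)$; this is a $C^3$ function of $u$ by the geometric-coordinate regularity asserted in Theorem~\ref{thm:main}. From the identity \eqref{eq:srd.x1} and Proposition~\ref{prop:geometric.low.2} (which yields $X^1 = -1 + \mathcal{O}_{\mydiam}(\mathring{\upalpha}^{1/2})$, and in particular $X^1 < 0$ for $\mathring{\upalpha}$ sufficiently small), one has $f'(u) = -\upmu c^2/X^1 > 0$. Meanwhile, Lemma~\ref{lem:higher.transversal.for.Holder} gives $f'''(u) = -\srd_u^3 x^1 \geq \mathring{\upbeta}$ throughout this interval. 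Applying Lemma~\ref{lem:trivial.calculus} with $y_1 = u(p_1)$, $y_2 = u(p_2)$, and $\mathring{b} = \mathring{\upbeta}$ therefore produces
\begin{equation*}
|x^1(p_1) - x^1(q)| = |f(u(p_1)) - f(u(p_2))| \geq \frac{\mathring{\upbeta}}{48} |u(p_1) - u(p_2)|^3.
\end{equation*}

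\textbf{Converting to Euclidean distance.} To close the argument, I would bound $|x^1(p_1) - x^1(q)|$ by the Euclidean distance between $p_1$ and $p_2$ via the triangle inequality $|x^1(p_1) - x^1(q)| \leq |x^1(p_1) - x^1(p_2)| + |x^1(p_2) - x^1(q)|$, the first summand being trivially at most $\mathrm{dist}_{Euc}(p_1,p_2)$. For the second summand, $p_2$ and $q$ share the same $t$- and $u$-coordinates, so integrating the identities $\srd_A x^1 = -X^A/X^1$ (for $A = 2,3$; see Lemma~\ref{L:GEOMETRICCOORDINATEVECTORFIELDSINTERMSOFCARTESIANVECTORFIELDS}) along the straight $(x^2, x^3)$-segment joining the two tori-lifts, and using the smallness $\|X^A\|_{L^\infty} \ls_{\mydiam} \mathring{\upalpha}^{1/2}$ together with $|X^1| \geq \tfrac{1}{2}$ (from Proposition~\ref{prop:geometric.low.2}), yields $|x^1(p_2) - x^1(q)| \leq C_{\mydiam}\, \mathring{\upalpha}^{1/2}\, \mathrm{dist}_{Euc}(p_1,p_2)$. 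Shrinking $\mathring{\upalpha}$ further if necessary so that $C_{\mydiam}\mathring{\upalpha}^{1/2} \leq 1$ gives $|x^1(p_1) - x^1(q)| \leq 2\,\mathrm{dist}_{Euc}(p_1,p_2)$, and substituting into the one-dimensional bound and taking cube roots yields $|u(p_1) - u(p_2)| \leq (96/\mathring{\upbeta})^{1/3}\,\mathrm{dist}_{Euc}(p_1,p_2)^{1/3}$. Since $96^{1/3} < 5$, this is exactly the desired estimate.

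\textbf{Expected main obstacle.} The substantive analytic input -- the strict inequality $\srd_u^3 x^1 \leq -\mathring{\upbeta}$ extracted from the nondegeneracy assumption \eqref{eq:nondegeneracy} -- has already been performed in Lemma~\ref{lem:higher.transversal.for.Holder}. The remaining work is essentially soft, and the only minor care required is to track constants carefully so as to obtain the explicit factor $5\mathring{\upbeta}^{-1/3}$ rather than a qualitative $C\mathring{\upbeta}^{-1/3}$; the factor $5$ is dictated by $96^{1/3} \approx 4.58$ together with the freedom to absorb the $\mathcal{O}(\mathring{\upalpha}^{1/2})$ loss by taking $\mathring{\upalpha}$ small.
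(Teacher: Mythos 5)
Your proof is correct and shares its core with the paper's argument (both reduce to the one-variable cubic bound of Lemma~\ref{lem:trivial.calculus} applied to $f(u) = -x^1(t,u,x^2,x^3)$, with Lemma~\ref{lem:higher.transversal.for.Holder} supplying the lower bound $f''' \geq \mathring{\upbeta}$), but you organize the reduction to the Euclidean distance differently, and somewhat more directly.

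The paper's Step~1 fixes the pair of $u$-values $(u_1,u_2)$ first, invokes compactness to find the pair of points ${\bf p}_1, {\bf p}_2$ minimizing $\mathrm{dist}_{Euc}$ subject to $u({\bf p}_i) = u_i$, uses the gradient formula $\upmu \rd_i u = c^{-2}X^i$ to argue that the minimizing chord makes an $\mathcal{O}_{\mydiam}(\mathring{\upalpha})$-angle with $\rd_1$, and then perturbs the endpoints to land on an exact $\rd_1$-line $(\mathfrak{p}_1,\mathfrak{p}_2)$ at the cost of a factor $2$. You instead handle arbitrary $p_1,p_2$ at once: you introduce the explicit auxiliary point $q$ with geometric coordinates $(u(p_2),x^2(p_1),x^3(p_1))$, observe that the $\srd_u$-path from $p_1$ to $q$ is precisely an $\rd_1$-path in Cartesian coordinates (since $\srd_u x^A = 0$ by definition and $\srd_u x^1 = \upmu c^2/X^1$), and use the triangle inequality $|x^1(p_1)-x^1(q)| \leq |x^1(p_1)-x^1(p_2)| + |x^1(p_2)-x^1(q)|$, bounding the torus-tangential piece via $\srd_A x^1 = -X^A/X^1 = \mathcal{O}_{\mydiam}(\mathring{\upalpha}^{1/2})$. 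Your approach bypasses the compactness and angle-chasing arguments in favor of an explicit construction, yielding the identical numerical constant $96^{1/3} < 5$ after taking $\mathring{\upalpha}$ small. Both are valid; yours is a touch more economical, while the paper's makes the ``near-perpendicularity to $\rd_1$'' picture explicit.
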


\begin{proof}

\pfstep{Step~1: Estimating $\min_{u(p_i) = u_i}  \mathrm{dist}_{Euc}(p_1, p_2)$ by carefully choosing two points} Consider two distinct values $u_1$, $u_2$ which obey $u_i\in [\f{\mathring{\upsigma}}{2}, \f{3\mathring{\upsigma}}2]$. By compactness of the constant-$u$ hypersurfaces in $\Sigma_t$, 
there exist points ${\bf p}_1, {\bf p}_2  \in \Sigma_t$ with 
$u({\bf p}_i) = u_i$ and $\mathrm{dist}_{Euc}({\bf p}_1,{\bf p}_2) = \min_{u(p_i) = u_i}  \mathrm{dist}_{Euc}(p_1, p_2)$. In particular, ${\bf p}_1$ and ${\bf p}_2$ are connected by a Euclidean straight line $L_{{\bf p}_1, {\bf p_2}}$ which is Euclidean-perpendicular to $\{u = u_i\}$ at the point ${\bf p}_i$ for $i=1,2$.

Now by Lemma~\ref{lem:Cart.to.geo} and \eqref{E:LUNITANDRADOFUANDT}, the Euclidean gradient of $u$ satisfies:
\begin{equation}\label{eq:compute.u.gradient}
	\upmu \rd_i u = c^{-2} X^i,\;\, i=1,2,3.
\end{equation}
Recalling (by Proposition~\ref{prop:geometric.low.2} and conclusions (2) and (3) 
of Theorem~\ref{thm:bootstrap}) that 
$\Speed^{-2} X^1 = -1+ \mathcal{O}_{\mydiam}(\mathring{\upalpha})$, 
$\Speed^{-2} X^2,\,\Speed^{-2} X^3 = \mathcal{O}_{\mydiam}(\mathring{\upalpha})$, 
we deduce from \eqref{eq:compute.u.gradient}
that $L_{{\bf p}_1, {\bf p_2}}$ makes a Euclidean
angle of $\mathcal{O}_{\mydiam}(\mathring{\upalpha})$ with respect to $\rd_1$. 
Therefore, using \eqref{eq:compute.u.gradient} again (which implies that constant-$u$ hypersurfaces 
in $\Sigma_t$
make an angle $\mathcal{O}(\mathring{\upalpha})$ with constant-$x^1$ planes), 
we infer that there exist\footnote{We can, for instance, 
take $\mfp_1 = {\bf p}_1$ and let $\mfp_2$ be the unique point in both the level set $\{u=u_2\}$ and the line passing through $\mfp_1$ with tangent vector everywhere equal to $\rd_1$.} $\mfp_1$, $\mfp_2$ such that:
\begin{enumerate}
\item $u(\mfp_i) = u_i$.
\item $\rd_1$ is tangent to the Euclidean line $\mathfrak L$ connecting $\mfp_1$ and $\mfp_2$.
\item $\min_{u(p_i) = u_i}  \mathrm{dist}_{Euc}(p_1, p_2) = \mathrm{dist}_{Euc}({\bf p}_1,{\bf p}_2) \geq \f 12\mathrm{dist}_{Euc}(\mfp_1,\, \mfp_2) = \f12 |x^1(\mfp_1) - x^1(\mfp_2)|$.
\end{enumerate}
We fix such a choice of $(\mfp_1, \mfp_2)$ for any given $(u_1, u_2)$ (with $u_1\neq u_2$).

\pfstep{Step~2: Estimating $|x^1(\mfp_1) - x^1(\mfp_2) |$} By \eqref{eq:srd.x1}, 
Proposition~\ref{prop:geometric.low.2},
and conclusions (2) and (3) of Theorem~\ref{thm:bootstrap}, we have:
$$\srd_u x^1 = \upmu (-1+ \mathcal{O}_{\mydiam}(\mathring{\upalpha})).$$
Hence, for every fixed $(x^2, x^3)$, $x^1$ is a strictly decreasing function in $u$. Moreover, by Lemma~\ref{lem:higher.transversal.for.Holder},
$\srd_u^3 x^1 \leq - \mathring{\upbeta}.$ 
Hence, we are exactly in the setting to apply Lemma~\ref{lem:trivial.calculus} (for the $1$-variable function $f(u) = -x^1(u)$, where $(x^2,x^3)$ is fixed, and $\mathring{b} = \mathring{\upbeta}$) to obtain:
\begin{equation}\label{eq:lower.bd.x.diff}
|x^1(\mfp_1) - x^1(\mfp_2)| \geq \f{\mathring{\upbeta}}{48} |u_1 - u_2|^3.
\end{equation}

In view of our choice of $\mfp_1$ and $\mfp_2$ in Step~1, 
we conclude from \eqref{eq:lower.bd.x.diff} that:
\begin{equation*}
\begin{split}
\sup_{ \substack{p_1,p_2 \in \Sigma_t, p_1 \neq p_2 \\ u(p_i)\in [\f{\mathring{\upsigma}}{2}, \f{3\mathring{\upsigma}}2] }} \f{|u(p_1) - u(p_2)|}{\mathrm{dist}_{Euc}(p_1,p_2)^{\f 13}} \leq &\: \sup_{\substack{ u_1\neq u_2 \\ u_i\in [\f{\mathring{\upsigma}}{2}, \f{3\mathring{\upsigma}}2] }} \f{|u_1 - u_2|}{\inf_{p_1,p_2 \in \Sigma_t, u(p_i) = u_i} \mathrm{dist}_{Euc}(p_1,p_2)^{\f 13}} \\
\leq &\: 2^{1/3} \sup_{\substack{p_1,p_2 \in \Sigma_t, p_1\neq p_2 \\ u(p_i)\in [\f{\mathring{\upsigma}}{2}, \f{3\mathring{\upsigma}}2] }} \f{|u_1-u_2|}{|x^1(\mfp_1) - x^1(\mfp_2)|^{\f 13}} 
\leq 96^{1/3} \mathring{\upbeta}^{-\f 13}
\leq 5 \mathring{\upbeta}^{-\f 13}.
\end{split}
\end{equation*}

\end{proof}

We are now ready to conclude the proof of Corollary~\ref{cor:stupid.Holder}.
\begin{proof}[Proof of Corollary~\ref{cor:stupid.Holder}]
Our starting point is the observation that
the estimates in Theorem~\ref{thm:main}
guarantee that for at each fixed $t$ with $0 \leq t \leq T_{(Sing)}$, the 
fluid variables and higher-order variables
$\rr$, $v^i$, $\Vr^i$, $S^i$, $\mathcal{C}^i$, and $\mathcal{D}$ are all uniformly Lipschitz 
\emph{when viewed as functions of the $(u,x^2,x^3)$ coordinates}. Therefore, the key to proving 
Corollary~\ref{cor:stupid.Holder} is to understand the regularity of the map $(x^1,x^2,x^3)\mapsto (u,x^2,x^3)$.

To this end, we first note that by the assumption (1) in Corollary~\ref{cor:stupid.Holder}, the equations \eqref{eq:Lmu.main}--\eqref{E:MUDATA}, 
\eqref{eq:Tsing.est},
and the arguments given in the proof of Corollary~\ref{cor:stupid.nonvanishing}, it follows that
away from $u\in [\f{3\mathring{\upsigma}}4, \f{5\mathring{\upsigma}}4]$,
we have $\upmu > \f 12$
From this lower bound, 
Lemma~\ref{lem:Cart.to.geo}, 
and the estimates of
Proposition~\ref{prop:geometric.low.2},
we see that
when $u \notin [\f{3\mathring{\upsigma}}4, \f{5\mathring{\upsigma}}4]$,
the map $(x^1,x^2,x^3)\mapsto (u,x^2,x^3)$ remains uniformly Lipschitz 
(in fact, we could prove that it is even more regular). 
Combined with the aforementioned fact that $\rr$, $v^i$, $\Vr^i$, $S^i$, $\mathcal{C}^i$ and $\mathcal{D}$ 
are uniformly Lipschitz in the $(u,x^2,x^3)$ coordinates, 
we see that at each fixed $t$ with $0 \leq t \leq T_{(Sing)}$, 
$\rr$, $v^i$, $\Vr^i$, $S^i$, $\mathcal{C}^i$, and $\mathcal{D}$ 
{are also uniformly Lipschitz in the $(x^1,x^2,x^3)$ coordinates}
away from $u\in [\f{3\mathring{\upsigma}}4, \f{5\mathring{\upsigma}}4]$.
Moreover, 
\eqref{E:MUSTARFINALEST} guarantees that 
in the region $\lbrace 0 \leq t \leq \frac{3}{4}T_{(Sing)} \rbrace$,
we have $\upmu > \frac{1}{8}$. Thus, for the same reasons given above,
the map $(x^1,x^2,x^3)\mapsto (u,x^2,x^3)$ is uniformly Lipschitz in 
$\lbrace 0 \leq t \leq \frac{3}{4}T_{(Sing)} \rbrace$,
and thus $\rr$, $v^i$, $\Vr^i$, $S^i$, $\mathcal{C}^i$, and $\mathcal{D}$ 
also remain uniformly Lipschitz in the $(x^1,x^2,x^3)$ coordinates in this region.

It remains for us to consider the difficult region in which
$u\in [\f{3\mathring{\upsigma}}4, \f{5\mathring{\upsigma}}4] \subseteq [\f{\mathring{\upsigma}}2, \f{3\mathring{\upsigma}}2]$
and $t \in [\frac{3}{4}T_{(Sing)},T_{(Sing)})$.
Using Lemma~\ref{lem:Lipschitz.cov},
we see that the map
$(x^1,x^2,x^3) \mapsto (u,x^2,x^3)$ is uniformly $C^{1/3}$ in this difficult region.
Hence, $\rr$, $v^i$, $\Vr^i$, $S^i$, $\mathcal{C}^i$, 
and $\mathcal{D}$ all have uniformly bounded Cartesian spatial $C^{1/3}$ norms 
in this region as well. \qedhere
\end{proof}

\appendix

\setcounter{section}{0}
\setcounter{subsection}{0}
\setcounter{equation}{0}

\section{Proof of the wave estimates}\label{app:elliptic}
In this appendix, we sketch the proof of the wave equation estimates, 
that is, of Proposition~\ref{prop:wave}.
As we already discussed in Section~\ref{sec:wave.black.box}, 
although the wave equation estimates that we need 
are almost identical to the ones derived in \cite{LS}, there are
two differences:
\begin{enumerate}
\item The wave equations in Proposition~\ref{prop:wave} feature 
the inhomogeneous terms ``$\mathfrak{G}$,''
and we need to track the influence of these inhomogeneous terms
on the estimates. Recall that the precise
inhomogeneous terms are located on
RHSs~\eqref{E:VELOCITYWAVEEQUATION}--\eqref{E:ENTROPYWAVEEQUATION},
but for purposes of proving Proposition~\ref{prop:wave}, we do not need to
 know their precise structure.
\item Recall that our commutation vectorfields $\lbrace L,Y,Z \rbrace$
are constructed out of the acoustic eikonal function $u$, and hence
the commuted wave equations feature error terms that depend on the acoustic geometry.
In $3$D, some additional arguments are needed (compared to the $2$D case treated in \cite{LS})
to control the top-order derivatives of some of these error terms.
\end{enumerate}
The issue (2) is tied to the fact that the null second fundamental form of null hypersurfaces in $1+3$ dimensions has now $3$ independent components, which stands in contrast to the case of $1+2$ dimensions,
where it has only a single component (i.e., it is trace-free in $1+2$ dimensions). This issue is by now very 
well-understood, and it can be resolved by using an elliptic estimate. For completeness, we will nonetheless sketch the main points needed for the argument in this appendix.

We now further discuss the issue (2).
In $1+2$ dimensions, $\mytr \upchi$ satisfies a transport equation
known as the Raychaudhuri equation\footnote{Note that this is a purely differential geometric identity that is independent of the compressible Euler equations.} (see \cite[(6.2.5)]{jSgHjLwW2016}):
\begin{equation}\label{eq:2D.Raychaudhuri}
\upmu L \mytr \upchi = (L\upmu)\mytr \upchi - \upmu (\mytr \upchi)^2 - \upmu \mathrm{Ric}_{LL},
\end{equation}
where $\mathrm{Ric}$ is the Ricci curvature of the acoustical metric $g$
and $\mathrm{Ric}_{LL} \doteq \mathrm{Ric}_{\alpha \beta}\Lunit^{\alpha} \Lunit^{\beta}$.
In contrast, in $1+3$ dimensions, RHS~\eqref{eq:2D.Raychaudhuri} 
features some additional terms. Specifically, in $1+3$ dimensions, the Raychaudhuri equation 
takes the following form (see \cite[(11.23)]{jS2016b}):
\begin{equation}\label{eq:3D.Raychaudhuri}
\upmu L \mytr \upchi = (L\upmu)\mytr \upchi - \upmu | \upchi |^2 - \upmu \mathrm{Ric}_{LL} = (L\upmu)\mytr \upchi - \upmu (\mytr \upchi)^2 -\upmu | \chih |^2 - \upmu \mathrm{Ric}_{LL},
\end{equation}
where $\chih$ is the traceless part of $\upchi$, i.e., it 
can be defined by imposing the following identity:
$\upchi = \chih + \f 12 (\mytr\upchi) \slashed g$. 
In other words,
\eqref{eq:3D.Raychaudhuri} has an additional $-\upmu |\chih|^2$ term 
compared to \eqref{eq:2D.Raychaudhuri}, 
and this additional term cannot be bounded using the only the transport 
equation \eqref{eq:3D.Raychaudhuri}, 
(since LHS~\eqref{eq:3D.Raychaudhuri} features a transport operator acting only on the component $\mytr\upchi$, as opposed to 
the full second fundamental form $\upchi$). 
The saving grace, however, as already noticed in \cite{dC2007} 
(see also \cites{dCsK1993,sKiR2003}), 
is that one can use geometric identities (specifically, the famous Codazzi equation)
and elliptic estimates to control $\angD \chih$ 
in terms of $\angdiff \mytr \upchi$ plus simpler error terms.
A top-order version of this kind of argument allows one to control
the difficult top-order derivatives of the term $-\upmu | \chih |^2$ on RHS~\eqref{eq:3D.Raychaudhuri}; 
see Section~\ref{sec:elliptic.on.spheres} for the details.
We remark that for the solutions under study, the $-\upmu | \chih |^2$ term
is quadratically small and, as it turns out, 
it does not have much effect on the dynamics.

\subsection{Running assumptions in the appendix and the dependence of constants and parameters}
\label{SS:ASSUMPTIONSANDCONSTANTS}
Throughout the entire appendix,
we work in the setting of Proposition~\ref{prop:wave}. 
In particular, we make the same assumptions as we did in Theorem~\ref{thm:bootstrap} 
(which provides the main a priori estimates) 
as well as the smallness assumption \eqref{eq:F.smallness} for the inhomogeneous terms $\mathfrak{G}$.

Our analysis involves various constants and parameters that play distinct roles in the proof.
We have already introduced these quantities earlier in the article.
For the reader's convenience, we again provide a brief description of these quantities in order
to help the reader understand their role in our subsequent arguments in the appendix.
\begin{itemize}
		\item The background density constant $\bar{\varrho} > 0$ was fixed at the beginning of the paper.
			The parameters 
			$\mathring{\upsigma}$, $\mathring{\updelta}_*$, $\mathring{\updelta}$, $\mathring{\upalpha}$ and $\epd$
			measure the size of the $x^1$-support and various norms of the initial data; 
			see Section~\ref{SS:FLUIDVARIABLEDATAASSUMPTIONS}.
		\item As in the rest of the paper, the positive integer $\Ntop$ denotes the maximum number 
			of times that we commute the equations for the purpose of obtaining $L^2$-type energy estimates.
		\item $M_{\mathrm{abs}}$ denotes an \textbf{absolute constant}, that is,
			a constant that can be chosen to be independent of
			$\Ntop$,
			the equation of state, 
			$\bar{\varrho}$, 
			$\mathring{\upsigma}$, 
			$\mathring{\updelta}$, 
			and
			$\mathring{\updelta}_*^{-1}$,
			as long as 
			$\mathring{\upalpha}$ and
			$\epd$ are sufficiently small.
			The constants $M_{\mathrm{abs}}$ arise as numerical coefficients that multiply the borderline
			energy error integrals; see in particular RHS~\eqref{E:JAREDTOPORDERINTEGRALINEQUALITY}.
			The universality of the $M_{\mathrm{abs}}$ is crucial since, 
			as the next two points clarify, they drive the blowup-rate
			of the top-order energies, which in turn controls the size of largeness of $\Ntop$
			needed to close the proof.
		\item As in the rest of the paper, 
			the positive integer $\toprate$ controls the blowup-rate of the high-order energies.
			The following point is crucial: 
			\textbf{for the proof to close, we need to choose $\toprate$ to be sufficiently large in a
			manner that depends \underline{only} on the absolute constants $M_{\mathrm{abs}}$. In particular,
			$\toprate$ does not depend on $\Ntop$.} 
		\item Once $\toprate$ has been chosen to be sufficiently large (as described in the previous point),
			for the proof to close, \textbf{we need to choose $\Ntop$ to be sufficiently large
			in a manner that depends \underline{only} on the integer $\toprate$ fixed in the previous step}.
		\item Once $\Ntop$ has been chosen to be sufficiently large (as described in the previous point),
			to close the proof, we must choose
			$\epd$ to be sufficiently small
			in a manner that is allowed to depend on all other parameters and constants.
			We must also choose $\mathring{\upalpha}$ to be sufficiently small
			in a manner that depends only on the equation of state and 
			$\bar{\varrho}$.
			We always assume that $\epd^{\frac{1}{2}} \leq \mathring{\upalpha}$.
		\item In contrast to $M_{\mathrm{abs}}$, 
			the constants 
			$\Ccrit$ are less delicate and
			are allowed to depend on
			the equation of state, 
			$\bar{\varrho}$, 
			$\mathring{\upsigma}$, 
			$\mathring{\updelta}$, 
			and
			$\mathring{\updelta}_*^{-1}$.
			We use the notation ``$\Ccrit$'' to emphasize that these constants multiply difficult, borderline 
			energy estimate error terms,
			but we could have just as well denoted these constants by ``$C$''
			(where $C$ has the properties described in the next point),
			and the proof would go through.
		\item Unless otherwise stated, 
		``general'' constants $C$ are allowed 
			to depend on
			$\Ntop$,
			$M_{\mathrm{abs}}$,
			the equation of state, 
			$\bar{\varrho}$, 
			$\mathring{\upsigma}$, 
			$\mathring{\updelta}$, 
			and
			$\mathring{\updelta}_*^{-1}$.
			When we write $A \lesssim B$, it means that there exists a $C > 0$ with the above dependence properties 
			such that $A \leq CB$.
			Moreover, $A \approx B$ means that $A \lesssim B$ and $B \lesssim A$.
\end{itemize}

\subsection{An outline of the rest of the appendix}
In sections \ref{sec:appendix.top.order}--\ref{SS:MAININTEGRALINEQUALITIES},
we will derive the estimates we need to prove Proposition~\ref{prop:wave}.
The conclusion of the proof of Proposition~\ref{prop:wave} 
is located in Section~\ref{sec:sketch.prop.wave}. 

Proposition~\ref{prop:wave} is an analog of the similar result \cite[Proposition~14.1]{LS}.
In fact, in our proof of the proposition, we will exactly follow the strategy from \cite{LS}. 
For this reason, we will only focus on terms which did not already appear in \cite{LS}. 
We begin by identifying the most difficult wave equation error terms
in Section~\ref{sec:appendix.top.order}. As in \cites{jSgHjLwW2016,LS}, these hardest terms are commutator terms
involving the top-order derivatives of $\mytr\upchi$, which we control using the following steps:
\begin{itemize}
\item In Section~\ref{sec:mod.quan.eq}, we write down the transport equations satisfied by the important \emph{modified quantities}. The modified quantities are special combinations of solution variables 
	involving $\mytr\upchi$.
	With the help of the Raychaudhuri equation \eqref{eq:3D.Raychaudhuri}, 
	the modified quantities will allow us to avoid the loss of a derivative at the top-order 
	and/or allow us to avoid fatal borderline error integrals.
\item In Section~\ref{sec:elliptic.on.spheres}, we use elliptic estimates on $\ell_{t,u}$ to control the top-order derivatives of $\hat{\upchi}$ in terms of the modified quantities.
\item In Section~\ref{SS:SPLITENERGIES}, we define partial energies, which are similar
	to the energies we defined in Section~\ref{SS:DEFSOFENANDFLUX},
	but they control all wave variables \underline{except for} the ``difficult'' one $\mathcal{R}_{(+)}$
	(which is such that $|\partial_1 \mathcal{R}_{(+)}|$ blows up as the shock forms).
	As in \cite{LS}, the partial energies play an important role in allowing us
	to close the proof using a universal number of derivatives, that
	is, a number $\Ntop$ that is independent of the equation of state and all parameters
	in the problem; the role of these partial energies will be made clear in
	Section~\ref{sec:sketch.prop.wave}.
	\item In Section~\ref{sec:est.for.modified}, we use the transport equations in Section~\ref{sec:mod.quan.eq} and the 	
	estimates in Section~\ref{sec:elliptic.on.spheres} to obtain the bounds for the top-order derivatives of $\mytr\upchi$.
\end{itemize}
At this point in the proof, we will have obtained all of the main new estimates we need to prove
Proposition~\ref{prop:wave}. In Section~\ref{SS:MAININTEGRALINEQUALITIES},
	we use our estimates for the top-order derivatives of $\mytr\upchi$ to derive preliminary energy integral inequalities
	for the wave equation solutions.
	These are the same integral inequalities that were derived in \cite[Proposition~14.3]{LS},
	except they include the new terms generated by the inhomogeneous terms $\mathfrak{G}$ 
	featured in the statement of Proposition~\ref{prop:wave}.
	Finally, in Section~\ref{sec:sketch.prop.wave}, we use these integral inequalities
	and a slightly modified version of the Gr\"onwall-type argument used in the proof
	of \cite[Proposition~14.1]{LS}, carefully tracking the different kinds of constants,
	thereby obtaining a priori estimates for the energies 
	and concluding the proof of Proposition~\ref{prop:wave}.

We close this section with three remarks to help the reader
understand how we use cite/use results that were proved in \cite{LS}.

\begin{remark}[\textbf{Implicit reliance on results we have already proved}]
	The estimates in this appendix rely, 
	in addition to the bootstrap assumptions, 
	on many of the estimates 
	that we independently derived in Section~\ref{sec:geometry},
	such as the results
	of Propositions~\ref{prop:geometric.low}, 
	\ref{prop:geometric.low.2}, 
	\ref{P:LINFTYHIGHERTRANSVERSAL},
	\ref{prop:mus.int},
	\ref{prop:almost.monotonicity},
	and \ref{prop:geometric.top}.
	Many of the results that we cite from \cite{LS} rely on
	these propositions, and we will not always explicitly indicate
	the dependence of the results of \cite{LS} on these propositions.
\end{remark}

\begin{remark}[$\varepsilon$ \textbf{vs} $\epd^{\frac{1}{2}}$]
	\label{R:EPSILONVSSQRRTDATAEPSILON}
	The bootstrap smallness parameter ``$\varepsilon$'' from \cite{LS} 
	should be identified with the quantity $\epd^{\frac{1}{2}}$
	in our bootstrap assumptions \eqref{BA:SMALLWAVEVARIABLESUPTOONETRANSVERSALDERIVATIVE}--\eqref{BA:C.D}.
	For this reason, various error terms from \cite{LS}
	reappear in the present paper, but with
	the factors of $\varepsilon$ replaced by $\epd^{\frac{1}{2}}$.
	This minor point has no substantial effect on our analysis,
	and we will often avoid explicitly
	pointing out that the error terms from \cite{LS} need to be modified as such.
\end{remark}

\begin{remark}[\textbf{Vorticity terms have been soaked up into $\mathfrak{G}$}]
	\label{R:VORTICITYSOAKEDUPINTOINHOMOGENEOUSTERMS}
	Many error terms in the estimates of \cite{LS} involve vorticity terms
	that are generated by the vorticity terms
	on the RHS of the wave equations.
	However, in this appendix, we have soaked these
	error terms up into our definition of the inhomogeneous terms $\mathfrak{G}$
	in Proposition~\ref{prop:wave}. For this reason,
	it is to be understood that many of the estimates
	cited from \cite{LS} have to be modified so that these vorticity terms
	are absent and are instead replaced with analogous error terms that depend on $\mathfrak{G}$
	(where throughout the appendix, we carefully explain how the term $\mathfrak{G}$ appears in various estimates).
\end{remark}

\subsection{The top-order commutator terms that require the modified quantities}\label{sec:appendix.top.order}
To begin, we recall that $\{Y,Z\}$ 
denotes the commutation vectorfields tangent to $\ell_{t,u}$,
and that we use the notation $\slashed{\mathcal{P}}$ to denote 
a generic element of this set. 
In the following proposition, we identify the most difficult error terms in the
top-order commuted wave equations.

\begin{proposition}[Identifying the most difficult commutator terms]
\label{prop:identity.main.wave.commutator.terms}
Let $\mathfrak G$ denote the inhomogeneous terms in the wave equations
from Proposition~\ref{prop:wave}. Then solutions to the wave equations of
Proposition~\ref{prop:wave} satisfy the following top-order wave equations
(which identify the most difficult commutator terms):
\begin{align}
\upmu \square_g (\slashed{\mathcal{P}}^{\Ntop-1} L \Psi) =  &\:(\slashed d^\sharp \Psi)(\upmu \slashed d \slashed{\mathcal{P}}^{\Ntop-1} \mytr \upchi)  + \slashed{\mathcal{P}}^{\Ntop-1} L\mathfrak G + \mbox{Harmless}, \label{eq:hard.commutator.L}\\
\upmu \square_g (\slashed{\mathcal{P}}^{\Ntop-1} Y\Psi) =  &\:(\bX \Psi) (\slashed{\mathcal{P}}^{\Ntop-1} Y \mytr \upchi) + \Speed^{-2} X^2 (\slashed d^\sharp \Psi)(\upmu \slashed d \slashed{\mathcal{P}}^{\Ntop-1} \mytr \upchi) \notag \\
&\: + \slashed{\mathcal{P}}^{\Ntop-1} Y\mathfrak G+ \mbox{Harmless}, \label{eq:hard.commutator.Y}\\
\upmu \square_g (\slashed{\mathcal{P}}^{\Ntop-1} Z\Psi) = &\: (\bX \Psi) (\slashed{\mathcal{P}}^{\Ntop-1}Z \mytr \upchi) + \Speed^{-2} X^3 (\slashed d^\sharp \Psi)(\upmu \slashed d \slashed{\mathcal{P}}^{\Ntop-1} \mytr \upchi). \notag \\
&\: + \slashed{\mathcal{P}}^{\Ntop-1} Z\mathfrak G+ \mbox{Harmless}. \label{eq:hard.commutator.Z}
\end{align}
Above, the terms ``Harmless'' are precisely the 
$\mbox{Harmless}_{(Wave)}^{\leq \Ntop}$ terms defined in \cite[Definition~13.1]{LS},
except here we do not need to allow for the presence of vorticity-involving terms
in the definition of $\mbox{Harmless}_{(Wave)}^{\leq \Ntop}$
because we have soaked these terms up into our definition
of the wave equation inhomogeneous term $\mathfrak{G}$.

Moreover, for any \underline{other} top-order operator $\mathcal{P}^{\Ntop}$ 
(i.e., a top-order operator featuring at least two copies of $L$ or featuring only a single $L$ 
but in an order different from \eqref{eq:hard.commutator.L}),
there are no difficult commutator terms in the sense that the following equation holds:
\begin{align} \label{E:EASTYTOPORDERCOMMUTATOR}
	\upmu  \square_g (\mathcal{P}^{\Ntop} \Psi)
	& = \mathcal{P}^{\Ntop} \mathfrak G + \mbox{Harmless}.
\end{align}

\end{proposition}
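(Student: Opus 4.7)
My plan is to derive the claimed decomposition by iteratively commuting the inhomogeneous wave equation $\upmu \square_g \Psi = \mathfrak G$ with a string of vectorfields in $\mathscr{P} = \{L,Y,Z\}$, carefully extracting at each step the single principal contribution that cannot be absorbed into the ``Harmless'' remainder. The starting point is the trivial identity
$$\upmu\square_g(\mathcal{P}^{\Ntop}\Psi) = \mathcal{P}^{\Ntop}(\upmu\square_g\Psi) + [\upmu\square_g,\mathcal{P}^{\Ntop}]\Psi = \mathcal{P}^{\Ntop}\mathfrak G + [\upmu\square_g,\mathcal{P}^{\Ntop}]\Psi,$$
so the entire task reduces to analyzing the commutator $[\upmu\square_g,\mathcal{P}^{\Ntop}]\Psi$, modulo the term $\mathcal{P}^{\Ntop}\mathfrak G$ (or its tangential analogues) that appears explicitly in each of the four stated formulas.

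For a single commutation vectorfield $V \in \mathscr{P}$, the commutator with $\upmu\square_g$ is governed by the deformation tensor $\deform{V} = \Lie_V g$, and it is standard (see, e.g., \cite{dC2007,jSgHjLwW2016,LS}) that the principal contribution pairs components of $\deform{V}$ against $\D^2\Psi$; the components of $\deform{V}$ encode first derivatives of the eikonal function $u$ and therefore, at top order, first $\mathcal{P}$-derivatives of Ricci coefficients such as $\mytr\upchi$. The crucial observation concerns how the different frame components of $\deform{V}$ interact with the decomposition \eqref{eq:wave.in.terms.of.geometric} of $\upmu\square_g$. For $V = L$ the deformation tensor is $\ell_{t,u}$-tangential, so its most dangerous contribution pairs $\upmu\angdiff\mytr\upchi$ with $\angdiff\Psi$, giving the sole hard term in \eqref{eq:hard.commutator.L}. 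For $V \in \{Y,Z\}$, the deformation tensor additionally has non-trivial $\Radunit$ components; these pair with $\bX\Psi$ to yield the crucial $(\bX\Psi)(Y\mytr\upchi)$ and $(\bX\Psi)(Z\mytr\upchi)$ terms, while the $\ell_{t,u}$-tangential component still contributes an angular piece weighted by the factor $\Speed^{-2}X^A$ that arises from expressing $\rd_A$ in the non-rescaled frame via Lemma~\ref{lem:Cart.to.geo}.

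To handle the full iterated commutator with $\slashed{\mathcal{P}}^{\Ntop-1} V$, I will inductively peel off one tangential vectorfield at a time. At each peeling step the newly introduced derivative can either fall on $\Psi$ (producing terms with strictly fewer eikonal derivatives than the principal one, and hence harmless) or on eikonal function quantities (building up $\slashed{\mathcal{P}}$-derivatives of $\mytr\upchi$). The key structural fact that distinguishes the principal from the harmless terms is that whenever an $L$-derivative falls on $\mytr\upchi$, the Raychaudhuri equation \eqref{eq:3D.Raychaudhuri} trades it for products of strictly lower order (together with $L\upmu$-dependent corrections and the quadratic $|\chih|^2$ term); consequently such $L$-derivatives never raise the derivative count of the eikonal quantities. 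This is exactly why the operators covered by \eqref{E:EASTYTOPORDERCOMMUTATOR} do not produce any difficult commutator terms: if an $L$ appears anywhere other than the inner-most position, the top-order eikonal factor that would otherwise appear is always of the form $L(\text{top-order }\upchi)$, which is eliminated via Raychaudhuri.

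The main obstacle will be the combinatorial bookkeeping required to distinguish the truly principal terms from the many non-principal contributions that can be absorbed into $\mathrm{Harmless}_{(Wave)}^{\leq \Ntop}$; in particular, one must verify that no unexpected principal term is created when a low-order derivative of $\deform{V}$ meets a high-order derivative of $\Psi$, and that the Raychaudhuri substitution never re-introduces a term of the principal form. Rather than redo this bookkeeping from scratch, my plan is to invoke the commutator identities and the classification of harmless error terms already established in \cite[Section~4]{jSgHjLwW2016} and \cite[Section~13]{LS}, which furnish precisely the decomposition we need. The only modification required for the present setting is to track the new term produced by the inhomogeneity $\mathfrak G$ on the right of the wave equation; since $\mathfrak G$ is a scalar function multiplied only by $\upmu$ (already absorbed into the operator on the left), it commutes trivially through $\mathcal{P}^{\Ntop}$ and simply persists in the identity, landing in the stated formulas as $\slashed{\mathcal{P}}^{\Ntop-1} L\mathfrak G$, $\slashed{\mathcal{P}}^{\Ntop-1} Y\mathfrak G$, $\slashed{\mathcal{P}}^{\Ntop-1} Z\mathfrak G$, and $\mathcal{P}^{\Ntop}\mathfrak G$, respectively.
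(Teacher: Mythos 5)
Your overall plan — commute the inhomogeneous wave equation $\upmu\square_g\Psi=\mathfrak G$, analyze the commutator $[\upmu\square_g,\mathcal{P}^{\Ntop}]$, classify terms as principal vs.\ harmless, invoke the bookkeeping from \cite{jSgHjLwW2016,LS}, and observe that $\mathfrak G$ simply persists as $\mathcal{P}^{\Ntop}\mathfrak G$ — is the same route the paper takes, which also reduces the proof to \cite[Proposition~13.2]{LS} with modifications. That part is fine. However, two issues remain.

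First, a factual error: the deformation tensor $\deform{L}$ is \emph{not} $\ell_{t,u}$-tangential. For example, $\deform{L}(L,\Radunit)=g(\D_L L,\Radunit)=-\upmu^{-1}L\upmu\neq 0$ in general, and $\deform{L}(\Radunit,\Radunit)$ is also non-vanishing. The absence of a $\bX\Psi$ factor in \eqref{eq:hard.commutator.L} is not because those components vanish; it is because the non-tangential components of $\deform{L}$ are expressible via $L\upmu$, which the transport equation \eqref{E:UPMUFIRSTTRANSPORT} converts into first-order derivatives of $\Psi$ so that no top-order eikonal factor arises from them. Your conclusion for the $L$ case is right, but the stated mechanism is wrong.

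Second, and more substantially, you have missed the genuinely new $3$D point that the paper flags explicitly. The references you propose to invoke, \cite{jSgHjLwW2016} and \cite{LS}, are both in $1+2$ dimensions, where $\ell_{t,u}$ is one-dimensional and $\upchi$ is a scalar; the trace-free part $\chih$ is identically zero, so ``the top-order eikonal factor involves only $\mytr\upchi$'' is automatic there. In $1+3$ dimensions, this is not automatic: if one follows the deformation-tensor picture you describe, the principal contribution involves $\angdiv$ of the tangential deformation tensor, whose trace-free part is $\chih$, so a priori one would expect $\slashed{\mathcal P}^{\Ntop-1}\chih$ to appear at top order as well — which would be disastrous for the energy hierarchy. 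The paper rules this out via a different argument: all top-order terms arise when the $\Ntop+1$ derivatives fall on the Cartesian component functions $\mathcal{P}^i$ (which are smooth functions of $L^j$ and $\Psi$), and then the key fact $\upmu\square_g u=-\mytr\upchi$ from \eqref{eq:wave.in.terms.of.geometric} (applied with $f\doteq u$) shows schematically $\square_g\mathcal P^i\sim\partial\mytr\upchi+\cdots$, so only the \emph{trace} of $\upchi$ can appear. Your sketch simply asserts the result ($\mytr\upchi$ shows up) without explaining why $\chih$ does not, and the $2$D references you plan to cite do not address this point. This needs to be supplied before the decomposition in \eqref{eq:hard.commutator.L}--\eqref{E:EASTYTOPORDERCOMMUTATOR} can be regarded as proved.
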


\begin{proof}
This is exactly the same as \cite[Proposition~13.2]{LS} with the obvious modifications: 
(1) we have $\{L, Y, Z\}$ (as opposed to just $\{L, Y\}$) as commutation vectorfields, 
and (2) we have accounted for the presence of the inhomogeneous terms $\mathfrak{G}$.
We stress that even in three spatial dimensions,
the top-order derivatives of $\upchi$ that appear on 
RHSs~\eqref{eq:hard.commutator.L}--\eqref{eq:hard.commutator.Z}
only involve its trace-part $\mytr \upchi$, as opposed to involving the full tensor $\upchi$.
Roughly speaking, this follows from three basic facts: 
\textbf{i)} all of these top-order terms are generated when
all $\Ntop + 1$ derivatives (including the two coming from $\square_g$) 
on the LHSs fall on the components $\mathcal{P}^i$ (where $\mathcal{P} \in \{L, Y, Z\}$);
\textbf{ii)} all $\mathcal{P}^i$ can be expressed as functions $\Psi$ and $L^1,L^2,L^3$;
and \textbf{iii)} Lemma~\ref{L:VFBASIC} 
and \eqref{eq:wave.in.terms.of.geometric} with $f \doteq u$ together 
imply that $\upmu \square_g u = - \mytr \upchi$.
Hence, considering also \eqref{E:FIRSTUPMU}, we have, schematically,  
that $\upmu \square_g \partial u = - \partial \mytr \upchi + \cdots$,
where $\cdots$ denotes terms that involve lower-order derivatives 
(i.e., up to second-order derivatives)
of the eikonal function $u$
and/or derivatives of $\Psi$.
Thus, \eqref{E:FIRSTUPMU}, \eqref{E:LGEOEQUATION}, \eqref{E:LUNITDEF}
imply that the scalar functions $\mathcal{P}^i$ satisfy, schematically,\footnote{Of course,
careful geometric decompositions are needed to obtain the precise form of the terms on 
RHSs~\eqref{eq:hard.commutator.L}--\eqref{eq:hard.commutator.Z}; here we are simply emphasizing
that the dependence of the top-order terms is through the derivatives of $\mytr \upchi$.}
$\square_g \mathcal{P}^i = \partial \mytr \upchi + \cdots$.

\qedhere
\end{proof}

\begin{remark}
Notice that in \cite[Proposition~13.2]{LS}, there is an additional difficult commutator term coming from (in the language of the present paper) the commutation with $\bX$. Since in this paper,
we use only the subset of energy estimates in \cite{LS} that 
\emph{avoid commutations with $\bX$}, 
an added benefit of our approach here is that we do not need to handle these additional terms.\footnote{Of course, even if these terms had been present in our work here, 
we could have handled them in the same way they were handled in \cite{LS}.}
\end{remark}

\subsection{The modified quantities and the additional terms in the transport equations}\label{sec:mod.quan.eq}
In order to control the top-order commutator terms from Proposition~\ref{prop:identity.main.wave.commutator.terms}, the idea from \cite{dC2007} is to introduce modified quantities, which are corrected versions 
of $\mytr\upchi$. The ``fully modified quantities'' 
solve transport equations with source terms that enjoy improved regularity, 
thus allowing us to avoid a loss of regularity at the top-order. 
The ``partially modified quantities'' lead to cancellations in the energy identities that allow us to avoid
error integrals whose singularity strength would have been too severe for us to control.

\begin{definition}[\textbf{Modified versions of the derivatives of} $\mytr \upchi$]
\label{D:TRANSPORTRENORMALIZEDTRCHIJUNK}
We define, for every\footnote{In practice, we need these quantities only to handle the difficult
terms from Proposition~\ref{prop:identity.main.wave.commutator.terms}, which
involve purely $\ell_{t,u}$-tangential derivatives of $\mytr \upchi$.
Put differently, in practice, we only need to use the quantities$\upchifullmodarg{\slashed{\mathcal{P}}^N}$.
\label{FN:FULLYMODIFIEDONLYNEEDLTUTANGENTIAL}} 
fixed string of order $N$ commutators $\mathcal{P}^N \in \mathscr{P}^{(N)}$, the \textbf{fully modified quantity} $\upchifullmodarg{\mathcal{P}^N}$ as follows:
\begin{subequations}
\begin{align}
	\upchifullmodarg{\mathcal{P}^N}
	& \doteq \upmu \mathcal{P}^N \mytr \upchi 
			 + 
			 \mathcal{P}^N \upchifullmodinhom,
		\label{E:TRANSPORTRENORMALIZEDTRCHIJUNK} 
			\\
	\upchifullmodinhom
	& \doteq - \vec{G}_{\Lunit \Lunit}\contr\Rad \threePsi
				- \frac{1}{2} \upmu \mytr \angG\contr\Lunit \threePsi
				- \frac{1}{2} \upmu \vec{G}_{\Lunit \Lunit}\contr\Lunit \threePsi 
				+ \upmu \angGmixedarg{\Lunit}{\#}\contr \angdiff \threePsi.
			\label{E:LOWESTORDERTRANSPORTRENORMALIZEDTRCHIJUNKDISCREPANCY}
\end{align}
\end{subequations}

We define, for every\footnote{As in Footnote~\ref{FN:FULLYMODIFIEDONLYNEEDLTUTANGENTIAL},
in practice, we only need to use the quantities$\upchipartialmodarg{\slashed{\mathcal{P}}^N}$.
\label{FN:PARTIALLYMODIFIEDONLYNEEDLTUTANGENTIAL}}  
fixed string of order $\mathcal{P}^N \in \mathscr{P}^{(N)}$, the \textbf{partially modified quantity} $\upchipartialmodarg{\mathcal{P}^N}$ as follows:
\begin{subequations}
\begin{align}
	\upchipartialmodarg{\mathcal{P}^N}
	& \doteq \mathcal{P}^N \mytr \upchi 
		+ \upchipartialmodinhomarg{\mathcal{P}^N},
		\label{E:TRANSPORTPARTIALRENORMALIZEDTRCHIJUNK} \\
	\upchipartialmodinhomarg{\mathcal{P}^N}
	& \doteq 
		- 
		\frac{1}{2} \mytr \angG\contr\Lunit \mathcal{P}^N \threePsi
		+ 
		\angGmixedarg{\Lunit}{\#}\contr \angdiff \mathcal{P}^N \threePsi.
			\label{E:TRANSPORTPARTIALRENORMALIZEDTRCHIJUNKDISCREPANCY}
\end{align}
\end{subequations}
\end{definition}

\begin{proposition}[Transport equations satisfied by the modified quantities]
\label{P:FULLYMODIFIEDTRANSPORTWITHEXTRATERMS}
The fully modified quantities solve the following modified version of equation \cite[(6.9)]{LS},
where $\vec{\mathfrak G}$ denotes the array of inhomogeneous terms in the wave equations
from Proposition~\ref{prop:wave}:
\begin{equation}\label{eq:fully.modified.transport}
\begin{split}
L \upchifullmodarg{\mathcal{P}^{\Ntop}}  
- 
\left( 2\f{L\upmu}{\upmu} - 2\mytr\upchi \right) \upchifullmodarg{\mathcal{P}^{\Ntop}} 
&
=  
\mbox{Non-vorticity-involving terms in \cite[(6.9)]{LS}} 
	\\
& \ \
- 
\mathcal{P}^{\Ntop} (\upmu |\chih|^2)
+ 
\frac 12 \mathcal{P}^{\Ntop} (\vec{G}_{\Lunit \Lunit}\contr \vec{\mathfrak G}).
\end{split}
\end{equation}

Moreover, the partially modified quantities
solve the following modified version of equation \cite[(6.10)]{LS}:
\begin{equation}\label{eq:partially.modified.transport}
L\upchipartialmodarg{\mathcal{P}^{\Ntop-1}} 
=  
\mbox{Terms in \cite[(6.10)]{LS}} 
- 
\mathcal{P}^{\Ntop-1} (|\chih|^2).
\end{equation}
\end{proposition}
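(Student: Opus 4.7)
\medskip

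\noindent \textbf{Proof proposal.} My plan is to retrace the derivation of \cite[(6.9), (6.10)]{LS}, reading off where the two modifications enter. The starting point is the Raychaudhuri equation for $L\mytr\upchi$ together with the decomposition of the Ricci curvature component $\upmu \mathrm{Ric}_{\Lunit \Lunit}$ in terms of $\upmu \square_g \threePsi$ and geometric quantities controllable below top order. The $2$D version of this identity (used in \cite{LS}) reads $\upmu L \mytr\upchi = (L \upmu)\mytr\upchi - \upmu(\mytr\upchi)^2 - \upmu \mathrm{Ric}_{\Lunit \Lunit}$, whereas in $3$D, equation \eqref{eq:3D.Raychaudhuri} carries the additional piece $-\upmu|\chih|^2$. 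In $\upmu \mathrm{Ric}_{\Lunit \Lunit}$, the key algebraic identity (this is the same identity used by Christodoulou and in \cite{LS}) expresses the principal part as $\frac{1}{2}\vec{G}_{\Lunit \Lunit}\contr \upmu \square_g \threePsi$ plus lower-order geometric/Ricci-type terms. In \cite{LS}, $\upmu \square_g \threePsi$ is not exactly zero because of vorticity source terms, but here we have soaked all of those into $\vec{\mathfrak G}$, so the wave equation of Proposition~\ref{prop:wave} gives $\upmu \square_g \threePsi = \vec{\mathfrak G}$, which is the only place where $\vec{\mathfrak{G}}$ enters the source of the modified quantity transport equation.

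With this bookkeeping, I would substitute $\upmu \square_g \threePsi = \vec{\mathfrak G}$ into the lowest-order ($N = 0$) identity for $L(\upmu \mytr\upchi)$, producing a new source contribution $\frac{1}{2}\vec{G}_{\Lunit \Lunit}\contr \vec{\mathfrak G}$ in addition to an inherited $-\upmu|\chih|^2$ that is absent in \cite{LS}. The fully modified quantity $\upchifullmodarg{\mathcal{P}^{\Ntop}}$ is then designed (as in \cite{LS}) so that commutation with $\mathcal{P}^{\Ntop}$ lands these additional sources unmodified at the top order: differentiating \eqref{E:TRANSPORTRENORMALIZEDTRCHIJUNK} along $L$, using the commutator identities of Lemma~\ref{L:SIMPLECOMMUTATORIDENTITY}, and absorbing the principal part of $\mathcal{P}^{\Ntop}(L \upchifullmodinhom)$ against $\mathcal{P}^{\Ntop}(\upmu \mathrm{Ric}_{\Lunit \Lunit})$ (which is the whole point of the modification $\upchifullmodinhom$) yields \eqref{eq:fully.modified.transport}, where the ``Non-vorticity-involving terms in \cite[(6.9)]{LS}'' are copied verbatim and the two new source terms $-\mathcal{P}^{\Ntop}(\upmu |\chih|^2)$ and $\frac{1}{2}\mathcal{P}^{\Ntop}(\vec{G}_{\Lunit\Lunit}\contr \vec{\mathfrak G})$ are exactly the images of the two modifications identified above under $\mathcal{P}^{\Ntop}$.

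For the partially modified quantity I would carry out the analogous commutation but one level of differentiation lower. The defining feature of the partial modification $\upchipartialmodinhomarg{\mathcal{P}^{\Ntop-1}}$ is that it is designed only to kill the borderline algebraic contribution from $\vec{G}\contr \mathcal{P}^{\Ntop} \threePsi$ on the RHS, while leaving the $\vec{G}_{\Lunit\Lunit}\contr \upmu \square_g \threePsi$ contribution to appear one derivative below; accordingly, when I commute $\mathcal{P}^{\Ntop-1}$ through the Raychaudhuri equation and group terms as in \cite{LS}, the $\vec{\mathfrak{G}}$ contribution at order $\Ntop - 1$ is a \emph{below-top-order} derivative of $\mathfrak{G}$ multiplied by $\vec{G}_{\Lunit\Lunit}$ and is therefore already absorbed into the ``Terms in \cite[(6.10)]{LS}''/Harmless bucket (it contributes no new borderline source). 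The only genuinely new piece at this order is the $-|\chih|^2$ contribution inherited from \eqref{eq:3D.Raychaudhuri}, which yields \eqref{eq:partially.modified.transport}.

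The main obstacle I anticipate is ensuring that the additional $-\mathcal{P}^{\Ntop}(\upmu|\chih|^2)$ source in \eqref{eq:fully.modified.transport} really is controllable: at top order it contains $\mathcal{P}^{\Ntop}\chih$, which one cannot bound purely by transport methods in $3$D, and this is precisely why Section~\ref{sec:elliptic.on.spheres} is needed. However, this control is not part of the present statement; within the scope of Proposition~\ref{P:FULLYMODIFIEDTRANSPORTWITHEXTRATERMS}, the only task is to derive the two transport equations cleanly, and the work is essentially algebraic: substitute $\upmu \square_g \threePsi = \vec{\mathfrak G}$ into the Ricci decomposition, track the $-\upmu |\chih|^2$ contribution from the $3$D Raychaudhuri equation, and invoke the corresponding ``2D-with-vorticity'' computations from \cite{LS} for the remaining terms.
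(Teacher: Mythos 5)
Your treatment of the fully modified transport equation \eqref{eq:fully.modified.transport} matches the paper's proof step for step: retrace the derivation of \cite[(6.9)]{LS}, observe that the decomposition of $\upmu\mathrm{Ric}_{\Lunit\Lunit}$ (via \cite[Lemma~6.1]{LS}) substitutes the wave equation $\upmu\square_g\threePsi = \vec{\mathfrak G}$ and therefore produces the source $\frac{1}{2}\mathcal{P}^{\Ntop}(\vec{G}_{\Lunit\Lunit}\contr\vec{\mathfrak G})$, and then replace the $1+2$-dimensional Raychaudhuri equation \eqref{eq:2D.Raychaudhuri} with its $1+3$-dimensional analogue \eqref{eq:3D.Raychaudhuri}, which contributes the additional $-\mathcal{P}^{\Ntop}(\upmu|\chih|^2)$. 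That part is correct.

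Where you diverge from the paper is in your explanation for why $\vec{\mathfrak G}$ is absent from the partially modified transport equation \eqref{eq:partially.modified.transport}. You claim that the derivation of \cite[(6.10)]{LS} does produce a $\vec{G}_{\Lunit\Lunit}\contr\upmu\square_g\threePsi$ contribution, but at one derivative below top order, which you then fold into the ``Terms in \cite[(6.10)]{LS}''/Harmless bucket. The paper's argument is cleaner and, I think, the correct one: the derivation of \cite[(6.10)]{LS} \emph{relies only on the Raychaudhuri equation and never substitutes the wave equations at all}; consequently no $\square_g\threePsi$ term, and hence no $\vec{\mathfrak G}$ term, ever arises. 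Your version of the argument is not merely a rephrasing --- if a $\mathcal{P}^{\Ntop-1}(\vec{G}_{\Lunit\Lunit}\contr\vec{\mathfrak G})$ source genuinely appeared, the stated form of \eqref{eq:partially.modified.transport} would be incomplete, since ``Terms in \cite[(6.10)]{LS}'' is a fixed list of vorticity-free, $\mathfrak G$-free expressions that cannot silently absorb a new $\vec{\mathfrak G}$-dependent piece (contrast with the fully modified case, where the statement explicitly restricts to ``Non-vorticity-involving terms in \cite[(6.9)]{LS}'' precisely because those vorticity terms are now re-expressed through $\vec{\mathfrak G}$). The conclusion you reach is correct, but the reasoning would, if taken literally, commit you to tracking a nonexistent source term. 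The fix is simply to observe that the partially modified quantity serves a different purpose (engineering a favorable integration by parts in the top-order energy identity, not avoiding a derivative loss), and its transport equation is derived from the Raychaudhuri equation without ever invoking the PDE satisfied by $\threePsi$.
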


\begin{remark}
\label{R:NOVORTICITYTERMSALGEBRA}
We clarify that the vorticity-involving terms in \cite[(6.9)]{LS} are absent from
RHS~\eqref{eq:fully.modified.transport}
because we have soaked these terms up into our definition
of the wave equation inhomogeneous term $\mathfrak{G}$. 
\end{remark}

\begin{proof}[Proof of Proposition~\ref{P:FULLYMODIFIEDTRANSPORTWITHEXTRATERMS}]
The key point is that the derivations of both \cite[(6.9), (6.10)]{LS} 
used the Raychaudhuri transport equation satisfied by $\mytr \upchi$, 
and thus we need to take into account the additional $-\upmu |\chih|^2$ term in \eqref{eq:3D.Raychaudhuri} as compared to \eqref{eq:2D.Raychaudhuri}.

The derivation of \cite[(6.9)]{LS} consists of two steps. 
First, in \cite[Lemma~6.1]{LS}, one expresses $\upmu \mathrm{Ric}_{LL}$ in terms of a sum of two terms: one term is a total $L$ derivative, and the other term is of lower order; see \cite[(6.1)]{LS}. 
Step~1 in particular uses the wave equations $\upmu \square_{g(\threePsi)} \Psi_{\iota} = \cdots$. 
In the second step, one combines the result of \cite[Lemma~6.1]{LS} 
with the $1+2$-dimensional Raychaudhuri equation \eqref{eq:2D.Raychaudhuri} and 
then commutes the resulting equation to obtain \cite[(6.9)]{LS}. 
In our setting, each step requires a small modification.
\begin{itemize}
\item In the first step, instead of $\upmu \square_{g(\threePsi)} \Psi_{\iota} = \cdots$, 
we have $\upmu \square_{g(\threePsi)} \Psi_\iota = \mathfrak G_{\iota}$. Thus,
we get an additional term $\frac 12 \mathcal{P}^{\Ntop} (\vec{G}_{\Lunit \Lunit}\contr \vec{\mathfrak G})$
on RHS~\eqref{eq:fully.modified.transport}.
\item In the second step, we need to use the $1+3$-dimensional Raychaudhuri equation
\eqref{eq:3D.Raychaudhuri} instead of \eqref{eq:2D.Raychaudhuri} and get the extra term 
$-\mathcal{P}^{\Ntop} (\upmu |\chih|^2)$
on RHS~\eqref{eq:fully.modified.transport}.
\end{itemize}
We thus obtain \eqref{eq:fully.modified.transport}.

The derivation of \cite[(6.10)]{LS} is simpler because its proof relies only on 
the $1+2$-dimensional Raychaudhuri equation \eqref{eq:2D.Raychaudhuri} 
(in particular, it does not rely on the wave equations $\upmu \square_{g(\threePsi)} \Psi_{\iota} = \cdots$). 
Thus,
to obtain \eqref{eq:partially.modified.transport}, 
we simply replace the application of \eqref{eq:2D.Raychaudhuri} from \cite[(6.10)]{LS} 
by an application of \eqref{eq:3D.Raychaudhuri}. 
The additional term in \eqref{eq:partially.modified.transport} is a result of the extra 
$- \upmu |\chih|^2$ term in 
\eqref{eq:3D.Raychaudhuri} compared to \eqref{eq:2D.Raychaudhuri}. 
\qedhere
\end{proof}

\subsection{Control of the geometry of $\ell_{t,u}$ and the elliptic estimates for $\chih$}\label{sec:elliptic.on.spheres}
The following elliptic estimate is standard; see \cite[Lemma~8.8]{dC2007}.

\begin{lemma}[\textbf{Elliptic estimate for symmetric, trace-free tensorfields}]
\label{lem:elliptic}
Let $(\mathcal{M}_2, {\slashed{\gamma}})$ 
be a closed, orientable Riemmanian manifold, and let $\mu$ be a non-negative function on $\mathcal{M}_2$. 
Then the following estimate 
holds for all trace-free symmetric covariant $2$-tensorfield $\xi$ belonging to 
$W^{1,2}(\mathcal{M}_2,{\slashed{\gamma}})$:
\begin{equation}\label{eq:Bochner}
\int_{\mathcal{M}_2} \mu^2 (\f 12 |\nab\xi|_{\slashed{\gamma}}^2 + 2 \mathfrak K_{\slashed{\gamma}} |\xi|_{{\slashed{\gamma}}}^2) \,\mathrm{dA}_{{\slashed{\gamma}}} \leq 3 \int_{\mathcal{M}_2} \mu^2 |\slashed{\mathrm{div}}_{\slashed{\gamma}}\xi|_{\slashed{\gamma}}^2 \,\mathrm{dA}_{{\slashed{\gamma}}} + 3 \int_{\mathcal{M}_2} |\nab \mu|^2_{\slashed{\gamma}} |\xi|_{\slashed{\gamma}}^2 \,\mathrm{dA}_{{\slashed{\gamma}}},
\end{equation}
where $\nab$, $\slashed{\mathrm{div}}_{\slashed{\gamma}}$, $\mathfrak K_{\slashed{\gamma}}$ and $\mathrm{dA}_{{\slashed{\gamma}}}$ are respectively the Levi-Civita connection, divergence operator, Gaussian curvature and induced area measure associated with ${\slashed{\gamma}}$. 
\end{lemma}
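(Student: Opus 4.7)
The plan is to reduce the claim to a weighted version of the classical Bochner-type identity for symmetric trace-free $(0,2)$-tensorfields on a $2$-manifold, and then absorb the $\nabla\mu$-cross terms by Young's inequality. The key preliminary step is the following pointwise divergence identity on $(\mathcal{M}_2,\slashed{\gamma})$: for any symmetric trace-free tensor $\xi$,
\begin{equation*}
|\slashed{\mathrm{div}}_{\slashed{\gamma}}\xi|_{\slashed{\gamma}}^2 = \frac{1}{2}|\nab\xi|_{\slashed{\gamma}}^2 + 2\,\mathfrak K_{\slashed{\gamma}}\,|\xi|_{\slashed{\gamma}}^2 + \slashed{\mathrm{div}}_{\slashed{\gamma}}(V),
\end{equation*}
where $V$ is an explicit one-form of the schematic form $V_A = \xi^{BC}\nab_B\xi_{AC} - \xi_{AB}\nab^C\xi^B{}_C$. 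I would derive this by starting from $\nab^A\xi_{AB}\cdot \nab_C\xi^{CB}$, integrating one derivative by parts against the adjacent $\xi$, and then using the Ricci commutation identity to swap $\nab_B\nab^C$ to $\nab^C\nab_B$. In two dimensions the Riemann curvature reduces to $R_{ABCD}=\mathfrak K_{\slashed{\gamma}}(\slashed{\gamma}_{AC}\slashed{\gamma}_{BD}-\slashed{\gamma}_{AD}\slashed{\gamma}_{BC})$, and combining this with the trace-free and symmetry properties of $\xi$ collapses the resulting algebraic combination of $\mathfrak K_{\slashed{\gamma}}|\xi|^2$-terms to exactly the factor $2\mathfrak K_{\slashed{\gamma}}|\xi|_{\slashed{\gamma}}^2$ stated above.

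Second, I would multiply the pointwise identity by $\mu^2$ and integrate over $\mathcal{M}_2$. The only non-routine contribution comes from $\int \mu^2\,\slashed{\mathrm{div}}_{\slashed{\gamma}}(V)\,\mathrm{dA}_{\slashed{\gamma}}$, which, after an integration by parts (legal since $\mathcal{M}_2$ is closed), equals $-2\int \mu\,(\nab\mu)(V)\,\mathrm{dA}_{\slashed{\gamma}}$. Using the pointwise bound $|V|_{\slashed{\gamma}}\lesssim |\xi|_{\slashed{\gamma}}|\nab\xi|_{\slashed{\gamma}}$ and Young's inequality with a parameter $\varepsilon>0$, I estimate
\begin{equation*}
\left|2\int \mu(\nab\mu)(V)\,\mathrm{dA}_{\slashed{\gamma}}\right|
\leq \varepsilon \int \mu^2 |\nab\xi|_{\slashed{\gamma}}^2\,\mathrm{dA}_{\slashed{\gamma}}
+ C\varepsilon^{-1}\int |\nab\mu|_{\slashed{\gamma}}^2 |\xi|_{\slashed{\gamma}}^2\,\mathrm{dA}_{\slashed{\gamma}}.
\end{equation*}

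Third, choosing $\varepsilon$ small enough to absorb the $\mu^2|\nab\xi|^2$-term into the $\frac{1}{2}\int \mu^2|\nab\xi|^2$ on the left-hand side and rearranging yields precisely the stated inequality, with the constants $3$ arising from the careful choice of $\varepsilon$ and the constant in the Cauchy--Schwarz bound for $|V|_{\slashed{\gamma}}$. The argument extends from smooth $\xi$ to $W^{1,2}$ by standard density, since both sides of the inequality are continuous in the $W^{1,2}$-topology.

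The main obstacle is the algebraic step in the first paragraph: one must verify that, after commuting the covariant derivatives and inserting the $2$-dimensional Riemann tensor, the curvature terms combine to give exactly the coefficient $2\mathfrak K_{\slashed{\gamma}}|\xi|_{\slashed{\gamma}}^2$ (rather than some other multiple of $\mathfrak K_{\slashed{\gamma}}|\xi|_{\slashed{\gamma}}^2$). This computation relies crucially on the trace-free condition $\slashed{\gamma}^{AB}\xi_{AB}=0$ and on the symmetry $\xi_{AB}=\xi_{BA}$, which together force several would-be error terms to cancel; without both properties, the numerical constant changes and the elliptic gain disappears. Once the identity is in hand, the weighted estimate is essentially automatic.
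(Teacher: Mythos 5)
Your overall strategy — a weighted Bochner/Weitzenb\"ock identity, integration by parts against $\mu^2$, and a Young-type absorption — is exactly the right one, and it is essentially the argument behind \cite[Lemma~8.8]{dC2007}, which the paper cites instead of giving a proof. However, the key pointwise identity you state is off by a factor of $2$ on the curvature term, and this is not a harmless typo: it is precisely the "main obstacle" you flag as the step to verify, and with the wrong coefficient the absorption step cannot produce the inequality as stated.

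Concretely, for $\xi$ symmetric trace-free on a closed $2$-manifold, the correct integrated identity is
\begin{equation*}
\int_{\mathcal M_2}\left(\f 12 |\nab\xi|_{\slashed{\gamma}}^2+\mathfrak K_{\slashed{\gamma}}|\xi|_{\slashed{\gamma}}^2\right)\,\mathrm{dA}_{\slashed{\gamma}}
=\int_{\mathcal M_2}|\slashed{\mathrm{div}}_{\slashed{\gamma}}\xi|_{\slashed{\gamma}}^2\,\mathrm{dA}_{\slashed{\gamma}},
\end{equation*}
i.e., the coefficient is $\mathfrak K_{\slashed{\gamma}}$, not $2\mathfrak K_{\slashed{\gamma}}$. (This can be checked pointwise in an orthonormal frame: writing $\xi_{11}=a=-\xi_{22}$, $\xi_{12}=b$, one has $2|\slashed{\mathrm{div}}_{\slashed{\gamma}}\xi|^2-|\nab\xi|^2=4\left(\nab_1 a\,\nab_2 b-\nab_2 a\,\nab_1 b\right)$, and the covariant Jacobian on the right integrates to $\f 12\int\mathfrak K_{\slashed{\gamma}}|\xi|^2$, not $\int\mathfrak K_{\slashed{\gamma}}|\xi|^2$.) Your claimed identity
$|\slashed{\mathrm{div}}_{\slashed{\gamma}}\xi|^2=\f 12 |\nab\xi|^2+2\mathfrak K_{\slashed{\gamma}}|\xi|^2+\slashed{\mathrm{div}}_{\slashed{\gamma}}(V)$
would, upon integration over the closed manifold, force $\int\mathfrak K_{\slashed{\gamma}}|\xi|^2=0$ for all $\xi$, which is false. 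With your $V$, the actual pointwise output of the computation is
$|\slashed{\mathrm{div}}_{\slashed{\gamma}}\xi|^2=\f 12 |\nab\xi|^2+\mathfrak K_{\slashed{\gamma}}|\xi|^2-\f 12\slashed{\mathrm{div}}_{\slashed{\gamma}}(V)$.

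This error is load-bearing in your final step. If one had your identity, then after inserting the weight and applying Young's inequality with a parameter $\varepsilon>0$, the term $\varepsilon\int\mu^2|\nab\xi|^2$ must be subtracted from the left-hand side, leaving $\left(\f 12-\varepsilon\right)\int\mu^2|\nab\xi|^2$, which is strictly less than the $\f 12\int\mu^2|\nab\xi|^2$ required by the statement. The correct way to close is to start from the weighted form with $|\nab\xi|^2$ (not $\f 12 |\nab\xi|^2$) on the left:
\begin{equation*}
\int_{\mathcal M_2}\mu^2\left(|\nab\xi|^2+2\mathfrak K_{\slashed{\gamma}}|\xi|^2\right)\,\mathrm{dA}_{\slashed{\gamma}}
=2\int_{\mathcal M_2}\mu^2|\slashed{\mathrm{div}}_{\slashed{\gamma}}\xi|^2\,\mathrm{dA}_{\slashed{\gamma}}
+\mathcal E,
\end{equation*}
where $\mathcal E$ is a cross-term satisfying $|\mathcal E|\leq 2\int\mu|\nab\mu|_{\slashed{\gamma}}|\xi|_{\slashed{\gamma}}|\nab\xi|_{\slashed{\gamma}}\,\mathrm{dA}_{\slashed{\gamma}}$. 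Applying Young's inequality to $\mathcal E$ with parameter $\f 12$, one absorbs $\f 12\int\mu^2|\nab\xi|^2$ from the error into the left-hand side, and the remaining $\f 12\int\mu^2|\nab\xi|^2+2\int\mu^2\mathfrak K_{\slashed{\gamma}}|\xi|^2$ is bounded by $2\int\mu^2|\slashed{\mathrm{div}}_{\slashed{\gamma}}\xi|^2+2\int|\nab\mu|^2|\xi|^2$, which is dominated by the stated right-hand side with constants $3$.
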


In order to use Lemma~\ref{lem:elliptic}, we need an $L^\infty$ estimate for the Gaussian curvature
of the tori $(\ell_{t,u},\slashed{g})$. 
We provide this basic estimate in the following proposition.
\begin{proposition}\label{prop:Gauss.curv.est}
The Gaussian curvature $\mathfrak K_{\slashed{ g}}$ of $(\ell_{t,u},\slashed{g})$
satisfies the following estimate for every $(t,u) \in [0,\Tboot)\times [0,U_0]$:
$$\| \mathfrak K_{\slashed{ g}} \|_{L^\infty(\Mtu)} \ls \epd^{\f 12}.$$
\end{proposition}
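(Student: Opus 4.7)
The plan is to express $\mathfrak{K}_{\slashed{g}}$ as a concrete algebraic function of $(L^i, \Psi)$ and their $\mathcal{P}$-derivatives up to order two, and then read off the $L^\infty$ bound from the estimates already established in Section~\ref{sec:geometry} and the $L^{\infty}$ bootstrap assumptions. No heavy machinery (Gauss--Codazzi, Riemann curvature decompositions, etc.) is actually required; the key is that $\slashed{g}$ has already been written down in closed form in terms of $(L^i, \Psi)$.

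First, I would invoke Lemma~\ref{lem:induced.metric}, which exhibits the components of $\slashed{g}$ in the coordinate system $(x^2, x^3)$ as
$$
\slashed{g}_{AB} = \Speed^{-2}\!\left(\delta_{AB} + \frac{X^A X^B}{(X^1)^2}\right), \qquad A, B \in \{2, 3\}.
$$
Since $\Speed = \Speed(\Densrenormalized, \Ent)$ is smooth and the Cartesian components $X^i$ are themselves smooth functions of $(L^i, \Psi)$ by \eqref{E:TRANSPORTVECTORFIELDINTERMSOFLUNITANDRADUNIT} (together with the identity $\sum_a (X^a)^2 = \Speed^2$), each $\slashed{g}_{AB}$ is a smooth function of $(L^i, \Psi)$. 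Next, applying the standard coordinate formula for the Gaussian curvature of a $2$-dimensional Riemannian metric (a second-order differential operator on the $\slashed{g}_{AB}$ that is linear in $\srd^2 \slashed{g}$ and quadratic in $\srd \slashed{g}$), and using Lemma~\ref{L:GEOMETRICCOORDINATEVECTORFIELDSINTERMSOFCARTESIANVECTORFIELDS} to express each coordinate vectorfield $\srd_A$ as a bounded smooth function of $(L^i, \Psi)$ times a linear combination of the $\ell_{t,u}$-tangent commutators $\{Y, Z\} \subset \mathscr{P}$, the chain rule yields a schematic identity of the form
$$
\mathfrak{K}_{\slashed{g}}
= \smoothfunction(L^i, \Psi) \cdot \mathcal{P}^{\leq 2}(L^i, \Psi)
+ \smoothfunction(L^i, \Psi) \cdot \mathcal{P}(L^i, \Psi) \cdot \mathcal{P}(L^i, \Psi),
$$
in which each $\smoothfunction(L^i, \Psi)$ is uniformly bounded on $\mathcal{M}_{\Tboot, U_0}$ (this uses $\|\Psi\|_{L^\infty(\Sigma_t)} \ls_{\mydiam} \mathring{\upalpha}^{1/2}$ from \eqref{BA:LARGERIEMANNINVARIANTLARGE}--\eqref{BA:SMALLWAVEVARIABLESUPTOONETRANSVERSALDERIVATIVE} and $\|L^i - \delta^i_1\|_{L^\infty(\Sigma_t)} \ls_{\mydiam} \mathring{\upalpha}$ from Proposition~\ref{prop:geometric.low}).

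Finally, the desired bound follows from the $L^\infty$ smallness of $\mathcal{P}$-differentiated quantities: Proposition~\ref{prop:geometric.low} gives $\|\mathcal{P}^{[1, 2]} L^i\|_{L^\infty(\Sigma_t)} \ls \epd^{1/2}$ (using $\Ntop - \toprate - 3 \geq 2$, which is ensured by the running assumption $\Ntop \geq 2 \toprate + 10$), while the bootstrap assumption \eqref{BA:W.Li.small} gives $\|\mathcal{P}^{[1, 2]} \Psi\|_{L^\infty(\Sigma_t)} \leq \epd^{1/2}$. Hence each summand in the display above has $L^\infty(\Mtu)$ norm $\ls \epd^{1/2}$ (the cross term is in fact $\ls \epd$, which is stronger than needed). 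The only conceptual subtlety will be a bookkeeping check that no term produced by the chain-rule expansion is free of $\mathcal{P}$-differentiated factors; this is automatic because the coordinate formula for $\mathfrak{K}_{\slashed{g}}$ has no purely algebraic (zeroth-order) dependence on the metric, and every contribution carries at least one factor of $\srd \slashed{g}_{AB}$ or $\srd^2 \slashed{g}_{AB}$, which in turn contains at least one $\mathcal{P}$-differentiated $(L^i, \Psi)$ factor.
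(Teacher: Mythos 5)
Your proof is correct and follows essentially the same route as the paper: write the Gaussian curvature via the standard coordinate formula (linear in $\srd^2\slashed{g}$, quadratic in $\srd\slashed{g}$), use Lemma~\ref{lem:induced.metric} to express $\slashed{g}_{AB}$ as a smooth function of $(L^i,\Psi)$, use Lemma~\ref{L:GEOMETRICCOORDINATEVECTORFIELDSINTERMSOFCARTESIANVECTORFIELDS} to replace $\srd_A$ by combinations of $Y,Z$ with smooth coefficients, and then apply the $L^\infty$ estimates from Section~\ref{sec:geometry}. The paper cites Proposition~\ref{prop:geometric.low.2} (bounds on $X^i,\Speed$) while you cite Proposition~\ref{prop:geometric.low} and \eqref{BA:W.Li.small} (bounds on $L^i,\Psi$), but these are interchangeable, and your explicit observation that every term in the curvature formula carries at least one $\mathcal{P}$-differentiated factor is exactly the implicit reason the bound is $\epd^{1/2}$ rather than merely $\mathcal{O}(1)$.
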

\begin{proof}
It is a standard fact that at fixed $(t,u)$, 
$\mathfrak K_{\slashed{ g}}$ can be expressed in terms of
the components of
$\slashed{g}$, $\slashed{g}^{-1}$ 
with respect to the coordinate system $(x^2,x^3)$ on $\ell_{t,u}$
and their first and second partial derivatives with respect to the geometric coordinate vectorfields $\srd_2, \srd_3$.
Schematically, we have 
$\mathfrak{K}_{\slashed{g}} 
= 
\slashed{g}^{-1} \cdot \slashed{g}^{-1} \cdot \srd^2 \slashed{g} 
+
\slashed{g}^{-1} \cdot \slashed{g}^{-1} \cdot \slashed{g}^{-1} \cdot \srd \slashed{g} \cdot \srd \slashed{g}$,
where $\srd \in \lbrace \srd_2,\srd_3 \rbrace$.

Recalling the expression for the induced metric $\slashed{g}$ in Lemma~\ref{lem:induced.metric} and 
the relations between the vectorfields in 
Lemma~\ref{L:GEOMETRICCOORDINATEVECTORFIELDSINTERMSOFCARTESIANVECTORFIELDS},
we see that 
the desired estimate for $\mathfrak K_{\slashed{ g}}$ follows from 
Proposition~\ref{prop:geometric.low.2}. \qedhere
\end{proof}

We now apply the elliptic estimate in Lemma~\ref{lem:elliptic} to control the top-order 
derivatives of $\chih$ in terms of the top-order 
pure $\ell_{t,u}$-tangential derivatives of $\mytr\upchi$.
\begin{proposition}\label{prop:Codazzi}
The following estimate holds for\footnote{Recall that 
$
\slashed{\mathcal L}_{\slashed{\mathcal{P}}}$
denotes Lie differentiation with respect to elements
$\slashed{\mathcal{P}} \in \lbrace Y, Z \rbrace$,
followed by projection onto $\ell_{t,u}$.
} 
the $\Ntop$-th $\ell_{t,u}$-tangential derivatives of $\chih$ for every $(t,u) \in [0,\Tboot) \times [0,U_0]$:
$$\| \upmu (\slashed{\mathcal L}_{\slashed{\mathcal{P}}})^{\Ntop} \chih \|_{L^2(\Sigma_t^u)} 
\ls 
\| \upmu \slashed{\mathcal{P}}^{\Ntop} \mytr\upchi \|_{L^2(\Sigma_t^u)} 
+ 
\epd^{\f 12} \upmu_{\star}^{-\toprate+0.9}(t).$$
\end{proposition}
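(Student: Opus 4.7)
The plan is to apply the standard Codazzi equation on $\ell_{t,u}$, which expresses the divergence of the trace-free part $\chih$ of the null second fundamental form schematically as
\[
\angdiv \chih = \frac{1}{2}\angdiff \mytr\upchi + \mathrm{Err}_0,
\]
where $\mathrm{Err}_0$ is a sum of products of $\upzeta$, $\vec{G}_{(Frame)}$, and first derivatives of $\threePsi$ (see, e.g., \cite{dC2007}). Commuting this identity with $(\angLie_{\slashed{\mathcal{P}}})^{\Ntop-1}$, where $\slashed{\mathcal{P}} \in \{Y,Z\}$, and using the simple commutator identities of Lemma~\ref{L:SIMPLECOMMUTATORIDENTITY} to freely interchange $\angdiv$ with $\angLie_{\slashed{\mathcal{P}}}$ modulo acceptable corrections, yields
\[
\angdiv \bigl((\angLie_{\slashed{\mathcal{P}}})^{\Ntop-1}\chih\bigr) = \tfrac{1}{2}\angdiff \slashed{\mathcal{P}}^{\Ntop-1}\mytr\upchi + \mathrm{Err},
\]
where $\mathrm{Err}$ is a sum of products involving derivatives of $\upmu$, $\Lunit^i$, and $\threePsi$ up to order $\Ntop$, in which the highest-order factor always appears in a pure $\ell_{t,u}$-tangential combination of the form $\mathcal{P}^{[1,\Ntop]}(\upmu, L^i)$ or $\mathcal{P}^{[1,\Ntop+1]}\Psi$, paired with lower-order factors bounded in $L^\infty$.

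I would then apply Lemma~\ref{lem:elliptic} to the trace-free symmetric $\ell_{t,u}$-tangent $2$-tensor $\xi \doteq (\angLie_{\slashed{\mathcal{P}}})^{\Ntop-1}\chih$ with weight $\mu = \upmu$, on each torus $(\ell_{t,u},\slashed g)$. Proposition~\ref{prop:Gauss.curv.est} gives $\|\mathfrak K_{\slashed g}\|_{L^\infty} \lesssim \epd^{1/2}$, so the curvature term on the LHS of \eqref{eq:Bochner} is absorbable. Using Lemma~\ref{L:SIMPLECOMPARISON} to compare $|\angD \xi|$ with the $\slashed{\mathcal{P}}$-derivatives of $\xi$, integrating the resulting fiberwise estimate in $u$ over $[0,U_0]$ (with the volume-form identity from Definition~\ref{D:NONDEGENERATEVOLUMEFORMS}), and using $\|\angdiff \upmu\|_{L^\infty} \lesssim 1$ from Propositions~\ref{prop:geometric.low}--\ref{prop:geometric.low.2}, gives
\[
\| \upmu (\angLie_{\slashed{\mathcal{P}}})^{\Ntop}\chih \|_{L^2(\Sigma_t^u)} \lesssim \| \upmu \slashed{\mathcal{P}}^{\Ntop}\mytr\upchi \|_{L^2(\Sigma_t^u)} + \| \upmu\, \mathrm{Err}\|_{L^2(\Sigma_t^u)} + \|\chih\|_{L^\infty}\,\|\cdots\|_{L^2},
\]
plus a lower-order $(\angLie_{\slashed{\mathcal{P}}})^{\leq \Ntop-1}\chih$ term that can be handled by downward induction on the order.

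The main obstacle is to verify that $\| \upmu\,\mathrm{Err}\|_{L^2(\Sigma_t^u)} \lesssim \epd^{1/2}\upmu_\star^{-\toprate + 0.9}(t)$. To bound $\mathrm{Err}$, I would use the product-rule decomposition and split into two cases depending on which factor receives the most derivatives: if the top-order factor is a derivative of $\Psi$, one uses the wave-energy bootstrap \eqref{BA:W1} for $N = \Ntop$ (which yields the blowup-rate $\upmu_\star^{-\toprate + 0.9}(t)$ after taking square roots) together with the $L^\infty$ smallness $\lesssim \epd^{1/2}$ for the remaining low-order factors from \eqref{BA:W.Li.small} and Proposition~\ref{prop:geometric.low}; if the top-order factor is $\mathcal{P}^{[2,\Ntop]}(\upmu, L^i)$, one uses Proposition~\ref{prop:geometric.top} instead, which produces the same blowup-rate. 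The key delicate point is that no $\bX$-derivative of $\threePsi$ ever lands on a top-order factor of $\mathrm{Err}$, since the Codazzi equation and its commutation with the $\ell_{t,u}$-tangent vectorfields $\{Y,Z\}$ produce only pure $\ell_{t,u}$-tangential top-order factors; this is what prevents a loss of a power of $\upmu_\star^{-1}$. Collecting all contributions yields the stated bound.
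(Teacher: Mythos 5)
Your overall strategy (a Codazzi-type identity for $\angdiv\chih$ in terms of $\angdiff\mytr\upchi$, followed by the elliptic estimate of Lemma~\ref{lem:elliptic}) is the same as the paper's. However, there is a genuine gap: you assert that $\xi \doteq (\angLie_{\slashed{\mathcal{P}}})^{\Ntop-1}\chih$ is a trace-free symmetric $2$-tensor and apply Lemma~\ref{lem:elliptic} to it. This is false. Although $\chih$ is trace-free, $\ell_{t,u}$-projected Lie differentiation does \emph{not} commute with taking the $\gsphere$-trace — the commutator $[\gsphere^{-1},(\angLie_{\slashed{\mathcal{P}}})^{\Ntop-1}]\chih$ is nonzero because $\angLie_{\slashed{\mathcal{P}}}\gsphere \neq 0$ — so $(\angLie_{\slashed{\mathcal{P}}})^{\Ntop-1}\chih$ has nontrivial trace. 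Since Lemma~\ref{lem:elliptic} is stated only for trace-free tensors, you cannot apply it directly. The paper handles this carefully: it first shows (using that $[\gsphere^{-1},(\angLie_{\slashed{\mathcal{P}}})^{\Ntop-1}]\chih$ involves at most $\Ntop-1$ derivatives of $\Psi,L^i$) that $\mytr\,(\angLie_{\slashed{\mathcal{P}}})^{\Ntop-1}\chih$ and $\upmu\,\angD\bigl(\mytr\,(\angLie_{\slashed{\mathcal{P}}})^{\Ntop-1}\chih\bigr)$ are both bounded by $\lesssim\epd^{1/2}\upmu_\star^{-\toprate+0.9}(t)$ in $L^2(\Sigma_t^u)$, then defines $\xi$ to be the trace-free part $(\angLie_{\slashed{\mathcal{P}}})^{\Ntop-1}\chih_{AB}-\tfrac12\gsphere_{AB}\,\mytr\,(\angLie_{\slashed{\mathcal{P}}})^{\Ntop-1}\chih$, and only then applies Lemma~\ref{lem:elliptic}. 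Your argument omits this step and would not go through as written.

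A secondary (non-fatal) difference worth noting: rather than quoting a ``standard Codazzi equation'' with a curvature-dependent error, the paper derives the needed relation directly from the explicit formulas \eqref{E:CHIINTERMSOFOTHERVARIABLES} and \eqref{E:TRCHIINTERMSOFOTHERVARIABLES}, which express $\upchi$ and $\mytr\upchi$ in terms of $\angdiff L^a$ and $\angdiff\Psi$; it then exploits the symmetry identity $\nab_C\angdiff_B L^a = \nab_B\angdiff_C L^a$ to cancel the common top-order piece in $\angdiv\upchi$ and $\angdiff\mytr\upchi$. This avoids any reference to the Riemann curvature of $g$ and keeps all error factors in the desired form $\mathcal{P}^{[1,\Ntop]}(L^i,\Psi)$ or $\upmu\mathcal{P}^{\Ntop+1}\Psi$. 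Your schematic $\mathrm{Err}_0$ involving only ``first derivatives of $\threePsi$'' appears to miss second-derivative contributions coming from the curvature of $g(\threePsi)$, though that could likely be repaired with more careful bookkeeping. Also, the paper does not use downward induction for the lower-order $(\angLie_{\slashed{\mathcal{P}}})^{\leq\Ntop-1}\chih$ terms; it bounds $\|\xi\|_{L^2}$ directly from the explicit expression for $\chih$, the bootstrap assumptions, and Proposition~\ref{prop:geometric.top}, and pairs it with the smallness $|\nab\upmu|\lesssim\epd^{1/2}$ (not merely $\lesssim 1$, as you wrote) to absorb the zeroth-order term in \eqref{eq:Bochner}.
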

\begin{proof}

\pfstep{Step~0: Preliminaries}
Throughout the proof, we will silently use the following observations,
valid for $\mathcal{P} \in \lbrace L,Y,Z \rbrace$
and
$\slashed{\mathcal{P}} \in \{Y,Z\}$,
where $\smoothfunction(\cdot)$ denotes a generic smooth function of its arguments that is allowed
to vary from line to line.
\begin{itemize}
	\item The component functions $X^1,X^2,X^3$ are smooth functions of
		the $L^i$ and $\Psi$; see \eqref{E:TRANSPORTVECTORFIELDINTERMSOFLUNITANDRADUNIT}.
		The same holds for the component functions $\mathcal{P}^0,\mathcal{P}^1,\mathcal{P}^2,\mathcal{P}^3$;
		this is obvious for $\mathcal{P}=L$, while see Lemma~\ref{lem:slashed} for $\mathcal{P}=Y,Z$.
		Similarly, the geometric coordinate component functions $\gsphere_{AB}$ and $(\gsphere^{-1})^{AB}$
		are smooth functions of
		the $L^i$ and $\Psi$; see Lemma~\ref{lem:induced.metric}.
	\item For $\srd \in \lbrace \srd_2, \srd_3 \rbrace$, we have the following schematic identity:
	$
\srd
=
\smoothfunction(L^i,\Psi)
Y
+
\smoothfunction(L^i,\Psi)
Z
$; see Lemma~\ref{L:GEOMETRICCOORDINATEVECTORFIELDSINTERMSOFCARTESIANVECTORFIELDS}.
\item For $\ell_{t,u}$-tangent one-forms $\upxi$, we have
	$|\upxi| \approx \sum_{A= 2,3} |\upxi_A| \approx |\upxi_Y| + |\upxi_Z| \doteq |\upxi(Y)| + |\upxi(Z)|$;
	this follows from the discussion in the previous two points,
	the bootstrap assumptions \eqref{BA:LARGERIEMANNINVARIANTLARGE}--\eqref{BA:W.Li.small},
	and the $L^{\infty}$ estimates for $L_{(Small)}^i$ from Proposition~\ref{prop:geometric.low}.
	In particular, for scalar functions $\phi$, we have 
	$|\angD \phi| \approx \sum_{A= 2,3} |\srd_A \phi| \approx |Y \phi| + |Z \phi|$.
	Analogous estimates hold for $\ell_{t,u}$-tangent tensorfields of any order.	
\item For type $\binom{0}{n}$ tensorfields, we have the following covariant identity,
		expressed schematically:
		$[\nab,\slashed{\mathcal L}_{\mathcal{P}}] \upxi
		=
		(\nab \slashed{\mathcal L}_{\mathcal{P}} \gsphere)
		\cdot \upxi
		$. It is straightforward to check 
		that $\slashed{\mathcal L}_{\mathcal{P}} \gsphere$
		is in fact equal to the $\ell_{t,u}$-projection of the deformation tensor of $\mathcal{P}$
		(the deformation tensor itself is equal to $\mathcal{L}_{\mathcal{P}} g$, where $g$ is the acoustical metric).
	\item Relative to the geometric coordinates $(t,u,x^2,x^3)$,
we have
$
\slashed{\mathcal L}_{\mathcal{P}} \gsphere
=
\smoothfunction(L^i,\Psi)
(\mathcal{P} L^i,\mathcal{P} \Psi)
$
(where the $\mathcal{P}$'s on the LHS and RHS can be different).
\item For $\ell_{t,u}$-tangent tensorfields $\upxi$, we have the following schematic
		identity, valid relative to the geometric coordinates, where $\srd \in \lbrace \srd_2, \srd_3 \rbrace$:
		$\nab \upxi - \srd \upxi = \smoothfunction(L^i,\Psi) 
		\upxi
		\cdot (\slashed{\mathcal{P}} L^i,\slashed{\mathcal{P}} \Psi)$;
		this follows from expressing $\nab$ in terms of geometric coordinate partial derivatives
		and the Christoffel symbols of $\gsphere$ and then expressing
		$\srd = \smoothfunction(L^i,\Psi) \slashed{\mathcal{P}}$ on the RHS.
	\item For $\ell_{t,u}$-tangent tensorfields $\upxi$, we have the following schematic
		identity, valid relative to the geometric coordinates, where $\srd \in \lbrace \srd_2, \srd_3 \rbrace$:
		$\slashed{\mathcal L}_{\slashed{\mathcal{P}}} \upxi 
		= 
\slashed{\mathcal{P}}^A
\srd_A \upxi
+
\smoothfunction(L^i,\Psi)
\upxi
\cdot
(\slashed{\mathcal{P}} L^i, \slashed{\mathcal{P}} \Psi)
$
(where the $\slashed{\mathcal{P}}$'s on the LHS and RHS can be different);
this formula is straightforward to verify relative to the geometric coordinates.
\item  For $\ell_{t,u}$-tangent tensorfields $\upxi$, we have the following schematic
		identity, valid relative to the geometric coordinates, where $\srd \in \lbrace \srd_2, \srd_3 \rbrace$:
$\nab_{\slashed{\mathcal{P}}} \xi 
-
\slashed{\mathcal L}_{\slashed{\mathcal{P}}} \upxi
= 
\smoothfunction(L^i,\threePsi)
\upxi
\cdot
(\slashed{\mathcal{P}} L^i, \slashed{\mathcal{P}} \Psi)
$
(where the $\slashed{\mathcal{P}}$'s on the LHS and RHS can be different);
this formula is straightforward to verify relative to the geometric coordinates.
\item If $f$ is a scalar function, then $\mathcal{L}_{\mathcal{P}} \slashed{d} f = \slashed{d} \mathcal{P}f$,
where $\slashed{d}$ denotes $\ell_{t,u}$-gradient of $f$;
this formula is straightforward to verify relative to the geometric coordinates.
\end{itemize}

\pfstep{Step~1: Codazzi equation\footnote{We use the phrase ``Codazzi equation''
because the equations we use in this analysis are
closely related to the classical Codazzi equation,
which links $\angdiv \upchi$, $\angD \mytr\upchi$, and
the curvature components of the acoustical metric.}} 
We compute $(\slashed{\mathcal L}_{\mathcal{P}})^{\Ntop-1} \nab^A \upchi_{BA}$ by differentiating 
\eqref{E:CHIINTERMSOFOTHERVARIABLES} with the operator
$(\slashed{\mathcal L}_{\mathcal{P}})^{\Ntop-1}\slashed{\mathrm{div}}_{\slashed{g}}$ and treating all capital Latin indices as tensorial indices, while treating all lowercase Latin indices as corresponding to scalar functions. We clarify that the tensor on LHS~\eqref{E:CHIINTERMSOFOTHERVARIABLES}
is symmetric, while the first, third, and fourth products on RHS~\eqref{E:CHIINTERMSOFOTHERVARIABLES} 
are not. Hence, for clarity, we emphasize that
when we write ``differentiating \eqref{E:CHIINTERMSOFOTHERVARIABLES} with 
$(\slashed{\mathcal L}_{\mathcal{P}})^{\Ntop-1} \nab^A \upchi_{BA}$,'' 
it is to be understood that the corresponding first term on the RHS is an $\ell_{t,u}$-tangent one-form
with index ``$B$'' whose top-order part (in the sense of the number of derivatives that fall on $L^a$)
is
$
(\slashed{\mathcal L}_{\mathcal{P}})^{\Ntop-1}
(g_{ab} ( (\slashed{g}^{-1})^{AC} \nab_C \srd_B L^a)\otimes \srd_A x^b))
=
(\slashed{\mathcal L}_{\mathcal{P}})^{\Ntop-1}
(g_{ab} ( (\slashed{g}^{-1})^{AC} \nab_B \srd_C L^a)\otimes \srd_A x^b))
$,
where to obtain the last equality,
we used
the commutation identity $\nab_C \srd_B L^a = \nab_B \srd_C L^a$,
which is a consequence of the torsion-free
property of $\nab$ and the fact that we are viewing
the Cartesian components $L^a$ as scalar functions.
Notice that unless all the $\Ntop$ derivatives fall
on the factor $\slashed{d} L^a$ in the first product on RHS~\eqref{E:CHIINTERMSOFOTHERVARIABLES}, 
the expression involves at most $\Ntop$ derivatives on $L$ and 
$\Psi$, and we can control such terms using the bounds we have obtained thus far. 
In total, 
using the symmetry property $\upchi_{BA} = \upchi_{AB}$,
isolating the terms featuring the top-order derivatives of the components $L^a$,
and estimating the remaining terms with 
\eqref{BA:W1}--\eqref{BA:W.Li.small} and
Propositions~\ref{prop:geometric.low} and \ref{prop:geometric.top}, we 
obtain:
\begin{equation}\label{eq:to.derive.Codazzi.1}
\begin{split}
&\: \|\upmu (\slashed{\mathcal L}_{\slashed{\mathcal{P}}})^{\Ntop-1} \nab^A \upchi_{AB} -  \upmu(\slashed{\mathcal L}_{\slashed{\mathcal{P}}})^{\Ntop-1} \{ g_{ab} (\slashed{g}^{-1})^{AC} (\nab_C \srd_B L^a )(\srd_A x^b) \} \|_{L^2(\Sigma_t^u)} \\
\ls &\: 
 \|\upmu \mathcal{P}^{\Ntop+1} \Psi\|_{L^2(\Sigma_t^u)} 
+
\|\upmu \mathcal{P}^{[1,\Ntop]} (L^i,\Psi)\|_{L^2(\Sigma_t^u)} 
\ls \epd^{\f 12} \upmu_{\star}^{-\toprate +0.9}(t).
\end{split}
\end{equation}
We then compute $(\slashed{\mathcal L}_{\slashed{\mathcal{P}}})^{\Ntop-1} \srd_B \mytr\upchi$ in a similar manner using 
\eqref{E:TRCHIINTERMSOFOTHERVARIABLES} to obtain:
\begin{equation}\label{eq:to.derive.Codazzi.2}
\begin{split}
&\: \|\upmu (\slashed{\mathcal L}_{\slashed{\mathcal{P}}})^{\Ntop-1} \srd_B \mytr\upchi - \upmu (\slashed{\mathcal L}_{\slashed{\mathcal{P}}})^{\Ntop-1} \{ g_{ab} (\slashed{g}^{-1})^{AC} (\nab_B \srd_C L^a )(\srd_A x^b) \} \|_{L^2(\Sigma_t^u)}  \ls \epd^{\f 12} \upmu_{\star}^{-\toprate +0.9}(t).
\end{split}
\end{equation}
In view of the commutation identity $\nab_C \srd_B L^a = \nab_B \srd_C L^a$ mentioned above
(which implies that the second terms on LHSs of \eqref{eq:to.derive.Codazzi.1} and \eqref{eq:to.derive.Codazzi.2} coincide), we can use \eqref{eq:to.derive.Codazzi.1}, 
\eqref{eq:to.derive.Codazzi.2},
and the triangle inequality to obtain:
\begin{equation}\label{eq:Codazzi!}
\begin{split}
 \|\upmu (\slashed{\mathcal L}_{\slashed{\mathcal{P}}})^{\Ntop-1} \nab^A \upchi_{AB}\|_{L^2(\Sigma_t^u)} \ls &\: \| \upmu (\slashed{\mathcal L}_{\slashed{\mathcal{P}}})^{\Ntop-1} \srd_B \mytr\upchi \|_{L^2(\Sigma_t^u)} + \epd^{\f 12} \upmu_{\star}^{-\toprate +0.9}(t) \\
\ls &\: \| \upmu \slashed{\mathcal{P}}^{\Ntop} \mytr\upchi \|_{L^2(\Sigma_t^u)} + \epd^{\f 12} \upmu_{\star}^{-\toprate +0.9}(t),
\end{split}
\end{equation}
where to obtain the last line, 
we used the commutation identity 
$(\slashed{\mathcal L}_{\slashed{\mathcal{P}}})^{\Ntop-1} \srd_B \mytr\upchi 
= 
\srd_B \slashed{\mathcal{P}}^{\Ntop-1} \mytr\upchi$
(in which we are thinking of both sides as $\ell_{t,u}$-tangent one-forms 
with components corresponding to the index ``$B$''),
the schematic identity 
$
\srd
=
\smoothfunction(L^i,\Psi)
Y
+
\smoothfunction(L^i,\Psi)
Z
$,
and Proposition~\ref{prop:geometric.low}.

Now since $(\slashed{\mathrm{div}}_{\slashed{g}} \chih)_B = (\slashed{\mathrm{div}}_{\slashed{g}} \upchi)_B - \f 12 \srd_B \mytr\upchi =  \nab^A \upchi_{AB} - \f 12 \srd_B \mytr\upchi$, we deuce from the estimate \eqref{eq:Codazzi!} 
that:
\begin{equation}\label{eq:Codazzi!!}
\begin{split}
\|\upmu (\slashed{\mathcal L}_{\slashed{\mathcal{P}}})^{\Ntop-1} \slashed{\mathrm{div}}_{\slashed{g}} \chih \|_{L^2(\Sigma_t^u)} 
\ls \| \upmu \slashed{\mathcal{P}}^{\Ntop} \mytr\upchi \|_{L^2(\Sigma_t^u)} + \epd^{\f 12} \upmu_{\star}^{-\toprate +0.9}(t).
\end{split}
\end{equation}

\pfstep{Step~2: Commuting $\slashed{\mathrm{div}}_{\slashed{g}}$ with $\slashed{\mathcal L}_{\slashed{\mathcal{P}}}$ derivatives} We now deduce from \eqref{eq:Codazzi!!} an estimate for $\slashed{\mathrm{div}}_{\slashed{g}} (\slashed{\mathcal L}_{\slashed{\mathcal{P}}})^{\Ntop-1} \chih$. For this, 
we simply note that the commutator 
$[\slashed{\mathrm{div}}_{\slashed{g}}, (\slashed{\mathcal L}_{\slashed{\mathcal{P}}})^{\Ntop-1}]\chih$ 
can be controlled by up to $\Ntop$ $\slashed{\mathcal{P}}$ derivatives of $\Psi$ and $L^i$. 
Hence, by \eqref{eq:Codazzi!!},  
\eqref{BA:W1}--\eqref{BA:W.Li.small}, 
and
Propositions~\ref{prop:geometric.low} and \ref{prop:geometric.top}, we have:
\begin{equation}\label{eq:commuted.Codazzi}
\begin{split}
&\: \| \upmu \slashed{\mathrm{div}}_{\slashed{g}} (\slashed{\mathcal L}_{\slashed{\mathcal{P}}})^{\Ntop-1} \chih \|_{L^2(\Sigma_t^u)}\\
\ls &\: \| \upmu  (\slashed{\mathcal L}_{\slashed{\mathcal{P}}})^{\Ntop-1} \slashed{\mathrm{div}}_{\slashed{g}} \chih \|_{L^2(\Sigma_t^u)} + \|\upmu \slashed{\mathcal{P}}^{[1,\Ntop]} (\Psi, L^i)\|_{L^2(\Sigma_t^u)} \\
\ls &\: \| \upmu  \slashed{\mathcal{P}}^{\Ntop} \mytr\upchi \|_{L^2(\Sigma_t^u)} 
+ 
\epd^{\f 12}  \upmu_{\star}^{-\toprate+0.9}(t).
\end{split}
\end{equation}

\pfstep{Step~3: Bounding the trace-part of $(\slashed{\mathcal L}_{\slashed{\mathcal{P}}})^{\Ntop-1} \chih$} By definition, $\mytr \chih = 0$. Note that the commutator 
$[\slashed{g}^{-1}, (\slashed{\mathcal L}_{\slashed{\mathcal{P}}})^{\Ntop-1}] \chih$ 
can be controlled by up to $\Ntop-1$ $\slashed{\mathcal{P}}$ derivatives of $\Psi$ and $L^i$. Hence, this commutator can be treated in the same way we treated the commutator term in Step~2, which yields the bound:
\begin{equation}\label{eq:tr.commuted.chih.est}
\begin{split}
&\: \|\mytr (\slashed{\mathcal L}_{\slashed{\mathcal{P}}})^{\Ntop-1} \chih\|_{L^2(\Sigma_t^u)} 
\ls \|[\slashed{g}^{-1}, (\slashed{\mathcal L}_{\slashed{\mathcal{P}}})^{\Ntop-1}] \chih\|_{L^2(\Sigma_t^u)} \ls \epd^{\f 12} \upmu_{\star}^{-\toprate+0.9}(t).
\end{split}
\end{equation}

Moreover, we can take a further $\slashed{\mathcal{P}}$-derivative of 
$\mytr (\slashed{\mathcal L}_{\slashed{\mathcal{P}}})^{\Ntop-1} \chih$, and the resulting term can be controlled by up to $\Ntop$ 
$\slashed{\mathcal{P}}$ derivatives of $\Psi$ and $L^i$. Therefore, using 
\eqref{BA:W1}--\eqref{BA:W.Li.small} and 
Propositions~\ref{prop:geometric.low} and \ref{prop:geometric.top}, we obtain:
\begin{equation}\label{eq:div.tr.commuted.chih.est}
\begin{split}
&\: \|\upmu \angD (\mytr (\slashed{\mathcal L}_{\slashed{\mathcal{P}}})^{\Ntop-1} \chih)\|_{L^2(\Sigma_t^u)} 
\ls \| \upmu \slashed{\mathcal{P}}^{[1,\Ntop]} (\Psi, L^i)\|_{L^2(\Sigma_t^u)} \ls \epd^{\f 12} \upmu_{\star}^{-\toprate+0.9}(t).
\end{split}
\end{equation}

\pfstep{Step~4: Elliptic estimates} Define $\xi$ to be the $\slashed{g}$-trace-free part of $(\slashed{\mathcal L}_{\slashed{\mathcal{P}}})^{\Ntop-1} \chih$, i.e.,
\begin{equation}\label{eq:def.xi.for.elliptic}
\xi_{AB} \doteq (\slashed{\mathcal L}_{\slashed{\mathcal{P}}})^{\Ntop-1} \chih_{AB} - \f 12 \slashed{g}_{AB} \mytr (\slashed{\mathcal L}_{\slashed{\mathcal{P}}})^{\Ntop-1} \chih.
\end{equation}
The term $(\slashed{\mathcal L}_{\slashed{\mathcal{P}}})^{\Ntop-1} \chih_{AB}$ 
on RHS~\eqref{eq:def.xi.for.elliptic}
can be written using \eqref{E:CHIINTERMSOFOTHERVARIABLES}, \eqref{E:TRCHIINTERMSOFOTHERVARIABLES} as an expression of up to $\Ntop$ 
$\mathcal{P}$ derivatives of $\Psi$ and $L^i$. Hence, by \eqref{E:CHIINTERMSOFOTHERVARIABLES}, \eqref{E:TRCHIINTERMSOFOTHERVARIABLES}, \eqref{BA:W1}--\eqref{BA:W.Li.small}, and
Propositions~\ref{prop:geometric.low} and \ref{prop:geometric.top}, we obtain:
\begin{equation}\label{eq:xi.L2.prelim}
\| (\slashed{\mathcal L}_{\slashed{\mathcal{P}}})^{\Ntop-1} \chih \|_{L^2(\Sigma_t^u)} 
\ls \| \mathcal{P}^{[1,\Ntop]} (\Psi, L^i)\|_{L^2(\Sigma_t^u)}  \ls \epd^{\f 12}\upmu_{\star}^{-\toprate+0.9}(t).
\end{equation}
Combining \eqref{eq:xi.L2.prelim} with \eqref{eq:tr.commuted.chih.est}, we find that:
\begin{equation}\label{eq:xi.L2.est}
\|\xi\|_{L^2(\Sigma_t^u)}\ls \epd^{\f 12} \upmu_{\star}^{-\toprate+0.9}(t).
\end{equation}

Moreover, in view of the algebraic relation
$$\slashed{\mathrm{div}}_{\slashed{g}} \xi = \slashed{\mathrm{div}}_{\slashed{g}} (\slashed{\mathcal L}_{\slashed{\mathcal{P}}})^{\Ntop-1} \chih - \frac{1}{2} \angD (\mytr (\slashed{\mathcal L}_{\slashed{\mathcal{P}}})^{\Ntop-1} \chih) $$
and the estimates \eqref{eq:commuted.Codazzi} and \eqref{eq:div.tr.commuted.chih.est}, we have:
\begin{equation}\label{eq:div.xi.est}
\| \upmu \slashed{\mathrm{div}}_{\slashed{g}} \xi \|_{L^2(\Sigma_t^u)} \ls \|\upmu \slashed{\mathcal{P}}^{\Ntop} \mytr\upchi \|_{L^2(\Sigma_t^u)} + \epd^{\f 12} \upmu_{\star}^{-\toprate+0.9}(t).
\end{equation}

Therefore, applying the elliptic estimates in Lemma~\ref{lem:elliptic} on $\ell_{t,u}$ with $\xi$ as in \eqref{eq:def.xi.for.elliptic} and $\mu = \upmu$, integrating over $u\in [0,U_0]$, and using 
\eqref{eq:div.tr.commuted.chih.est},
\eqref{eq:xi.L2.prelim}, 
\eqref{eq:xi.L2.est}, 
and \eqref{eq:div.xi.est}, 
as well as the Gauss curvature estimate in Proposition~\ref{prop:Gauss.curv.est} 
and the estimates of Proposition~\ref{prop:geometric.low}
(including the bound $|\nab \upmu|  \ls \epd^{\frac{1}{2}}$ that it implies), 
we obtain:
\begin{equation*}
\begin{split}
\|\upmu (\slashed{\mathcal L}_{\slashed{\mathcal{P}}})^{\Ntop} \chih \|_{L^2(\Sigma_t^u)} 
& 
\ls \|\upmu \nab (\slashed{\mathcal L}_{\slashed{\mathcal{P}}})^{\Ntop-1} \chih \|_{L^2(\Sigma_t^u)} + \|\upmu (\slashed{\mathcal L}_{\slashed{\mathcal{P}}})^{\Ntop-1} \chih \|_{L^2(\Sigma_t^u)}\\
\ls &\: \|\upmu \nab \xi \|_{L^2(\Sigma_t^u)} + \epd^{\f 12} \upmu_{\star}^{-\toprate+0.9}(t) \\
\ls &\: \|\upmu \slashed{\mathrm{div}}_{\slashed{g}} \xi \|_{L^2(\Sigma_t^u)} + (\|\mathfrak K_{\slashed{g}}\|_{L^\infty(\Sigma_t)}^{\f 12} + \|\nab \upmu\|_{L^\infty(\Sigma_t)}) \|\xi\|_{L^2(\Sigma_t^u)} + \epd^{\f 12} \upmu_{\star}^{-\toprate+0.9}(t) \\
\ls &\: \|\upmu \slashed{\mathcal{P}}^{\Ntop} \mytr\upchi \|_{L^2(\Sigma_t^u)} + \epd^{\f 12} \upmu_{\star}^{-\toprate+0.9}(t),
\end{split}
\end{equation*}
which is what we wanted to prove. \qedhere
\end{proof}

\subsection{The partial energies}
\label{SS:SPLITENERGIES}
To derive our top-order energy estimates for the wave equations, 
we will use the approach of \cite{LS},
which relies on distinguishing the ``full energies''
featured in definitions \eqref{eq:wave.energy.def.1}--\eqref{eq:wave.energy.def.4}
(which control all wave variables) 
from the ``partial energies,'' which are captured by the next definition.
The main point is that the partial energies do not control the difficult
almost Riemann invariant $\mathcal{R}_{(+)}$ 
(it is difficult in the sense that the shock formation is driven by the relative largeness 
of $|\Rad \mathcal{R}_{(+)}|$), and it turns out that this leads to 
easier error terms in the corresponding energy identities.
Importantly, we need to distinguish
the partial energies from the full energies
in order to close the proof using a uniform number of derivatives\footnote{We could close the proof without introducing the partial energies, but those simpler, less precise arguments would allow for the possibility
that the number of derivatives needed to close the estimates 
might depend on the equation of state, 
$\bar{\varrho}$, 
$\mathring{\upsigma}$, $\mathring{\updelta}$ and $\mathring{\updelta}_*^{-1}$.} 
$\Ntop$, that is, a number derivatives that does not depend on the equation of state
or any parameters in the problem; see the arguments in Section~\ref{sec:sketch.prop.wave}
for clarification on the role played by the partial energies in allowing us to close
the proof using a number of derivatives that is independent of the equation of state
and all parameters in the problem.

\begin{definition}[The partial energies]
\label{D:JAREDPARTIALENERGIES}
At the top-order, we define the partial energy by:
$$\mathbb{E}_{\Ntop}^{(Partial)}(t,u) 
\doteq \sup_{t'\in [0,t)}\sum_{\widetilde{\Psi}\in \{\mathcal R_{(-)}, v^2, v^3, s\}} \left(\|\bX \mathcal{P}^{\Ntop} \widetilde{\Psi}\|_{L^2(\Sigma_{t'}^u)}^2+ \|\sqrt{\upmu} \mathcal{P}^{\Ntop+1} \widetilde{\Psi}\|_{L^2(\Sigma_{t'}^u)}^2 \right).$$
Similarly, we separate the contribution of $\mathcal R_{(+)}$ from that of other components of $\Psi$ and define $\mathbb{F}_{\Ntop}^{(Partial)}$, $\mathbb K_{\Ntop}^{(Partial)}$, $\mathbb{Q}_{\Ntop}^{(Partial)}$ in an analogous way,
that is, as in Section~\ref{SS:DEFSOFENANDFLUX}, but without the $\mathcal R_{(+)}$-involving terms.
\end{definition}

\subsection{$L^2$ estimates for the top-order derivatives of $\mytr \upchi$ tied to the modified quantities}\label{sec:est.for.modified}

\begin{proposition}[$L^2$ estimates for the top-order derivatives of $\mytr \upchi$ tied to the fully modified quantities]
\label{prop:trch.top}
There exists an absolute positive constant $M_{\mathrm{abs}} \in \mathbb{N}$,
a positive constant
$\Ccrit \in \mathbb N$,
and a constant $C > 0$
(each having the properties described in Section~\ref{SS:ASSUMPTIONSANDCONSTANTS})  
such that the following estimate 
(whose right-hand side involves the wave energies \eqref{eq:wave.energy.def.1}--\eqref{eq:wave.energy.def.4}
as well as the partial energies of Definition~\ref{D:JAREDPARTIALENERGIES})
holds for every $(t,u) \in [0,\Tboot) \times [0,U_0]$:
\begin{equation}\label{eq:trch.top.sharp}
\begin{split}
&\: \|(\bX \mathcal{R}_{(+)}) \slashed{\mathcal{P}}^{\Ntop} \mytr \upchi \|_{L^2(\Sigma_t^u)} \\
\leq &\: \mbox{Non-vorticity-involving terms on RHS~\cite[(14.27)]{LS} with the boxed constants replaced by 
$\boxed{M_{\mathrm{abs}}}$} 	
	\\
&\: \ \ \mbox{and the constant $C_*$ replaced by $\boxed{\Ccrit}$} \\
&\: \ \ 
+ C \epd \upmu_{\star}^{-\toprate+ 0.9}(t) + C\upmu_{\star}^{-1}(t)\int_{t'=0}^{t'=t}  \| \mathcal{P}^{[1, \Ntop]}  \mathfrak G \|_{L^2(\Sigma_{t'}^u)} \, dt',
\end{split}
\end{equation}
and:
\begin{equation}\label{eq:trch.top.non.sharp}
\begin{split}
\| \upmu \slashed{\mathcal{P}}^{\Ntop} \mytr \upchi \|_{L^2(\Sigma_t^u)} 
\ls &\: 
\mbox{Non-vorticity-involving terms on RHS~\cite[(14.28)]{LS}} 
	\\
&\: \ \ + 
\epd \upmu_{\star}^{-\toprate+ 1.9}(t)  
+ 
\int_{t'=0}^{t'=t}  \| \mathcal{P}^{[1, \Ntop]}  \mathfrak G \|_{L^2(\Sigma_{t'}^u)} \, dt'.
\end{split}
\end{equation}
\end{proposition}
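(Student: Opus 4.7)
\smallskip

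\noindent\textbf{Proof proposal for Proposition~\ref{prop:trch.top}.} The plan is to follow the top-order transport-estimate strategy of \cite[Proposition~14.1]{LS} essentially verbatim, but carefully incorporating the two new features introduced by the present setting: (i) the extra $-\mathcal{P}^{\Ntop}(\upmu|\chih|^2)$ term on the right-hand side of the transport equation \eqref{eq:fully.modified.transport} for the fully modified quantity, which is the $3+1$-dimensional Raychaudhuri correction; and (ii) the extra inhomogeneous term $\tfrac{1}{2}\mathcal{P}^{\Ntop}(\vec{G}_{\Lunit\Lunit}\contr\vec{\mathfrak G})$ on the same right-hand side, coming from the fact that we use the inhomogeneous wave equations of Proposition~\ref{prop:wave} rather than homogeneous ones. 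Everything else in \cite[proof of Proposition~14.1]{LS}, and in particular the delicate algebraic structure that produces the boxed constants, the explicit factor of $(\bX\mathcal{R}_{(+)})$, and the ``Gr\"onwall-friendly'' form of the estimate, carries over without change (remembering the convention in Remarks~\ref{R:EPSILONVSSQRRTDATAEPSILON} and \ref{R:VORTICITYSOAKEDUPINTOINHOMOGENEOUSTERMS}).

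First, I would take $\slashed{\mathcal{P}}^{\Ntop}$ to be a purely $\ell_{t,u}$-tangential operator of order $\Ntop$, appeal to \eqref{eq:fully.modified.transport} for $\upchifullmodarg{\slashed{\mathcal{P}}^{\Ntop}}$, and solve the resulting linear transport equation along the integral curves of $\Lunit$ by the standard integrating-factor method; this expresses $\upchifullmodarg{\slashed{\mathcal{P}}^{\Ntop}}(t,u,x^2,x^3)$ as the sum of a data term on $\Sigma_0$ and a time-integral of the inhomogeneous terms, weighted by a power of $\upmu$ with explicit leading coefficient of roughly $-2$ coming from the $-(2L\upmu/\upmu-2\mytr\upchi)\upchifullmodarg{\slashed{\mathcal{P}}^{\Ntop}}$ term. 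Recalling \eqref{E:TRANSPORTRENORMALIZEDTRCHIJUNK}--\eqref{E:LOWESTORDERTRANSPORTRENORMALIZEDTRCHIJUNKDISCREPANCY}, rewriting $\upmu\slashed{\mathcal{P}}^{\Ntop}\mytr\upchi=\upchifullmodarg{\slashed{\mathcal{P}}^{\Ntop}}-\slashed{\mathcal{P}}^{\Ntop}\upchifullmodinhom$ and using \eqref{E:KEYLARGETERMEXPANDED} to extract the explicit factor $(\bX\mathcal{R}_{(+)})$ from $\vec{G}_{\Lunit\Lunit}\contr\Rad\threePsi$ produces the sharp estimate \eqref{eq:trch.top.sharp}; dropping the decomposition and just taking the $L^2(\Sigma_t^u)$ norm of the result yields \eqref{eq:trch.top.non.sharp}. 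All of the terms that already appear in \cite[(14.27), (14.28)]{LS} can be estimated by exactly the arguments there, with the bootstrap parameter $\varepsilon$ replaced by $\epd^{1/2}$.

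The main new work is to show that the two additional contributions fit into the desired right-hand sides. For the inhomogeneous-wave term, after commuting $\mathcal{P}^{[1,\Ntop]}$ through $\vec{G}_{\Lunit\Lunit}\contr\vec{\mathfrak G}$ using \eqref{BA:LARGERIEMANNINVARIANTLARGE}--\eqref{BA:W.Li.small}, Proposition~\ref{prop:geometric.low}, and the smallness assumption \eqref{eq:F.smallness}, one gains an overall factor of $\upmu^{-1}$ upon dividing by the integrating factor (which is essentially $\upmu^{-2}$ to leading order), and then integrating in $t$ produces precisely the term $C\upmu_\star^{-1}(t)\int_0^t\|\mathcal{P}^{[1,\Ntop]}\mathfrak G\|_{L^2(\Sigma_{t'}^u)}\,dt'$ advertised on the right-hand side of \eqref{eq:trch.top.sharp}; the corresponding contribution to \eqref{eq:trch.top.non.sharp} is one power of $\upmu_\star$ less singular because we do not divide by the extra $\bX\mathcal{R}_{(+)}$ factor. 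The expected main obstacle is the term $-\mathcal{P}^{\Ntop}(\upmu|\chih|^2)$, which is new compared to the two-dimensional case treated in \cite{LS} because in $1+2$ dimensions $\chih$ vanishes identically. The naive transport-only bound would lose a derivative, since pointwise $|\chih|$ costs one derivative of the eikonal function. The plan is to resolve this by invoking the Codazzi-type elliptic estimate of Proposition~\ref{prop:Codazzi}: the top-order contribution comes from one factor of $(\slashed{\mathcal L}_{\slashed{\mathcal{P}}})^{\Ntop}\chih$ paired with a factor of $\chih$ (which is $\mathcal{O}(\epd^{1/2})$ by Proposition~\ref{P:LINFTYHIGHERTRANSVERSAL} together with \eqref{E:CHIINTERMSOFOTHERVARIABLES}--\eqref{E:TRCHIINTERMSOFOTHERVARIABLES} and the bootstrap assumptions), so that after applying Cauchy--Schwarz and Proposition~\ref{prop:Codazzi} in the form
\[
\|\upmu(\slashed{\mathcal L}_{\slashed{\mathcal{P}}})^{\Ntop}\chih\|_{L^2(\Sigma_t^u)}\lesssim\|\upmu\slashed{\mathcal{P}}^{\Ntop}\mytr\upchi\|_{L^2(\Sigma_t^u)}+\epd^{1/2}\upmu_\star^{-\toprate+0.9}(t),
\]
the $\|\upmu\slashed{\mathcal{P}}^{\Ntop}\mytr\upchi\|_{L^2}$ factor can be absorbed into the left-hand side after the $t$-integration in Gr\"onwall (using the approximate monotonicity of $\upmu_\star$ from Proposition~\ref{prop:almost.monotonicity} and the integral estimates of Proposition~\ref{prop:mus.int}), while the remaining piece contributes at most $C\epd\upmu_\star^{-\toprate+0.9}(t)$ to \eqref{eq:trch.top.sharp} and $C\epd\upmu_\star^{-\toprate+1.9}(t)$ to \eqref{eq:trch.top.non.sharp}. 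Lower-order commutator pieces from $\mathcal{P}^{\Ntop}(\upmu|\chih|^2)$ can be controlled by \eqref{BA:W1}--\eqref{BA:W.Li.small}, Propositions~\ref{prop:geometric.low} and \ref{prop:geometric.top}, and Proposition~\ref{P:LINFTYHIGHERTRANSVERSAL}, and all fit into the $C\epd\upmu_\star^{-\toprate+0.9}(t)$ and $\epd\upmu_\star^{-\toprate+1.9}(t)$ remainders. Finally, tracking constants as in Section~\ref{SS:ASSUMPTIONSANDCONSTANTS}, the new contributions carry absolute-constant coefficients that can be absorbed into a single absolute $M_{\mathrm{abs}}$ in the boxed positions, yielding \eqref{eq:trch.top.sharp}--\eqref{eq:trch.top.non.sharp}.
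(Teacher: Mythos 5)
Your proposal follows essentially the same route as the paper: identify the two new contributions to the fully modified transport equation — the $3$D Raychaudhuri term $-\slashed{\mathcal{P}}^{\Ntop}(\upmu|\chih|^2)$ and the inhomogeneous-wave term $\tfrac12\slashed{\mathcal{P}}^{\Ntop}(\vec G_{\Lunit\Lunit}\contr\vec{\mathfrak G})$ — then isolate the genuinely top-order piece $2\upmu\chih^{\sharp\sharp}(\slashed{\mathcal L}_{\slashed{\mathcal{P}}})^{\Ntop}\chih$, invoke the Codazzi/elliptic estimate of Proposition~\ref{prop:Codazzi}, use the $\mathcal{O}(\epd^{1/2})$ smallness of $\chih$, absorb the $\|\upmu\slashed{\mathcal{P}}^{\Ntop}\mytr\upchi\|_{L^2}$ back term by Gr\"onwall, estimate the $\mathfrak G$ term directly via the smallness assumption \eqref{eq:F.smallness}, and defer to \cite{LS} for the boxed-constant bookkeeping. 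This matches the paper's Steps~1(a), 1(b), 2, 3.

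One point of imprecision worth noting: you attribute the extra $\upmu_\star^{-1}(t)$ in front of the $\mathfrak G$ integral in \eqref{eq:trch.top.sharp} to ``dividing by the integrating factor $\upmu^{-2}$.'' That isn't quite the right accounting. The paper's Step~1(b) produces the \emph{unweighted} integral $\int_0^t\|\slashed{\mathcal{P}}^{\Ntop}(\vec G_{\Lunit\Lunit}\contr\vec{\mathfrak G})\|_{L^2(\Sigma_{t'}^u)}\,dt'$, and the $\upmu_\star^{-1}$ prefactor in \eqref{eq:trch.top.sharp} simply reflects that the sharp estimate controls $\|(\bX\mathcal{R}_{(+)})\slashed{\mathcal{P}}^{\Ntop}\mytr\upchi\|_{L^2}$ (with $|\bX\mathcal{R}_{(+)}|=\mathcal{O}(1)$) rather than $\|\upmu\slashed{\mathcal{P}}^{\Ntop}\mytr\upchi\|_{L^2}$ as in \eqref{eq:trch.top.non.sharp}; removing the $\upmu$ weight from the latter costs one power of $\upmu_\star^{-1}$. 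Your final clause does say this correctly, so this is a matter of a misleading intermediate remark rather than a gap in the argument. Similarly, the remark about ``using \eqref{E:KEYLARGETERMEXPANDED} to extract $(\bX\mathcal{R}_{(+)})$'' conflates the $L^\infty$ weight $\bX\mathcal{R}_{(+)}$ present from the start in the quantity being bounded with the appearance of $\vec G_{\Lunit\Lunit}\contr\Rad\threePsi$ inside $\upchifullmodinhom$; the former is not derived from the latter.
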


\begin{remark}
\label{R:VORTICITYTERMSHAVEBEENSOAKEDINTOGENERALINHOMTERM}
We clarify that in the proofs of \cite[(14.27)]{LS} 
and \cite[(14.28)]{LS}, the vorticity-involving inhomogeneous terms in the wave
equations led to error integrals on RHSs~\cite[(14.27)]{LS} and \cite[(14.28)]{LS} 
that involved the vorticity energies;
in contrast, on RHSs~\eqref{eq:trch.top.sharp}--\eqref{eq:trch.top.non.sharp}, 
the vorticity-involving terms are not explicitly indicated 
because we have soaked them up into our definition of 
the wave equation inhomogeneous term $\mathfrak{G}$.
\end{remark}

\begin{proof}
The proofs of both estimates are similar. 
We first discuss the proof of \eqref{eq:trch.top.non.sharp} in Steps~1--2.
In Step~3, we describe the changes we need in order to obtain \eqref{eq:trch.top.sharp}.
Throughout this proof, we freely use the observations made in Step~0 of the proof of
Proposition~\ref{prop:Codazzi}.

Following \cites{jSgHjLwW2016,LS}, 
in order to bound $\upmu \slashed {\mathcal{P}}^{\Ntop} \mytr \upchi$, 
we first control the fully modified quantity (recall the definition in \eqref{E:TRANSPORTRENORMALIZEDTRCHIJUNK}), and then bound the difference of $\upmu \slashed{\mathcal{P}}^{\Ntop} \mytr \upchi$ 
and 
the fully modified quantity. See the corresponding estimates in 
\cite[Lemma~13.9, Proposition~13.11, Lemma~14.14]{LS}.

\pfstep{Step~1: Controlling the inhomogeneous terms in \eqref{eq:fully.modified.transport}} 
We first estimate the two new terms on RHS~\eqref{eq:fully.modified.transport} in the following norms
(recall that here we are assuming that in \eqref{eq:fully.modified.transport}, 
$\mathcal{P}^{\Ntop}$ is equal to a pure $\ell_{t,u}$-tangential operator $\slashed{\mathcal{P}}^{\Ntop}$):
\begin{equation}\label{eq:to.be.controlled.for.modified}
\int_{t'=0}^{t'=t} \|\slashed{\mathcal{P}}^{\Ntop} (\upmu |\chih|^2) \|_{L^2(\Sigma_{t'}^u)}\, dt',\quad \int_{t'=0}^{t'=t} \| \slashed{\mathcal{P}}^{\Ntop}  
(\vec{G}_{\Lunit \Lunit}\contr \vec{\mathfrak G})\|_{L^2(\Sigma_{t'}^u)} \, dt'.
\end{equation}

\pfstep{Step~1(a): The $\slashed{\mathcal{P}}^{\Ntop} (\upmu |\chih|^2)$ term} 
For the first term in \eqref{eq:to.be.controlled.for.modified}, the most (and indeed only) difficult 
contribution arises when all operators $\slashed{\mathcal{P}}^{\Ntop}$ 
fall on one factor of $\chih$. For the lower-order terms, we use 
the identities \eqref{E:CHIINTERMSOFOTHERVARIABLES}, 
\eqref{E:TRCHIINTERMSOFOTHERVARIABLES},
and 
$\chih_{AB} = \upchi_{AB} - \frac{1}{2} \gsphere_{AB} \mytr \upchi$,
\eqref{BA:LARGERIEMANNINVARIANTLARGE}--\eqref{BA:W.Li.small}, and 
Proposition~\ref{prop:geometric.low} to obtain the following pointwise estimates:
\begin{equation}\label{eq:chih.not.top.pointwise}
\begin{split}
&\: \left| \slashed{\mathcal{P}}^{\Ntop} (\upmu |\chih|^2) 
- 2 \upmu \chih^{\sharp\sharp} (\slashed{\mathcal L}_{\slashed{\mathcal{P}}})^{\Ntop} \chih \right| 
\ls \epd^{\f 12} |\slashed{\mathcal{P}}^{[1,\Ntop]}(\Psi,L^i,\upmu)|.
\end{split}
\end{equation}
From \eqref{BA:W1}, \eqref{BA:W2} and Proposition~\ref{prop:geometric.top}, 
and
the estimate \eqref{eq:chih.not.top.pointwise}, we see that:
\begin{equation}\label{eq:top.chih.but.not.really.top}
\begin{split}
&\: \| \slashed{\mathcal{P}}^{\Ntop} (\upmu |\chih|^2) 
- 
2 \upmu \chih^{\sharp\sharp} (\slashed{\mathcal L}_{\slashed{\mathcal{P}}})^{\Ntop} \chih \|_{L^2(\Sigma_t^u)} 
\ls \epd^{\f 12} \|\slashed{\mathcal{P}}^{[1, \Ntop]} (\Psi,L^i,\upmu) \|_{L^2(\Sigma_t^u)} 
\ls  \epd \upmu_{\star}^{-\toprate +1.4}(t).
\end{split}
\end{equation}

On the other hand, the top-order derivative $\upmu (\slashed{\mathcal L}_{\slashed{\mathcal{P}}})^{\Ntop}\chih$ can be bounded using Proposition~\ref{prop:Codazzi}, while the low-order factor $\chih^{\sharp\sharp}$
can be bounded in $L^{\infty}$ by $\lesssim \epd^{\frac{1}{2}}$ by virtue of 
the bootstrap assumptions \eqref{BA:LARGERIEMANNINVARIANTLARGE}--\eqref{BA:W.Li.small}
and the estimates of Proposition~\ref{prop:geometric.low}. 
Therefore, combining \eqref{eq:top.chih.but.not.really.top} and Proposition~\ref{prop:Codazzi}, 
and then using Proposition~\ref{prop:mus.int}, 
we bound the first term in \eqref{eq:to.be.controlled.for.modified} 
as follows:
\begin{equation}\label{eq:chih.for.top.trch}
\begin{split}
&\: \int_{t'=0}^{t'=t} \|\slashed{\mathcal{P}}^{\Ntop} (\upmu |\chih|^2) \|_{L^2(\Sigma_{t'}^u)}\, dt' \\
\ls &\: \epd^{\f 12}\int_{t'=0}^{t'=t} \| \upmu \slashed{\mathcal{P}}^{\Ntop} \mytr\upchi \|_{L^2(\Sigma_{t'}^u)} \, dt' + \epd \int_{t'=0}^{t'=t} \upmu_{\star}^{-\toprate + 0.9}(t') \, dt' \\
\ls &\: \epd^{\f 12}\int_{t'=0}^{t'=t} \| \upmu \slashed{\mathcal{P}}^{\Ntop} \mytr\upchi \|_{L^2(\Sigma_{t'}^u)} \, dt' + \epd \upmu_{\star}^{-\toprate + 1.9}(t).
\end{split}
\end{equation}

\pfstep{Step~1(b): The $\slashed{\mathcal{P}}^{\Ntop} ( \vec{G}_{\Lunit \Lunit}\contr \vec{\mathfrak G})$ term}  
To handle the second term in \eqref{eq:to.be.controlled.for.modified}, 
we simply use H\"older's inequality together with 
\eqref{BA:W1}--\eqref{BA:W.Li.small}, Propositions~\ref{prop:geometric.low}, \ref{prop:geometric.top}, the assumption \eqref{eq:F.smallness},
and Proposition~\ref{prop:mus.int} to obtain the following bound:
\begin{equation}\label{eq:G.for.top.trch}
\begin{split}
&\: \int_{t'=0}^{t'=t} \| \slashed{\mathcal{P}}^{\Ntop} (\vec{G}_{\Lunit \Lunit}\contr \vec{\mathfrak G}) \|_{L^2(\Sigma_{t'}^u)} \, dt' \\
\ls &\: \int_{t'=0}^{t'=t} \left\lbrace \|\slashed{\mathcal{P}}^{[2, \Ntop]} (\Psi, L^i) \|_{L^2(\Sigma_{t'}^u)}  \|\slashed{\mathcal{P}}^{ \leq \lceil \f {\Ntop}{2} \rceil } \mathfrak G \|_{L^\infty(\Sigma_{t'})} + \| \slashed{\mathcal{P}}^{[1, \Ntop]}  \mathfrak G \|_{L^2(\Sigma_{t'}^u)} \right\rbrace \, dt'  \\
\ls &\: \int_{t'=0}^{t'=t} \left\lbrace \epd \upmu_{\star}^{-\toprate+1.4}(t') + \| \slashed{\mathcal{P}}^{[1,\Ntop]}  \mathfrak G \|_{L^2(\Sigma_{t'}^u)} \right\rbrace \, dt' \\
\ls &\: \epd \upmu_{\star}^{-\toprate+2.4}(t) + \int_{t'=0}^{t'=t}  \| \mathcal{P}^{[1,\Ntop]}  \mathfrak G \|_{L^2(\Sigma_{t'}^u)} \, dt'.
\end{split}
\end{equation}

\pfstep{Step~2: Bounding the fully modified quantity} 
The fully modified quantity $\upchifullmodarg{\slashed{\mathcal{P}}^{\Ntop}}$ satisfies the transport equation 
\eqref{eq:fully.modified.transport} in the $L$ direction. 
We use the arguments given in \cite[Proposition~13.11]{LS} to
integrate the transport equation to obtain a pointwise estimate for $\upchifullmodarg{\slashed{\mathcal{P}}^{\Ntop}}$.
On the right-hand side of the pointwise estimate
there appears, in particular, 
the time integral of the
new terms
$
\slashed{\mathcal{P}}^{\Ntop} (\upmu |\chih|^2)
$
and 
$
\frac 12 \slashed{\mathcal{P}}^{\Ntop} (\vec{G}_{\Lunit \Lunit}\contr \vec{\mathfrak G})
$
on RHS~\eqref{eq:fully.modified.transport}.
We then take the $L^2(\Sigma_t^u)$ norm of the resulting pointwise inequality, 
as in the proof of \cite[Lemma~14.14]{LS}.
This yields an $L^2(\Sigma_t^u)$ estimate for $\upchifullmodarg{\slashed{\mathcal{P}}^{\Ntop}}$.
We next use \eqref{E:TRANSPORTRENORMALIZEDTRCHIJUNK} to
algebraically express $\upmu \slashed{\mathcal{P}}^{\Ntop} \mytr \upchi$
in terms of $\upchifullmodarg{\slashed{\mathcal{P}}^{\Ntop}}$ plus a remainder term, and then use
the triangle inequality to obtain an $L^2(\Sigma_t^u)$ estimate for $\upmu \slashed{\mathcal{P}}^{\Ntop} \mytr \upchi$.
One of the remainder terms is $\slashed{\mathcal{P}}^{\Ntop} \upchifullmodinhom$, 
and it can be estimated exactly as in \cite[Lemma~14.14]{LS}. 
In total, we find that:
\begin{equation*}
\begin{split}
&\: \| \upmu \slashed{\mathcal{P}}^{\Ntop} \mytr \upchi\|_{L^2(\Sigma_t^u)} \\
\ls &\: \|\upchifullmodarg{\slashed{\mathcal{P}}^{\Ntop}} \|_{L^2(\Sigma_t^u)} 
+ 
\|\slashed{\mathcal{P}}^{\Ntop} \upchifullmodinhom\|_{L^2(\Sigma_t^u)} \\
\ls &\: \mbox{Terms on RHS~\cite[(14.28)]{LS}}
	+ 
	\int_{t'=0}^{t'=t} 
		\| \slashed{\mathcal{P}}^{\Ntop} (\upmu |\chih|^2)  \|_{L^2(\Sigma_{t'}^u)} 
	\, dt' 
	+ 
	\int_{t'=0}^{t'=t}  \| \slashed{\mathcal{P}}^{\Ntop} ( \vec{G}_{\Lunit \Lunit}\contr \vec{\mathfrak G}) \|_{L^2(\Sigma_{t'}^u)} \, dt' 
\\
\ls &\: 
	\mbox{Terms on RHS~\cite[(14.28)]{LS}}
	+ 
	\epd \upmu_{\star}^{-\toprate+ 1.9}(t) 
	+ 
	\epd^{\f 12} \int_{t'=0}^{t'=t} \| \upmu \slashed{\mathcal{P}}^{\Ntop} \mytr \upchi \|_{L^2(\Sigma_{t'}^u)} \, dt' 
	\\
&\: 
+ 
\int_{t'=0}^{t'=t}  \| \slashed{\mathcal{P}}^{[1,\Ntop]} \mathfrak G \|_{L^2(\Sigma_{t'}^u)} \, dt' 
\\
\ls 
&\: \mbox{Terms on RHS~\cite[(14.28)]{LS}}
+ 
\epd \upmu_{\star}^{-\toprate+ 1.9}(t)
+ 
\int_{t'=0}^{t'=t}  \| \mathcal{P}^{[1,\Ntop]}  \mathfrak G \|_{L^2(\Sigma_{t'}^u)} \, dt',
\end{split}
\end{equation*}
where to obtain the next-to-last line, 
we used the estimates \eqref{eq:chih.for.top.trch} and \eqref{eq:G.for.top.trch},
and to obtain the last line, we used
Gr\"onwall's inequality to eliminate the factor
$
\epd^{\f 12}\int_{t'=0}^{t'=t} \| \upmu \slashed{\mathcal{P}}^{\Ntop} \mytr \upchi \|_{L^2(\Sigma_{t'}^u)} \, dt'
$
on the RHS. We have therefore proved \eqref{eq:trch.top.non.sharp}.

\pfstep{Step~3: Proof of \eqref{eq:trch.top.sharp}} 
\eqref{eq:trch.top.sharp} can be proved using arguments that are very similar
to the ones we used in the proof of \eqref{eq:trch.top.non.sharp}, 
except that we need to keep track of the constants in the borderline terms,
i.e., the absolute constant $M_{\mathrm{abs}}$ 
(whose precise value we do not bother to estimate here)
and the parameter-dependent constant
$\Ccrit$. This can be done exactly as in the proof of \cite[(14.27)]{LS}.
The only terms which are not already present in \cite[(14.27)]{LS} 
are exactly those we encountered already in Steps~1--2. These new terms 
can be treated exactly as in the proof of \eqref{eq:trch.top.non.sharp}, 
since we do not have to keep track of the sharp constants for these new terms
(we instead allow a general constant $C$). \qedhere
\end{proof}

\begin{proposition}[$L^2$ estimates for the partially modified quantities]
\label{prop:partially.modified}
There exists an absolute positive constant $M_{\mathrm{abs}} \in \mathbb{N}$,
a positive constant
$\Ccrit \in \mathbb N$,
and a constant $C > 0$
(each having the properties described in Section~\ref{SS:ASSUMPTIONSANDCONSTANTS})  
such that the partially modified quantity $\upchipartialmodarg{\slashed{\mathcal{P}}^{\Ntop-1}}$ obeys the
following estimates 
(whose right-hand side involves the wave energies \eqref{eq:wave.energy.def.1}--\eqref{eq:wave.energy.def.4}
as well as the partial energies of Definition~\ref{D:JAREDPARTIALENERGIES})
for every $(t,u) \in [0,\Tboot) \times [0,U_0]$:
\begin{equation}\label{eq:partiall.modified.1}
\begin{split}
 &\: \| \f{1}{\sqrt{\upmu}} (\bX \mathcal{R}_{(+)}) L\upchipartialmodarg{\slashed{\mathcal{P}}^{\Ntop-1}} \|_{L^2(\Sigma_t^u)} \\
 \leq &\: 
\mbox{Terms on RHS~\cite[(14.32a)]{LS} with the boxed constants replaced by 
$\boxed{M_{\mathrm{abs}}}$} 	
	\\
 &\: \ \ \mbox{and the constant $C_*$ replaced by $\boxed{\Ccrit}$} \\ 
 &\: \ \
	+ C \epd \upmu_{\star}^{-\toprate + 0.9}(t),
\end{split}
\end{equation}
\begin{equation}\label{eq:partiall.modified.2}
\begin{split}
 &\: \| \f{1}{\sqrt{\upmu}} (\bX \mathcal{R}_{(+)}) \upchipartialmodarg{\slashed{\mathcal{P}}^{\Ntop-1}} \|_{L^2(\Sigma_t^u)} \\
 \leq &\: 
\mbox{Terms on RHS~\cite[(14.32b)]{LS} with the boxed constants replaced by 
$\boxed{M_{\mathrm{abs}}}$} 	
	\\
 &\: \ \ \mbox{and the constant $C_*$ replaced by $\boxed{\Ccrit}$} \\ 
 &\: \ \ + C \epd \upmu_{\star}^{-\toprate + 1.9}(t),
\end{split}
\end{equation}
\begin{equation}\label{eq:partiall.modified.3}
 \| L \upchipartialmodarg{\slashed{\mathcal{P}}^{\Ntop-1}} \|_{L^2(\Sigma_t^u)} 
\ls  
\mbox{Terms in \cite[(14.33a)]{LS}} 
+ 
\epd \upmu_{\star}^{-\toprate+1.4}(t),
 \end{equation}
 and
\begin{equation}\label{eq:partiall.modified.4}
 \| \upchipartialmodarg{\slashed{\mathcal{P}}^{\Ntop-1}} \|_{L^2(\Sigma_t^u)} \ls 
	\mbox{Terms in \cite[(14.33b)]{LS}} 
	+ 
	\epd \upmu_{\star}^{-\toprate+2.4}(t).
 \end{equation}
\end{proposition}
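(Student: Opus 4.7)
The plan is to follow the arguments used to derive the $2$D analogs of these four bounds in \cite{LS} (whose right-hand sides we have quoted), making the single modification required by the additional source term $-\mathcal{P}^{\Ntop-1}(|\chih|^2)$ on RHS~\eqref{eq:partially.modified.transport}. The estimates for $L\upchipartialmodarg{\slashed{\mathcal{P}}^{\Ntop-1}}$ (\eqref{eq:partiall.modified.1} and \eqref{eq:partiall.modified.3}) follow by reading off \eqref{eq:partially.modified.transport} pointwise and taking $L^2(\Sigma_t^u)$-norms of the right-hand side; the estimates for $\upchipartialmodarg{\slashed{\mathcal{P}}^{\Ntop-1}}$ itself (\eqref{eq:partiall.modified.2} and \eqref{eq:partiall.modified.4}) will be obtained by integrating the transport equation along the integral curves of $L$ from $\Sigma_0$ via Lemma~\ref{L:L2ESTIMATEFORTRANSPORT}, and then using Proposition~\ref{prop:mus.int} to control the resulting time integral (which accounts for the extra factor of $\upmu_\star^{-1}$ between \eqref{eq:partiall.modified.1} and \eqref{eq:partiall.modified.2}, and between \eqref{eq:partiall.modified.3} and \eqref{eq:partiall.modified.4}). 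For the weighted bounds \eqref{eq:partiall.modified.1}--\eqref{eq:partiall.modified.2}, the factor $\frac{1}{\sqrt{\upmu}}\bX\mathcal{R}_{(+)}$ is handled exactly as in \cite{LS}, so that the boxed $M_{\mathrm{abs}}$ and $\Ccrit$-dependent structure on the right-hand sides is preserved without change.

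The one new input required is an $L^2(\Sigma_t^u)$ bound on $\slashed{\mathcal{P}}^{\Ntop-1}(|\chih|^2)$. I will distribute the derivatives via Leibniz: in every term at least one of the two factors carries no more than $\lceil (\Ntop-1)/2 \rceil$ derivatives and is placed in $L^\infty$, where it is controlled via the identities \eqref{E:CHIINTERMSOFOTHERVARIABLES}--\eqref{E:TRCHIINTERMSOFOTHERVARIABLES}, the decomposition $\chih = \upchi - \tfrac{1}{2}\gsphere\,\mytr\upchi$, the bootstrap assumptions \eqref{BA:LARGERIEMANNINVARIANTLARGE}--\eqref{BA:W.Li.small}, and Proposition~\ref{prop:geometric.low}; this yields a pointwise smallness factor of $\mathcal{O}(\epd^{1/2})$. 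For the unique dangerous top-order term $2\chih^{\sharp\sharp}(\slashed{\mathcal{L}}_{\slashed{\mathcal{P}}})^{\Ntop-1}\chih$, a one-derivative-lower variant of Proposition~\ref{prop:Codazzi} (whose proof carries over \emph{mutatis mutandis}) gives
\[
\|\upmu(\slashed{\mathcal{L}}_{\slashed{\mathcal{P}}})^{\Ntop-1}\chih\|_{L^2(\Sigma_t^u)}
\lesssim \|\upmu\slashed{\mathcal{P}}^{\Ntop-1}\mytr\upchi\|_{L^2(\Sigma_t^u)}
+ \epd^{1/2}\upmu_\star^{-\toprate+1.9}(t).
\]
Pulling the $L^\infty$-bound $\|\chih\|_{L^\infty}\lesssim \epd^{1/2}$ out front then produces the contribution $\epd\,\upmu_\star^{-\toprate+1.9}(t)$ plus an $\epd^{1/2}$-small multiple of $\|\upmu\slashed{\mathcal{P}}^{\Ntop-1}\mytr\upchi\|_{L^2(\Sigma_t^u)}$, the latter already under control via (a shift of) \eqref{eq:trch.top.non.sharp}; after time-integration along $L$ one gains a further power of $\upmu_\star$ via Proposition~\ref{prop:mus.int}.

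The crucial structural fact—and the reason this new term is not borderline—is that $\|\chih\|_{L^\infty}$ carries an intrinsic $\epd^{1/2}$ smallness, converting what might naively have been a top-order borderline contribution into a strictly sub-borderline remainder. Thus all $|\chih|^2$-generated errors can be absorbed into the generic $C\epd\upmu_\star^{-\toprate+k}(t)$ terms on RHSs~\eqref{eq:partiall.modified.1}--\eqref{eq:partiall.modified.4}, leaving the sharp $M_{\mathrm{abs}}$ and $\Ccrit$ constants untouched. I expect the principal obstacle to be the elliptic step itself: Proposition~\ref{prop:Codazzi} is indispensable because in $1+3$ dimensions $\chih$ has three independent components and cannot be recovered algebraically from $\mytr\upchi$, so without a Codazzi-type elliptic estimate the $|\chih|^2$ source would force a loss of one derivative. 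Once this elliptic estimate is in place, the rest of the proof is bookkeeping that exactly mirrors the arguments in \cite{LS}.
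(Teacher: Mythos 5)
Your proof of the first three displayed estimates runs along the same lines as the paper's — read off the transport equation \eqref{eq:partially.modified.transport} and bound the new $|\chih|^2$ source in $L^2(\Sigma_t^u)$, then integrate along $L$ via Lemma~\ref{L:L2ESTIMATEFORTRANSPORT} and invoke Proposition~\ref{prop:mus.int} to pick up a power of $\upmu_\star$ for \eqref{eq:partiall.modified.2} and \eqref{eq:partiall.modified.4}. However, your treatment of the key term $\slashed{\mathcal{P}}^{\Ntop-1}(|\chih|^2)$ is incorrect, and the final paragraph of your argument asserts the exact opposite of the structural fact that makes this proposition work.

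You claim that a one-derivative-lower variant of the Codazzi elliptic estimate (Proposition~\ref{prop:Codazzi}) is ``indispensable'' here, for the same reason it was needed for $\slashed{\mathcal{P}}^{\Ntop}(\upmu|\chih|^2)$ in Proposition~\ref{prop:trch.top}. That is not so. The elliptic estimate is needed in Proposition~\ref{prop:trch.top} because there the top-order factor $(\slashed{\mathcal L}_{\slashed{\mathcal{P}}})^{\Ntop}\chih$ involves, via \eqref{E:CHIINTERMSOFOTHERVARIABLES}, up to $\Ntop+1$ $\mathcal{P}$-derivatives of $L^i$, which would lose a derivative if estimated directly; the Codazzi identity converts those top-order derivatives of $\chih$ into derivatives of $\mytr\upchi$ and sidesteps the loss. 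For the partially modified quantity the source carries only $\Ntop-1$ derivatives, so the worst factor $(\slashed{\mathcal L}_{\slashed{\mathcal{P}}})^{\Ntop-1}\chih$ involves at most $\Ntop$ derivatives of $(L^i,\Psi)$, which \emph{are} directly controlled in $L^2$ by Proposition~\ref{prop:geometric.top} and the wave energy bootstrap assumptions. The paper's proof simply uses the pointwise expression \eqref{E:CHIINTERMSOFOTHERVARIABLES}, pulls out the $L^\infty$ factor of size $\epd^{1/2}$, and concludes
\begin{align*}
\| \slashed{\mathcal{P}}^{\Ntop - 1}(|\chih|^2)\|_{L^2(\Sigma_t^u)}
\ls \epd^{\f 12} \|\mathcal{P}^{[1,\Ntop]} (\Psi, L^i)\|_{L^2(\Sigma_t^u)}
\ls \epd \upmu_\star^{-\toprate+1.4}(t).
\end{align*}
No elliptic estimate enters.

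Moreover, even on its own terms your elliptic route cannot reproduce the stated bound. Proposition~\ref{prop:Codazzi} and its lower-order variant control $\| \upmu (\slashed{\mathcal L}_{\slashed{\mathcal{P}}})^{\Ntop-1} \chih \|_{L^2(\Sigma_t^u)}$, with a $\upmu$ weight, but the source $-\slashed{\mathcal{P}}^{\Ntop-1}(|\chih|^2)$ in \eqref{eq:partially.modified.transport} carries no $\upmu$ weight (unlike \eqref{eq:fully.modified.transport}, whose source is $-\upmu|\chih|^2$). Stripping the $\upmu$ weight costs you a factor of $\upmu_\star^{-1}(t)$, so your final error term comes out as $\epd\,\upmu_\star^{-\toprate+0.9}(t)$ rather than the $\epd\,\upmu_\star^{-\toprate+1.4}(t)$ appearing in \eqref{eq:partiall.modified.3}. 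You would then propagate the weaker exponent into \eqref{eq:partiall.modified.1} and, after time integration, into \eqref{eq:partiall.modified.2} and \eqref{eq:partiall.modified.4}. Those weaker exponents do not match the statement you are asked to prove. The fix is to drop the elliptic step entirely and estimate $\slashed{\mathcal{P}}^{\Ntop-1}(|\chih|^2)$ directly, as the paper does.
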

\begin{proof}
To control $L\upchipartialmodarg{\slashed{\mathcal{P}}^{\Ntop-1}}$, we bound the terms on the RHS of
the transport equation \eqref{eq:partially.modified.transport}. 
Note that for this estimate, the only term not already found in \cite{LS} is the term 
$-\slashed{\mathcal{P}}^{\Ntop - 1} (|\chih|^2)$. 
Compared to the estimates for the fully modified quantity that we derived in Proposition~\ref{prop:trch.top}, 
the estimates for the partially modified quantity is simpler in two ways:
i)~the transport equation \eqref{eq:partially.modified.transport} does not feature wave equation
inhomogeneous term $\mathfrak G$, 
and ii)~the additional term only has up to $\Ntop-1$ derivatives of $\chih$,
and thus elliptic estimates are not necessary to control this term.

We now estimate $-\slashed{\mathcal{P}}^{\Ntop - 1}(|\chih|^2)$. 
By \eqref{E:CHIINTERMSOFOTHERVARIABLES}, 
\eqref{BA:W1}--\eqref{BA:W.Li.small},
and
Propositions~\ref{prop:geometric.low} and \ref{prop:geometric.top}, we have:
\begin{equation}\label{eq:chih.one.lower.than.top}
\| \slashed{\mathcal{P}}^{\Ntop - 1}(|\chih|^2)\|_{L^2(\Sigma_t^u)} 
\ls \epd^{\f 12} \|\mathcal{P}^{[1,\Ntop]} (\Psi, L^i)\|_{L^2(\Sigma_t^u)} 
\ls \epd \upmu_{\star}^{-\toprate+1.4}(t).
\end{equation}
We now recall \eqref{eq:partially.modified.transport}. 
The terms that are already in \mbox{Terms in \cite[(6.10)]{LS}}  
can be treated using the same arguments that were used to prove \cite[(14.32a)]{LS}
and \cite[(14.33a)]{LS},
except here we do not bother to
estimate the absolute constant $M_{\mathrm{abs}}$ that arises in the arguments,
and we have renamed the constant $C_*$ as $\boxed{\Ccrit}$.
From this fact, the estimate \eqref{eq:chih.one.lower.than.top},
and the bootstrap assumption \eqref{BA:LARGERIEMANNINVARIANTLARGE} 
for $\Rad \mathcal{R}_{(+)}$, we deduce
\eqref{eq:partiall.modified.1} and \eqref{eq:partiall.modified.3}. 

To obtain \eqref{eq:partiall.modified.4},
we use the transport equation estimate
provided by Lemma~\ref{L:L2ESTIMATEFORTRANSPORT},
the estimate \eqref{eq:partiall.modified.3} for the source term,
Proposition~\ref{prop:mus.int},
and the initial data bound 
$\| \upchipartialmodarg{{\slashed{\mathcal{P}}}^{\Ntop-1}}(0,\cdot) \|_{L^2(\Sigma_t^u)}
\lesssim \epd$
obtained in the proof of \cite[(14.33b)]{LS}.

Similarly, \eqref{eq:partiall.modified.2}
can be proved using the same arguments used in the proof of \cite[(14.32b)]{LS}.
The estimate is based on integrating the transport equation \eqref{eq:partially.modified.transport}
along the integral curves of $\Lunit$
and using Lemma~\ref{L:L2ESTIMATEFORTRANSPORT}.
The only new term we have to handle 
comes from the $- \mathcal{P}^{\Ntop - 1} (|\chih|^2)$ term on RHS~\eqref{eq:partially.modified.transport},
and by Lemma~\ref{L:L2ESTIMATEFORTRANSPORT}, this term leads to the following
additional term that has to be controlled:
$ 
\f{1}{\sqrt{\upmu_{\star}(t)}} 
\| \bX \mathcal{R}_{(+)} \|_{L^{\infty}(\Sigma_t^u)}
\int_{t'=0}^{t'=t}
\| 
	\mathcal{P}^{\Ntop - 1} (|\chih|^2)
\|_{L^2(\Sigma_{t'}^u)}
\, dt'
$.
In view of the bootstrap assumption \eqref{BA:LARGERIEMANNINVARIANTLARGE},
the estimate \eqref{eq:chih.one.lower.than.top},
and Proposition~\ref{prop:mus.int},
we bound this additional term
by $\lesssim \upmu_{\star}^{-\toprate+1.9}(t)$, 
which is $\leq \mbox{RHS~\eqref{eq:partiall.modified.2}}$ as desired.

\qedhere
\end{proof}

\subsection{The main integral inequalities for the energies}
\label{SS:MAININTEGRALINEQUALITIES}
Our main goal in this section is to prove Proposition~\ref{P:JAREDimproved.critical.constant},
which provides integral inequalities for the various wave energies at various derivative levels.
Most of the analysis is the same as in \cite{LS}. In the next definition, we highlight
the error terms in the energy estimates that are new in the present paper compared to \cite{LS}.
The new terms stem from the inhomogeneous term $\mathfrak{G}$ in the wave equations
as well as the $- \upmu |\chih|^2$ term on the RHS of the $3$D Raychaudhuri equation \eqref{eq:3D.Raychaudhuri}.

\begin{definition}[New energy estimate error terms]
\label{D:NEWENERGYESTIMATE}
We use the notation $\mbox{\upshape NewError}_{\Ntop}^{(Top)}(t,u)$ to denote any term that obeys the following bound
for every $(t,u) \in [0,\Tboot) \times [0,U_0]$:
\begin{align}\label{E:NEWTOPORDER}
\begin{split}
\mbox{\upshape NewError}_{\Ntop}^{(Top)}(t,u) 
& \leq C \epd^2 \upmu_{\star}^{-2\toprate+1.8}(t) 
+ 
C\int_{t'=0}^{t'=t} \upmu_{\star}^{-\f 32}(t') 
\left\{\int_{s=0}^{s=t'} \|\mathcal{P}^{[1,\Ntop]} \mathfrak G\|_{L^2(\Sigma_s^u)} \, ds \right\}^2 \, dt' 
	\\
&\: 
+ 
C \| (|L\mathcal{P}^{[1,\Ntop]} \Psi| + |\bX \mathcal{P}^{[1,\Ntop]} \Psi|) |\mathcal{P}^{[1,\Ntop]} \mathfrak G| \|_{L^1(\Mtu)},
\end{split}
\end{align}
where $C > 0$ is a constant of the type described in Section~\ref{SS:ASSUMPTIONSANDCONSTANTS}.

Similarly, 
we use the notation $\mbox{\upshape NewError}_{N-1}^{(Below-Top)}(t,u)$
to denote any term that obeys the following bound
for every $(t,u) \in [0,\Tboot) \times [0,U_0]$:
\begin{align}\label{E:NEWBELOWTOPORDER}
\begin{split}
\mbox{\upshape NewError}_{N-1}^{(Below-Top)}(t,u)
& \leq
C \| (|L\mathcal{P}^{[1,N-1]} \Psi| + |\bX \mathcal{P}^{[1,N-1]} \Psi|) |\mathcal{P}^{[1,N-1]} \mathfrak G| \|_{L^1(\Mtu)}.
\end{split}
\end{align}

\end{definition}

\begin{proposition}[The main integral inequalities for the energies]
\label{P:JAREDimproved.critical.constant}
Let $\mathbb{Q}_{[1,N]}(t,u), \mathbb K_{[1,N]}(t,u)$ be the wave energies from Section~\ref{SSS:DEFSOFENERGIES},
and let $\mathbb{Q}^{(Partial)}_{[1,N]}(t,u), \mathbb K^{(Partial)}_{[1,N]}(t,u)$
be the partial wave energies from Section~\ref{SS:SPLITENERGIES}.
There exist \textbf{an absolute constant} $M_{\mathrm{abs}}\in \mathbb N$ and a constant $\Ccrit \in \mathbb N$ 
depending on the equation of state, $\bar{\varrho}$, $\mathring{\upsigma}$, $\mathring{\updelta}$ and 
$\mathring{\updelta}_*^{-1}$, such that the following estimates, which are
modified versions of \cite[(14.3)]{LS},
hold for every $(t,u) \in [0,\Tboot) \times [0,U_0]$:
\begin{align} \label{E:JAREDTOPORDERINTEGRALINEQUALITY}
\begin{split}
&\: \max\{ \mathbb{Q}_{[1,\Ntop]}(t,u), \mathbb K_{[1,\Ntop]}(t,u)\}\\
\leq &\: \boxed{M_{\mathrm{abs}}} \int_{t'=0}^t \f{\|[L\upmu]_-\|_{L^\infty(\Sigma_{t'}^u)}}{\upmu_{\star}(t',u)} \mathbb{Q}_{[1,\Ntop]}(t',u)\, dt' \\
&\: + \boxed{M_{\mathrm{abs}}} \int_{t'=0}^{t'=t}  \f{\|[L\upmu]_-\|_{L^\infty(\Sigma^u_{t'})}}{\upmu_{\star}(t',u)} \sqrt{\mathbb{Q}_{[1,\Ntop]}}(t',u) \int_{s=0}^{s=t'} \f{\|[L\upmu]_-\|_{L^\infty(\Sigma^u_s)}}{\upmu_{\star}(s,u)} \sqrt{\mathbb{Q}_{[1,\Ntop]}}(s,u) \, ds\, dt' \\
&\: + \boxed{M_{\mathrm{abs}}} \f{\|L\upmu\|_{L^\infty(^{(-)}\Sigma^u_{t;t})}}{\upmu_{\star}^{\f 12}(t,u)} \sqrt{\mathbb{Q}_{[1,\Ntop]}}(t,u) \int_{t'=0}^{t'=t} \f{1}{\upmu_{\star}^{\f 12}(t',u)}\sqrt{\mathbb{Q}_{[1,\Ntop]}}(t',u)\,dt' \\
&\: + \boxed{\Ccrit} \int_{t'=0}^t \f{\|[L\upmu]_-\|_{L^\infty(\Sigma_{t'}^u)}}{\upmu_{\star}(t',u)} \sqrt{\mathbb{Q}_{[1,\Ntop]}}(t',u) \sqrt{\mathbb{Q}^{(Partial)}_{[1,\Ntop]}}(t',u)\, dt' \\
&\: + \boxed{\Ccrit} \int_{t'=0}^{t'=t}  \f{\|[L\upmu]_-\|_{L^\infty(\Sigma^u_{t'})}}{\upmu_{\star}(t',u)} \sqrt{\mathbb{Q}_{[1,\Ntop]}}(t',u) \int_{s=0}^{s=t'} \f{\|[L\upmu]_-\|_{L^\infty(\Sigma^u_s)}}{\upmu_{\star}(s,u)} \sqrt{\mathbb{Q}_{[1,\Ntop]}^{(Partial)}}(s,u) \, ds\, dt' \\
&\: + \boxed{\Ccrit} \f{\|L\upmu\|_{L^\infty(^{(-)}\Sigma^u_{t;t})}}{\upmu_{\star}(t,u)^{\f 12}} \sqrt{\mathbb{Q}_{[1,\Ntop]}}(t,u) \int_{t'=0}^{t'=t} \f{1}{\upmu_{\star}^{\f 12}(t',u)}\sqrt{\mathbb{Q}^{(Partial)}_{[1,\Ntop]}}(t',u)\,dt' 
	\\
&\: + \mbox{the error terms $\mbox{\upshape Error}_{\Ntop}^{(Top)}(t,u)$ defined by \cite[(14.4)]{LS}} 
	\\
&\: + \mbox{the error terms $\mbox{\upshape NewError}_{\Ntop}^{(Top)}(t,u)$ defined by \eqref{E:NEWTOPORDER}}.
\end{split}
\end{align}
The set $^{(-)}\Sigma^u_{t;t}$ appearing on RHS~\eqref{E:JAREDTOPORDERINTEGRALINEQUALITY} 
is defined in\footnote{We have no need to state its precise definition here; 
later, we will simply quote the relevant estimates from \cite{LS} that are tied to this set.} 
\cite[Definition~10.4]{LS}.

Moreover, the partial wave energies 
obey the following estimates,
which are modified versions of \cite[(14.5)]{LS}:
\begin{align} \label{E:JAREDPARTIALTOPORDERINTEGRALINEQUALITY}
\begin{split}
&\: \max\{ \mathbb{Q}_{[1,\Ntop]}^{(Partial)}(t,u), \mathbb K_{[1,\Ntop]}^{(Partial)}(t,u)\}\\
\leq &\: \mbox{the error terms $\mbox{\upshape Error}_{\Ntop}^{(Top)}(t,u)$ defined by \cite[(14.4)]{LS}} 
	\\
&\: + \mbox{the error terms $\mbox{\upshape NewError}_{\Ntop}^{(Top)}(t,u)$ defined by \eqref{E:NEWTOPORDER}}.
\end{split}
\end{align}

Finally, we have the following below-top-order estimates, 
which are modified versions\footnote{Note that the lower-order estimate \cite[(14.6)]{LS} 
is easier and has fewer additional terms. 
This is because to obtain the top-order estimates 
\cite[(14.3), (14.5)]{LS}, one needs to bound all of the commutator 
terms, including the difficult ones identified in Proposition~\ref{prop:identity.main.wave.commutator.terms},
without losing derivatives.
In contrast, to obtain the lower-order estimates \cite[(14.6)]{LS}, 
one is allowed to lose a derivative, as is manifested by
the double-time-integral term on RHS~\eqref{E:JAREDBELOWTOPORDERINTEGRALINEQUALITY}.
This double-time-integral will eventually be responsible for the coupling between the energies of different orders;
see in particular the estimates \eqref{eq:main.EE.prop.low} in the statement of Proposition~\ref{prop:wave}.} 
of \cite[(14.6)]{LS}:
\begin{align} \label{E:JAREDBELOWTOPORDERINTEGRALINEQUALITY}
\begin{split}
&\: \max\{ \mathbb{Q}_{[1,N-1]}(t,u), \mathbb K_{[1,N-1]}(t,u)\} 
\\
\leq &\: 
	C
	\int_{t'=0}^t
				\frac{1}{\upmu_{\star}^{\frac{1}{2}}(t',u)} 
						\sqrt{\mathbb{Q}_{[1,N-1]}}(t',u) 
						\int_{s=0}^{t'}
							\frac{1}{\upmu_{\star}^{\frac{1}{2}}(s,u)} 
							\sqrt{\mathbb{Q}_{[1,N]}}(s,u)  
						\, ds
				\, dt'
	\\
&\:+ \mbox{the error terms $\mbox{\upshape Error}_{N-1}^{(Below-Top)}(t,u)$ defined by \cite[(14.7)]{LS}} 
	\\
&\:+ \mbox{the error terms $\mbox{\upshape NewError}_{N-1}^{(Below-Top)}(t,u)$ defined by \eqref{E:NEWBELOWTOPORDER}}.
\end{split}
\end{align}

\end{proposition}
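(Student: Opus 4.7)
The plan is to follow the strategy of the proof of \cite[Proposition~14.3]{LS}, making appropriate modifications to account for (i) the additional $-\upmu|\chih|^2$ source term appearing in the 3D Raychaudhuri equation \eqref{eq:3D.Raychaudhuri}, and (ii) the wave equation inhomogeneities $\mathfrak{G}$ that are featured in Proposition~\ref{prop:wave}. As the paper itself notes, the analytic scheme is otherwise essentially the same as in \cite{LS}, so the bulk of the work amounts to identifying where the new terms enter and verifying that they are absorbed into the error term definitions \eqref{E:NEWTOPORDER}--\eqref{E:NEWBELOWTOPORDER}.

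First, I would derive the basic multiplier-based energy identity for each commuted solution $\mathcal{P}^N\Psi$, using the standard Christodoulou-type multiplier $(1+2\upmu)L + 2\Rad$. Integrating the resulting divergence identity over $\mathcal{M}_{t,u}$ and using Proposition~\ref{prop:identity.main.wave.commutator.terms} to substitute for $\upmu\square_g(\mathcal{P}^N\Psi)$, one obtains an inequality that controls $\mathbb{Q}_{[1,N]}(t,u) + \mathbb{K}_{[1,N]}(t,u)$ by data terms, a collection of ``harmless'' spacetime integrals already treated in \cite{LS}, a spacetime integral of $(|L\mathcal{P}^{[1,N]}\Psi| + |\Rad\mathcal{P}^{[1,N]}\Psi|) \cdot |\mathcal{P}^{[1,N]}\mathfrak{G}|$ coming from the new inhomogeneity, and the top-order commutator terms involving $\slashed{\mathcal{P}}^{\Ntop-1}\slashed d\mytr\upchi$. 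The $\mathfrak{G}$-dependent spacetime integral is exactly the third term in the definition \eqref{E:NEWTOPORDER} of $\mbox{NewError}_{\Ntop}^{(Top)}$, and at lower orders it is the single term in \eqref{E:NEWBELOWTOPORDER}, so these immediately fit into the claimed error structures.

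The main technical step is then to handle the top-order derivatives of $\mytr\upchi$ via the modified quantities estimates already established in Propositions~\ref{prop:trch.top}~and~\ref{prop:partially.modified}. Plugging \eqref{eq:trch.top.sharp} into the borderline terms generated by the $\Rad\mathcal{R}_{(+)}$-factor in \eqref{eq:hard.commutator.Y}--\eqref{eq:hard.commutator.Z} produces the six boxed-constant integrals on RHS~\eqref{E:JAREDTOPORDERINTEGRALINEQUALITY}: the three $M_{\mathrm{abs}}$-multiplied terms inherit the universal constants from the $\Rad\mathcal{R}_{(+)}\cdot\Rad\mathcal{R}_{(+)}$-type contributions (which is the self-coupling of the large Riemann invariant and hence genuinely absolute), while the three $\Ccrit$-multiplied terms come from the couplings $\Rad\mathcal{R}_{(+)}\cdot\Rad\widetilde{\Psi}$ with $\widetilde{\Psi}\in\{\mathcal{R}_{(-)},v^2,v^3,s\}$, which by Cauchy--Schwarz can be estimated by the partial energy $\mathbb{Q}^{(Partial)}_{[1,\Ntop]}$. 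The last two terms on RHS~\eqref{eq:trch.top.sharp} involving $\mathfrak{G}$ feed directly (via Cauchy--Schwarz and Proposition~\ref{prop:mus.int}) into the double time-integral term on RHS~\eqref{E:NEWTOPORDER}. The partially modified quantity bounds \eqref{eq:partiall.modified.1}--\eqref{eq:partiall.modified.4} control the remaining less-singular contributions, absorbing them into $\mbox{Error}_{\Ntop}^{(Top)}$ as in \cite{LS}. The partial energy inequality \eqref{E:JAREDPARTIALTOPORDERINTEGRALINEQUALITY} is then obtained by repeating the argument but using $\mathcal{P}^{\Ntop}\widetilde{\Psi}$ with $\widetilde{\Psi}\neq\mathcal{R}_{(+)}$ as the commuted variable; the absence of $\Rad\mathcal{R}_{(+)}$ on the left-hand side kills the borderline terms entirely, leaving only $\mbox{Error}_{\Ntop}^{(Top)}$ and $\mbox{NewError}_{\Ntop}^{(Top)}$. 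For the below-top-order inequality \eqref{E:JAREDBELOWTOPORDERINTEGRALINEQUALITY}, since one is allowed to lose a derivative, the $\mytr\upchi$ factor at order $N-1$ can be controlled by directly integrating the Raychaudhuri equation \eqref{eq:3D.Raychaudhuri}, producing the double-time-integral term on RHS~\eqref{E:JAREDBELOWTOPORDERINTEGRALINEQUALITY} that couples order $N-1$ to order $N$.

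The hard part will be the careful bookkeeping that distinguishes the absolute constants $M_{\mathrm{abs}}$ from the parameter-dependent constants $\Ccrit$: one must verify that every coefficient that multiplies a borderline (genuinely singular in $\upmu_\star^{-1}$) integral involving only top-order self-coupling of $\mathcal{R}_{(+)}$ can be chosen to depend only on the structure of the multiplier identity, and not on the equation of state, $\bar{\varrho}$, $\mathring{\upsigma}$, $\mathring{\updelta}$ or $\mathring{\updelta}_*^{-1}$. This is crucial because these constants fix the universal blow-up exponent $\toprate$ in the later Gr\"onwall argument of Section~\ref{sec:sketch.prop.wave}. The new contributions from $-\upmu|\chih|^2$ that arose in Step~1 of the proof of Proposition~\ref{prop:trch.top} were already estimated with a general constant $C$ (since they are quadratically small in $\epd^{1/2}$ and contain no $\Rad\mathcal{R}_{(+)}$-factor), so they harmlessly enter $\mbox{Error}_{\Ntop}^{(Top)}$ rather than the borderline boxed terms; this has to be checked explicitly, but it is a consequence of the elliptic gain in Proposition~\ref{prop:Codazzi} combined with the $\epd^{1/2}$-smallness of $\chih$ in $L^\infty$. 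Once this bookkeeping is complete, no further analytic input beyond what is in Sections~\ref{sec:appendix.top.order}--\ref{sec:est.for.modified} is required.
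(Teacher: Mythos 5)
Your proposal is essentially correct and follows the same strategy as the paper's proof: one derives the multiplier energy identity, isolates the three top-order commutator integrals involving $\slashed{\mathcal{P}}^{\Ntop}\mytr\upchi$ via Proposition~\ref{prop:identity.main.wave.commutator.terms}, handles them via Propositions~\ref{prop:trch.top}--\ref{prop:partially.modified}, and carefully tracks the $M_{\mathrm{abs}}$ versus $\Ccrit$ split, with the new $\mathfrak{G}$- and $|\chih|^2$-dependent pieces feeding into \eqref{E:NEWTOPORDER}--\eqref{E:NEWBELOWTOPORDER}. Two minor imprecisions are worth noting but do not affect the validity of the plan. First, for the partial energy estimate \eqref{E:JAREDPARTIALTOPORDERINTEGRALINEQUALITY}, the borderline commutator integrals are not literally ``killed'' by the absence of $\Rad\mathcal{R}_{(+)}$; they are still present, but the factor $\bX\widetilde{\Psi}$ (which replaces $\bX\mathcal{R}_{(+)}$ in the integrand) carries $L^\infty$-size $\epd^{1/2}$ via \eqref{BA:SMALLWAVEVARIABLESUPTOONETRANSVERSALDERIVATIVE}, so they are relegated into $\mbox{Error}_{\Ntop}^{(Top)}$ rather than appearing as boxed-constant terms. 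Second, the boxed constants $M_{\mathrm{abs}}$ and $\Ccrit$ are inherited directly from the structure of the $\mytr\upchi$ estimate \eqref{eq:trch.top.sharp} (which in turn mirrors \cite[(14.27)]{LS}); your heuristic attribution of $M_{\mathrm{abs}}$ to $\Rad\mathcal{R}_{(+)}\cdot\Rad\mathcal{R}_{(+)}$-type self-coupling and $\Ccrit$ to the cross-couplings with $\widetilde{\Psi}$ (i.e., the terms where $\sqrt{\mathbb{Q}^{(Partial)}}$ appears) is a fair summary of the mechanism, but the actual bookkeeping follows from the modified-quantity transport estimate rather than from a direct term-by-term reading of the commutator integrand.
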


\begin{proof}
\pfstep{Step~1: Proof of \eqref{E:JAREDBELOWTOPORDERINTEGRALINEQUALITY}} 
We begin with \eqref{E:JAREDBELOWTOPORDERINTEGRALINEQUALITY}, 
which is the easier estimate since it is below top-order. 
Here, we use that \cite[(14.6)]{LS} is proved by differentiating the wave equation 
$\upmu  \square_{g(\vec{\Psi})}\Psi = \cdots$ 
with $\mathcal{P}^{N'}$, computing the commutator $[\upmu \square_{g(\vec{\Psi})}, \mathcal{P}^{N'}]$,
multiplying the commuted equation by $(1+2\upmu) L \mathcal{P}^{N'} \Psi + \bX \mathcal{P}^{N'} \Psi$, 
and then integrating (with respect to the volume form $d \vol$ Definition~\ref{D:NONDEGENERATEVOLUMEFORMS}) 
by parts over the spacetime region $\Mtu$ (for $1\leq N' \leq N-1$).
Hence, to prove \eqref{E:JAREDBELOWTOPORDERINTEGRALINEQUALITY},
we repeat the argument in \cite{LS}, except that here we simply denote all of the inhomogeneous terms
in the wave equations as ``$\mathfrak G$.'' 
That is, we start with the wave equations 
$\upmu \square_{g(\vec{\Psi})} \Psi_{\iota} = \mathfrak G_{\iota}$
and commute them to obtain the wave equations 
$\upmu \square_{g(\vec{\Psi})} \mathcal{P}^{N'} \Psi_{\iota} = [\upmu \square_{g(\vec{\Psi})}, \mathcal{P}^{N'}] \Psi_{\iota} 
+ 
\mathcal{P}^{N'} \mathfrak G_{\iota}$.
The main point is that for the below-top-order estimates,
all commutator terms 
$[\upmu \square_{g(\vec{\Psi})}, \mathcal{P}^{N'}] \Psi_{\iota}$
can be handled exactly as in \cite{LS}. 
These commutator terms lead to the presence of the first term
on RHS~\eqref{E:JAREDBELOWTOPORDERINTEGRALINEQUALITY} as well as the error term
$\mbox{\upshape Error}_{N-1}^{(Below-Top)}(t,u)$ on RHS~\eqref{E:JAREDBELOWTOPORDERINTEGRALINEQUALITY}.
We clarify that in the proof of \cite[(14.6)]{LS}, the vorticity-involving inhomogeneous terms in the wave
equation led to error integrals on RHS~\cite[(14.6)]{LS} that involved the vorticity energies;
in contrast, on RHS~\eqref{E:JAREDBELOWTOPORDERINTEGRALINEQUALITY}, 
the vorticity-involving terms are not explicitly indicated 
because we have soaked them up into our definition of $\mathfrak G_{\iota}$.
Thus, to complete the proof of \eqref{E:JAREDBELOWTOPORDERINTEGRALINEQUALITY},
we only have to discuss the contribution of the inhomogeneous term $\mathfrak G_{\iota}$.
From the above discussion, it follows that we only have to show that
the following energy identity error integrals are bounded in magnitude
by $\leq \mbox{RHS~\eqref{E:JAREDBELOWTOPORDERINTEGRALINEQUALITY}}$
when $1\leq N' \leq N-1$ and $(t,u)\in [0,\Tboot)\times [0,U_0]$:
$$
\int_{\Mtu}  
	\left\lbrace
		(1+2\upmu) L\mathcal{P}^{N'}\Psi + \bX \mathcal{P}^{N'}\Psi
	\right\rbrace
	\mathcal{P}^{N'} \mathfrak G
\, d \vol.
$$
The desired estimate is simple;
in view of the $L^{\infty}$ estimates for $\upmu$ provided by Proposition~\ref{prop:geometric.low},
we see that these error integrals are all bounded by
$
C \| (|L\mathcal{P}^{[1,N-1]} \Psi| + |\bX \mathcal{P}^{[1,N-1]} \Psi|) 
|\mathcal{P}^{[1,N-1]} \mathfrak G| \|_{L^1(\Mtu)}
$,
which are exactly the error terms we have defined in \eqref{E:NEWBELOWTOPORDER}.

\medskip

\pfstep{Step~2: Proof of \eqref{E:JAREDTOPORDERINTEGRALINEQUALITY}}

\pfstep{Step~2(a): Preliminaries} 
As in our proof of \eqref{E:JAREDBELOWTOPORDERINTEGRALINEQUALITY},
to prove \eqref{E:JAREDTOPORDERINTEGRALINEQUALITY}, 
the only new step compared to \cite{LS} is tracking the contribution of the
wave equation inhomogeneous terms $\mathfrak G_{\iota}$ to the energy estimates.
As in Step 1, one way in which this inhomogeneous term contributes to the energy estimates is
through the error terms
$C \| (|L\mathcal{P}^{[1,\Ntop]} \Psi| 
+ 
|\bX \mathcal{P}^{[1,\Ntop]} \Psi|) |\mathcal{P}^{[1,\Ntop]} \mathfrak G| \|_{L^1(\Mtu)}$,
which are found on RHS~\eqref{E:NEWTOPORDER}. 
However, in the top-order case, there is a second way in which 
$\mathfrak G_{\iota}$ contributes to the top-order energy estimates.
To explain this contribution, we first note that, as in the proof of \cite[(14.3)]{LS},
we have to handle some additional difficult top-order commutator terms 
involving the top-order derivatives of $\mytr\upchi$.
Specifically, these difficult top-order commutator terms are
explicitly listed on RHSs~\eqref{eq:hard.commutator.L}--\eqref{eq:hard.commutator.Z}.
Recalling that we multiply the wave equation by $(1+2\upmu) L\mathcal{P}^{N'}\Psi + \bX \mathcal{P}^{N'}\Psi$
to derive the wave equation energy estimates at level $N'$, we see that 
up to harmless factors that are $\mathcal{O}(1)$ by virtue of the estimates of Proposition~\ref{prop:geometric.low.2},
these difficult commutator terms lead to the following three error integrals in the top-order energy estimates:
$$
\int_{\Mtu}  
	(\bX \mathcal{P}^{\Ntop} \Psi)
	(\bX \Psi) 
	\slashed{\mathcal{P}}^{\Ntop} \mytr\upchi
\, d \vol,
$$
$$
\int_{\Mtu}  
	\left\lbrace
		(1+2\upmu) L\mathcal{P}^{\Ntop} \Psi
	\right\rbrace
	(\bX \Psi) \slashed{\mathcal{P}}^{\Ntop} \mytr\upchi
\, d \vol,
$$
$$
\int_{\Mtu}  
	\left\lbrace
		(1+2\upmu) L\mathcal{P}^{\Ntop} \Psi + \bX \mathcal{P}^{\Ntop} \Psi
	\right\rbrace
	(\slashed{\mathcal{P}} \Psi) \upmu \slashed{\mathcal{P}}^{\Ntop} \mytr\upchi
\, d \vol.
$$

We will control these three terms, respectively, in Steps~2(b)--(d) below.

\pfstep{Step~2(b): Contributions from 
$\int_{\Mtu}  
	(\bX \mathcal{P}^{\Ntop} \Psi)
	(\bX \Psi) 
	\slashed{\mathcal{P}}^{\Ntop} \mytr\upchi
\, d \vol$} 
We first consider the case $\Psi = \mathcal{R}_{(+)}$,
which is by far the most difficult case.
Using H\"older's inequality and the estimate 
\eqref{eq:trch.top.sharp} in Proposition~\ref{prop:trch.top}, 
we deduce that:
\begin{equation}\label{eq:hardest.commutator.term}
\begin{split}
&\: 
\left|
\int_{\Mtu}  
	(\bX \mathcal{P}^{\Ntop} \mathcal{R}_{(+)})
	(\bX \mathcal{R}_{(+)}) 
	\slashed{\mathcal{P}}^{\Ntop} \mytr\upchi
\, d \vol
\right|
	\\
&\leq
\: 
\int_{t'=0}^{t'=t} 
	\|\bX\mathcal{P}^{\Ntop} \mathcal{R}_{(+)} \|_{L^2(\Sigma_{t'}^u)}
	\|(\bX \mathcal{R}_{(+)}) \slashed{\mathcal{P}}^{\Ntop} \mytr\upchi \|_{L^2(\Sigma_{t'}^u)}
	\, dt'
	\\
\leq &\: \mbox{Terms on RHS~\cite[(14.3)]{LS} with the boxed constants replaced by $\boxed{M_{\mathrm{abs}}}$}
	\\
&\: \ \ \mbox{and the constant $C_*$ replaced by $\boxed{\Ccrit}$} 
	\\
&\: + C \underbrace{ \int_{t'=0}^{t'=t} \| \bX\mathcal{P}^{\Ntop} \mathcal{R}_{(+)}\|_{L^2(\Sigma_{t'}^u)} \epd \upmu_{\star}^{-\toprate + 0.9}(t') \, dt' }_{\doteq \mathrm{I}}\\
&\: + C \underbrace{ \int_{t'=0}^{t'=t} \upmu_{\star}^{-1}(t) \|\bX\mathcal{P}^{\Ntop}
\mathcal{R}_{(+)}\|_{L^2(\Sigma_{t'}^u)} \left\{\int_{s=0}^{s=t'} \|\mathcal{P}^{[1,\Ntop]} \mathfrak G\|_{L^2(\Sigma_s^u)} \, ds \right\}\, dt' }_{\doteq \mathrm{II}}.
\end{split}
\end{equation}
We clarify that Remark~\ref{R:VORTICITYTERMSHAVEBEENSOAKEDINTOGENERALINHOMTERM}
also applies to the terms on RHS~\cite[(14.3)]{LS}
(some of which also appear on RHS~\eqref{eq:hardest.commutator.term}).

To handle the term $\mathrm{I}$ in \eqref{eq:hardest.commutator.term}, 
we use Cauchy--Schwarz inequality in $t'$ and Proposition~\ref{prop:mus.int} 
to deduce:
\begin{equation}\label{eq:hardest.commutator.term.1}
\begin{split}
&\: \int_{t'=0}^{t'=t} \| \bX\mathcal{P}^{\Ntop} \mathcal{R}_{(+)}\|_{L^2(\Sigma_{t'}^u)} \epd \upmu_{\star}^{-\toprate 
+ 0.9}(t') \, dt' \\
\ls &\: \int_{t'=0}^{t'=t} \upmu_{\star}^{-\f 12}(t) \|\bX\mathcal{P}^{\Ntop}\mathcal{R}_{(+)}\|_{L^2(\Sigma_{t'}^u)}^2 \, dt' + \epd^2 \int_{t'=0}^{t'=t} \upmu_{\star}^{-2\toprate + 2.3}(t')\, dt' \\
\ls &\: \int_{t'=0}^{t'=t} \upmu_{\star}^{-\f 12}(t) \|\bX\mathcal{P}^{\Ntop}\mathcal{R}_{(+)}\|_{L^2(\Sigma_{t'}^u)}^2 \, dt' + \epd^2 \upmu_{\star}^{-\toprate + 3.3}(t).
\end{split}
\end{equation}

For the term $\mathrm{II}$ in \eqref{eq:hardest.commutator.term}, 
we apply first the Cauchy--Schwarz inequality in $t'$ and then Young's inequality to obtain:
\begin{equation}\label{eq:hardest.commutator.term.2}
\begin{split}
&\: \int_{t'=0}^{t'=t} \upmu_{\star}^{-1}(t) \|\bX\mathcal{P}^{\Ntop} \mathcal{R}_{(+)}\|_{L^2(\Sigma_{t'}^u)} 
\left\{\int_{s=0}^{s=t'} \|\mathcal{P}^{[1,\Ntop]} \mathfrak G\|_{L^2(\Sigma_s^u)} \, ds \right\}\, dt' \\
\ls &\: \int_{t'=0}^{t'=t} \upmu_{\star}^{-\f 12}(t) \|\bX\mathcal{P}^{\Ntop} \mathcal{R}_{(+)} \|_{L^2(\Sigma_{t'}^u)}^2 \, dt' + \int_{t'=0}^{t'=t} \upmu_{\star}^{-\f 32}(t') \left\{\int_{s=0}^{s=t'} \|\mathcal{P}^{[1,\Ntop]} \mathfrak G\|_{L^2(\Sigma_s^u)} \, ds \right\}^2 \, dt'.
\end{split}
\end{equation}
Notice that the term $\int_{t'=0}^{t'=t} \upmu_{\star}^{-\f 12}(t) \|\bX\mathcal{P}^{\Ntop}\mathcal{R}_{(+)}\|_{L^2(\Sigma_{t'}^u)}^2 \, dt' $ appearing on the RHSs of both \eqref{eq:hardest.commutator.term.1} and \eqref{eq:hardest.commutator.term.2} is bounded above by $\int_{t'=0}^{t'=t} \upmu_{\star}^{-\f 12}(t',u) \mathbb{Q}_{\Ntop}(t',u) \, dt'$, 
which is among the error terms $\mbox{\upshape Error}_{\Ntop}^{(Top)}(t,u)$ defined by \cite[(14.4)]{LS}.
Therefore,
combining \eqref{eq:hardest.commutator.term}--\eqref{eq:hardest.commutator.term.2} 
and taking into account definition~\ref{E:NEWTOPORDER},
we obtain that: 
\begin{align}\label{eq:hardest.commutator.term.final}
\|(\bX \mathcal{P}^{\Ntop} \mathcal{R}_{(+)}) (\bX \mathcal{R}_{(+)}) \slashed{\mathcal{P}}^{\Ntop} \mytr\upchi \|_{L^1(\Mtu)} 
& \leq \mbox{RHS~\eqref{E:JAREDTOPORDERINTEGRALINEQUALITY}}
\end{align}
as desired.

We also need to bound the integral
$\int_{\Mtu}  
	(\bX \mathcal{P}^{\Ntop} \Psi)
	(\bX \Psi) 
	\slashed{\mathcal{P}}^{\Ntop} \mytr\upchi
\, d \vol$
in the remaining cases $\Psi \in \lbrace R_{(-)}, v^2, v^3, s \rbrace$.
As we further explain below in Step~3, 
a similar argument allows us to bound these error integrals 
by exploiting one crucial simplifying feature:
these error integrals are bounded by RHS~\eqref{E:JAREDTOPORDERINTEGRALINEQUALITY},
but \emph{without the difficult boxed-constant-involving integrals}
on the right-hand side.
The difference is that we can take advantage of the 
smallness of the factor $\| \bX \Psi \|_{L^\infty(\Sigma_t)} \leq \epd^{\frac{1}{2}}$
(valid for $\Psi \in \lbrace \mathcal{R}_{(-)}, v^2, v^3, s \rbrace$ -- but not for $\mathcal{R}_{(+)}$!),
which is provided by the bootstrap assumption \eqref{BA:SMALLWAVEVARIABLESUPTOONETRANSVERSALDERIVATIVE};
this allows us to avoid the error terms with large boxed constants 
and thus allows us to relegate the contribution of these error integrals
to the error term $\mbox{\upshape Error}_{\Ntop}^{(Top)}(t,u)$ on RHS~\eqref{E:JAREDTOPORDERINTEGRALINEQUALITY};
we refer to \cite[pg.~154]{LS} for further details.

\pfstep{Step~2(c): Contributions from 
$\int_{\Mtu}  
	\left\lbrace
		(1+2\upmu) L\mathcal{P}^{\Ntop} \Psi
	\right\rbrace
	(\bX \Psi) \slashed{\mathcal{P}}^{\Ntop} \mytr\upchi
\, d \vol$} 
We first consider the case $\Psi = \mathcal{R}_{(+)}$,
which is by far the most difficult case.
Unlike the error integral we controlled in Step~2(b), 
as in \cite{LS}, this error integral can be controlled
by first using the definition \eqref{E:TRANSPORTPARTIALRENORMALIZEDTRCHIJUNK}
of the partially modified quantities to algebraically replace the factor
$\slashed{\mathcal{P}}^{\Ntop} \mytr\upchi$
with a $\slashed{\mathcal{P}}$ derivative of
$\upchipartialmodarg{\slashed{\mathcal{P}}^{\Ntop-1}}$
plus remainder terms (that one controls separately), 
and then using integration by parts
to swap the $L$ and $\slashed{\mathcal{P}}$ derivatives.
Notice that by Proposition~\ref{prop:partially.modified}, 
the partially modified quantity obeys the same bounds as in \cite[Lemma~14.19]{LS},
except the estimates of Proposition~\ref{prop:partially.modified}
feature $\epd$-multiplied terms such as
$C \epd \upmu_{\star}^{-\toprate + 1.4}(t)$ on the RHSs,
which can be handled using arguments of the type we used to control the
error term \eqref{eq:hardest.commutator.term.1}.
In particular, the right-hand sides of the estimates in Proposition~\ref{prop:partially.modified}
do not involve the wave equation inhomogeneity $\mathfrak{G}$.
Hence, the error integral
$\int_{\Mtu}  
	\left\lbrace
		(1+2\upmu) L\mathcal{P}^{\Ntop} \mathcal{R}_{(+)}
	\right\rbrace
	(\bX \mathcal{R}_{(+)}) \slashed{\mathcal{P}}^{\Ntop} \mytr\upchi
\, d \vol$ 
can be bounded using exactly the same arguments given in \cite[Lemma~14.17]{LS} and \cite[Lemma~14.12]{jSgHjLwW2016},
except with the boxed constants from \cite{LS}
replaced by 
$\boxed{M_{\mathrm{abs}}}$ 
and the constant $C_*$ from \cite{LS} replaced by $\boxed{\Ccrit}$.
As a consequence, the error integral under consideration
is bounded in magnitude by $\leq \mbox{RHS~\eqref{E:JAREDTOPORDERINTEGRALINEQUALITY}}$.

To bound the integral
$\int_{\Mtu}  
	\left\lbrace
		(1+2\upmu) L\mathcal{P}^{\Ntop} \Psi
	\right\rbrace
	(\bX \Psi) \slashed{\mathcal{P}}^{\Ntop} \mytr\upchi
\, d \vol$
in the remaining cases $\Psi \in \lbrace R_{(-)}, v^2, v^3, s \rbrace$,
we can again (as in Step~2(b)) take advantage of the smallness
$\| \bX \Psi \|_{L^\infty(\Sigma_t)} \leq \epd^{\frac{1}{2}}$
(valid for $\Psi \in \lbrace \mathcal{R}_{(-)}, v^2, v^3, s \rbrace$ -- but not for $\mathcal{R}_{(+)}$!),
which is provided by the bootstrap assumption \eqref{BA:SMALLWAVEVARIABLESUPTOONETRANSVERSALDERIVATIVE}.
This again allows us to relegate the contribution of these integrals
to the error term $\mbox{\upshape Error}_{\Ntop}^{(Top)}(t,u)$ on RHS~\eqref{E:JAREDTOPORDERINTEGRALINEQUALITY};
see \cite[pg.~154]{LS} for further details.

\pfstep{Step~2(d): Contributions from 
$
\int_{\Mtu}  
	\left\lbrace
		(1+2\upmu) L\mathcal{P}^{\Ntop} \Psi + \bX \mathcal{P}^{\Ntop} \Psi
	\right\rbrace
	(\slashed{\mathcal{P}} \Psi) \upmu \slashed{\mathcal{P}}^{\Ntop} \mytr\upchi
\, d \vol
$} This error integral is similar to the one we treated in Step~2(b), but easier. Here are the differences:
\begin{itemize}
\item There is an additional $\upmu$ factor.
\item There is a $L \mathcal{P}^{\Ntop} \Psi$ term, in addition to a $\bX \mathcal{P}^{\Ntop} \Psi$ term.
\item There is a factor of $\slashed{\mathcal{P}} \Psi$ instead of $\bX \Psi$.
\end{itemize} 
Notice that due to the additional factor of $\upmu$, 
we can control the $L^2(\Sigma_t^u)$ norm of $\sqrt{\upmu} L \mathcal{P}^{\Ntop} \Psi$ by the $\mathbb{Q}_{\Ntop}$ energy (recall the definition \eqref{eq:wave.energy.def.1} for the energy). Moreover, comparing \eqref{BA:W.Li.small} with \eqref{BA:LARGERIEMANNINVARIANTLARGE}, we see that 
the factor $\slashed{\mathcal{P}} \Psi$ gives an additional $\epd^{\f12}$ $L^{\infty}$-smallness factor 
compared to $\bX \mathcal{R}_{(+)}$. 
Therefore, we can use H\"older's inequality, 
\eqref{BA:W.Li.small},
the $L^{\infty}$ bound for $\upmu$ in Proposition~\ref{prop:geometric.low},
\eqref{eq:trch.top.non.sharp} in Proposition~\ref{prop:trch.top}, 
and Proposition~\ref{prop:mus.int}
and argue as in Step~2(b)
(taking into account definition~\ref{E:NEWTOPORDER})
to obtain:
\begin{equation*}
\begin{split}
&\: 
\left|
\int_{\Mtu}  
	\left\lbrace
		(1+2\upmu) L\mathcal{P}^{\Ntop} \Psi + \bX \mathcal{P}^{\Ntop} \Psi
	\right\rbrace
	(\slashed{\mathcal{P}} \Psi) \upmu \slashed{\mathcal{P}}^{\Ntop} \mytr\upchi
\, d \vol
\right|
\\
\ls &\: \epd^{\f 12} 
\int_{t'=0}^{t'=t}
\upmu_{\star}^{-\f 12}(t',u)
\left(\|\bX\mathcal{P}^{\Ntop}\Psi\|_{L^2(\Sigma_{t'}^u)} + \|\sqrt{\upmu} L\mathcal{P}^{\Ntop}\Psi\|_{L^2(\Sigma_{t'}^u)}\right) 
\|\upmu \slashed{\mathcal{P}}^{\Ntop} \mytr\upchi \|_{L^2(\Sigma_{t'}^u)}
\,dt' \\
\ls &\: \epd^{\f 12} \int_{t'=0}^{t'=t} \upmu_{\star}^{-\f 12}(t',u) \mathbb{Q}_{\Ntop}(t',u) \, dt' 
+ 
\epd^{\f 52} \upmu_{\star}^{-2 \toprate + 4.3}(t) \\
&\: 
+ 
\epd^{\f 12} 
\int_{t'=0}^{t'=t} 
	\upmu_{\star}^{-\f 12}(t',u) 
		\left\{
		\int_{s=0}^{s=t'}  \| \mathcal{P}^{[1, \Ntop]}  \mathfrak G \|_{L^2(\Sigma_s^u)} \, ds 
		\right\}^2 \, dt' 
			\\
\leq &\: \mbox{Non-boxed-constant-involving terms on RHS~\eqref{E:JAREDTOPORDERINTEGRALINEQUALITY}}.
\end{split}
\end{equation*}
Combining Steps~2(a)--2(d), we arrive at the desired bound \eqref{E:JAREDTOPORDERINTEGRALINEQUALITY}.

\medskip

\pfstep{Step~3: Proof of \eqref{E:JAREDPARTIALTOPORDERINTEGRALINEQUALITY}}
In this step, we only have to derive top-order energy estimates for
$\mathcal{R}_{(-)}, v^2, v^3, s$. This is in contrast to Step~2, in which we
also had to derive energy estimates for $\mathcal{R}_{(+)}$.
The proof of \eqref{E:JAREDPARTIALTOPORDERINTEGRALINEQUALITY} is 
the same as the proof of \cite[(14.5)]{LS},
except we have to account for the contribution of the inhomogeneous terms $\mathfrak G_{\iota}$
in the wave equations satisfied by 
$\widetilde{\Psi} 
\in \lbrace \mathcal{R}_{(-)}, v^2, v^3, s \rbrace$. For the same reason as in Step~2, these inhomogeneous terms lead to
error integrals that are controlled by the terms 
$\mbox{\upshape NewError}_{\Ntop}^{(Top)}(t,u)$ on
RHS~\eqref{E:JAREDPARTIALTOPORDERINTEGRALINEQUALITY}.
We clarify that the proof of \eqref{E:JAREDPARTIALTOPORDERINTEGRALINEQUALITY}
requires that we control the difficult error integrals
$$
\int_{\Mtu}  
	(\bX \mathcal{P}^{\Ntop} \widetilde{\Psi})
	(\bX \widetilde{\Psi}) 
	\slashed{\mathcal{P}}^{\Ntop} \mytr\upchi
\, d \vol,
$$
$$
\int_{\Mtu}  
	\left\lbrace
		(1+2\upmu) L\mathcal{P}^{\Ntop} \widetilde{\Psi}
	\right\rbrace
	(\bX \widetilde{\Psi}) \slashed{\mathcal{P}}^{\Ntop} \mytr\upchi
\, d \vol,
$$
$$
\int_{\Mtu}  
	\left\lbrace
		(1+2\upmu) L\mathcal{P}^{\Ntop} \widetilde{\Psi} + \bX \mathcal{P}^{\Ntop} \widetilde{\Psi}
	\right\rbrace
	(\mathcal{P} \widetilde{\Psi}) \upmu \slashed{\mathcal{P}}^{\Ntop} \mytr\upchi
\, d \vol,
$$
as in Step~2. In Step~2, the first two of these error integrals led to error terms that are 
controlled by the boxed-constant-involving terms on RHS~\eqref{E:JAREDTOPORDERINTEGRALINEQUALITY}.
However, in Step~3, we can take advantage of the smallness
of the factors $\bX \widetilde{\Psi}$ in these integrals.
That is, we can exploit the smallness
estimate $\| \bX \widetilde{\Psi} \|_{L^\infty(\Sigma_t)} \leq \epd^{\frac{1}{2}}$
(valid for $\widetilde{\Psi} \in \lbrace \mathcal{R}_{(-)}, v^2, v^3, s \rbrace$ -- but not for $\mathcal{R}_{(+)}$!),
which is provided by the bootstrap assumption \eqref{BA:SMALLWAVEVARIABLESUPTOONETRANSVERSALDERIVATIVE};
this allows us to avoid the error terms with large boxed constants 
(which are found on RHS~\eqref{E:JAREDTOPORDERINTEGRALINEQUALITY}),
and allow us to relegate the contribution of the corresponding error integrals
to the error term $\mbox{\upshape Error}_{\Ntop}^{(Top)}(t,u)$ on RHS~\eqref{E:JAREDPARTIALTOPORDERINTEGRALINEQUALITY}.
See \cite[pg.~154]{LS} for further details.

\qedhere
\end{proof}

\subsection{Sketch of the proof of Proposition~\ref{prop:wave}}
\label{sec:sketch.prop.wave}
The argument here is the same as in the proof of \cite[Proposition~14.1]{LS}, 
except we have to handle the additional terms in Proposition~\ref{P:JAREDimproved.critical.constant}.
\begin{proof}[Sketch of proof of Proposition~\ref{prop:wave}]
\pfstep{Step~1: The top- and penultimate- orders (Proof of \eqref{eq:main.EE.prop.top})} 
It turns out that the top-order energies are heavily coupled to the penultimate-order energies.
In turn, this forces us to perform a Gr\"onwall-type argument that simultaneously handles
the top- and penultimate-order energy estimates at the same time. 
For these reasons, 
we follow the notation of \cite[Proposition~14.1]{LS} and define:\footnote{For easy comparisons with the proof of
\cite[Proposition~14.1]{LS}, we are using the notations ``$F$,'' ``$G$,'' and ``$H$'' here. The reader should be careful
to distinguish these functions 
from the different functions $F$ and $G$ in Definitions~\ref{def:variables.fluid} and \ref{D:BIGGANDBIGH}.}
\begin{align}
F(t,u) \doteq &\: \sup_{(\hat{t}, \hat{u}) \in [0,t]\times [0,u]} \iota_F^{-1}(\hat{t},\hat{u}) \max\{\mathbb{Q}_{[1,\Ntop]}(\hat{t},\hat{u}),\, \mathbb K_{[1,\Ntop]}(\hat{t},\hat{u})\}, \label{eq:F.def.in.appendix}\\
G(t,u) \doteq &\: \sup_{(\hat{t}, \hat{u}) \in [0,t]\times [0,u]} \iota_G^{-1}(\hat{t},\hat{u}) 
\max\{\mathbb{Q}_{[1,\Ntop]}^{(Partial)}(\hat{t},\hat{u}),\, \mathbb K_{[1,\Ntop]}^{(Partial)}(\hat{t},\hat{u})\}, \label{eq:G.def.in.appendix}\\
H(t,u) \doteq &\: \sup_{(\hat{t}, \hat{u}) \in [0,t]\times [0,u]} \iota_H^{-1}(\hat{t},\hat{u}) \max\{\mathbb{Q}_{[1,\Ntop-1]}(\hat{t},\hat{u}),\, \mathbb K_{[1,\Ntop-1]}(\hat{t},\hat{u})\}, \label{eq:H.def.in.appendix}
\end{align}
where:
\begin{align*}
\iota_1(t) \doteq &\: \int_{t'=0}^{t'=t} \f 1{\sqrt{\Tboot -t'}}\, dt', \\
\iota_2(t) \doteq &\: \int_{t'=0}^{t'=t} \upmu_{\star}^{- 0.9}(t')\, dt', \\
\iota_F(t,u) = \iota_G(t,u) \doteq &\: \upmu_{\star}^{-2\toprate+1.8}(t)\iota_1^c(t)\iota_2^c(t) e^{ct} e^{cu}, \\
\iota_H(t,u) \doteq &\: \upmu_{\star}^{-2\toprate+3.8}(t) \iota_1^c(t)\iota_2^c(t) e^{ct} e^{cu}.
\end{align*}

Following exactly the same\footnote{Here we note one minor difference
compared to \cite[Proposition~14.1]{LS}: that proposition was more precise with respect to $u$ in
the sense that it yielded a priori estimates in terms of powers of
$\upmu_{\star}(t,u)$, rather than $\upmu_{\star}(t)$ (see Definition~\ref{def:mustar}).
For this reason, in the proof \cite[Proposition~14.1]{LS}, the definition of the analog of
$\iota_2$ involved $\upmu_{\star}(t,u)$, and similarly for the $\upmu_{\star}$-dependent
factors on the RHSs of the analogs of $\iota_F$, $\iota_G$, and $\iota_H$. 
The change we have made in this paper 
has no substantial effect on the analysis; at the relevant points
in the proof of \cite[Proposition~14.1]{LS}, all of the needed estimates
hold true with $\upmu_{\star}(t)$ in place of $\upmu_{\star}(t,u)$.
}
argument\footnote{The detailed argument relies on some extensions and
sharpened versions of the estimates
of Proposition~\ref{prop:mus.int}.
Given the estimates of Section~\ref{sec:geometry}, such as
Propositions~\ref{prop:geometric.low}, 
	\ref{prop:geometric.low.2}, 
	and
	\ref{P:LINFTYHIGHERTRANSVERSAL},
	the needed estimates can be proved using the same arguments given in \cite{LS}.
} 
used in the proof of \cite[Proposition~14.1]{LS} 
(see in particular \cite[(14.64)--(14.66)]{LS}), 
but taking into account the additional terms in Proposition~\ref{P:JAREDimproved.critical.constant}, 
we can choose $\toprate \in \mathbb N$ and $c>0$ sufficiently large depending on the absolute 
constant $M_{\mathrm{abs}}$ in Proposition~\ref{P:JAREDimproved.critical.constant}
so that the following holds\footnote{The inequality \cite[(14.64)]{LS} 
featured a term $C F^{\frac{1}{2}}(t,u)G^{\frac{1}{2}}(t,u)$ on the right-hand side. We used Young's inequality
to bound this term by $\leq a F(t,u) + \upalpha_3 G(t,u)$, where $\upalpha_3 \doteq C^2/a$ and we have chosen
$a$ to be small, which allows us to soak up $a F(t,u)$ into the term $\upalpha_1 F(t,u)$.} 
for every $(\hat{t},\hat{u})\in [0,t]\times [0,u]$:
\begin{align}
F(\hat{t},\hat{u}) \leq &\: C \epd^2 + \upalpha_1 F(t,u) + \upalpha_2 H(t,u) 
+ \upalpha_3 G(t,u)
\notag \\
&\: + C \iota_F^{-1}(\hat{t},\hat{u}) \int_{t'=0}^{t'=\hat{t}} \| (|L\mathcal{P}^{[1,\Ntop]} \Psi| + |\bX \mathcal{P}^{[1,\Ntop]} \Psi|) \mathcal{P}^{[1,\Ntop]} \mathfrak G \|_{L^1(\Sigma_{t'}^u)}\, dt'  \notag\\
&\: + C  \iota_F^{-1}(\hat{t},\hat{u}) \int_{t'=0}^{t'=\hat{t}} \upmu_{\star}^{-\f 32}(t') \left\{\int_{s=0}^{s=t'} \|\mathcal{P}^{[1,\Ntop]}\mathfrak G\|_{L^2(\Sigma_s^u)} \, ds\right\}^2 \, dt', \label{eq:appendix.F.basic} 
	\\
G(\hat{t},\hat{u}) \leq &\: C \epd^2 + \upbeta_1 F(t,u) + \upbeta_2 H(t,u) 
	\notag\\
&\: + C \iota_G^{-1}(\hat{t},\hat{u}) \int_{t'=0}^{t'=\hat{t}} \| (|L\mathcal{P}^{[1,\Ntop]} \Psi| + |\bX \mathcal{P}^{[1,\Ntop]} \Psi|) \mathcal{P}^{[1,\Ntop]} \mathfrak G \|_{L^1(\Sigma_{t'}^u)}\, dt'  \notag\\
&\: + C  \iota_G^{-1}(\hat{t},\hat{u}) \int_{t'=0}^{t'=\hat{t}} \upmu_{\star}^{-\f 32}(t') \left\{\int_{s=0}^{s=t'} \|\mathcal{P}^{[1,\Ntop]}\mathfrak G\|_{L^2(\Sigma_s^u)} \, ds\right\}^2 \, dt', \label{eq:appendix.G.basic} 
	\\
H(\hat{t},\hat{u}) \leq &\: C\epd^2 + \upgamma_1 F(t,u) + \upgamma_2 H(t,u) \notag\\
&\:  + C \iota_H^{-1}(\hat{t},\hat{u}) \int_{t'=0}^{t'=\hat{t}} \| (|L\mathcal{P}^{[1,\Ntop-1]} \Psi| + 
|\bX \mathcal{P}^{[1,\Ntop-1]} \Psi|) \mathcal{P}^{[1,\Ntop-1]} \mathfrak G \|_{L^1(\Sigma_{t'}^u)}\, dt', 
	\label{eq:appendix.H.basic}
\end{align}
where $C > 0$ is a constant,
while
$\upalpha_1$, $\upalpha_2$, $\upalpha_3$, 
$\upbeta_1$, $\upbeta_2$,
$\upgamma_1$ and $\upgamma_2$ are constants that obey the following smallness conditions
(as long as $\toprate \in \mathbb N$ and $c>0$ are sufficiently large): 
\begin{equation}\label{eq:funny.smallness}
\upalpha_1 
	+
	4 \upalpha_2 \upgamma_1  
	+ 
	\upalpha_3 \upbeta_1
	+
	4 \upalpha_3 \upbeta_2 \upgamma_1 < 1, 
\quad \quad \upgamma_2 < 3/4.
\end{equation}
At this point we fix $c>0$ and $\toprate \in \mathbb N$. From now on, 
we allow the general constants $C>0$ to depend on these particular fixed choices 
of $c$ and $\toprate$. 

For each of the three integrals on RHSs~\eqref{eq:appendix.F.basic}--\eqref{eq:appendix.H.basic}, 
we absorb $\iota_1^c(\hat{t})\iota_2^c(\hat{t}) e^{c \hat{t}} e^{c \hat{u}}$ into the general constant $C$, 
and then take the supremum with respect to $\hat{t}$. 
For instance, for the first integral on RHS \eqref{eq:appendix.F.basic},
we deduce that for $(\hat{t},\hat{u}) \in [0,t]\times [0,u]$, 
we have:
\begin{equation*}
\begin{split}
&\: \iota_F^{-1}(\hat{t},\hat{u}) \int_{t'=0}^{t'=\hat{t}} \| (|L\mathcal{P}^{[1,\Ntop]} \Psi| + |\bX \mathcal{P}^{[1,\Ntop]} \Psi|) \mathcal{P}^{[1,\Ntop]} \mathfrak G \|_{L^1(\Sigma_{t'}^u)}\, dt' \\
\ls &\: \sup_{\hat{t}' \in [0,t]} \upmu_{\star}^{2\toprate - 1.8}(\hat{t}') \int_{t'=0}^{t'=\hat{t}'}   \| (|L\mathcal{P}^{[1,\Ntop]} \Psi| + |\bX \mathcal{P}^{[1,\Ntop]} \Psi|) \mathcal{P}^{[1,\Ntop]} \mathfrak G \|_{L^1(\Sigma_{t'}^u)} \, dt'.
\end{split}
\end{equation*}
We perform the same operation on the other integrals. Since we have taken a supremum, the RHSs are independent of $(\hat{t},\hat{u})$. We then take supremum over $(\hat{t},\hat{u}) \in [0,t]\times [0,u]$ on the LHSs of 
\eqref{eq:appendix.F.basic}--\eqref{eq:appendix.H.basic} to obtain, 
with the same constants $\upalpha_1$, $\upalpha_2$, $\upalpha_3$, 
$\upbeta_1$, $\upbeta_2$,
$\upgamma_1$ and $\upgamma_2$,  
but with a different constant $C$, the following inequalities:
\begin{align}
F(t,u) \leq &\: C \epd^2 + \upalpha_1 F(t,u) + \upalpha_2 H(t,u) + \upalpha_3 G(t,u) \notag \\
&\: + C \sup_{\hat{t} \in [0,t]} \upmu_{\star}^{2\toprate - 1.8}(\hat{t}) \int_{t'=0}^{t'=\hat{t}}   \| (|L\mathcal{P}^{[1,\Ntop]} \Psi| + |\bX \mathcal{P}^{[1,\Ntop]} \Psi|) \mathcal{P}^{[1,\Ntop]} \mathfrak G \|_{L^1(\Sigma_{t'}^u)} \, dt'  \notag \\
&\: + C \sup_{\hat{t} \in [0,t]} \upmu_{\star}^{2\toprate - 1.8}(\hat{t})  \int_{t'=0}^{t'=\hat{t}}  \upmu_{\star}^{-\f 32}(t')  \left\{\int_{s=0}^{s=t'} \|\mathcal{P}^{[1,\Ntop]}\mathfrak G\|_{L^2(\Sigma_s^u)} \, ds\right\}^2 \, dt', 
\label{eq:appendix.F.again} \\
G(t,u) \leq &\: C \epd^2 + \upbeta_1 F(t,u) + \upbeta_2 H(t,u)  \notag \\
&\: + C \sup_{\hat{t} \in [0,t]} \upmu_{\star}^{2\toprate - 1.8}(\hat{t}) \int_{t'=0}^{t'=\hat{t}}   \| (|L\mathcal{P}^{[1,\Ntop]} \Psi| + |\bX \mathcal{P}^{[1,\Ntop]} \Psi|) \mathcal{P}^{[1,\Ntop]} \mathfrak G \|_{L^1(\Sigma_{t'}^u)} \, dt'  \notag \\
&\: + C \sup_{\hat{t} \in [0,t]} \upmu_{\star}^{2\toprate - 1.8}(\hat{t})  \int_{t'=0}^{t'=\hat{t}}  \upmu_{\star}^{-\f 32}(t')  \left\{\int_{s=0}^{s=t'} \|\mathcal{P}^{[1,\Ntop]}\mathfrak G\|_{L^2(\Sigma_s^u)} \, ds\right\}^2 \, dt',
\label{eq:appendix.G.again} \\
H(t,u) \leq &\: C\epd^2 + \upgamma_1 F(t,u) + \upgamma_2 H(t,u) \notag \\
&\: + C \sup_{\hat{t} \in [0,t]} \upmu_{\star}^{2\toprate - 3.8}(\hat{t}) \int_{t'=0}^{t'=\hat{t}}  
\| (|L\mathcal{P}^{[1,\Ntop-1]} \Psi| + |\bX \mathcal{P}^{[1,\Ntop-1]} \Psi|) \mathcal{P}^{[1,\Ntop-1]} \mathfrak G \|_{L^1(\Sigma_{t'}^u)} \, dt'. \label{eq:appendix.H.again}
\end{align}

The main point is the smallness conditions \eqref{eq:funny.smallness} on the constants 
$\upalpha_1,\cdots,\upgamma_2$ allow us to solve
the inequalities \eqref{eq:appendix.F.again}--\eqref{eq:appendix.H.again}
using a reductive approach.
More precisely, using that $\upgamma_2 < 3/4$, we soak the $\upgamma_2 H(t,u)$
term on RHS~\eqref{eq:appendix.H.again} back into the LHS
to isolate $H(t,u)$, at the expense of enlarging $C$
and replacing $\upgamma_1$ with $4 \upgamma_1$.
We then insert this estimate for $H(t,u)$ into RHS~\eqref{eq:appendix.G.again} 
to obtain an estimate for $G(t,u)$,
and then insert these estimates for $H(t,u)$ and $G(t,u)$ into RHS~\eqref{eq:appendix.F.again}
to obtain the following inequality:
\begin{align}
\begin{split}
F(t,u) \leq &\: 
C \epd^2 
+ 
\left\lbrace
	\upalpha_1 
	+
	4 \upalpha_2 \upgamma_1  
	+ 
	\upalpha_3 \upbeta_1
	+
	4 \upalpha_3 \upbeta_2 \upgamma_1
\right\rbrace
	F(t,u)
	\label{eq:appendix.F.FINAL}  
	\\
&\: + C \sup_{\hat{t} \in [0,t]} \upmu_{\star}^{2\toprate - 1.8}(\hat{t}) \int_{t'=0}^{t'=\hat{t}}   \| (|L\mathcal{P}^{[1,\Ntop]} \Psi| + |\bX \mathcal{P}^{[1,\Ntop]} \Psi|) \mathcal{P}^{[1,\Ntop]} \mathfrak G \|_{L^1(\Sigma_{t'}^u)} \, dt' \\
&\: + C \sup_{\hat{t} \in [0,t]} \upmu_{\star}^{2\toprate - 1.8}(\hat{t})  \int_{t'=0}^{t'=\hat{t}}  \upmu_{\star}^{-\f 32}(t')  \left\{\int_{s=0}^{s=t'} \|\mathcal{P}^{[1,\Ntop]}\mathfrak G\|_{L^2(\Sigma_s^u)} \, ds\right\}^2 \, dt' \\
&\: + C \sup_{\hat{t} \in [0,t]} \upmu_{\star}^{2\toprate - 3.8}(\hat{t}) \int_{t'=0}^{t'=\hat{t}}  \| (|L\mathcal{P}^{[1,\Ntop-1]} \Psi| + |\bX \mathcal{P}^{[1,\Ntop-1]} \Psi|) \mathcal{P}^{[1,\Ntop-1]} \mathfrak G \|_{L^1(\Sigma_{t'}^u)} \, dt'.
\end{split}
\end{align}
From the smallness condition 
$\upalpha_1 
	+
	4 \upalpha_2 \upgamma_1  
	+ 
	\upalpha_3 \upbeta_1
	+
	4 \upalpha_3 \upbeta_2 \upgamma_1
< 1$
featured in \eqref{eq:funny.smallness},
it follows that we can soak the terms
$
\left\lbrace
	\upalpha_1 
	+
	4 \upalpha_2 \upgamma_1  
	+ 
	\upalpha_3 \upbeta_1
	+
	4 \upalpha_3 \upbeta_2 \upgamma_1
\right\rbrace
	F(t,u)
$
on RHS~\eqref{eq:appendix.F.FINAL} back into LHS~\eqref{eq:appendix.F.FINAL} to isolate $F(t,u)$,
at the expense of increasing the constant $C$.
We therefore deduce the following inequality:
\begin{equation}\label{eq:appendix.F.G.H.final}
\begin{split}
&\: F(t,u)
	\\
\ls &\: \epd^2 + \sup_{\hat{t} \in [0,t]} \upmu_{\star}^{2\toprate - 1.8}(\hat{t}) \int_{t'=0}^{t'=\hat{t}}   \| (|L\mathcal{P}^{[1,\Ntop]} \Psi| + |\bX \mathcal{P}^{[1,\Ntop]} \Psi|) \mathcal{P}^{[1,\Ntop]} \mathfrak G \|_{L^1(\Sigma_{t'}^u)} \, dt' \\
&\: + \sup_{\hat{t} \in [0,t]} \upmu_{\star}^{2\toprate - 1.8}(\hat{t})  \int_{t'=0}^{t'=\hat{t}}  \upmu_{\star}^{-\f 32}(t')  \left\{\int_{s=0}^{s=t'} \|\mathcal{P}^{[1,\Ntop]}\mathfrak G\|_{L^2(\Sigma_s^u)} \, ds\right\}^2 \, dt' \\
&\: + \sup_{\hat{t} \in [0,t]} \upmu_{\star}^{2\toprate - 3.8}(\hat{t}) \int_{t'=0}^{t'=\hat{t}}  \| (|L\mathcal{P}^{[1,\Ntop-1]} \Psi| + |\bX \mathcal{P}^{[1,\Ntop-1]} \Psi|) \mathcal{P}^{[1,\Ntop-1]} \mathfrak G \|_{L^1(\Sigma_{t'}^u)} \, dt'.
\end{split}
\end{equation}
Then from \eqref{eq:appendix.F.G.H.final} and the arguments given above, 
we deduce that $G(t,u)$ and $H(t,u)$ are also bounded by $\leq$ 
RHS~\eqref{eq:appendix.F.G.H.final} (where we enlarge $C$ if necessary).

Recalling the definitions of $F$,$G$, and $H$ in 
\eqref{eq:F.def.in.appendix}--\eqref{eq:H.def.in.appendix}, 
we see that \eqref{eq:appendix.F.G.H.final} and the similar bounds for $G(t,u)$ and $H(t,u)$
collectively imply \eqref{eq:main.EE.prop.top}.

\pfstep{Step~2: The lower orders (Proof of \eqref{eq:main.EE.prop.low})} 
To prove the lower-order energy estimates, 
we start by considering the energy inequality 
given by 
the below-top-order estimate from Proposition~\ref{P:JAREDimproved.critical.constant},
i.e., the estimate \eqref{E:JAREDBELOWTOPORDERINTEGRALINEQUALITY},
which features the additional term \eqref{E:NEWBELOWTOPORDER}
compared to \cite[(14.6)]{LS}.

Observe that on RHS~\eqref{E:JAREDBELOWTOPORDERINTEGRALINEQUALITY}, 
except for $\int_{t'=0}^{t'=t} \f{\mathbb{Q}^{\f 12}_{[1,N-1]}(t',u) }{\upmu_{\star}^{\f 12}(t',u)} \left\{ \int_{s=0}^{s=t'} \f{\mathbb{Q}^{\f 12}_{[1,N]}(s,u) }{\upmu_{\star}^{\f 12}(s,u)}\, ds \right\}\, dt'$, 
every other term can be treated directly by Gr\"onwall's inequality (using Proposition~\ref{prop:mus.int}),
as in \cite{LS}. 
It thus follows that:
\begin{equation}\label{eq:energy.appendix.losing.derivatives}
\begin{split}
&\: \sup_{t'\in [0,t]} \max\{\mathbb{Q}_{[1,N-1]}(t',u), \, \mathbb K_{[1,N-1]}(t',u)\} \\
\leq &\: C \epd^2 + C \int_{t'=0}^{t'=t} \f{\mathbb{Q}^{\f 12}_{[1,N-1]}(t',u) }{\upmu_{\star}^{\f 12}(t',u)} \left\{ \int_{s=0}^{s=t'} \f{\mathbb{Q}^{\f 12}_{[1,N]}(s,u) }{\upmu_{\star}^{\f 12}(s,u)}\, ds \right\}\, dt' \\
&\: + C \| (|L\mathcal{P}^{[1,N-1]} \Psi| + |\bX \mathcal{P}^{[1,N-1]} \Psi|) |\mathcal{P}^{[1,N-1]} \mathfrak G| \|_{L^1(\Mtu)}.
\end{split}
\end{equation}

To proceed, we analyze the double time-integral term on RHS~\eqref{eq:energy.appendix.losing.derivatives}.
For any $\varsigma >0$, we have:
\begin{equation}\label{eq:energy.appendix.losing.derivatives.2}
\begin{split}
&\: \int_{t'=0}^{t'=t} \f{\mathbb{Q}^{\f 12}_{[1,N-1]}(t',u) }{\upmu_{\star}^{\f 12}(t',u)} \left\{ \int_{s=0}^{s=t'} \f{\mathbb{Q}^{\f 12}_{[1,N]}(s,u) }{\upmu_{\star}^{\f 12}(s,u)}\, ds \right\}\, dt' \\
\leq &\:  \left(\sup_{t'\in [0,t]} \mathbb{Q}^{\f 12}_{[1,N-1]}(t') \right)
	\times \left(\sup_{s\in [0,t]} \min\{1,\upmu_{\star}^{\toprate-\Ntop+N-0.9}(s)\} \mathbb{Q}^{\f 12}_{[1,N]}(s) \right) \\
&\: \times \int_{t'=0}^{t'=t} \f{1}{\upmu_{\star}^{\f 12}(t')} \left\{ \int_{s=0}^{s=t'}  \f{\max\{1,\upmu_{\star}^{-\toprate+\Ntop-N+0.9}(s)\}}{\upmu_{\star}^{\f 12}(s)}\, ds \right\}\, dt' \\
\leq &\: \varsigma \sup_{t'\in [0,t]} \mathbb{Q}_{[1,N-1]}(t') \\
&\: + C \varsigma^{-1} \max\{1, \upmu_{\star}^{-2\toprate+2\Ntop-2N+3.8}(t) \} (\sup_{s\in [0,t]} 
\min\{1,\upmu_{\star}^{2\toprate-2\Ntop+2N-1.8}(s)\} \mathbb{Q}_{[1,N]}(s)),
\end{split}
\end{equation}
where to obtain the last inequality, 
we have used Young's inequality and the following estimate,
which follows from Proposition~\ref{prop:mus.int}:
\begin{equation*}
\begin{split}
&\: \int_{t'=0}^{t'=t} \f{1}{\upmu_{\star}^{\f 12}(t')} \left\{ \int_{s=0}^{s=t'}  \f{\max\{1,\upmu_{\star}^{-\toprate+\Ntop-N+0.9}(s)\}}{\upmu_{\star}^{\f 12}(s)}\, ds \right\}\, dt' \\
\ls &\: \int_{t'=0}^{t'=t} \f{\max\{ 1, \upmu_{\star}^{-M+\Ntop-N+1.4}(t')\} }{\upmu_{\star}^{\f 12}(t')}  \, dt' \ls \max\{1, \upmu_{\star}^{-\toprate+\Ntop-N+1.9}(t) \}.
\end{split}
\end{equation*}

Inserting \eqref{eq:energy.appendix.losing.derivatives.2} into \eqref{eq:energy.appendix.losing.derivatives} and fixing
$\varsigma>0$ to be sufficiently small, 
we can absorb the term $C\varsigma (\sup_{t'\in [0,t]} \mathbb{Q}_{[1,N-1]}(t'))$ 
back into LHS~\eqref{eq:energy.appendix.losing.derivatives}. Thus, for this fixed value of 
$\varsigma$, we obtain:
\begin{equation*}
\begin{split}
&\: \sup_{t'\in [0,t]} \max\{\mathbb{Q}_{[1,N-1]}(t',u), \, \mathbb K_{[1,N-1]}(t',u)\} \\
\ls &\: \epd^2 + \max\{1, \upmu_{\star}^{-2M+2\Ntop-2N+3.8}(t) \} 
\left(\sup_{s\in [0,t]} \min\{1,\upmu_{\star}^{2\toprate-2\Ntop+2N-1.8}(s)\} \mathbb{Q}_{[1,N]}(s) \right) \\
&\: + \| (|L\mathcal{P}^{[1,N-1]} \Psi| + |\bX \mathcal{P}^{[1,N-1]} \Psi|) |\mathcal{P}^{[1,N-1]} \mathfrak G| \|_{L^1(\Mtu)}.
\end{split}
\end{equation*}
After changing the index $N$ to $N+1$, we conclude the estimate \eqref{eq:main.EE.prop.low}. \qedhere
\end{proof}

\bibliographystyle{amsalpha}
\bibliography{JBib}

\end{document}